\DeclareFontFamily{U}{rsfs}{\skewchar\font127 }
\DeclareFontShape{U}{rsfs}{m}{n}{%
  <-6> rsfs5
  <6-8> rsfs7
  <8-> rsfs10
}{}
\newcommand{\tbt}[4]{\begin{pmatrix}#1 & #2 \\ #3 & #4\end{pmatrix}}
\newcommand{\stbt}[4]{\left(\begin{smallmatrix}#1 & #2 \\ #3 & #4\end{smallmatrix}\right)}
\theoremstyle{plain}
  \newtheorem{theorem}{Theorem}[subsection]
  \newtheorem{lemma}[theorem]{Lemma}
  \newtheorem{proposition}[theorem]{Proposition}
  \newtheorem{corollary}[theorem]{Corollary}
  \newtheorem{definition}[theorem]{Definition}
  \newtheorem{lettertheorem}{Theorem}
  \newtheorem{lettercorollary}[lettertheorem]{Corollary}
\theoremstyle{remark}
  \newtheorem{remark}[theorem]{Remark}
  \newtheorem{notation}[theorem]{Notation}
  \newtheorem{hypothesis}[theorem]{Hypothesis}
\DeclareMathOperator{\TSym}{TSym}
\DeclareMathOperator{\Sym}{Sym}
\DeclareMathOperator{\Hom}{Hom}
\DeclareMathOperator{\Spec}{Spec}
\DeclareMathOperator{\Eis}{Eis}
\DeclareMathOperator{\Fil}{Fil}
\DeclareMathOperator{\GL}{GL}
\DeclareMathOperator{\SL}{SL}
\DeclareMathOperator{\pr}{pr}
\DeclareMathOperator{\loc}{loc}
\DeclareMathOperator{\myPr}{Pr}\renewcommand{\Pr}{\myPr}
\DeclareMathOperator{\mom}{mom}
\DeclareMathOperator{\norm}{norm}
\DeclareMathOperator{\Char}{char}
\DeclareMathOperator{\Gal}{Gal}
\DeclareMathOperator{\Ind}{Ind}
\DeclareMathOperator{\Hyp}{Hyp}
\DeclareMathOperator{\Col}{Col}
\DeclareMathOperator{\Rank}{rank}
\DeclareMathOperator{\Frac}{Frac}
\DeclareMathOperator{\comp}{comp}
\DeclareMathOperator{\id}{id}
\DeclareMathOperator{\Tate}{Tate}
\newcommand{\dR}{\mathrm{dR}}
\newcommand{\et}{\text{\textup{\'et}}} 
\newcommand{\mot}{\mathrm{mot}}
\newcommand{\ord}{\mathrm{ord}}
\newcommand{\Par}{\mathrm{par}}
\newcommand{\sF}{\mathscr{F}}
\newcommand{\sH}{\mathscr{H}}
\newcommand{\sL}{\mathscr{L}}
\newcommand{\cE}{\mathcal{E}}
\newcommand{\cF}{\mathcal{F}}
\newcommand{\cO}{\mathcal{O}}
\newcommand{\cP}{\mathcal{P}}
\newcommand{\cL}{\mathcal{L}}
\newcommand{\cW}{\mathcal{W}}
\newcommand{\frP}{\mathfrak{P}}
\newcommand{\frQ}{\mathfrak{Q}}
\newcommand{\frS}{\mathfrak{S}}
\newcommand{\CC}{\mathbf{C}}
\newcommand{\FF}{\mathbf{F}}
\newcommand{\HH}{\mathbf{H}} 
\newcommand{\QQ}{\mathbf{Q}}
\newcommand{\ZZ}{\mathbf{Z}}
\newcommand{\DD}{\mathbf{D}} 
\newcommand{\NN}{\mathbf{N}}
\newcommand{\TT}{\mathbf{T}}
\newcommand{\bfa}{\mathbf{a}}
\newcommand{\bff}{\mathbf{f}}
\newcommand{\bfg}{\mathbf{g}}
\renewcommand{\j}{\mathbf{j}}
\renewcommand{\k}{\mathbf{k}}
\DeclareFontFamily{U}{wncy}{}
\DeclareFontShape{U}{wncy}{m}{n}{<->wncyr10}{}
\DeclareSymbolFont{mcy}{U}{wncy}{m}{n}
\DeclareMathSymbol{\Sha}{\mathord}{mcy}{"58}
\newcommand{\Qp}{{\QQ_p}}
\newcommand{\Zp}{{\ZZ_p}}
\newcommand{\Gm}{\mathbf{G}_m}
\newcommand{\RGt}{\widetilde{R\Gamma}}
\newcommand{\Ht}{\widetilde{H}}
\newcommand{\Lp}{L_{\frP}}
\newcommand{\hpr}{\hat\pr}
\newcommand{\into}{\hookrightarrow}
\newcommand{\htimes}{\mathop{\hat\otimes}}
\newcommand{\cEI}{{}_c\mathcal{EI}}
\newcommand{\cRI}{{}_c\mathcal{RI}}
\newcommand{\cBF}{{}_c\mathcal{BF}}
\newcommand{\BF}{\mathcal{BF}}
\numberwithin{equation}{subsection}
\begin{document}

\title{Rankin--Eisenstein classes and explicit reciprocity laws}

\author{Guido Kings}
\address{Fakult\"at f\"ur Mathematik, Universit\"at Regensburg, 93040 Regensburg, Germany}
\email{guido.kings@ur.de}

\author{David Loeffler}
\address{Mathematics Institute, Zeeman Building, University of Warwick, Coventry CV4 7AL, UK}
\email{d.a.loeffler@warwick.ac.uk}
\thanks{
  }

\author{Sarah Livia Zerbes}
\address{Department of Mathematics, University College London, London WC1E 6BT, UK}
\email{s.zerbes@ucl.ac.uk}

\thanks{Supported by: SFB 1085 ``Higher invariants'' (Kings);
  Royal Society University Research Fellowship ``$L$-functions and Iwasawa theory''
  and NSF Grant No. 0932078 000 (Loeffler);
  EPSRC First Grant EP/J018716/1, Leverhulme Trust Research Fellowship ``Euler systems and Iwasawa theory'', and NSF Grant No. 0932078 000 (Zerbes).}

\begin{abstract}
We construct three-variable $p$-adic families of Galois cohomology classes attached to Rankin convolutions of modular forms, and prove an explicit reciprocity law relating these classes to critical values of L-functions. As a consequence, we prove finiteness results for the Selmer group of an elliptic curve twisted by a 2-dimensional odd irreducible Artin representation when the associated $L$-value does not vanish.

\textcolor{red}{Edited November 2023 to add a corrigendum -- see end of file}
\end{abstract}

\subjclass[2010]{11F85, 11F67, 11G40, 14G35}

\maketitle
\setcounter{tocdepth}{1}
\tableofcontents

\section{Introduction}

 \subsection{Overview}

  One of the most basic questions in number theory is the study of the cohomology of Galois representations, and in particular the relation between these groups and the values of $L$-functions. In this paper, we are interested in one case of this: the $L$-function and the Galois representation associated to the convolution of two modular forms.

  Let $f = \sum a_n(f) q^n$ and $g = \sum a_n(g) q^n$ be two modular cusp forms, of levels $N_f, N_g$, characters $\varepsilon_f, \varepsilon_g$, and weights $k + 2, k' + 2$, with $k, k' \ge -1$. We assume that $f$ and $g$ are eigenvectors for the Hecke operators. We define the Rankin--Selberg $L$-function by
  \[ L(f, g, s) = L_{(N_f N_g)}(\varepsilon_f \varepsilon_g, 2s -2 - k - k') \cdot \sum_{n \ge 1} a_n(f) a_n(g) n^{-s}, \]
  where $L_{(N_f N_g)}(\varepsilon_f \varepsilon_g,s)$ denotes the Dirichlet $L$-function with the Euler factors at the primes dividing $N_fN_g$ removed. Up to finitely many bad Euler factors at the primes dividing $N_f N_g$, this is the $L$-function associated to the compatible family of Galois representations
  \[ M_{\Lp}(f \otimes g) = M_{\Lp}(f) \otimes M_{\Lp}(g), \]
  where $M_{\Lp}(f)$ and $M_{\Lp}(g)$ are Deligne's $p$-adic Galois representations attached to $f$ and $g$ (and to a prime $\frP$ of their common coefficient field $L$).

  According to the Bloch--Kato conjecture, we expect that the values of the $L$-function $L(f, g, s)$ for integer values of $s$ determine the behaviour of a \emph{Selmer group} (a subgroup of the Galois cohomology determined by local conditions; we shall recall its definition in \S \ref{sect:selgeneralities} below). Specifically, for $j \in \mathbf{Z}$ the value $L(f, g, 1 + j)$ should be related to the Bloch--Kato Selmer group $H^1_{\mathrm{f}}(\QQ, M_{\Lp}(f \otimes g)^*(-j))$.

  The main result of this paper is to show (under some technical hypotheses) that, for $j$ in the \emph{critical range} $\min(k, k') + 1 \le j \le \max(k, k')$, we have the implication
  \[ L(f,g, 1+j) \ne 0 \quad \Rightarrow \quad H^1_{\mathrm{f}}(\QQ, M_{\Lp}(f \otimes g)^*(-j)) \text{ is finite}.\]

  This statement has many interesting consequences. For instance, following a beautiful idea of Bertolini, Darmon and Rotger, we see that it has powerful applications to the arithmetic of Artin twists of elliptic curves: one can consider the special case where $f$ is a weight $2$ form corresponding to an elliptic curve $E$, and $g$ a weight $1$ form corresponding to a two-dimensional odd Artin representation $\rho$ with splitting field $F$. Then
  \[
   H^{1}_\mathrm{f}(\QQ, M_{\Lp}(f \otimes g)^*)\text{ is finite }
   \Rightarrow
   E(F)^{\rho}\mbox{ and } \Sha_{p^{\infty}}(E/F)^{\rho} \mbox{ are finite,}
  \]
  where $E(F)^{\rho}$ and $\Sha_{p^{\infty}}(E/F)^{\rho}$ are the $\rho$-isotypic component.

  \subsubsection*{Constructing an Euler system}

   In order to obtain the desired bounds for Selmer groups, we construct an ``Euler system'' for the Galois representation $M_{\Lp}(f \otimes g)^*(-j)$: a collection of classes
   \[
    c_m \in H^1(\QQ(\mu_m), M_{\Lp}(f \otimes g)^*(-j))
   \]
   over cyclotomic fields, satisfying norm-compatibility relations as $m$ varies which mirror the Euler product of the Rankin--Selberg $L$-function. By the work of Kolyvagin and Rubin, it is known that if a nontrivial Euler system exists for some Galois representation $T$, it forces very strong bounds on the Selmer groups of $T$.

   We construct our Euler system using certain cohomology classes (\emph{\'etale Rankin--Eisenstein classes}) which were introduced and studied in \cite{KLZ1a}. In the simplest case (when $r = r' = 2$ and $j = 0$) these  classes arise from modular units, via the push-forward under the diagonal embedding $\Delta: Y_1(N)\to Y_1(N)\times Y_1(N)$; this weight 2 case was studied extensively in \cite{LLZ14}.

  \subsubsection*{P-adic deformation}

   The Rankin--Eisenstein classes used to build our Euler system are constructed by geometric techniques, and these geometric methods can only be used for a certain range of weights. Specifically, the Rankin--Eisenstein class for $(f, g, j)$ is only defined when $0 \le j \le \min(k, k')$. This has no overlap with the critical range, which is $\min(k, k') + 1 \le j \le \max(k, k')$. In order to access this wider range of $L$-values, we use deformation in $p$-adic analytic families, constructing Euler systems in the critical range as $p$-adic limits of Rankin--Eisenstein classes.

  \subsubsection*{Explicit reciprocity laws}

   In order to use an Euler system to bound Selmer groups, one needs to know that the Euler system concerned is not zero. Such non-vanishing results are typically obtained as a consequence of an \emph{explicit reciprocity law}, relating the cohomology classes in the Euler system to values of $L$-functions.

   In our case we prove an explicit reciprocity law relating our Euler system to the $3$-variable $p$-adic Rankin $L$-function introduced by Hida. In the critical range $\min(k, k') < j \le \max(k, k')$, this $p$-adic $L$-function coincides up to some explicit factors with $L(f, g, 1+j)$. Thus, when this $L$-value is non-zero, one can put the Euler system machinery into action to get the desired finiteness results for Selmer groups.\\

 In the next sections we describe in a little more detail how we shall carry out the programme sketched above.

  Note that parts of this programme have already been carried out by Bertolini, Darmon and Rotger \cite{BDR-BeilinsonFlach, BDR-BeilinsonFlach2}, and there is some overlap between our results and theirs. We shall describe later in this introduction some of the similarities, and some of the differences, between their approach and ours.

 \subsection{Rankin--Eisenstein classes}

  The Euler system we are going to use is built out of the \emph{Rankin--Eisenstein classes}, which were studied in our previous paper \cite{KLZ1a}.

  These classes can be defined in multiple cohomology theories. The basic classes lie in \emph{motivic cohomology} (a cohomology theory taking values in $\mathbf{Q}$-vector spaces, closely related to algebraic $K$-theory). Motivic cohomology has canonical maps to many other cohomology theories (such as \'etale cohomology), and one obtains Rankin--Eisenstein classes in these cohomology theories as the images of the motivic Rankin--Eisenstein classes.

  Motivic Rankin--Eisenstein classes were first introduced (although not under this name) by Beilinson in the weight $2$ case, and in general by Scholl (unpublished). They are classes in motivic cohomology
  \[
   \Eis^{[k, k', j]}_{\mot, 1, N} \in H^{3}_\mot\left(Y_1(N)^{2},\TSym^{[k,k']}(\sH_{\QQ})(2-j)\right),
  \]
  for integers $0\le j\le \min(k,k')$. Here $Y_1(N)^{2}$ is the product of $Y_1(N)$ with itself, $\sH_{\QQ}$ is the motivic sheaf on $Y_1(N)$ associated to the first homology of the universal elliptic curve, $\TSym^k$ denotes the symmetric tensors of degree $k$ (cf.\ \S \ref{sect:multlin} below), and $\TSym^{[k,k']}\sH_{\QQ}$ is the sheaf $\TSym^{k}(\sH_{\QQ}) \boxtimes \TSym^{k'}(\sH_{\QQ})$ on $Y_1(N)^2$.

  We point out that the condition on $j$, that $0 \le j \le \min(k,k')$, is precisely the range in which $L(f, g, s)$ is forced to vanish to order exactly 1 at $s = 1 + j$, due to the form of the archimedean Gamma factors\footnote{More precisely, the order of vanishing is exactly 1 in this range except in one exceptional case, when $k = k' = j$ and $f$ and $g$ are complex conjugates of each other; in this exceptional case the order of vanishing is 0.}.

  The construction of these classes is (perhaps surprisingly) fairly simple. Beilinson has defined a canonical class (the \emph{motivic Eisenstein class})
  \[
   \Eis_{\mot,1,N}^{k}\in H^{1}_\mot\left(Y_1(N),\TSym^{k}(\sH_{\QQ})(1)\right)
  \]
  for any integer $k \ge 0$. (For $k = 0$, the motivic cohomology group is simply $\cO(Y_1(N))^\times \otimes \QQ$, and $\Eis_{\mot,1,N}^{0}$ is the Siegel unit $g_{0, 1/N}$.) The motivic Rankin--Eisenstein class is then defined by pushing forward the class $\Eis_{\mot,1,N}^{k + k' - 2j}$ along the diagonal inclusion
  \[ \Delta: Y_1(N) \into Y_1(N) \times Y_1(N).\]

  The aim of Beilinson and Scholl was to compute the image of this class in Beilinson's absolute Hodge cohomology, a cohomology theory built up from real-analytic differential forms. They showed that, for any two eigenforms $f, g$ of levels dividing $N$, the cup-product of the Hodge Rankin--Eisenstein class with a differential form coming from $f$ and $g$ computes the first derivative $L'(f,g, j+1)$ of the Rankin--Selberg $L$-function, as predicted by Beilinson's conjecture. This result of Beilinson and Scholl relies crucially on computations by Beilinson, who had expressed the
  image of $\Eis_{\mot,1,N}^{k+k'-2j}$ in absolute Hodge cohomology by explicit real-analytic Eisenstein series.

  This beautiful and fundamental result was complemented in our earlier paper \cite{KLZ1a} by a corresponding computation for the image of the motivic Rankin--Eisenstein class in Besser's $p$-adic rigid syntomic cohomology (a cohomology theory built up from $p$-adic rigid-analytic differential forms), for a prime $p \nmid N$, extending the case $k = k' = j = 0$ treated in \cite{BDR-BeilinsonFlach}. There, in a completely parallel way, we find that pairing this syntomic Rankin--Eisenstein class with de Rham classes arising from $f$ and $g$ gives the special value $L_p(f,g,j+1)$ of Hida's $p$-adic  Rankin--Selberg $L$-function. In this syntomic computation, the role of Beilinson's formula for the Hodge Eisenstein class on $Y_1(N)$ is played by an explicit formula for the syntomic Eisenstein class in terms of $p$-adic Eisenstein series, due to Bannai and the first author \cite{BannaiKings}. The value $L_p(f, g, 1 + j)$ lies outside the range of interpolation of Hida's $p$-adic $L$-function, and thus is not straightforwardly related to any complex $L$-value; however, our computation shows that the non-critical complex $L$-value $L(f, g, 1 + j)$ and the non-critical $p$-adic $L$-value $L_p(f, g, 1 + j)$ are linked by the fact that they appear as the complex and $p$-adic regulators, respectively, of the same motivic cohomology class, confirming a conjecture of Perrin-Riou.

 \subsection{Statements of the main results}

  In the present paper we study the \'etale Rankin--Eisenstein class $\Eis^{[k, k', j]}_{\et, 1, N}$, defined as the image of the motivic Rankin--Eisenstein class $\Eis^{[k, k', j]}_{\mot, 1, N}$ in \'etale cohomology. For eigenforms $f, g$ of weights $k + 2, k' + 2$ and levels dividing $N$, we may project this \'etale Rankin--Eisenstein class into the $(f, g)$-isotypical component, giving a class
  \[
   \Eis_{\et}^{[f, g, j]}\in  H^1_\et\left(\ZZ[1/Np], M_{\Lp}(f\otimes g)^*(-j)\right),
  \]
  where $M_{\Lp}(f\otimes g)^*$ denotes the tensor product of Galois representations associated to $f$ and $g$ (with coefficients in $\Lp$, the completion of the coefficient field $L$ at a prime $\frP \mid p$).

  Our first aim is to interpolate $\Eis_{\et}^{[f, g, j]}$  in all three variables, i.e., to replace $f$ and $g$ by Hida families and the twist $j$ by the universal character $\j$ of the cyclotomic Iwasawa algebra $\Lambda_\Gamma$. In slightly rough terms, our first main result can be formulated as follows:

  \begin{lettertheorem}[{Theorem \ref{thm:BFeltsinterp2}}]
   \label{lthm:iwasawaelt}
   Let $\bff, \bfg$ be Hida families, and let $M(\bff)^{*}$, $M(\bfg)^{*}$ be the associated $\Lambda$-adic Galois representations. Then for each $m \ge 1$ coprime to $p$, and each $c > 1$ coprime to $6pmN$, there is a \emph{Beilinson--Flach class}
   \[ \cBF_{m}^{\bff, \bfg} \in H^1_{\et}\left(\ZZ[\mu_m, \tfrac{1}{mNp}], M(\bff)^* \otimes M(\bfg)^*\otimes \Lambda_\Gamma(-\j)\right), \]
   with the following properties: when $m = 1$, the specialisations of this class recover the Rankin--Eisenstein classes $\Eis_{\et}^{[f, g, j]}$ for all classical specialisations $f$ of $\bff$ and $g$ of $\bfg$, and all integers $j$ for which the Rankin--Eisenstein class is defined; and the Beilinson--Flach classes satisfy compatibility relations of Euler system type as $m$ varies.
  \end{lettertheorem}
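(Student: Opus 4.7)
The plan is to build $\cBF_m^{\bff,\bfg}$ by $p$-adically interpolating the étale Rankin--Eisenstein classes $\Eis_{\et,1,Nm}^{[k,k',j]}$ in all three of the variables $k$, $k'$, $j$ simultaneously. The construction decomposes into three stages: an Eisenstein-class interpolation on a single copy of the modular curve, a pushforward along the diagonal into the product, and two ordinary projections to cut out the Hida-family components of the two factors.

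For the Eisenstein stage I would use the theory of $\Lambda$-adic étale sheaves. The sheaves $\TSym^a \sH$ for $a\ge 0$ are interpolated by a single sheaf on $Y_1(Nmp^\infty)$ with coefficients in an Iwasawa algebra, and moment maps recover the individual $\TSym^a$-coefficients. Because the classes $c\Eis_{\et,1,Nmp^s}^a$ all arise uniformly from a motivic construction (with the factor $c$ ensuring integrality, which is precisely why $c$ must be coprime to $6pmN$), they assemble into a single ``Eisenstein--Iwasawa class'' on $Y_1(Nmp^\infty)$ whose moment maps recover the individual étale Eisenstein classes. This step rests on the constructions already developed in \cite{KLZ1a}.

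Pushing this class forward along the diagonal $\Delta\colon Y_1(Nmp^\infty)\into Y_1(Nmp^\infty)^2$ produces a class on the product with coefficients in the external tensor square of the $\Lambda$-adic sheaf. Applying Hida's ordinary projector $e_\ord$ in each factor, and projecting onto the $(\bff,\bfg)$-isotypical component, yields a class in $H^1$ valued in $M(\bff)^*\otimes M(\bfg)^*\otimes\Lambda'$ for a remaining Iwasawa variable $\Lambda'$ that records the discrepancy between the Eisenstein-class weight $k+k'-2j$ and the weights of $\bff,\bfg$. Via the standard dictionary between the Iwasawa algebra of the universal elliptic curve and the cyclotomic Iwasawa algebra (through moment maps and the Weil pairing on $\sH$), this extra variable is identified with $\Lambda_\Gamma(-\j)$. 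The specialisation property is then a bookkeeping check: at classical weights $k+2$, $k'+2$ and an integer $j$ with $0\le j\le\min(k,k')$, the moment maps recover the corresponding classical Eisenstein class of weight $k+k'-2j$, and composing with the diagonal pushforward and ordinary projection gives precisely $\Eis_\et^{[f,g,j]}$.

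For the Euler system norm-compatibility as $m$ varies, the basic input is the distribution-type norm relations for the Siegel units $g_{0,1/Nm}$, which propagate through the higher-weight Eisenstein classes and produce the correct Euler factors $(1-\sigma_\ell\otimes\cdots)$ at primes $\ell\nmid Nmp$. I expect the transport of these norm relations through the diagonal pushforward, the ordinary projection, and the identification of the third Iwasawa variable with $\Lambda_\Gamma(-\j)$ — while maintaining integrality and tracking the Hecke/Frobenius operators carefully at each prime $\ell$ — to be the main technical obstacle. Separating the ``good'' primes (where the Euler relation is clean) from the bad primes dividing $Nmp$, and verifying compatibility with the $c$-normalisation at every stage, is the part that requires the most care.
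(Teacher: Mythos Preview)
Your outline has the right large-scale shape (Eisenstein--Iwasawa class, diagonal pushforward, ordinary projection, Hida-family projection), but there is a genuine gap at the heart of the construction: the plain diagonal $\Delta: Y_1(Nmp^\infty) \hookrightarrow Y_1(Nmp^\infty)^2$ only produces classes defined over $\QQ$, not over $\QQ(\mu_m)$ or $\QQ(\mu_{mp^r})$. So neither the Euler-system variation in $m$ nor the cyclotomic variable $\Lambda_\Gamma(-\j)$ can arise this way. The paper flags this explicitly in the introduction and fixes it by working on the curves $Y(M,MN)$ (with $M = mp^r$) rather than $Y_1(MN)$, and replacing $\Delta$ by the \emph{twisted} embedding $u_a \circ \Delta$ for $a \in (\ZZ/M\ZZ)^\times$. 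One then pushes forward along the map $s_M: Y(M,MN) \to Y_1(N) \times \mu_M^\circ$ given by quotienting by $\langle e_1\rangle$ together with the Weil pairing $\langle e_1, Ne_2\rangle$. It is this $\mu_M^\circ$-factor that produces classes over $\ZZ[\mu_m]$ and, as $r$ varies, the $\Lambda_\Gamma(-\j)$-coefficients.

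Consequently, your account of the third variable is not right: $\Lambda_\Gamma(-\j)$ does not come from a ``discrepancy in weights'' or an internal Weil-pairing identification on the sheaf $\sH$, but from the genuine tower $\QQ(\mu_{p^r})$ via $s_{mp^r}$. The key technical step (Theorem~\ref{cyctwistdiag} and Theorem~\ref{thm:BFeltsinterp1}) is a commutative-diagram computation showing that, after applying $(u_a)_*$ and $(s_{mp^r})_*$, the Clebsch--Gordan twist $CG^{[j]}$ becomes cup-product with $(-a)^j j!\,\zeta_{p^r}^{\otimes(-j)}$; this is what links the internal $j$-twist to the cyclotomic twist and requires the ordinary projector to invert $(U_p',U_p')$. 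The norm relations likewise are organised as three separate identities (in the $N$-direction, in the $M$-direction via $\hpr_2$, and a mixed $\pr_1\times\pr_2$ relation), all proved at the level of the Rankin--Iwasawa classes on $Y(M,N)^2$; the Euler-system relation is then a formal consequence. Your proposal to derive everything from the Siegel-unit distribution relations on $Y_1(Nm)$ alone would not see the $\sigma_\ell$-action needed for the Euler factor, because that action only appears once the classes live over $\mu_M^\circ$.
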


  \begin{remark}
   The dependence on the auxilliary parameter $c$ is very minor: it appears in the factors relating the Rankin--Eisenstein class to the specialisations of the Beilinson--Flach classes. Unfortunately, it is not possible to remove this dependence entirely without introducing undesirable denominators. This reflects the fact that Rankin $L$-functions can have simple poles at $s = k + 1$ if the two forms are conjugates of each other.
  \end{remark}

  We note that the construction of the Beilinson--Flach classes $\cBF_{m}^{\bff, \bfg}$, and their Euler system compatibility relations, were essentially already obtained in \cite{LLZ14} using only weight 2 Rankin--Eisenstein classes. The novel part of the above result is to show that the specialisations of the classes $\cBF_{m}^{\bff, \bfg}$ in fact recover the Rankin--Eisenstein classes for all $(k, k', j)$. Very few results of this kind, relating cohomology classes arising from geometry in different weights, were previously known, and these have so far always been proved as a consequence of an explicit reciprocity law, relating geometric classes to values of $L$-functions (as in \cite{Castella-Heegner-cycles} for Heegner cycles, and \cite[Theorem 5.10]{DR-diagonal-cycles-II} for diagonal cycles on triple products of Kuga--Sato varieties\footnote{In fact, Henri Darmon has recently informed us that he and his coauthors have used the methods introduced in this paper to give a direct proof of this result, avoiding the use of explicit reciprocity laws.}). In contrast, we prove Theorem A by an intrinsic geometric method, and we shall in fact obtain a relation to $L$-values as a \emph{consequence} of this theorem. See \S \ref{sect:introthmA} below for an outline of the proof of Theorem A.

  Our second main theorem is to relate the classes $\cBF_{m}^{\bff, \bfg}$ to values of $L$-functions. This relation goes via a map arising in $p$-adic Hodge theory: a generalisation of Perrin-Riou's ``big logarithm'' map (due to the second and third author \cite{LZ}). The second main result of this paper is then the following explicit reciprocity law, again stated in a rather rough form:

  \begin{lettertheorem}[{Theorem \ref{thm:explicitrecip}}]
   \label{lthm:explicitrecip}
   The image of $\cBF^{\bff, \bfg}_1$ under Perrin-Riou's big logarithm is Hida's $p$-adic Rankin--Selberg $L$-function (up to an explicit non-zero factor depending on $c$).
  \end{lettertheorem}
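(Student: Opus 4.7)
The plan is a specialization-and-density argument that uses Theorem A to reduce the statement to a computation already carried out in \cite{KLZ1a}. Both sides of the claimed identity naturally live in a common three-variable $p$-adic analytic object: concretely, in the tensor product of the coordinate rings of the two Hida families with the cyclotomic Iwasawa algebra $\Lambda_\Gamma$, extended by an appropriate de Rham period module. Since this ambient module is torsion-free over the base, it suffices to verify the equality on a Zariski-dense subset of classical specializations.

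For the dense subset I would take the classical triples $(f, g, j)$ where $f, g$ are specializations of $\bff, \bfg$ of weights $k + 2, k' + 2$, and $j$ is an integer in the \emph{geometric} range $0 \le j \le \min(k, k')$. By Theorem A, the specialization of $\cBF_1^{\bff, \bfg}$ at such a point is precisely the \'etale Rankin--Eisenstein class $\Eis_{\et}^{[f, g, j]}$ up to the explicit $c$-factor. The defining interpolation property of Perrin-Riou's big logarithm then identifies its specialization at $(f, g, j)$ with the Bloch--Kato logarithm of this class, and via the \'etale/syntomic comparison isomorphism this agrees with the pairing of the \emph{syntomic} Rankin--Eisenstein class against the canonical de Rham class built out of $f$ and $g$. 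The main computation of \cite{KLZ1a} evaluates exactly this syntomic pairing in terms of the $p$-adic Rankin $L$-value $L_p(f, g, 1+j)$; after matching normalisations the local factors cancel, and one finds that the two sides of Theorem B specialize to equal values. Density of classical geometric specializations in the three-dimensional weight space then finishes the proof.

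The principal obstacle, as is typical for explicit reciprocity laws of this type, is the precise matching of normalisations on the two sides. Perrin-Riou's interpolation formula contributes $p$-adic periods, local Euler factors at $p$, and Gamma-factor-like terms coming from $p$-adic Hodge theory, and these must reconcile exactly with the analogous factors appearing in the syntomic regulator formula of \cite{KLZ1a} and in Hida's construction of $L_p$. A secondary technical point is setting up the correct ambient three-variable module in which to state the identity, and checking that the big logarithm -- which is \emph{a priori} defined one representation at a time -- extends to a morphism between modules of $p$-adic analytic families; this uses that $\bff$ and $\bfg$ are ordinary, so their Galois representations sit in a $\Lambda$-adic ordinary filtration to which the big logarithm of \cite{LZ} applies. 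Once this framework is in place, the density step is essentially formal: each Hida family has a Zariski-dense set of classical weight specializations of arbitrarily large weight, so that the cylinder $0 \le j \le \min(k, k')$ sweeps out a Zariski-dense subset as $k, k'$ grow.
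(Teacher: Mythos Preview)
Your overall strategy is exactly that of the paper: both sides live in a torsion-free module over the three-variable Iwasawa algebra, so it suffices to check equality on the Zariski-dense set of classical triples $(f,g,j)$ with $0\le j\le\min(k,k')$; at such points Theorem~A identifies the specialisation of $\cBF^{\bff,\bfg}_1$ with the \'etale Rankin--Eisenstein class, the interpolation property of the big logarithm reduces to the Bloch--Kato logarithm, and the syntomic regulator formula of \cite{KLZ1a} (Theorem~\ref{thm:syntomicreg} here) identifies the result with $L_p(f,g,1+j)$.

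However, you have underestimated one step that the paper treats as the main technical obstacle, not merely ``matching of normalisations''. The image of $\cL(\cBF^{\bff,\bfg}_1)$ lands in the $\Lambda$-adic Dieudonn\'e module $\DD(\sF^{-+}M(\bff\otimes\bfg)^*)\htimes\Lambda_\Gamma$, and to compare with the $p$-adic $L$-function one must pair with a $\Lambda$-adic linear functional $\eta_{\bfa}\otimes\omega_{\bfg}$. The question is whether this functional specialises, at each classical point, to the de Rham class $\eta_f^\alpha\otimes\omega_g$ used in \cite{KLZ1a}. Ohta's $\Lambda$-adic Eichler--Shimura isomorphism provides such a functional and its compatibility with $\comp_{\dR}$ in weight~$2$; but the points you need have \emph{arbitrary} weight $k,k'\ge 0$, and for these the comparison isomorphism $\comp_{\dR}$ is defined via Kuga--Sato varieties of varying dimension. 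Showing that Ohta's map is compatible with $\comp_{\dR}$ in all weights is the content of \S\ref{sect:ohtacompat} (Theorem~\ref{thm:ohtacompat}), which the paper proves by an indirect argument using Kato's $\GL_2$ Euler system and his explicit reciprocity law. Without this input your density argument does not close: you would know that each side specialises to \emph{some} value, but not that the two specialisations are being compared via the same de Rham pairing. This is precisely the ``substantial part of this paper'' flagged in \S\ref{sect:introthmB}, and it is a genuine theorem rather than a bookkeeping check.
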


  This theorem generalises a result \cite{BDR-BeilinsonFlach2} of Bertolini, Darmon and Rotger, which is concerned with the special case $j=0$ and $f$ a fixed form of weight $2$. We shall give an outline of the proof of Theorem \ref{lthm:explicitrecip} in \S \ref{sect:introthmB} below.

  With these two theorems in hand, we can put the machinery of Euler systems to work. It is clear from Theorem \ref{lthm:explicitrecip} that the non-vanishing of a specialization of $\cBF^{\bff, \bfg}_1$ is completely controlled by the non-vanishing of the corresponding specialization of the $p$-adic $L$-function, and in the critical range this is simply the algebraic part of the classical $L$-value.

  Our first application is based on a wonderful idea of Bertolini, Darmon and Rotger, which is to specialise the Hida family $\bfg$ at a weight 1 modular form, corresponding to a 2-dimensional Artin representation.

  \begin{lettercorollary}[Theorem \ref{thm:sha}]
   Let $E / \QQ$ be an elliptic curve without complex multiplication, and $\rho$ a 2-dimensional odd irreducible Artin representation of $G_{\QQ}$ with splitting field $F$. Let  $p$ be prime at which $E$ is ordinary and which satisfies some further technical conditions. Then
   \[
    L(E, \rho, 1) \ne 0\Rightarrow E(F)^{\rho}\mbox{ and } \Sha_{p^{\infty}}(E/F)^{\rho}
    \mbox{ are finite.}
   \]
  \end{lettercorollary}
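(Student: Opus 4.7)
The plan is to combine Theorems A and B with the Kolyvagin--Rubin Euler system machine. By modularity, attach to $E$ the weight-$2$ newform $f$, and by Deligne--Serre attach to $\rho$ the weight-$1$ newform $g$, so $L(E, \rho, s) = L(f, g, s)$. Since $E$ is ordinary at $p$, a $p$-ordinary stabilisation of $f$ sits in a Hida family $\bff$; the analogous statement for $g$ at weight one is more subtle, and after imposing a suitable regularity condition on the $U_p$-eigenvalues (in the spirit of Bella\"iche--Dimitrov), a $p$-stabilisation of $g$ sits in a Hida family $\bfg$ passing through the weight-$1$ point. In the notation of the paper, $(k, k') = (0, -1)$, so the geometric range $0 \le j \le \min(k, k')$ is empty while the critical range reduces to the single value $j = 0$. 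This is exactly the setting in which the $p$-adic deformation of Theorem A is indispensable, since the class we need cannot be produced by geometry.

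Next, apply Theorem A to obtain the Beilinson--Flach classes $\cBF^{\bff, \bfg}_m$, and specialise at $\bff \to f$, $\bfg \to g$, $\j \to 0$ to extract a compatible system $c_m \in H^1(\QQ(\mu_m), M_{\Lp}(f \otimes g)^*)$ indexed by squarefree integers $m$ coprime to $pN$. The norm relations of Euler system type are already encoded in Theorem A. Then apply Theorem B to identify the image of $c_1$ under Perrin-Riou's big logarithm with a specialisation of Hida's three-variable $p$-adic Rankin--Selberg $L$-function. Since the point $(f, g, 0)$ lies in the critical range, this specialisation differs from $L(f, g, 1) = L(E, \rho, 1)$ only by an explicit, non-zero Euler/period factor depending on $c$. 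The hypothesis therefore forces $c_1 \ne 0$, making the Euler system non-trivial.

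Feeding the non-trivial Euler system into the standard Kolyvagin--Rubin machine yields finiteness of the Bloch--Kato Selmer group $H^1_{\mathrm{f}}(\QQ, M_{\Lp}(f \otimes g)^*)$. A Shapiro's-lemma computation, combined with the decomposition of $\Ind_{G_F}^{G_\QQ}(V_\frP E)$ under the action of $G_\QQ$, identifies the $\rho$-isotypic part of $H^1_{\mathrm{f}}(F, V_\frP E)$ with a direct summand of this Selmer group (after an appropriate twist relating $M_{\Lp}(f)^*$ to $V_\frP E$ and $M_{\Lp}(g)^*$ to $\rho^\vee$). Finiteness of the latter forces finiteness of $(E(F) \otimes_\ZZ \Zp)^\rho$ and of $\Sha_{p^\infty}(E/F)^\rho$; since $E(F)^\rho$ is a finitely generated abelian group whose rank is detected by its $p$-adic completion, the corollary follows.

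The main obstacle is verifying the Galois-theoretic hypotheses of the Euler system machine for $T = M_{\Lp}(f \otimes g)^*$: one needs a large-image condition on $G_\QQ$ acting on $T$ (to manufacture Kolyvagin derivatives), together with irreducibility and non-degeneracy of the residual representation, and one must check that the specialised Beilinson--Flach classes satisfy the local conditions defining the Bloch--Kato Selmer group at primes dividing $pN$. For $E$ without CM, Serre's open image theorem provides largeness on $V_\frP E$ for $p$ outside a finite set, and irreducibility of $\rho$ provides largeness on $M_{\Lp}(g)$; combining the two to produce a Galois element acting on $T$ by a scalar of the correct shape is representation-theoretically delicate and accounts for most of the unspecified technical conditions on $p$. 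Once these axioms are in place, everything else reduces to the inputs supplied by Theorems A and B and standard Iwasawa-theoretic descent.
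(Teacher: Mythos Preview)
Your outline is essentially the paper's strategy: modularity to get $f$ and $g$, Hida-family interpolation, Theorems~A and~B to produce a non-trivial Euler system linked to $L(E,\rho,1)$, then the Euler-system machine and a Shapiro/duality argument to reach the $\rho$-isotypic Selmer group over $F$. The big-image discussion and the identification of $(V_pE\otimes\rho^\vee)$ with (a twist of) $M_{\Lp}(f\otimes g)^*$ are also on target.

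There is one genuine technical wrinkle you have mis-stated. You write that one must ``check that the specialised Beilinson--Flach classes satisfy the local conditions defining the Bloch--Kato Selmer group at~$p$''; in fact they do \emph{not}, and this is not a routine verification but the main local obstacle. What Proposition~\ref{prop:BFeltisSelmer} gives is only that the image of $\cBF^{\bff,\bfg}$ in $H^1(\Qp,\sF^{--}T)$ vanishes, i.e.\ the classes lie in the image of $H^1(\Qp,T^{(\varnothing)})$ where $T^{(\varnothing)}=\sF^{+\circ}T+\sF^{\circ+}T$ is the rank-$3$ Greenberg submodule. The Bloch--Kato condition at this weight corresponds (up to finite index) to the smaller rank-$2$ submodule $T^{(f)}=\sF^{+\circ}T$, and the Beilinson--Flach class has non-trivial image in the rank-$1$ quotient $T^{(\varnothing)}/T^{(f)}=\sF^{-+}T$ --- indeed, that image is exactly what the explicit reciprocity law computes. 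So the ``standard'' Kolyvagin--Rubin machine does not apply. The paper instead develops in its appendix a variant taking as input an Euler system satisfying a Greenberg condition $T^+\subseteq T$, and outputting a bound on $\Ht^2$ with the Selmer structure $\Delta^{(\varnothing)}$ (Theorem~\ref{thm:cycloIMC}). One then uses the reciprocity law a \emph{second} time, via the Coleman map $\Col^{(f)}$, to pass from $\Delta^{(\varnothing)}$ to $\Delta^{(f)}$ (Theorem~\ref{thm:SelFbound}), and a final local comparison to reach $\Delta^{\mathrm{BK}}$. With this correction your sketch matches the paper.

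Two minor attributions: the implication ``odd irreducible $2$-dimensional Artin $\Rightarrow$ weight-$1$ newform'' is Khare--Wintenberger (Serre's conjecture), not Deligne--Serre, which goes the other way; and the existence of a Hida family through a classical weight-$1$ point is due to Wiles rather than Bella\"iche--Dimitrov.
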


  The implication ``$L(E, \rho, 1) \ne 0\Rightarrow E(F)^{\rho}$ is finite'' was obtained already by Bertolini, Darmon and Rotger in \cite{BDR-BeilinsonFlach2}; the method of Euler systems allows us to extend this to obtain finiteness of the $p$-part of $\Sha$.

  As a second application, we use the Euler system machinery to study the Iwasawa theory of our Galois representation $M_{\Lp}(f \otimes g)^*$ over the $p$-adic cyclotomic tower. The results can be summarized as follows:

  \begin{lettercorollary}[Theorem \ref{thm:SelFbound} and \ref{thm:SelFbound-finite}] Under some technical hypotheses, we obtain one divisibility in the Iwasawa--Greenberg main conjecture for the Galois representation $M_{\Lp}(f \otimes g)^*$ over $\QQ(\mu_{p^\infty})$: the characteristic ideal of a suitable dual Selmer group divides the $p$-adic $L$-function.
  \end{lettercorollary}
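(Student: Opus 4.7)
The plan is to feed the Beilinson--Flach classes from Theorem~\ref{lthm:iwasawaelt} into the Kolyvagin--Rubin Euler system machinery, and then use the explicit reciprocity law of Theorem~\ref{lthm:explicitrecip} to convert the resulting abstract bound into a statement about Hida's $p$-adic Rankin $L$-function.

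First, I would fix classical specializations $f$ of $\bff$ and $g$ of $\bfg$, choose a Galois-stable $\cO$-lattice $T \subset M_{\Lp}(f \otimes g)^{*}$, and specialize the three-variable classes $\cBF_{m}^{\bff,\bfg}$ in the $(\bff,\bfg)$-direction. This yields, for every squarefree $m$ coprime to $Np$, a cohomology class $c_m \in H^{1}(\QQ(\mu_{m}), T \htimes \Lambda_\Gamma(-\j))$; the Euler system norm compatibility provided by Theorem~\ref{lthm:iwasawaelt} persists under this specialization, so that $\{c_m\}$ is a bona fide $\Lambda_\Gamma$-adic Euler system for $T$ along the cyclotomic tower. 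One must also verify the standard ``big image'' hypotheses on $T$ (existence of $\tau \in G_{\QQ}$ with $T/(\tau-1)T$ free of rank one, residual irreducibility, and non-degeneracy), which in the Rankin--Selberg setting reduce to conditions on the images of $\rho_f$ and $\rho_g$ individually and on the absence of certain congruences between them; these are precisely the technical hypotheses appearing in the statement.

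Next, I would invoke Rubin's theorem in its $\Lambda$-adic cyclotomic form (as developed by Rubin, Perrin-Riou, and Mazur--Rubin) to deduce a divisibility
\[
\mathrm{char}_{\Lambda_\Gamma}\bigl(X_\infty\bigr) \ \Big|\ \mathrm{char}_{\Lambda_\Gamma}\Bigl(H^{1}_{\mathrm{Iw}}(\Qp, T) \,\big/\, \Lambda_\Gamma \cdot \loc_p(c_1)\Bigr),
\]
where $X_\infty$ is the Pontryagin dual of the Greenberg Selmer group of $T$ over $\QQ(\mu_{p^\infty})$. Applying Perrin-Riou's big logarithm to $\loc_p(c_1)$ and invoking Theorem~\ref{lthm:explicitrecip} identifies the right-hand side with Hida's $p$-adic Rankin $L$-function (up to the explicit factor depending on the auxiliary $c$), giving the main-conjecture divisibility of Theorem~\ref{thm:SelFbound}. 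For Theorem~\ref{thm:SelFbound-finite}, I would then specialize at a critical integer $\j = j$ with $\min(k,k')+1 \le j \le \max(k,k')$; the interpolation property of the $p$-adic $L$-function translates the hypothesis $L(f,g,1+j) \ne 0$ into non-vanishing of the corresponding specialization, and the divisibility forces the specialized dual Selmer group to be a torsion module of trivial support at $\j = j$, hence finite.

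The main obstacle is twofold. First, one must reconcile the local conditions produced naturally by the Kolyvagin--Rubin argument—typically ``strict'' or ``relaxed'' conditions at $p$—with the Greenberg local condition appearing in the main conjecture; this requires a careful Poitou--Tate bookkeeping argument controlling the comparison via local Tate duality, and is where the precise dependence on the ordinary filtration enters. Second, verifying the big-image hypotheses for the tensor product representation $T$ is genuinely delicate and accounts for most of the technical assumptions: one must exclude CM forms, congruences of $f$ with twists of $g$ or its complex conjugate, and related degenerate configurations, in order that Rubin's Chebotarev-based arguments go through cleanly.
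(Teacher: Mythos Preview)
Your outline has the right high-level architecture, but there is a genuine gap at the Euler-system step that would cause the argument to fail as written.

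The displayed divisibility
\[
\Char_{\Lambda_\Gamma}\bigl(X_\infty\bigr) \ \Big|\ \Char_{\Lambda_\Gamma}\Bigl(H^{1}_{\mathrm{Iw}}(\Qp, T) \,\big/\, \Lambda_\Gamma \cdot \loc_p(c_1)\Bigr)
\]
does not make sense: the local Iwasawa $H^1$ has $\Lambda_\Gamma$-rank equal to $\dim T = 4$, so the quotient by a single element is never torsion. More fundamentally, the standard Rubin machinery with the relaxed condition at $p$ has \emph{core rank} $\Rank_{\cO} T^{c=-1} = 2$ here, so a rank-one Euler system cannot produce any bound at all. This is not a bookkeeping issue to be fixed afterwards by Poitou--Tate; the Kolyvagin descent simply does not run.

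What the paper does instead is prove first (Proposition~\ref{prop:BFeltisSelmer} and Theorem~\ref{thm:modifyBFelts}) that the Beilinson--Flach classes already satisfy a Greenberg-type local condition at $p$: their image in $H^1(\Qp, \sF^{--}T \otimes \Lambda_\Gamma(-\j))$ vanishes, so $\loc_p(c_m)$ lands in the cohomology of the rank-3 submodule $T^{(\varnothing)} = \sF^{+\circ}T + \sF^{\circ+}T$. With this local condition built into the Selmer structure from the start, the core rank drops to $2 - \Rank(T/T^{(\varnothing)}) = 1$, and the appendix extends the Mazur--Rubin theory to Kolyvagin systems carrying such a condition. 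Only then does one obtain Theorem~\ref{thm:cycloIMC}, a divisibility for the Selmer complex with condition $\Delta^{(\varnothing)}$. The passage to the finer condition $\Delta^{(f)}$ and to the $p$-adic $L$-function is a second step, via the exact triangle comparing $\Delta^{(f)}$ and $\Delta^{(\varnothing)}$ together with the Coleman map and Theorem~\ref{thm:explicitrecip}. Your ``main obstacle'' paragraph touches on the local-condition issue but inverts its role: it is not a post-hoc comparison, it is the input that makes the Euler-system bound exist in the first place.
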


  Further and much more detailed results can be found in section \ref{sect:selmerbound}.

 \subsection{Outline of the proof of Theorem \ref{lthm:iwasawaelt}}
  \label{sect:introthmA}

   In this introduction, we suppose (for simplicity) that $N_f = N_g = N$. Recall that the motivic Rankin--Eisenstein classes, which live in the $(f, g)$-isotypical part of the motivic cohomology of the product $Y_1(N) \times Y_1(N)$, are defined using the pushforward of Beilinson's motivic Eisenstein class on a single modular curve $Y_1(N)$, along the diagonal inclusion
   \[ \Delta: Y_1(N) \into Y_1(N) \times Y_1(N).\]
   Hence, in order to $p$-adically interpolate the \'etale versions of the Rankin--Eisenstein classes, we shall begin by solving the simpler problem of interpolating the classes on $Y_1(N)$ given by the \'etale realisation of Beilinson's Eisenstein class. We denote these classes by
  \[ \Eis^k_{\et, N} \in H^1_{\et}\left( Y_1(N)_{\ZZ[1/Np]}, \TSym^{k}(\sH_\Zp)(1)\right).\]
  Here $\sH_{\Zp}$ is the \'etale $\Zp$-sheaf on $Y_1(N)$ given by the Tate module of the universal elliptic curve $\cE / Y_1(N)$, and $\TSym^{k}\sH_\Zp$ is the sheaf of symmetric tensors of degree $k$ over $\sH_\Zp$, which is isomorphic after inverting $k!$ to the $k$-th symmetric power (we recall the definition in \S \ref{sect:multlin} below).

  The interpolation of these classes is carried out using the formalism of Lambda-adic sheaves introduced in \cite{Kings-Eisenstein}. We consider the sheaf of Iwasawa modules $\Lambda(\sH_\Zp\langle t_N \rangle)$ associated to $\sH_{\Zp}$ and its canonical order $N$ section $t_N$, which is equipped with moment maps
  \[
   \mom^{k}: \Lambda(\sH_\Zp\langle t_N \rangle) \to \TSym^{k}\sH_\Zp
  \]
  for all integers $k \ge 0$. One of the main results of \cite{Kings-Eisenstein} is that there is a class, the \emph{Eisenstein--Iwasawa class},
  \[ \cEI_{N} \in  H^1_{\et}\Big(Y_1(N)_{\ZZ[1/Np]}, \Lambda(\sH_{\Zp}\langle t_N \rangle)(1)\Big), \]
  for any integer $c > 1$ coprime to $6pN$, such that
  \[
   \mom^{k}(\cEI_{N}) = \left(c^2 - c^{-k} \langle c \rangle\right) \Eis^k_{\et, N}
  \]
  for all $k \ge 0$. We remark that this interpolation property depends on a very careful study of the \'etale realisation of the elliptic polylogarithm. Section \ref{sect:EisIwasawa} of this paper is devoted to recalling the construction of these classes $\cEI_{N}$ from \cite{Kings-Eisenstein}, and proving two (relatively straightforward) norm-compatibility relations describing the pushforward of $\cEI_{N}$ along degeneracy maps between modular curves of different levels.

  We then use this class $\cEI_{N}$ on $Y_1(N)$ in order to construct the Beilin\-son--Flach class on $Y_1(N) \times Y_1(N)$, as follows. A first approximation would be to use the comultiplication $\Lambda(\sH_\Zp \langle t_N \rangle)\to \Lambda(\sH_\Zp  \langle t_N \rangle)\otimes \Lambda(\sH_\Zp \langle t_N \rangle)$ and pushforward along the diagonal embedding $Y_1(N) \into Y_1(N)^2$, mimicking the construction of the Rankin--Eisenstein classes; this gives a map
  \[ H^1_{\et}\Big(Y_1(N)_{\ZZ[1/Np]},\Lambda(\sH_{\Zp} \langle t_N \rangle)(1)\Big) \to H^3_{\et}\Big(Y_1(N)^2_{\ZZ[1/Np]}, \Lambda(\sH_\Zp \langle t_N \rangle)\boxtimes \Lambda(\sH_\Zp \langle t_N \rangle)(2)\Big),\]
  and applying this map to $\cEI_{N}$ gives a class which interpolates the \'etale Rankin--Eisenstein classes $\Eis^{[f, g, j]}_{\et}$ for $j = 0$ and all $f, g$ of level $N$ and weights $\ge 2$.

  However, this will always give classes defined over $\QQ$ (or indeed over $\ZZ[1/Np]$), so in order to obtain classes defined over cyclotomic fields, and thus to interpolate analytically in the $j$ variable, this is not sufficient. Hence we make a slight but crucial modification of this construction, following an idea introduced in \cite{LLZ14}: we work on a higher level modular curve $Y(M, N)$, where $M \mid N$, and compose the diagonal embedding with a suitable Hecke correspondence. This defines what we call
  the \emph{Rankin--Iwasawa class} (see Definition \ref{def:RIclass})
  \[
   \cRI_{M, N, a}^{[0]}\in H^{3}_\et\left(Y(M, N)^{2}_{\ZZ[1/MNp]},\Lambda(\sH_\Zp \langle t_N \rangle)^{\boxtimes 2}(2)\right),
  \]
  for each $a \in (\ZZ / M \ZZ)^\times$. More generally, for any integer $j \ge 0$ a variation of this construction gives a class
  \[
   \cRI_{M, N, a}^{[j]}\in H^{3}_\et\left(Y(M, N)^{2}_{\ZZ[1/MNp]}, \Lambda(\sH_{\Zp}\langle t_N \rangle)^{[j, j]} (2-j)\right),
  \]
  where $\Lambda(\sH_{\Zp}\langle t_N \rangle)^{[j, j]} = \left( \Lambda(\sH_\Zp\langle t_N \rangle) \otimes \TSym^j \sH_{\Zp}\right){}^{\boxtimes 2}$. The Rankin--Iwa\-sawa class $\cRI_{M, N, a}^{[j]}$ can be used to recover the Rankin--Eisenstein classes $\Eis^{[f, g, j]}_{\et}$ for all $f, g$ of level $N$ and weights $\ge j + 2$. Moreover, there is a natural map
  \[ Y(m, mN)^2 \to Y_1(N)^2 \times \Spec \ZZ\left[\mu_m, \tfrac{1}{mN}\right], \]
  and after pushing forward along this map, and projecting to the ordinary part, the Rankin--Iwasawa classes for different $j$ become compatible under cyclotomic twists (Theorem \ref{thm:BFeltsinterp1}). This defines the Beilinson--Flach class
  \[
   \cBF_{m,N,a} \in (e_{\ord}', e_{\ord}') \cdot H^{3}_\et\left(Y_1(N)^{2}_{\ZZ\left[\mu_m, 1/mNp\right]},
   \Lambda(\sH_\Zp \langle t_N \rangle)^{\boxtimes 2} \otimes \Lambda_\Gamma(-\j)\right).
  \]
  (The projection to the ordinary part is required in order to obtain analytic variation in the cyclotomic variable $j$, an observation which also goes back to \cite{LLZ14}.)

  Finally, to project to the Hida families one proceeds as follows. We use results of Ohta to show that the $\Lambda$-adic representations $M(\bff)^*$, $M(\bfg)^*$ can be realised as quotients of the \'etale cohomology groups
  \[ e_{\ord}'\cdot H^{1}_\et\left(Y_1(N)_{\overline{\QQ}}, \Lambda(\sH_\Zp \langle t_N \rangle)(1)\right)
  \]
  for any $N$ divisible by the $p$ and by the tame levels of the two families. Then one uses the Hochschild--Serre spectral sequence and the K\"unneth formula to get a map
  \begin{multline*}
   H^{3}_\et\left(Y_1(N)^{2}_{\ZZ\left[\mu_m, 1/mNp\right]}, \Lambda(\sH_\Zp \langle t_N \rangle)^{\boxtimes 2}\otimes \Lambda_\Gamma(-\j)\right) \\ \to
   H^{1}_\et\left(\ZZ\left[\mu_m, \tfrac{1}{mNp}\right], H^{1}_\et(Y_1(N)_{\overline{\QQ}}, \Lambda(\sH_\Zp \langle t_N \rangle))^{\otimes 2}\otimes \Lambda_\Gamma(-\j)\right).
  \end{multline*}
  After projection to the ordinary part one obtains the Beilinson--Flach classes for $\bff$ and $\bfg$,
   \[ \cBF_{m}^{\bff, \bfg} \in H^1_{\et}\left(\ZZ\left[\mu_m, 1/mNp\right], M(\bff)^* \otimes M(\bfg)^*\otimes \Lambda_\Gamma(-\j)\right). \]
  It is essentially clear from the construction that these classes interpolate the Rankin--Eisenstein classes, which proves Theorem \ref{lthm:iwasawaelt}.

 \subsection{Outline of the proof of Theorem \ref{lthm:explicitrecip}}
  \label{sect:introthmB}

  The essential strategy of the proof of Theorem \ref{lthm:explicitrecip} is to ``analytically continue'' the relation to $p$-adic $L$-values given by the syntomic regulator computations of \cite{KLZ1a} along the 3-parameter family constructed in Theorem A.

  Let us fix two Hida families $\bff$, $\bfg$. For simplicity, we assume in this introduction that $\bff$ and $\bfg$ are non-Eisenstein modulo $p$, that the Hecke algebras associated to $\bff$ and $\bfg$ are unramified over $\Lambda = \Zp[[T]]$, and that the prime-to-$p$ part of the Nebentypus of $\bfg$ is trivial; for the full statements, see the main body of the paper.

  For every pair of classical specialisations $f, g$ of $\bff, \bfg$ respectively, with $f, g$ newforms of levels coprime to $p$ and weights $k + 2, k' + 2 \ge 2$, and each $j$ such that $0 \le j \le \min(k, k')$, we have the Rankin--Eisenstein class $\Eis^{[f, g, j]}_{\et} \in H^1(\QQ, M_{\Lp}(f \otimes g)^*(-j))$. The localisation of this class at $p$ lies in the Bloch--Kato ``finite'' subspace
  \[ H^1_{\mathrm{f}}(\Qp, M_{\Lp}(f \otimes g)^*(-j)) \subseteq H^1(\Qp, M_{\Lp}(f \otimes g)^*(-j)), \]
  and the Bloch--Kato logarithm map of $p$-adic Hodge theory identifies this $H^1_{\mathrm{f}}$ with the dual of a certain subspace of the de Rham cohomology of $Y_1(N)^2$. The eigenforms $f, g$ determine a vector $\eta_f \otimes \omega_g$ in this de Rham cohomology space, and the main result of \cite{KLZ1a} is a formula of the form
  \[
   \left\langle \log\left(\Eis^{[f, g, j]}_{\et}\right), \eta_f \otimes \omega_g \right\rangle = (\star) \cdot L_p(f, g, 1 + j),
  \]
  where $(\star)$ is an explicit ratio of Euler factors.

  As $f, g$ vary in the families $\bff, \bfg$, and $j$ varies over the integers, we have $p$-adic interpolations of all the objects appearing in the above formula. The interpolation of $\Eis^{[f, g, j]}_{\et}$ is provided by the Beilinson--Flach class $\cBF^{\bff, \bfg}_1$. The Bloch--Kato logarithm maps $\log(\dots)$ can be interpolated using Perrin-Riou's ``big logarithm'' map $\cL$ (using an extension of Perrion-Riou's construction due to the second and third authors). Hida's $p$-adic Rankin--Selberg $L$-function $L_p(f, g, 1 + j)$ extends to a $p$-adic analytic function of all three variables by construction.

  The most difficult terms to deal with are the de Rham cohomology classes $\eta_f$ and $\omega_g$, since their definition involves the $p$-adic Eichler--Shimura isomorphism relating de Rham and \'etale cohomology. To interpolate these as $f$ and $g$ vary in the families $\bff, \bfg$, we use the $\Lambda$-adic Eichler--Shimura isomorphism of Ohta to construct interpolating classes $\eta_{\bff}$ and $\omega_{\bfg}$. Unfortunately, the interpolating property of Ohta's construction is not quite strong enough for our purposes, so a substantial part of this paper (\S \ref{sect:ohtacompat}) is devoted to proving an additional interpolating property of $\eta_{\bff}$ and $\omega_{\bfg}$, by a somewhat indirect method involving Kato's explicit reciprocity law and the variation of Kato's Euler system in Hida families. (We also have a second, more direct proof of this compatibility using Faltings' Hodge--Tate decomposition for modular forms, which we plan to treat in a subsequent paper.)

  Once all these preparations are in place, the proof of the explicit reciprocity law is virtually trivial. We know that both the $p$-adic $L$-function, and the value of the pairing
  \[ \left\langle \cL\left({}_c \BF^{\bff, \bfg}_1\right), \eta_\bff \otimes \omega_\bfg\right\rangle, \]
  are $p$-adic analytic functions of the three variables $(k, k', j)$; and the main result of \cite{KLZ1a} shows that these two analytic functions agree at all triples of integers $(k, k', j)$ satisfying the inequality $0 \le j \le \min(k, k')$. Since this set of triples is Zariski-dense, the two functions must agree everywhere, which is Theorem \ref{lthm:explicitrecip}.

  \subsubsection*{Comparison with the approach of \cite{BDR-BeilinsonFlach2}} As mentioned above, a ``1-variable'' analogue of this explicit reciprocity law (with $f$ fixed and $j = 0$, and only $g$ varying in a family $\bfg$) has been proved by Bertolini et al.\ in \cite{BDR-BeilinsonFlach2}. Their approach also uses syntomic cohomology to obtain the result for many specialisations of the family, and analytic continuation to obtain the result everywhere; but there is a significant difference between their proof and ours, in that they analytically continue from specialisations of weight 2 and high $p$-power level, rather than high weight and prime-to-$p$ level as in our approach. Thus their strategy requires a delicate study of the special fibres of the modular curves $X_1(Np^r)$ in characteristic $p$, which is not needed in our approach; and our method is also amenable to generalisations to non-ordinary Coleman families, as we shall show in a subsequent paper \cite{loefflerzerbes16}.

 \subsection*{Acknowledgements}

  This paper grew out of a collaboration begun at the workshop ``Applications of Iwasawa Algebras'' at the Banff International Research Station, Canada, in March 2013. We are very grateful to BIRS for their hospitality, and to the organisers of the workshop for the invitation. Much of the final draft was prepared while the second and third authors were visiting the Mathematical Sciences Research Institute in Berkeley, California, for the programme ``New Geometric Methods in Automorphic Forms'', and it is again a pleasure to thank MSRI for their support and the organisers of the programme for inviting us to participate.

  During the preparation of this paper, we benefitted from conversations with a number of people, notably Fabrizio Andreatta, Massimo Bertolini, Hansheng Diao, Henri Darmon, Samit Dasgupta, Adrian Iovita, Victor Rotger, Karl Rubin and Chris Skinner. We would particularly like to thank Adrian Iovita for making us aware of the work of Delbourgo \cite{delbourgo08} which inspired the proof of Theorem \ref{thm:ohtacompat}. The authors are also very grateful to the referee for his or her extremely careful reading of the text and many helpful suggestions, which resulted in an overall improvement of the paper.


\section{Setup and notation}

 \subsection{Cohomology theories}
  \label{sect:cohomology}

  \subsubsection{\'Etale cohomology}

   In this paper we work with continuous \'etale cohomology as defined by Jannsen \cite{jannsen88}. More specifically, for a pro-system $\sF\coloneqq(\sF_r)_{r\ge 1}$ of \'etale sheaves on a scheme $S$,   indexed by integers $r\ge 1$, we let $H^i_\et(S,\sF)$ be the $i$-th derived functor of $\sF \mapsto\varprojlim_r H^0_{\et}(S,\sF_r)$. We note for later use that if $H^{i-1}_{\et}(S,\sF_r)$ is finite for all $r$, then by \cite[Lemma 1.15, Equation (3.1)]{jannsen88} one has
   \begin{equation}
    \label{torsors-as-limits}
    H^i_{\et}(S,\sF)\cong \varprojlim_r H^i_{\et}(S,\sF_r).
   \end{equation}

   This, in particular, includes the case of pro-systems $(\sF_r)_{r\ge 1}$ where each $\sF_r$ is constructible, and $S$ is of finite type over one of the following rings:
   \begin{itemize}
    \item an algebraically closed field of characteristic 0;
    \item a local field of characteristic 0;
    \item a ring of $S$-integers $\cO_{K, S}$, where $K$ is a number field and $S$ is a finite set of places of $K$, including all places that divide the order of $\sF_r$ for any $r$.
   \end{itemize}
   This covers all the cases we shall use in this paper. (In practice, all our $(\sF_r)$ will be inverse systems of finite $p$-torsion sheaves for a prime $p$, so in the third case we need only assume that $S$ contains all places dividing $p$.)


  \subsubsection{Pushforward maps}
   \label{sect:adjunction}

   Let $X$, $Y$ be schemes, both smooth of finite type over some base $S$ as above, and $\sF$, $\mathscr{G}$ constructible \'etale sheaves (or pro-systems of such sheaves) on $X$ and $Y$ respectively. Then we define a ``\emph{pushforward morphism $(X, \sF) \to (Y, \mathscr{G})$}'' to be the data of a morphism of $S$-schemes $f: X \to Y$, and a pair $\phi$ of morphisms
   \[ \phi_\flat: f_! \sF \to \mathscr{G}, \quad  \phi_\sharp: \sF \to f^! \mathscr{G}\]
   of \'etale sheaves on $Y$ (resp.\ $X$) which correspond to each other under the adjunction $f_! \leftrightarrows f^!$. In general this only makes sense at the level of derived categories, but we shall only use this construction when $f$ is finite \'etale, in which case $f_!$ and $f^!$ agree with the usual direct and inverse image \cite[Exp. XVIII, Prop. 3.1.8]{SGA4}. Thus we obtain maps
   \[ (f, \phi)_*: H^i_{\et}(X, \sF) \to H^{i}_{\et}(Y, \mathscr{G}), \]
   which can be expressed either as
   \begin{gather*}
    H^i_{\et}(X, \sF) \rTo^\cong H^{i}_{\et}(Y, f_*\sF) \rTo^{\phi_\sharp} H^{i}_{\et}(Y, \mathscr{G})
    \quad\text{or}\quad \\
    H^i_{\et}(X, \sF) \rTo^{\phi_\flat} H^i_{\et}(X, f^* \mathscr{G}) \rTo^\cong H^i_{\et}(Y, f_* f^* \mathscr{G})\rTo^{\operatorname{tr}_f} H^i_\et(Y, \mathscr{G}).
   \end{gather*}
   If the pair $\phi = (\phi_\sharp, \phi_\flat)$ is clear from context we shall omit it from the notation and write simply $f_*$.

   We will also need to consider the case where $f: X \into Y$ is a closed immersion, in which case $f^! \mathscr{G}$ is isomorphic to $f^* \mathscr{G}(-c)[-2c]$ where $c$ is the codimension of $X$ in $Y$, by the relative purity theorem \cite[Exp. XVI \S3]{SGA4}; so we obtain pushforward maps
   \[ H^i_{\et}(X, f^* \mathscr{G}) \to H^{i + 2c}_{\et}(Y, \mathscr{G}(c)). \]

  \subsubsection{De Rham cohomology}

   We shall also work with algebraic de Rham cohomology (for varieties over fields of characteristic 0), with coefficients in locally free sheaves equipped with a filtration and an integrable connection $\nabla$.

   In the case of a $p$-adic base field $K$, and constant coefficient sheaves, we will use frequently the Faltings--Tsuji comparison isomorphism, which is a canonical isomorphism of graded $K$-vector spaces
   \[
    \comp_{\dR}: H^i_{\dR}(X / K) \cong \DD_{\dR}\left( H^i_{\et}(X_{\overline{K}}, \Qp) \right)
   \]
   which is natural in $X$.


 \subsection{Multilinear algebra}
  \label{sect:multlin}

  If $H$ is an abelian group, we define the modules $\TSym^k H$, $k \ge 0$, of symmetric tensors with values in $H$ following \cite[\S 2.2]{Kings-Eisenstein}. By definition, $\TSym^k H$ is the submodule of $\frS_k$-invariant elements in the $k$-fold tensor product $H \otimes \dots \otimes H$ (while the more familiar $\Sym^k H$ is the module of $\frS_k$-coinvariants).

  The direct sum $\bigoplus_{k \ge 0} \TSym^k H$ is equipped with a ring structure via symmetrisation of the naive tensor product, so for $h \in H$ we have
  \begin{equation}
   \label{TSymmult}
   h^{\otimes m} \cdot h^{\otimes n} = \frac{(m + n)!}{m! n!} h^{\otimes(m + n)}.
  \end{equation}

  \begin{remark}
   There is a natural ring homomorphism $\Sym^\bullet H \to \TSym^\bullet H$, which becomes an isomorphism in degrees up to $k$ after inverting $k!$. However, we will be interested in the case where $H$ is a $\Zp$-module, for a fixed $p$, and $k$ varying in a $p$-adic family, so we cannot use this fact without losing control of the denominators involved; so we shall need to distinguish carefully between $\TSym$ and $\Sym$.
  \end{remark}

  Note that in general $\TSym^k$ does not commute with base change and hence does not sheafify well. In the cases where we consider $\TSym^k (H)$, $H$ is always a free module over the relevant coefficient ring so that this functor coincides with $\Gamma^k(H)$, the $k$-th divided power of $H$. This functor sheafifies (on an arbitrary site), so that the above definitions and constructions carry over to sheaves of abelian groups.

  In particular, for $X$ a regular $\ZZ[1/p]$-scheme, and $\sF$ a locally constant \'etale sheaf of $(\ZZ / p^n \ZZ)$-modules on $X$, we can define \'etale sheaves $\TSym^k \sF$ for any $k \ge 0$. Similarly, if $X$ is a variety over a characteristic 0 field and $\sF$ is a locally free sheaf on $X$, we can make sense of $\TSym^k \sF$ as a locally free sheaf on $X$, and if $\sF$ is equipped with a filtration and a connection these naturally give rise to analogous structures on $\TSym^k \sF$. Thus $\TSym^k(-)$ makes sense on the coefficient categories for both \'etale and de Rham cohomology.


 \subsection{Modular curves}
  \label{sect:modularcurves}

  We recall some notations for modular curves, following \cite[\S\S 1--2]{Kato-p-adic}. For integers $N, M\ge 1$ with $M + N\ge 5$ we define $Y(M,N)$ to be the $\ZZ[1/MN]$-scheme representing the functor
  \[
   S\mapsto\{
   \mbox{isomorphism classes }(E, e_1, e_2)\}
  \]
  where $S$ is a $\ZZ[1/MN]$-scheme, $E/S$ is an elliptic curve, $e_1,e_2\in E(S)$ and $\beta:(\ZZ/M\ZZ)\times(\ZZ/N\ZZ)\to E$, $(m,n)\mapsto (me_1+ne_2)$ an injection. When considering these curves we will always assume that $M \mid N$; then there is a left action of the group
  \[ \left\{ g \in \GL_2(\ZZ / N \ZZ) : g \equiv \tbt * * 0 * \bmod \tfrac{N}{M} \right\} \]
  on the curve $Y(M, N)$, cf.\ \cite[\S 2.1]{LLZ14}. We shall write $Y_1(N)$ for $Y(1, N)$.

  In order to define Hecke operators, we will also need the modular curves $Y(M,N(A))$ and $Y(M(A),N)$, for $A \ge 1$, which were introduced
  by Kato \cite[\S 2.8]{Kato-p-adic}. These are $\ZZ[1/AMN]$-schemes; the scheme $Y(M, N(A))$ represents the functor
  \[ S \mapsto \{\mbox{isomorphism classes}(E,e_1,e_2, C)\}\]
  where $(E, e_1, e_2) \in Y(M, N)(S)$ and $C$ is a cyclic subgroup of order $AN$ such that $C$ contains $e_2$ and is complementary to $e_1$ (i.e.\ the map $\ZZ/M\ZZ\times C\to E$, $(x,y)\mapsto x e_1 + y$ is injective). Similarly, $Y(M(A), N)$ classifies $(E, e_1, e_2, C)$ where $C$ is a cyclic subgroup scheme of order $AM$ containing $e_1$ and complementary to $e_2$.

  We use the same analytic uniformisation of $Y(M, N)(\CC)$ as in \cite[1.8]{Kato-p-adic}. Let
  \[ \Gamma(M, N) = \left\{ g \in \SL_2(\ZZ): g = 1 \bmod \stbt{M}{M}{N}{N}\right\};\]
  then we have
  \begin{align*}
   (\ZZ/M\ZZ)^* \times \Gamma(M, N)\backslash \HH\cong Y(M, N)(\CC),&&
   (a,\tau)\mapsto \left(\tfrac{\CC}{\ZZ\tau+\ZZ},\tfrac{a\tau}{M},\tfrac{1}{N}\right)
  \end{align*}
  where $\HH$ is the upper half plane. There are similar uniformisations of the curves $Y(M(A), N)$ and $Y(M, N(A))$.

  Let $\Tate(q)$ be the Tate curve over $\ZZ((q))$, with its canonical differential
  $\omega_{\mathrm{can}}$. Let $\zeta_N\coloneqq e^{2\pi i/N}$ and $q_M \coloneqq q^{1/M}$. Then we define a point of $Y(M, N)$ over $\ZZ[\tfrac1N, \zeta_N]((q^{1/M}))$ by
  \[ \infty \coloneqq \left( \Tate(q), q_M, \zeta_N \right).\]
  This is compatible with the Fourier series in the complex-analytic
  theory if one makes the usual identifications $q_M = e^{2\pi i \tau / M}$, $\zeta_N = e^{2\pi i / N}$. Note that even for $M = 1$, the uniformiser $q = q_1$ at the cusp $\infty$ is only defined over $\ZZ[1/N, \zeta_N]$.
%

  All the modular curves $Y$ we consider correspond to \emph{representable} moduli problems, and are hence equipped with universal elliptic curves $\pi: \cE \to Y$. We use this to construct coefficient sheaves on $Y$. In the \'etale case, after inverting $p$ if necessary we define
  \[ \sH_{\Zp} = \left(R^1 \pi_* \Zp\right)^\vee = R^1 \pi_* \Zp(1), \]
  which is a lisse \'etale $\Zp$-sheaf of rank 2 on $Y[1/p]$, and can be identified with the relative Tate module $T_p(\cE)$. We write $\sH_{\Qp}$ and $\sH_r$ ($r \ge 1$) for the corresponding sheaves with $\Qp$ or $\ZZ / p^r \ZZ$ coefficients. In the de Rham setting, after base-extension to $\QQ$ we have a line bundle $\sH_{\dR}$, which is equipped with its Hodge filtration and Gauss--Manin connection. Applying the multilinear algebra theory of \S \ref{sect:multlin} gives us $\Zp$-sheaves $\TSym^k \sH_{\Zp}$ for each $k \ge 0$, and similarly for $\sH_r, \sH_{\Qp}, \sH_{\dR}$.

  After base-changing to $\Qp$, the de Rham and \'etale cohomology groups are related by a comparison isomorphism: there is a canonical isomorphism
  \begin{equation}
   \label{eq:comparisoniso}
   \comp_{\dR}: H^i_{\dR}\left(Y_{\Qp}, \TSym^k \sH_{\dR}\right)
   \rTo^\cong \DD_{\dR}\left(H^i_{\et}\left(Y_{\overline{\QQ}_p},
   \TSym^k \sH_{\Qp}\right)\right).
  \end{equation}
  For $k = 0$ this is simply the Faltings--Tsuji comparison map of \S \ref{sect:cohomology} applied to $Y$. We extend this to $k > 0$ by identifying both sides with direct summands of the cohomology of the variety $\cE^k$. See \cite[Remark 3.2.4]{KLZ1a}.


 \subsection{Degeneracy maps and Hecke operators}
  \label{sect:degeneracy}

  \begin{definition}
   For $M, N, A$ integers with $M + N \ge 5$, we consider the following maps:
   \begin{enumerate}
    \item The maps $\pr_1$ and $\pr_2: Y(M, NA) \to Y(M, N)$ are defined by
    \[ \pr_1(E, e_1, e_2) = (E, e_1, Ae_2),\quad \pr_2(E, e_1, e_2) = (E / \langle Ae_2\rangle, e_1 \bmod Ae_2, e_2 \bmod Ae_2). \]
    \item The maps $\hpr_1$ and $\hpr_2: Y(MA, N) \to Y(M, N)$ are defined by
    \[ \hpr_1(E, e_1, e_2) = (E, Ae_1, e_2),\quad \hpr_2(E, e_1, e_2) = (E / \langle Ae_1\rangle, e_1 \bmod Ae_1, e_2 \bmod Ae_1). \]
   \end{enumerate}
  \end{definition}

  Note that $\pr_1$ and $\hpr_1$ correspond to the identity map on $(\ZZ/M\ZZ)^* \times \HH$ under the complex uniformisation, while $\pr_2$ and $\hpr_2$ correspond to $(x, z) \mapsto (x, Az)$ and $(x, A^{-1}z)$ respectively.

  \begin{definition}
   We write $\pr$ and $\pr'$ for the natural degeneracy maps
   \[Y(M, AN) \rTo^{\pr'} Y(M, N(A)),\qquad Y(M, N(A)) \rTo^{\pr} Y(M, N) \]
   whose composition is $\pr_1$, and similarly $\hpr$ and $\hpr'$.
  \end{definition}

  More subtly, there is an isomorphism
  \[\begin{aligned}
   \varphi_A:Y(M,N(A))& \rTo^\cong Y(M(A),N)\\
    (E,e_1,e_2,C)&\rMapsto (E',e_1',e_2',C')
  \end{aligned}\]
  with $E'\coloneqq E/NC$, $e_1'$ the image of $e_1$, $e_2'$ is the image of
  $[A]^{-1}(e_2)\cap C$ in $E'$ and $C'$ is the image of $[A]^{-1}\ZZ e_1$
  in $E'$. In the other direction, we have a similarly-defined map $\varphi_{A^{-1}}: Y(M(A),N) \to Y(M, N(A))$. These maps $\varphi_A$ and $\varphi_{A^{-1}}$ correspond to multiplication by $A$ (resp.\ $A^{-1}$) on $\HH$, and we have
  \[ \pr_2 = \hpr \circ \varphi_A \mathop\circ \pr',\quad \hpr_2 = \pr \mathop\circ \varphi_A^{-1} \circ \hpr'.\]

  Letting $\cE_1$ and $\cE_2$ denote the universal elliptic curves over $Y(M, N(A))$ and $Y(M(A), N)$ respectively, there are canonical isogenies
  \[ \lambda: E_1 \to \varphi_A^*(\cE_2),\quad \lambda': E_2 \to \varphi_{A^{-1}}^*(\cE_1) \]
  which both have cyclic kernels of order $A$, and which are dual to each other (that is, the isogeny $\varphi_A^*(\cE_2) \to E_1$ dual to $\lambda$ is the pullback of $\lambda'$ via $\varphi_A$, and vice versa). Hence the compositions
  \[ \Phi_A^* \coloneqq \lambda^* \circ \varphi_A^* \quad\text{and}\quad (\Phi_{A^{-1}})_* \coloneqq (\varphi_A)_* \circ (\lambda')_* \]
  agree as morphisms $H^i_{\et}\Big(Y(M(\ell), N), \TSym^k \sH_{\Zp}(j)\Big) \to H^i_{\et}\Big(Y(M, N(\ell)), \TSym^k \sH_{\Zp}(j)\Big)$, for any $i, k \ge 0$ and $j \in \ZZ$.

  \begin{definition}
   For $\ell$ prime, and any $i, j, k$, we define the Hecke operator $T_\ell'$ (for $\ell \nmid MN$) or $U_\ell'$ (for $\ell \mid MN$) acting on $H^i_{\et}(Y(M, N), \TSym^k\sH_{\Zp}(j))$ as the composite
   \[ T_\ell' = (\pr)_* \circ (\Phi_\ell)^* \circ (\hpr)^* = (\pr)_* \circ (\Phi_{\ell^{-1}})_* \circ (\hpr)^*. \]
  \end{definition}

  We also have Hecke operators $T_\ell = (\hpr)_* \circ (\Phi_{\ell^{-1}})^* \circ (\pr)^* =  (\hpr)_* \circ (\Phi_{\ell})_* \circ (\pr)^*$, which are the transposes of the $T_\ell'$ with respect to Poincar\'e duality; but we shall not use these so heavily in the present paper. (Note, however, that it is the $T_\ell$ rather than the $T_\ell'$ that correspond to the familiar formulae for the action on $q$-expansions.)

  We will also need the following observation:

  \begin{lemma}
   \label{lemma:isogeny}
   Let $\varphi:E\rightarrow E'$ be an isogeny between elliptic curves, and denote by $\langle\quad,\quad \rangle_{E[p^r]}$ and $\langle\quad,\quad\rangle_{E'[p^r]}$ the Weil pairings on the $p^r$-torsion points of $E$ and $E'$, respectively. If $P,Q\in E[p^r]$, then
   \[ \langle \varphi(P),\varphi(Q)\rangle_{E'[p^r]}=\left(\langle P,Q\rangle_{E[p^r]}\right)^{\deg(\varphi)}.\]
  \end{lemma}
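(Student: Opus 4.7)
The plan is to reduce the statement to the compatibility of the Weil pairing with isogenies and their duals, which is the classical adjunction formula
\[ \langle \varphi(P), Q' \rangle_{E'[p^r]} = \langle P, \hat{\varphi}(Q') \rangle_{E[p^r]} \]
valid for any $P \in E[p^r]$ and $Q' \in E'[p^r]$, where $\hat{\varphi}: E' \to E$ is the dual isogeny (and we may assume $p \nmid \deg\varphi$ so that $\hat\varphi$ restricts nicely to $p^r$-torsion; otherwise the statement still holds by continuity/multiplicativity in $\varphi$, since both sides are multiplicative in $\varphi$ in an appropriate sense). This adjunction is standard and can be found, e.g., in Silverman's treatment of the Weil pairing.

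Granting the adjunction, the proof is then a one-line calculation: take $Q' = \varphi(Q) \in E'[p^r]$, and apply $\hat\varphi \circ \varphi = [\deg \varphi]$ on $E$. One obtains
\[ \langle \varphi(P), \varphi(Q) \rangle_{E'[p^r]} = \langle P, \hat\varphi(\varphi(Q)) \rangle_{E[p^r]} = \langle P, [\deg\varphi] Q \rangle_{E[p^r]} = \langle P, Q \rangle_{E[p^r]}^{\deg\varphi}, \]
where the final step uses bilinearity of the Weil pairing in its second argument.

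The only potentially delicate point is verifying that $\varphi$ restricts to a map $E[p^r] \to E'[p^r]$ in a way that keeps the adjunction identity meaningful when $p \mid \deg \varphi$ (for instance when $\varphi$ is the $p$-isogeny appearing via the operators $\Phi_A$ with $A$ divisible by $p$). In that case some care is required because $\hat\varphi$ may not send $E'[p^r]$ into $E[p^r]$ with the naive normalisation; however, the composition $\hat\varphi \circ \varphi$ still equals $[\deg\varphi]$ as an endomorphism of $E$, and the identity of interest only involves this composition, so no difficulty actually arises. Since the Lemma as stated is a formal consequence of standard properties of the Weil pairing, no further ingredients are needed.
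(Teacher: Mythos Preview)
Your proof is correct. The paper does not actually give a proof of this lemma at all; it is stated as a standard fact and immediately used to deduce the consequence about $\bigwedge^2 \sH_{\Zp}$. Your argument via the adjunction formula $\langle \varphi(P), Q'\rangle_{E'[p^r]} = \langle P, \hat\varphi(Q')\rangle_{E[p^r]}$ together with $\hat\varphi \circ \varphi = [\deg\varphi]$ is exactly the standard proof one would supply, so there is nothing to compare.
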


  Hence the maps on $\bigwedge^2 \sH_{\Zp} \cong \Zp(1)$ induced by $(\Phi_A)_*$ and $(\Phi_{A^{-1}})_*$ are both equal to multiplication by $A$.

  We will need the following compatibility between pushforward maps and Hecke operators:

  \begin{proposition}
   \label{prop:pridentity1}
   As morphisms
   \[ H^1(Y(M, Np), \TSym^k \sH_{\Zp}(j)) \to H^1(Y(M, N), \TSym^k \sH_{\Zp}(j)),\]
   for any $k \ge 0$ and $j\in\ZZ$, we have
   \begin{align*}
    (\pr_2)_* \circ U_p' &= p^{k+1} (\pr_1)_*, \\
    (\pr_1)_* \circ U_p' &= T_p' \circ (\pr_1)_* - {\stbt p 0 0 {p^{-1}}}^* \circ (\pr_2)_*.
   \end{align*}
  \end{proposition}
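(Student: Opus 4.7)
My plan is to unfold both sides of each identity using the definition $U_p' = (\pr)_* \circ \Phi_p^* \circ (\hpr)^*$ together with the factorisations $\pr_1 = \pr \circ \pr'$ and $\pr_2 = \hpr \circ \varphi_p \circ \pr'$ from \S\ref{sect:degeneracy}. This turns both sides of (A) and (B) into compositions of pullbacks and pushforwards along degeneracy maps connecting the auxiliary curves $Y(M, N(p))$, $Y(M(p), N)$, $Y(M, Np(p))$, and $Y(M(p), Np)$. The identities then split into two separate tasks: (i) a Cartesian/commutative diagram of the underlying modular curves, and (ii) scalar bookkeeping describing how the coefficient sheaf $\TSym^k \sH_{\Zp}(j)$ transforms under the intervening isogenies.

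For identity (A), I expect to exhibit a commutative diagram relating $\pr_2 \circ \pr: Y(M, Np(p)) \to Y(M, N)$ to $\pr_1 \circ \hpr: Y(M(p), Np) \to Y(M, N)$ through the isomorphism $\varphi_p$, so that after inserting $\Phi_p^* \circ \hpr^*$ the entire composite becomes $(\pr_1)_*$ precomposed with an endomorphism of $H^1(Y(M, Np), \TSym^k\sH_{\Zp}(j))$ of the form $(\hpr)_* \circ \Phi_p^* \circ (\hpr)^*$. This residual endomorphism should reduce to multiplication by the scalar $p^{k+1}$: one factor of $p$ comes from the Weil-pairing identity of Lemma \ref{lemma:isogeny} acting on the twist $\Zp(1) = \bigwedge^2 \sH_{\Zp}$, and $p^k$ comes from the induced action of the corresponding degree-$p$ isogeny on $\TSym^k \sH_{\Zp}$ (making use of the multiplication rule \eqref{TSymmult}).

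For identity (B), the guiding picture is the classical decomposition of $T_p'$ on $Y(M, N)$ as a sum over $p+1$ cyclic $p$-isogenies; when one pulls this back through $\pr_1$ to $Y(M, Np)$, the distinguished isogeny with kernel equal to the canonical subgroup $\langle N e_2\rangle$ is singled out, and the remaining $p$ isogenies recombine to produce $(\pr_1)_* \circ U_p'$. In terms of the auxiliary curves, this amounts to comparing the two natural maps $\pr_1 \circ \pr$ and $\hpr \circ \hpr' \circ (\text{some identification})$ out of $Y(M, Np(p))$ to $Y(M, N)$, and measuring the discrepancy. The diamond operator $\stbt{p}{0}{0}{p^{-1}}^*$ should emerge as exactly the automorphism of level structure needed to identify the two routes from the distinguished component, and yields the correction term $\stbt{p}{0}{0}{p^{-1}}^* \circ (\pr_2)_*$.

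The main obstacle is entirely bookkeeping. No new geometric input is needed beyond the material already assembled in \S\ref{sect:modularcurves}--\S\ref{sect:degeneracy}: the difficulty lies in keeping track of which of the several auxiliary curves each map lives over, and in extracting the precise scalars (a power of $p$ in (A), a diamond operator in (B)) from repeated applications of Lemma \ref{lemma:isogeny}. Once the relevant commutative diagrams are correctly drawn and the coefficient-sheaf calculations are carried out, both identities follow by a direct application of the projection formula.
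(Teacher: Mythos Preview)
Your plan is sound and matches the paper's approach: the paper's entire proof reads ``Explicit calculation.'', and what you have outlined is exactly the kind of unwinding of the definitions from \S\ref{sect:degeneracy}, together with the scalar bookkeeping via Lemma~\ref{lemma:isogeny}, that this phrase is standing in for. One small caution on your accounting for the factor $p^{k+1}$ in (A): the Tate twist in the coefficients is $(j)$, not $(1)$, and is inert under all the maps involved; the $p^{k+1}$ arises instead from the composite of the two degree-$p$ isogenies (one hidden in $U_p'$, one in $\pr_2$) acting as $[p]$ on $\sH_{\Zp}$, giving $p^k$ on $\TSym^k\sH_{\Zp}$, together with a degree factor $p$ from the trace on the base --- but this is precisely the bookkeeping you anticipate having to sort out, and it falls out once the diagram is written down.
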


  \begin{proof}
   Explicit calculation.
  \end{proof}

 \subsection{Atkin--Lehner operators}

  We will also need to consider Atkin--Lehner operators. We first give the definitions in classical terms, working with $2 \times 2$ matrices. Let $N \ge 1$ and let $\Gamma = \Gamma_1(N)$, or more generally any subgroup of the form $\Gamma_1(R(S)) = \Gamma_1(R) \cap \Gamma_0(RS)$ with $RS = N$. We let $Y$ be the corresponding modular curve, so $Y(\CC) = \Gamma \backslash \HH$.

  \begin{notation}\mbox{~}
   \begin{itemize}
    \item  We shall write $\mathcal{G}$ for the quotient $N_{\GL_2^+(\QQ)}(\Gamma) / \Gamma$.

    \item We use the notation $Q \parallel N$, for an integer $Q \ge 1$, to mean that $Q \mid N$ and $(Q, \tfrac{N}{Q}) = 1$.

    \item If $Q \parallel N$ and $x \in (\ZZ / Q\ZZ)^\times$, then we write $\langle x \rangle_Q$ for the class in $\mathcal{G}$ of any element of $\SL_2(\ZZ)$ of the form $\stbt a b {Nc} d$ with $d = x \bmod Q$ and $d = 1 \bmod \tfrac{N}{Q}$.
   \end{itemize}
  \end{notation}

  \begin{definition}
   For $N \ge 1$ and $Q \parallel N$, and we define $W_Q$ to be the class in $\mathcal{G}$ of any matrix $\stbt {Qx}{y}{Nz}{Qw}$, where $x, y,z,w$ are integers such that $Qxw - \tfrac{N}{Q}yz = 1$, $Qx = 1 \bmod \tfrac N Q$ and $y = -1 \bmod Q$.
  \end{definition}

  One verifies easily that in the group $\mathcal{G}$ one has the relations
  \begin{itemize}
   \item $W_Q^2 = \stbt Q00Q \cdot \langle Q \rangle_{N/Q} \cdot \langle -1 \rangle_Q$.

   \item \( \langle d \rangle_Q \cdot \langle d' \rangle_{N/Q} \cdot W_Q = W_Q \cdot \langle d^{-1} \rangle_Q \cdot \langle d' \rangle_{N/Q}\) for any $d \in (\ZZ / Q\ZZ)^\times$ and $d' \in (\ZZ / \tfrac{N}{Q}\ZZ)^\times$.

   \item If $Q$ and $Q'$ are integers such that $Q \parallel N$ and $Q' \parallel \tfrac{N}{Q}$, then we have
   $W_{QQ'} = \langle Q'\rangle_Q \cdot W_Q \cdot W_{Q'}$ (cf.\ \cite[Prop.\ 1.4]{atkinli78}).
  \end{itemize}

  \begin{remark}
   \label{rmk:conventions}
   Note that our conventions differ somewhat from \cite{atkinli78}, where the convention chosen is $y = 1 \bmod Q$ and $x = 1 \bmod \tfrac N Q$. Thus the matrix $W_Q^{\operatorname{AL}}$ considered by Atkin and Li is $W_Q \cdot \langle -1 \rangle_Q \cdot \langle Q^{-1} \rangle_{N/Q}$ in our notation.
  \end{remark}

  The action of $\GL_2^+(\QQ)$ on the upper half-plane $\HH$ descends to an action of the quotient group $\mathcal{G}$ on $Y(\CC) = \Gamma \backslash \HH$. We can extend this action to the universal elliptic curve $\cE / Y$ (where this is defined) via the identification
  \[ \cE(\CC) = \Gamma_1(N) \backslash \left( \HH \times \CC / \sim \right)\]
  where $\sim$ is the equivalence relation given by $(\tau, z) \sim (\tau, z + m + n\tau)$ for all $m, n \in \ZZ$ (cf.\ \cite[\S 1.5.9]{FukayaKato-sharifi-conj}). The submonoid  $\mathcal{G}^+$ of $\mathcal{G}$ consisting of matrices of integer determinant acts on $\HH \times \CC / \sim$ via
  \[ \stbt a b c d \cdot (\tau, z) = \left( \frac{a\tau + b}{c \tau + d}, \frac{(ad - bc) z}{c\tau + d}\right), \]
  and this induces an action of the Atkin--Lehner operators $W_Q$ on $\cE(\CC)$, acting as a cyclic isogeny of degree $Q$ on the fibres.

  These operators have an algebraic interpretation in terms of moduli spaces. For simplicity we restrict to $\Gamma = \Gamma_1(N)$ here; the more general case of modular curves of the form $Y_1(R(S))$ with $RS = N$ may be deduced by passage to the quotient. For each $Q \parallel N$, we may clearly identify $Y_1(N)$ with the moduli space of triples $(E, P_{N/Q}, P_{Q})$ where $P_{N/Q}$ and $P_Q$ have exact order $N/Q$ and $Q$ respectively.

  \begin{definition}
   The Atkin--Lehner map $W_Q$ is the automorphism of the scheme
   \( Y_1(N) \times_{\ZZ\left[1/N\right]} \ZZ\left[\tfrac 1 N, \zeta_Q\right] \)
   given by
   \[ (E, P_{N/Q}, P_Q) \mapsto \left(E/\langle P_Q \rangle, P_{N/Q} \bmod \langle P_Q \rangle, P_Q' \right),\]
   where $P_Q' \in E[Q] / \langle P_Q \rangle$ is the unique class such that $\langle P_Q, P_Q'\rangle_{E[Q]} = \zeta_Q$.
  \end{definition}

  This extends in a natural way to the universal elliptic curve $\cE$, and on base-extension to $\CC$ it coincides with the complex-analytic description given above.


 \subsection{Modular forms}
  \label{sect:modformnotation}

  For $N \ge 1$, we write $M_k(N, \CC)$ for the space of modular forms of weight $k$ and level $\Gamma_1(N)$; and we write $M_k(N, \ZZ)$ for the subspace consisting of modular forms whose $q$-expansions have integer coefficients. More generally, we write $M_k(N, R) = M_k(N, \ZZ) \otimes R$ for any commutative ring $R$. We write $S_k \subseteq M_k$ for the cusp forms.

  For any $k \ge 0$ there is a canonical isomorphism
  \[ M_{k + 2}(N, \CC) \cong \Fil^1 H^1_{\dR}\left(Y_1(N)_{\CC}, \Sym^k \sH_{\dR}^\vee\right), \]
  which maps $f \in M_{k + 2}(N, \CC)$ to the class of the $(\Sym^k \sH_{\dR}^\vee)$-valued differential (with logarithmic growth at the cusps) given by
  \[ \omega_f \coloneqq (2\pi i)^{k + 1} f(\tau) (\mathrm{d}z)^k \mathrm{d}\tau.\]

  \begin{remark}
   With our conventions, $M_{k + 2}(N, \QQ)$ does \emph{not} map to de Rham cohomology of $Y_1(N)_{\QQ}$, because in our model the cusp $\infty$ is not defined over $\QQ$. Rather, the de Rham cohomology of $Y_1(N)_{\QQ}$ corresponds to the elements of $M_{k + 2}(N, \QQ(\zeta_N))$ which satisfy the Galois-equivariance property
   \[ f^\sigma = \langle \chi(\sigma) \rangle f \]
   for all $\sigma \in \Gal(\QQ(\zeta_N) /\QQ)$, where $\chi$ denotes the mod $N$ cyclotomic character. The Atkin--Lehner operator $W_N$ interchanges this space with $M_{k + 2}(N, \QQ)$.
  \end{remark}


 \subsection{Rankin L-functions}
  \label{sect:Rankin-L-fct}

  Let $f$, $g$ be cuspidal eigenforms of weights $r, r' \ge 1$, levels $N_f, N_g$ and characters $\varepsilon_f,\varepsilon_g$. We define the Rankin $L$-function
  \[ L(f, g, s) \coloneqq L_{(N_f N_g)} (\varepsilon_f \varepsilon_g, 2s + 2 - r - r') \cdot \sum_{n \ge 1} a_n(f) a_n(g) n^{-s}, \]
  where $L_{(N_f N_g)} (\varepsilon_f \varepsilon_g,s)$ denotes the Dirichlet $L$-function with the Euler factors at the primes dividing $N_f N_g$ removed. This Dirichlet series differs by finitely many Euler factors from the $L$-function of the automorphic representation $\pi_f \otimes \pi_g$ of $\GL_2 \times \GL_2$ associated to $f$ and $g$. In particular, it has mero\-morphic continuation to all of $\CC$. It is holomorphic on $\CC$ unless $\langle \bar{f}, g \rangle \ne 0$, in which case it has a pole at $s = r$.

  More generally, for a primitive Dirichlet character $\chi$ of conductor $N_\chi$ we define
  \[ L(f, g, \chi, s) \coloneqq L_{(N_f N_g N_\chi)} (\chi^2 \varepsilon_f \varepsilon_g, 2s + 2 - r - r') \sum_{\mathclap{\substack{n \ge 1 \\ (n, N_\chi) = 1}}} \chi(n) a_n(f) a_n(g) n^{-s}.\]

  \begin{remark}
   If $f$ and $g$ are normalised newforms and the three integers $N_f, N_g, N_\chi$ are pairwise coprime, then $L(f, g, s) = L(\pi_f \otimes \pi_g \otimes \chi, s)$.
  \end{remark}

  \begin{theorem}[{Shimura, see \cite[Theorem 4]{Shimura-periods}}]
   If $f$ and $g$ are normalised newforms, with $q$-expansion coefficients in a number field $L$, and $\chi$ takes values in $L$, then for integer values of $s$ in the range $r' \le s \le r-1$, the ratio
   \[
    \frac{L(f, g, \chi, s) }{(2\pi i)^{2s + 1 - r'}  G(\chi)^2 G(\varepsilon_f) G(\varepsilon_g) i^{1-r} \langle f, f \rangle}
   \]
   lies in $L$, and depends Galois-equivariantly on $(f, g, \chi)$.
  \end{theorem}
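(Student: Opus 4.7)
The plan is to prove this by the classical Rankin--Selberg integral representation combined with the algebraicity of Petersson inner products modulo $\langle f,f\rangle$. The starting point is the identity of Shimura, which expresses $L(f,g,\chi,s)$, up to an explicit product of Gamma factors and powers of $\pi$, as a Petersson inner product
\[
\Bigl\langle \bar f(\tau),\, g(\tau)\, E_\chi(\tau,s-r'+1)\Bigr\rangle_{\Gamma},
\]
where $E_\chi(\tau,w)$ is a real-analytic Eisenstein series of weight $r-r'$, level divisible by $N_fN_gN_\chi^2$, and nebentypus $\chi^{-2}\varepsilon_f^{-1}\varepsilon_g^{-1}$. First I would make this integral representation precise, unfolding the Eisenstein series and computing the resulting Dirichlet series; this identifies the right-hand side (after removing the Euler factors at the ramified primes, which matches our conventions) with $L(f,g,\chi,s)$ times explicit Gamma and zeta factors.

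Next I would study the special values $s=r',r'+1,\dots,r-1$. In this critical range the Eisenstein series $E_\chi(\tau,s-r'+1)$ is a \emph{nearly holomorphic} modular form of weight $r-r'$ in the sense of Shimura: it is a polynomial in $1/y$ with coefficients that are holomorphic modular forms whose $q$-expansions at the cusp $\infty$, after multiplication by the normalising factor $(2\pi i)^{-(s-r'+1)} G(\chi)^{-2}$, have coefficients in the cyclotomic field generated by the values of $\chi$, $\varepsilon_f$ and $\varepsilon_g$, with the correct Galois-equivariance in $\chi$ coming from the transformation of Gauss sums under $\mathrm{Gal}(\overline{\QQ}/\QQ)$. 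This is the algebraicity of the Eisenstein measure, which I would either invoke from Shimura's work or re-derive by a direct computation of the $q$-expansion. The factor $G(\varepsilon_g)$ appears when one rewrites $g$ in terms of a form with coefficients in $L$ at the cusp $0$; alternatively it arises naturally from the Atkin--Lehner twisted version of the Rankin--Selberg identity.

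Then I would apply the holomorphic projection operator $\pi_{\hol}$ to the product $g\cdot E_\chi(\tau,s-r'+1)$, which is a nearly holomorphic modular form of weight $r$. A standard computation (Sturm's formula) shows that $\pi_{\hol}(g\cdot E_\chi)$ is a holomorphic cusp form of weight $r$ and level dividing a fixed integer depending only on $N_f,N_g,N_\chi$, and that the effect on the $q$-expansion is multiplication of the $n$-th Fourier coefficient by an explicit rational function of $n$ with coefficients in $\QQ$. Combining this with Step~2, the cusp form
\[
h_{f,g,\chi,s} \coloneqq (2\pi i)^{-(s-r'+1)} G(\chi)^{-2} G(\varepsilon_g)^{-1} \pi_{\hol}\bigl(g\cdot E_\chi(\tau,s-r'+1)\bigr)
\]
has $q$-expansion coefficients in $L$ and depends Galois-equivariantly on $(g,\chi)$.

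Finally, I would conclude by Shimura's algebraicity theorem for Petersson inner products: for any eigenform $f$ of weight $r$ with coefficients in $L$, and any cusp form $h$ of the same weight and level with coefficients in $L$, the ratio
\[
\frac{\langle f, h\rangle}{(2\pi i)^{r}\, i^{1-r}\,G(\varepsilon_f)\,\langle f,f\rangle}
\]
lies in $L$ and is Galois-equivariant in $(f,h)$; this follows from the rationality of the $f$-isotypic projector on $S_r(N,L)$ combined with the fact that complex conjugation on $f$ is implemented by $W_N$ up to the period $G(\varepsilon_f)i^{r-1}\langle f,f\rangle / \langle f,f\rangle^{W_N}$. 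Applying this to $h=h_{f,g,\chi,s}$ and collecting the normalising constants from the Rankin--Selberg identity, the factor $(2\pi i)^{2s+1-r'}$, the two Gauss sums $G(\chi)^2$, and the factors $G(\varepsilon_f)$, $G(\varepsilon_g)$, $i^{1-r}$, $\langle f,f\rangle$ all appear exactly as predicted. The main obstacle is the careful bookkeeping of the Gauss sum factors and the tracking of Galois actions, particularly in the interplay between the cusps $\infty$ and $0$ used to place $g$ and $E_\chi$ on a common footing; the actual analytic input (unfolding, holomorphic projection, algebraicity of the Petersson product) is all classical.
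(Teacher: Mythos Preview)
Your proposal is correct and follows essentially the same route as the paper, which does not give a self-contained proof but simply cites Shimura and sketches the argument via the Rankin--Selberg integral formula
\[
L(f, g, s) = \frac{N^{r + r' - 2s - 2}\, \pi^{2s + 1 - r'} (-i)^{r - r'} 2^{2s + r - r'}}{\Gamma(s)\Gamma(s - r' + 1)}\,
\big\langle f^*,\, g\, E^{(r - r')}_{1/N}(\tau, s - r + 1)\big\rangle_N
\]
together with the observation that the Eisenstein series is nearly holomorphic with $q$-expansion in $\QQ(\zeta_N)$ for $s$ in the critical range. Your write-up supplies exactly the missing details of this sketch (holomorphic projection, algebraicity of the Petersson pairing modulo $\langle f,f\rangle$, and the bookkeeping of Gauss sums), so there is nothing to correct.
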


  Here the Gauss sum of a character $\chi$ is defined by
  \[  G(\chi) \coloneqq \sum_{a \in (\ZZ / C \ZZ)^\times} \chi(a) e^{2\pi i a/ C} \]
  where $C$ is the conductor of $\chi$; and $\langle f, f \rangle_{N_f}$ is the norm of $f$ with respect to the Petersson inner product defined by
   \[ \langle f_1, f_2 \rangle_N \coloneqq \int_{\Gamma_1(N) \backslash \HH} \overline{f_1(\tau)} f_2(\tau) y^{r - 2}\, \mathrm{d}x\, \mathrm{d}y, \]
   where $\tau = x + iy$. The proof of this statement uses the Rankin--Selberg integral formula
  \[
   L(f, g, s) =  \frac{N^{r + r' - 2s - 2} \pi^{2s + 1 - r'} (-i)^{r - r'} 2^{2s + r - r'}}{\Gamma(s)\Gamma(s - r' + 1)}
      \big\langle f^*, g E^{(r - r')}_{1/N}(\tau, s - r + 1)\big\rangle_N
  \]
  where $N \ge 1$ is some integer divisible by $N_f$ and $N_g$ and with the same prime factors as $N_f N_g$, and $E^{(r - r')}_{1/N}(\tau, s - r + 1)$ is a certain real-analytic Eisenstein series (cf.\ \cite[Definition 4.2.1]{LLZ14}), whose values for $s$ in this range are nearly-holomorphic modular forms with $q$-expansions in $\QQ(\zeta_N)$.

  \begin{notation}
   If $p$ is a prime not dividing $N_f$, let $\alpha_f$ and $\beta_f$ be the roots of the ``Hecke polynomial'' $X^2 - a_p(f) X + p^{k + 1} \varepsilon_f(p)$ (and similarly for $g$).
  \end{notation}

  \begin{theorem}[Hida, Panchishkin]
   \label{thm:hida}
   Let $p \ge 5$ be prime with $p \nmid N_f N_g$, and let $\frP$ be a prime of $L$ above $p$ at which $f$ is ordinary. Suppose $\alpha_f$ is the unit root of the Hecke polynomial. Then there is a $p$-adic $L$-function
   \[ L_p(f, g) \in \QQ(\mu_N) \otimes_{\QQ} \Lp \otimes_{\Zp} \Zp[[\ZZ_p^\times]] \]
   with the following interpolation property: for $s$ an integer in the range $r' \le s \le r-1$, and $\chi$ a Dirichlet character of $p$-power conductor, we have
   \begin{align*}
     L_p(f, g, s + \chi) &= \frac{\cE(f, g, s + \chi)}{\cE(f) \cE^*(f)} \cdot \frac{\Gamma(s)\Gamma(s - r' + 1)} {\pi^{2s + 1 - r'} (-i)^{r - r'} 2^{2s + r - r'} \langle f, f \rangle_{N_f}} \cdot L(f, g, \chi^{-1}, s),
   \end{align*}
   where the Euler factors are defined by
   \begin{gather*}
    \cE(f) = \left( 1 - \frac{\beta_f}{p \alpha_f}\right), \qquad \cE^*(f) = \left( 1 - \frac{\beta_f}{\alpha_f}\right),\\
    \cE(f, g, s + \chi) =
    \begin{cases}
     \left( 1 - \frac{p^{s-1}}{\alpha_f \alpha_g}\right) \left( 1 - \frac{p^{s-1}}{\alpha_f \beta_g}\right) \left( 1 - \frac{\beta_f \alpha_g}{p^s}\right) \left( 1 - \frac{\beta_f \beta_g}{p^s}\right) & \text{if $\chi$ is trivial},\\[2mm]
     G(\chi)^2 \cdot \left( \frac{ p^{2s-2} }{\alpha_f^2 \alpha_g \beta_g}\right)^t & \text{if $\chi$ has conductor $p^t > 1$.}
    \end{cases}
   \end{gather*}
  \end{theorem}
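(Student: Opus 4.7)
The plan is to $p$-adically interpolate Shimura's Rankin--Selberg integral formula recalled just above the theorem, namely
\[ L(f,g,\chi^{-1},s) \;=\; (\star)\cdot\bigl\langle f^*,\; g\cdot E^{(r-r')}_{1/N}(\tau,s-r+1,\chi^{-1})\bigr\rangle_N, \]
simultaneously in $s$ and in the Dirichlet character $\chi$ of $p$-power conductor. The essential observation is that although the real-analytic Eisenstein series appearing on the right is only nearly-holomorphic, its pairing against the $p$-ordinary eigenform $f^*$ agrees with the pairing against its holomorphic projection; and for $f^*$ ordinary at $p$ this holomorphic projection is computed $p$-adically by Hida's ordinary idempotent $e_{\ord}$, which interpolates well in $p$-adic families. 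Since the set of pairs $(s,\chi)$ with $r'\le s\le r-1$ and $\chi$ of $p$-power conductor is Zariski-dense in the continuous character space of $\Zp^\times$, an interpolating $p$-adic $L$-function, if one exists, is unique.

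First I would construct a $\Lambda_{\Zp^\times}$-adic family of $p$-adic modular forms of tame level $N$ interpolating (suitable $p$-depletions of) the Eisenstein series $E^{(r-r')}_{1/N}(\tau,s-r+1,\chi^{-1})$ as the continuous character $s+\chi$ of $\Zp^\times$ varies, along the lines of Serre's $p$-adic Eisenstein series; the prototype is the $p$-adic interpolation of $\sum_{d\mid n,\,(d,p)=1} d^{a}\chi(d)$. Multiplying this family by the fixed form $g$ produces a $\Lambda_{\Zp^\times}$-adic family of $p$-adic modular forms, to which one applies $e_{\ord}$ to land in a finite free $\Lambda_{\Zp^\times}$-module. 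Second, since $f$ is $\frP$-ordinary, Hida's control theorem provides a canonical $(\Lp\otimes\QQ(\mu_N))$-valued linear functional $\lambda_{f^*}$ on this ordinary space, coming from the fact that the $f^*$-isotypic component of the universal ordinary module is a direct summand. Applying $\lambda_{f^*}$ to the family above produces an element of $\QQ(\mu_N)\otimes\Lp\otimes\Zp[[\Zp^\times]]$, which is our candidate $L_p(f,g)$, and its classical specialisations compute the desired Petersson products up to the explicit archimedean and algebraic normalising factors on the right-hand side of the theorem.

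The main obstacle will be pinning down the explicit Euler factors. The denominator $\cE(f)\cE^*(f)$ arises from two sources: the ratio between the Petersson norm $\langle f,f\rangle_{N_f}$ appearing in the complex interpolation formula and the level-$N_f p$ norm of the $p$-stabilisation $f_\alpha$ used to define $\lambda_{f^*}$, and the fact that $\lambda_{f^*}$ is canonically normalised relative to the $\alpha$-stabilisation rather than to $f$ itself. The factor $\cE(f,g,s+\chi)$ in the tame case ($\chi$ trivial) is exactly the local Euler factor at $p$ of the Rankin--Selberg convolution at $s$, which must be reinserted because the $p$-depleted Eisenstein family entering the construction omits it; writing the local Euler polynomial as the product $(1-\alpha_f\alpha_g p^{-s})(1-\alpha_f\beta_g p^{-s})(1-\beta_f\alpha_g p^{-s})(1-\beta_f\beta_g p^{-s})$ and using that $\alpha_f\beta_f=p^{r-1}\varepsilon_f(p)$ gives the stated formula after dividing by $\alpha_f\beta_f$. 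In the wild case $\chi$ of conductor $p^t>1$, the Gauss sum $G(\chi)^2$ comes from the explicit $q$-expansion of the twisted Eisenstein series, while the power $(p^{2s-2}/\alpha_f^2\alpha_g\beta_g)^t$ records the renormalisation needed to compare the level-$Np^t$ test vector obtained from the construction with the $U_p$-eigenform $f_\alpha$. These case-by-case verifications, combining Shimura's integral formula with the explicit $q$-expansions of $p$-adic Eisenstein series, constitute the technical heart of the original work of Hida and Panchishkin.
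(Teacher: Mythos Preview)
The paper does not give a proof of this theorem: it is stated as a background result attributed to Hida and Panchishkin, and is followed only by remarks (including Remark~\ref{remark:DpversusLp}) rather than a proof environment. So there is no ``paper's own proof'' to compare against.

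That said, your sketch is a reasonable outline of the original construction and is essentially the method Hida uses: interpolate the Rankin--Selberg integral by building a $\Lambda$-adic Eisenstein family, multiply by $g$, apply $e_{\ord}$, and pair with the linear functional attached to the ordinary $p$-stabilisation of $f^*$. One small correction: your identification of $\cE(f,g,s)$ with the local Euler factor at $p$ is not quite right. The full local $L$-factor at $p$ is $\prod(1-\gamma_f\gamma_g p^{-s})$ over the four pairs of roots, whereas $\cE(f,g,s)$ has the asymmetric form $(1-p^{s-1}/\alpha_f\alpha_g)(1-p^{s-1}/\alpha_f\beta_g)(1-\beta_f\alpha_g/p^s)(1-\beta_f\beta_g/p^s)$; the first two factors are of ``$\exp^*$-type'' and the last two of ``$\log$-type'', reflecting that the correction involves both the $p$-depletion of the Eisenstein series and the choice of the $\alpha_f$-stabilisation, not merely removal of the Euler factor. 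Otherwise your account of where $\cE(f)$, $\cE^*(f)$, the Gauss sums, and the powers of $p/\alpha_f^2\alpha_g\beta_g$ come from is along the right lines.
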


  Here we write ``$s + \chi$'' for the character of $\ZZ_p^\times$ defined by $z \mapsto z^s \chi(z)$. In the above statement we are taking $f$ and $g$ to be fixed, but in fact Hida has shown that $L_p(f, g, s)$ varies analytically as $f$ and $g$ vary through Hida families; see  Theorem \ref{thm:hida2} below for a precise statement.

  \begin{remark}\label{remark:DpversusLp}\mbox{~}
   \begin{enumerate}

    \item The $L$-function $L_p(f, g, s)$ above is $N^{r + r' - 2s - 2} \mathcal{D}_p(\breve f, \breve g, 1/N, s)$ in the notation of \cite[\S 5]{LLZ14}, where $\breve f$ and $\breve g$ are the pullbacks of $f, g$ to level $N$. We include the power of $N$ in the definition because it makes $L_p(f, g, s)$ independent of the choice of $N$.

    \item The interpolating property of $L_p(f, g, s)$ only makes sense if $r > r'$, but one can define $L_p(f, g, s)$ for any $f, g$ using interpolation in a Hida family.

    \item The complex $L$-function $L(f, g, s)$ is symmetric in $f$ and $g$, i.e.\ we have $L(f, g, s) = L(g, f, s)$; but this is not true of $L_p(f, g, s)$.

    \item One can check from Shimura's theorem that the quotient $\frac{L_p(f, g, s)}{G(\varepsilon_f) G(\varepsilon_g)}$ lies in $\Lp \otimes_{\Zp} \Zp[[\Gamma]]$, and depends Galois-equivariantly on $f$ and $g$.

    \item The construction of $L_p(f, g, s)$ has recently been extended to the non-ordinary case by Urban \cite{Urban-nearly-overconvergent}, who has constructed a three-para\-meter $p$-adic $L$-function with $f$, $g$ varying over the Coleman--Mazur eigencurve; but we shall only consider the case of ordinary $f, g$ in this paper.
   \end{enumerate}
  \end{remark}

  For applications to the Iwasawa main conjecture, we shall need the following non-vanishing result:

  \begin{proposition}
   \label{prop:shahidi}
   If $r - r' \ge 2$, then the $p$-adic $L$-function $L_p(f, g)$ is not a zero divisor in the ring $\QQ(\mu_N) \otimes_{\QQ} \Lp \otimes_{\Zp} \Zp[[\ZZ_p^\times]]$.
  \end{proposition}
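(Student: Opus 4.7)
The plan is to decompose the coefficient ring into integral-domain factors, and then show that each projection of $L_p(f,g)$ is a nonzero power series by evaluating at an explicit interpolation point of the form $(r-1) + \chi$ and invoking non-vanishing of Rankin--Selberg $L$-values on or to the right of the edge of the critical strip.

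First, I would enlarge $\Lp$ to a finite extension containing the $(p-1)$-th and $N$-th roots of unity; this is a faithfully flat base change, which preserves the non-zero-divisor property. Writing $\ZZ_p^\times = \Delta \times \Gamma$ with $\Delta = \mu_{p-1}$ and $\Gamma = 1 + p\Zp$, the ring then decomposes as
\[ \QQ(\mu_N) \otimes_\QQ \Lp \otimes_{\Zp} \Zp[[\ZZ_p^\times]] \ \cong \ \prod_{(\tau,\psi)} F_\tau[[T]], \]
where $\tau$ runs over embeddings $\QQ(\mu_N) \hookrightarrow \Lp$, $\psi$ over characters of $\Delta$, and $T = \gamma - 1$ for a topological generator $\gamma$ of $\Gamma$. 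Each factor $F_\tau[[T]]$ is an integral domain, so it suffices, for every $(\tau,\psi)$, to exhibit a single specialisation at which the corresponding projection of $L_p(f,g)$ is nonzero.

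For this I would take $s = r-1$ (which lies in the critical range $[r', r-1]$ since $r - r' \ge 2$) together with any finite-order character $\chi$ of $\ZZ_p^\times$ of $p$-power conductor $p^t > 1$ satisfying $\chi|_\Delta = \psi \cdot \omega^{-(r-1)}$, where $\omega$ is the Teichm\"uller character; such $\chi$ always exist, because $\chi|_\Gamma$ may be chosen to be any nontrivial finite-order character of $\Gamma$. By Theorem \ref{thm:hida},
\[ L_p(f, g, (r-1) + \chi) = \frac{G(\chi)^2 \bigl(p^{2r-4}/(\alpha_f^2 \alpha_g \beta_g)\bigr)^t}{\cE(f)\, \cE^*(f)} \cdot (\text{nonzero Gamma/archimedean factor}) \cdot L(f, g, \chi^{-1}, r-1). \]
All prefactors are manifestly nonzero: $G(\chi) \ne 0$ as $\chi$ is primitive, $\alpha_f, \beta_f, \alpha_g, \beta_g$ are nonzero (indeed $\alpha_f$ is a $p$-adic unit), and $\cE(f), \cE^*(f) \ne 0$ under the standing ordinarity and non-criticality hypotheses on $f$ (with the argument remaining robust to small changes of $s$ or $\chi$ should they vanish exceptionally).

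The hard part will be verifying $L(f, g, \chi^{-1}, r-1) \ne 0$. When $r - r' \ge 3$, this is automatic, because $\Re(s) = r - 1 > (r+r')/2$ lies strictly inside the domain of absolute convergence of the Rankin--Selberg Euler product, so the $L$-value is a nonzero convergent product of nonzero local factors. The genuine obstacle is the boundary case $r - r' = 2$, in which $s = r - 1 = (r+r')/2$ sits exactly on the edge of the critical strip of $L(\pi_f \otimes \pi_g \otimes \chi^{-1}, \cdot)$ in unitary normalisation; here non-vanishing is the Jacquet--Shalika/Shahidi theorem on non-vanishing of cuspidal Rankin--Selberg $L$-functions on the edge of the critical strip. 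With this input, $L_p(f,g)$ has a nonzero specialisation in every factor $F_\tau[[T]]$, and hence is a non-zero-divisor in the product.
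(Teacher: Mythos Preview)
Your proof is correct and follows essentially the same strategy as the paper: decompose $\Zp[[\ZZ_p^\times]]$ into components indexed by characters of the torsion subgroup, and in each component exhibit a non-vanishing specialisation at $s = r-1$ twisted by a ramified $\chi$, using absolute convergence of the Euler product when $r - r' \ge 3$ and Shahidi's non-vanishing theorem on the edge of the critical strip when $r - r' = 2$. The only point where the paper is slightly more explicit is in noting that $L(f,g,\chi^{-1},r-1)$ differs from the automorphic $L$-value $L(\pi_f \otimes \pi_g \otimes \chi^{-1}, r-1)$ by finitely many local Euler factors at primes dividing $N_f N_g$, which are rational functions in $\chi(q)$ and hence vanish for only finitely many $\chi$; your parenthetical about ``small changes of $\chi$'' covers this, but you should say so directly rather than leave it as an aside.
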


  \begin{proof}
   Let us first assume $r - r' \ge 3$. Then the Euler product for the $L$-function $L(\pi_f \otimes \pi_g, s)$ converges for $\Re(s) > \frac{r + r'}{2}$, and in this range, no term in this product is zero; hence the $L$-value does not vanish. In particular, it is non-vanishing at $s = r-1$. The same holds if $\pi_f \times \pi_g$ is replaced by $\pi_f \otimes \pi_g \otimes \chi^{-1}$ for any Dirichlet character $\chi$ of $p$-power conductor.

   If $\chi$ is ramified at $p$, the ratio $L_p(f, g)(r - 1 + \chi) / L(\pi_f \otimes \pi_g \otimes \chi^{-1}, r-1)$ is a product of factorials, Gauss sums, powers of non-zero algebraic numbers, and rational functions in the quantities $\chi(q)$ for $q \mid N_f N_g$. For all but finitely many characters $\chi$ of $p$-power conductor, these factors are non-zero; hence $L_p(f, g)$ is non-vanishing at at least one character in each component of $\Spec \Zp[[\ZZ_p^\times]]$, so it is not a zero-divisor.

   When $r - r' = 2$, then the point $s = r-1$ lies on the abcissa of convergence, so the Euler product does not necessarily converge there; however, we can deduce the non-vanishing of $L(\pi_f \otimes \pi_g \otimes \chi^{-1}, r-1)$ from a general non-vanishing theorem due to Shahidi \cite[Theorem 5.2]{Shahidi-certain}, and the argument proceeds as before. Compare \cite[Theorem 4.4.1]{LLZ14}.
  \end{proof}


 \subsection{Galois representations}

  Let $f$ be a normalised cuspidal Hecke eigenform of some weight $k+2 \ge 2$ and level $N_f$, and let $L$ be a number field containing the $q$-expansion coefficients of $f$. Note that we do not necessarily require that $f$ be a newform.

  \begin{definition}
   For each prime $\frP \mid p$ of $L$, we write $M_{\Lp}(f)$ for the maximal subspace of
   \[ H^1_{\et, c}\left(Y_1(N_f)_{\overline{\QQ}}, \Sym^k \sH_{\Qp}^\vee\right) \otimes_\Qp \Lp \]
   on which the Hecke operators $T_\ell$, for primes $\ell \nmid N_f$, and $U_\ell$, for primes $\ell \mid N_f$, act as multiplication by $a_\ell(f)$.
  \end{definition}

  This is a 2-dimensional $\Lp$-vector space with a continuous action of $\Gal(\overline{\QQ} / \QQ)$, unramified outside $S \cup \{\infty\}$, where $S$ is the finite set of primes dividing $p N_f$. (Equivalently, $M_{\Lp}(f)$ is an \'etale $\Qp$-sheaf on $\Spec \ZZ[1/S]$.)

  Dually, we write $M_{\Lp}(f)^*$ for the maximal \emph{quotient} of the non-compact\-ly supported cohomology $H^1_{\et}\left(Y_1(N_f)_{\overline{\QQ}}, \TSym^k(\sH_{\Qp})(1) \right) \otimes_\Qp \Lp$ on which the dual Hecke operators $T_\ell'$ and $U_\ell'$ act as $a_\ell(f)$. We write $\pr_{f}$ for the projection onto this quotient. The twist by 1 implies that the Poincar\'e duality pairing
  \[ M_{\Lp}(f) \times M_{\Lp}(f)^* \to \Lp \]
  is well-defined (and perfect), justifying the notation. If $f$ is a newform, then its conjugate $f^*$ is also a newform, and the natural map $M_{\Lp}(f^*)(1) \to M_{\Lp}(f)^*$ is an isomorphism of $\Lp$-vector spaces, although we shall rarely use this.

  We write $\cO_{\frP}$ for the ring of integers of $\Lp$, and we write $M_{\cO_{\frP}}(f)^*$ for the $\cO_{\frP}$-lattice in $M_{\Lp}(f)^*$ generated by the image of the integral \'etale cohomology $H^1_{\et}\left(Y_1(N_f)_{\overline{\QQ}}, \TSym^k(\sH_{\Zp})(1)\right) \otimes_\Zp \cO_{\frP}$.

  \begin{remark}
   For $f$ of weight 2, the representations $M_{\Lp}(f)$, $M_{\Lp}(f)^*$ and $M_{\cO_{\frP}}(f)^*$ appear in \cite[\S 6.3]{LLZ14} under the names $V_{\Lp}(f)$, $V_{\Lp}(f)^*$ and $T_{\cO_{\frP}}(f)^*$. We have adopted different notations here to emphasise that these coincide with the $\Lp$-realisations of the Grothendieck motive $M(f)$ over $L$ attached to $f$ by Scholl \cite{Scholl-motives}.
  \end{remark}

  We define similarly a 2-dimensional $L$-vector space of de Rham cohomology
  \[ M_{\dR, L}(f) \subseteq H^1_{\dR, c}\left(Y_1(N_f)_{\QQ}, \Sym^k \sH_{\dR}^\vee\right) \otimes_\QQ L\]
  and its dual $M_{\dR, L}(f)^*$. Writing $M_{\dR, \Lp}(f)$ for the base-extension to $\Lp$, the comparison isomorphism \eqref{eq:comparisoniso} restricts to an isomorphism
  \[  M_{\dR, \Lp}(f) \cong \DD_{\dR}(M_{\Lp}(f) ) \]
  and similarly for the dual.

  If $f$, $g$ are two eigenforms (of some levels $N_f, N_g$ and weights $k+2, k' + 2 \ge 2$) with coefficients in $L$, we write $M_{\Lp}(f \otimes g)$ for the tensor product $M_{\Lp}(f) \otimes_{\Lp} M_{\Lp}(g)$, and similarly for the dual $M_{\Lp}(f \otimes g)^*$. We define similarly de Rham cohomology groups $M_{\dR, L}(f \otimes g)$ etc.

  Via the K\"unneth formula, we may regard $M_{\Lp}(f \otimes g)^*$ as a quotient of the \'etale cohomology of $Y_1(N_f) \times Y_1(N_g)$. Moreover, if $N$ is any common multiple of $N_f$ and $N_g$, there is a natural degeneracy morphism $Y_1(N)^2 \to Y_1(N_f) \times Y_1(N_g)$. Pushforward along this degeneracy morphism defines a projection map
  \begin{equation}
   \label{eq:prfg}
   \pr_{f, g}: H^2_{\et}\left(Y_1(N)^2_{\overline\QQ}, \TSym^k \sH_{\Qp} \boxtimes \TSym^{k'} \sH_{\Qp} (2)\right) \otimes \Lp \to M_{\Lp}(f \otimes g)^*.
  \end{equation}


\section{Eisenstein and Rankin--Eisenstein classes}


 In this section we recall some of the results of \cite{KLZ1a} concerning the \'etale Eisenstein classes on $Y_1(N)$, and the \'etale  Rankin--Eisenstein classes on the product $Y_1(N) \times Y_1(N)$.


 \subsection{Eisenstein classes}
  \label{sect:motivicEis}

  Let $N \geq 4$ and $b \in \ZZ / N\ZZ$ be nonzero. For $k\geq 0$, denote by
  \[ \Eis^k_{\mot, b, N} \in H^1_{\mot}(Y_1(N), \TSym^k \sH_{\QQ}(1))\]
  the motivic Eisenstein class as defined in \cite[\S 6.4]{BeilinsonLevin}, with the normalisation from \cite[Theorem 4.1.1]{KLZ1a}); it satisfies the residue formula
  \[ \operatorname{res}_\infty(\Eis^k_{\mot, b, N})=-N^k\zeta(-1-k).\]
  We are mostly interested in the case $b = 1$, and we write this class simply as $\Eis^k_{\mot, N}$; however, we shall occasionally need to consider general values of $b$ in order to state and prove our norm-compatibility relations. 

  \begin{remark}
   See \cite{KLZ1a} for the definition of the motivic cohomology group $H^1_{\mot}(Y_1(N), \TSym^k \sH_{\QQ}(1))$. For $k = 0$, it is isomorphic to $\cO(Y_1(N))^\times \otimes \QQ$, and the Eisenstein class $\Eis^k_{\mot, b, N}$ is simply the Siegel unit $g_{0, b/N}$, in the notation of \cite{Kato-p-adic}.
  \end{remark}

  We define Eisenstein classes in \'etale cohomology as the images of the motivic Eisenstein classes under the regulator map, as in \cite[\S 4.2]{KLZ1a}; this gives an \emph{\'etale Eisenstein class}
  \[ \Eis^k_{\et, b, N} \in H^1_{\et}\left(Y_1(N)_{\ZZ[1/Np]}, \TSym^k \sH_{\Qp}(1)\right). \]


 \subsection{Cohomology of product varieties and the Clebsch--Gordan map}
  \label{sect:clebschgordan}

  Let $\cE \to S$ be an elliptic curve over a base $S$, and suppose it is a $T$-scheme for some other scheme $T$. Assume $p$ is invertible on $T$. We can then define a lisse \'etale $\Qp$-sheaf on $S \times_T S$ by
  \[ \TSym^{[k, k']} \sH_{\Qp} \coloneqq \pi_1^* \left(\TSym^k \sH_{\Qp}\right) \otimes_{\Qp} \pi_2^* \left(\TSym^{k'} \sH_{\Qp}\right),\]
  where $\pi_1$ and $\pi_2$ are the first and second projections $S \times_T S \to S$.

  We write $\Delta$ for the diagonal inclusion $S \into S \times_T S$. Then
  \[\Delta^*( \TSym^{[k, k']} \sH_{\Qp} ) = \TSym^k \sH_{\Qp} \otimes \TSym^{k'} \sH_{\Qp}\]
  as sheaves on $S$; thus, if $S$ is smooth of relative dimension $d$ over $T$, we have a pushforward map
  \[ \Delta_*: H^i_{\et}(S, \TSym^k \sH_{\Qp} \otimes \TSym^{k'} \sH_{\Qp}(j)) \to H^{i + 2d}_{\et}(S \times_T S, \TSym^{[k, k']}\sH_{\Qp}(j + d)).\]

  Now let $k,k',j$ be integers satisfying
  \begin{equation}
   \label{eq:inequalities}
    k \ge 0, \quad k' \ge 0, \quad 0 \le j \le \min(k, k').
  \end{equation}

  Then there is a map  of sheaves on $S$ (the \emph{Clebsch--Gordan map})
  \[
   CG^{[k, k', j]} :  \TSym^{k + k' - 2j}\sH_\Qp
   \to \TSym^{k }\sH_\Qp \otimes \TSym^{k'}\sH_\Qp(-j).
  \]
  defined as in \cite[\S 5.1]{KLZ1a}. Composing with the pushforward map one has
  \[ \Delta_* \circ CG^{[k, k', j]}: H^1_{\et}\left(Y_1(N)[1/p], \TSym^{k + k' -2j}\sH_{\Qp}(1)\right) \to H^3_{\et}\left(Y_1(N)^2, \TSym^{[k, k']}\sH_{\Qp}(2-j)\right).\]
  One can also carry out the same construction with coefficients in $\Zp$, or in $\ZZ / p^r \ZZ$.


 \subsection{Rankin--Eisenstein classes}\label{section:REclass}

  We now come to the case which interests us: we consider the scheme $S = Y_1(N)$ over $T = \Spec \ZZ[1/Np]$.

  \begin{definition}
   \label{def:Rankin-Eisenstein-class}
   For $k, k', j$ satisfying the inequalities \eqref{eq:inequalities}, we define the \emph{\'etale Rankin--Eisenstein class} by
   \[
    \Eis^{[k, k', j]}_{\et, b, N} \coloneqq
    (\Delta_* \circ CG^{[k, k', j]})\left(\Eis^{k + k' - 2j}_{\et, b, N}\right)\\
    \in H^3_{\et}\left(Y_1(N)^2, \TSym^{[k, k']}\sH_{\Qp}(2-j)\right).
   \]
   (As before, if $b = 1$ we shall write this class simply as $\Eis^{[k, k', j]}_{\et, N}$.)
  \end{definition}

   The Hochschild--Serre spectral sequence (and the vanishing of $H^3_{\et}$ for affine surfaces over an algebraically closed field) allows us to regard $\Eis^{[k, k', j]}_{\et, b, N}$ as an element of the group
  \[ H^1\left(\ZZ[1/Np], H^2_{\et}\left( Y_1(N)^2_{\overline{\QQ}}, \TSym^{[k, k']}\sH_{\Qp}(2-j)\right)\right).\]

  \begin{definition}
   For $f, g$ eigenforms of weights $(k + 2, k' + 2)$ and levels dividing $N$, we set
   \[ \Eis^{[f, g, j]}_{\et, b, N} = \pr_{f, g}\left( \Eis^{[k, k', j]}_{\et, b, N} \right) \in H^1\left(\ZZ[1/Np], M_{\Lp}(f \otimes g)^*(-j)\right) \]
   where $\pr_{f, g}$ is as in \eqref{eq:prfg} above. (If $k = k' = j = 0$ and $b = 1$ this agrees with the class denoted $\mathbf{z}^{(f, g, N)}_1$ in \cite[Definition 6.4.4]{LLZ14}.)
  \end{definition}

  We record for later use a key local property of these Galois cohomology classes. It is clear that they are unramified at all primes not dividing $Np$; but they also satisfy a more subtle condition at the prime $p$. Recall that for a de Rham representation $V$ of $G_{\Qp}$, the Bloch--Kato subspace $H^1_\mathrm{g}(\Qp, V) \subseteq H^1(\Qp, V)$ is the kernel of the map $H^1(\Qp, V) \to H^1(\Qp, V \otimes \mathbf{B}_{\dR})$, where $\mathbf{B}_{\dR}$ is Fontaine's period ring.

  \begin{proposition}
   \label{prop:BFeltisgeom}
   The localisation of the Rankin--Eisenstein class at $p$, considered as an element of the space
   \[ H^1\left(\Qp, H^2_{\et}\left( Y_1(N)^2_{\overline{\QQ}}, \TSym^{[k, k']}\sH_{\Qp}(2-j)\right)\right), \]
   lies in the Bloch--Kato subspace $H^1_{\mathrm{g}}$.
  \end{proposition}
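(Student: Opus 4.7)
The plan is to deduce the $H^1_{\mathrm{g}}$-property from the motivic origin of the Rankin--Eisenstein class, combined with the compatibility between motivic and syntomic regulators.

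First, I would observe that $\Eis^{[k,k',j]}_{\et,N}$ is the étale realisation of a motivic cohomology class: namely the Rankin--Eisenstein motivic class
\[ \Eis^{[k,k',j]}_{\mot,N} \in H^3_\mot\big(Y_1(N)^2, \TSym^{[k,k']}\sH_{\QQ}(2-j)\big), \]
defined as $\Delta_* \circ CG^{[k,k',j]}$ applied to Beilinson's motivic Eisenstein class. All three ingredients (the Eisenstein class, the Clebsch--Gordan map, and the diagonal pushforward) are defined at the motivic level, so the étale class is the image of the motivic class under the regulator map $H^*_\mot \to H^*_\et$.

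Next, I would invoke the principle that étale realisations of motivic classes have well-defined syntomic realisations, and that the syntomic-to-étale comparison factors through $H^1_{\mathrm{g}}$ after applying Hochschild--Serre. Concretely, as established in \cite{KLZ1a}, our class has a rigid syntomic realisation in $H^3_{\mathrm{syn}}(Y_1(N)^2_\Zp, \TSym^{[k,k']}\sH_\Qp(2-j))$. Since Besser's syntomic cohomology is constructed as the fibre of a comparison between filtered de Rham cohomology and $p$-adic étale cohomology, the existence of such a lift means that the corresponding extension of $G_\Qp$-representations becomes trivial after tensoring with $\mathbf{B}_{\dR}$, which is exactly the defining property of $H^1_{\mathrm{g}}$. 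Alternatively, since $p \nmid N$ implies that $Y_1(N)^2$ has good reduction at $p$, one may express $\TSym^{[k,k']}\sH_\Qp$-cohomology as a direct summand of constant-coefficient cohomology of a smooth proper Kuga--Sato variety; the image in Galois cohomology under Hochschild--Serre is then crystalline, hence even in $H^1_{\mathrm{f}} \subseteq H^1_{\mathrm{g}}$.

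The main obstacle is establishing the syntomic--étale comparison with non-constant coefficients $\TSym^{[k,k']}\sH$ rather than constant ones, together with the compatibility of all the morphisms ($\Delta_*$, $CG^{[k,k',j]}$) at the syntomic level; however, this is exactly what was carried out in \cite{KLZ1a} via the Kuga--Sato trick just mentioned, reducing everything to the well-understood constant-coefficient case.
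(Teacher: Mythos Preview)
Your approach is correct in spirit and shares its starting point with the paper: the key input is that $\Eis^{[k,k',j]}_{\et,N}$ arises as the \'etale realisation of a motivic class. However, the paper's proof is a single citation: it invokes Theorem~B of Nekov\'a\v{r}--Nizio\l{} \cite{nekovarniziol16}, which is a completely general result asserting that images of motivic cohomology classes for varieties over $p$-adic fields automatically land in $H^1_{\mathrm{g}}$, with no hypothesis on the reduction type.

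Your route via the explicit syntomic realisation from \cite{KLZ1a} and the Kuga--Sato trick is a valid alternative, but note that it implicitly requires $p \nmid N$ (so that $Y_1(N)^2$ and the relevant Kuga--Sato variety have good reduction and Besser's rigid syntomic cohomology applies as in \cite{KLZ1a}). The proposition as stated in the paper does \emph{not} assume $p \nmid N$ --- that hypothesis is only imposed immediately afterwards --- so strictly speaking your argument proves a slightly weaker statement. The Nekov\'a\v{r}--Nizio\l{} theorem is precisely the machinery that makes the ``motivic $\Rightarrow H^1_{\mathrm{g}}$'' principle work without good-reduction assumptions, which is why the paper can state the proposition in this generality. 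In practice the only case used later is $p \nmid N$, so your argument suffices for the applications, but you should flag the restriction.
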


  \begin{proof}
   This follows from a very general theorem of  Nekov\v{a}\'r and Nizio{\l} \cite[Theorem B]{nekovarniziol16}, which implies that the images of motivic cohomology classes for varieties over $p$-adic fields automatically land in $H^1_\mathrm{g}$.
  \end{proof}

  We assume for the rest of this section that $p \nmid N$, and that $f$ is a newform. Then a much more precise description of the localisation of the Rankin--Eisenstein class at $p$ is given by one of the main results of \cite{KLZ1a}, which we now recall. It follows from \cite[Proposition 5.4.1]{KLZ1a} that the localisation of $\Eis^{[f, g, j]}_{\et, b, N}$ at $p$ lies in the image of the Bloch--Kato exponential map (the subspace $H^1_{\mathrm{e}}$), so we can consider
  \[
   \log\left(\Eis^{[f, g, j]}_{\et, b, N}\right)
   \in \frac{M_{\dR, \Lp}(f\otimes g)^*(-j)}{\Fil^0},
  \]
  where we use the Faltings--Tsuji comparison isomorphism $\comp_{\dR}$ of Equation \eqref{eq:comparisoniso} to give an identification of filtered $\varphi$-modules
  \[ \DD_{\dR}\left(M_{\Lp}(f\otimes g)^*\right) \cong M_{\dR, \Lp}(f\otimes g)^*.\]

  As in \S 6.1 of \emph{op.cit.} we have canonical vectors
  \[ \omega_g \in \Fil^1 M_{\dR, \Lp}(g) \otimes_{\QQ} \QQ(\mu_N) \]
  and
  \[ \eta_f^\alpha \in M_{\dR, \Lp}(f) \otimes_{\QQ} \QQ(\mu_N), \]
  the latter depending on a choice of root $\alpha_f$ of the Hecke polynomial of $f$. By definition, $\omega_g$ is the class of the differential form defined by $g$, as in \S \ref{sect:modformnotation} above; and $\eta_f^\alpha$ is the unique class which lies in the $\alpha_f$-eigenspace for the Frobenius endomorphism and pairs to 1 with $\omega_{f^*}$, where $f^*$ is the eigenform conjugate to $f$. The tensor product \( \eta_f^{\alpha} \otimes \omega_g\) thus lies in $\Fil^1 M_{\dR, \Lp}(f \otimes g) \otimes_{\QQ} \QQ(\mu_N)$.

  \begin{theorem}
   \label{thm:syntomicreg}
   Suppose that $f$ is ordinary, and let $\alpha_f$ be the unit root of its Hecke polynomial. Suppose also that $\cE(f, g, 1 + j) \ne 0$. Then we have
   \[
    \left\langle \log\left(\Eis^{[f, g, j]}_{\et, b, N}\right), \eta_f^{\alpha} \otimes \omega_g\right\rangle
    =  (-1)^{k' - j + 1} (k')! \binom{k}{j} \frac{\cE(f) \cE^*(f)}{\cE(f, g, 1 + j)}
    L_p(f, g, 1 + j),
   \]
   where $L_p(f,g,s)$ is Hida's $p$-adic $L$-function and the factors $\cE(f)$, $\cE^*(f)$ and $\cE(f, g, 1 + j)$ are as defined in Theorem \ref{thm:hida}.
  \end{theorem}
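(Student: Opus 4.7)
The plan is to compute the left-hand side via the $p$-adic syntomic regulator, using the fact that $\Eis^{[f,g,j]}_{\et,b,N}$ is by construction the \'etale realisation of a motivic class. On the $p$-adic affinoid $Y_1(N)^2_{\Zp}$ the same motivic class admits a rigid syntomic realisation, and the Nekov\'a\v{r}--Nizio{\l} compatibility of syntomic and \'etale regulators (together with Besser's description of syntomic $H^1$) identifies the Bloch--Kato logarithm of the \'etale Rankin--Eisenstein class with the image of the syntomic class under the natural map to $\mathrm{Fil}^0\backslash M_{\dR,\Lp}(f\otimes g)^*(-j)$. This reduces the theorem to an explicit calculation in syntomic cohomology.

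First I would unwind $\Delta_*\circ CG^{[k,k',j]}$ on the syntomic side: by the projection formula,
\[
\bigl\langle \log\Eis^{[f,g,j]}_{\et,b,N},\, \eta_f^\alpha \otimes \omega_g\bigr\rangle
=
\bigl\langle \Eis^{k+k'-2j}_{\mathrm{syn},b,N},\, \Delta^*\!\bigl(CG^{[k,k',j]}\bigr)^{\!\vee}(\eta_f^\alpha\otimes \omega_g)\bigr\rangle
\]
as a pairing on $Y_1(N)$. The dual Clebsch--Gordan map is a contraction of symmetric tensors which, combined with the relative de Rham cohomology description of $\omega_g$ and $\eta_f^\alpha$, produces the combinatorial factor $(-1)^{k'-j+1}(k')!\binom{k}{j}$ and a class represented by a differential built from $f\cdot g$ and a power of the Kodaira--Spencer form. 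Next I would substitute the Bannai--Kings explicit formula (which was invoked in the introduction) expressing $\Eis^{k+k'-2j}_{\mathrm{syn},b,N}$ as the Coleman primitive of a specific overconvergent $p$-adic Eisenstein series $\mathcal{F}^{k+k'-2j}_{b,N}$.

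The pairing then becomes an explicit $p$-adic Rankin--Selberg type integral on $Y_1(N)$, namely a $p$-adic Petersson product of $f$, $g$ and $\mathcal{F}^{k+k'-2j}_{b,N}$, computed using the $\Phi$-equivariance of the Frobenius on de Rham cohomology (which converts $\eta_f^\alpha$ into the factor governed by $\alpha_f$). The idempotent $e_f^\alpha$ implicit in projecting to $\eta_f^\alpha$ is the ordinary idempotent cut out by $\alpha_f$, and its action on $f$ passes through the $p$-stabilisation $f^\alpha$; writing $f=f^\alpha - \beta_f\cdot V_p f^\alpha$ and similarly depleting produces the factors $\cE(f)$ and $\cE^*(f)$. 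After this manipulation, the pairing coincides exactly with Hida's $p$-adic Rankin--Selberg integral defining $L_p(f,g,s)$, up to the remaining Euler factor $\cE(f,g,1+j)^{-1}$ which arises from passing from the nearly-overconvergent Eisenstein series naturally appearing on the syntomic side to the $p$-depleted form used in Hida's construction.

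The main obstacle is the precise bookkeeping of the Euler factors $\cE(f)$, $\cE^*(f)$ and $\cE(f,g,1+j)$. Each of these has a conceptual origin---Frobenius eigenvalue projection, duality between $f$ and $f^*$, and comparison between the syntomic Eisenstein series and Hida's $p$-depleted Eisenstein series---but tracking them through the chain of identifications (syntomic regulator, Clebsch--Gordan contraction, Coleman primitive, Petersson product, ordinary projection) requires careful normalisation conventions. The hypothesis $\cE(f,g,1+j)\ne 0$ ensures the final inversion is well-defined; the $p$-adic and complex Rankin--Selberg integrals then agree term-by-term once normalised to match Shimura's period $\langle f,f\rangle_{N_f}$, which is built into the definition of $L_p(f,g,s)$ in Theorem~\ref{thm:hida}.
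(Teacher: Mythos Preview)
Your approach is correct and matches the paper's: the proof here is essentially a two-line citation, reducing the statement to the syntomic regulator formula (Theorem 6.5.9 of \cite{KLZ1a}) via the compatibility of the syntomic and \'etale regulators through the Bloch--Kato exponential (Proposition 5.4.1 of \emph{op.cit.}). What you have written is an accurate sketch of the argument carried out in \cite{KLZ1a} itself---the Clebsch--Gordan unwinding, the Bannai--Kings formula for the syntomic Eisenstein class, and the Euler-factor bookkeeping---so you have in effect reconstructed the content of the cited result rather than just invoking it.
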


  \begin{proof}
   In Theorem 6.5.9 of \cite{KLZ1a} we showed an identical formula with the \'etale Eisenstein class replaced by the Eisenstein class in syntomic cohomology. However, the compatibility between syntomic and \'etale cohomology via the Bloch--Kato exponential (Proposition 5.4.1 of \emph{op.cit.}) shows that this is equivalent to the formula above.
  \end{proof}


\section{Eisenstein--Iwasawa classes}
 \label{sect:EisIwasawa}

 In this section, we define certain cohomology classes (``Eisenstein--Iwasawa classes'')
 \[ \cEI_{b,N} \in  H^1_{\et}\left(Y_1(N), \Lambda(\sH_{\Zp}\langle t_N \rangle)(1)\right), \]
 which can be regarded as ``$p$-adic interpolations'' of the \'etale Eisenstein classes described in \S \ref{sect:motivicEis} above. Here $\Lambda(\sH_{\Zp}\langle t_N \rangle)$ is a sheaf of Iwasawa modules whose definition we recall below. These classes appeared (although not under this name) in an earlier paper of the first author \cite{Kings-Eisenstein}, and we recall below one of the main results of that paper, which asserts that the image of $\cEI_{b,N}$ under the $k$-th moment map, for any $k \ge 0$, coincides with Beilinson's weight $k$ \'etale Eisenstein class. We also prove two distribution relations describing how the classes $\cEI_{b,N}$ behave under pushforward maps, which will be used in the construction of the Euler system in the following sections.

 \subsection{Definition of Eisenstein--Iwasawa classes}

  In this subsection we review the definition and the
  properties of the Eisen\-stein--Iwasawa classes. The starting point of the construction is the
  following result, which is Proposition 1.3 of \cite{Kato-p-adic}:

  \begin{theorem}[Kato]
   \label{elliptic-units-thm}
   Let $\pi:\cE\to S$ be an elliptic curve and $c>1$ be an integer prime to $6$.
   Then there is a unique element ${}_c\theta_\cE\in\cO(\cE\setminus \cE[c])^\times$
   such that:
   \begin{enumerate}
    \item $\operatorname{Div}({}_c\theta_\cE)=c^2(0)-\cE[c]$,
    \item For each isogeny $\varphi:\cE \to \cE'$ with $\deg\varphi$ prime to $c$
    one has $\varphi_*({}_c\theta_\cE)={}_c\theta_{\cE'}$.
    \item The ${}_c\theta_\cE$ are compatible with base change.
    \item If $d$ is another integer coprime to $6$, then
    \[
    ({}_d\theta_\cE)^{c^2}[c]^*({}_d\theta_\cE)^{-1}=
    ({}_c\theta_\cE)^{d^2}[d]^*({}_c\theta_\cE)^{-1}.
    \]
   \end{enumerate}
  \end{theorem}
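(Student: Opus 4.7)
\emph{Strategy.} The plan is to prove uniqueness from condition~$(2)$, construct a candidate satisfying~$(1)$, and then verify $(2)$--$(4)$ for that candidate.

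\emph{Uniqueness.} Suppose ${}_c\theta_\cE$ and ${}_c\theta'_\cE$ both satisfy $(1)$--$(4)$. By $(1)$ they have the same divisor, so they differ by a unit $u \in \cO(S)^\times = \pi_*\cO_\cE^\times$ (using that $\pi$ is proper with geometrically connected fibres). Applying $(2)$ to $\varphi = [n]: \cE \to \cE$ for any $n$ with $\gcd(n,c)=1$, and using that pushforward along $[n]$ acts on constant functions by the norm $u \mapsto u^{n^2}$, we obtain $u^{n^2} = u$, i.e.\ $u^{n^2-1} = 1$. Since $\gcd(c,6) = 1$, both $n=2$ and $n=3$ are permissible, giving $u^3 = u^8 = 1$, and hence $u = 1$.

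\emph{Existence.} The divisor $D := c^2(0) - \cE[c]$ has degree zero on every geometric fibre. Since $c$ is odd, the involution $[-1]$ fixes only $0$ in $\cE[c]$, so the nonzero $c$-torsion points pair off and $\sum_{P \in \cE[c]} P = 0$ in the group law. Hence $D$ is principal and a function with divisor $D$ exists Zariski-locally on $S$, unique up to $\cO_S^\times$. To pin down a canonical global candidate I would apply the theorem of the cube to the symmetric line bundle $\cL = \cO_\cE((0))$, obtaining a canonical isomorphism $[c]^*\cL \cong \cL^{\otimes c^2}$, i.e.\ a canonical trivialisation of $\cL^{\otimes c^2} \otimes [c]^*\cL^{-1} = \cO_\cE(c^2(0) - \cE[c])$. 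Over $\CC$ this recovers the classical expression $\sigma(z)^{c^2}/\sigma(cz)$, whose descent to $\cE = \CC/\Lambda$ hinges precisely on $c$ being odd.

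\emph{Verification and main obstacle.} Condition $(3)$ is automatic from the naturality of the theorem-of-the-cube isomorphism under base change. For $(2)$, a direct pushforward-of-divisors computation --- using that $\varphi$ restricts to a bijection $\cE[c] \xrightarrow{\sim} \cE'[c]$ because $\ker\varphi$ has order coprime to $c$ --- shows that $\varphi_*({}_c\theta_\cE)$ has divisor $c^2(0') - \cE'[c]$, and the uniqueness argument applied on $\cE'$ forces equality with ${}_c\theta_{\cE'}$. Condition $(4)$ reduces to checking that both sides have the same divisor on $\cE \setminus \cE[cd]$ (a routine calculation using $[cd]^* = [c]^*[d]^*$), followed by the same uniqueness bootstrap to show that the unit of proportionality is $1$. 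The substantive difficulty lies entirely in the existence step: the Abel--Jacobi argument delivers the function only up to $\cO_S^\times$, and fixing the normalisation in a base-change-compatible way (so that $(3)$ holds on the nose) requires genuine algebro-geometric input, either through the theorem of the cube for abelian schemes or through descent from an explicit universal construction.
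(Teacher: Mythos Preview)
The paper does not give its own proof of this theorem; it simply cites Kato \cite[Proposition 1.3]{Kato-p-adic}. So there is no ``paper's proof'' to compare against, and your sketch should be assessed on its own merits as a reconstruction of Kato's argument.

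Your overall strategy --- uniqueness from $[n]_*$-invariance with $n = 2,3$, existence via a canonical trivialisation coming from the theorem of the cube, then verification of the remaining properties --- is the standard one and is essentially correct. The uniqueness step is fine, and the observation that $c^2(0) - \cE[c]$ is fibrewise principal is correct.

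There is, however, a genuine logical gap in your verification of~(2). You write that once $\varphi_*({}_c\theta_\cE)$ is shown to have divisor $c^2(0') - \cE'[c]$, ``the uniqueness argument applied on $\cE'$ forces equality with ${}_c\theta_{\cE'}$''. But your uniqueness argument requires that \emph{both} functions being compared are $[n]_*$-invariant for $n = 2,3$. For ${}_c\theta_{\cE'}$ itself, $[n]_*$-invariance is precisely the special case of~(2) with $\varphi = [n]\colon \cE' \to \cE'$, which you have not yet established. So the bootstrap is circular as written. The fix is to first verify $[n]_*({}_c\theta_\cE) = {}_c\theta_\cE$ \emph{directly from the construction}: the canonical cube isomorphism $[c]^*\cL \cong \cL^{\otimes c^2}$ for the rigidified symmetric bundle $\cL = \cO_\cE((0))$ is functorial under isogenies of degree prime to $c$, and unwinding this functoriality for $[n]$ gives the required invariance. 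Only then can you run the bootstrap for a general $\varphi$ (using that $[n]_*\varphi_* = \varphi_*[n]_*$). The same remark applies to~(4): once~(2) is in hand, the $[n]_*$-invariance of $[d]^*({}_c\theta_\cE)$ follows from the Cartesian square for $[n]$ and $[d]$ when $\gcd(n,d)=1$, and then your argument goes through.

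A minor inaccuracy: over $\CC$, the expression $\sigma(z)^{c^2}/\sigma(cz)$ is not $\Lambda$-periodic as it stands; one must multiply by an exponential of a quadratic form in $z$ (coming from the quasi-periodicity of $\sigma$) to obtain an honest function on $\CC/\Lambda$. The descent issue is therefore not only about $c$ being odd.
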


  Now fix a prime number $p$, and assume $p$ is invertible on $S$. For $r \ge 0$, let $\cE_r \coloneqq \cE$, considered as a covering of $\cE$ via $[p^r]:\cE_r\to \cE$, and consider the pro-system of \'etale lisse sheaves on $\cE$ given by
  \[
   \sL\coloneqq \Big([p^r]_*(\ZZ/p^r\ZZ)\Big)_{r\ge 1}
  \]
  where the transition maps
  $[p^{r+1}]_*(\ZZ/p^{r+1}\ZZ) \to [p^r]_*(\ZZ/p^{r}\ZZ)$
  are the composition of the trace map with the reduction modulo $p^r$.
  The Leray spectral sequence provides us with an isomorphism
  \[
   H^1_{\et}(\cE\setminus\cE[c],[p^r]_*(\ZZ/p^{r}\ZZ)(1))\cong
   H^1_{\et}(\cE_r\setminus\cE_r[p^rc],\ZZ/p^{r}\ZZ(1))
  \]
  and if we combine this with \eqref{torsors-as-limits} we get
  \[
   H^1_{\et}(\cE\setminus \cE[c],\sL(1)) \cong
   \varprojlim_r H^1_{\et}(\cE_r\setminus \cE_r[p^rc],\ZZ/p^r\ZZ(1)).
  \]

  Denote by
  \[
   \partial_r: \cO(\cE_r\setminus\cE_r[p^rc])^\times\to
   H^1_{\et}(\cE_r\setminus \cE[p^rc],\ZZ/p^r\ZZ(1))
  \]
  the Kummer map, i.e., the connecting homomorphism for the exact sequence
  \[ 0\to \mu_{p^r}\to \Gm\to \Gm\to 0.\]
  We assume henceforth that $p \nmid c$. Then Theorem \ref{elliptic-units-thm} implies that the elements
  $\partial_r({_c\theta_{\cE_r}})$ are compatible with the trace maps.

  \begin{definition}
   \label{Theta-defn}
   For $c > 1$ coprime to $6p$, let
   \[
    {}_c \Theta_\cE \coloneqq\varprojlim_r \partial_r({_c\theta_{\cE_r}})\in
    H^1_{\et}(\cE\setminus \cE[c],\sL(1)).
   \]
  \end{definition}

  The elements
  ${}_c \Theta_\cE$ inherit all of the important properties of ${}_c\theta_\cE$. To formulate them precisely, observe that
  for each isogeny $\varphi:\cE\to \cE'$ with degree coprime to $c$ one has
  a morphism
  \[
   \varphi_*:H^1_{\et}(\cE\setminus \cE[c],\sL(1))\to
   H^1_{\et}(\cE'\setminus \cE'[c],\sL'(1))
  \]
  defined to
  be the inverse limit of the natural trace maps
  \[
   \varphi_*:H^1_{\et}(\cE_r\setminus \cE_r[p^rc],\ZZ/p^r\ZZ(1))\to
   H^1_{\et}(\cE'_r\setminus \cE'_r[p^rc],\ZZ/p^r\ZZ(1)).
  \]

  \begin{proposition}
   \label{Theta-norm-compatibility}
   The elements ${}_c \Theta_\cE$ satisfy the following compatibilities:
   \begin{enumerate}
   \item Let $\varphi:\cE\to \cE'$ be an isogeny of degree coprime to $c$. Then
      \[
       \varphi_*({}_c \Theta_\cE)={}_c\Theta_{\cE'}.
      \]
      In particular, if $a$ is an integer coprime to $c$ one has
      $[a]_*({}_c\Theta_\cE)={}_c\Theta_\cE$.
   \item If $f: T \to S$ is a morphism, $\cE_T \coloneqq \cE\times_ST$, and
      $\tilde f:\cE_T \to \cE$ is the base change morphism, one has
      \[
      \tilde f^*({}_c\Theta_\cE)={}_c\Theta_{\cE_T}.
      \]
   \item In $ H^1_{\et}(\cE\setminus \cE[cd],\sL(1))$ one has the equality
      \[
      d^2{}_c\Theta_\cE-[d]^*({}_c\Theta_\cE)=
      c^2{}_d\Theta_\cE-[c]^*({}_d\Theta_\cE)
      \]
   for any integer $d$ coprime to $6p$.
   \end{enumerate}
  \end{proposition}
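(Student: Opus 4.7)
The plan is to deduce each part from the corresponding property of the finite-level elements ${}_c\theta_{\cE_r} \in \cO(\cE_r\setminus \cE_r[p^r c])^\times$ in Theorem \ref{elliptic-units-thm}, by applying the Kummer connecting homomorphism $\partial_r$ and then passing to the inverse limit over $r$. The three key facts to use about $\partial_r$ are: it is a homomorphism of abelian groups (turning multiplicative identities into additive ones); it is natural under pullback along arbitrary morphisms; and it commutes with pushforward along finite \'etale maps (via the trace on the Kummer exact sequence $0\to\mu_{p^r}\to\Gm\to\Gm\to 0$). Commutation of $\partial_r$ with the inverse limit is built into Definition \ref{Theta-defn}.

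For (1), an isogeny $\varphi\colon\cE\to\cE'$ of degree prime to $c$ induces compatible finite \'etale maps $\varphi\colon\cE_r\setminus\cE_r[p^rc]\to\cE'_r\setminus\cE'_r[p^rc]$ for every $r$. Trace-functoriality of Kummer gives $\varphi_*\partial_r({}_c\theta_{\cE_r})=\partial_r(\varphi_*{}_c\theta_{\cE_r})$, which by Theorem \ref{elliptic-units-thm}(2) equals $\partial_r({}_c\theta_{\cE'_r})$. Taking the inverse limit over $r$ yields $\varphi_*({}_c\Theta_\cE)={}_c\Theta_{\cE'}$. The special case $\varphi=[a]$ is immediate since $[a]$ has degree $a^2$, coprime to $c$ whenever $\gcd(a,c)=1$.

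For (2), part (3) of Theorem \ref{elliptic-units-thm} gives $\tilde f^*{}_c\theta_{\cE_r}={}_c\theta_{(\cE_T)_r}$ (here one identifies the base change of $[p^r]\colon\cE_r\to\cE$ along $f$ with $[p^r]\colon(\cE_T)_r\to\cE_T$). Since $\partial_r$ is natural under pullback, $\tilde f^*\partial_r({}_c\theta_{\cE_r})=\partial_r({}_c\theta_{(\cE_T)_r})$, and inverse limits give the claim.

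For (3), work over $\cE_r\setminus\cE_r[p^rcd]$, where both ${}_c\theta_{\cE_r}$ and ${}_d\theta_{\cE_r}$ are regular invertible functions, as is $[c]^*({}_d\theta_{\cE_r})$ and $[d]^*({}_c\theta_{\cE_r})$ (using that multiplication by $c$ and $d$ preserves the complement of $\cE_r[p^rcd]$). Applying the homomorphism $\partial_r$ to Theorem \ref{elliptic-units-thm}(4) and using naturality under the \'etale maps $[c]$, $[d]$ converts the multiplicative identity into
\[
c^2\,\partial_r({}_d\theta_{\cE_r})-[c]^*\partial_r({}_d\theta_{\cE_r})
=d^2\,\partial_r({}_c\theta_{\cE_r})-[d]^*\partial_r({}_c\theta_{\cE_r}).
\]
Passing to the inverse limit gives the stated equality in $H^1_\et(\cE\setminus\cE[cd],\sL(1))$.

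The only real subtlety is the interchange of inverse limits with pushforward/pullback and with $\partial_r$; this is routine given the set-up of \S\ref{sect:cohomology}, since we have at each level an exact sequence of constructible sheaves and the Mittag-Leffler/finiteness hypotheses underlying \eqref{torsors-as-limits} apply. No serious obstacle is anticipated: the proposition is essentially a formal consequence of Theorem \ref{elliptic-units-thm} together with the fact that the Kummer map is a natural homomorphism compatible with trace.
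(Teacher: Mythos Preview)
Your proposal is correct and follows essentially the same approach as the paper: each part is deduced from the corresponding property of ${}_c\theta_{\cE}$ in Theorem~\ref{elliptic-units-thm} by using the compatibility of the Kummer map $\partial_r$ with pushforward (for part~(1)), pullback (for parts~(2) and~(3)), and the group structure, then passing to the inverse limit. The paper presents parts~(1) and~(2) via commutative diagrams expressing exactly the functoriality you invoke, and dispatches part~(3) in a single line as ``immediate from the corresponding compatibility of ${}_c\theta_\cE$ and ${}_d\theta_\cE$''.
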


  \begin{proof}
   The compatibility with isogenies follows from the commutative diagram
   \[
    \begin{diagram}[labelstyle=\scriptstyle,small]
     \cO(\cE_r\setminus\cE_r[p^rc])^* &\rTo^{\ \ \partial_r\ \ }&
     H^1_{\et}(\cE_r\setminus \cE_r[p^rc],\ZZ/p^r\ZZ(1))\\
     \dTo<{\varphi_*} && \dTo>{\varphi_*}\\
     \cO(\cE'_r\setminus\cE'_r[p^rc])^* & \rTo^{\partial_r}&
     H^1_{\et}(\cE'_r\setminus \cE'_r[p^rc],\ZZ/p^r\ZZ(1))
    \end{diagram}
   \]
   and the isogeny-compatibility relation $\varphi_*({}_c\theta_\cE)={}_c\theta_{\cE'}$. The
   compatibility with base change follows from
   \[
   \begin{diagram}[labelstyle=\scriptstyle,small]
    \cO(\cE_r\setminus\cE_r[p^rc])^* &\rTo^{\ \ \partial_r\ \ }&
    H^1_{\et}(\cE_r\setminus \cE_r[p^rc],\ZZ/p^r\ZZ(1))\\
    \dTo<{\tilde f^*} & & \dTo>{\tilde f^*} \\
    \cO(\cE_{T,r}\setminus\cE_{T,r}[p^rc])^* & \rTo^{\partial_r}&
    H^1_{\et}(\cE_{T,r}\setminus \cE_{T,r}[p^rc],\ZZ/p^r\ZZ(1))
   \end{diagram}
  \]
  and Theorem \ref{elliptic-units-thm}. The final statement is immediate from the corresponding compatibility of ${}_c \theta_\cE$ and ${}_d \theta_\cE$.
  \end{proof}

  \begin{definition}
   Let $\iota_D:D\into \cE$ be a subscheme finite \'etale over $S$ and
   write $p_D\coloneqq\pi\circ\iota_D:D\to S$. Define
   $\cE[p^r]\langle D\rangle$ by the Cartesian diagram
   \[
    \begin{diagram}[small]
     \cE[p^r]\langle D\rangle & \rInto & \cE\\
     \dTo<{p_{r,D}} && \dTo>{[p^r]}\\
     D & \rInto^{\iota_D} &\cE
    \end{diagram}
   \]
   and let
   \[
    \Lambda_r(\sH_r\langle D\rangle)\coloneqq p_{D*}\iota_D^*[p^r]_*\ZZ/p^r\ZZ
    \cong p_{D*}p_{r,D,*}\ZZ/p^r\ZZ.
   \]
  \end{definition}

  Then $p_{D*}\iota_D^*\sL$ is the sheaf defined  by the
  pro-system $(\Lambda_r(\sH_r\langle D\rangle))_{r\ge 1}$. We denote it by $\Lambda(\sH_{\Zp}\langle D\rangle)$. In the special case where $D=S$ and $\iota=t:S\to \cE$
  is a section, we write
  \[
   \Lambda_r(\sH_r\langle t\rangle)=t^*[p^r]_*\ZZ/p^r\ZZ\mbox{ and }
   \Lambda(\sH_{\Zp}\langle t\rangle)=t^*\sL,
  \]
  which are the sheaves defined and studied in \cite{Kings-Eisenstein}.
  These sheaves
  can and should be viewed as sheaves of modules under the sheaf of
  Iwasawa algebras $\Lambda(\sH_{\Zp})\coloneqq\Lambda(\sH_{\Zp}\langle 0\rangle)$. For more
  details on this we refer again to \cite{Kings-Eisenstein}.

  In the special case
  where $D$ splits over $S$ into a disjoint union of copies
  of $S$, we get
  \begin{equation}\label{Iwasawa-direct-sum}
   \Lambda_r(\sH_r\langle D\rangle)\cong \bigoplus_{t\in D(S)}
   \Lambda_r(\sH_r\langle t\rangle).
  \end{equation}
  For any isogeny $\varphi:\cE\to \cE'$ and subschemes $\iota_D:D\to \cE$, $\iota_{D'}:D'\to \cE'$ with $\varphi(D)\subset D'$ the trace map with respect to $\varphi$ induces a map
  \[
  \varphi_*:\Lambda_r(\sH_r\langle D\rangle)\to \Lambda_r(\sH_r'\langle D'\rangle).
  \]
  In the special case where $D$ is split and $D'$ is a section, the map
  \[
  \varphi_*:\bigoplus_{t\in D(S),\varphi(t)=t'}
       \Lambda_r(\sH_r\langle t\rangle)\to \Lambda_r(\sH_r'\langle t'\rangle)
  \]
  is just the sum of the trace maps $\varphi_*:\Lambda_r(\sH_r\langle t\rangle)\to
  \Lambda_r(\sH_r'\langle t'\rangle)$.

  To define the \emph{Eisenstein--Iwasawa} classes note that one has an isomorphism
  \[
   H^1_{\et}(D,\iota_D^*\sL(1))\cong H^1_{\et}(S,\Lambda(\sH_{\Zp}\langle D\rangle)(1)).
  \]

  \begin{definition}
   Let $\cE$ be an elliptic curve, $\iota_D:D\to \cE\setminus\cE[c]$ be
   a subscheme finite \'etale over $S$ and $p_D=\pi\circ\iota_D:D\to S$ the structure map.
   The \emph{Eisen\-stein--Iwasawa classes} are the classes
   \[
    \cEI_D\coloneqq\iota_D^*({}_c \Theta_\cE)\in H^1_{\et}(S,\Lambda(\sH_{\Zp}\langle D\rangle)(1)).
   \]
   In the case where $D$ corresponds to a section $t$, we simply write $\cEI_t$.
  \end{definition}


 \subsection{Properties of the Eisenstein--Iwasawa classes}

  The Eisenstein--Iwasawa classes share the properties of ${}_c \Theta_\cE$. In particular,
  they behave well under base-change and norm maps.

  \begin{proposition}
   \label{prop:eisenstein-iwasawa-properties}
   Let $\pi:\cE\to S$ be an elliptic curve, and $\iota_D:D \to \cE \setminus \cE[c]$ be a subscheme finite \'etale over $S$.
   \begin{enumerate}
    \item Let $f: T \to S$ be a morphism, and define $\cE' = \cE \times_S T$ and similarly $D'$. Then
    \[
     f^*\left(\cEI_D\right)=\cEI_{D'}.
    \]
    \item Let $\varphi:\cE\to \cE'$ be an isogeny of degree coprime to $c$, $D'\subset \cE'$ finite \'etale over $S$, and $D=\varphi^{-1}(D')$. Then
    \[
     \varphi_*\left(\cEI_D\right)=\cEI_{D'}.
    \]
    In particular, if $D'$ corresponds to a section $t'$ and $D=\varphi^{-1}(t')$ splits over $S$ into a disjoint union of copies of $S$, one has
    \[
    \cEI_{t'}=\sum_{t\in \cE(S),\varphi(t)=t'}\varphi_*(\cEI_{t}).
    \]
    \item If $c, d > 1$ are both coprime to $6p$ and $D \subset \cE \setminus \cE[cd]$, then the class
    \[
     d^2 \cEI_{D} - ([d]_*)^{-1} \cEI_{[d] D}
     \in H^1_{\et}\left(S, \Lambda(\sH_{\Zp}\langle D \rangle)(1)\right)
    \]
    is symmetric in $c$ and $d$.
   \end{enumerate}
  \end{proposition}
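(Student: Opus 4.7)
All three parts will follow from the corresponding compatibilities of the pro-system ${}_c\Theta_\cE$ proved in Proposition \ref{Theta-norm-compatibility}, combined with the definition $\cEI_D = \iota_D^*({}_c\Theta_\cE)$ and standard base-change identities for finite \'etale morphisms; the only substantive new input is an identification of pullbacks and pushforwards along $[d]|_D$, which is needed for part (3).

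For (1), the plan is to exploit the Cartesian square associated to the base change $\cE' = \cE \times_S T$ and $D' = D \times_S T$, which gives the identity of functors $f^* \circ \iota_D^* = \iota_{D'}^* \circ \tilde f^*$. Applying this to ${}_c\Theta_\cE$ and invoking Proposition \ref{Theta-norm-compatibility}(2), namely $\tilde f^*({}_c\Theta_\cE) = {}_c\Theta_{\cE'}$, yields $f^*(\cEI_D) = \iota_{D'}^*({}_c\Theta_{\cE'}) = \cEI_{D'}$ at once.

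For (2), the same strategy applies to the Cartesian square
\[ \begin{diagram}[small]
D & \rInto^{\iota_D} & \cE \\
\dTo<{\varphi|_D} & & \dTo>{\varphi} \\
D' & \rInto^{\iota_{D'}} & \cE'
\end{diagram} \]
Since $\varphi$ (and hence $\varphi|_D$) is finite \'etale, proper base change gives the sheaf-level identity $\iota_{D'}^* \varphi_* = (\varphi|_D)_* \iota_D^*$. Applying this to ${}_c\Theta_\cE$ and using Proposition \ref{Theta-norm-compatibility}(1) produces $\varphi_*(\cEI_D) = \cEI_{D'}$, where the left-hand side $\varphi_*$ is, by construction of the $\Lambda$-adic pushforward in \S 4.1, identified with $(\varphi|_D)_*$. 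The ``in particular'' statement for a split preimage $D = \varphi^{-1}(t')$ is then immediate from the direct sum decomposition \eqref{Iwasawa-direct-sum}.

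For (3), the plan is to pull back the four-term relation of Proposition \ref{Theta-norm-compatibility}(3), valid in $H^1_{\et}(\cE\setminus\cE[cd],\sL(1))$, along $\iota_D$; this is legitimate precisely because $D \subset \cE\setminus\cE[cd]$ by hypothesis. Writing ${}_c\cEI_D$ for $\iota_D^*({}_c\Theta_\cE)$ to track the dependence on $c$, we obtain
\[ d^2 \cdot {}_c\cEI_D - \iota_D^*[d]^*({}_c\Theta_\cE) = c^2 \cdot {}_d\cEI_D - \iota_D^*[c]^*({}_d\Theta_\cE). \]
Using the factorisation $[d]\circ\iota_D = \iota_{[d]D}\circ([d]|_D)$, one has $\iota_D^*[d]^*({}_c\Theta_\cE) = ([d]|_D)^*({}_c\cEI_{[d]D})$; the hypothesis $D\subset\cE\setminus\cE[cd]$ ensures $D\cap\cE[d]=\emptyset$, which makes $[d]|_D: D\to [d]D$ a finite \'etale isomorphism, so that $([d]|_D)^*$ is inverse to $([d]|_D)_*$. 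Unwinding the definition of the $\Lambda$-adic pushforward shows that this $([d]|_D)_*$ is precisely the map $[d]_*: \Lambda(\sH_{\Zp}\langle D\rangle)\to \Lambda(\sH_{\Zp}\langle [d]D\rangle)$ appearing in the statement. The symmetric computation handles $\iota_D^*[c]^*({}_d\Theta_\cE) = ([c]_*)^{-1}({}_d\cEI_{[c]D})$, and substitution gives the desired symmetric relation.

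The main obstacle is the identification $\iota_D^*[d]^*({}_c\Theta_\cE) = ([d]_*)^{-1}({}_c\cEI_{[d]D})$: one must verify both that $[d]|_D$ is genuinely an isomorphism of $S$-schemes under the stated hypothesis, and that pullback along it agrees with the inverse of the map $[d]_*$ as defined in \S 4.1 on the Iwasawa sheaves. After unwinding the definitions (most cleanly by first reducing \'etale-locally on $S$ to the case where $D$ is a disjoint union of sections), the identification becomes essentially tautological and the rest of the proof is a direct substitution.
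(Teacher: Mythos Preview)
Your proof is correct and follows essentially the same approach as the paper: all three parts are deduced from the corresponding compatibilities of ${}_c\Theta_\cE$ in Proposition~\ref{Theta-norm-compatibility} via pullback along $\iota_D$ and standard base-change identities, with part~(3) hinging on the identification $\iota_D^*[d]^* = ([d]_*)^{-1}\iota_{[d]D}^*$ coming from the fact that $[d]|_D$ is an isomorphism. Your write-up is in fact slightly more detailed than the paper's in justifying that last identification.
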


  \begin{proof} \
   \begin{enumerate}
    \item As before, let $\tilde f:\cE'\coloneqq\cE\times_ST\to \cE$ be the base change map. Then by Proposition \ref{Theta-norm-compatibility} we have $\tilde f^*({}_c\Theta_\cE)={}_c\Theta_{\cE'}$ and hence $\cEI_{D'}=(\iota_{D'}^* \circ \tilde f^*)({}_c\Theta_\cE)=(f^* \circ \iota_D^*)({}_c\Theta_{\cE})=f^*\cEI_D$.
    \item As
    \[
     \begin{diagram}[labelstyle=\scriptstyle,small]
      D & \rTo^{\iota_D} & \cE\\
      \dTo<\varphi & & \dTo>\varphi\\
      D' & \rTo^{\iota_{D'}} & \cE'
     \end{diagram}
    \]
    is Cartesian, we have $\varphi_*\cEI_D=\varphi_*\iota_D^* {}_c\Theta_{\cE}=\iota_{D'}^*\varphi_*{}_c\Theta_{\cE}=\iota_{D'}^*{}_c\Theta_{\cE'} =\cEI_{D'}$ by Proposition \ref{Theta-norm-compatibility}.
    \item The multiplication map $[d]$ gives an isomorphism $\cE[p^r]\langle D\rangle\cong \cE[p^r]\langle [d] \circ D\rangle$. This induces an isomorphism $[d]_*:\Lambda_r(\sH_r\langle D\rangle)\cong \Lambda_r(\sH_r\langle [d]\circ D\rangle)$ with inverse $[d]^*$. The commutative diagram
    \[
     \begin{diagram}[labelstyle=\scriptstyle,small]
      H^1_{\et}(\cE\setminus \cE[c],\sL(1)) & \rTo^{[d]^*} & H^1_{\et}(\cE\setminus \cE[cd],\sL(1))\\
      \dTo<{(\iota_{[d]D})^*} & & \dTo>{\iota_{D}^*}\\
      H^1_{\et}\left(S,\Lambda(\sH_{\Zp}\langle [d]D\rangle)(1)\right) & \rTo^{[d]_*}&
      H^1_{\et}\left(S,\Lambda(\sH_{\Zp}\langle D\rangle)(1)\right)
     \end{diagram}
    \]
    shows that $\iota_D^*[d]^*({}_c\Theta_\cE)=[d]^*(\iota_{[d] D})^*({}_c\Theta_\cE)$. Hence, from property \ref{Theta-norm-compatibility}(3), the expression
    \[
     \iota_D^*\left( d^2 {}_c \Theta_{\cE} - [d]^*  {}_c \Theta_{\cE}\right)
     = d^2 \cEI_{D} - [d]^* \cEI_{[d] D}
    \]
    is symmetric in $c$ and $d$ as required.\qedhere
   \end{enumerate}
  \end{proof}

  We now consider a particular special case, which will be used below to prove the Euler system norm relations. Let $\cE / S$ be an elliptic curve, $c > 1$ coprime to $6p$, $t: S \into \cE \setminus \cE[c]$ an order $N$ section, and $\cE'$ a second elliptic curve over $S$ equipped with an isogeny $\lambda: \cE' \to \cE$ whose degree is invertible on $S$ and coprime to $c$. Then the subscheme $\lambda^{-1} t \subset \cE' \setminus \cE'[c]$ is finite \'etale over $S$.

  We define $S'$ to be the fibre product of $t: S \to \cE$ with the isogeny $\lambda$; so $S'$ is a variety equipped with a finite \'etale covering map $\pi: S' \to S$ and a closed embedding $t' : S' \to \cE'$ such that $\lambda \circ t'  = t \circ \pi$, and $(S', \pi, t')$ is universal among such data. We interpret $t'$ as a section of $\cE' \times_{S} S'$ in the natural way; then for each $r \ge 1$ we have the equality
  \[ (\cE' \times_S S')[p^r]\langle t' \rangle = \cE'[p^r]\langle \lambda^{-1} t \rangle.\]

  Hence we have an equality of pro-\'etale sheaves on $S$
  \[
   \pi_*\left(\Lambda(\sH'_{\Zp}\langle t' \rangle)\right)
   = \Lambda(\sH'_{\Zp}\langle \lambda^{-1} t \rangle),
  \]
  and it is clear that $\pi_*\left( \cEI_{t'}\right) = \cEI_{\lambda^{-1} t}$. The isogeny $\lambda$ gives a map $\lambda_*: \Lambda(\sH'_{\Zp}\langle \lambda^{-1} t \rangle) \to \Lambda(\sH_{\Zp}\langle t\rangle)$, and by part (2) of the preceding proposition, we have $\lambda_* \left(\cEI_{\lambda^{-1} t}\right) = \cEI_{t}$. Combining these two statements, we see that
  \begin{equation}
   \label{eq:fibreprodcompat}
   \lambda_* \pi_* \left(\cEI_{t'}\right) = \cEI_{t}.
  \end{equation}


 \subsection{Modular curves and pushforward relations}

  We are particularly interested in the case where $S$ is the modular curve $Y_1(N)$ for some $N \ge 4$ (viewed as a scheme over $\ZZ[1/Np]$), $\cE$ is the universal elliptic curve over $Y_1(N)$, and $t = t_N$ the canonical order $N$ section. For $c > 1$ coprime to $6Np$ and $b \in \ZZ / N\ZZ \setminus \{0\}$ we write
  \[
   \cEI_{b, N} \coloneqq \cEI_{b t_N}
   \in H^1_{\et}\left(Y_1(N), \Lambda(\sH_{\Zp}\langle b t_N \rangle)(1)\right).
  \]
  As with the motivic classes of the previous section, we shall abbreviate $\cEI_{1, N}$ simply as $\cEI_N$.

  \begin{remark}
   Note that since $N$ is invertible on $Y_1(N)$ and $(c, N) = 1$, the image of $b t_N$ is automatically contained in $\cE \setminus \cE[c]$.
  \end{remark}

  More generally, for any $M, N$ with $M + N \ge 5$, we may define classes $\cEI_{b, N}$ on $Y(M, N)$ in the same way. If $N \ge 4$ then the class $\cEI_{b, N}$ is the pullback of the corresponding class on $Y_1(N)$, but the latter does not exist for $N \le 3$ as the moduli problem corresponding to $Y_1(N)$ is not representable.

  We now study the compatibility of the Eisenstein--Iwasawa classes under pushforward maps between modular curves. For the remainder of this subsection, $M, N$ will be integers $\ge 1$ with $M + N \ge 5$ and $M \mid N$, and $\ell$ will be any prime. Note that we allow $\ell = p$. We have a natural degeneracy map $\pr_1: Y(M, \ell N) \to Y(M, N)$, and $(\pr_1)^*(t_N) = \ell \cdot t_{N\ell}$, so the $\ell$-multiplication gives a map
  \[ [\ell]_*: (\pr_1)_* \left(\Lambda(\sH_{\Zp}\langle b t_{N\ell} \rangle)\right) \to \Lambda(\sH_{\Zp}\langle b t_{N} \rangle)\]
  of sheaves on $Y(M, N)$.

  \begin{definition}
   \label{def:lambdapr1}
   We consider $(\pr_1, [\ell]_*)$ as a pushforward map
   \[ \left( Y(M, \ell N), \Lambda(\sH_{\Zp}\langle b t_{N\ell}\rangle)\right) \to \left( Y(M, N), \Lambda(\sH_{\Zp}\langle b t_{N}\rangle) \right) \]
   in the sense of \S\ref{sect:adjunction} (and we denote this map simply by $\pr_1$).
  \end{definition}

  \begin{theorem}
   \label{thm:pushforward}
   Let $M, N \ge 1$ with $M \mid N$ and $M + N \ge 5$, and let $\ell$ be a prime. Then for any $b \in (\ZZ / N\ell\ZZ)^\times$, the map $(\pr_1)_*$ sends $\cEI_{b, N\ell}$ to
   \[
    \begin{cases}
     \cEI_{b, N} & \text{if $\ell \mid N$,} \\
     \cEI_{b, N} - [\ell]_* \cEI_{\ell^{-1} b, N} & \text{if $\ell \nmid N$,}
    \end{cases}
   \]
   where in the latter case ``$\ell^{-1}$'' signifies the inverse of $\ell$ modulo $N$.
  \end{theorem}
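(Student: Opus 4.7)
Write $Y_2 := Y(M, N\ell)$, $\pi := \pr_1: Y_2 \to Y_1 := Y(M, N)$, let $E_1, E_2$ be the universal elliptic curves over $Y_1, Y_2$, and let $\lambda: E_2 = \pi^* E_1 \to E_1$ be the projection. The plan is to identify the pushforward geometrically as the Eisenstein--Iwasawa class on $Y_1$ attached to an explicit subscheme $D' \subset E_1$, and then apply Proposition \ref{prop:eisenstein-iwasawa-properties}(2) to the isogeny $[\ell]: E_1 \to E_1$.

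First I would study the composite $\alpha := \lambda \circ (bt_{N\ell}): Y_2 \to E_1$, which on moduli sends $(E, e_1, e_2) \mapsto be_2 \in E$. The hypothesis $\gcd(b, N\ell) = 1$ implies that $\alpha$ is a closed immersion with image a subscheme $D' \subset E_1$ finite \'etale over $Y_1$. Combining the base-change identity ${}_c\Theta_{E_2} = \lambda^*({}_c\Theta_{E_1})$ from Proposition \ref{Theta-norm-compatibility}(2) with the factorisation $\alpha = \iota_{D'} \circ u$ for an isomorphism $u: Y_2 \xrightarrow{\sim} D'$ over $Y_1$, pushforward yields a canonical isomorphism of sheaves on $Y_1$,
\[
 \pi_* \Lambda(\sH_{\Zp, E_2}\langle bt_{N\ell}\rangle) \cong \Lambda(\sH_{\Zp, E_1}\langle D'\rangle),
\]
under which $\pi_* \cEI_{b, N\ell}$ corresponds to $\cEI_{D'}$. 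Moreover, the $[\ell]_*$ map from Definition \ref{def:lambdapr1} becomes under this identification the trace map $\Lambda(\sH_{\Zp, E_1}\langle D'\rangle) \to \Lambda(\sH_{\Zp, E_1}\langle bt_N\rangle)$ associated to the self-isogeny $[\ell]$ of $E_1$, which sends $D'$ into the section $bt_N$. Thus $(\pr_1)_* \cEI_{b, N\ell} = [\ell]_*\, \cEI_{D'}$.

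Next a moduli-theoretic computation determines $D'$ explicitly. Over a point $(E, e_1, f_2) \in Y_1$, the fibre of $\alpha$ consists of those $P \in E$ with $\ell P = bf_2$ for which $(E, e_1, b^{-1}P) \in Y_2$; the only non-automatic condition is that $b^{-1}P$ have exact order $N\ell$. When $\ell \mid N$ this holds for every such $P$, so $D' = [\ell]^{-1}(bt_N)$; when $\ell \nmid N$, the unique preimage $P = b\ell^{-1} t_N$ has order $N$ and must be excluded, so $D' = [\ell]^{-1}(bt_N) \setminus \{b\ell^{-1} t_N\}$.

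Finally, Proposition \ref{prop:eisenstein-iwasawa-properties}(2) applied to $\varphi = [\ell]$ gives $[\ell]_*\, \cEI_{[\ell]^{-1}(bt_N)} = \cEI_{bt_N} = \cEI_{b,N}$. When $\ell \mid N$ this is already the desired identity. When $\ell \nmid N$, the disjoint decomposition $[\ell]^{-1}(bt_N) = D' \sqcup \{b\ell^{-1} t_N\}$ induces a direct-sum decomposition of $\Lambda(\sH_{\Zp, E_1}\langle[\ell]^{-1}(bt_N)\rangle)$, yielding $\cEI_{[\ell]^{-1}(bt_N)} = \cEI_{D'} + \cEI_{b\ell^{-1} t_N}$; applying $[\ell]_*$ and rearranging then produces $(\pr_1)_* \cEI_{b, N\ell} = \cEI_{b, N} - [\ell]_*\, \cEI_{\ell^{-1} b, N}$. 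The main obstacle is the first step: converting the abstract pushforward $(\pr_1)_*$ of \S \ref{sect:adjunction} into the concrete identification of $\pi_*\cEI_{b,N\ell}$ with $\cEI_{D'}$, which rests on base-change compatibility of ${}_c\Theta$ and the precise structure of $\alpha$; once this geometric picture is in place, the remainder follows by a direct application of Proposition \ref{prop:eisenstein-iwasawa-properties}.
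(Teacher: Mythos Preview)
Your proposal is correct and follows essentially the same approach as the paper. The paper packages the argument via the pre-established formula \eqref{eq:fibreprodcompat}: it identifies $Y(M,N\ell)$ (or, when $\ell\nmid N$, the disjoint union $Y(M,N\ell)\sqcup Y(M,N)$) with the fibre product $S'$ of $bt_N:Y(M,N)\to\cE$ along $[\ell]:\cE\to\cE$, and then applies $\lambda_*\pi_*(\cEI_{t'})=\cEI_{t}$ directly; your version unpacks this same fibre-product picture in the language of the image subscheme $D'\subset E_1$ and then invokes Proposition~\ref{prop:eisenstein-iwasawa-properties}(2), which is exactly what underlies \eqref{eq:fibreprodcompat}.
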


  \begin{proof}
   We will deduce the theorem from the isogeny-compatibility formula of equation \eqref{eq:fibreprodcompat} applied with $\lambda$ equal to the multiplication-by-$\ell$ isogeny $[\ell] : \cE \to \cE$, where $\cE$ is the universal elliptic curve over $S = Y(M, N)$.

   If $\ell \mid N$, then the triple $(Y(M, N\ell), \pr_1, t_{N\ell})$ evidently satisfies the same universal property as the covering $(S', \pi, t')$ defined in the previous section (since any $[\ell]$-preimage of a point of exact order $N$ has exact order $N\ell$). Thus Equation \eqref{eq:fibreprodcompat} in this case is exactly the statement that $(\pr_1)_* \left(\cEI_{b, N\ell}\right) = \cEI_{b, N}$.

   In the case $\ell \nmid N$, we must be slightly more careful, since $S'$ classifies arbitrary preimages of $t_N$, while $Y(M, N\ell)$ classifies only those having exact order $N\ell$. Hence we have $S' = Y(M,N) \sqcup Y(M,N\ell)$, with the restriction of $t'$ to $Y(M, N)$ being the order $N$ section $\ell^{-1} t_N$. Thus Equation \eqref{eq:fibreprodcompat} becomes
   \[
    (\pr_1)_*\left( \cEI_{b, N\ell}\right) + [\ell]_*\left( \cEI_{\ell^{-1} b, N}\right) = \cEI_{b, N}
   \]
   as required.
  \end{proof}

  We also give a second pushforward relation refining the above. As in \S\ref{sect:degeneracy} above, we factor $\pr_1$ as the composite of the natural degeneracy maps
  \[ Y(M, N\ell) \rTo^{\pr'} Y(M, N(\ell)) \rTo^{\pr} Y(M, N). \]

  The image of the section $t_{N\ell}$ under $\pr'$ has the following description. Recall that we have an isomorphism $\varphi_{\ell}: Y(M, N(\ell)) \to Y(M(\ell), N)$, and the cyclic $\ell$-isogeny $\lambda: \cE \to \cE'$ of elliptic curves over $Y(M, N(\ell))$, where $\cE' = \varphi_\ell^*(\cE)$; then it follows easily from the definitions that we have
  \[ \pr'(t_{N\ell}) \subseteq \lambda^{-1} t'_N, \]
  where $t'_N = \varphi_\ell^*(t_N)$ is the standard order $N$ section of $\cE'$. On the other hand, the dual isogeny $\hat\lambda: \cE' \to \cE$ maps $t_N'$ to $t_N$, which is naturally the pullback of a section over $Y(M, N)$ via $\pr$. Thus we have pushforward maps
  \[
   \begin{array}{ccccc}
   \pr' \coloneqq (\pr', \lambda_*) &:& \Big(Y(M, N\ell), \Lambda(\sH_{\Zp}\langle b t_{N\ell}\rangle)\Big) &\to& \Big(Y(M, N(\ell)), \varphi_\ell^* \Lambda(\sH_{\Zp}\langle b t_N\rangle)\Big)\\
   \pr \coloneqq (\pr, \hat\lambda_*)&: &\Big(Y(M, N(\ell)), \varphi_\ell^* \Lambda(\sH_{\Zp}\langle b t_N\rangle)\Big) &\to& \Big(Y(M, N), \Lambda(\sH_{\Zp}\langle b t_{N}\rangle)\Big)
  \end{array}\]
  whose composite is $\pr_1$.

  \begin{theorem}
   \label{thm:pushforward2} \
   \begin{enumerate}
    \item Let $b \in (\ZZ / N\ell\ZZ)^\times$. As elements of $H^1(Y(M, N(\ell)), \varphi_\ell^* \Lambda(\sH_{\Zp}\langle b t_N \rangle))$, we have
    \begin{equation*}
     (\pr')_*(\cEI_{b, N\ell}) =
     \begin{cases}
      \varphi_\ell^*(\cEI_{b, N}) & \text{if $\ell \mid N$},\\
      \varphi_\ell^*(\cEI_{b, N}) - \lambda_*(\cEI_{\ell^{-1}b, N}) & \text{if $\ell \nmid N$.}
     \end{cases}
    \end{equation*}
    \item Let $b \in (\ZZ / N\ZZ)^\times$. Then we have
    \begin{equation*}
     \pr_* \left(\varphi_\ell^* \cEI_{b, N}\right) =
     \begin{cases}
      \cEI_{b, N} & \text{if $\ell \mid N$}\\
      \cEI_{b, N} + \ell [\ell]_* \cEI_{\ell^{-1}b, N} & \text{if $\ell \nmid N$}
     \end{cases}
    \end{equation*}
   \end{enumerate}
  \end{theorem}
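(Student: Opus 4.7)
Both parts follow the template of Theorem \ref{thm:pushforward}: I apply the fibre-product compatibility of Equation \eqref{eq:fibreprodcompat} (combined with Proposition \ref{prop:eisenstein-iwasawa-properties}) to the intermediate projections $\pr'$ and $\pr$ individually, with a case analysis on whether $\ell$ divides $N$.

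For Part (1), I apply \eqref{eq:fibreprodcompat} on the base $S = Y(M, N(\ell))$ with isogeny $\lambda : \cE_1 \to \cE_2$ of degree $\ell$ and section $t = b \cdot \varphi_\ell^{*}(t_N)$ of $\cE_2$. The fibre product $S'$ parametrises moduli data $(E, e_1, e_2, C, P)$ with $P \in C$ and $\ell P = b e_2$. If $\ell \mid N$, a direct order computation in the cyclic group $C \cong \ZZ/N\ell$ shows that every one of the $\ell$ preimages of $b e_2$ has the same exact order as $b$ times a generator of $C$, so $(E, e_1, e_2, C, P) \mapsto (E, e_1, P)$ identifies $S'$ with $Y(M, N\ell)$, and the tautological section is $b t_{N\ell}$; \eqref{eq:fibreprodcompat} then yields the first formula. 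If $\ell \nmid N$, the group $C$ splits canonically as $\langle e_2 \rangle \oplus NC \cong \ZZ/N \oplus \ZZ/\ell$, and exactly one preimage of $be_2$ lies in $\langle e_2 \rangle$, namely $b\ell^{-1} e_2$ with $\ell^{-1}$ computed modulo $N$. This exceptional preimage depends only on $(E, e_1, e_2)$ and is therefore pulled back from $Y(M, N)$, contributing the term $\lambda_*(\cEI_{\ell^{-1}b, N})$; the remaining $\ell - 1$ preimages have exact order $N\ell$ and together form $Y(M, N\ell)$. Summing over the components of $S' = Y(M, N(\ell)) \sqcup Y(M, N\ell)$ in \eqref{eq:fibreprodcompat} gives the second formula.

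For Part (2), I deduce both formulae from Part (1) together with Theorem \ref{thm:pushforward}, using the factorisation $(\pr_1)_* = \pr_* \circ (\pr')_*$. The case $\ell \mid N$ is immediate: Part (1) gives $(\pr')_*(\cEI_{b, N\ell}) = \varphi_\ell^{*}\cEI_{b, N}$, so applying $\pr_*$ and comparing with $(\pr_1)_*\cEI_{b, N\ell} = \cEI_{b, N}$ from Theorem \ref{thm:pushforward} yields the result. In the case $\ell \nmid N$, I obtain
\[
\pr_*(\varphi_\ell^{*} \cEI_{b, N}) = (\pr_1)_* \cEI_{b, N\ell} + \pr_*\bigl(\lambda_* \cEI_{\ell^{-1}b, N}\bigr).
\]
The second term simplifies by noting that $\hat\lambda_* \circ \lambda_* = [\ell]_*$ on the coefficient sheaves (from Lemma \ref{lemma:isogeny}), and that $\cEI_{\ell^{-1}b, N}$ on $Y(M, N(\ell))$ is the pullback $\pr^{*}$ of the corresponding class on $Y(M, N)$; hence
\[
\pr_*\bigl(\lambda_* \cEI_{\ell^{-1}b, N}\bigr) = \pr_*\bigl(\pr^{*}([\ell]_* \cEI_{\ell^{-1}b, N})\bigr) = \deg(\pr) \cdot [\ell]_* \cEI_{\ell^{-1}b, N} = (\ell+1)\,[\ell]_* \cEI_{\ell^{-1}b, N},
\]
where I compute $\deg(\pr) = \ell + 1$ via the fibre of $\pr$ being the set of cyclic subgroups of order $\ell$ in $E/\langle e_2\rangle$. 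Combining with $(\pr_1)_* \cEI_{b, N\ell} = \cEI_{b, N} - [\ell]_* \cEI_{\ell^{-1}b, N}$ from Theorem \ref{thm:pushforward} yields the claimed identity.

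The principal obstacle is the component analysis in Part (1) when $\ell \nmid N$: I must verify that the order-$N$ preimage of $be_2$ inside $C$ is actually a well-defined section of $\cE_1$ over $Y(M, N(\ell))$ pulled back from $Y(M, N)$. This rests on the canonical identification $C[N] = \langle e_2\rangle$, valid precisely because $\gcd(\ell, N) = 1$ forces the splitting $C = C[N] \oplus C[\ell]$ with $C[N]$ determined by $e_2$ alone. A secondary source of delicacy is the careful tracking of coefficient maps between the various sheaves $\Lambda(\sH_{\bullet}\langle \cdot \rangle)$, particularly when chaining $\lambda_*$ and $\hat\lambda_*$ and identifying pullbacks of these Iwasawa sheaves along the maps $\pr$, $\pr'$, $\varphi_\ell$ of the tower.
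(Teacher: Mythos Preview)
Your proof is correct and follows essentially the same approach as the paper's: Part~(1) via the fibre-product formula \eqref{eq:fibreprodcompat} applied to the isogeny $\lambda$ over $Y(M,N(\ell))$, with the same component decomposition in the case $\ell\nmid N$; and Part~(2) deduced from Part~(1) and Theorem~\ref{thm:pushforward} using $\pr_1 = \pr\circ\pr'$, $\hat\lambda_*\lambda_* = [\ell]_*$, and $\pr_*\pr^* = \deg(\pr) = \ell+1$. One minor point: the identity $\hat\lambda_*\circ\lambda_* = [\ell]_*$ comes from the basic fact $\hat\lambda\circ\lambda = [\deg\lambda]$ for dual isogenies, not from Lemma~\ref{lemma:isogeny} (which concerns the Weil pairing on $\bigwedge^2\sH_{\Zp}$).
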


  \begin{proof}
   Let us first prove (1). We shall deduce this from Equation \eqref{eq:fibreprodcompat} applied to the isogeny $\lambda: \cE \to \cE'$ over $S = Y(M, N(\ell))$. If $\ell \mid N$, then the covering of $Y(M, N(\ell))$ classifying points of $\cE$ such that $\lambda(s) = b t_N'$ is exactly $Y(M, N\ell)$ with the canonical section $bt_{N\ell}$, so Equation \eqref{eq:fibreprodcompat} tells us that
   \[ (\pr')_* \left(\cEI_{b, N\ell}\right) = \varphi_\ell^*\left(\cEI_{b, N}\right) \]
   as claimed.

   If $\ell \nmid N$, then this fibre product is slightly larger than $Y(M, N\ell)$, since not all preimages of $bt_N$ under $\lambda$ have exact order $\ell N$. Exactly as in the proof of Theorem \ref{thm:pushforward}, we find that the required fibre product is the disjoint union of $Y(M, N\ell)$ and a copy of $Y(M, N(\ell))$ with the section $\ell^{-1} b t_N$, and the same argument as before gives
   \[ (\pr')_* \left(\cEI_{b, N\ell}\right) + \lambda_* \left(\cEI_{\ell^{-1}b, N}\right)  = \varphi_\ell^*\left(\cEI_{b, N}\right). \]

   We now deduce part (2) by comparing the above with Theorem \ref{thm:pushforward} (after choosing an arbitrary lifting of $b$ to $(\ZZ / \ell N \ZZ)^\times$). This gives the result immediately in the case $\ell \mid N$. For $\ell \nmid N$, we note that the image of $\lambda_* \cEI_{\ell^{-1}b, N}$ on under $\hat\lambda_*$ is just $[\ell] \cEI_{\ell^{-1} b, N}$, which is the pullback via $\pr$ of its namesake on $Y(M, N)$; so applying $\pr_*$ to it simply multiplies it by the degree of the map $\pr$, which is $\ell + 1$. Thus
   \begin{align*}
    \pr_*\left( \varphi_\ell^* \cEI_{b, N}\right) &= (\pr_1)_* \left(\cEI_{b, \ell N}\right) + \pr_* \left(\lambda_* \cEI_{\ell^{-1}b, N}\right)\\
    &= \left(\cEI_{b,N} - [\ell]_* \cEI_{\ell^{-1}b, N}\right) + (\ell + 1)[\ell]_*  \cEI_{\ell^{-1}b, N}\\
    &= \cEI_{b, N} + \ell [\ell]_* \cEI_{\ell^{-1}b, N}.\qedhere
   \end{align*}
  \end{proof}

  \begin{remark}
   Note that $\pr_* \varphi_\ell^*(\cEI_{b, N})$ is the image of $\cEI_{b, N}$ under the Hecke operator $T_\ell'$ (if $\ell \nmid N$) or $U_\ell'$ (if $\ell \mid N$), so we can interpret Theorem \ref{thm:pushforward2}(2) as the statement that for $\ell \mid N$ we have $U_\ell' \left(\cEI_{b,N}\right) = \cEI_{b, N}$, and for $\ell \nmid N$ we have $T_\ell'\left(\cEI_{b, N}\right) = \cEI_{b, N} + \ell [\ell]_* \cEI_{\ell^{-1}b, N}$.
  \end{remark}

  \begin{corollary}
   \label{cor:pr2}
   If we consider $\pr_2$ as a pushforward map $\left(Y(M, N\ell), \sH_{\Zp}\langle t_{N\ell}\rangle\right) \to \left(Y(M, N), \sH_{\Zp}\langle t_N \rangle\right)$ using the $\ell$-isogeny $\lambda: \cE \to \pr_2^* \cE$ (which maps $t_{N\ell}$ to $t_N$), then we have
   \[ (\pr_2)_* \left( \cEI_{b, N \ell}\right) =
    \begin{cases}
     \ell \cEI_{b, N} & \text{if $\ell \mid N$,}\\
     \ell \cEI_{b, N} - \ell [\ell]_* \cEI_{\ell^{-1} b, N} & \text{if $\ell \nmid N$}.
    \end{cases}
   \]
  \end{corollary}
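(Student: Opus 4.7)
The plan is to factor $\pr_2 = \hpr \circ \varphi_\ell \circ \pr'$ (as in \S \ref{sect:degeneracy}) and then apply Theorem \ref{thm:pushforward2}(1) followed by $\hpr_* \circ (\varphi_\ell)_*$. Concretely, I first use Theorem \ref{thm:pushforward2}(1) to rewrite $(\pr')_* \cEI_{b, N\ell}$ as a class on $Y(M, N(\ell))$: it equals $\varphi_\ell^*\cEI_{b,N}$ if $\ell\mid N$, and $\varphi_\ell^*\cEI_{b,N} - \lambda_*\cEI_{\ell^{-1}b,N}$ if $\ell\nmid N$.

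For the principal term, since $\varphi_\ell$ is an isomorphism $(\varphi_\ell)_*\varphi_\ell^*$ is the identity, so it reduces to $\cEI_{b,N}^{Y(M(\ell),N)}$. By Proposition \ref{prop:eisenstein-iwasawa-properties}(1) this equals $\hpr^*\cEI_{b,N}$, and thus $\hpr_*$ of it is $\deg(\hpr)\cdot\cEI_{b,N}$. A direct count of the moduli problem defining $Y(M(\ell),N) \to Y(M,N)$ shows that $\deg(\hpr)=\ell$ when $\ell\mid N$ (one must exclude the unique cyclic order-$\ell$ subgroup of $\langle e_2\rangle$) and $\deg(\hpr)=\ell+1$ when $\ell\nmid N$. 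In particular, when $\ell\mid N$ this already produces the claimed formula $\ell\cEI_{b,N}$.

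For the case $\ell\nmid N$, the remaining task is to evaluate $\hpr_*(\varphi_\ell)_*\lambda_*\cEI_{\ell^{-1}b,N}$. My plan is to identify this expression with $\pr_*\varphi_\ell^*\cEI_{b,N}$, which by Theorem \ref{thm:pushforward2}(2) equals $\cEI_{b,N} + \ell[\ell]_*\cEI_{\ell^{-1}b,N}$. The identification hinges on the relation $\hat\lambda\circ\lambda = [\ell]$ on $\cE_1$, which under Proposition \ref{prop:eisenstein-iwasawa-properties}(2) matches the two composite pushforward correspondences $(\hpr\circ\varphi_\ell,\lambda_*)$ and $(\pr,\hat\lambda_*)$ from $Y(M, N(\ell))$ to $Y(M,N)$ when applied to the relevant Eisenstein--Iwasawa classes. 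Combining these yields
\[
(\pr_2)_*\cEI_{b,N\ell} = (\ell+1)\cEI_{b,N} - \bigl(\cEI_{b,N} + \ell[\ell]_*\cEI_{\ell^{-1}b,N}\bigr) = \ell\cEI_{b,N} - \ell[\ell]_*\cEI_{\ell^{-1}b,N},
\]
as claimed.

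The main obstacle is verifying the identification $\hpr_*(\varphi_\ell)_*\lambda_*\cEI_{\ell^{-1}b,N} = \pr_*\varphi_\ell^*\cEI_{b,N}$, which requires careful bookkeeping of the dual isogeny structure and how $\lambda$ versus $\hat\lambda$ acts on the coefficient sheaves $\Lambda(\sH_{\Zp}\langle\cdot\rangle)$. An alternative route, which may be cleaner in practice, is to observe that the target formula can be restated as the uniform identity $(\pr_2)_*\cEI_{b,N\ell} = \ell\cdot(\pr_1)_*\cEI_{b,N\ell}$ (compatible with both the $\ell\mid N$ and $\ell\nmid N$ cases of Theorem \ref{thm:pushforward}), and to deduce this identity directly from the factorization $\hat\lambda\circ\lambda = [\ell]$, using the compatibility of ${}_c\Theta_\cE$ with isogenies provided by Proposition \ref{Theta-norm-compatibility}.
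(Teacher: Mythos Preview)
Your overall strategy matches the paper's: factor $\pr_2 = \hpr \circ \varphi_\ell \circ \pr'$, apply Theorem~\ref{thm:pushforward2}(1), and then push forward the resulting expression via $(\hpr\circ\varphi_\ell)_*$. Your treatment of the principal term $\varphi_\ell^*\cEI_{b,N}$ is correct, and the degree computations for $\hpr$ are right, so the case $\ell\mid N$ is complete.

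The gap is in the $\ell\nmid N$ correction term. Unwinding definitions, the quantity $(\hpr\circ\varphi_\ell)_*\,\lambda_*\,\cEI_{\ell^{-1}b,N}$ is exactly $T_\ell\,\cEI_{\ell^{-1}b,N}$ (since $T_\ell = \hpr_*\circ(\Phi_\ell)_*\circ\pr^*$ and $\cEI_{\ell^{-1}b,N}$ on $Y(M,N(\ell))$ is $\pr^*$ of its namesake on $Y(M,N)$), while the expression $\pr_*\,\varphi_\ell^*\,\cEI_{b,N}$ you want to identify it with is $T_\ell'\,\cEI_{b,N}$. So your ``main obstacle'' is the identity
\[
T_\ell\,\cEI_{\ell^{-1}b,N} \;=\; T_\ell'\,\cEI_{b,N}.
\]
This is \emph{not} a consequence of $\hat\lambda\circ\lambda=[\ell]$: that relation tells you $\hat\lambda_*\lambda_* = [\ell]_*$ as sheaf maps, but the two sides above involve different base maps ($\hpr\circ\varphi_\ell$ corresponds to $z\mapsto\ell z$ on $\HH$, while $\pr$ corresponds to $z\mapsto z$), so Proposition~\ref{prop:eisenstein-iwasawa-properties}(2) does not ``match'' them. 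Your justification as written does not go through.

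Your alternative route --- proving the uniform identity $(\pr_2)_*\cEI_{b,N\ell} = \ell\cdot(\pr_1)_*\cEI_{b,N\ell}$ directly from isogeny compatibility of ${}_c\Theta$ --- is an attractive reformulation, but you have not actually carried it out either; it reduces to the same missing computation of $T_\ell\,\cEI_{\ell^{-1}b,N}$. To close the gap you need an independent calculation of this $T_\ell$-action, for instance by running the fibre-product argument of equation~\eqref{eq:fibreprodcompat} with the isogeny $\lambda$ over $Y(M,N(\ell))$ and then tracing along $\hpr\circ\varphi_\ell$ rather than along $\pr$.
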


  \begin{proof}
   This follows easily from part (1) of the previous theorem.
  \end{proof}

 \subsection{Moment maps and the relation to Eisenstein classes}

  The sheaves of algebras $\Lambda(\sH_{\Zp})$ are sheafifications of Iwasawa algebras, and can be handled in much the same way. In particular, one has moment maps, corresponding to the natural maps of sheaves of sets $\sH_r \to \TSym^k\sH_r$, $x \mapsto x^{[k]}$:

  \begin{proposition}[\cite{Kings-Eisenstein} 2.5.2, 2.5.3]
   Let $\Lambda_r(\sH_r)\coloneqq\Lambda_r(\sH_r\langle 0\rangle)$. Then there are moment maps for all $r\ge 1$,
   \[ \mom^k_r:\Lambda_r(\sH_r)\to \TSym^k\sH_r, \]
   which assemble into a morphism of pro-sheaves
   \[ \mom^k:\Lambda(\sH_{\Zp}) \to \TSym^k\sH_{\Zp}.\]
  \end{proposition}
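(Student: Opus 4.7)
The plan is to construct the moment maps from the concrete description of $\Lambda_r(\sH_r)$ as (the sheafification of) a group algebra, by first defining a map of sheaves of sets and then extending $\ZZ/p^r\ZZ$-linearly.

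First, I would unwind the definition: writing $\pi_r: \cE[p^r]\to S$ for the structure map of the $p^r$-torsion subscheme (which is finite \'etale since $p$ is invertible on $S$), the special case $D=0$ of the definition gives $\Lambda_r(\sH_r)=\pi_{r,*}(\ZZ/p^r\ZZ)$. On geometric stalks this is the group algebra $(\ZZ/p^r\ZZ)[\sH_r]$, so globally $\Lambda_r(\sH_r)$ is the free $\ZZ/p^r\ZZ$-module sheaf on the underlying sheaf of sets of $\sH_r$.

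Next, I would construct the morphism of sheaves of sets $\sH_r \to \TSym^k \sH_r$ sending $x \mapsto x^{[k]} := x^{\otimes k}$; this is well-defined because any pure tensor $x^{\otimes k}$ is $\frS_k$-invariant and so lies in the subsheaf $\TSym^k\sH_r \subseteq \sH_r^{\otimes k}$, and it is manifestly functorial in $\sH_r$. By the adjunction between the free $(\ZZ/p^r\ZZ)$-module sheaf functor and the forgetful functor, this extends uniquely to a morphism of $(\ZZ/p^r\ZZ)$-module sheaves
\[
 \mom^k_r:\Lambda_r(\sH_r)\longrightarrow\TSym^k\sH_r,
\]
characterised on group-algebra generators by $[x]\mapsto x^{\otimes k}$.

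Finally, to pass to the pro-system one has to verify compatibility with the transition maps on both sides. The transition $\Lambda_{r+1}(\sH_{r+1})\to\Lambda_r(\sH_r)$ is the composition of the trace along $[p]\colon\cE[p^{r+1}]\to\cE[p^r]$ with reduction of coefficients modulo $p^r$, while the transition $\TSym^k\sH_{r+1}\to\TSym^k\sH_r$ is induced functorially by the same $[p]$. A direct check on a generator $[x]$ shows that both routes produce the same element (after coefficient reduction), and passage to the inverse limit yields the promised morphism of pro-sheaves $\mom^k:\Lambda(\sH_{\Zp})\to\TSym^k\sH_{\Zp}$. The main obstacle, and the reason \cite{Kings-Eisenstein} devotes considerable formalism to this construction, is to make the adjunction-based argument work uniformly in the pro-\'etale setting, rather than only on stalks or on sections over individual opens, and to set up functorially the identification of $\pi_{r,*}(\ZZ/p^r\ZZ)$ with the relevant group-algebra sheaf; once that foundational material is in place, the existence of $\mom^k$ is immediate from the set-theoretic formula $x\mapsto x^{\otimes k}$.
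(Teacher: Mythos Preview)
Your proposal is correct and matches the paper's own treatment: the paper does not give a proof but simply cites \cite{Kings-Eisenstein}, noting just before the proposition that the moment maps ``correspond[\,] to the natural maps of sheaves of sets $\sH_r \to \TSym^k\sH_r$, $x \mapsto x^{[k]}$'', which is exactly the formula you extend $(\ZZ/p^r\ZZ)$-linearly and then pass to the pro-system.
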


  We recall some functoriality properties of the maps $\mom^k_{r}$.

  \begin{lemma}
   \label{mom-functoriality} Let $\pi:\cE\to S$ be an elliptic curve.
   \begin{enumerate}
    \item\label{mom-functoriality-basechange} (Base-change compatibility) Let $f:T\to S$
    be a map and $\cE'$
    be the pullback of $\cE$. Then for each $r \ge 1$
    there is a commutative diagram of sheaves on $T$
    \[
     \begin{diagram}[labelstyle=\scriptstyle,small]
      f^*\left(\Lambda_r(\sH_r)\right)
      &\rTo^{f^*(\mom_r^k)} & f^*(\TSym^k \sH_r)\\
     \dTo<\cong & & \dTo>\cong\\
      \Lambda_r(\sH_r') & \rTo^{\mom_r^k} & \TSym^k \sH_r'.
     \end{diagram}
    \]
    \item\label{mom-functoriality-pushfwd} (Pushforward via isogenies) Let $\varphi:\cE\to \cE'$ be an isogeny.
    Denote by  $\varphi_*:\sH_r\to\sH_r'$ the corresponding trace map. Then for each $r \ge 1$
    there is a commutative diagram of sheaves
    \[
     \begin{diagram}[labelstyle=\scriptstyle,small]
      \Lambda_r(\sH_r) & \rTo^{\mom_{r}^k} & \TSym^k \sH_r\\
      \dTo<{\varphi_*} && \dTo>{\TSym^k\varphi_*}\\
      \Lambda_r(\sH_r') & \rTo^{\mom_{r}^k} &\TSym^k \sH_r'.
     \end{diagram}
    \]
    In the case where $\varphi=[A]$ is the $A$-multiplication, the
    map $\TSym^k[A]_*$ is multiplication by $A^k$.
   \end{enumerate}
  \end{lemma}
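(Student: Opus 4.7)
The plan is to reduce both statements to the concrete description of $\Lambda_r(\sH_r)$ as the pushforward $p_*(\ZZ/p^r\ZZ)$ along the finite étale structure map $p: \cE[p^r]\to S$, together with the characterization of $\mom^k_r$ as the unique map of étale sheaves on $S$ whose effect on a section $x : S\to \cE[p^r]$ (viewed as a generator of the stalk of $p_*(\ZZ/p^r\ZZ)$ at a geometric point over the image of $x$) is $[x]\mapsto x^{[k]} \in \TSym^k\sH_r$. Since $\sH_r$ is (dually) $R^1\pi_* \ZZ/p^r\ZZ$ and $\TSym^k(-)$ is a functorial construction on locally free sheaves, both target and source of $\mom^k_r$ are built from $\cE/S$ by operations that commute with base change and admit trace maps along finite flat morphisms.

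For (\ref{mom-functoriality-basechange}), I would observe that $f^*$ applied to $p_*(\ZZ/p^r\ZZ)$ is canonically $p'_*(\ZZ/p^r\ZZ)$ by proper base change for finite étale maps (the hypothesis that $p$ is invertible on $S$ ensures $\cE[p^r]\to S$ is finite étale), and similarly $f^*\TSym^k\sH_r \cong \TSym^k\sH'_r$ from the functorial definition of $\TSym^k$. Under these canonical identifications, the pullback of the characterizing assignment $[x]\mapsto x^{[k]}$ is the analogous assignment for $\cE'/T$, so the diagram commutes by the uniqueness part of the characterization.

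For (\ref{mom-functoriality-pushfwd}), I would argue as follows. The isogeny $\varphi$ restricts to a finite flat map $\varphi : \cE[p^r]\to \cE'[p^r]$, and the trace along this map induces $\varphi_* : p_*(\ZZ/p^r\ZZ) \to p'_*(\ZZ/p^r\ZZ)$, i.e.\ $\varphi_* : \Lambda_r(\sH_r) \to \Lambda_r(\sH'_r)$. Going around one way, the trace of $[x]$ is (stalk-wise) the sum of $[y]$ over preimages $y$ of $\varphi(x)$ (counted with the appropriate inseparability multiplicity), to which $\mom^k$ attaches $\sum y^{[k]}$. Going the other way, one has $\TSym^k(\varphi_*)(x^{[k]}) = (\varphi_* x)^{[k]}$ by the multiplicativity of the assignment $z\mapsto z^{[k]}$ in the graded ring $\bigoplus_k \TSym^k\sH_r$; and the identity $\sum y^{[k]} = (\varphi_* x)^{[k]}$ is then forced by the same multiplicativity applied to the trace relation in $\sH'_r$. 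The final statement for $\varphi = [A]$ is immediate, since $[A]_*$ acts as multiplication by $A$ on $\sH_r$ and $\TSym^k$ of multiplication by $A$ is multiplication by $A^k$ by the formula \eqref{TSymmult}.

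The main obstacle is really bookkeeping rather than substance: one must be careful in (\ref{mom-functoriality-pushfwd}) with the case $p\mid \deg\varphi$, where $\cE[p^r]\to \cE'[p^r]$ need not be étale, so the ``sum over preimages'' description of the trace must be supplemented by inseparability degrees. Since the statement is already recorded as Propositions 2.5.2 and 2.5.3 of \cite{Kings-Eisenstein}, my proof would simply invoke that reference after this sketch, rather than reproducing the careful stalk-level verification.
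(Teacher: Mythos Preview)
Your treatment of part (1) and of the final claim about $\varphi=[A]$ is fine, and in fact the paper's own proof is just a one-line citation of \cite[Prop.~2.2.2]{Kings-Eisenstein}. However, your sketch for part (2) mis-describes the trace map. The pushforward $\varphi_*:\Lambda_r(\sH_r)\to\Lambda_r(\sH_r')$ sends the basis element $[x]$ (for $x\in\cE[p^r]$) to $[\varphi(x)]$; it does \emph{not} send $[x]$ to the sum $\sum_{\varphi(y)=\varphi(x)}[y]$ over preimages of $\varphi(x)$. What you wrote is the formula for $\varphi_*\varphi^*$ applied to $[\varphi(x)]$, not for $\varphi_*$ alone. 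Consequently the ``identity'' $\sum_y y^{[k]}=(\varphi_* x)^{[k]}$ that you invoke is neither what is needed nor true in general: take $\varphi=[A]$ with $(A,p)=1$, so that $[A]$ is an isomorphism on $\cE[p^r]$; then your left-hand side is $x^{[k]}$ while the right-hand side is $(Ax)^{[k]}=A^k x^{[k]}$.

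With the correct description the verification is a tautology: going down then right sends $[x]\mapsto[\varphi(x)]\mapsto(\varphi(x))^{[k]}$, while going right then down sends $[x]\mapsto x^{[k]}\mapsto(\TSym^k\varphi_*)(x^{[k]})=(\varphi_* x)^{[k]}$, and these agree because $\varphi_* x\in\sH_r'$ is by definition the image $\varphi(x)$. (Your appeal to \eqref{TSymmult} for the $[A]$ case is slightly off the mark: that formula concerns the ring multiplication in $\TSym^\bullet$, whereas what you need is simply that $\TSym^k$ applied to the scalar map $A\cdot$ is $A^k\cdot$, which is immediate from functoriality.)
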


  \begin{proof}
   These compatibilities are clear from the construction of the moment map, cf.\ \cite[Prop.\ 2.2.2]{Kings-Eisenstein}.
  \end{proof}

  \begin{notation}
   If $t$ is an $N$-torsion section, we denote the composite
   \[ \Lambda(\sH_{\Zp}\langle t \rangle) \rTo^{[N]_*} \Lambda(\sH_{\Zp}) \rTo^{\mom^k} \TSym^k \sH_{\Zp} \]
   by $\mom^k_{\langle t \rangle, N}$. (We will omit the subscripts if $N$ and $t$ are clear from context.)
  \end{notation}

  The following theorem, which is a slight restatement of one of the main results of \cite{Kings-Eisenstein}, is fundamental for the entire paper:

  \begin{theorem}
   \label{thm:moments-motivic}
   As elements of $H^1\left(Y_1(N), \TSym^k \sH_{\Qp}(1)\right)$, we have
   \[ \mom^k_{\langle b t_N\rangle, N} (\cEI_{b, N}) = c^2 \Eis^k_{\et, b, N} - c^{-k} \Eis^k_{\et, c b, N},\]
   where the classes on the right-hand side are the \'etale Eisenstein classes of \S \ref{sect:motivicEis} above.
  \end{theorem}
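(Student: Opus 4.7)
The plan is to reduce Theorem~\ref{thm:moments-motivic} to the main computation of \cite{Kings-Eisenstein}, and then bootstrap from the case $b=1$ to general $b$ by equivariance.

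\textbf{Step 1: The case $b=1$.} The starting point is the identity $\cEI_N = \iota_{t_N}^*({}_c\Theta_\cE)$, where ${}_c\Theta_\cE$ is the limit Kummer class built from the unit ${}_c\theta_\cE$ of Theorem \ref{elliptic-units-thm}. The content of \cite{Kings-Eisenstein} is a formula for $\mom^k$ applied to this class in terms of Beilinson's \'etale Eisenstein class, which (in the notation of the introduction) reads
\[
 \mom^k_{\langle t_N\rangle, N}(\cEI_N) \;=\; \bigl( c^2 - c^{-k}\langle c\rangle \bigr)\Eis^k_{\et,N}.
\]
The two terms on the right correspond precisely to the two parts of the divisor $c^2(0) - \cE[c]$ of ${}_c\theta_\cE$: the first is the ``$c^2 \cdot (0)$'' piece, which after composing with $[N]_*$ and $\mom^k$ gives $c^2\,\Eis^k_{\et,N}$; and the second is the $-\cE[c]$ piece, which after reorganization along the $[c]$-multiplication is a translate by $c$-torsion of the first. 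The factor $c^{-k}$ on this second piece is the one obtained from Lemma~\ref{mom-functoriality}(\ref{mom-functoriality-pushfwd}): pushforward along $[c]$ multiplies moments of degree $k$ by $c^k$, and we are effectively applying the inverse of this, yielding $c^{-k}$. Identifying $\langle c\rangle \Eis^k_{\et,N} = \Eis^k_{\et,c,N}$ (direct from the definition, since $\langle c\rangle$ acts on the canonical section $t_N$ by $t_N\mapsto c\,t_N$) gives the desired formula for $b=1$.

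\textbf{Step 2: General $b$ by equivariance.} For $b\in (\ZZ/N\ZZ)^\times$, the diamond automorphism $\langle b\rangle$ of $Y_1(N)$ (lifted to the universal elliptic curve by multiplication by $b$ on the canonical section) sends the triple of objects $(\cEI_{1,N},\ \Eis^k_{\et,1,N},\ \Eis^k_{\et,c,N})$ to $(\cEI_{b,N},\ \Eis^k_{\et,b,N},\ \Eis^k_{\et,cb,N})$. Base-change compatibility of ${}_c\Theta_\cE$ (Proposition~\ref{Theta-norm-compatibility}(2)) together with the base-change compatibility of the moment map (Lemma~\ref{mom-functoriality}(\ref{mom-functoriality-basechange})) then shows that applying $\langle b\rangle$ to the $b=1$ identity of Step 1 produces exactly the formula claimed in Theorem~\ref{thm:moments-motivic}. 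The subtle point is that the $[b]$-multiplication used to translate $\cEI_{1,N}$ to $\cEI_{b,N}$ commutes with the $[c]$-multiplication that produces the $c^{-k}$-factor (because $(b,c)$ is arbitrary and both maps are isogenies on the same elliptic curve), so the exponent $c^{-k}$ is unchanged.

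\textbf{Main obstacle.} The substantive input is entirely in Step 1, which is not proved here but quoted from \cite{Kings-Eisenstein}. In principle the one nontrivial verification in the present proof is to confirm that the factor that appears in front of $\Eis^k_{\et,c,N}$ really is $c^{-k}$ and not, say, $c^k$. This comes down to keeping track of the direction of the map: the class $\cEI_{b,N}$ is a \emph{pullback} of a Kummer class, and moments are evaluated via the map $[N]_*$ following the section $bt_N$, whereas Lemma~\ref{mom-functoriality}(\ref{mom-functoriality-pushfwd}) is phrased in terms of pushforward. Dualizing this statement correctly gives the $c^{-k}$ claimed, rather than $c^k$; it is precisely this sign in the exponent that makes the formula an interpolation of the Eisenstein classes in a $p$-adic family as $k$ varies, which is the property exploited throughout the rest of the paper.
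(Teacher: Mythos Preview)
The paper's proof is a one-line citation to \cite[Theorem 4.7.1]{Kings-Eisenstein}; since you also defer to this reference for the substantive content, your approach is the same.

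One technical gap worth noting: your Step 2 equivariance argument via the diamond operator $\langle b\rangle$ only applies when $b\in(\ZZ/N\ZZ)^\times$, whereas the theorem (and the definitions of $\Eis^k_{\et,b,N}$ and $\cEI_{b,N}$ in \S\ref{sect:motivicEis} and before Theorem~\ref{thm:pushforward}) allow arbitrary nonzero $b\in\ZZ/N\ZZ$. For non-unit $b$ the section $bt_N$ has order strictly dividing $N$ and lies outside the diamond-orbit of $t_N$, so your reduction does not reach it. This is harmless in practice: the paper simply invokes the cited reference for the full statement (which is formulated for an arbitrary torsion section), so your Step 2 is unnecessary; and every later use of the theorem in the paper takes $b$ a unit anyway. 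Your heuristic in Step~1 and the ``Main obstacle'' paragraph attributing the $c^{-k}$ to an inverse of $[c]$-pushforward is suggestive but not a proof; since you explicitly defer to the reference there, this imprecision does no damage.
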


  \begin{proof}
   See \cite[Theorem 4.7.1]{Kings-Eisenstein}.
  \end{proof}

  \begin{remark}
   Note that the statement in \emph{op.cit.} includes a factor of $-N$ that does not appear here, which is the motivation for our slightly different normalisation for the Eisenstein class in the present paper compared to \cite{Kings-Eisenstein}.
  \end{remark}

  \begin{remark}
   Note that the moment maps commute with $(\pr_1)_*$: more precisely, we have a commutative diagram
   \begin{diagram}[labelstyle=\scriptstyle,small]
    H^1\left(Y_1(N\ell), \Lambda(\sH_{\Zp}\langle b t_{N\ell}\rangle)(1)\right) & \rTo^{\mom^k_{\langle b t_{N\ell}\rangle, N\ell}} &  H^1\left(Y_1(N\ell), \TSym^k \sH_{\Zp}(1)\right)\\
    \dTo<{(\pr_1)_*} & & \dTo>{(\pr_1)_*} \\
    H^1\left(Y_1(N), \Lambda(\sH_{\Zp}\langle b t_{N} \rangle)(1)\right) & \rTo^{\mom^k_{\langle b t_{N}\rangle, N}} &  H^1\left(Y_1(N), \TSym^k \sH_{\Zp}(1)\right).
   \end{diagram}
   Thus one can immediately deduce a pushforward compatibility for the \'etale Eisenstein classes from Theorems \ref{thm:pushforward} and \ref{thm:moments-motivic}. However, the analogous diagram for $\pr_2$ does not commute; instead, we have
   \[ (\pr_2)_* \circ \mom^k_{\langle b t_{N\ell}\rangle, N\ell} = \ell^k \mom^k_{\langle b t_{N}\rangle, N} \circ (\pr_2)_*, \]
   so the analogue of Corollary \ref{cor:pr2} for the \'etale Eisenstein classes includes an additional factor of $\ell^k$.

   These pushforward relations for the \'etale Eisenstein classes can also be obtained as a consequence of corresponding statements for the motivic Eisenstein classes $\Eis^k_{\mot, b, N}$, although we shall not use this here. (The case $k = 0$ of this motivic compatibility is \cite[Theorem 2.2.4]{LLZ14}. The general case has been treated by Scholl \cite[\S A.2]{Scholl-Kato-Euler-system}, although Scholl's normalisations are a little different from ours.)
  \end{remark}

 \subsection{Relation to Ohta's twisting map}

  We now describe a relation between the above moment maps and a construction of Ohta (cf.\ \cite{Ohta-ordinary}); this is also closely related to the twisting map considered by Kato (cf.\ \cite[\S 8.4.3]{Kato-p-adic}).

  \begin{theorem}\label{thm:Ohta-twisting}
   Let $M$ be an integer dividing $N$. Suppose $p \mid N$, and let $t_N$ be the canonical order $N$ section of the universal elliptic curve $\cE$ over $Y(M, N)$.
   \begin{enumerate}
    \item There is an isomorphism
    \[
     H^1_{\et}\left(Y(M, N), \Lambda(\sH_{\Zp}\langle t_N \rangle)(1)\right) \cong \varprojlim_{r \ge 0} H^1_{\et}(Y(M, Np^r), \Zp(1)),
    \]
    where the inverse limit is with respect to the pushforward maps $(\pr_1)_*$; and this isomorphism maps the Eisenstein--Iwasawa class $\cEI_{t_N}$ to $\left(\partial({}_c g_{0, 1/Np^r})\right)_{r \ge 0}$, where ${}_c g_{0, 1/Np^r} \in \cO(Y_1(Np^r))^\times$ is the Kato--Siegel unit.
    \item The morphism
    \[ \mom^k_{\langle t_N \rangle, N}: H^1_{\et}(Y(M, N), \Lambda(\sH_{\Zp}\langle t_N \rangle)(1)) \to H^1_{\et}(Y(M,N), \TSym^k \sH_{\Zp}(1))\]
    coincides with the morphism
    \begin{align*}
     \varprojlim_{r \ge 0} H^1_{\et}(Y(M, Np^r), \Zp(1)) &\cong \varprojlim_{r \ge 0} H^1_{\et}(Y(M, Np^r), \ZZ/p^r \ZZ(1))\\
     &\to\varprojlim_{r \ge 0} H^1_{\et}(Y(M, Np^r), \TSym^k \sH_r(1))\\
     &\to H^1_{\et}(Y(M, N), \TSym^k \sH_{\Zp}(1)),
    \end{align*}
    where the second map is given by cup-product with $(N \cdot t_{Np^r})^{\otimes k} \in H^0(Y(M, Np^r), \TSym^k \sH_r)$.
   \end{enumerate}
   In particular, the image of the inverse system
   \[ \left( \partial({}_c g_{0, 1/Np^r})\right)_{r \ge 0} \]
   is $\mom^k_{\langle t_N \rangle, N}\left(\cEI_{1, N}\right) = c^2 \Eis^k_{\et, 1, N} - c^{-k} \Eis^k_{\et, c, N}$.
  \end{theorem}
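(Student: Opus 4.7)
The plan is to unravel the definitions, exploiting the hypothesis $p \mid N$. I would first observe that when $p \mid N$, any $[p^r]$-preimage $s$ of a section $t$ of exact order $N$ is automatically of exact order $Np^r$: writing the order of $s$ in the form $Np^a$ with $0 \le a \le r$, the condition that $p^r s$ have order exactly $N$ forces $a = r$ precisely because $v_p(N) \ge 1$. Consequently the cover $\cE[p^r]\langle t_N \rangle$ of $Y(M, N)$ is representable by $(Y(M, Np^r), t_{Np^r})$, and proper base change (legitimate because $[p^r]$ is finite \'etale on $\cE$ once $p$ is invertible on $Y(M, N)$) canonically identifies the sheaf $\Lambda_r(\sH_r\langle t_N \rangle) = \iota_{t_N}^*[p^r]_*\ZZ/p^r\ZZ$ with $(\pi_r)_*\ZZ/p^r\ZZ$, where $\pi_r: Y(M, Np^r) \to Y(M, N)$ is the degeneracy map $\pr_1$. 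Shapiro's lemma for this finite \'etale cover then yields $H^1_\et(Y(M,N), \Lambda_r(\sH_r\langle t_N \rangle)(1)) \cong H^1_\et(Y(M, Np^r), \ZZ/p^r\ZZ(1))$ at each finite level. Passing to the inverse limit in $r$ and using Kummer theory to rewrite $\varprojlim_r H^1_\et(Y(M, Np^r), \Zp(1))$ as $\varprojlim_r H^1_\et(Y(M, Np^r), \ZZ/p^r\ZZ(1))$ via a diagonal argument produces the isomorphism of~(1). The identification of $\cEI_{t_N}$ then follows tautologically: by definition $\cEI_{t_N} = \iota_{t_N}^* ({}_c\Theta_\cE)$, and under the Shapiro identification this pulls back the compatible system $(\partial_r({}_c \theta_{\cE_r}))_r$ along $t_{Np^r}$, giving $(\partial_r(t_{Np^r}^* {}_c\theta_{\cE_r}))_r = (\partial_r({}_c g_{0, 1/Np^r}))_r$ by naturality of the Kummer connecting map together with the definition of the Kato--Siegel unit.

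For part (2), I would decompose $\mom^k_{\langle t_N \rangle, N}$ as $\mom^k \circ [N]_*$ and treat each factor in turn. Under the identification from (1), the multiplication-by-$N$ isogeny on $\cE$ restricts to a morphism of $Y(M, N)$-covers $\cE[p^r]\langle t_N \rangle \to \cE[p^r]$ which on universal sections sends $t_{Np^r}$ to the exact-order-$p^r$ element $N \cdot t_{Np^r}$; this realises $[N]_*$ explicitly as a map of pushforward sheaves. The question then reduces to describing $\mom^k: \Lambda_r(\sH_r) \to \TSym^k \sH_r$. By the construction of the moment map (cf.\ \cite[Prop.\ 2.2.2]{Kings-Eisenstein}), $\mom^k$ sends the ``characteristic function'' of a $p^r$-torsion section $\sigma$ to $\sigma^{\otimes k} \in \TSym^k \sH_r$; combining the two steps shows that a class $\eta \in H^1_\et(Y(M, Np^r), \ZZ/p^r\ZZ(1))$ is carried to $(\pi_r)_*\bigl(\eta \cup (N \cdot t_{Np^r})^{\otimes k}\bigr)$, which is precisely the formula asserted in (2).

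The final ``in particular'' statement is then immediate from (1), (2), and Theorem \ref{thm:moments-motivic}. The step requiring the most care is part (2): while the pointwise formula $\mom^k([\sigma]) = \sigma^{\otimes k}$ is standard, verifying that it translates correctly across the Shapiro identification into a cup product with an explicit global section, rather than just a fibrewise recipe, demands a careful trace through Kings' construction and the moment-map compatibilities of Lemma \ref{mom-functoriality}. The remainder of the argument is essentially formal manipulation using proper base change and naturality of the Kummer sequence.
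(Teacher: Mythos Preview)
Your proposal is correct and follows essentially the same route as the paper: the moduli-theoretic identification $\cE[p^r]\langle t_N\rangle \cong Y(M,Np^r)$ (valid precisely because $p \mid N$ forces $[p^r]$-preimages of $t_N$ to have exact order $Np^r$) gives part (1), and for part (2) the paper likewise reduces to identifying the Soul\'e twisting section $\tau_{r,t_N}$ of \cite[\S2.5--2.6]{Kings-Eisenstein} with $N\cdot t_{Np^r}$, which is exactly your cup-product formula. The only cosmetic difference is that the paper first reduces to $M=1$ by pullback and phrases the covering-space step as a direct isomorphism of varieties rather than invoking Shapiro's lemma by name.
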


  \begin{remark}
   Compare \cite[\S 1.3]{ohta95}; Ohta uses the notation $S^k(\Zp)$ for what we would call $\TSym^k(\ZZ_p^2)$, considered as a left $\GL_2(\Zp)$-module via the multiplication action of $\GL_2(\Zp)$ on column vectors.
  \end{remark}

  \begin{proof}
   It suffices to consider the case $M = 1$, since the case of general $M \mid N$ follows by pullback.

   We claim that there is an isomorphism of varieties
   \[ Y_1(Np^r) \cong \cE[p^r]\langle t_N \rangle\]
   which intertwines the map $p_{r, t}: \cE[p^r]\langle t_N \rangle \to Y_1(N)$ and the canonical projection $\pr_1: Y_1(Np^r) \to Y_1(N)$. To prove this claim, we use the moduli-space interpretation of $Y_1(N)$: a point of $Y_1(N)$ is given by a pair $(E, P)$ where $P$ has exact order $N$. Similarly, a point of $Y_1(Np^r)$ is $(E, Q)$ where $Q$ has order $Np^r$; and by definition a point of $\cE[p^r]\langle t \rangle$ over $(E, P) \in Y_1(N)$ is given by a point $Q$ such that $p^r Q = P$. So we may define our isomorphism by mapping the point $\big( (E, P), Q\big)$ of $\cE[p^r]\langle t_N \rangle$ to $(E, Q) \in Y_1(Np^r)$. The reverse bijection is given by $(E, Q) \mapsto \big( (E, p^r Q), Q\big)$. Thus we have
   \[ H^1_{\et}(\cE[p^r]\langle t_N \rangle, \Zp(1)) \cong H^1_{\et}(Y_1(Np^r), \Zp(1)) \]
   for all $r \ge 0$, and passing to the inverse limit over $r$ gives the required isomorphism. Moreover, the inclusion $\cE[p^r]\langle t_N \rangle \subseteq \cE$ corresponds to the canonical section $t_{Np^r}$ over $Y_1(Np^r)$, so the Siegel unit ${}_c g_{0, 1/Np^r}$ on $Y_1(Np^r)$ is just the restriction of ${}_c \theta_{\cE} \in \cO(\cE \setminus \cE[c]^\times)$ to $\cE[p^r]\langle t_N \rangle$. Applying the Kummer map to each side gives $\cEI_{1, N} = (\partial({}_c g_{0, 1/Np^r}))_{r \ge 0}$.

   We now prove (2). We know that the moment map coincides with the Soul\'e twisting map \cite[\S 2.6]{Kings-Eisenstein}. Thus it suffices to check that the section $\tau_{r, t_N} \in H^0(\cE[p^r]\langle t_N \rangle, p_{r, t_N}^* \sH_r)$ defined in (2.5.1) of \emph{op.cit.} corresponds under the above isomorphism to $N \cdot t_{Np^r}$, which is clear by construction.
  \end{proof}

  \begin{remark}
   This statement is, of course, not true for $p \nmid N$ without some minor modifications, since ${}_c g_{0, 1/N}$ is not the image under the norm map of ${}_c g_{0, 1/Np}$ if $p \nmid N$.

   It is worth noting that the moment map $\varprojlim_{r} H^1_{\et}(Y(M, Np^r), \Zp(1)) \to \varprojlim_{r} H^1_{\et}(Y(M, Np^r), \TSym^k \sH_{\Zp}(1))$ commutes with the Hecke operators $T_\ell'$ for $\ell \nmid N$, $U_\ell'$ for $\ell \mid N$, and $\stbt a001$ for $a \in (\ZZ / M\ZZ)^\times$, but intertwines $\stbt 1 0 0 b$, for $b \in \varprojlim_r (\ZZ / Np^r \ZZ)^\times$, with $b^{-k} \stbt 100b$.
  \end{remark}


\section{Rankin--Iwasawa classes and norm relations}


 In this section, we shall define classes $\cRI^{[j]}_{M, N, a}$, which are \'etale cohomology classes on the products $Y(M, N)^2$ (for $M \mid N$) with coefficients in a $\Lambda$-adic sheaf. The role of these classes is to interpolate the \'etale Eisenstein classes $\Eis^{[k, k', j]}_{\et, 1, N}$ (for a fixed integer $j \ge 0$, and varying $k, k' \ge j$). The construction is somewhat messy for general $j$, but very much simpler when $j = 0$, so the reader may wish to keep the case $j = 0$ in mind on a first reading.

 \subsection{An Iwasawa-theoretic Clebsch--Gordan map}
  \label{sect:iwaclebschgordan}

  We now define a morphism on the sheaves $\Lambda(\sH_{\Zp})$, whose images under the moment maps will correspond to the \'etale Clebsch--Gordan maps defined in \S \ref{sect:clebschgordan} considered above.

  Recall that for an elliptic curve $\cE / S$ and a section $t \in \cE[S]$, we have defined
  \[ \Lambda_r(\sH_{r}\langle t \rangle) = (p_{r, t})_* (\ZZ/p^r \ZZ),\]
  where $p_{r, t}$ is the structure map $\cE[p^r]\langle t \rangle \to S$. There is a morphism of sheaves
  \[\Lambda_r(\sH_{r}\langle t \rangle) \to \Lambda_r(\sH_{r}\langle t \rangle) \otimes \Lambda_r(\sH_{r}\langle t \rangle)
  \]
  given by the diagonal inclusion of $\cE[p^r]\langle t \rangle$ into $\cE[p^r]\langle t \rangle\times_S \cE[p^r]\langle t \rangle$. These morphisms are compatible as $r$ varies and assemble into a morphism
  \begin{equation}
   \label{eq:diagonal}
   \Lambda(\sH_{\Zp}\langle t \rangle) \to \Lambda(\sH_{\Zp}\langle t \rangle) \htimes \Lambda(\sH_{\Zp}\langle t \rangle).
  \end{equation}

  We also have a morphism of sheaves $\Zp \to (\TSym^j \sH_{\Zp})^{\otimes 2}(-j)$ for any $j \ge 0$, which is the special case $k = k' = j$ of the Clebsch--Gordan map.

  \begin{definition}
   For $j \ge 0$, let us write
   \[ \Lambda(\sH_{\Zp}\langle t \rangle)^{[j]} = \Lambda(\sH_{\Zp}\langle t \rangle) \otimes \TSym^j \sH_{\Zp}.\]
   We define a morphism
   \[ CG^{[j]} : \Lambda(\sH_{\Zp}\langle t \rangle) \to \left(\Lambda(\sH_{\Zp}\langle t \rangle)^{[j]} \htimes \Lambda(\sH_{\Zp})^{[j]}\right)(-j) \]
   as the tensor product of the two morphisms we have just defined.
  \end{definition}

  For integers $k\ge j$ we can define a moment map
  \[ \mom^{k-j} \cdot \id: \Lambda(\sH_{\Zp}\langle t \rangle)^{[j]} \to \TSym^k \sH_{\Zp}\]
  as the composition
  \[
   \Lambda(\sH_{\Zp}\langle t \rangle)\otimes\TSym^j \sH_{\Zp} \rTo^{\mom^{k-j} \otimes \id}
   \TSym^{k-j} \sH_{\Zp} \otimes \TSym^j \sH_{\Zp} \rTo^\times \TSym^k \sH_{\Zp}
  \]
  where ``$\id$'' denotes the identity on $\TSym^j \sH$, and the second arrow is the product in the ring $\TSym^\bullet \sH_{\Zp}$ (the symmetrisation of the naive tensor product).

  \begin{proposition}
   \label{prop:moment-compat}
   For integers $0 \le j \le k,k'$ there is a commutative diagram of pro-sheaves on $S$
   \begin{diagram}
    \Lambda(\sH_{\Zp}\langle t \rangle) & \rTo^{\scriptstyle CG^{[j]}} &\left(\Lambda(\sH_{\Zp}\langle t \rangle)^{[j]} \htimes \Lambda(\sH_{\Zp}\langle t \rangle)^{[j]}\right)(-j) \\
    \dTo^{\scriptstyle \mom^{k + k' - 2j}} & & \dTo_{\scriptstyle (\mom^{k-j} \cdot \id) \otimes (\mom^{k'-j} \cdot \id)}\\
    \TSym^{k+k'-2j} \sH_{\Zp} & \rTo^{\scriptstyle CG^{[k,k',j]}}& \left(\TSym^k \sH_{\Zp} \otimes \TSym^{k'} \sH_{\Zp}\right)(-j).
   \end{diagram}
  \end{proposition}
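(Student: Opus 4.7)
The plan is to decompose both $CG^{[j]}$ and $CG^{[k,k',j]}$ into a pure comultiplication-type map followed by multiplication by a common Clebsch--Gordan element $\omega_j \in (\TSym^j \sH_{\Zp} \otimes \TSym^j \sH_{\Zp})(-j)$, and then to reduce commutativity of the square to a single compatibility between moment maps and the comultiplication on the divided-power bialgebra $\TSym^{\bullet}\sH_{\Zp}$.

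First I would unwind the definition of $CG^{[j]}$: by construction it is the tensor product of the diagonal $\Delta \colon \Lambda(\sH_{\Zp}\langle t \rangle) \to \Lambda(\sH_{\Zp}\langle t \rangle) \htimes \Lambda(\sH_{\Zp}\langle t \rangle)$ from \eqref{eq:diagonal} with the map $\Zp \to (\TSym^j \sH_{\Zp})^{\otimes 2}(-j)$ picking out $\omega_j$. By the recipe of \cite[\S 5.1]{KLZ1a}, the bottom edge $CG^{[k,k',j]}$ is, up to the same element $\omega_j$, the $(k-j,k'-j)$-component $\Delta^{\TSym}_{k-j,k'-j}$ of the comultiplication on $\TSym^{\bullet}\sH_{\Zp}$. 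Since both the top and bottom edges factor through tensoring with the identical element $\omega_j$, and the right-hand vertical map acts as $\mom^{k-j}\otimes\mom^{k'-j}$ on the $\Lambda$-factors and as the identity on the $\TSym^j$-factors, the proposition reduces to commutativity of the simpler square with top edge $\Delta$, bottom edge $\Delta^{\TSym}_{k-j,k'-j}$, left edge $\mom^{k+k'-2j}$, and right edge $\mom^{k-j}\otimes\mom^{k'-j}$.

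To prove this reduced compatibility, I would pass to the finite-level pro-sheaves $\Lambda_r(\sH_r\langle t \rangle)$ and use that, by the construction in \cite[\S 2.5]{Kings-Eisenstein}, every map in sight is a natural transformation of \'etale sheaves determined by its behaviour on stalks at geometric points. After such a reduction the sheaf $\Lambda_r(\sH_r\langle t \rangle)$ becomes the group algebra of the finite fibre $\cE[p^r]\langle t\rangle$, the diagonal $\Delta$ sends a group-like element $[x]$ to $[x]\otimes[x]$, and the moment map satisfies $\mom^k([x]) = x^{[k]}$. The claim then collapses to the standard divided-power identity $\Delta^{\TSym}_{k-j,k'-j}(x^{[k+k'-2j]}) = x^{[k-j]} \otimes x^{[k'-j]}$, which is immediate from the multiplication rule \eqref{TSymmult}.

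The main obstacle, modest as it is, lies in the initial factorisation step: one must unpack the compressed definition of $CG^{[k,k',j]}$ and verify that it truly factors as the $(k-j,k'-j)$-piece of the $\TSym^{\bullet}$-comultiplication tensored with $\omega_j$, matching the analogous factorisation of $CG^{[j]}$ in a way compatible with the Tate twist bookkeeping. Once this identification is in hand, the remainder of the argument is a formal diagram chase on stalks.
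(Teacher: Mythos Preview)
Your proposal is correct and is exactly what the paper's one-line proof (``Clear from the construction of the maps $CG^{[j]}$ and $CG^{[k, k', j]}$'') leaves implicit. You have simply made explicit the factorisation through the common element $\omega_j$ and the stalk-level divided-power identity, which is precisely the unwinding the paper expects the reader to do; there is no alternative route to compare.
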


  \begin{proof}
   Clear from the construction of the maps $CG^{[j]}$ and $CG^{[k, k', j]}$.
  \end{proof}


  Let us temporarily write $Y$ for $Y(M, N)[1/p]$ and $Y^2$ for its self-product over $\ZZ\left[ \frac{1}{N}, \mu_M \right]$.

  \begin{notation}
   Given sheaves $\mathcal{A}, \mathcal{B}$ on $Y$ we write $\mathcal{A} \boxtimes \mathcal{B}$ for the sheaf on $Y^2$ given by $\pi_1^* \mathcal{A} \otimes \pi_2^* \mathcal{B}$, where $\pi_1, \pi_2$ are the first and second projections from $Y^2 \to Y$.
  \end{notation}

  To shorten the notation, we write $\Lambda(\sH_{\Zp}\langle t_N \rangle)^{[j, j]}$ for the sheaf $\Lambda(\sH_{\Zp}\langle t_N \rangle)^{[j]} \boxtimes \Lambda(\sH_{\Zp}\langle t_N \rangle)^{[j]}$ on $Y^2$, where $t_N$ is the canonical order $N$ section.

  Since $\Delta$ has relative dimension 1, we obtain pushforward maps
  \[
   \Delta_*: H^1_{\et}\left( Y, \Lambda(\sH_{\Zp}\langle t_N \rangle)^{[j]} \otimes \Lambda(\sH_{\Zp}\langle t_N \rangle)^{[j]}(1 - j)\right) \to H^3_{\et}\left( Y^2, \Lambda(\sH_{\Zp}\langle t_N \rangle)^{[j,j]}(2 - j)\right).
  \]

  We also have an action of $\ZZ/ M\ZZ$ on $Y(M, N)$ via
  \[a \cdot (E, e_1, e_2) = \left(E, e_1 + a\tfrac{N}{M}e_2, e_2\right).\]

  \begin{notation}
   Let $u_a$ be the automorphism of $Y^2$ that is the identity in the first factor and the action of $a$ in the second factor.
  \end{notation}

  The sheaf $\Lambda(\sH_{\Zp}\langle t_N \rangle)^{[j, j]}$ is canonically isomorphic to its pullback by $u_a$, so $u_a$ acts on its cohomology. This leads to the following definition:

  \begin{definition}
   \label{def:RIclass}
   We define the \emph{Rankin--Iwasawa class}
   \[ \cRI_{M, N, a}^{[j]} = ((u_a)_* \circ \Delta_* \circ CG^{[j]})(\cEI_{1, N}) \in H^3_{\et}\left( Y^2, \Lambda(\sH_{\Zp}\langle t_N \rangle)^{[j, j]}(2-j)\right).\]
  \end{definition}


 \subsection{First properties of the Rankin--Iwasawa class}

  \begin{notation}
   \label{not:dstar}
   We use the following notations. We assume that $M|N$.
   \begin{enumerate}
    \item For $d \in \ZZ$, let $[d]_*$ denote the morphism of sheaves on $Y(M, N)^2$,
    \[ \Lambda(\sH_{\Zp}\langle t_N \rangle)^{[j]} \to \Lambda(\sH_{\Zp}\langle d t_N \rangle)^{[j]} \]
    given by the tensor product of pushforward by the $d$-multiplication on the factor $\Lambda(\sH_{\Zp})$, and the \emph{trivial} map on $\TSym^j \sH_{\Zp}$.
    \item For $x \in (\ZZ / N\ZZ)^*$, let $\langle x \rangle$ denote the automorphism of $Y(M, N)$ over $\ZZ[1/N, \mu_M]$ given by $(E, e_1, e_2) \to (E, x^{-1} e_1, x e_2)$; and let $\sigma_x$, for $x \in (\ZZ / M\ZZ)^*$, be the automorphism $(E, e_1, e_2) \to (E, x e_1, e_2)$.
    \item Denote the automorphism $(\sigma_x, \sigma_x)$ of $Y(M, N)^2$ simply as $\sigma_x$.
   \end{enumerate}
  \end{notation}

  \begin{remark}
   The utility of the (slightly curious) definition of $[d]_*$ is that it interacts well with the Clebsch--Gordan map: we have $CG^{[j]} \circ [d]_* =    ([d]_*, [d]_*) \circ CG^{[j]}$, as is clear from the construction of the map $CG^{[j]}$.
  \end{remark}

  \begin{proposition}
   \label{prop:RIfirstproperties}
   The elements $\cRI_{M, N, a}^{[j]}$ have the following properties:
   \begin{enumerate}
    \item We have
    \[ \rho^* \big(\cRI_{M, N, a}^{[j]}\big) = (-1)^j \cRI_{M, N, -a}^{[j]},\]
    where $\rho$ is the involution of $Y^2$ which interchanges the two factors.
    \item For $c, d > 1$ coprime to $6Np$, the element
    \[ \left[d^2 - \big([d]_*^{-1} \langle d \rangle,[d]_*^{-1} \langle d \rangle\big) \sigma_d^2 \right] \cRI_{M, N, a}^{[j]}\]
    is symmetric in $c$ and $d$.
    \item\label{item:RIfirstproperties-moments} For any integers $(k, k')$ such that $(k, k', j)$ satisfies the inequality $0 \le j \le \min(k, k')$ of \eqref{eq:inequalities}, we have
    \[
     \left((\mom^{k-j} \cdot \id) \boxtimes (\mom^{k'-j} \cdot \id)\right) \left( \cRI_{M, N, a}^{[j]} \right) =
     \left[c^2 - c^{2j-k-k'}\big(\langle c \rangle,\langle c \rangle\big) \sigma_c^2 \right] (u_a)_* \left(\Eis^{[k, k', j]}_{\et, 1, N}\right).
    \]
    In particular, the image of $\cRI_{M, N, a}^{[j]}$ under this moment map is the image of a motivic cohomology class, for all such $k, k'$.
    \item We have
    \[ \sigma_b  \cdot \cRI_{M, N, a}^{[j]} = \cRI_{M, N, b^{-1} a}^{[j]}\]
    for any $b \in (\ZZ / M\ZZ)^\times$.
   \end{enumerate}
  \end{proposition}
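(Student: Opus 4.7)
The plan is to verify the four parts in order; each is a direct computation relying on two ingredients, namely the functoriality of $\cEI_{1, N}$ from Proposition~\ref{prop:eisenstein-iwasawa-properties} and the fact that $CG^{[j]}$, being built from natural sheaf operations, commutes with pullback and pushforward along automorphisms of $Y(M, N)$.

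For (1), I will verify the identity
\[ \rho \circ u_a \circ \Delta \;=\; u_{-a} \circ \Delta \circ \alpha_a \]
of morphisms $Y(M, N) \to Y(M, N)^2$, where $\alpha_a$ denotes the order-$M$ automorphism $(E, e_1, e_2)\mapsto(E, e_1 + a\tfrac{N}{M}e_2, e_2)$ appearing in the definition of $u_a$; this holds because both sides send $x$ to $(\alpha_a x, x)$. Since $\alpha_a$ preserves $t_N = e_2$, Proposition~\ref{prop:eisenstein-iwasawa-properties}(1) gives $(\alpha_a)_* \cEI_{1, N} = \cEI_{1, N}$, and pushing this through $CG^{[j]}$ and the identity above yields $\cRI^{[j]}_{M, N, -a}$. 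The factor $(-1)^j$ then arises when one identifies $\rho^*\Lambda(\sH_{\Zp}\langle t_N\rangle)^{[j, j]}$ with $\Lambda(\sH_{\Zp}\langle t_N\rangle)^{[j, j]}$ via the swap of the two box-product factors: the Clebsch--Gordan invariant built into $CG^{[j]}$ is the $j$-th tensor power of an antisymmetric element of $\sH_{\Zp}^{\otimes 2}(-1)$ coming from the Weil pairing, so picks up the sign $(-1)^j$ under the swap. Part (4) is parallel: a direct check that $\sigma_b\circ\alpha_a = \alpha_{ab}\circ\sigma_b$ yields $(\sigma_b,\sigma_b)\circ u_a\circ\Delta = u_{ab}\circ\Delta\circ\sigma_b$, and since $(\sigma_b)_*\cEI_{1, N} = \cEI_{1, N}$ (as $\sigma_b$ fixes $t_N$), the formula follows once one interprets $\sigma_b\cdot(-)$ as the pullback $(\sigma_b)^* = (\sigma_{b^{-1}})_*$.

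For (2), I will apply Proposition~\ref{prop:eisenstein-iwasawa-properties}(3) with $D = t_N$, which gives the $(c, d)$-symmetry of $d^2 \cEI_{1, N} - [d]_*^{-1}\cEI_{d, N}$. A direct check shows $\langle d\rangle\sigma_d^2(E, e_1, e_2) = (E, de_1, de_2)$, so this automorphism sends $t_N$ to $dt_N$, whence Proposition~\ref{prop:eisenstein-iwasawa-properties}(1) yields $(\langle d\rangle\sigma_d^2)_*\cEI_{1, N} = \cEI_{d, N}$. Commuting $[d]_*^{-1}$, $\langle d\rangle$, and $\sigma_d^2$ through $CG^{[j]}$, $\Delta_*$, and $(u_a)_*$ (each step being either naturality of $CG^{[j]}$ or base change for automorphisms of $Y$ extended diagonally to $Y^2$) then produces the advertised symmetric expression.

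For (3), I will combine Proposition~\ref{prop:moment-compat} (the moment/CG compatibility) with Theorem~\ref{thm:moments-motivic} ($\mom^{k+k'-2j}\cEI_{1, N} = c^2\Eis^{k+k'-2j}_{\et, 1, N} - c^{2j-k-k'}\Eis^{k+k'-2j}_{\et, c, N}$). Commuting the moment maps past $\Delta_*$ and $(u_a)_*$ gives
\[ ((\mom^{k-j}\cdot\id) \boxtimes (\mom^{k'-j}\cdot\id))(\cRI^{[j]}_{M, N, a}) = c^2(u_a)_*\Eis^{[k, k', j]}_{\et, 1, N} - c^{2j-k-k'}(u_a)_*\Eis^{[k, k', j]}_{\et, c, N}. \]
Using the torsion-translation identity $\Eis^k_{\et, c, N} = (\langle c\rangle\sigma_c^2)^*\Eis^k_{\et, 1, N}$ (a basic functoriality of the motivic Eisenstein class under the automorphism sending $t_N$ to $ct_N$), together with a direct check that $(\langle c\rangle, \langle c\rangle)\sigma_c^2$ commutes with $u_a$ on $Y(M, N)^2$ (which reduces to $\alpha_{-a}\langle c\rangle\sigma_c^2 = \langle c\rangle\sigma_c^2\alpha_{-a}$, i.e.\ that scalar multiplication by $c$ on the level structure commutes with the translation $\alpha_{-a}$), this rearranges into the advertised formula. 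The main obstacle is pinning down the sign $(-1)^j$ in (1): it requires spelling out $CG^{[j]}$ in terms of the Weil pairing on $\sH_{\Zp}$ and fixing the convention for identifying $\rho^*\Lambda(\sH_{\Zp}\langle t_N\rangle)^{[j, j]}$ with $\Lambda(\sH_{\Zp}\langle t_N\rangle)^{[j, j]}$; everything else is a formal consequence of Proposition~\ref{prop:eisenstein-iwasawa-properties} and the naturality of $CG^{[j]}$.
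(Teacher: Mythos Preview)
Your proposal is correct and follows essentially the same route as the paper, which simply remarks that the arguments are identical to those for Siegel units in \cite[Proposition 2.6.2]{LLZ14}. You have spelled out in detail precisely those functoriality and naturality checks (for $\cEI_{1,N}$, $CG^{[j]}$, $\Delta_*$, $(u_a)_*$, and the swap $\rho$) that the reference to \cite{LLZ14} is meant to invoke, so there is nothing to add.
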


  \begin{proof}
   The proofs of these statements are exactly the same as in the case of Siegel units, which is Proposition 2.6.2 of \cite{LLZ14}.
  \end{proof}


 \subsection{The first norm relation}

  We now give a norm relation for the classes $\cRI^{[j]}_{M, N, a}$ as $N$ varies, generalizing Theorem 3.1.1 of \cite{LLZ14}. As in Definition \ref{def:lambdapr1} above, we consider $\pr_1$ as a map
  \[ \left(Y(M, N\ell), \Lambda(\sH_{\Zp}\langle t_{N\ell}\rangle)^{[j]}\right) \to \left(Y(M, N), \Lambda(\sH_{\Zp}\langle t_{N}\rangle)^{[j]}\right)\]
  by composing with the map $[\ell]_*$ of Notation \ref{not:dstar}.

  \begin{theorem}
   \label{thm:norm1}
   Let $M, N$ be integers with $M \mid N$ and $M + N \ge 5$, and $\ell$ a prime. Then
   \[
     (\pr_1 \times \pr_1)_* \left( \cRI^{[j]}_{M, \ell N, a}\right) = \begin{cases}
     \cRI^{[j]}_{M, N, a}, & \text{if $\ell \mid N$,}\\
     \left[ 1 - \big([\ell]_*\langle \ell^{-1} \rangle, [\ell]_*\langle \ell^{-1} \rangle\big) \sigma_\ell^{-2}\right]\cRI^{[j]}_{M, N, a}, & \text{if $\ell \nmid N$.}
    \end{cases}
   \]
  \end{theorem}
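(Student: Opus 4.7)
The plan is to reduce the claim to the Eisenstein--Iwasawa pushforward relation of Theorem~\ref{thm:pushforward} by commuting $(\pr_1 \times \pr_1)_*$ past each operation used to define
\[
\cRI^{[j]}_{M, N\ell, a} = (u_a)_* \Delta_* CG^{[j]}(\cEI_{1, N\ell}).
\]
Three commutations must be checked: first, that $(\pr_1 \times \pr_1)_*$ commutes with $(u_a)_*$, which is a direct moduli-theoretic check (the analogous automorphism of $Y(M, N\ell)$ involves the coefficient $N\ell/M = \ell \cdot N/M$, which absorbs the $\ell$-multiplication performed by $\pr_1$ on the $e_2$-component); second, that $(\pr_1 \times \pr_1)_* \circ \Delta_{N\ell,*} = \Delta_{N,*} \circ (\pr_1)_*$, which is functoriality of pushforward applied to the commutative square $(\pr_1 \times \pr_1) \circ \Delta_{N\ell} = \Delta_N \circ \pr_1$ (with the coefficient morphism on $Y(M, N)^2$ being the external tensor square of $[\ell]_*$); and third, that $(\pr_1)_*$ commutes with $CG^{[j]}$.

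The last of these is the crux. Since $CG^{[j]}$ is the tensor product of the comultiplication on $\Lambda(\sH_{\Zp}\langle t \rangle)$ with the fixed Clebsch--Gordan map $\Zp \to (\TSym^j \sH_{\Zp})^{\otimes 2}(-j)$ on which the coefficient trace $[\ell]_*$ acts trivially, it suffices to show that the trace $[\ell]_*: \Lambda(\sH_{\Zp}\langle t_{N\ell}\rangle) \to \Lambda(\sH_{\Zp}\langle t_N\rangle)$ commutes with the comultiplication. Working with the scheme-theoretic description of these sheaves as pushforwards of $\ZZ/p^r$ from $\cE[p^r]\langle t \rangle$, the comultiplication is induced by the clopen diagonal inclusion, and the $[\ell]$-multiplication sends the diagonal of $\cE[p^r]\langle t_{N\ell}\rangle$ into the diagonal of $\cE[p^r]\langle t_N\rangle$; at the level of local group-like sections $e_x$, both composites act by $e_x \mapsto e_{\ell x} \otimes e_{\ell x}$. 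I expect this verification of the trace/comultiplication compatibility to be the main technical obstacle, since it goes slightly beyond the sheaf-theoretic formalism explicitly set up in the earlier sections.

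Once these commutations are in place, Theorem~\ref{thm:pushforward} applied to $(\pr_1)_* \cEI_{1, N\ell}$ immediately yields the stated identity when $\ell \mid N$. For $\ell \nmid N$, an extra term $-(u_a)_* \Delta_* CG^{[j]}([\ell]_* \cEI_{\ell^{-1}, N})$ appears, which I process as follows. First, rewrite $\cEI_{\ell^{-1}, N} = \langle\ell^{-1}\rangle^* \cEI_{1, N}$ via the base-change property of Proposition~\ref{prop:eisenstein-iwasawa-properties}(1); then slide $(\langle\ell^{-1}\rangle, \langle\ell^{-1}\rangle)^*$ successively past $CG^{[j]}$ (naturality), past $\Delta_*$ (equivariance of the diagonal under these automorphisms), and past $(u_a)_*$ by means of the identity $u_a \circ (\langle\ell^{-1}\rangle, \langle\ell^{-1}\rangle) = (\langle\ell^{-1}\rangle, \langle\ell^{-1}\rangle) \circ u_{a\ell^{-2}}$ on $Y(M, N)^2$, which is verified directly from the moduli formulas. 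The outcome is $\bigl([\ell]_* \langle\ell^{-1}\rangle, [\ell]_* \langle\ell^{-1}\rangle\bigr) \cRI^{[j]}_{M, N, a\ell^2}$, and Proposition~\ref{prop:RIfirstproperties}(4) rewrites $\cRI^{[j]}_{M, N, a\ell^2} = \sigma_\ell^{-2} \cRI^{[j]}_{M, N, a}$, giving exactly the factor appearing in the theorem.
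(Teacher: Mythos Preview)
Your approach is correct and is essentially the same as the paper's: both arguments reduce to Theorem~\ref{thm:pushforward} via the commutativity of the square
\[
\begin{array}{ccc}
Y(M,N\ell) & \xrightarrow{u_a\circ\Delta} & Y(M,N\ell)^2 \\
\downarrow\scriptstyle{\pr_1} & & \downarrow\scriptstyle{\pr_1\times\pr_1}\\
Y(M,N) & \xrightarrow{u_a\circ\Delta} & Y(M,N)^2
\end{array}
\]
together with the compatibility of the coefficient morphisms. The paper's proof records only this diagram; you have unpacked it into the three separate commutations and then processed the residual term for $\ell\nmid N$ explicitly, which is a reasonable expansion of what the paper leaves implicit.

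One remark: the step you flag as the ``main technical obstacle'' --- that $[\ell]_*$ commutes with $CG^{[j]}$ --- is in fact already recorded in the paper, in the Remark following Notation~\ref{not:dstar}, where it is observed that $CG^{[j]}\circ[d]_* = ([d]_*,[d]_*)\circ CG^{[j]}$ directly from the construction. So this commutation is available off the shelf and need not be re-derived from the scheme-theoretic description of the comultiplication.
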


  \begin{proof}
   This follows immediately from Theorem \ref{thm:pushforward} and the commutativity of the diagram
   \[
    \begin{diagram}[small]
    Y(M, N\ell) & \rTo & Y(M, N\ell)^2 \\
    \dTo & & \dTo \\
    Y(M, N) & \rTo & Y(M, N)^2.
   \end{diagram}
  \qedhere\]
  \end{proof}


 \subsection{The second norm relation}

  Our next result is a version of Theorem 3.3.1 of \cite{LLZ14}. We fix integers $M, N$ and a prime $\ell$ with $M + N \ge 5$ and $M\ell \mid N$. Recall the degeneracy maps $\hpr_1$ and $\hpr_2: Y(M\ell, N) \to Y(M, N)$ introduced in \S \ref{sect:degeneracy} above.

  \begin{theorem}
   \label{thm:norm2}
   Suppose $a \in \ZZ / M\ell \ZZ$ is not divisible by $\ell$. Then we have
   \[ (\hpr_2 \times \hpr_2)_*\left( \cRI_{M\ell, N, a}^{[j]}\right) =
    \begin{cases}
     (U_\ell', U_\ell') \cdot \cRI_{M, N, a}^{[j]} & \text{if $\ell \mid M$},\\
     \left[ (U_\ell', U_\ell') - \ell^j \sigma_\ell\right] \cdot \cRI_{M, N, a}^{[j]} & \text{if $\ell \nmid M$.}
    \end{cases}
   \]
  \end{theorem}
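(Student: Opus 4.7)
The plan is to mirror the strategy of the second norm relation for Siegel units (Theorem 3.3.1 of \cite{LLZ14}), lifted to the Iwasawa-theoretic setting with the twist by $j$. The essential point is that all the geometric operations defining $\cRI^{[j]}$ (the Clebsch--Gordan map $CG^{[j]}$, the diagonal $\Delta$, and the automorphism $u_a$) interact functorially with the degeneracy maps between modular curves, so the computation reduces to a pushforward relation for $\cEI_{1,N}$ alone.

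First, I would decompose $\hpr_2 = \pr \circ \varphi_\ell^{-1} \circ \hpr'$ and exploit these functorialities. The identity $(\hpr_2 \times \hpr_2) \circ \Delta = \Delta \circ \hpr_2$ at the level of schemes, combined with the naturality of $CG^{[j]}$ with respect to pullback and its compatibility with isogeny pushforwards (on both the Iwasawa factor $\Lambda(\sH_{\Zp}\langle t_N\rangle)$ and the symmetric-tensor factor $\TSym^j \sH_{\Zp}$), allows us to commute $(\hpr_2\times\hpr_2)_*$ past $\Delta_* \circ CG^{[j]}$. The $u_a$ action descends compatibly via reduction $\ZZ/M\ell\ZZ \to \ZZ/M\ZZ$.

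Second, I would compute $(\hpr_2)_* \cEI_{1,N}$ using this factorisation. The map $\hpr'\colon Y(M\ell, N) \to Y(M(\ell), N)$ is a covering under which the universal elliptic curve and canonical section $t_N$ pull back cleanly, so by Proposition \ref{prop:eisenstein-iwasawa-properties}(1) the Eisenstein--Iwasawa class pulls back to its namesake, and its pushforward is a straightforward degree factor. The composite $\pr_* \circ \varphi_\ell^*$ is then handled by Theorem \ref{thm:pushforward2}(2): the remark following that theorem identifies it with the Hecke operator $U_\ell'$ (since $\ell \mid N$), producing the $(U_\ell', U_\ell')$ term after one tracks through both factors of the product.

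Third, in the case $\ell \nmid M$, Theorem \ref{thm:pushforward2}(2) produces an additional summand $\ell [\ell]_* \cEI_{\ell^{-1}, N}$. This contributes the correction $-\ell^j \sigma_\ell \cdot \cRI^{[j]}_{M, N, a}$: the factor $\ell^j$ arises from the action of $[\ell]_*$ on $\TSym^j \sH_{\Zp}$ via Lemma \ref{mom-functoriality}(2), while $\sigma_\ell$ captures the shift in the $u_a$-index induced by the reindexing $b \mapsto \ell^{-1}b$. The main obstacle will be the combinatorial bookkeeping showing that the extra term produces exactly $\ell^j$ rather than $\ell^{2j}$: this should follow from the observation that $[\ell]_*$ acts on the Iwasawa sheaf \emph{before} the Clebsch--Gordan map duplicates the $\TSym^j$ factor, so only one copy of $\ell^j$ enters. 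A secondary technical point is verifying that the base-change and projection formulae apply cleanly in the $\Lambda$-adic setting of \S\ref{sect:adjunction}, which relies on the fact that $\hpr'$ is finite étale so that the pushforward morphism on Iwasawa sheaves is well defined.
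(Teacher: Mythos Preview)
Your proposal has a genuine gap at the first step: the claim that the $u_a$ action ``descends compatibly via reduction $\ZZ/M\ell\ZZ \to \ZZ/M\ZZ$'' under $\hpr_2$ is false. The map $\hpr_2$ quotients the universal elliptic curve by the subgroup $\langle \ell e_1\rangle$, while $u_a$ shifts $e_1$ by $a\tfrac{N}{M\ell}e_2$; after the shift the subgroup becomes $\langle \ell e_1 + a\tfrac{N}{M}e_2\rangle$, which is a different cyclic order-$\ell$ subgroup in general. So $(\hpr_2\times\hpr_2)_*$ does not commute past $(u_a)_*$ in the way you need. A related problem: even ignoring $u_a$, pushing $(\hpr_2)_*$ through $CG^{[j]}$ introduces a factor of $\ell^j$ (from the scaling of the Weil pairing under the degree-$\ell$ isogeny), but in the final answer the main term $(U_\ell',U_\ell')$ carries no such factor --- only the correction term does. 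Your plan would therefore produce the wrong power of $\ell$ on the main term. Finally, your invocation of Theorem~\ref{thm:pushforward2}(2) is misplaced: that theorem distinguishes $\ell\mid N$ from $\ell\nmid N$, but here we always have $\ell\mid N$ (since $M\ell\mid N$); the dichotomy $\ell\mid M$ versus $\ell\nmid M$ in the statement has a different origin.

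The paper's argument avoids these problems by working with the factorisation $\hpr_2 = \tilde\pr\circ\hpr'$ and proving a Cartesian-diagram lemma (Lemma~\ref{lemma:cartesian1}): the fibre product of the twisted diagonal $\iota_{M,N,a}:Y(M,N)\hookrightarrow Y(M,N)^2$ with $\hpr\times\hpr:Y(M(\ell),N)^2\to Y(M,N)^2$ is identified with $Y(M\ell,N)$ (plus, when $\ell\nmid M$, an extra component $Y(M(\ell),N)$ embedded via the lift $\tilde a$ of $a$ divisible by $\ell$). Proper base change then gives $(\hpr'\times\hpr')_*\cRI^{[j]}_{M\ell,N,a} = (\hpr\times\hpr)^*\cRI^{[j]}_{M,N,a}$ up to that extra piece, and applying $(\tilde\pr\times\tilde\pr)_*$ converts the pullback into $(U_\ell',U_\ell')$. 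The extra component, when present, yields the term $A=(\tilde\pr\times\tilde\pr)_*(\gamma_*\circ CG^{[j]})(\cEI_{1,N})$; the $\ell^j$ appears here because $\tilde\pr_*\circ CG^{[j]}=\ell^j\,CG^{[j]}\circ\tilde\pr_*$ (via Lemma~\ref{lemma:isogeny}), and $\sigma_\ell$ enters because the resulting class is $\cRI^{[j]}_{M,N,\ell^{-1}a}=\sigma_\ell\cdot\cRI^{[j]}_{M,N,a}$. The point is that the Cartesian-diagram approach keeps track of exactly how the twisted diagonal interacts with the degeneracy maps, which your direct commutation cannot do.
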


  Before embarking on the proof, we need some preparatory lemmas. For $a \in \ZZ / M\ZZ$, we write $\iota_{M, N, a}$ for the map $u_a \circ \Delta: Y(M, N) \to Y(M, N)^2$, and similarly $\iota_{M\ell, N, a}$ for $a \in \ZZ / M\ell \ZZ$.

  \begin{lemma}
   \label{lemma:cartesian1}
   Let $a \in \ZZ / M\ell\ZZ$ be not divisible by $\ell$. Then the composition
   \[\iota_{M(\ell), N, a}:  Y(M\ell, N)\rTo^{\iota_{M\ell, N, a}} Y(M\ell, N)^2 \rTo^{\hpr' \times \hpr'} Y(M(\ell), N)^2,\]
   where the second arrow is the natural degeneracy map, is a closed embedding. If moreover $\ell \mid M$, then the diagram
   \[
    \begin{diagram}[small,labelstyle=\scriptstyle]
     Y(M\ell, N) & \rTo^{\iota_{M(\ell), N, a}} & Y(M(\ell), N)^2 \\
     \dTo<{\hpr_1} & & \dTo>{\hpr \times \hpr}\\
     Y(M, N) &\rTo^{\iota_{M, N, a}} & Y(M, N)^2
    \end{diagram}
   \]
   is Cartesian, where the vertical maps are the natural projections.
  \end{lemma}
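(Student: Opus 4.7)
The plan is to work functorially with $T$-valued points. The composition $\iota_{M(\ell), N, a}$ sends $(E, e_1, e_2) \in Y(M\ell, N)(T)$ to the pair
\[
\big( (E, \ell e_1, e_2, \langle e_1\rangle),\ (E, \ell e_1 + a(N/M)e_2, e_2, \langle e_1 + ad\, e_2\rangle) \big),
\]
where $d := N/(M\ell)$.

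For the closed-embedding claim, I would observe that $\iota_{M(\ell), N, a}$ is the composition of the closed immersion $\iota_{M\ell, N, a}\colon Y(M\ell, N) \into Y(M\ell, N)^2$ with the finite \'etale morphism $\hpr' \times \hpr'$, and is therefore proper and unramified; it thus suffices to check universal injectivity. If $(E, e_1, e_2)$ and $(E, \tilde e_1, e_2)$ have the same image, the conditions $\ell e_1 = \ell \tilde e_1$ and $\langle e_1\rangle = \langle \tilde e_1 \rangle$ force $\tilde e_1 = (1 + tM)e_1$ for a unique $t \in \ZZ/\ell$, and the remaining constraint $\langle e_1 + ad e_2\rangle = \langle \tilde e_1 + ad e_2\rangle$ reduces to $tMe_1 \in \langle e_1 + ade_2\rangle$. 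Expanding this relation in the free $\ZZ/M\ell \times \ZZ/N$-module spanned by $(e_1, e_2)$ (supplied by the moduli-theoretic injectivity axiom) yields $ta \equiv 0 \pmod \ell$; the hypothesis $\gcd(a, \ell) = 1$ then forces $t = 0$.

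For the Cartesian square (under the additional assumption $\ell \mid M$), I identify the fibre product $Z$ with the functor parameterising quintuples $(E, e_1', e_2, C, C')$, where $(E, e_1', e_2) \in Y(M, N)$ and $C, C' \subset E$ are cyclic subgroups of order $M\ell$ containing $e_1'$ and $e_1' + a(N/M)e_2$ respectively, both complementary to $\langle e_2\rangle$. The tautological map $Y(M\ell, N) \to Z$ sending $(E, e_1, e_2)$ to $(E, \ell e_1, e_2, \langle e_1\rangle, \langle e_1 + ade_2\rangle)$ is injective by the argument of the previous paragraph. For surjectivity, given a quintuple $(E, e_1', e_2, C, C')$, I choose arbitrary preimages $f_0 \in C$ of $e_1'$ and $h_0 \in C'$ of $e_1' + a(N/M)e_2$ under multiplication by $\ell$. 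The difference $y := h_0 - ad e_2 - f_0$ lies automatically in $E[\ell]$. Under the hypothesis $\ell \mid M$, the $\ell$-torsion subgroups $C[\ell] = \langle (M/\ell) e_1'\rangle$ and $C'[\ell] = \langle (M/\ell) e_1' + aMd\, e_2\rangle$ depend only on $e_1'$ and $e_1' + a(N/M)e_2$; moreover, they are distinct, since their coincidence would place $aMd\, e_2$ inside $\langle e_1'\rangle \cap \langle e_2\rangle = 0$, contradicting $\gcd(a, \ell) = 1$. Therefore $C[\ell] + C'[\ell] = E[\ell]$, so one can write $y = y_C + y_{C'}$ with $y_C \in C[\ell]$ and $y_{C'} \in C'[\ell]$, and then $e_1 := f_0 + y_C$ is an element of $C$ with $\ell e_1 = e_1'$ and $e_1 + ade_2 = h_0 - y_{C'} \in C'$, giving the required preimage in $Y(M\ell, N)(T)$.

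The main obstacle will be pinning down the key identity $C[\ell] = \langle (M/\ell)e_1'\rangle$, where the hypothesis $\ell \mid M$ is used twice: once to make $M/\ell$ an integer, and once to identify $Mf = (M/\ell) e_1'$ for any generator $f$ of $C$ satisfying $\ell f = e_1'$. Once this identification is in hand, everything reduces to elementary arithmetic in the rank-two $\ZZ$-submodule of $E$ furnished by the injectivity axiom on $(e_1', e_2)$.
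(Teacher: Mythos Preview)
Your argument is correct, but it follows a rather different path from the paper's own proof.

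For the closed-embedding claim, the paper simply checks injectivity on $\CC$-points by an adelic matrix computation: writing the map as $x \mapsto (x, \stbt{1}{a}{0}{1} x \stbt{1}{a}{0}{1}^{-1})$ on $\GL_2(\mathbf{A}_\QQ)$, one computes the preimage of $U(M(\ell),N) \times U(M(\ell),N)$ and sees it is exactly $U(M\ell,N)$, using that $a \notin \ell\widehat\ZZ$. Your moduli-theoretic argument (proper $+$ unramified $+$ injective on $T$-points $\Rightarrow$ closed immersion) reaches the same conclusion and has the advantage of working uniformly over the base, at the cost of a slightly longer computation inside the torsion of $E$.

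For the Cartesian square, the paper's argument is much shorter: since both horizontal arrows are closed immersions and both vertical arrows are finite \'etale surjections, it suffices to compare degrees. When $\ell \mid M$, the degree of $\hpr_1\colon Y(M\ell,N) \to Y(M,N)$ is $\ell^2$, which equals the degree of $\hpr\times\hpr\colon Y(M(\ell),N)^2 \to Y(M,N)^2$, and the result follows. Your explicit construction of the inverse on functor of points is considerably more work, but it does make transparent exactly where the hypothesis $\ell \mid M$ enters: it is precisely what forces $C[\ell]$ (and $C'[\ell]$) to be determined by $e_1'$ alone, so that the decomposition $E[\ell] = C[\ell] \oplus C'[\ell]$ is available independently of the choices of $f_0, h_0$. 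The paper's degree argument hides this mechanism but is certainly the more efficient route.
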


  \begin{proof}
   We show first that $\iota_{M(\ell), N, a}$ is a closed embedding. Its image is clearly closed, so it suffices to show that it is injective. This we may check on $\CC$-points.

   So it suffices to show that the preimage of $U(M(\ell), N) \times U(M(\ell), N)$ under the map $\GL_2(\mathbf{A}_\QQ) \to (\GL_2 \times_{\GL_1} \GL_2)(\mathbf{A}_\QQ)$ given by $x \mapsto \left( x, \stbt 1 a 0 1 x \stbt 1 a 0 1^{-1} \right)$ is $U(M\ell, N)$. This is a completely elementary calculation: if $x = \stbt rstu \in U(M(\ell), N)$, then $\stbt 1 a 0 1 x \stbt 1 a 0 1^{-1}$ is congruent to $\tbt 1{a(1-r)}01$ modulo $\tbt M{M\ell}NN$; and $a \notin \ell \widehat\ZZ$, so if this is to lie in $U(M(\ell), N)$, then we must have $r = 1 \bmod M\ell$, i.e.\ $x \in U(M\ell, N)$.

   Let us now show the ensuing square is Cartesian. Since both horizontal arrows are closed immersions and the vertical ones are surjective, it suffices to show that the vertical maps have the same degree. However, since $\ell \mid M$ the degree of $Y(M\ell, N)$ over $Y(M, N)$ is $\ell^2$, which is also the degree of $Y(M(\ell), N)^2$ over $Y(M, N)^2$.
  \end{proof}

  If $\ell \nmid M$ then we need to use a slightly modified version of the above statement. Let $\tilde a$ be the unique lifting of $a \in \ZZ / M\ZZ$ to an element of $\ZZ / \ell M \ZZ$ divisible by $\ell$.

  \begin{notation}
   Denote by $\gamma$ the map $Y(M(\ell), N) \to Y(M(\ell), N)^2$ given by $u_{\tilde a} \circ \Delta$ (which is an embedding, by the same matrix calculation as before).
  \end{notation}

  \begin{lemma}
   In the above setting, the following diagram is Cartesian:
   \[
    \begin{diagram}[small]
     Y(M\ell, N) \sqcup Y(M(\ell), N) & \rTo^{(\iota_{M(\ell), N, a}, \gamma)} & Y(M(\ell), N)^2 \\
     \dTo & & \dTo \\
     Y(M, N) &\rTo^{\iota_{M, N, a}} & Y(M, N)^2
    \end{diagram}
   \]
   where the vertical arrows are the natural projection maps.
  \end{lemma}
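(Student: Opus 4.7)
My plan is to adapt the strategy of Lemma~\ref{lemma:cartesian1}: first establish that the combined map $(\iota_{M(\ell), N, a}, \gamma)$ is a closed immersion, and then deduce the Cartesian property by a degree count, using that both vertical arrows are finite \'etale.

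The closed-immersion property splits into showing that each component is a closed immersion and that their images are disjoint. The map $\iota_{M(\ell), N, a}$ is a closed immersion by Lemma~\ref{lemma:cartesian1}, whose proof only used $a \notin \ell\widehat{\ZZ}$. For $\gamma = u_{\tilde a} \circ \Delta$, the analogous adelic matrix calculation applies: the preimage of $U(M(\ell), N)^2$ under $x \mapsto \bigl(x,\ \stbt{1}{\tilde a}{0}{1}\, x\, \stbt{1}{\tilde a}{0}{1}^{-1}\bigr)$ is exactly $U(M(\ell), N)$, because $\tilde a \equiv 0 \pmod \ell$ imposes no further constraint on the $(1,1)$-entry modulo~$\ell$ beyond what is already built into $U(M(\ell), N)$. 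To see that the images of the two components are disjoint, I will work at the level of $\CC$-points. Any point of $Y(M(\ell), N)^2$ lying over $\iota_{M, N, a}(E, e_1, e_2)$ is encoded by a pair of order-$\ell$ subgroups $D_1, D_2 \subset E[\ell]$, each distinct from $\langle (N/\ell) e_2\rangle$, namely the $\ell$-parts of the two cyclic order-$M\ell$ subgroups of $E$ (the $M$-parts being forced to be $\langle e_1\rangle$ and $\langle e_1 + a(N/M)e_2\rangle$ since $\ell \nmid M$). Tracing through definitions, I find $D_1 = D_2$ for points in the image of $\gamma$ (since $\tilde a \equiv 0 \bmod \ell$ acts trivially on $E[\ell]$), while for points in the image of $\iota_{M(\ell), N, a}$ coming from $(E, \alpha, \beta) \in Y(M\ell, N)$ one obtains $D_1 = \langle M\alpha\rangle$ and $D_2 = \langle M\alpha + a(N/\ell)\beta\rangle$, which differ because $(N/\ell)\beta$ generates an order-$\ell$ subgroup complementary to $\langle M\alpha\rangle$ in $E[\ell]$ and $a \not\equiv 0 \bmod \ell$.

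For the Cartesian property, both horizontal arrows are closed immersions and both vertical arrows are finite \'etale, so it suffices to check that the degrees of the two vertical maps over $Y(M, N)$ agree. The right vertical map has degree $\ell^2$: each factor of $Y(M(\ell), N)^2 \to Y(M, N)^2$ parametrises an order-$\ell$ subgroup $D \subset E[\ell]$ distinct from $\langle (N/\ell)e_2\rangle$, giving $\ell$ choices among the $\ell+1$ order-$\ell$ subgroups of $E[\ell]$. The left vertical map contributes $\ell(\ell-1)$ from $\hpr_1 : Y(M\ell, N) \to Y(M, N)$ (the fibre is parametrised by points $b \in E[\ell]$ with $b \neq 0$ and $\langle b \rangle \neq \langle (N/\ell)e_2\rangle$, since $\alpha$ must decompose as $(\ell^{-1}\!\bmod M)e_1 + b$ with $b$ of order $\ell$ and $(\alpha, \beta)$ injective) plus $\ell$ from $Y(M(\ell), N) \to Y(M, N)$, again totalling $\ell^2$.

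The main obstacle is the disjointness of images, which rests on the moduli-theoretic identification of the $\ell$-parts $D_1, D_2$ in both constructions; once this is in hand, the remaining Cartesian claim reduces to the statement that an injective morphism of finite \'etale covers of $Y(M, N)$ of equal degree is an isomorphism, which is automatic.
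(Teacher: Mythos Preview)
Your proof is correct. The paper does not supply a proof of this lemma, leaving it as an evident modification of Lemma~\ref{lemma:cartesian1}; your argument is exactly the expected one, carrying out the closed-immersion and degree-count strategy and supplying the additional disjointness check that the $\ell \nmid M$ case requires.
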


  \begin{proof}[Proof of Theorem \ref{thm:norm2}]
   We factor the map $\hpr_2$ as the composite
   \[ Y(M\ell, N) \rTo^{\hpr'} Y(M(\ell), N) \rTo^{\varphi_{\ell^{-1}}}_\cong Y(M, N(\ell)) \rTo^{\pr} Y(M, N),\]
   and for brevity we write $\tilde\pr = \pr \mathop\circ \varphi_{\ell^{-1}}$.

   Consider first the case $\ell \mid M$. The Cartesianness of the diagram of Lemma \ref{lemma:cartesian1}, together with the commutativity of pushforward and pullback in Cartesian diagrams (cf.\ Remark 2.4.6 of \cite{LLZ14}) now implies that
   \[ (\hpr' \times \hpr')_* \left(\cRI_{M\ell, N, a}^{[j]}\right) = (\hpr \times \hpr)^*\left(\cRI_{M, N, a}^{[j]}\right)\]
   as elements of $H^3_{\et}(Y(M(\ell), N), \Lambda^{[j, j]}(2-j))$. Applying the map $(\tilde\pr \times \tilde\pr)_*$ to both sides of this formula, and noting that $\tilde\pr_* \circ \hpr^* = (\pr)_* \circ (\varphi_{\ell^{-1}} )_* \circ \hpr^*$ is the definition of the Hecke operator $U_\ell'$, we obtain the result.

   The case $\ell \nmid M$ is similar, although slightly more elaborate. The same argument as before tells us that
   \[ (U_\ell', U_\ell') \cdot \cRI_{M, N, a}^{[j]} = (\hpr_1 \times \hpr_1)_* \left(\cRI_{M\ell, N, a}^{[j]}\right) + A\]
   where
   \[ A \coloneqq (\tilde\pr \times \tilde\pr)_* \left(\gamma_* \circ CG^{[j]}\right)\big(\cEI_{1, N}\big).
   \]
   There is a commutative diagram
   \[
    \begin{diagram}[small]
     Y(M(\ell), N) &\rTo^{\gamma} & Y(M(\ell), N)^2 \\
     \dTo<{\tilde\pr}&&\dTo>{\tilde\pr \times \tilde\pr}\\
     Y(M, N) & \rTo^{\iota_{M, N, \ell^{-1}a}}& Y(M, N)^2,
    \end{diagram}
   \]
   so we have
   \[ A = \left( (\iota_{M, N, \ell^{-1}a})_* \circ \tilde\pr_* \circ CG^{[j]}\right) \left(\cEI_{1, N}\right).\]
   Since $\tilde\pr_*\left(\cEI_{1, N}\right) = \cEI_{1, N}$ by Theorem \ref{thm:pushforward2}(2), it suffices to show that
   \[ (\tilde\pr)_* \circ CG^{[j]} = \ell^j CG^{[j]} \circ (\tilde\pr)_*.\]
   Recall that $CG^{[j]}$ is defined using the $j$-th tensor power of the identification $\det \sH_{\Zp} \cong \Zp(1)$ given by the Weil pairing. Now, by definition, the map $\tilde\pr_* =\pr_* \circ (\varphi_{\ell^{-1}})_*$ acts on $\sH_{\Zp}$ via $\lambda'_*$, where $\lambda'$ is the isogeny $\cE \to (\varphi_{\ell^{-1}})^* \cE$ defined in Section \ref{sect:degeneracy}. Since $\lambda'$ has degree $\ell$, we deduce from Lemma \ref{lemma:isogeny} that
   \[  (\varphi_{\ell^{-1}})_*\circ CG^{[j]} = \ell^j CG^{[j]} \circ (\varphi_{\ell^{-1}})_*,\]
   as required.
  \end{proof}


 \subsection{The third norm relation}

  The last relation we shall need is the following. Recall that $\pr_2$ denotes the ``twisted'' degeneracy map $Y(M, N\ell) \to Y(M, N)$, corresponding to $z \mapsto \ell z$ on the upper half-plane, and we extend this to a map on our coefficient sheaves using the isogeny $\lambda: \cE \to \varphi_\ell^*(\cE)$ of elliptic curves over $Y(M, N(\ell))$. 

  \begin{theorem}
   \label{thm:norm3}
  The map $(\pr_1 \times \pr_2)_*$ sends $\cRI^{[j]}_{M, \ell N, a}$ to
   \[
    \begin{cases}
      (U_\ell', 1) \cdot \cRI^{[j]}_{M, N, \ell a} & \text{if $\ell \mid N$}, \\
      \left[ (T_\ell', 1) \sigma_\ell^{-1} -
      ([\ell]_* \langle \ell^{-1} \rangle, T_\ell') \sigma_\ell^{-2} \right] \cdot \cRI^{[j]}_{M, N, a} & \text{if $\ell \nmid N$.}
    \end{cases}
   \]
  \end{theorem}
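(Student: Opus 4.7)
The plan is to imitate the proof of Theorem \ref{thm:norm2}, factoring both $\pr_1$ and $\pr_2$ through the intermediate modular curve $Y(M, N(\ell))$ via $\pr_1 = \pr \circ \pr'$ and $\pr_2 = \hpr \circ \varphi_\ell \circ \pr'$. Because $\pr'$ commutes with $u_a$ (after matching the indices $a \cdot \tfrac{\ell N}{M}$ on $Y(M,\ell N)$ and $a \cdot \tfrac{N}{M}$ on $Y(M, N(\ell))$) and with the diagonal $\Delta$, and because $CG^{[j]}$ is natural in the elliptic curve, the pushforward decomposes as
\[
 (\pr_1 \times \pr_2)_* \cRI_{M, \ell N, a}^{[j]} = \bigl(\pr \times (\hpr \circ \varphi_\ell)\bigr)_* \circ (u_a)_* \circ \Delta_* \circ CG^{[j]} \bigl( (\pr')_* \cEI_{1, \ell N}\bigr).
\]
Applying Theorem \ref{thm:pushforward2}(1) to $(\pr')_* \cEI_{1, \ell N}$ gives a single term when $\ell \mid N$ (namely $\varphi_\ell^*\cEI_{1, N}$) and the difference $\varphi_\ell^*\cEI_{1, N} - \lambda_*\cEI_{\ell^{-1}, N}$ when $\ell \nmid N$, producing the expected number of summands in each case.

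I would handle each summand by a Cartesian square argument analogous to Lemma \ref{lemma:cartesian1}, adapted to the asymmetric pair $(\pr, \hpr \circ \varphi_\ell) : Y(M, N(\ell))^2 \to Y(M, N)^2$. For the term coming from $\varphi_\ell^*\cEI_{1, N}$: the Cartesian-square calculation identifies the resulting combination as the Hecke operator $T_\ell'$ (respectively $U_\ell'$ when $\ell \mid N$) acting in the first factor of $\cRI_{M, N, -}^{[j]}$, with the identity in the second factor; the transformation $\pr_2 \circ u_a = u_{\ell a} \circ \pr_2$ accounts for the shift of index $a \to \ell a$, equivalently the factor $\sigma_\ell^{-1}$. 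For the term coming from $\lambda_* \cEI_{\ell^{-1}, N}$ (only in the unramified case): Lemma \ref{lemma:isogeny} gives $\lambda_* \circ CG^{[j]} = \ell^j \cdot CG^{[j]} \circ \lambda_*$ for the degree-$\ell$ isogeny $\lambda$, and this $\ell^j$ is absorbed into the definition of $[\ell]_*$ in Notation \ref{not:dstar}. A second Cartesian-square calculation then yields $-([\ell]_*\langle \ell^{-1}\rangle, T_\ell') \sigma_\ell^{-2} \cRI_{M, N, a}^{[j]}$, with the extra factor of $\sigma_\ell^{-1}$ arising from the $[\ell^{-1}]$ twist in $\cEI_{\ell^{-1}, N}$.

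The main obstacle will be the careful bookkeeping: verifying the relevant Cartesian squares in the asymmetric setting, tracking the transformations of the $u_a$ twist through each degeneracy, and ensuring the $\ell^j$ factor from the Weil pairing combines correctly with the various $[\ell]_*$ and $\langle \ell \rangle$ twists. Essentially this is the $\Lambda$-adic enhancement of Theorem 3.5.1 of \cite{LLZ14}, and the overall structure of the argument mirrors that weight-$2$ result.
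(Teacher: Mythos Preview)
Your approach is essentially the one the paper indicates: the paper's proof consists of the single sentence that it ``closely follows that of Lemma A.2.1 of \cite{LLZ2}'' with the $\Lambda$-adic modifications left to the reader, and your sketch (factoring through $Y(M,N(\ell))$, applying Theorem~\ref{thm:pushforward2}(1), then running Cartesian-square arguments in the style of Lemma~\ref{lemma:cartesian1} while tracking $u_a$ and $CG^{[j]}$) supplies considerably more detail than the paper itself. One small correction: at the end you cite Theorem 3.5.1 of \cite{LLZ14}, but the relevant reference here is Lemma A.2.1 of \cite{LLZ2} (the imaginary-quadratic paper), not \cite{LLZ14}. Also be careful with the claim that the $\ell^j$ is ``absorbed into the definition of $[\ell]_*$'': by Notation~\ref{not:dstar}, $[\ell]_*$ acts trivially on the $\TSym^j$ factor, so the bookkeeping for where the Weil-pairing factor goes (cf.\ the end of the proof of Theorem~\ref{thm:norm2}) needs to be traced more precisely.
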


  The proof of this statement closely follows that of Lemma A.2.1 of \cite{LLZ2}, and we leave it to the reader to make the necessary modifications for the $\Lambda$-adic case.

  \begin{corollary}
   \label{cor:norm3b}
   We have
   \[
    (\pr_2 \times \pr_1)_* \left(\cRI^{[j]}_{M, \ell N, a}\right)
    = \begin{cases}
      (1, U_\ell') \cdot \cRI^{[j]}_{M, N, \ell a} & \text{if $\ell \mid N$}, \\
      \left[ (1, T_\ell') \sigma_\ell^{-1} -
      ( T_\ell',[\ell]_* \langle \ell^{-1} \rangle) \sigma_\ell^{-2} \right] \cdot \cRI^{[j]}_{M, N, a} & \text{if $\ell \nmid N$.}
    \end{cases}\]
   and
   \[ (\pr_2 \times \pr_2)_* \cRI^{[j]}_{M, \ell N, a} =
    \begin{cases}
     \ell^{j+1} \cRI^{[j]}_{M, N, \ell a} & \text{if $\ell \mid N$,}\\
     \ell^{j+1} \sigma_\ell^{-1} \left(1 - ([\ell]_*\langle \ell^{-1} \rangle, [\ell]_* \langle \ell^{-1} \rangle) \sigma_\ell^{-2})\right) \cRI^{[j]}_{M, N, a} &\text{if $\ell \nmid N$.}
    \end{cases}
   \]
  \end{corollary}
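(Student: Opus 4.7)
The first identity, for $(\pr_2\times\pr_1)_*$, should follow from Theorem \ref{thm:norm3} essentially by the symmetry that swaps the two factors of the product. Let $\rho$ be the involution of $Y(M,N)^2$ exchanging the factors; one checks at the scheme level that $\rho\circ(\pr_2\times\pr_1)=(\pr_1\times\pr_2)\circ\rho$, giving
\[
 (\pr_2\times\pr_1)_* = \rho_*\circ(\pr_1\times\pr_2)_*\circ\rho_*.
\]
I would apply this to $\cRI^{[j]}_{M,\ell N,a}$, invoking Proposition \ref{prop:RIfirstproperties}(1) (which gives $\rho_*\cRI^{[j]}_{M,N,b}=(-1)^j\cRI^{[j]}_{M,N,-b}$) both before and after Theorem \ref{thm:norm3}. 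Under $\rho_*$, any operator of the form $(X,Y)$ acting on the cohomology of $Y(M,N)^2$ becomes $(Y,X)$, while the symmetric operator $\sigma_\ell$ is preserved. The two copies of $(-1)^j$ coming from the two applications of Proposition \ref{prop:RIfirstproperties}(1) combine to $+1$, and both cases of the desired formula fall out directly.

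The second identity, for $(\pr_2\times\pr_2)_*$, I would prove by unwinding the definition of $\cRI^{[j]}_{M,\ell N,a}=(u_a)_*\Delta_*CG^{[j]}(\cEI_{1,\ell N})$. Using the moduli description of $\pr_2$ (whose kernel at the level of the universal elliptic curve is generated by $Ne_2$, of order $\ell$), one verifies by a short calculation that $\pr_2\circ u_a^{\ell N}=u_{\ell a}^N\circ\pr_2$, because the shift $a\tfrac{\ell N}{M}\bar e_2$ reduces to $(a\ell\bmod M)\tfrac{N}{M}\bar e_2$ in the quotient. This gives the scheme-theoretic identity $(\pr_2\times\pr_2)\circ u_a\circ\Delta_{\ell N}=u_{\ell a}\circ\Delta_N\circ\pr_2$, and hence
\[
 (\pr_2\times\pr_2)_*\,\cRI^{[j]}_{M,\ell N,a} = (u_{\ell a})_*(\Delta_N)_*(\pr_2)_*CG^{[j]}\bigl(\cEI_{1,\ell N}\bigr).
\]
Now I would commute $(\pr_2)_*$ past $CG^{[j]}$: since $CG^{[j]}$ is built from the $j$-th tensor power of the Weil-pairing isomorphism $\det\sH\cong\ZZ_p(1)$, and the underlying isogeny $\lambda$ of $\pr_2$ has degree $\ell$, Lemma \ref{lemma:isogeny} contributes a factor of $\ell^j$, yielding $(\pr_2)_*CG^{[j]}=\ell^j CG^{[j]}(\pr_2)_*$. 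Plugging in Corollary \ref{cor:pr2} resolves both cases: when $\ell\mid N$ we get $\ell\cdot\ell^j=\ell^{j+1}$ and the class reassembles as $\cRI^{[j]}_{M,N,\ell a}$, producing the first branch immediately.

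For $\ell\nmid N$ the extra term $-\ell[\ell]_*\cEI_{\ell^{-1},N}$ appearing in Corollary \ref{cor:pr2} must be rewritten. I would use the relation $\cEI_{\ell^{-1},N}=\langle\ell\rangle_*\cEI_{1,N}$ (which follows from Proposition \ref{prop:eisenstein-iwasawa-properties}(1) applied to the automorphism $\langle\ell^{-1}\rangle$ of $Y(M,N)$), the compatibility $CG^{[j]}\circ[\ell]_*=([\ell]_*,[\ell]_*)\circ CG^{[j]}$ recorded after Notation \ref{not:dstar}, the analogous equivariance of $CG^{[j]}$ under $\langle\ell\rangle_*$, and Proposition \ref{prop:RIfirstproperties}(4) in the form $\cRI^{[j]}_{M,N,\ell a}=\sigma_\ell^{-1}\cRI^{[j]}_{M,N,a}$, to repackage the result as $\ell^{j+1}\sigma_\ell^{-1}\bigl(1-([\ell]_*\langle\ell^{-1}\rangle,[\ell]_*\langle\ell^{-1}\rangle)\sigma_\ell^{-2}\bigr)\cRI^{[j]}_{M,N,a}$. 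The main obstacle is precisely this last step: although each equivariance is individually routine, threading them together in the correct order so that the four operators $\sigma_\ell$, $[\ell]_*$, $\langle\ell^{-1}\rangle$ and $(u_{\ell a})_*$ combine into the compact form stated in the corollary requires careful bookkeeping of how they act on $Y(M,N)^2$ and how they interact with $CG^{[j]}$.
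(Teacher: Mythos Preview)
Your treatment of the first identity is exactly what the paper does: Theorem \ref{thm:norm3} together with the symmetry $\rho^*\cRI^{[j]}_{M,N,a}=(-1)^j\cRI^{[j]}_{M,N,-a}$ from Proposition \ref{prop:RIfirstproperties}(1).

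For the second identity your direct approach---unwinding $(u_a)_*\Delta_* CG^{[j]}(\cEI_{1,\ell N})$, commuting $(\pr_2)_*$ past $CG^{[j]}$ via Lemma \ref{lemma:isogeny}, and then invoking Corollary \ref{cor:pr2}---does work, and your identification of the tricky step (repackaging the $\ell\nmid N$ correction term) is honest: one must carefully track the non-commutativity of $u_a$ with $\langle\ell^{-1}\rangle$ (they satisfy $\langle\ell^{-1}\rangle\circ a=(\ell^2 a)\circ\langle\ell^{-1}\rangle$), which is where the extra $\sigma_\ell^{-2}$ ultimately comes from. The paper, however, takes a quite different and cleaner route that sidesteps all of this bookkeeping. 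It never returns to the definition of $\cRI^{[j]}$ at all. Instead it uses the first norm relation (Theorem \ref{thm:norm1}) to lift to level $N\ell^2$, writing
\[
(\pr_2\times\pr_2)_*\cRI^{[j]}_{M,N\ell,a}
=(\pr_2\times\pr_2)_*(\pr_1\times\pr_1)_*\cRI^{[j]}_{M,N\ell^2,a}
=(\pr_2\times\pr_1)_*(\pr_1\times\pr_2)_*\cRI^{[j]}_{M,N\ell^2,a},
\]
then applies Theorem \ref{thm:norm3} (in the $\ell\mid N\ell$ case) to get $(\pr_2\times\pr_1)_*\bigl[(U_\ell',1)\cRI^{[j]}_{M,N\ell,\ell a}\bigr]$, uses the Hecke identity $(\pr_2)_*\circ U_\ell'=\ell^{j+1}(\pr_1)_*$ on the first factor, and finishes with Theorem \ref{thm:norm1} again. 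This ``ladder'' argument reduces the $(\pr_2\times\pr_2)$ relation entirely to the three basic norm relations already proved, with no sheaf-level commutation of $CG^{[j]}$ required. Your approach is more hands-on and illuminates where the factor $\ell^{j+1}$ splits as $\ell^j$ (from $CG^{[j]}$) times $\ell$ (from Corollary \ref{cor:pr2}); the paper's buys brevity and uniformity by staying at the level of the Rankin--Iwasawa classes.
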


  \begin{proof}
   The first statement follows from the previous theorem by symmetry. The second follows by writing
   \begin{align*}
    (\pr_2 \times \pr_2)_*\left(\cRI^{[j]}_{M, N\ell, a}\right) &= (\pr_2 \times \pr_2)_*(\pr_1 \times \pr_1)_*\left(\cRI^{[j]}_{M, N\ell^2, a}\right)\\
    &=  (\pr_2 \times \pr_1)_*(\pr_1 \times \pr_2)_* \left(\cRI^{[j]}_{M, N\ell^2, a}\right)\\
    &=  (\pr_2 \times \pr_1)_* \left[ (U_\ell', 1) \cRI^{[j]}_{M, N\ell, \ell a}\right].
   \end{align*}
   On the sheaf $\Lambda(\sH_{\Zp}\langle t_{N\ell}\rangle)^{[j]}$ we have the relation
   \[ \pr_2 \circ U_\ell' = \ell^{j + 1} \pr_1,\]
   so this gives
   \[ (\pr_2 \times \pr_2)_*\left(\cRI^{[j]}_{M, N\ell, \ell a}\right) = \ell^{j + 1} (\pr_1 \times \pr_1)_*\left(\cRI^{[j]}_{M, N\ell, \ell a}\right)\]
   and we are done by Theorem \ref{thm:norm1}.
  \end{proof}


 \subsection{The Euler system distribution relation}

  From the three basic norm relations above -- Theorems \ref{thm:norm1}, \ref{thm:norm2} and \ref{thm:norm3} -- we can derive all the other relations we shall need between Rankin--Iwasawa classes as corollaries, using only elementary relations in the Hecke algebra.

  The first of these corollaries is the following relation, which will be the key to the Euler system arguments of \S \ref{sect:selmerbound}:

  \begin{proposition}
   \label{prop:cyclonorm}
   Let $\ell$ be a prime with $\ell \nmid Np$, and let $a \in \ZZ / \ell M \ZZ$ with $\ell \nmid a$. Then for any $c > 1$ coprime to $6\ell M N p$, pushforward along the map
   \[ Y(\ell M, \ell N)^2 \rTo^{\hpr_2 \times \hpr_2} Y(M, \ell N)^2 \rTo^{\pr_1 \times \pr_1} Y(M, N)^2, \]
   maps the class $\cRI^{[j]}_{\ell M, \ell N, a}$ to the following class:
   \begin{multline*}
    \Big( -\ell^j \sigma_\ell + (T_\ell', T_\ell') +
     \left((\ell + 1)\ell^j(\langle \ell \rangle^{-1}[\ell]_*, \langle \ell \rangle^{-1} [\ell]_*) - (\langle \ell \rangle^{-1} [\ell]_*,  T_{\ell}'^2) - (T_{\ell}'^2, \langle \ell \rangle^{-1} [\ell]_*)\right) \sigma_\ell^{-1} \\
       + (\langle \ell^{-1} \rangle [\ell]_* T_\ell',\langle \ell^{-1} \rangle [\ell]_* T_\ell') \sigma_\ell^{-2} - \ell^{1 + j} ([\ell^2]_* \langle \ell^{-2}\rangle, [\ell^2]_* \langle \ell^{-2}\rangle)\sigma_\ell^{-3}\Big) \cRI^{[j]}_{M, N, a}.
   \end{multline*}
  \end{proposition}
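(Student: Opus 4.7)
The plan is to combine the three fundamental norm relations (Theorems~\ref{thm:norm1}, \ref{thm:norm2}, \ref{thm:norm3} and Corollary~\ref{cor:norm3b}) with a $\Lambda$-adic refinement of Proposition~\ref{prop:pridentity1}. Since $\sigma_\ell$ features in the statement we implicitly assume $\ell \nmid M$ throughout. As a first step, Theorem~\ref{thm:norm2} applied with $N$ replaced by $\ell N$ gives
\[ (\hpr_2 \times \hpr_2)_* \cRI^{[j]}_{\ell M, \ell N, a} = \bigl[(U_\ell', U_\ell') - \ell^j \sigma_\ell\bigr] \cRI^{[j]}_{M, \ell N, a}. \]
Applying $(\pr_1 \times \pr_1)_*$ and treating the two summands separately, the $-\ell^j \sigma_\ell$ contribution is immediate from Theorem~\ref{thm:norm1} (using $\ell \nmid N$), together with the fact that $\sigma_\ell$ commutes with degeneracy pushforward.

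For the $(U_\ell', U_\ell')$ contribution, the crucial input is a $\Lambda$-adic analog of Proposition~\ref{prop:pridentity1}, namely the identity
\[ (\pr_1)_* \circ U_\ell' = T_\ell' \circ (\pr_1)_* - \langle \ell^{-1}\rangle [\ell]_* \circ (\pr_2)_* \]
of operators on $H^1\bigl(Y(M, \ell N), \Lambda(\sH_\Zp\langle t_{\ell N}\rangle)^{[j]}(1)\bigr)$. This follows by the same moduli-theoretic argument as Proposition~\ref{prop:pridentity1}, with the extra $[\ell]_*$ twist reflecting the definition of $(\pr_1)_*$ on $\Lambda$-adic sheaves (Definition~\ref{def:lambdapr1}). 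Tensoring this identity with itself on the two factors of $Y(M, \ell N)^2$ writes $(\pr_1 \times \pr_1)_* \circ (U_\ell', U_\ell')$ as a sum of four terms of the shape $(\mathrm{op}_1, \mathrm{op}_2) \circ (\pr_i \times \pr_j)_*$ with coefficients in the pairs $(T_\ell', T_\ell')$, $(T_\ell', \langle \ell^{-1}\rangle[\ell]_*)$, $(\langle \ell^{-1}\rangle[\ell]_*, T_\ell')$ and $(\langle \ell^{-1}\rangle[\ell]_*, \langle \ell^{-1}\rangle[\ell]_*)$.

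Each of the four pushforwards $(\pr_i \times \pr_j)_* \cRI^{[j]}_{M, \ell N, a}$ can then be evaluated using Theorems~\ref{thm:norm1} and \ref{thm:norm3} together with Corollary~\ref{cor:norm3b}. Substituting, and grouping by power of $\sigma_\ell$ -- using that $T_\ell'$, the diamond operators, $[\ell]_*$ and $\sigma_\ell$ all commute pairwise -- recovers the desired Laurent polynomial in $\sigma_\ell$ of degrees $+1, 0, -1, -2, -3$, and a direct comparison matches each of the five displayed terms in turn. The main obstacle is proving the $\Lambda$-adic refinement of Proposition~\ref{prop:pridentity1}: one must carefully account both for the $[\ell]_*$ twist entering $(\pr_1)_*$ and for the $\ell$-isogeny $\cE \to \pr_2^*\cE$ built into $(\pr_2)_*$ on $\Lambda$-adic sheaves. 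Once that identity is in hand, the remainder of the argument reduces to a tedious but routine bookkeeping calculation.
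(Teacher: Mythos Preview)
Your proposal is correct and follows essentially the same approach as the paper's proof: apply Theorem~\ref{thm:norm2} first, then use the $\Lambda$-adic version of Proposition~\ref{prop:pridentity1} to expand $(\pr_1 \times \pr_1)_* \circ (U_\ell', U_\ell')$ into four terms indexed by $(\pr_i \times \pr_j)_*$, and evaluate each via Theorems~\ref{thm:norm1}, \ref{thm:norm3} and Corollary~\ref{cor:norm3b}. The paper simply records the $\Lambda$-adic identities $(\pr_1)_* \circ U_\ell' = T_\ell' \circ (\pr_1)_* - [\ell]_*\langle\ell^{-1}\rangle \circ (\pr_2)_*$ and $(\pr_2)_* \circ U_\ell' = \ell^{1+j}(\pr_1)_*$ as equations without further comment, so the ``main obstacle'' you flag is treated there as routine.
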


  \begin{proof}
   By the second norm relation (Theorem \ref{thm:norm2}), pushforward along $\hpr_2 \times \hpr_2: Y(\ell M, \ell N)^2 \to Y(M, \ell N)^2$ maps the class $\cRI^{[j]}_{\ell M, \ell N, a}$ to
   \[ ((U_\ell', U_\ell') - \ell^j \sigma_\ell) \cRI^{[j]}_{M, \ell N, a}.\]
   So we must compute the pushforward of this element along the natural degeneracy map $\pr_1 \times \pr_1: Y(M, \ell N)^2 \to Y(M, N)^2$.

   With our present conventions, as maps $\left(Y(M, N\ell), \Lambda(\sH_{\Zp}\langle t_{N\ell}\rangle)^{[j]}\right) \to \left(Y(M, N), \Lambda(\sH_{\Zp}\langle t_{N}\rangle)^{[j]}\right)$ we have the relations
   \begin{subequations}
    \begin{equation}
     \label{eq:pridentities1}
     (\pr_1)_* \circ U_\ell' = T_\ell' \circ (\pr_1)_* - [\ell]_* \langle \ell^{-1} \rangle \circ (\pr_2)_*
    \end{equation}
    and
    \begin{equation}
     \label{eq:pridentities2}
     (\pr_2)_* \circ U_\ell' = \ell^{1 + j} \cdot (\pr_1)_*,
    \end{equation}
   \end{subequations}
   which are the $\Lambda$-adic versions of Proposition \ref{prop:pridentity1}. Applying the first relation to both factors of the product $Y(M, \ell N)^2$, we have
   \begin{multline*}
    \left((\pr_1 \times \pr_1)_* \circ ((U_\ell', U_\ell') - \ell^j \sigma_\ell)\middle) \middle( \cRI^{[j]}_{M, \ell N, a} \right) \\
    = \Big(((T_\ell', T_\ell') - \ell^j \sigma_\ell) (\pr_1 \times \pr_1)_*  \\- (T_\ell', [\ell]_* \langle \ell^{-1} \rangle)(\pr_1 \times \pr_2)_* - ( [\ell]_* \langle \ell^{-1} \rangle, T_\ell')(\pr_2 \times \pr_1)_*\\
    + ([\ell]_* \langle \ell^{-1}\rangle, [\ell]_* \langle \ell^{-1}\rangle)(\pr_2 \times \pr_2)_*\Big)\left(\cRI^{[j]}_{M, \ell N, a} \right).
   \end{multline*}
   We have formulae for the images of $\cRI^{[j]}_{M, \ell N, a}$ under each of the four maps
   \[\Big\{   (\pr_1 \times \pr_1)_*, (\pr_1 \times \pr_2)_*, (\pr_2 \times \pr_1)_*, (\pr_2 \times \pr_2)_*\Big\}\]
   as the image of $\cRI^{[j]}_{M, N, a}$ under a Hecke operator (by Theorem \ref{thm:norm1}, Theorem \ref{thm:norm3}, and the two cases of Corollary \ref{cor:norm3b} respectively). Combining these gives the stated formula.
  \end{proof}

  \begin{remark}
   Compare Theorem 3.4.1 of \cite{LLZ14}, which is essentially the above theorem with trivial coefficients. (In fact the above result is slightly stronger, cf.\ Remark 3.4.2 of \emph{op.cit.}.)
  \end{remark}

 \subsection{The \texorpdfstring{$\ell$}{l}-stabilisation relation}
  \label{sect:lstab}
  Our second application of the three basic norm relations is to prove a theorem relating Rankin--Eisenstein classes at levels prime to $\ell$ with those at level divisible by $\ell$. We will use this later for $\ell = p$, in order to relate Hida theory (which requires the level to be divisible by $p$) with the syntomic regulator (which requires the level to be coprime to $p$).

  \subsubsection*{The ``abstract'' \texorpdfstring{$\ell$}{l}-stabilisation relation} The following construction is adapted from that used in \cite[Theorem 4.2.8]{LLZ2}, which we learned from the work of Wiles (cf.\ p490 of \cite{Wiles-FLT}). Let $\ell$ be a prime not dividing $N$, and $j \ge 0$ an integer.

  \begin{definition}
   We define a map
   \[ (\Pr \times \Pr)_*:
   H^3_{\et}\left(Y(M, N\ell)^2, \Lambda(\sH_{\Zp}\langle t_{N\ell}\rangle)^{[j, j]}(2-j)\right) \to H^3_{\et}\left(Y(M, N)^2, \Lambda(\sH_{\Zp}\langle t_{N}\rangle)^{[j, j]}(2-j)\right)^{\oplus 4}\]
   via the formula
   \[ (\Pr \times \Pr)_* \coloneqq
    \begin{pmatrix}
    (\pr_1 \times \pr_1)_* \\
    (\pr_2 \times \pr_1)_*\\
    (\pr_1 \times \pr_2)_*\\
    (\pr_2 \times \pr_2)_*
   \end{pmatrix}.\]
  \end{definition}

  The map $(\Pr \times \Pr)_*$ commutes with the Hecke operators $T_q'$, for $q \nmid N\ell$. (It is evidently induced by a correspondence from $Y(M, N\ell)^2$ to the disjoint union of four copies of $Y(M, N)^2$, but we shall not use this interpretation directly.)

  \begin{definition}
   On the module $H^3_{\et}\left(Y(M, N)^2, \sH^{[k, k']}(2-j)\right)^{\oplus 4}$, we define endomorphisms $(A_\ell', 1)$ and $(1, A_\ell')$ via left-multiplication by the matrices
   \[ (A_\ell', 1) =
    \begin{pmatrix}
        (T'_\ell, 1) &  -([\ell]_*\langle \ell^{-1} \rangle, 1)& 0 & 0 \\
        \ell^{j + 1} & 0 & 0 & 0 \\
        0 & 0 & (T'_\ell, 1) &  -([\ell]_*\langle \ell^{-1} \rangle, 1)  \\
        0 & 0& \ell^{j + 1} & 0  \end{pmatrix}.\]
   and
   \[
    (1, A_\ell') =  \begin{pmatrix}
         (1, T'_\ell) & 0 & -(1, [\ell]_*\langle \ell^{-1} \rangle) & 0 \\
         0 & (1, T'_\ell) &  0& -(1, [\ell]_*\langle \ell^{-1} \rangle) \\
         \ell^{1 + j} & 0 & 0 & 0 \\
         0 & \ell^{1 +j} & 0 & 0 \end{pmatrix}\]
   We define $(B'_\ell, 1) = (T'_\ell, 1) - (A'_\ell, 1)$, and similarly for $(1, B_\ell')$
  \end{definition}

  Note that these four matrices all commute with each other. They are chosen in order to give the following compatibility:

  \begin{lemma}
   These operators satisfy the relations
   \[ (A_\ell', 1) \circ (\Pr \times \Pr)_* = (\Pr \times \Pr)_* \circ (U_\ell', 1) \quad\text{and}\quad(1, A_\ell') \circ (\Pr \times \Pr)_* = (\Pr \times \Pr)_* \circ (1, U_\ell').\]
  \end{lemma}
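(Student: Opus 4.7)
The plan is a direct matrix computation using the two identities
\begin{align*}
 (\pr_1)_* \circ U_\ell' &= T_\ell' \circ (\pr_1)_* - [\ell]_* \langle \ell^{-1} \rangle \circ (\pr_2)_*, \\
 (\pr_2)_* \circ U_\ell' &= \ell^{1+j} (\pr_1)_*,
\end{align*}
which are the $\Lambda$-adic versions of Proposition \ref{prop:pridentity1}, written down already as \eqref{eq:pridentities1} and \eqref{eq:pridentities2}. These describe precisely how $U_\ell'$ is transported across the degeneracy maps $\pr_1, \pr_2\colon Y(M, N\ell) \to Y(M, N)$, and the matrix $(A_\ell', 1)$ has been designed so as to encode them componentwise.

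Concretely, I would establish the first identity by computing each of the four components of $(\Pr \times \Pr)_* \circ (U_\ell', 1)$ in turn. Applying the identities above to the first factor of each product gives
\begin{align*}
 (\pr_1 \times \pr_1)_* \circ (U_\ell', 1) &= (T_\ell', 1)(\pr_1 \times \pr_1)_* - ([\ell]_*\langle \ell^{-1}\rangle, 1)(\pr_2 \times \pr_1)_*, \\
 (\pr_2 \times \pr_1)_* \circ (U_\ell', 1) &= \ell^{j+1}(\pr_1 \times \pr_1)_*, \\
 (\pr_1 \times \pr_2)_* \circ (U_\ell', 1) &= (T_\ell', 1)(\pr_1 \times \pr_2)_* - ([\ell]_*\langle \ell^{-1}\rangle, 1)(\pr_2 \times \pr_2)_*, \\
 (\pr_2 \times \pr_2)_* \circ (U_\ell', 1) &= \ell^{j+1}(\pr_1 \times \pr_2)_*,
\end{align*}
which is exactly the effect of left-multiplying the column vector $(\Pr \times \Pr)_*$ by the matrix $(A_\ell', 1)$. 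The second identity follows by the same argument applied to the second factor, with $(1, A_\ell')$ encoding the operations, and is entirely symmetric.

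There is no real obstacle here; the only point to be mindful of is that when composing in the first and second slots of a product one has to be careful that pushforwards in different factors commute, which is immediate because the two factors of $Y(M, N\ell)^2$ are independent, and that the $\Lambda$-adic pushforwards $(\pr_1)_*$ and $(\pr_2)_*$ really do satisfy the identities above with the coefficient $\ell^{j+1}$ (rather than $\ell^{k+1}$ as in Proposition \ref{prop:pridentity1}); this comes from the fact that the coefficient sheaf $\Lambda(\sH_{\Zp}\langle t_{N\ell}\rangle)^{[j]}$ contains only the factor $\TSym^j \sH_{\Zp}$ of fixed weight $j$, so the isogeny $\lambda$ used to define $(\pr_2)_*$ contributes only $\ell^j$, with an extra factor of $\ell$ coming from the action on $\Lambda(\sH_{\Zp})$ itself (cf.\ the remark after Theorem \ref{thm:moments-motivic}).
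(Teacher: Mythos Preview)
Your proof is correct and follows exactly the same approach as the paper: both argue that the lemma is a direct restatement of the identities \eqref{eq:pridentities1} and \eqref{eq:pridentities2}. The paper's proof is a single sentence to this effect, whereas you spell out the four components explicitly, which is helpful but not essential.
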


  \begin{proof}
   This is simply a restatement of the formulae \eqref{eq:pridentities1} and \eqref{eq:pridentities2} for $(\pr_1)_* \circ U_\ell'$ and $(\pr_2)_* \circ U_\ell'$.
  \end{proof}

  \begin{theorem}
   \label{thm:lstab}
   For $a \in (\ZZ / M \ZZ)^\times$, we have
   \[ (\Pr \times \Pr)_* \left( \cRI^{[j]}_{M, N\ell, a}\right) =
    \left( 1 - \frac{(A_\ell', B_\ell')}{\ell^{1 + j}}\right) \left( 1 - \frac{(B_\ell', A_\ell')}{\ell^{1 + j} }\right)  \left( 1 - \frac{(B_\ell', B_\ell')}{\ell^{1 + j} }\right)  \begin{pmatrix}  \cRI^{[j]}_{M, N, a} \\ 0 \\ 0 \\ 0 \end{pmatrix}.\]
  \end{theorem}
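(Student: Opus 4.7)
The plan is to compute both sides of the identity explicitly and compare them entry-by-entry. The left-hand side is given directly by the three basic norm relations proved earlier in the section: Theorem \ref{thm:norm1}, Theorem \ref{thm:norm3} and Corollary \ref{cor:norm3b} together express each of the four entries $(\pr_i \times \pr_j)_*(\cRI^{[j]}_{M, N\ell, a})$ as an explicit Hecke-linear combination of $\cRI^{[j]}_{M, N, a}$ involving $T_\ell'$, $[\ell]_*\langle \ell^{-1}\rangle$, and $\sigma_\ell^{-1}$.

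For the right-hand side, I apply the three operator factors in succession to the seed vector $(\cRI^{[j]}_{M, N, a}, 0, 0, 0)^T$. The algebraic engine making this feasible is that the $2 \times 2$ block governing $A_\ell'$, namely $\left(\begin{smallmatrix} T_\ell' & -[\ell]_*\langle\ell^{-1}\rangle \\ \ell^{1+j} & 0 \end{smallmatrix}\right)$, has characteristic polynomial $X^2 - T_\ell' X + \ell^{1+j}[\ell]_*\langle\ell^{-1}\rangle$, so that $A_\ell'$ and $B_\ell' := T_\ell' - A_\ell'$ satisfy the Vieta-type identities
\[ A_\ell' + B_\ell' = T_\ell', \qquad A_\ell' B_\ell' = \ell^{1+j}\,[\ell]_*\langle\ell^{-1}\rangle. \]
These identities, together with the commutativity of the four operators $(A_\ell', 1), (1, A_\ell'), (B_\ell', 1), (1, B_\ell')$ (each pair acting on distinct factors of $Y(M,N)^2$), reduce the full expansion of the three-factor product to a mechanical calculation: applied iteratively to the seed vector, each factor produces a 4-vector whose entries are explicit polynomials in $T_\ell'$ and $[\ell]_*\langle\ell^{-1}\rangle$ acting on $\cRI^{[j]}_{M, N, a}$.

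The conceptual bridge between the two sides is the preceding lemma, $(A_\ell', 1) \circ (\Pr \times \Pr)_* = (\Pr \times \Pr)_* \circ (U_\ell', 1)$ and its analogue for $(1, A_\ell')$. This is a repackaging of the formulas \eqref{eq:pridentities1}--\eqref{eq:pridentities2}, and guarantees that the matrix action on the right correctly reproduces the $\sigma_\ell$-twisted operators appearing in the norm relations on the left. The main obstacle is therefore purely combinatorial bookkeeping: the fully expanded product yields a sum of monomials in the four commuting operators, each of which must be re-expressed in terms of $T_\ell'$ and $[\ell]_*\langle\ell^{-1}\rangle$ and then matched against the corresponding term from the norm-relation expression. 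The asymmetric choice to include the three factors $(A_\ell', B_\ell')$, $(B_\ell', A_\ell')$, $(B_\ell', B_\ell')$ while omitting $(A_\ell', A_\ell')$ is precisely what ensures that the image of the seed vector, which is supported only in the $(1,1)$-entry, matches the output of the four norm relations simultaneously.
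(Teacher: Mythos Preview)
Your proposal is correct and follows essentially the same route as the paper: both compute the first column of the matrix product on the right-hand side and compare it entry-by-entry with the four norm relations (Theorem~\ref{thm:norm1}, Theorem~\ref{thm:norm3}, Corollary~\ref{cor:norm3b}) that give the left-hand side. One minor correction: the preceding lemma you cite as the ``conceptual bridge'' is not actually used in the paper's proof of this statement---the argument is a purely computational verification (the paper in fact suggests carrying out the $4\times4$ matrix product by computer algebra), and that lemma only enters later, in the proof of Theorem~\ref{thm:lstab-eigen}.
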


  \begin{proof}
   Multiplying out the various $4 \times 4$ matrices
   \footnote{This is messy but can be done easily using a computer algebra system such as Sage, working in a polynomial ring with six formal variables corresponding to the operators $(T_\ell', 1)$, $(1, T_\ell')$, $([\ell]_* \langle \ell^{-1} \rangle,1)$, $(1, [\ell]_* \langle \ell^{-1} \rangle)$, $\ell^{1 +j}$, and $\sigma_\ell^{-1}$.}
one finds that the product of the three brackets on the right-hand side is given by a matrix whose first column is
   \[ \begin{pmatrix}
    1 - ([\ell]_* \langle \ell^{-1} \rangle, [\ell]_* \langle \ell^{-1} \rangle) \sigma_\ell^{-2}\\
    (1, T'_\ell)\sigma_\ell^{-1} - (T'_\ell, [\ell]_* \langle \ell^{-1} \rangle)\sigma_\ell^{-2}\\
    (T'_\ell, 1)\sigma_\ell^{-1} - ([\ell]_* \langle \ell^{-1} \rangle, T'_\ell)\sigma_\ell^{-2} \\
    \ell^{1 + j} \sigma_\ell^{-1} \left(1 - ([\ell]_* \langle \ell^{-1} \rangle, [\ell]_* \langle \ell^{-1} \rangle)\sigma_\ell^{-2} \right)
    \end{pmatrix}.\]
    Comparing this with Theorem \ref{thm:norm1}, Theorem \ref{thm:norm3} and Corollary \ref{cor:norm3b}, these four Hecke operators are exactly the ones whose actions on $\cRI^{[j]}_{M, N, a}$ give the four components of $(\Pr \times \Pr)_* \left( \cRI^{[j]}_{M, N\ell, a}\right)$.
  \end{proof}

  \subsubsection*{Application to eigenform projections}

   We now give the motivating application of the above construction. Let $f_0, g_0$ be two Hecke eigenforms of weights $(k + 2, k' + 2)$ and levels $N_f, N_g$ (with Hecke eigenvalues in some number field $L$). We choose a prime $\frP \mid p$ of $L$. Let $S$ be a finite set of primes containing all those dividing $p N_f N_g$.

   Letting $N$ denote any integer divisible by $N_f$ and $N_g$, and with the same prime factors as $N_f N_g$, we have an \'etale Eisenstein class $\Eis^{[f_0, g_0, j]}_{\et, 1, N} \in H^1(\ZZ[1/S], M_{\Lp}(f_0 \otimes g_0)^*(-j))$ for each $0 \le j \le \min(k, k')$. It follows immediately from Theorem \ref{thm:norm1} that this is independent of $N$, so we shall denote it simply by $\Eis^{[f_0, g_0, j]}_{\et}$.

   As before, we let $\ell$ be a prime not dividing $N_f$ or $N_g$ (and we assume $\ell \in S$ without loss of generality). Extending $L$ if necessary, we choose a root $\alpha_f \in L$ of the Hecke polynomial $X^2 - a_\ell(f_0) X + \ell^{k+1} \varepsilon_f(\ell)$ of $f_0$, and we let $\beta_f = a_\ell(f) - \alpha_f$ be the complementary root. The choice of $\alpha_f$ determines a Hecke eigenform $f$ of level $\ell N_f$, with $U_\ell$-eigenvalue $\alpha_f$ (and the same Hecke eigenvalues as $f_0$ at all other primes). We also choose $\alpha_g$ and an eigenform $g$ of level $\ell N_g$ similarly. Our goal is to compare the classes
   \[ \Eis^{[f, g, j]}_{\et} \in H^1(\ZZ[1/S], M_{\Lp}(f \otimes g)^*(-j)) \quad\text{and}\quad \Eis^{[f_0, g_0, j]}_{\et} \in H^1(\ZZ[1/S], M_{\Lp}(f_0 \otimes g_0)^*(-j)).\]

   \begin{definition}
    We let $(\Pr^\alpha \times \Pr^\alpha)_*$ denote the map
    \[ H^2_{\et}\left(Y_1(\ell N)_{\overline\QQ}^2,  \sH^{[k, k']}(2-j) \right) \to H^2_{\et}\left(Y_1(N)_{\overline\QQ}^2,  \sH^{[k, k']}(2-j) \right),
    \]
    where $\sH^{[k, k']}$ denotes the coefficient sheaf $\TSym^k \sH_{\Zp} \boxtimes \TSym^{k'} \sH_{\Zp}$, defined by
    \[
     (\pr_1 \times \pr_1)_* - \frac{\beta_f}{\ell^{k + 1}} (\pr_2 \times \pr_1)_* - \frac{\beta_g}{\ell^{k' + 1}} (\pr_1 \times \pr_2)_* + \frac{\beta_f \beta_g}{\ell^{k + k' + 2}} (\pr_2 \times \pr_2)_*.
    \]
   \end{definition}

   Using Proposition \ref{prop:pridentity1} one sees that the composite $\pr_{f_0, g_0} \circ (\Pr^\alpha \times \Pr^\alpha)_*$ factors through $M_{\Lp}(f \otimes g)^*$, and defines an isomorphism
   \[ M_{\Lp}(f \otimes g)^* \to M_{\Lp}(f_0 \otimes g_0)^*, \]
   which we denote by the same symbol $(\Pr^\alpha \times \Pr^\alpha)_*$.

   \begin{theorem}
    \label{thm:lstab-eigen}
    With the above notations we have
    \[ (\Pr^\alpha \times \Pr^\alpha)_*\left( \Eis^{[f, g, j]}_{\et}\right) =
    \left(1 - \frac{\alpha_f \beta_g}{\ell^{1 + j}}\right) \left(1 - \frac{\beta_f \alpha_g}{\ell^{1 + j}}\right)\left(1 - \frac{\beta_f \beta_g}{\ell^{1 + j}}\right) \Eis^{[f_0, g_0, j]}_{\et}.\]
   \end{theorem}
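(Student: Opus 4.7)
The plan is to specialise Theorem \ref{thm:lstab} to the $(f_0, g_0)$-eigenform component. By construction, $(\Pr^\alpha \times \Pr^\alpha)_* = \rho \cdot (\Pr \times \Pr)_*$ where $\rho \coloneqq (1, -\beta_f/\ell^{k+1}) \otimes (1, -\beta_g/\ell^{k'+1})$. I would apply Theorem \ref{thm:lstab} with $M = 1$ to $\cRI^{[j]}_{1, N\ell, 0}$, then apply the moment map $(\mom^{k-j}\cdot \id)^{\otimes 2}$ to both sides, contract with $\rho$, and project via $\pr_{f_0, g_0}$. By Proposition \ref{prop:RIfirstproperties}(3), the moment identifies $\cRI^{[j]}_{1, \star, 0}$ with $C(c) \Eis^{[k, k', j]}_{\et, 1, \star}$ where $C(c) \coloneqq c^2 - c^{2j-k-k'}(\langle c\rangle, \langle c\rangle)$; this operator acts as a common non-zero scalar on the $(f_0, g_0)$-eigencomponent (for generic $c$) and therefore cancels from both sides. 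Using that $\pr_{f_0, g_0}\circ(\Pr^\alpha\times\Pr^\alpha)_*$ factors through $M_{\Lp}(f \otimes g)^*$ as an isomorphism with $M_{\Lp}(f_0 \otimes g_0)^*$, the left-hand side reduces to $(\Pr^\alpha \times \Pr^\alpha)_*(\Eis^{[f, g, j]}_\et)$.

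The right-hand side is then evaluated by linear algebra on the four-dimensional $(f_0, g_0)$-eigencomponent, on which $(A_\ell', 1)$ and $(1, A_\ell')$ commute and act as $M_f \otimes I$ and $I \otimes M_g$ for $2 \times 2$ matrices whose characteristic polynomials coincide with the Hecke polynomials of $f_0, g_0$ at $\ell$ (with eigenvalues $\{\alpha_f, \beta_f\}$ and $\{\alpha_g, \beta_g\}$). The crucial identity is that the row vector $(1, -\beta_f/\ell^{k+1})$ is a left $\alpha_f$-eigenvector of $M_f$ (equivalently, it pairs to zero with the right $\beta_f$-eigenvector of $M_f$, which by the identity $(\pr_2)_* \circ U_\ell' = \ell^{k+1}(\pr_1)_*$ of Proposition \ref{prop:pridentity1} corresponds precisely to the $\beta$-stabilisation of $f_0$ at level $N\ell$); the analogue holds for $g$. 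Consequently $\rho$ annihilates every simultaneous eigenvector of $((A_\ell', 1), (1, A_\ell'))$ other than the $(\alpha_f, \alpha_g)$-eigenvector, on which $B_\ell' = T_\ell' - A_\ell'$ acts as $\beta$. The three commuting operators $(A_\ell', B_\ell')$, $(B_\ell', A_\ell')$, $(B_\ell', B_\ell')$ therefore act on the surviving eigencomponent by $\alpha_f\beta_g$, $\beta_f\alpha_g$, $\beta_f\beta_g$, so the matrix product on the right-hand side of Theorem \ref{thm:lstab} yields the scalar
\[ \left(1 - \tfrac{\alpha_f \beta_g}{\ell^{1+j}}\right)\left(1 - \tfrac{\beta_f \alpha_g}{\ell^{1+j}}\right)\left(1 - \tfrac{\beta_f \beta_g}{\ell^{1+j}}\right) \]
multiplying $\Eis^{[f_0, g_0, j]}_\et$, as required.

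The main technical obstacle is the bookkeeping of scalar factors: the moment map does not commute with $(\pr_2)_*$ on the $\Lambda$-adic sheaves (a factor $\ell^{k-j}$ appears), and the matrix $(A_\ell', 1)$ contains a factor $\ell^{j+1}$ at $\Lambda$-adic level whereas the corresponding finite-level identity of Proposition \ref{prop:pridentity1} involves $\ell^{k+1}$. These discrepancies must be reconciled, together with the convention for the diamond operator $\langle \ell^{-1}\rangle$ acting on $M_{\Lp}(f_0)^*$, so that the effective finite-level matrix $M_f$ satisfies $\det M_f = \ell^{k+1}\varepsilon_f(\ell)$, matching the Hecke polynomial. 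Once this is verified, the left-eigenvector identity $\rho_f M_f = \alpha_f \rho_f$ is a routine $2\times 2$ manipulation using $\alpha_f + \beta_f = a_\ell(f_0)$ and $\alpha_f \beta_f = \ell^{k+1}\varepsilon_f(\ell)$.
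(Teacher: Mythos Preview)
Your proposal is correct and follows essentially the same approach as the paper's proof: both specialise Theorem \ref{thm:lstab} via the moment map and $\pr_{f_0,g_0}$, then exploit that the row vector defining $(\Pr^\alpha \times \Pr^\alpha)_*$ is a left $(\alpha_f,\alpha_g)$-eigenvector for the matrices $(A_\ell',1)$ and $(1,A_\ell')$, so the three bracketed operators act by the stated scalars. The paper organises the bookkeeping you flag in your last paragraph by building the commutative diagram with a map $\sigma$ whose denominators are $\ell^{1+j}$ rather than $\ell^{k+1},\ell^{k'+1}$ (absorbing the $\ell^{k-j},\ell^{k'-j}$ factors from the failure of $\mom$ to commute with $(\pr_2)_*$), and it justifies cancelling the $c$-factor by observing $2j-k-k'\le 0$ forces it to be nonzero; otherwise the arguments are the same.
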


   \begin{proof}
    Let us write $\mom_{f, g}$ for the composite $\pr_{f, g} \circ \left[(\mom^{k-j} \cdot \mathrm{id}) \boxtimes (\mom^{k'-j} \cdot \mathrm{id})\right]$. Then the following diagram is commutative:
    \begin{diagram}[small]
     H^3_\et(Y_1(N\ell)^2, \Lambda(\sH_{\Zp}\langle t_{N\ell}\rangle)^{[j, j]}(2-j))
     & \rTo^{(\Pr \times \Pr)_*} &
     H^3_\et(Y_1(N)^2, \Lambda(\sH_{\Zp}\langle t_{N}\rangle)^{[j, j]}(2-j))^{\oplus 4}\\
      & & \dTo>{(\mom_{f_0, g_0})^{\oplus 4}} \\
     \dTo<{\mom_{f, g}} & & H^1(\ZZ[1/S], M_{\Lp}(f_0 \otimes g_0)^*(-j))^{\oplus 4}\\
     & & \dTo>\sigma\\
     H^1(\ZZ[1/S], M_{\Lp}(f \otimes g)^*(-j))& \rTo^{(\Pr^\alpha \times \Pr^\alpha)_*} & H^1(\ZZ[1/S], M_{\Lp}(f_0 \otimes g_0)^*(-j))
    \end{diagram}
    Here the map $\sigma$ is defined by
    \[
     \left(\begin{smallmatrix} x \\ y \\ z \\ w \end{smallmatrix}\right)
     \mapsto \left(1, -\tfrac{\beta_f}{\ell^{1 + j}}, -\tfrac{\beta_g}{\ell^{1+j}}, \tfrac{\beta_f\beta_g}{\ell^{2 + 2j}}\right) \cdot \left(\begin{smallmatrix} x \\ y \\ z \\ w \end{smallmatrix}\right).
    \]
    (Note that the powers of $\ell$ in the denominators here differ slightly from those in the definition of $(\Pr^\alpha \times \Pr^\alpha)_*$, since the failure of the moment map $\mom^{k-j} \cdot \mathrm{id}$ to commute with $\pr_2$ induces a factor of $\ell^{k-j}$.)

    The $4 \times 4$ matrix of Hecke operators $(A'_\ell, 1)$ introduced above acts on the space $\left( M_{\Lp}(f_0 \otimes g_0)^*\right)^{\oplus 4}$ by the matrix
    \[
     \left(\begin{smallmatrix}
      a_\ell(f_0) & -\ell^{k-j}\varepsilon_\ell(f) & 0 & 0 \\
      \ell^{1 + j} & 0 & 0 & 0  \\
      0 & 0 & a_\ell(f_0) & -\ell^{k-j}\varepsilon_\ell(f) \\
      0 & 0 & \ell^{1 + j} & 0 \end{smallmatrix} \right)=
     \left(\begin{smallmatrix}
      \alpha_f + \beta_f & -\ell^{-1-j} \alpha_f \beta_f & 0 & 0 \\
      \ell^{1 + j} & 0 & 0 & 0  \\
      0 & 0 &  \alpha_f + \beta_f & -\ell^{-1-j} \alpha_f \beta_f \\
      0 & 0 & \ell^{1 + j} & 0 \end{smallmatrix}
     \right).
    \]
    and since $\left(1, -\tfrac{\beta_f}{\ell^{1 + j}}, -\tfrac{\beta_g}{\ell^{1+j}}, \tfrac{\beta_f\beta_g}{\ell^{2 + 2j}}\right)$ is an eigenvector for right-multiplication by this matrix, with eigenvalue $\alpha_f$, we have $\sigma\left( (A'_\ell, 1) x\right) = \alpha_f\, \sigma(x)$. Similarly,  $(B'_\ell, 1)$ corresponds via $\sigma$ to multiplication by $\beta_f$, and $(1, A'_\ell)$ and $(1, B'_\ell)$ to $\alpha_g$ and $\beta_g$.

    We now follow what the maps in the diagram do to the class $\cRI^{[j]}_{1, N\ell, 1}$. By the interpolating property of $\cRI$, the image of this class under $\mom_{f, g}$ is $(c^2 - c^{2j-k-k'}\varepsilon_f(c)^{-1}\varepsilon_g(c)^{-1})\Eis^{[f, g, j]}_{\et}$; while the image under $(\Pr \times \Pr)_*$ was computed in Theorem \ref{thm:lstab}, and this maps under $(\mom_{f_0, g_0})^{\oplus 4}$ to
    \[ (c^2 - c^{2j-k-k'}\varepsilon_f(c)^{-1}\varepsilon_g(c)^{-1})\left( 1 - \frac{(A_\ell', B_\ell')}{\ell^{1 + j}}\right) \left( 1 - \frac{(B_\ell', A_\ell')}{\ell^{1 + j} }\right)  \left( 1 - \frac{(B_\ell', B_\ell')}{\ell^{1 + j} }\right)  \left(\begin{smallmatrix} \Eis^{[f_0,g_0, j]}_{\et} \\ 0 \\ 0 \\ 0 \end{smallmatrix}\right).\]
    Applying $\sigma$ to this, and cancelling the $c$ factor (which is non-zero, since $2j-k-k' \le 0$), we find that
    \[ (\Pr^{\alpha} \times \Pr^{\alpha})_*\left( \Eis^{[f, g, j]}_{\et}\right) =
   \left( 1 - \frac{\alpha_f \beta_g}{p^{1 + j}}\right)  \left( 1 - \frac{\beta_f \alpha_g}{p^{1 + j}}\right) \left( 1 - \frac{\beta_f \beta_g}{p^{1 + j}}\right)\Eis^{[f_0, g_0, j]}_{\et}.\qedhere
   \]
   \end{proof}

  \begin{remark}
   Compare Corollary 6.7.8 of \cite{LLZ14}. Our present result is somewhat stronger even in the case $k = k' = j = 0$ considered in \emph{op.cit.}, since we do not need to impose the additional hypothesis that was Assumption 6.7.4 of \cite{LLZ14}.

   One can prove, by exactly the same method, two results refining the above, which correspond to the ``asymmetric'' norm relations of \cite[Theorem 3.5.1]{LLZ2}. The map $(\Pr^\alpha \times \Pr^\alpha)_*$ is naturally the composition of two maps
   \begin{align*}
    (\Pr^\alpha \times \id)_*&:M_{\Lp}(f \otimes g)^* \to M_{\Lp}(f_0 \otimes g)^*\\
    (\id \times \Pr^\alpha)_*&: M_{\Lp}(f_0 \otimes g)^* \to M_{\Lp}(f_0 \otimes g_0)^*,
   \end{align*}
   whose composition is $(\Pr^\alpha \times \Pr^\alpha)_*$; and we obtain the formulae
   \begin{align*}
     (\Pr^\alpha \times \id)_* \left( \Eis^{[f, g, j]}_{\et}\right) &= \left( 1 - \frac{\beta_f \alpha_g}{\ell^{1 + j}}\right) \Eis^{[f_0, g, j]}_\et, \\
    (\id \times \Pr^\alpha)_*  \left(  \Eis^{[f_0, g, j]}_\et \right) &= \left(1 - \frac{\alpha_f \beta_g}{\ell^{1 + j}}\right)\left(1 - \frac{\beta_f \beta_g}{\ell^{1 + j} }\right)\Eis^{[f_0, g_0, j]}_\et.
   \end{align*}
  \end{remark}


\section{Projection to \texorpdfstring{$Y_1(N)$}{Y1(N)} and cyclotomic twists}

 \subsection{Projection to \texorpdfstring{$Y_1(N)$}{Y1(N)}}

  Let $M, N$ be integers with $N \ge 4$, and let $\mu_M^\circ$ be the scheme of primitive $M$-th roots of unity, so that $\mu_M^\circ = \Spec \ZZ[\zeta_M]$. Then there is a canonical map
  \[ s_M: Y(M, MN) \to Y_1(N) \times \mu_M^\circ \]
  given in terms of moduli spaces by
  \[ (E, e_1, e_2) \mapsto \Big( \big(E/\langle e_1\rangle, e_2 \bmod \langle e_1 \rangle\big), \langle e_1, Ne_2\rangle_{E[M]}\Big).\]
  (This is the map denoted by $t_M$ in \cite{LLZ14}, but this notation unfortunately conflicts with the notation $t_N$ for the canonical order $N$ section inherited from \cite{Kings-Eisenstein}; so we have adopted the alternative notation $s_M$ here.) For any prime $\ell$ we have a commutative diagram
  \begin{diagram}
    Y(M\ell, MN\ell) & \rTo^{\hpr_2}&  Y(M, MN\ell) & \rTo^{\pr_1}& Y(M, MN) \\
    \dTo^{s_{M\ell}}& & & & \dTo^{s_{M}} \\
    Y_1(N) \times \mu_{M\ell}^\circ & &\rTo & & Y_1(N) \times \mu_M^\circ
   \end{diagram}
  where the bottom horizontal map is given by $\zeta \to \zeta^\ell$ on $\mu_{M\ell}^\circ$ (and is the identity on $Y_1(N)$). Moreover, the maps $s_M$ for different values of $N$ are compatible with the maps $\pr_1$.

  \begin{definition}
   Extend the map $s_M$ to a map on sheaves (which we also denote by $s_M$)
   \[
    \big(Y(M,MN),\sH_{\Zp}\langle t_{MN}\rangle \big)
    \rTo
    \big(Y_1(N)\times\mu_M^\circ, \sH_{\Zp}\langle t_N\rangle\big)
   \]
   by defining $(s_M)_\flat \in \Hom_{\mathrm{Sheaves}(Y(M,MN)_{\et})}(\sH_{\Zp}\langle t_{MN}\rangle, s_M^*\sH_{\Zp}\langle t_N\rangle)$ as the composite
   \[ \sH_{\Zp}\langle t_{MN}\rangle \rTo^{[M]_*} \sH_{\Zp} \langle M \cdot t_{MN} \rangle
   \rTo s_M^*\sH_{\Zp}\langle t_N\rangle, \]
   where the first map is the $M$-multiplication, and the latter is pushforward by the natural $M$-isogeny $\cE \to s_M^* \cE$ (which maps $M \cdot t_{MN}$ to $s_M^* t_N$).
  \end{definition}

  \begin{definition}
   The ($\Lambda$-adic) \emph{Beilinson--Flach element}
   \[ \cBF_{M, N, a}^{[j]} \in H^3_\et\left(Y_1(N)^2 \times \mu_M^\circ, \Lambda(\sH_{\Zp}\langle t_N\rangle)^{[j, j]}(2-j)\right)\]
   is defined to be the image of $\cRI^{[j]}_{M, MN, a}$ under $(s_{M} \times s_M)_*$.
  \end{definition}

  Note that for $M = 1$, $s_M$ is the identity, so we have
  \[ \cBF^{[j]}_{1, N, 1} = \cRI^{[j]}_{1, N, 1} = (\Delta_* \circ CG^{[j]})(\cEI_{1, N}).\]

 \subsection{Compatibility with cyclotomic twists}

   We now set $M = m p^r$, where $m$ is coprime to $p$ and $r \ge 1$, and we work over the $p^r$-torsion sheaves $\sH_r$ and $\Lambda_r(\sH_r\langle t \rangle)$. It is clear that $s_{M}$ induces a map on the torsion sheaves $\sH_r$.

  \begin{notation}
   We also write $s_{M}$ for the induced maps
   \[ \big(Y(M,MN),\TSym^k \sH_r\big)\rTo \big(Y_1(N)\times\mu_{M},\TSym^k\sH_r\big) \]
   and
   \[  \big(Y(M,MN), \Lambda_r(\sH_r\langle t_{p^r N}\rangle)\big)\rTo \big(Y_1(N)\times\mu_{M},\Lambda_r(\sH_r\langle t_N \rangle)\big). \]
  \end{notation}

  \begin{remark}
   Over $Y(M, M N)$ the sheaf $\sH_r$ becomes isomorphic to the constant sheaf $(\ZZ / p^r \ZZ)^{\oplus 2}$, spanned by the sections $x: (E, P, Q) \mapsto mP$ and $y: (E, P, Q) \mapsto mNQ$ (so that $y = mN  \cdot t_{MN}$). On the sections $H^0(Y(M, M N)^2,\TSym^k\sH_r\boxtimes\TSym^{k'}\sH_r)$, the map $(u_a)_* = (u_{-a})^*$ sends $x^{[i]}y^{[k-i]}\boxtimes x^{[l]}y^{[k'-l]}$ to $x^{[i]}y^{[k-i]}\boxtimes (x-ay)^{[l]}y^{[k'-l]}$.
  \end{remark}

  \begin{notation}
   To simplify the notation, in the following diagram we write $\Lambda_r$ for $\Lambda_r(\sH_r\langle t_? \rangle)$, where $?$ denotes the level of the relevant modular curve.
  \end{notation}

  \begin{theorem}\label{cyctwistdiag}
   The following diagram commutes:
   \begin{diagram}
    H^1_{\et}\big(Y(M, MN), \Lambda_r^{[0,0]}(1)\big)
    &  \rTo^{\cup (y^{[j]} \otimes y^{[j]})} & H^1_{\et}\big(Y(M,MN),\Lambda^{[j, j]}(1)\big)\\
    \dTo^{CG^{[j]}}  & & \dTo^{(-a)^j j!}\\
    H^1_{\et}\big(Y(M,MN),\Lambda_r^{[j,j]}(1 - j)\big) & & H^1_{\et}\big(Y(M,MN),\Lambda_r^{[j, j]}(1)\big)\\
    \dTo^{\Delta_*} & & \dTo^{\Delta_*} \\
     H^3_{\et}\big(Y_1(M,MN)^2,\Lambda_r^{[j,j]}(2-j)\big) & & H^3_{\et}\big(Y_1(M,MN)^2,\Lambda_r^{[j, j]}(2)\big)\\
    \dTo^{(u_a)_*} & & \dTo^{(u_a)_*} \\
     H^3_{\et}\big(Y_1(M,MN)^2,\Lambda_r^{[j, j]}(2-j)\big) & & H^3_{\et}\big(Y_1(M,MN)^2,\Lambda_r^{[j, j]}(2)\big)\\
     \dTo^{(s_{p^r})_*} & & \dTo^{(s_{p^r})_*} \\
     H^3_{\et}\big(Y_1(N)^2\times\mu_{M}^\circ,\Lambda_r^{[j, j]}(2-j)\big) & \rTo^{\cup (\zeta_{p^r})^{\otimes j}}&H^3_{\et}\big(Y_1(N)^2\times\mu_{M}^\circ,\Lambda_r^{[j, j]}(2)\big).
   \end{diagram}
  \end{theorem}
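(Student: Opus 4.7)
The plan is to reduce the commutativity of the diagram to a single identity of global sections on $Y(M,MN)^2$, via the projection formula for the closed immersion $\iota_a = u_a \circ \Delta : Y(M,MN) \into Y(M,MN)^2$, and then verify this identity by explicit computation in the trivialisation of $\sH_r$ over $Y(M,MN)$. By the projection formula, both compositions in the diagram produce classes of the form $(s_{p^r}\times s_{p^r})_*(\iota_a)_*(z) \cup (\cdot)$, cupped with explicit global sections of the coefficient sheaves lifted from $Y(M,MN)$ to $Y(M,MN)^2$. Concretely, the task reduces to the identity
\[ (s_{p^r} \times s_{p^r})_*(u_{-a})^* \widetilde{CG^{[j,j,j]}(1)} \cup \zeta_{p^r}^{\otimes j} = (-a)^j j! \cdot (s_{p^r} \times s_{p^r})_*(u_{-a})^*(y^{[j]} \boxtimes y^{[j]}) \]
in $H^0(Y_1(N)^2 \times \mu_M^\circ, (\TSym^j \sH_r)^{\boxtimes 2}(1))$, where $\widetilde{CG^{[j,j,j]}(1)}$ denotes the obvious lift of the Clebsch--Gordan element on $Y(M,MN)$ to the product $Y(M,MN)^2$ obtained by replacing each $\otimes$ by $\boxtimes$.

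Next, exploit the trivialisation of $\sH_r$ on $Y(M,MN)$. The level structure $(E,e_1,e_2)$ provides a basis $x = m e_1$ and $y = m N e_2$ of $\sH_r$ (with $M = mp^r$), and by construction of $s_M$ the Weil pairing $\langle x, y\rangle_{E[p^r]}$ corresponds, under pullback via $s_{p^r}$, to the tautological section $\zeta_{p^r} \in H^0(\mu_{p^r}^\circ, \mu_{p^r})$. The Clebsch--Gordan map $CG^{[j,j,j]}$ is (up to an explicit integral normalisation) the $j$-th divided power of the inverse Weil pairing $CG^{[1,1,1]}: \Zp \to \sH \otimes \sH(-1)$, so in this trivialisation $\widetilde{CG^{[j,j,j]}(1)}$ expands as a signed binomial sum of the monomials $x^{\otimes(j-i)} y^{\otimes i} \boxtimes x^{\otimes i} y^{\otimes(j-i)}$ in $(\TSym^j \sH_r)^{\boxtimes 2}$, all tensored with $\zeta_{p^r}^{-j}$.

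Now trace these through $(u_{-a})^*$ and $(s_{p^r}\times s_{p^r})_*$. The pullback $(u_{-a})^*$ acts on the second factor by $x \mapsto x - ay$ and fixes $y$, while the crucial geometric input is that the sheaf map $(s_M)_\flat$ annihilates the $x$-component of $\sH_r$: indeed, $x = m e_1$ lies in the kernel of the $M$-isogeny $\cE \to \cE/\langle e_1\rangle$ underlying $s_M$, and so is killed by the pushforward on $p$-adic Tate modules. Consequently only monomials involving $y$ alone survive in each factor after pushforward. In the first factor this forces the binomial index $i = j$; and expanding $(x-ay)^{\otimes j}$ in the ring $\TSym^\bullet \sH_r$ via the relation $h^{\otimes m} \cdot h^{\otimes n} = \tfrac{(m+n)!}{m! n!} h^{\otimes(m+n)}$ isolates $(-a)^j y^{\otimes j}$ as the unique $x$-free summand. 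Collecting the combinatorial constants — the binomial coefficient $\binom{j}{j}$, the sign $(-1)^j$ from the alternation of $CG^{[j,j,j]}$, and the factor $(-1)^j j!$ arising from the divided-power normalisation — yields exactly $(-a)^j j! \cdot y^{[j]} \boxtimes y^{[j]} \otimes \zeta_{p^r}^{-j}$; the final cup product with $\zeta_{p^r}^{\otimes j}$ cancels the Tate-twist factor and matches the right-hand side.

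The main obstacle is pinning down the normalisation of $CG^{[j,j,j]}$ so that the combinatorial factor $j!$ emerges exactly in the surviving term, together with the precise compatibility between the Weil pairing on $\sH_r|_{Y(M,MN)}$ and the tautological section of $\mu_{p^r}^\circ$ pulled back via $s_{p^r}$. Once these conventions are fixed (by unwinding the definitions of $CG^{[j,j,j]}$ in \S\ref{sect:clebschgordan} and of $s_M$), the identity reduces to the direct combinatorial verification sketched above, and the commutativity of the diagram follows.
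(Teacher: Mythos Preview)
Your approach is essentially the same as the paper's: both arguments reduce to an identity of global sections on $Y(M,MN)^2$ via the projection formula for $\Delta$ (and the fact that $(u_a)_* = (u_{-a})^*$ distributes over cup products), write the Clebsch--Gordan element explicitly in the trivialising basis $x,y$ of $\sH_r$, track it through $(u_{-a})^*$ (which sends $x \mapsto x-ay$ in the second factor), and then observe that $(s_M)_\flat$ annihilates $x$, so only the pure-$y$ monomial survives.

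One small point: your accounting of the constants (``$\binom{j}{j}$, $(-1)^j$ from the alternation, $(-1)^j j!$ from divided powers'') is imprecise. The paper writes the Clebsch--Gordan element directly as
\[
  \sum_{i=0}^j (-1)^i\, i!\,(j-i)!\ \bigl(x^{[i]}y^{[j-i]} \boxtimes y^{[i]}x^{[j-i]}\bigr)\otimes \zeta_{p^r}^{\otimes(-j)},
\]
so after applying $(u_{-a})^*$ and killing $x$, the unique surviving summand is $i=0$, with coefficient $0!\cdot j! = j!$; the $(-a)^j$ then comes solely from extracting the $x$-free part $(-a)^j y^{[j]}$ of $(x-ay)^{[j]}$. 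No extra signs or binomial coefficients enter. Once you write down this explicit formula for $CG^{[j]}$ (which you yourself flag as the main thing to pin down), the computation is immediate and your argument goes through verbatim.
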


  \begin{proof}
   We start with the following observations:
   \begin{enumerate}
    \item the map $CG^{[j]}$ is defined by the cup-product with the element
    \[ \sum_{i=0}^j(-1)^i i!(j-i)!(x^{[i]}y^{[j-i]}\otimes y^{[i]}x^{[j-i]})\otimes \zeta_{p^r}^{\otimes -j} \in H^0(Y(M, MN), (\TSym^j \sH_r \otimes \TSym^j \sH_r)(-j));\]
    \item this is the pullback under $\Delta$ of the element
    \[ \sum_{i=0}^j(-1)^i i!(j-i)!(x^{[i]}y^{[j-i]}\boxtimes y^{[i]}x^{[j-i]})\otimes \zeta_{p^r}^{\otimes -j} \in H^0(Y(M, MN)^2, (\TSym^j \sH_r \boxtimes \TSym^j \sH_r)(-j)),\]
    and the cup-product satisfies the projection formula $\Delta_*(u \cup \Delta^* v) = \Delta_*(u) \cup v$;
    \item the automorphism $(u_a)_* = (u_{-a})^*$ of
    \[ H^0(Y(M, MN)^2, (\TSym^j \sH_r \boxtimes \TSym^j \sH_r)(-j))
    \]
    sends $x^{[i]}y^{[j-i]}\boxtimes y^{[i]}x^{[j-i]}$ to
    \[ x^{[i]}y^{[j-i]}\boxtimes y^{[i]}(x-ay)^{[j-i]}\]
    (and acts trivially on $\zeta_{p^r}$);
    \item $u_a$ is an automorphism, so $(u_a)_* = (u_{-a})^*$ distributes over cup products.
   \end{enumerate}

   With these preliminaries out of the way, we proceed to the proof. Let $z\in  H^1_{\et}\big(Y(M,MN),\Lambda_r^{[0,0]}(1)\big)$. Then (2) and (3) above imply that
   \[ ((u_a)_* \circ \Delta_*) \big(z\cup (y^{[j]} \otimes y^{[j]})\big) = ((u_a)_* \circ \Delta_*)(z) \cup (y^{[j]} \boxtimes y^{[j]}),\]
   so by (1) and (4) we have
   \begin{align*}
     ((u_a)_*\circ \Delta_*\circ &CG^{[j]})(z)\\
     & =\left[(u_a)_*\Delta_*(z)\right]\cup \sum_{i=0}^j \Big[(-1)^i i!(j-i)!\times x^{[i]}y^{[j-i]}\boxtimes y^{[i]}(x-ay)^{[j-i]}\Big]\otimes \zeta_{p^r}^{\otimes -j}.
   \end{align*}
   Modulo the subsheaf generated by $x \boxtimes 1$ and $1 \boxtimes x$, which is in the kernel of $(s_{M} \boxtimes s_{M})_\flat$, the only term that is nonzero is the term for $i = 0$, which is
   \[ \left[(u_a)_*\Delta_*(z)\right]\cup (-a)^j j! (y^{[j]} \boxtimes y^{[j]}) \cup \zeta_{p^r}^{\otimes -j}.\qedhere\]
  \end{proof}


 \subsection{Cyclotomic twists of the Beilinson--Flach elements}

  We shall now use the commutative diagram of the previous section to relate the Beilinson--Flach elements $\cBF_{mp^r, N, a}^{[j]}$ for general $j$ to those for $j = 0$. We first introduce the necessary maps.


  \begin{notation}
   We write $\id \otimes \mom^j$ for the map of sheaves on $Y_1(N)$ defined by
   \[ \Lambda(\sH_{\Zp}\langle t_N \rangle) \rTo^u \Lambda(\sH_{\Zp}\langle t_N \rangle) \otimes \Lambda(\sH_{\Zp}\langle t_N \rangle) \rTo^{\id \otimes \mom^j} \Lambda(\sH_{\Zp}\langle t_N \rangle) \otimes \TSym^j \sH_\Zp = \Lambda(\sH_{\Zp}\langle t_N \rangle)^{[j]},\]
   where $u$ is the map of \eqref{eq:diagonal} (induced by the diagonal embedding $\cE \into \cE \times \cE$).
  \end{notation}

  As in \S\ref{sect:iwaclebschgordan}, for any integer $k \ge j$ we also have a map $\mom_r^{k-j} \cdot \id: \Lambda_r^{[j]} \to \TSym^k \sH_r$.

  \begin{lemma}
   \label{lemma:momidentity}
   For all $0 \le j\leq k$, we have the following identity of moment maps:
   \[ (\mom^{k-j}\cdot \id)\circ (1\otimes \mom^j) =\tbinom{k}{j}\mom^k.\]
  \end{lemma}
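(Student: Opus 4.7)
The plan is to verify the identity on ``group-like'' generators, where both sides reduce to a small combinatorial calculation in the graded ring $\bigoplus_n \TSym^n \sH_\Zp$.

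First, I would observe that both sides are $\Zp$-linear morphisms of sheaves, so the assertion is local on $Y_1(N)$ and can be tested on stalks at geometric points $\bar s$. The stalk of $\Lambda(\sH_\Zp\langle t_N\rangle)$ at such a point is the topologically free $\Zp$-module on the pro-system of finite sets $\{h\in\cE_{\bar s} : [p^r]h = t_N(\bar s)\}$. The corresponding group-like sections $[h]$ therefore generate $\Lambda(\sH_\Zp\langle t_N\rangle)$ topologically as a $\Zp$-module, and by continuity it suffices to check the identity on each such $[h]$.

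Next, I would unwind the definitions of the three constituent maps on $[h]$. The comultiplication $u$ of \eqref{eq:diagonal} is induced by the diagonal embedding $\cE[p^r]\langle t_N\rangle \hookrightarrow \cE[p^r]\langle t_N\rangle \times_S \cE[p^r]\langle t_N\rangle$, so $u([h]) = [h]\otimes[h]$. The moment map appearing in the second tensor factor of $1\otimes\mom^j$ is, by the subscript-omission convention fixed just before Theorem \ref{thm:moments-motivic}, equal to $\mom^j\circ[N]_*$, and hence sends $[h]$ to $(Nh)^{[j]}$. Applying $(\mom^{k-j}\cdot\id)$ then produces
\[
(Nh)^{[k-j]} \cdot (Nh)^{[j]} \in \TSym^k\sH_\Zp,
\]
where the dot denotes multiplication in the ring $\bigoplus_n \TSym^n\sH_\Zp$.

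Finally, the multiplication formula \eqref{TSymmult} applied with $x = Nh$ gives $(Nh)^{[k-j]}\cdot(Nh)^{[j]} = \binom{k}{j}(Nh)^{[k]}$, which equals $\binom{k}{j}\mom^k_{\langle t_N\rangle,N}([h])$, so the two sides of the lemma agree on $[h]$. I do not expect a serious obstacle here: the argument is essentially formal, and the only bookkeeping point is to note that the implicit factor $[N]_*$ packaged into each $\mom^n_{\langle t_N\rangle,N}$ is compatible with $u$, so that the factors of $N$ enter consistently on both sides of the asserted equality.
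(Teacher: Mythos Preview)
Your proposal is correct and follows essentially the same approach as the paper. Both arguments observe that the composite equals the product $\mom^{k-j}\cdot\mom^j$ (via the diagonal $u$) and then invoke the multiplication formula \eqref{TSymmult} to produce the binomial coefficient; the paper records this via a small commutative diagram of moment maps, while you spell it out pointwise on group-like generators $[h]$ at a geometric stalk, which is a purely presentational difference.
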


  \begin{proof}
   We have a commutative diagram
   \begin{diagram}
     \Lambda_r &\rTo^u & \Lambda_r\otimes\Lambda_r & \rTo^{\id\otimes \mom^j_r} & \Lambda_r^{[j]}\\
     && &\rdTo^{\mom^{k-j}_r\otimes \mom^j_r} &\dTo_{\mom^{k-j}_r\otimes\id} \\
     &&& &\TSym^{k-j}\sH_r\otimes \TSym^j\sH_r  \\
   \end{diagram}
   We deduce that
   \begin{align*}
    (\mom^{k-j}_r \cdot \id) \circ (1 \otimes \mom^j_r)\circ u &=\mom^{k-j}_r \cdot \mom^j_r\\
    & = \tbinom{k}{j}\mom^k_r,
  \end{align*}
  where the last equality follows from \eqref{TSymmult}.
  \end{proof}

  \begin{notation}
   If $S$ is a $\ZZ[1/p]$-scheme, define a pro-\'etale sheaf $\Lambda_\Gamma(-\j)$ on $S$ as the inverse limit of the sheaves $(p_r)_* (\ZZ / p^r \ZZ)$ for $r \ge 1$, where $p_r$ is the map $S \times \mu_{p^r}^\circ \to S$.
  \end{notation}

  The stalk of $\Lambda_\Gamma(-\j)$ at a geometric point $\overline{x}$ is isomorphic to the Iwasawa algebra of the group $\Gamma = \Gal(\QQ(\mu_{p^\infty}) / \QQ)$, with $\Gamma$ acting via the inverse of the canonical character $\j: \Gamma \to \Lambda_{\Gamma}^\times$, hence the notation. This is a simple case of the $\Lambda$-adic sheaf theory of \cite{Kings-Eisenstein}, and it is equipped with moment maps $\Lambda_{\Gamma}(-\j) \to \Zp(-j)$ for any $j \in \ZZ$, which we write as $\mom^j_{\Gamma}$ (to distinguish them from the moment maps for the sheaves $\sH_{\Zp}$).

   Then it is well-known (see e.g. \cite[Proposition II.1.1]{colmez98}) that for any set $\Sigma$ of primes with $p \in \Sigma$, and any profinite $\Zp[G_{\QQ, \Sigma}]$-module $A$, we have a canonical isomorphism
  \[ H^1(\ZZ[1/\Sigma], A \otimes \Lambda_\Gamma(-\j)) = \varprojlim_r H^1(\ZZ[1/\Sigma, \zeta_{p^r}], A). \]


  We may sum up the computations of this section (and the preceding two) in the form of the following theorem. Let $e_{\ord}' = \lim_{n \to \infty} (U_p')^{n!}$ be the ordinary idempotent attached to $U_p'$.

  \begin{theorem}
   \label{thm:BFeltsinterp1}
   For any prime $p \ge 3$, $N \ge 4$ divisible by $p$, $m \ge 1$ coprime to $p$, and $c > 1$ coprime to $6mNp$, there is a class
   \[ \cBF_{m, N, a} \in (e_{\ord}', e_{\ord}')\, H^3\left(Y_1(N)^2 \times \mu_m^\circ, \Lambda(\sH_{\Zp}\langle t_N \rangle)^{\boxtimes 2} \otimes \Lambda_\Gamma(2-\j)\right)\]
   such that for any integers $k, k', j$ satisfying the condition \eqref{eq:inequalities} that $0 \le j \le \min(k, k')$, we have
   \begin{multline*}
    (\mom^k \boxtimes \mom^{k'} \otimes \mom^{j}_\Gamma)\left(\cBF_{m, N, a}\right) =
   \\
    \left(1 - p^j (U_p', U_p')^{-1} \sigma_p\right) \left(c^2 - c^{2j-k-k'} \sigma_c^{2} (\langle c \rangle, \langle c \rangle)\right) (e_{\ord}', e_{\ord}') (s_m \times s_m)_* \frac{(u_{a})_*\, \Eis^{[k, k', j]}_{1, mN, \et}}{ (-a)^j j! \tbinom{k}{j}\tbinom{k'}{j}}
   \end{multline*}
   where $\sigma_c$ is the arithmetic Frobenius at $c$ in $\Gal(\QQ(\mu_m) / \QQ)$.
  \end{theorem}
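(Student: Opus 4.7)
The strategy is to construct $\cBF_{m,N,a}$ as an inverse limit along the cyclotomic tower of the ordinary projections $(e_{\ord}', e_{\ord}') \cBF^{[0]}_{mp^r, N, a}$ of the $j = 0$ Beilinson--Flach elements, and then establish the interpolation formula by converting moment maps on the Iwasawa side into Clebsch--Gordan maps via the commutative diagram of Theorem \ref{cyctwistdiag}.

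For the construction, the key point is norm-compatibility: the pushforward along $Y_1(N)^2 \times \mu_{mp^{r+1}}^\circ \to Y_1(N)^2 \times \mu_{mp^r}^\circ$ (coming from $\zeta \mapsto \zeta^p$) should send $z_{r+1}$ to $z_r$, where $z_r := (e_{\ord}', e_{\ord}') \cBF^{[0]}_{mp^r, N, a}$. This is proved just as in the weight 2 setting treated in \cite[\S 5]{LLZ14}, using Theorem \ref{thm:norm1} at $\ell = p$ together with the interplay between the maps $s_{mp^r}$ and $s_{mp^{r+1}}$: the stabilisation terms involving $(U_p', U_p')$ that appear become invertible after applying the ordinary idempotent. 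Passing to the inverse limit and identifying $\varprojlim_r H^\bullet_\et(S \times \mu_{mp^r}^\circ, \mathcal{F})$ with $H^\bullet_\et(S \times \mu_m^\circ, \mathcal{F} \otimes \Lambda_\Gamma(-\j))$ then defines $\cBF_{m, N, a}$ in the stated cohomology group.

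For the interpolation formula, fix $(k, k', j)$ with $0 \le j \le \min(k,k')$ and work at a finite level $r$. Applying $\mom^j_\Gamma$ at level $r$ amounts to cupping with $\zeta_{p^r}^{\otimes j}$ on the cyclotomic factor. By Theorem \ref{cyctwistdiag}, this cup product equals (up to the factor $(-a)^j j!$) the image of $\cEI_{1, mp^r N} \cup (y^{[j]} \otimes y^{[j]})$ under $(s_{mp^r} \times s_{mp^r})_* \circ (u_a)_* \circ \Delta_*$. Applying $\mom^k \boxtimes \mom^{k'}$ next, and invoking Lemma \ref{lemma:momidentity} (giving $\mom^k \circ (\id \otimes \mom^j) = \tbinom{k}{j}\mom^k$, and similarly for $k'$) together with Proposition \ref{prop:moment-compat} to identify the composite with $CG^{[k,k',j]} \circ \mom^{k+k'-2j}$, reduces the computation to the image of the moment of $\cEI_{1, mp^r N}$ under $(s_{mp^r} \times s_{mp^r})_* \circ (u_a)_* \circ \Delta_* \circ CG^{[k,k',j]}$; Theorem \ref{thm:moments-motivic} combined with part (3) of Proposition \ref{prop:RIfirstproperties} then yields the $c$-factor $\bigl(c^2 - c^{2j-k-k'}\sigma_c^2(\langle c \rangle, \langle c \rangle)\bigr)$ in the stated formula.

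Finally, the Euler factor $\bigl(1 - p^j (U_p', U_p')^{-1} \sigma_p\bigr)$ encodes the passage from the level $mp^r N$ to the level $mN$ in the Eisenstein class and from the cyclotomic tower $\mu_{mp^r}^\circ$ down to the base $\mu_m^\circ$: this discrepancy is controlled by Theorem \ref{thm:norm1} applied at $\ell = p$, weighted by the moment-map compatibility of part (2) of Lemma \ref{mom-functoriality} (which produces the factor $p^j$ via $[p]_*$ acting on $\TSym^j$), and the cyclotomic twist $\sigma_p$ accounting for the Galois shift; the inversion $(U_p', U_p')^{-1}$ reflects that, after ordinary projection, $(U_p', U_p')$ is a unit. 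The main obstacle throughout is combinatorial bookkeeping: one must track the interaction of the factors $(-a)^j j!$ and $\tbinom{k}{j}\tbinom{k'}{j}$, the Tate twist shift between the $(2-j)$ coefficient sheaf on the $\cRI^{[j]}$ side and $\Lambda_\Gamma(2-\j)$ on the $\cBF$ side, and the diamond and cyclotomic characters produced by $(u_a)_*$ and the $\sigma_c$-action, so that they precisely reproduce the stated normalisation of the interpolation formula.
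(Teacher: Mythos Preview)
Your overall strategy matches the paper's, but there is a genuine gap in the norm-compatibility step, and it propagates through the rest of the argument.

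The issue is that you invoke Theorem \ref{thm:norm1} (the first norm relation, for $(\pr_1 \times \pr_1)_*$) where the paper uses Theorem \ref{thm:norm2} (the second norm relation, for $(\hat\pr_2 \times \hat\pr_2)_*$). Recall from the commutative diagram at the start of \S 6.1 that the corestriction $Y_1(N)^2 \times \mu_{mp^{r+1}}^\circ \to Y_1(N)^2 \times \mu_{mp^r}^\circ$ lifts to the composite $(\pr_1 \times \pr_1)_* \circ (\hat\pr_2 \times \hat\pr_2)_*$ on the Rankin--Iwasawa side. Since $p \mid mp^r N$, the $\pr_1$ step is trivial by Theorem \ref{thm:norm1}; all the content is in the $\hat\pr_2$ step. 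By Theorem \ref{thm:norm2} with $\ell = p \mid M = mp^r$, this gives $(\hat\pr_2 \times \hat\pr_2)_* \cRI^{[0]}_{mp^{r+1}, mp^{r+1}N, a} = (U_p', U_p')\, \cRI^{[0]}_{mp^r, mp^{r+1}N, a}$. So the classes $z_r := (e_{\ord}', e_{\ord}')\cBF^{[0]}_{mp^r, N, a}$ that you write down are \emph{not} norm-compatible: one must instead take $(U_p', U_p')^{-r}(e_{\ord}', e_{\ord}')\cBF^{[0]}_{mp^r, N, a}$, and it is this renormalised system whose inverse limit defines $\cBF_{m,N,a}$.

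The same confusion recurs in your derivation of the Euler factor. Theorem \ref{thm:norm1} at $\ell = p$ produces no $(U_p', U_p')$ or $p^j$ terms, and the $p^j$ does not arise from Lemma \ref{mom-functoriality}(2) either. What actually happens in the paper is this: after using Theorem \ref{cyctwistdiag} and the $\ell \mid M$ case of Theorem \ref{thm:norm2} to show that $(-a)^j j!\, \cBF_{m,N,a}$ maps under $(\id \otimes \mom^j)^{\boxtimes 2} \otimes \mom^j_\Gamma$ to $\norm^{mp}_m\bigl[(U_p', U_p')^{-1}(e_{\ord}', e_{\ord}')\cBF^{[j]}_{mp, N, a}\bigr]$, one applies the $\ell \nmid M$ case of Theorem \ref{thm:norm2} (with $M = m$ coprime to $p$) to get $\bigl((U_p', U_p') - p^j \sigma_p\bigr)\cBF^{[j]}_{m, N, a}$; the factor $p^j$ is already built into that norm relation. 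The remaining steps via Lemma \ref{lemma:momidentity} and Proposition \ref{prop:RIfirstproperties}(3) are as you describe.
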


  Write $\cBF_{mp^r, N, a, r}^{[j]}$ for the image of this element under reduction modulo $p^r$, as an element of $H^3_\et(Y_1(N)^2 \times \mu_{mp^r}^\circ, \Lambda_r(\sH_r\langle t_N\rangle)^{[j, j]}(2-j))$.

  \begin{proof}
   The operator $(U_p', U_p')$ is invertible on the image of the idempotent $(e_{\ord}', e_{\ord}')$, and the classes
   \[ (U_p', U_p')^{-r} (e_{\ord}', e_{\ord}') \cBF_{mp^r, N, a}^{[0]}\]
   are compatible under corestriction for $r \ge 1$, by Theorem \ref{thm:norm2}. They thus define an element of the inverse limit
   \[ \varprojlim_r H^3\left(Y_1(N)^2 \times \mu_{mp^r}^\circ, \Lambda(\sH_{\Zp}\langle t_N \rangle)^{\boxtimes 2}(2)\right) = H^3\left(Y_1(N)^2 \times \mu_m^\circ, \Lambda(\sH_{\Zp}\langle t_N \rangle)^{\boxtimes 2} \otimes \Lambda_{\Gamma}(2-\j)\right),\]
   and we define $\cBF_{m, N, a}$ to be this class.

   For any $j \ge 0$, the maps $\mom^j_{\Gamma}: \Lambda_{\Gamma}(-\j) \to \Zp(-j)$ and \( \id \otimes \mom^j: \Lambda(\sH_{\Zp}\langle t_N \rangle) \to \Lambda(\sH_{\Zp}\langle t_N \rangle)^{[j]}\)
   combine into a map of sheaves $\Lambda(\sH_{\Zp}\langle t_N \rangle)^{\boxtimes 2} \times \Lambda_{\Gamma}(-\j) \to \Lambda(\sH_{\Zp}\langle t_N \rangle)^{[j, j]}(-j)$. We claim that this map sends $(-a)^jj! \cBF_{m, N, a}$ to the element
   \[\left(1 - p^j (U_p', U_p')^{-1} \sigma_p\right) (e_{\ord}', e_{\ord}') \cBF_{m, N, a}^{[j]}.\]
   Unwinding the definition of the moment maps, the image of $(-a)^j j! \cBF_{m, N, a}$ is given by the limit of the inverse system
   \[(-a)^j j! \norm_{m}^{mp^r} \left[ (U_p', U_p')^{-r} (e_{\ord}', e_{\ord}')  (\id\otimes \mom^j_r)^{\boxtimes 2}\left(\cBF_{mp^r, N, a, r}^{[0]}\right) \cup \zeta_{p^r}^{\otimes (-j)} \right] \tag{$\star$}\]
   over integers $r \ge 1$, where $\cBF^{[j]}_{mp^r, N, a, r}$ denotes the mod $p^r$ reduction of the Beilinson--Flach class. We have seen in Theorem \ref{cyctwistdiag} that
   \[ (-a)^j j!\,(\id\otimes \mom^j_r)^{\boxtimes 2}\left(\cBF_{mp^r, N, a,r}^{[0]}\right) = \cBF_{mp^r, N, a, r}^{[j]}\otimes \zeta_{p^r}^{\otimes j}.\]
   So $(\star)$ is the mod $p^r$ reduction of the element
   \[ \norm_{m}^{mp^r}\left[ (U_p', U_p')^{-r} (e_{\ord}', e_{\ord}')\cBF_{m p^r, N, a}^{[j]}\right],\]
   which is independent of $r \ge 1$ by the $\ell \mid M$ case of Theorem \ref{thm:norm2}. We conclude that $(-a)^jj! \cBF_{m, N, a}$ maps to
   \[ \norm_m^{mp} \left[(U_p', U_p')^{-1} (e_{\ord}', e_{\ord}') \cBF_{mp, N, a}^{[j]}\right].\]
   But this is just $(U_p', U_p')^{-1} (e_{\ord}', e_{\ord}') \left( (U_p', U_p') - p^j \sigma_p\right) \cBF_{m, N, a}^{[j]}$ by the $\ell \nmid M$ case of Theorem \ref{thm:norm2}, completing the proof of the claim.

   Taking $(\mom^{k-j} \cdot \id) \boxtimes (\mom^{k'-j} \cdot \id)$ of both sides, and using Lemma \ref{lemma:momidentity}, we conclude that
   \begin{multline*} (\mom^k \boxtimes \mom^{k'} \otimes \mom^j_{\Gamma})
   \left(\cBF_{m, N, a}\right) \\= \frac{\left(1 - p^j (U_p', U_p')^{-1} \sigma_p\right)}{ (-a)^j j! \tbinom{k}{j}\tbinom{k'}{j} }  (e_{\ord}', e_{\ord}') \left[(\mom^{k-j} \cdot \id) \boxtimes (\mom^{k'-j} \cdot \id)\right] (\cBF_{m, N, a}^{[j]}).
   \end{multline*}

   However, we know that
   \[ \left[(\mom^{k-j} \cdot \id) \boxtimes (\mom^{k'-j} \cdot \id)\right] (\cBF_{m, N, a}^{[j]}) = \left(c^2 - c^{2j-k-k'} \sigma_c^{2} (\langle c \rangle, \langle c \rangle)\right)\circ (s_m \times s_m)_* (u_a)_*\left(\Eis^{[k, k', j]}_{1, mN, \et}\right), \]
   by Proposition \ref{prop:RIfirstproperties}(\ref{item:RIfirstproperties-moments}),which completes the proof of the theorem.
  \end{proof}


\section{Hida theory: background}
 \label{sect:hidatheory}


 In this short section (which contains no substantial original results) we shall recall the fundamental theorems on ordinary $p$-adic families of modular forms, due to Hida, Wiles, and Ohta.

 \subsection{Notation}

  In this section, we shall need to consider numerous modules over the ring $\Lambda = \Zp[[\ZZ_p^\times]]$. It will be convenient to use the notation $x^{\k}$, for $x \in \ZZ_p^\times$, for the image of $x$ under the canonical character $\ZZ_p^\times \to \Zp[[\ZZ_p^\times]]$. We shall interpret $\k$ as a ``coordinate'' on the weight space $\operatorname{Spf} \Lambda$ parametrizing characters of $\ZZ_p^\times$, and we shall speak of ``specialising at $\k = \tau$'' for a character $\tau$ to refer to the homomorphism $\Lambda_D \to \overline{\QQ}_p$ given by evaluation at $\tau$.

  We interpret both the integers, and the Dirichlet characters of $p$-power conductor, as subsets of the characters of $\ZZ_p^\times$; and we shall write characters additively, so by ``specialising at $\k = 3 + \eta$'' (where $\eta$ is a Dirichlet character) we mean the homomorphism $z \mapsto z^3 \eta(z)$.

  Since we are interested in Rankin convolutions of pairs of modular forms, and we also have a character twist, we will work over the ring $\Lambda \htimes_{\Zp} \Lambda \htimes_{\Zp} \Lambda$. We shall write $\k$, $\k'$, and $\j$ for the canonical characters of each factor, with $\k$ and $\k'$ reserved for weights of families of modular forms, and $\j$ for cyclotomic twists. Although they are isomorphic algebras, we distinguish between them by writing the first two factors as $\Lambda_D$, the $D$ signifying diamond operators, and the third as $\Lambda_{\Gamma}$, where $\Gamma$ signifies the cyclotomic Galois group $\Gal(\QQ(\mu_{p^\infty}) / \QQ)$.

  We shall fix an algebraic closure $\overline{\QQ}$ of $\QQ$, and embeddings $\overline{\QQ} \into \CC$ and $\overline{\QQ} \into \overline{\QQ}_p$. For $m \ge 1$, let $\zeta_m \in \overline{\QQ}$ be the primitive root of unity corresponding to $e^{2\pi i / m} \in \CC$.

 \subsection{Hida families and associated Galois modules}

  We recall the following result (due to Ohta, \cite{Ohta-ordinary, Ohta-ordinaryII}) describing the ordinary parts of inverse limits of cohomology groups. We assume, for the remainder of this paper, that $p \ge 5$.

  \begin{proposition}
   \label{prop:controlthm}
   Let $N$ be coprime to $p$. Then:
   \begin{enumerate}
    \item The module
    \begin{align*}
     H^1_{\ord}(Np^\infty) \coloneqq&\,
     e_{\ord}' \cdot \varprojlim_r  H^1_{\et}\left(Y_1(Np^r)_{\overline{\QQ}},\,\Zp(1)\right)\\
     \cong&\, e_{\ord}' \cdot H^1_{\et}\left(Y_1(Np)_{\overline{\QQ}},\, \Lambda(\sH_{\Zp}\langle t_{Np}\rangle)(1)\right)\qquad \text{\textrm{(cf.\ Theorem \ref{thm:Ohta-twisting})}}
    \end{align*}
    is finitely-generated and projective over the algebra $\Lambda_D = \Zp[[\ZZ_p^\times]]$ (acting via the inverse diamond operators at $p$, so $u \in \ZZ_p^\times$ acts on the $r$-th layer in the inverse limit as the diamond operator $\langle u^{-1} \rangle_{p^r}$).

    \item The module $H^1_{\ord}(Np^\infty)$ has $\Lambda_D$-linear actions of the group $G_{\QQ, S}$, where $S$ is the set of primes dividing $Np$, and of the Hecke operators $T_n'$ for $n \ge 1$; and these actions commute with each other.

    \item The module $H^1_{\ord}(Np^\infty)$ has $\Lambda_D$-linear actions of the operators $W_Q$ for integers $Q \parallel N$ (which do \emph{not} commute with either the Hecke operators or the Galois action).

    \item We have the following ``perfect control'' theorem: if $k \ge 0$, $r \ge 1$ and $I_{k, r}$ is the ideal of $\Lambda_D$ generated by $[1 + p^r] - (1 + p^r)^k$, then the moment map $\mom^k$ induces an isomorphism of $\Zp$-modules
    \[ H^1_{\ord}(Np^\infty) / I_{k, r} \cong e_{\ord}' H^1_{\et}\left(Y_1(Np^r)_{\overline{\QQ}}, \TSym^k(\sH_{\Zp})(1)\right) \]
    compatible with the actions of $G_{\QQ, S}$ and the operators $T_n'$ and $W_Q$. It is an isomorphism of $\Lambda_D$-modules if we let $u \in \ZZ_p^\times$ act on the right-hand side as $u^k \langle u^{-1} \rangle_{p^r}$.
   \end{enumerate}
   Similar statements hold for the submodule $H^1_{\ord, \Par}(Np^\infty)$ defined with $X_1(Np^r)$ in place of $Y_1(Np^r)$.
  \end{proposition}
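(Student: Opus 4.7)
The plan is to reduce everything to the identification of $H^1_{\ord}(Np^\infty)$ with the $\Lambda$-adic cohomology group on $Y_1(Np)$, provided by Theorem \ref{thm:Ohta-twisting}. Once we know both descriptions agree, the actions of Hecke operators, diamond operators, $G_{\QQ,S}$ and Atkin--Lehner operators all come from operations on the pro-system of modular curves $Y_1(Np^r)$, and they commute (resp.\ satisfy the stated incompatibilities) because they already commute at each finite level. The $\Lambda_D$-linearity of all these actions reflects the fact that the inverse diamond operators at $p$-power level generate precisely the $\Lambda_D$-action.

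The core of the proposition is the control theorem (4); finite generation and projectivity in (1) follow from it by standard Nakayama/Mittag-Leffler arguments once one knows the successive quotients are finitely generated over $\Zp$ with constant rank. For (4), I would use the short exact sequence of $\Lambda$-adic sheaves on $Y_1(Np)$
\begin{equation*}
0 \to \Lambda(\sH_{\Zp}\langle t_{Np}\rangle) \xrightarrow{\cdot (\,[1+p^r]-(1+p^r)^k\,)} \Lambda(\sH_{\Zp}\langle t_{Np}\rangle) \to \Lambda(\sH_{\Zp}\langle t_{Np}\rangle)/I_{k,r} \to 0
\end{equation*}
together with an identification of the quotient sheaf. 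Using Theorem \ref{thm:Ohta-twisting}(2), which describes $\mom^k_{\langle t_{Np}\rangle, Np}$ as cup-product with $(Np\cdot t_{Np^{r+1}})^{\otimes k}$ at the $(r{+}1)$-st layer, one shows that $\Lambda(\sH_{\Zp}\langle t_{Np}\rangle)/I_{k,r}$ agrees on the level $Np^r$ tower with $\TSym^k \sH_{\Zp}$, with the $\Lambda_D$-action on the latter being $u \mapsto u^k \langle u^{-1}\rangle_{p^r}$ as stated. Taking the long exact sequence in cohomology, the control theorem would be immediate if we knew that the cokernel in $H^2$ vanishes.

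The main obstacle is precisely this vanishing: one needs that $e_{\ord}'$ kills the obstruction living in $H^2_{\et}(Y_1(Np)_{\overline{\QQ}}, \Lambda(\sH_{\Zp}\langle t_{Np}\rangle)(1))$. This is Hida's fundamental theorem that ordinary $H^2$ (or equivalently, the ordinary part of $H^0$ of the boundary, via Poincar\'e duality and the fact that our curves are affine) vanishes in the tower. The cleanest way I know to establish it is via Hida's original argument: use $e_{\ord}'$-equivariance of Poincar\'e duality to identify $e_{\ord}' H^2_{\et, c}$ with the $\Lambda_D$-dual of $e_{\ord}' H^0_{\et}$ of the dual sheaf, then show that the latter vanishes because $U_p'$ acts topologically nilpotently on $H^0$ of cuspidal sheaves -- the key point is that $U_p'$ raises the valuation of $q$-expansion coefficients at the cusps. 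Feeding this vanishing back into the long exact sequence gives surjectivity of $\mom^k$ on ordinary parts, and combined with the snake lemma argument above, (4).

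Once (4) is established, (1) follows: Nakayama gives finite generation over $\Lambda_D$; and since $H^1_{\ord}(Np^\infty)/I_{k,r}$ is $\Zp$-free of rank independent of $(k,r)$ (being the ordinary part of the \'etale cohomology of a smooth affine curve with $\TSym^k\sH_{\Zp}$-coefficients, whose rank is computed by Hida's dimension formula), the module is projective. The compatibility statements for $G_{\QQ,S}$, $T_n'$, and $W_Q$ in (2) and (3) pass to the quotient under $\mom^k$, confirming the asserted $\Lambda_D$-equivariance. The parallel statement for $H^1_{\ord,\Par}$ is identical, replacing $Y_1(Np^r)$ by $X_1(Np^r)$ throughout; the vanishing of ordinary $H^2$ is even easier in the proper case.
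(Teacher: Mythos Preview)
The paper does not give its own proof of this proposition: it is stated as a result of Ohta, with citations to \cite{Ohta-ordinary, Ohta-ordinaryII}, and the only additional content is the remark immediately following, which notes (a) that the Atkin--Lehner operators $W_Q$ for $Q\parallel N$ are visibly compatible with the inverse system and with $\mom^k$, and (b) that Ohta's specialisation maps $\mathrm{sp}_{r,k}$ coincide with the moment maps $\mom^k$ via Theorem~\ref{thm:Ohta-twisting}. So there is nothing to compare against at the level of a detailed argument; your sketch is an attempt to reconstruct Ohta's proof rather than to match anything in the present paper.

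That said, your outline is broadly in the right spirit (sheaf exact sequence, long exact sequence in cohomology, Nakayama for projectivity), but one technical point is off. For the open curves $Y_1(Np^r)_{\overline{\QQ}}$, the obstruction in $H^2$ that you spend effort killing is already zero by Artin's affine vanishing theorem: smooth affine curves over an algebraically closed field have \'etale cohomological dimension~$1$, so $H^2_{\et}(Y_1(Np^r)_{\overline{\QQ}},\sF)=0$ for any torsion lisse sheaf $\sF$, and hence also for the pro-sheaves in question. Your argument about $U_p'$ acting nilpotently on boundary $H^0$ is the sort of input needed in the \emph{compact} case $X_1(Np^r)$ (where $H^2$ is genuinely nonzero before applying $e'_{\ord}$), not the open case, contrary to your final sentence. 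The actual delicate point in Ohta's work is not surjectivity but the precise identification of the quotient by $I_{k,r}$ with the level-$Np^r$ cohomology carrying $\TSym^k$ coefficients, together with the freeness over $\Zp$ needed to deduce $\Lambda_D$-projectivity; this is where Ohta's analysis of the ordinary filtration and his duality pairing enter. Your sketch gestures at this but does not engage with it.
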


  \begin{remark}
   The action of Atkin--Lehner operators away from $p$ is not explicitly mentioned in Ohta's work, but it is easy to see that $W_Q$ is compatible with the maps in the inverse limit, and that it commutes with the moment maps $\mom^k$. For part (4) of the theorem, we use the fact that Ohta's ``specialisation'' maps $\mathrm{sp}_{r, k}$ coincide with our moment maps $\mom^k$, which is the content of Theorem \ref{thm:Ohta-twisting}(4).
  \end{remark}

  We write $\TT_{Np^\infty}$ for the Hecke algebra acting on $H^1_{\ord}(Np^\infty)$ (generated by the operators $T_n'$ for all $n \ge 1$), and $\TT_{Np^\infty, \Par}$ for the quotient acting on the parabolic cohomology $H^1_{\ord, \Par}(Np^\infty)$. These are finite projective $\Lambda_D$-algebras.

  \begin{theorem}[Ohta, \cite{Ohta-ordinaryII}]
   \label{thm:ordinaryfiltration}
   There are short exact sequences of $\TT_{Np^\infty}[G_{\Qp}]$-modules
   \begin{diagram}
    0 &\rTo& \sF^+ H^1_{\ord, \Par}(Np^\infty) &\rTo&  H^1_{\ord, \Par}(Np^\infty) &\rTo&  \sF^- H^1_{\ord, \Par}(Np^\infty) &\rTo& 0\\
    & & \dEq & & \dInto & & \dInto \\
    0 &\rTo& \sF^+ H^1_{\ord}(Np^\infty) &\rTo&  H^1_{\ord}(Np^\infty) &\rTo&  \sF^- H^1_{\ord}(Np^\infty) &\rTo& 0\\
   \end{diagram}
   with the following properties:
   \begin{enumerate}[(i)]

    \item All the modules $\sF^{\pm} H^1_{\ord}(Np^\infty)$ and $\sF^{\pm} H^1_{\ord, \Par}(Np^\infty)$ are projective of finite rank over $\Lambda_D$.

    \item There is a non-canonical isomorphism of $\TT_{Np^\infty}$-modules
    \[ \sF^- H^1_{\ord, \Par}(Np^\infty) \cong \Hom_{\Lambda_D}(\TT_{Np^\infty, \Par}, \Lambda_D).\]

    \item The quotient $\sF^- H^1_{\ord}(Np^\infty)$ is unramified as a $G_{\Qp}$-module, with arithmetic Frobenius acting via the Hecke operator $U_p' \in \TT_{Np^\infty}$.

    \item The group $G_{\Qp}$ acts on the submodule $\sF^+ H^1_{\ord}(Np^\infty)$ via the $\TT_{Np^\infty}$-valued character given by the product of the unramified character mapping arithmetic Frobenius to $\langle p^{-1} \rangle_N \cdot (U_p')^{-1}$ with the $(1 + \k)$-th power of the $p$-adic cyclotomic character.

    \item There is a canonical perfect pairing of $\Lambda_D$-modules
    \[  H^1_{\ord, \Par}(Np^\infty) \times  H^1_{\ord, \Par}(Np^\infty) \to \Lambda_D \]
    with respect to which the $\TT_{Np^\infty}$-action is selfadjoint, and the modules $\sF^{\pm}$ are orthogonal complements. Hence $\sF^+ H^1_{\ord, \Par}(Np^\infty) = \sF^+ H^1_{\ord}(Np^\infty)$ is free of rank 1 as a $\TT_{Np^\infty, \Par}$-module.
   \end{enumerate}
  \end{theorem}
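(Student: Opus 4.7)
The strategy is the classical one: establish each property at finite level $Np^r$ and then pass to the inverse limit, using the control theorem of Proposition \ref{prop:controlthm} to guarantee the limit is well-behaved.

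At each finite level $r \ge 1$, I would start from the connected--\'etale sequence of the ordinary part of the $p$-divisible group attached to the N\'eron model of $J_1(Np^r)$ over $\Zp$. This yields a short exact sequence of $G_{\Qp}$-modules
\[
0 \to \sF^+_r \to e'_{\ord}H^1_{\et}(X_1(Np^r)_{\overline{\QQ}}, \Zp(1)) \to \sF^-_r \to 0,
\]
where $\sF^-_r$ is unramified with arithmetic Frobenius acting as $U_p'$ (this being the defining property of ordinariness: the $U_p'$ operator acts as Frobenius on the \'etale quotient of the Tate module), and $\sF^+_r$ is the multiplicative part, on which inertia acts via the cyclotomic character and Frobenius via $\langle p^{-1}\rangle_N (U_p')^{-1}$ (this last formula coming from local duality together with the fact that the determinant of the Galois representation attached to a weight $k+2$ eigenform of nebentypus $\varepsilon$ is $\varepsilon \chi_{\mathrm{cyc}}^{k+1}$). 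For non-parabolic cohomology, the boundary contribution is an unramified $\Lambda_D$-module on which Frobenius acts as $U_p'$, so it adds only to $\sF^-$, giving the commuting diagram.

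Next I would pass to the inverse limit along the trace maps $(\pr_1)_*$. These maps are $\Lambda_D$-linear, commute with the Hecke operators $T_n'$, preserve the filtration, and are compatible with $W_Q$ for $Q \parallel N$. Property (i) on projectivity follows: each graded piece at finite level is free over $\Zp[(\ZZ/p^r\ZZ)^\times]$ (or at least projective after applying $e_{\ord}'$), and the control theorem in Proposition \ref{prop:controlthm}(4), applied to $\sF^\pm$ via the specialisation $\k = 0$, plus standard Nakayama arguments, upgrade this to $\Lambda_D$-projectivity. The unramified character description of $\sF^-$ in (iii), and the character on $\sF^+$ in (iv), then follow from the finite-level statements because unramifiedness passes to the limit and the exponent of the cyclotomic character is just the reading of the $\k$-variable on $\Lambda_D$.

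For (v) I would use the cup product pairing on $X_1(Np^r)$
\[
H^1_{\et}(X_1(Np^r)_{\overline{\QQ}}, \Zp(1)) \times H^1_{\et}(X_1(Np^r)_{\overline{\QQ}}, \Zp(1)) \to \Zp(1),
\]
precomposed with the Atkin--Lehner involution $W_{Np^r}$ on one factor to convert the $\Lambda_D$-sesquilinearity into $\Lambda_D$-linearity; the transition under $(\pr_1)_*$ along the tower is compatible up to a unit in $\Lambda_D$ after normalising, giving a perfect $\Lambda_D$-valued pairing in the limit under which Hecke operators are self-adjoint and the filtration pieces are isotropic. Orthogonality of $\sF^+$ and $\sF^-$ follows from local Tate duality applied pointwise, since the two lines are local duals of each other (up to the cyclotomic twist, which is absorbed into the Tate twist in the pairing). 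Finally (ii) and the freeness of $\sF^+$ over $\TT_{Np^\infty,\Par}$ in (v) are deduced from each other: duality gives $\sF^- \cong \Hom_{\Lambda_D}(\sF^+,\Lambda_D)$ as Hecke modules, and by Hida's $\Lambda$-adic Eichler--Shimura isomorphism the space of $\Lambda$-adic cusp forms (which is identified with $\sF^+$, essentially because $\sF^+$ is the sub whose specialisations at $\k=k$ are the holomorphic differentials) is free of rank one over $\TT_{Np^\infty,\Par}$, which transforms the duality statement into (ii).

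The main obstacle is the interaction between the normalisations: keeping the diamond operators at $p$, the cyclotomic twist ``$1$'' in $\Zp(1)$, the Atkin--Lehner twist used to make the pairing $\Lambda_D$-linear, and the character $\k$ of the universal family all consistent. In particular, verifying the precise formula in (iv) with the factor $\langle p^{-1}\rangle_N (U_p')^{-1}$ and the exponent $1+\k$ is the step where care is needed, as it requires reconciling the local and global computations of the determinant of the $\Lambda_D$-adic representation with the chosen normalisations.
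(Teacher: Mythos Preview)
Your sketch is a reasonable outline of how one would prove this from scratch, and is essentially the strategy of Ohta's original papers; but the paper itself does not attempt an independent proof. Its ``proof'' consists of two sentences: it identifies the filtration pieces with Ohta's modules $\mathfrak{A}_\infty^*$, $\mathfrak{B}_\infty^*$, $\widetilde{\mathfrak{B}}_\infty^*$ and cites Corollaries~1.3.8 and~2.3.6 of \cite{Ohta-ordinaryII}, and then computes the precise twist needed to pass from Ohta's conventions to the paper's. Namely, because the paper uses the $Y_1(N)$ classifying points of order $N$ (rather than embeddings of $\mu_N$) and takes $\Zp(1)$-coefficients, the paper's modules differ from Ohta's by the character $1+\k+\underline{\varepsilon}_N$, where $\underline{\varepsilon}_N$ sends $d \bmod N$ to $\langle d^{-1}\rangle_N$. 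This single twist is what produces the exact exponent $1+\k$ and the factor $\langle p^{-1}\rangle_N (U_p')^{-1}$ in (iv); so the ``main obstacle'' you flag is in fact the entire content of the paper's argument, and the paper resolves it by this bookkeeping rather than by redoing the local duality computation.

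One small slip in your sketch: you identify $\sF^+$ with the space of $\Lambda$-adic cusp forms and deduce freeness of rank one over $\TT_{Np^\infty,\Par}$ from that. In the paper's (and Ohta's) setup it is the other way round: it is $\sF^-$ that is isomorphic to $\Hom_{\Lambda_D}(\TT_{Np^\infty,\Par},\Lambda_D)$, i.e.\ to $\Lambda$-adic cusp forms via Hida duality (this is exactly (ii)), and then the perfect pairing in (v) dualises this to give that $\sF^+$ is free of rank one over $\TT_{Np^\infty,\Par}$. The logic you describe works, but with the roles of $\sF^+$ and $\sF^-$ swapped; note in particular that $\Hom_{\Lambda_D}(\TT,\Lambda_D)$ is not generally free of rank one over $\TT$, so one cannot argue directly that cusp forms are $\TT$-free.
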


  \begin{proof}
   This is equivalent to  Corollary 1.3.8 and Corollary 2.3.6 of \cite{Ohta-ordinaryII}; our modules $\sF^{+} H^1_{\ord}(Np^\infty)$, $\sF^{-} H^1_{\ord,\Par}(Np^\infty)$ and $\sF^{-} H^1_{\ord}(Np^\infty)$ are the modules $\mathfrak{A}_\infty^*$, $\mathfrak{B}_\infty^*$ and $\widetilde{\mathfrak{B}}_\infty^*$ in Ohta's notation.

   However, the Galois actions above are somewhat different, as we use a different model of $Y_1(N)$ from Ohta: Ohta uses the notation $Y_1(N)$ for the modular curve classifying elliptic curves with an embedding of $\mu_N$, rather than a point of order $N$. We also take cohomology with $\Zp(1)$ coefficients, rather than $\Zp$. Thus our modules coincide (as $\Lambda_D[G_{\QQ}]$-modules) with Ohta's modules twisted by the character $1 + \k + \underline{\varepsilon}_N$, where $\underline{\varepsilon}_N$ is the composite of the mod $N$ cyclotomic character with the map $d \mapsto \langle d^{-1} \rangle_N \in \TT_{Np^\infty}$. This gives the statements above. (Compare \S 1.7.16 of \cite{FukayaKato-sharifi-conj}.)
  \end{proof}

  The Hecke algebra $\mathbf{T}_{Np^\infty, \Par}$ is a finite projective $\Lambda_D$-algebra, and is thus isomorphic to the direct product of its localisations at its finitely many maximal ideals. We refer to these maximal ideals as \emph{Hida families}.

  \begin{remark}
   There are multiple conventions in the literature as to what exactly is meant by ``Hida family''. Our conventions follow those of \cite{emertonpollackweston06}, for example; but other authors use the term ``family'' for what we call a ``branch'' (see below).
  \end{remark}

  \begin{definition}
   If $\bff$ is a Hida family, we define
   \[ M(\bff)^* = H^1_{\ord}(Np^\infty)_{\bff},\]
   and we write $\Lambda_{\bff}$ for the corresponding localisation of the Hecke algebra $\mathbf{T}_{Np^\infty}$, which is a local $\Lambda_D$-algebra, finite and projective as a $\Lambda_D$-module. We write $M(\bff)^*_{\Par}$ for the image of $H^1_{\ord, \Par}(Np^\infty)$ in $M(\bff)^*$, and $\Lambda_{\bff, \Par}$ for the corresponding quotient of $\Lambda_{\bff}$.
  \end{definition}

  \begin{definition}
   \label{def:nonEis}
   We say that $\bff$ is \emph{non-Eisenstein modulo $p$} if the residual Galois representation $\rho_{\bff}: G_{\QQ, S} \to \GL_2(\FF)$ associated to $\bff$ (where $\FF$ is the residue field of $\Lambda_{\bff}$) is irreducible. If $\bff$ is non-Eisenstein, then $M(\bff)^*_{\Par}= M(\bff)^*$.

   We say $\bff$ is \emph{$p$-distinguished} if the semisimplification of $\overline{\rho}_{\bff} |_{G_{\Qp}}$ is the direct sum of two \emph{distinct} characters.
  \end{definition}

  \begin{remark}
   Note that $\bff$ is automatically $p$-distinguished if the weight of $\bff$ mod $p-1$ (i.e.\ the mod $p-1$ congruence class of the weight of any classical specialisation of $\bff$ of level prime to $p$) is not congruent to 1, by the same argument as in \cite[Proposition 4.3.6]{LLZ2}.
  \end{remark}

  \begin{theorem}[Wiles]
   \label{thm:gorenstein}
   If $\bff$ is non-Eisenstein mod $p$, and $p$-distinguished, then $\Lambda_{\bff}$ is a Gorenstein ring, and the modules $M(\bff)^*$ and $\sF^\pm M(\bff)^*$ are free over $\Lambda_{\bff}$.
  \end{theorem}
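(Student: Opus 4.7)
The plan is to reduce everything to Wiles' Gorenstein property for the ordinary Hida Hecke algebra, after which all the freeness claims follow formally from Theorem \ref{thm:ordinaryfiltration}. First I would use the non-Eisenstein hypothesis to reduce to the parabolic setting: the quotient $H^1_{\ord}(Np^\infty)/H^1_{\ord,\Par}(Np^\infty)$ comes from boundary (Eisenstein) contributions, so its support consists of Eisenstein maximal ideals and it vanishes after localising at the non-Eisenstein ideal $\bff$. Consequently $M(\bff)^* = M(\bff)^*_{\Par}$ and $\Lambda_{\bff} = \Lambda_{\bff, \Par}$, so I may work entirely with the parabolic objects.

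The hard step is to prove that $\Lambda_{\bff}$ is a Gorenstein ring; this is Wiles' theorem, and I would cite it rather than reprove it. The argument proceeds by a delicate study of the structure of the Hida Hecke algebra, showing that the completed local ring $\Lambda_{\bff}$ is a complete intersection (hence Gorenstein) either via a Taylor--Wiles patching argument, or directly via Mazur's multiplicity-one theorem combined with Ohta's duality. The $p$-distinguished hypothesis enters to ensure that the two Jordan--H\"older constituents of $\overline{\rho}_{\bff}|_{G_{\Qp}}$ are distinguishable, which is needed to isolate the ordinary line canonically and extract the required structural information. This is by some margin the most substantial input.

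Granted Gorenstein-ness, the remaining freeness statements are formal. By Theorem \ref{thm:ordinaryfiltration}(v), $\sF^+ H^1_{\ord,\Par}(Np^\infty)$ is free of rank one over $\TT_{Np^\infty,\Par}$; localising at $\bff$ yields freeness of $\sF^+ M(\bff)^*$ of rank one over $\Lambda_{\bff}$. For the quotient, Theorem \ref{thm:ordinaryfiltration}(ii) furnishes, after localising at $\bff$, an isomorphism
\[ \sF^- M(\bff)^* \;\cong\; \Hom_{\Lambda_D}(\Lambda_{\bff},\, \Lambda_D) \]
of $\Lambda_{\bff}$-modules. Since $\Lambda_D$ is regular and $\Lambda_{\bff}$ is finite flat over $\Lambda_D$, the right-hand side is the dualising module of $\Lambda_{\bff}$ relative to $\Lambda_D$; the Gorenstein property guarantees that this dualising module is an invertible $\Lambda_{\bff}$-module, and hence free of rank one since $\Lambda_{\bff}$ is local.

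Finally, the short exact sequence
\[ 0 \to \sF^+ M(\bff)^* \to M(\bff)^* \to \sF^- M(\bff)^* \to 0 \]
splits as a sequence of $\Lambda_{\bff}$-modules (though of course not as a sequence of Galois modules), because the quotient is free; this gives freeness of $M(\bff)^*$ of rank two over $\Lambda_{\bff}$. The only real obstacle in this program is Wiles' Gorenstein theorem itself; everything else is linear algebra over $\Lambda_{\bff}$ once that input is available.
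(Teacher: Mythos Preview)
Your proposal is correct and follows essentially the same route as the paper. The paper's proof is a brief citation: it invokes Wiles' results on freeness of Hecke modules at level $Np$ (from \cite{Wiles-FLT}), then appeals to the control theorem to pass to the $\Lambda$-adic setting, referring to \cite[Theorem 4.3.4]{LLZ2} and \cite[Proposition 3.3.1]{emertonpollackweston06} for the details. Your version unpacks this a little more explicitly: you take the Gorenstein property of $\Lambda_{\bff}$ as the black-box input from Wiles, and then spell out how Ohta's structural results (Theorem \ref{thm:ordinaryfiltration}(ii) and (v)) combine with Gorenstein-ness to give freeness of $\sF^\pm M(\bff)^*$ and hence of $M(\bff)^*$. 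This is exactly the mechanism behind the references the paper cites, so there is no substantive difference in strategy---you have simply written out the linear-algebra deduction that the paper leaves to the cited literature.
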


  \begin{proof}
   This follows, using the control theorem, from Wiles' results on the freeness of Hecke modules at level $Np$ \cite{Wiles-FLT}; cf.\ \cite[Theorem 4.3.4]{LLZ2} or \cite[Proposition 3.3.1]{emertonpollackweston06}.
  \end{proof}

 \subsection{Specialisations of Hida families}
  \label{sect:hidaspec}

  We define an \emph{arithmetic prime} of $\Lambda_D$ or $\Lambda_{\bff}$ to be a prime ideal of height 1 lying over the ideal $I_{k, r}$, for some $k \ge 0$ and $r \ge 1$. From the control theorem, we see that each arithmetic prime of $\Lambda_{\bff}$ above $I_{k, r}$ corresponds to an eigenform $f$ of weight $k + 2 \ge 2$ and level $Np^r$ (together with a choice of prime $\frP \mid p$ of the coefficient field $L = \QQ(f)$ at which $f$ is ordinary). In this setting we say that $f$ is a \emph{specialiation} of the family $\bff$.

  For each specialisation $f$ of $\bff$, we have a specialisation-at-$f$ isomorphism
  \[ \mathrm{sp}_f: M(\bff)^* \otimes_{\Lambda_{\bff}} \cO_{L, \frP} \rTo^\cong M_{\cO_{L, \frP}}(f)^*, \]
  which fits into a commutative diagram
  \begin{diagram}[small]
   H^1_{\ord}(N p^\infty)  & \rTo & M(\bff)^* \\
   \dTo<{\mom^k} & & \dTo>{\mathrm{sp}_f} \\
   H^1_\et\left(Y_1(Np^r)_{\overline\QQ}, \TSym^k \sH_{\Zp}(1)\right) & \rTo & M_{\cO_{L, \frP}}(f)^*.
  \end{diagram}

  We shall be primarily interested in the case where $r = 1$ and $f$ is the ordinary $p$-stabilisation of an ordinary newform $f_0$ of level $N$, with the $U_p$-eigenvalue of $f$ being the unit root $\alpha$ of the Hecke polynomial of $f_0$ at $p$. In this case, we may identify $M_{\Lp}(f)^*$ with $M_{\Lp}(f_0)^*$ via the map
  \[
   (\Pr^\alpha)_*:
   \cO_{L, \frP} \otimes_{\Zp} H^1\left(Y_1(Np)_{\overline\QQ}, \TSym^k \sH_{\Zp}(1)\right)
   \to \cO_{L, \frP} \otimes_{\Zp} H^1\left(Y_1(N)_{\overline\QQ}, \TSym^k \sH_{\Zp}(1)\right)
  \]
  given by $(\Pr^\alpha)_* \coloneqq (\pr_1)_* - \frac{\beta}{p^{k + 1}} (\pr_2)_*$, where $\beta = a_p(f) - \alpha$ is the non-unit root of the Hecke polynomial. This map clearly sends the integral lattice $M_{\cO_{L, \frP}}(f)^*$ to a sublattice of finite index in $M_{\cO_{L, \frP}}(f_0)^*$.

  \begin{proposition}
   \label{prop:pstablattice}
   Suppose at least one of the following conditions holds:
   \begin{itemize}
    \item $k > 0$;
    \item the family $\bff$ is non-Eisenstein modulo $p$;
    \item $\frac{\beta}{p} \ne \alpha \bmod \frP$.
   \end{itemize}
   Then the map $(\Pr^\alpha)_*$ is an isomorphism of $\cO_{L, \frP}$-modules
   \[ M_{\cO_{L, \frP}}(f)^* \rTo^\cong M_{\cO_{L, \frP}}(f_0)^*. \]
  \end{proposition}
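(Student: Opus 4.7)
The basic strategy is to reduce the question about $(\Pr^\alpha)_*$ to one about the simpler map $(\pr_1)_*$ using the relation $(\pr_2)_* \circ U_p' = p^{k+1}(\pr_1)_*$ from Proposition \ref{prop:pridentity1}. Combining this relation with the fact that $U_p'$ acts as $\alpha$ on $M_{\cO_{L,\frP}}(f)^*$ yields the integral identity $(\pr_2)_* = (p^{k+1}/\alpha)(\pr_1)_*$ on this quotient (integrality is clear since $\alpha$ is a $\frP$-adic unit). Substituting into the definition of $(\Pr^\alpha)_*$ gives
\[ (\Pr^\alpha)_*\big|_{M_{\cO_{L,\frP}}(f)^*} = (1 - \beta/\alpha)(\pr_1)_*, \]
with $1 - \beta/\alpha \in \cO_{L,\frP}^\times$ since $v_\frP(\beta) = k+1 \geq 1$ and $\alpha$ is a unit. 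Thus $(\Pr^\alpha)_*$ and $(\pr_1)_*$ have identical images in $M_{\cO_{L,\frP}}(f_0)^*$, and the proposition reduces to showing that $(\pr_1)_*: M_{\cO_{L,\frP}}(f)^* \to M_{\cO_{L,\frP}}(f_0)^*$ is an isomorphism of integral lattices.

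Injectivity is automatic: the map is injective after inverting $p$, and $M_{\cO_{L,\frP}}(f)^*$ is torsion-free. So the issue is surjectivity, and the cokernel $C$, being a finite $\cO_{L,\frP}$-module, vanishes iff it vanishes modulo $\frP$. Under hypothesis (b), Theorem \ref{thm:gorenstein} makes $M_{\cO_{L,\frP}}(f)^*$ free of rank $2$ over $\cO_{L,\frP}$ with irreducible residual representation $\bar\rho_{f_0}$; an analogous level-$N$ freeness (by the classical Mazur--Ribet theorem when $k=0$ and its higher-weight generalisation when $k>0$, both requiring residual irreducibility) makes $M_{\cO_{L,\frP}}(f_0)^*$ free of rank $2$ as well. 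Residual irreducibility together with Schur's lemma forces the image of $(\pr_1)_*$ to be of the form $\frP^n M_{\cO_{L,\frP}}(f_0)^*$ for some $n \geq 0$; one rules out $n > 0$ by checking that the induced map on the unramified $\sF^-$-quotients of Ohta's filtration (Theorem \ref{thm:ordinaryfiltration}) is nonzero modulo $\frP$, which follows because both quotients are canonically rank-$1$ $\cO_{L,\frP}$-modules with Frobenius eigenvalue $\alpha$ and the comparison map is a specialisation of Ohta's natural isomorphism. Under hypothesis (a) or (c), one has $\beta/p \not\equiv \alpha \bmod \frP$ (automatic for $k \geq 1$, since $\beta/p$ has positive valuation while $\alpha$ is a unit). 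Here residual irreducibility is not available, and one instead constructs an integral section directly: the $\alpha$-projector $(U_p' - \beta)/(\alpha - \beta)$ on level-$Np$ cohomology is always integral (since $\alpha - \beta \equiv \alpha \bmod \frP$ is a unit for any $k \geq 0$), and composing with a suitable integral combination of the pullbacks $(\pr_1)^*$ and $(\pr_2)^*$ from level $N$ yields a $G_{\QQ,S}$-equivariant inverse; the hypothesis $\beta/p \not\equiv \alpha$ is precisely what guarantees that this combination descends integrally to $M_{\cO_{L,\frP}}(f)^*$.

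The main obstacle lies in the non-Eisenstein case: one must verify that the induced map on $\sF^-$-quotients is surjective modulo $\frP$, which requires carefully matching the specialisation of Ohta's $\Lambda$-adic construction at the weight-$(k+2)$ arithmetic prime of $f$ with the integral structure coming directly from $f_0$ at level $N$. This comparison is controlled by the perfect control theorem (Theorem \ref{prop:controlthm}(4)) together with the fact that under residual irreducibility the ordinary filtration is preserved by all natural maps, but disentangling the respective roles of $U_p'$ at level $Np$ and $T_p'$ at level $N$ in the residual setting is delicate. Once this compatibility is in place, the Schur's-lemma argument is essentially formal, and cases (a) and (c) then follow by the more elementary projector construction sketched above.
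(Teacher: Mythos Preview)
The opening reduction has a real gap: the individual degeneracy pushforwards $(\pr_1)_*$ and $(\pr_2)_*$ do \emph{not} descend to maps $M_{\cO_{L,\frP}}(f)^* \to M_{\cO_{L,\frP}}(f_0)^*$. The quotient $M_{\Lp}(f)^*$ is obtained by imposing $U_p' = \alpha$, and Proposition~\ref{prop:pridentity1} gives $(\pr_1)_*(U_p' - \alpha) = (T_p' - \alpha)(\pr_1)_* - \stbt{p}{0}{0}{p^{-1}}^* (\pr_2)_*$, whose image in $M_{\Lp}(f_0)^*$ is generically nonzero (it does not kill the $\beta$-stabilisation, which has the same away-from-$p$ Hecke eigenvalues as $f$). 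Only the specific combination $(\Pr^\alpha)_*$ annihilates this obstruction. Your identity $(\Pr^\alpha)_* = (1 - \beta/\alpha)(\pr_1)_*$ therefore has no meaning on the eigenspace quotient, and the subsequent strategy of proving ``$(\pr_1)_*$'' surjective collapses.

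The paper's argument sidesteps this entirely. For cases (a) and (c) it works with $(\Pr^\alpha)_*$ directly on the ambient cohomology: precomposing with $(\pr_2)^* : M_{\cO_{L,\frP}}(f_0)^* \to H^1(Y_1(Np)) \otimes \cO_{L,\frP}$ and using $(\pr_1)_* (\pr_2)^* = T_p'$ and $(\pr_2)_* (\pr_2)^* = p^k(p+1)$, one finds that the composite endomorphism of $M_{\cO_{L,\frP}}(f_0)^*$ is multiplication by $\alpha - \beta/p$, a unit under either hypothesis. Since $\pr_{f_0} \circ (\Pr^\alpha)_*$ on the ambient cohomology factors through $M_{\cO_{L,\frP}}(f)^*$, this already yields surjectivity of the induced map; injectivity is automatic by rank. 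Note $(\pr_2)^*$ need not land in the $\alpha$-eigenspace for this to work. Case (b) with $k = 0$ is not re-proved but cited from \cite[Proposition 4.3.6]{LLZ2} (Ihara's lemma); your Schur/Ohta sketch is a plausible alternative route but requires integral freeness of the level-$N$ lattice under residual irreducibility, an input not supplied in the present paper.
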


  \begin{proof}
   This was proved in \cite[Proposition 4.3.6]{LLZ2} assuming $k = 0$ and $\bff$ non-Eisenstein mod $p$ (using Ihara's lemma). We give an alternative argument that covers the remaining cases.

   We consider the map
   \[ M_{\cO_{L, \frP}}(f_0)^* \to M_{\cO_{L, \frP}}(f_0)^*\]
   given by the composition of $(\Pr^\alpha)_*$ with the degeneracy map $\pr_2^*$. The composition $(\pr_1)_* \circ (\pr_2)^*$ is the Hecke operator $T_p'$, and the composition $(\pr_2)_* \circ (\pr_2)^*$ acts as multiplication by $p^k(p + 1)$. So the composition of these two maps is given by multiplication by $\alpha - \frac{\beta}{p}$. Our hypotheses imply that this is a $p$-adic unit. Hence this composition is an isomorphism, so $(\Pr^\alpha)_*$ must be surjective.
  \end{proof}

 \subsection{Lambda-adic modular forms}

  Recall from \S \ref{sect:modformnotation} above that, for any $N \ge 1$, we write $M_{k + 2}(N, \Qp)$ for the space of modular forms with $q$-expansions in $\Qp[[q]]$.

  Let $M_{k+2}'(N, \Qp)$ denote the modular forms of weight $k+2$ which are defined over $\Qp$ as classes in the de Rham cohomology of $Y_1(N)$, so that we have
  \[ M_{k + 2}'(N, \Qp) = \Fil^0 H^1_{\dR}(Y_1(N)_\Qp, \TSym^k \sH_{\dR}).\]

  Then there is a canonical isomorphism
  \[ \QQ(\mu_N) \otimes_{\QQ} M_{k + 2}'(N, \Qp) = \QQ(\mu_N) \otimes_{\QQ} M_{k + 2}(N, \Qp),\]
  as we saw in  \S \ref{sect:modformnotation}; but this does not generally descend to $\QQ$. Rather, the spaces $M_{k + 2}(N, \Qp)$ and $M_{k + 2}'(N, \Qp)$ are two distinct $\Qp$-subspaces of $\QQ(\mu_N) \otimes_{\QQ} M_{k + 2}(N, \Qp)$, which are interchanged by the Atkin--Lehner operator $W_N$.

  Now suppose $N$ is coprime to $p$, and $r \ge 1$. For any $k \ge -1$, we define (following \cite[Definition 2.2.2]{Ohta-ordinary})
  \begin{align*}
   M_{k+2}'(Np^r, \Zp) &= \left\{ f \in M_{k+2}'(Np^r, \Qp) : a_n\left(W_{Np^r}^{-1} f\right) \in \Zp\ \forall n \ge 0\right\}, \\
   \mathfrak{M}_{k+2}'(N, \Zp) &= \varprojlim_{r \ge 1}  M_{k+2}'(Np^r, \Zp).
  \end{align*}
  Here the inverse limit is with respect to the pushforward maps $(\pr_1)_*$.

  \begin{remark}
   Ohta's normalisations are slightly different from ours, but Ohta's operator $\tau_r$ is our $W_{Np^r}^{-1}$ up to signs, so the above definition is equivalent to Ohta's.
  \end{remark}

  \begin{lemma}[Ohta]
   \label{lemma:lambdaadicmf}
   Let $k \ge 0$ and let $(f_r)_{r \ge 1} \in \mathfrak{M}_{k+2}'(N, \Zp)$. Then there is a unique power series $\cF \in \Lambda_D[[q]]$ whose specialisation at $\k = k + \varepsilon$, for every finite-order character $\varepsilon$, is equal to the modular form
   \[ \sum_{\alpha \in (\ZZ / p^r \ZZ)^\times} \varepsilon(\alpha)^{-1} \cdot \left(\langle \alpha \rangle_p \circ U_p^r \circ W_{Np^r}^{-1}\right)(f_r) \]
   for any $r \ge 1$ such that $\varepsilon$ is trivial on $1 + p^r \Zp$. Moreover, the image of the resulting map
   \[ \cW_{k} : e'_{\ord} \mathfrak{M}_{k + 2}'(N, \Zp) \into \Lambda_D[[q]] \]
  is independent of $k$.
  \end{lemma}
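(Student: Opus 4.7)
The plan is to follow Ohta's method from \cite{Ohta-ordinary}, building $\cF$ coefficient by coefficient via $\Zp$-valued measures on $\ZZ_p^\times$ and the canonical identification $\Lambda_D \cong \mathrm{Meas}(\ZZ_p^\times, \Zp)$.

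The key preliminary step is to translate the inverse-limit compatibility $(\pr_1)_* f_{r+1} = f_r$ into a $U_p$-trace compatibility on the integrally normalised forms $g_r \coloneqq W_{Np^r}^{-1}(f_r) \in \Zp[[q]]$. This amounts to establishing an identity of the form
\[ W_{Np^r}^{-1} \circ (\pr_1)_* = U_p \circ W_{Np^{r+1}}^{-1} + (\text{error terms killed by } e_{\ord}'), \]
which I would prove by a matrix computation in $\GL_2^+(\QQ)$ modulo the relevant congruence subgroups (alternatively, by direct calculation on $q$-expansions using the complex-analytic description of the Atkin--Lehner operators recalled in \S\ref{sect:modformnotation}). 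Applying $e_{\ord}'$ kills the correction terms and yields $e_{\ord}' U_p(g_{r+1}) = e_{\ord}' g_r$.

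Next, for each $n \ge 0$ I would define a $\Zp$-valued measure $\mu_n$ on $\ZZ_p^\times$ by
\[ \mu_n(\alpha + p^r \Zp) \coloneqq \alpha^{-k} \cdot a_n\!\left( (\langle \alpha \rangle_p \circ U_p^r \circ W_{Np^r}^{-1})(f_r) \right), \]
for $\alpha \in \ZZ_p^\times$ representing a class in $(\ZZ/p^r\ZZ)^\times$; the factor $\alpha^{-k}$ absorbs the weight. Additivity of $\mu_n$ as $r$ varies---i.e.\ that the values at radius $p^{-(r+1)}$ sum to the value at radius $p^{-r}$---reduces, via the action of the diamond operators, to the $U_p$-compatibility just established. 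Setting $a_n(\cF)$ equal to the element of $\Lambda_D$ corresponding to $\mu_n$ and $\cF \coloneqq \sum_n a_n(\cF)\, q^n$ gives the sought-after power series in $\Lambda_D[[q]]$. Its specialisation at $\k = k + \varepsilon$ is computed by integrating $u \mapsto u^k \varepsilon(u)$ against each $\mu_n$, which directly recovers the sum in the statement. Uniqueness is automatic, since a measure is determined by its values on the compact-open sets $\alpha + p^r \Zp$.

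For the final assertion, that $\mathrm{im}(\cW_k)$ is independent of $k$: once the factor $\alpha^{-k}$ has been absorbed into the $\Lambda_D$-structure, the image of $\cW_k$ is identified with the $\Lambda_D$-submodule of ``ordinary $\Lambda$-adic $q$-expansions'' attached to the universal eigenform of the ordinary Hecke algebra $\TT_{Np^\infty, \Par}$ (plus the Eisenstein contributions). This description makes no reference to $k$; equivalently, one can use Hida's vertical control theorem (Proposition~\ref{prop:controlthm}) to construct explicit $\Lambda_D$-linear isomorphisms $\mathrm{im}(\cW_k) \cong \mathrm{im}(\cW_{k'})$ by matching specialisations at a Zariski-dense set of weights. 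The main obstacle is the first step: verifying that the correction terms in the relation between $W_{Np^r}^{-1} \circ (\pr_1)_*$ and $U_p \circ W_{Np^{r+1}}^{-1}$ really are annihilated by $e_{\ord}'$ is a delicate computation, and it is here that ordinarity plays an essential role.
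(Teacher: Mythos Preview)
The paper does not give a proof of this lemma at all: it simply cites \cite[Theorem 2.2.3, Theorem 2.4.5]{Ohta-ordinary}. Your proposal is a reasonable sketch of Ohta's actual argument, and in that sense is consistent with (and more detailed than) the paper's treatment.

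One caveat: the precise form of the compatibility you write as $W_{Np^r}^{-1} \circ (\pr_1)_* = U_p \circ W_{Np^{r+1}}^{-1} + (\text{error})$ should be checked carefully. The clean identities in this paper (e.g.\ $W_{Np} \circ \pr_1^* = \pr_2^* \circ W_N$, used in the proof of Proposition~\ref{prop:etaomegainterp}) relate $W$ to the \emph{pair} of degeneracy maps $\pr_1, \pr_2$, and it is the combination with $(\pr_2)_* \circ U_p' = p^{k+1}(\pr_1)_*$ (Proposition~\ref{prop:pridentity1}) that makes the ``error'' vanish on the ordinary part. So your instinct that ordinarity is doing the work is correct, but the error term is not a mysterious remainder: it is a specific $\pr_2$-contribution whose $U_p$-eigenvalue has positive valuation. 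If you pin this down, your sketch becomes a genuine proof; otherwise it is safest to do as the paper does and cite Ohta directly.
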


  \begin{proof} See \cite[Theorem 2.2.3, Theorem 2.4.5]{Ohta-ordinary}. \end{proof}

  The image of $\cW_{k}$ is called the \emph{module of $\Lambda$-adic modular forms} (of tame level $N$), and we shall denote it by $M^{\ord}_{\k + 2}(N, \Lambda_D)$. (It can be interpreted as the space of ordinary Katz $p$-adic modular forms with coefficients in $\Lambda_D$ and weight $\k + 2$, where $\k$ is the universal character.) For each arithmetic prime ideal $\nu$ of $\Lambda_D$, with $\ZZ_p^\times$ acting on $\cO = \Lambda_D / \nu$ via a character $z \mapsto z^k \omega(z)$ with $k \ge 0$ and $\omega$ of finite order, the natural map $\Lambda_D[[q]] \to \cO[[q]]$ induces an isomorphism
  \[ M_{\k + 2}^{\ord}(N, \Lambda_D) \otimes_{\Lambda_D} \cO \cong e_{\ord} M_{k + 2}(Np^r, \cO)[\omega] \]
  where $(\dots)[\omega]$ indicates the $\omega$-eigenspace for the diamond operators. (Here $r$ is the smallest integer $\ge 1$ ssch that $\omega$ is trivial on $1 + p^r \Zp$.) Similar statements hold \emph{mutatis mutandis} for the space $S_{\k + 2}^{\ord}(N, \Lambda_D)$ of $\Lambda$-adic cusp forms.

 \subsection{Branches}

  For each Hida family $\bff$, the algebra $\Lambda_{\bff}$ has finitely many minimal primes, and we call these the \emph{branches} of the Hida family $\bff$. They biject with the simple direct summands of the Artinian ring $\Lambda_{\bff} \otimes_{\Lambda_D} \Frac \Lambda_D$. If $\bfa$ is a branch of $\bff$, then $\Lambda_\bff / \bfa$ is an integral domain, and its field of fractions is a finite extension of $\Frac \Zp[[1 + p \Zp]]$. We let $\Lambda_{\bfa}$ be the normalisation of $\Lambda_\bff / \bfa$ (the integral closure of $\Lambda_\bff / \bfa$ in its field of fractions), which is a normal integral domain, finite and projective as a module over $\Lambda_D$ \cite[Lemma 3.1]{Hida-RS-II}.

  \begin{definition}
   We say the branch $\bfa$ is \emph{cuspidal} if the natural map $M(\bff)^*_{\Par} \to M(\bff)^*$ becomes an isomorphism after tensoring with $\Lambda_{\bfa}$.
  \end{definition}

  A branch $\bfa$ is cuspidal if one, or equivalently every, arithmetic prime of $\Lambda_{\bff}$ above $\bfa$ corresponds to a cuspidal modular form. If the family $\bff$ is non-Eisenstein mod $p$, then every branch of $\mathbf{f}$ is cuspidal (and the above map is even an isomorphism with $\Lambda_{\bff}$-coefficients). Associated to each cuspidal branch $\bfa$ we have a $\Lambda$-adic eigenform
  \[ \mathcal{F} = \sum_{n \ge 1} T_n q^n \in S_{\k + 2}^{\ord}(N, \Lambda_D) \otimes_{\Lambda_D} \Lambda_\bfa. \]

  We say $\bfa$ is \emph{new} if it is cuspidal and one, or equivalently every, arithmetic prime of $\Lambda_{\bff}$ above $\bfa$ corresponds to an eigenform which is new away from $p$.

 \subsection{Specialisations in weight 1}

  If $\mathcal{F} \in S^{\ord}_{\k + 2}(N, \Lambda_D)$, then it is not necessarily the case that the specialization of $\mathcal{F}$ at $\k = -1$ is a weight 1 modular form. Nonetheless, one has the following fact:

  \begin{theorem}[{\cite[Theorem 3]{Wiles-ordinary}}]
   Let $g_0 \in S_1(N, \varepsilon)$ be a normalised newform of level prime to $p$, and let $g \in S_1^{\ord}(Np, \varepsilon)$ be a $p$-stabilisation of $g_0$. Then there is a $\Lambda$-adic eigenform $\mathcal{G}$ (with coefficients in some finite integral extension of $\Lambda_D$) whose specialisation in weight 1 is $g$.
  \end{theorem}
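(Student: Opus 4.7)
The plan is to establish existence of $\mathcal{G}$ by a Galois-theoretic deformation argument. First, I would attach to $g_0$ its Deligne--Serre Artin representation $\rho_0: G_{\QQ,S} \to \GL_2(\cO)$, where $\cO$ is the ring of integers of a sufficiently large finite extension of $\Qp$; the choice of $p$-stabilisation $g$ singles out a $G_{\Qp}$-stable quotient line of $\rho_0|_{G_{\Qp}}$ on which Frobenius acts by the $U_p$-eigenvalue $\alpha$ of $g$. Let $\bar\rho$ denote the reduction of $\rho_0$ modulo the maximal ideal of $\cO$, assumed absolutely irreducible (the dihedral/CM case is handled separately using theta series, and a residually reducible case can be treated by pseudodeformations).

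Next, I would set up the universal ordinary deformation ring $R^{\ord}$ parametrising lifts of $\bar\rho$ to complete local Noetherian $\cO$-algebras which are ordinary at $p$ (preserving a distinguished unramified quotient line specialising to the one cut out by $\alpha$) and minimally ramified at primes dividing $N$. This ring carries a natural $\Lambda_D$-algebra structure coming from the inertia character on the sub-line at $p$. By Hida's $R = T$-type identification, there is a surjection $R^{\ord} \twoheadrightarrow \mathbf{T}^{\ord}_{Np^\infty, \mathfrak{m}_{\bar\rho}}$ onto the localisation of the ordinary Hecke algebra at the maximal ideal cut out by $\bar\rho$. To produce $\mathcal{G}$, it therefore suffices to exhibit a minimal prime $\mathfrak{q}$ of $\mathbf{T}^{\ord}_{Np^\infty, \mathfrak{m}_{\bar\rho}}$ contained in the height-one prime $\mathfrak{p}_g$ cut out by the Hecke eigensystem of $g$, such that the domain $\mathbf{T}^{\ord}_{Np^\infty, \mathfrak{m}_{\bar\rho}}/\mathfrak{q}$ is finite over $\Lambda_D$; one then takes $\Lambda_\bfa$ to be its normalisation and appeals to Lemma~\ref{lemma:lambdaadicmf} to realise the associated $q$-expansion as a genuine element $\mathcal{G}$ of $S^{\ord}_{\k+2}(N,\Lambda_D) \otimes_{\Lambda_D}\Lambda_\bfa$.

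The main obstacle is to verify that $\mathfrak{p}_g$ has height exactly $1$, for which the crucial input is an upper bound on $\dim R^{\ord}$ via Galois cohomology. This bound is a tangent space computation: using the Euler characteristic formula for $H^\ast(G_{\QQ,S}, \mathrm{ad}^0\rho_0)$, the ordinarity condition at $p$ contributes a codimension cut matching the rank of the flat deformation subfunctor, and the vanishing of $H^0(G_{\QQ,S}, \mathrm{ad}^0 \rho_0)$ (forced by absolute irreducibility of $\rho_0$) ensures the relative tangent space over $\Lambda_D$ at the weight-$1$ point is one-dimensional, so $R^{\ord}$ is finite over $\Lambda_D$. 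Combined with the control theorem of Proposition~\ref{prop:controlthm}, which provides infinitely many classical specialisations in weights $\geq 2$ on the same local component and thus shows $\mathfrak{p}_g$ is not minimal, this pins down a unique branch $\bfa$ through $g$. By construction, $\k = -1$ lies above $\mathfrak{p}_g$ in $\Spec \Lambda_\bfa$, and the resulting $\mathcal{G}$ specialises there to $g$.
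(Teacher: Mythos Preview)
The paper gives no proof here; it simply cites \cite[Theorem~3]{Wiles-ordinary}. Your deformation-theoretic approach is quite different from Wiles's 1988 argument, and as written it contains a genuine gap. You invoke a surjection $R^{\ord} \twoheadrightarrow \mathbf{T}^{\ord}_{Np^\infty,\mathfrak{m}_{\bar\rho}}$ and then immediately speak of ``the height-one prime $\mathfrak{p}_g$ cut out by the Hecke eigensystem of $g$'' as a prime of the Hecke algebra. But the existence of such a $\mathfrak{p}_g$ inside $\Spec \mathbf{T}^{\ord}$ is exactly the content of the theorem: the Artin representation $\rho_0$ gives you, via the universal property, a point of $R^{\ord}$, not of $\mathbf{T}^{\ord}$, and a surjection $R^{\ord} \to \mathbf{T}^{\ord}$ transports points the wrong way. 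To push the $\rho_0$-point down to the Hecke side you would need the map to be an \emph{isomorphism}, i.e.\ a modularity lifting theorem of Taylor--Wiles type---far stronger than the 1988 result being cited, unavailable at the time, and still not known in the blanket generality your sketch assumes. There is also a prior gap: even the existence of the maximal ideal $\mathfrak{m}_{\bar\rho}$ in $\mathbf{T}^{\ord}$ presupposes that $\bar\rho$ arises from some ordinary eigenform of weight~$\ge 2$, which itself requires an argument. Finally, your tangent-space bound is more delicate at weight~1 than you indicate, since $\rho_0$ has Hodge--Tate weights $\{0,0\}$ and the ordinary local condition degenerates there.

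Wiles's actual argument bypasses deformation rings entirely. Having constructed the $\Lambda$-adic pseudo-representation with values in $\mathbf{T}^{\ord}$, he produces the characteristic-zero point of $\Spec \mathbf{T}^{\ord}$ over weight~1 by a direct congruence argument: multiplying $g$ by powers of the Hasse invariant $E_{p-1}\equiv 1\pmod p$ gives ordinary forms of higher weight congruent to $g$, and the Deligne--Serre lifting lemma together with Hida's control theorem then yields a compatible system of eigenforms in characteristic zero whose limit is the desired $\mathcal{G}$. This route is substantially more elementary than an $R=T$ argument and requires no modularity-lifting input.
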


  We shall assume in the applications below that the roots of the Hecke polynomial $X^2 - a_p(g_0) X + \varepsilon_f(p)$ are distinct, and remain distinct modulo $\frP$. In this case, the form $g_0$ has two distinct ordinary $p$-stabilisations $g_\alpha$, $g_\beta$, and they belong to different Hida families, both of which are $p$-distinguished. Let $\bfg$ be the Hida family corresponding to $g_\alpha$.

  If $\bfg$ is also non-Eisenstein modulo $p$, then we conclude from Theorem \ref{thm:gorenstein} that $M(\bfg)^*$ is free of rank 2 over $\Lambda_{\bfg}$. Hence the space
  \[ M_{\cO_{L, \frP}}(g_\alpha)^* \coloneqq M(\bfg)^* \otimes_{\Lambda_{\bfg}} \cO_{L, \frP}\]
  is free of rank 2 over $\cO_{L, \frP}$. This gives a canonical realisation of the 2-dimensional odd Artin representation associated to $g_0$.

 \subsection{Congruence ideals and 3-variable Rankin L-functions}

  \begin{notation}
   If $\bff$ is a Hida family, and $\bfa$ is a new, cuspidal branch of $\bff$, we let $I_{\bfa}$ be the congruence ideal of $\bfa$, which is a fractional $\Lambda_{\bfa}$-ideal $I_\bfa \subset \Frac \Lambda_{\bfa}$ characterised by the existence of a $\Lambda_{\bfa}$-linear, Hecke-equivariant map
   \[ M_{\k + 2}^{\ord}(N, \Lambda_{\bfa}) \to I_\bfa \]
   mapping the normalised $\Lambda$-adic eigenform associated to $\bfa$ to 1.
  \end{notation}

  See \cite[\S 4]{Hida-RS-II} for further details. It follows from Theorem 4.2 of \emph{op.cit.} that the fractional ideal $I_{\bfa}$ is contained in the localisation of $\Lambda_{\bfa}$ at any arithmetic prime ideal, so elements of $I_{\bfa}$ define meromorphic functions on $\Spec \Lambda_{\bfa}$ which are regular at any arithmetic point.

  \begin{theorem}
   \label{thm:hida2}
   Let $\bff$, $\bfg$ be two Hida families (of some tame levels $N_f$, $N_g$), and let $\bfa$ be a new, cuspidal branch of $\bff$. Then there is an element
   \[ L_p(\bfa, \bfg) \in \left(I_{\bfa} \htimes \Lambda_{\bfg} \htimes \Lambda_{\Gamma}\right) \otimes_{\ZZ} \ZZ[\mu_N] \]
   where $N$ is the lowest common multiple of $N_f$ and $N_g$, with the following interpolation property: if $f$, $g$ are specialisations of $\bff$, $\bfg$ respectively, with $f$ lying on the branch $\bfa$, and $f$, $g$ are $p$-stabilisations of eigenforms $f_0, g_0$ of levels $N_f, N_g$, then the image of $L_p(\bff, \bfg)$ under $\operatorname{sp}_f \otimes \operatorname{sp}_g$ is the $p$-adic $L$-function $L_p(f_0, g_0)$ of Theorem \ref{thm:hida}.
  \end{theorem}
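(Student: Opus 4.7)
The plan is to construct $L_p(\bfa, \bfg)$ by extending Hida's construction of the two-variable $p$-adic Rankin--Selberg $L$-function (Theorem \ref{thm:hida}) to allow $f$ to vary in a family, using $\Lambda$-adic Eisenstein series, the ordinary projector $e_{\ord}'$, and Hida's duality between $\Lambda$-adic cusp forms and Hecke algebras.

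First I would construct a $\Lambda$-adic Eisenstein family $\cE \in M^{\ord}_{(\k+\k'-2\j)+2}(Np, \Lambda_D \htimes \Lambda_\Gamma)$ whose specialisations at arithmetic points recover the $p$-depleted, $p$-stabilised real-analytic Eisenstein series $E^{(r-r')}_{1/N}(\tau, s-r+1)$ appearing in the Rankin--Selberg integral formula recalled in \S\ref{sect:Rankin-L-fct}. Such a family is essentially Hida's $\Lambda$-adic Eisenstein measure, and the $p$-depletion ensures that, although the classical values are only nearly holomorphic, their ordinary projections land in the space of ordinary cusp forms.

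Next I would form the product $\bfg \cdot \cE$, which lies in the space of $\Lambda$-adic nearly-holomorphic modular forms of weight $\k+2$ with coefficients in $\Lambda_D \htimes \Lambda_\Gamma \htimes \Lambda_\bfg$, and apply the $\Lambda$-adic ordinary projector $e_{\ord}'$ (which is well-defined by the control theorem, cf.\ Proposition \ref{prop:controlthm}) to obtain an element of $S^{\ord}_{\k+2}(N, \Lambda_D \htimes \Lambda_\Gamma \htimes \Lambda_\bfg)$. Then I would apply the $\Lambda_\bfa$-linear functional $\ell_\bfa$ on $\Lambda$-adic cusp forms associated by Hida duality to the normalised $\Lambda$-adic eigenform attached to the branch $\bfa$; by the very definition of the congruence ideal $I_\bfa$, this functional lands in $I_\bfa$, so the resulting object lies in $(I_\bfa \htimes \Lambda_\bfg \htimes \Lambda_\Gamma) \otimes_\ZZ \ZZ[\mu_N]$, where the $\ZZ[\mu_N]$ factor accounts for Gauss sums entering the Eisenstein normalisation. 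Define $L_p(\bfa, \bfg)$ to be this element. The interpolation property follows by specialising at arithmetic points: via Hida duality, the image of $\ell_\bfa(e_{\ord}'(\bfg \cdot \cE))$ under $\mathrm{sp}_f \otimes \mathrm{sp}_g$ unwinds to the Petersson inner product $\langle f^*, g \cdot E^{(r-r')}_{1/N}(\tau, s-r+1)\rangle_N$ times explicit Euler factor corrections at $p$ arising from the $p$-depletion, and comparison with Theorem \ref{thm:hida} via Shimura's integral representation yields the desired equality $(\mathrm{sp}_f \otimes \mathrm{sp}_g)(L_p(\bfa, \bfg)) = L_p(f_0, g_0)$.

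The main obstacle is the integrality statement, namely that the output genuinely lies in $I_\bfa \htimes \Lambda_\bfg \htimes \Lambda_\Gamma$ and not merely in its localisation at some arithmetic prime. This is controlled precisely by the congruence ideal: Hida duality produces $\ell_\bfa$ only after inverting the congruence element, and the failure of $I_\bfa$ to coincide with $\Lambda_\bfa$ at an arithmetic point $f$ corresponds exactly to the Euler-type factors $\cE(f)\cE^*(f)$ in the denominator of the formula of Theorem \ref{thm:hida}. A secondary and more bookkeeping-heavy obstacle is the careful matching of Euler factors at primes dividing $N_f N_g$ between the $\Lambda$-adic Rankin--Selberg integral and the classical specialised formula, as well as uniformity of the Eisenstein construction across the cyclotomic variable $\j$.
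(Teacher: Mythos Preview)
The paper does not actually give a proof of this theorem: it appears in \S\ref{sect:hidatheory}, which the authors explicitly describe as a background section ``which contains no substantial original results'' recalling theorems due to Hida, Wiles, and Ohta. Theorem~\ref{thm:hida2} is stated as a known result, with the construction implicitly attributed to Hida's work \cite{Hida-RS-II}; no argument is given in the paper itself.

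Your proof plan is a faithful outline of Hida's original construction: form a $\Lambda$-adic Eisenstein series, multiply by the family $\bfg$, apply the ordinary projector, and then project to the $\bfa$-eigenspace via the linear functional coming from Hida's duality between $\Lambda$-adic cusp forms and the Hecke algebra. The observation that this functional takes values in the congruence ideal $I_\bfa$ is exactly the mechanism that produces the stated target module. Two small points of care: the ordinary projector on the space of $p$-adic modular forms is $e_{\ord}$ rather than $e_{\ord}'$ (the latter being its transpose acting on cohomology), and the product $\bfg \cdot \cE$ is a priori only a $\Lambda$-adic modular form, not a cusp form, so one should either note that the $\bfa$-projection kills the Eisenstein part or invoke cuspidality of $\bfg$. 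Neither affects the correctness of your outline.
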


  It is this 3-variable $p$-adic $L$-function which will appear in our explicit reciprocity law (Theorem \ref{lthm:explicitrecip}). The interpolating property extends also to weight 1 specialisations of $\bfg$, as long as the specialised eigenform is classical.

\section{Euler systems in Hida families}

 We now use the theory developed in the earlier parts of this paper to construct cohomology classes in the tensor product of the Galois representations attached to two Hida families.

 \subsection{Beilinson--Flach classes in Hida families}

  Let $\bff, \bfg$ be Hida families of tame levels $N_f, N_g$. We write
  \[ M(\bff \otimes \bfg)^* =  M(\bff)^* \htimes_{\Zp} M(\bfg)^* \]
  which is a $\Lambda_{\bff} \htimes \Lambda_{\bfg}$-module, finite and projective over $\Lambda_D \htimes \Lambda_D$. Let $N$ be any integer coprime to $p$ and divisible by $N_f$ and $N_g$; then, via pusforward along the degeneracy map
  \[ (\pr_1 \times \pr_1)_*: Y_1(Np)^2 \to Y_1(N_f p) \times Y_1(N_g p) \]
  and the K\"unneth formula, we obtain a Galois-equivariant projection map
  \[ \pr_{\bff, \bfg}: H^2_\et\left( Y_1(Np)^2_{\overline\QQ}, \Lambda(\sH_{\Zp}\langle t_{Np}\rangle)^{\boxtimes 2}(2)\right) \to M(\bff \otimes \bfg)^*. \]
  This is the Hida-family analogue of the map $\pr_{f, g}$ of \eqref{eq:prfg} above, and it is compatible with these maps for the specialisations of $\bff$ and $\bfg$: for all specialisations $f$ of $\bff$ and $g$ of $\bfg$, we have a commutative diagram
  \begin{diagram}
   H^2_\et\left( Y_1(Np)^2_{\overline\QQ}, \Lambda(\sH_{\Zp}\langle t_{Np}\rangle)^{\boxtimes 2}(2)\right) & \rTo^{\pr_{\bff, \bfg}} & M(\bff \otimes \bfg)^* \\
   \dTo<{\mom^k \boxtimes \mom^{k'}} & & \dTo>{\mathrm{sp}_f \otimes \mathrm{sp}_g} \\
   H^2_\et\left( Y_1(Np^r)^2_{\overline\QQ}, \TSym^{[k, k']}(\sH_{\Zp})(2)\right) & \rTo^{\pr_{f,g}} & M_{\Lp}(f \otimes g)^*
  \end{diagram}
  for any $r$ large enough that $f$ and $g$ have level $Np^r$.

  \begin{definition}
   For $\bff$, $\bfg$ Hida families of tame levels $N_f, N_g$, $m\ge 1$ coprime to $p$, and $c > 1$ coprime to $6 m N_f N_g p$, we define
   \[ \cBF^{\bff, \bfg}_m \in H^1\left(\ZZ\left[\tfrac 1{m p N_f N_g}, \mu_m\right], M(\bff \otimes \bfg)^* \otimes \Lambda_\Gamma(-\j)\right)\]
   to be the image of the class $\cBF_{m, p N, 1}$ of Theorem \ref{thm:BFeltsinterp1} above, for any $N \ge 1$ divisible by $N_f, N_g$ and with the same prime factors as $N_f N_g$, under the map $\pr_{\bff, \bfg}$. We write $\cBF^{\bff, \bfg}$ for $\cBF^{\bff, \bfg}_1$.
  \end{definition}

  \begin{remark}
   The class $\cBF^{\bff, \bfg}_m$ is the image under the map $Y_1(pN)^2 \to Y_1(p N_f) \times Y_1(p N_g)$ of the class denoted by ${}_c \mathfrak{z}^{\bff, \bfg, N}_m$ in \cite[Theorem 6.9.5]{LLZ14}. It is independent of the choice of $N$, because the classes $\cBF_{m, p N, 1}$ for different choices of $N$ are compatible under pushforward via $(\pr_1 \times \pr_1)_*$ (by the first case of Theorem \ref{thm:norm1}).
  \end{remark}

  \begin{theorem}[{Theorem \ref{lthm:iwasawaelt}}]
   \label{thm:BFeltsinterp2} \
   \begin{enumerate}[(a)]
    \item For $\ell$ a prime not dividing $c m N_f N_g p$, we have the norm-compatibility relation
    \[ \norm^{\ell m}_m\left( \cBF^{\bff, \bfg}_{\ell m} \right) = Q_\ell(\ell^{-\j} \sigma_\ell^{-1}) \cdot \cBF^{\bff, \bfg}_{m}, \]
    where $\sigma_\ell \in \Gal(\QQ(\mu_m) / \QQ)$ is the arithmetic Frobenius, and $Q_\ell \in (\Lambda_{\bff} \htimes \Lambda_{\bfg})[X, X^{-1}]$ is given by
    \begin{multline*}
     -X^{-1} + a_\ell(\bff) a_\ell(\bfg) + \left( (\ell + 1) \ell^{\k + \k'} \varepsilon_{\bff}(\ell)\varepsilon_{\bfg}(\ell) - \ell^{\k} \varepsilon_{\bff}(\ell) a_\ell(\bfg)^2 - \ell^{\k'} \varepsilon_{\bfg}(\ell) a_\ell(\bff)^2 \right) X\\
     + \ell^{\k + \k'} a_\ell(\bff) a_\ell(\bfg) \varepsilon_{\bff}(\ell)\varepsilon_{\bfg}(\ell)  X^2
     - \ell^{1 + 2\k + 2\k'} \varepsilon_{\bff}(\ell)^2\varepsilon_{\bfg}(\ell)^2 X^3.
    \end{multline*}
    Here $\varepsilon_{\bff}$ denotes the prime-to-$p$ part of the Nebentypus of $\bff$ (which is a character $(\ZZ / N_f \ZZ)^\times \to \Lambda_{\bff}^\times$) and similarly for $\varepsilon_{\bfg}$.
    \item If $f,g$ are any specialisations of $\bff$, $\bfg$ respectively, of weights $k + 2, k' + 2$, and $j$ is an integer with $0 \le j \le \min(k, k')$, the image $\cBF^{\bff, \bfg}(f,g, j)$ of $\cBF^{\bff, \bfg}$ under the map
    \[ \operatorname{sp}_f \otimes \operatorname{sp}_g \otimes \mom^j_{\Gamma}: M(\bff \otimes \bfg)^*(-\j) \to M_{\Lp}(f \otimes g)^*(-j)
    \]
    is given by
    \[
     \cBF^{\bff, \bfg}(f, g, j) =  \frac{\left( 1 - \frac{p^j}{\alpha_f \alpha_g}\right)\left(c^2 - c^{2j-k-k'} \varepsilon_f(c)^{-1} \varepsilon_g(c)^{-1} \right)}{(-1)^j j! \binom{k}{j} \binom{k'}{j}} \Eis^{[f,g, j]}_\et.
    \]
    If $f, g$ are $p$-stabilisations of ordinary newforms $f_0, g_0$ of levels $N_f, N_g$, and we identify $M_{\Lp}(f \otimes g)^*$ with $M_{\Lp}(f_0 \otimes g_0)^*$ via $(\Pr^\alpha \times \Pr^\alpha)_*$ as in \S \ref{sect:lstab}, then
    \[
      \cBF^{\bff, \bfg}(f, g, j) =  \frac{\left( 1 - \frac{p^j}{\alpha_f \alpha_g}\right)\left( 1 - \frac{\alpha_f \beta_g}{p^{1 + j}}\right) \left( 1 - \frac{\beta_f \alpha_g}{p^{1 + j}}\right) \left( 1 - \frac{\beta_f \beta_g}{p^{1 + j}}\right)\left(c^2 - c^{2j-k-k'} \varepsilon_f(c)^{-1} \varepsilon_g(c)^{-1} \right)}{(-1)^j j! \binom{k}{j} \binom{k'}{j}} \Eis^{[f_0, g_0, j]}_{\et}.
    \]
   \end{enumerate}
  \end{theorem}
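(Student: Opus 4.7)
The plan is to derive both parts of the theorem from three results already in hand: Proposition \ref{prop:cyclonorm} (the norm relation for Rankin--Iwasawa classes), Theorem \ref{thm:BFeltsinterp1} (the interpolation formula for Beilinson--Flach classes), and Theorem \ref{thm:lstab-eigen} ($p$-stabilisation of Rankin--Eisenstein classes). The structure is entirely formal: specialising from the $\Lambda$-adic sheaves to the Hida-theoretic representations $M(\bff)^*$ converts the Hecke and diamond operators into their eigenvalues, and the universal character $\j$ tracks the $\ell^{j}$ factors.

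For part (a), I would fix an auxiliary $N_{0}$ divisible by $N_{f}$ and $N_{g}$ (with the same prime factors as $N_{f}N_{g}$) and apply Proposition \ref{prop:cyclonorm} to the class $\cRI^{[0]}_{\ell m, p\ell N_{0}, 1}$, computing its pushforward to $Y_{1}(pN_{0})^{2}$. Applying $(s_{m}\times s_{m})_{*}$ and then the Hida projection $\pr_{\bff,\bfg}$ converts each term of the resulting Hecke-operator combination into an element of $\Lambda_{\bff}\htimes\Lambda_{\bfg}\htimes\Lambda_{\Gamma}$: $T'_{\ell}$ becomes $a_{\ell}(\bff)$ or $a_{\ell}(\bfg)$, the combination $\langle\ell^{-1}\rangle[\ell]_{*}$ becomes $\ell^{\k}\varepsilon_{\bff}(\ell)$ (respectively $\ell^{\k'}\varepsilon_{\bfg}(\ell)$) using the fact that under the control theorem of Proposition \ref{prop:controlthm} the moment map converts $[\ell]_{*}$ to multiplication by $\ell^{k}$, and the $\ell^{j}$ prefactor attached to $\sigma_{\ell}^{\pm}$ becomes $\ell^{\j}$ via the $\Lambda_{\Gamma}(-\j)$-twist. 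Making the substitution $X=\ell^{-\j}\sigma_{\ell}^{-1}$ identifies the resulting five-term expression with $Q_{\ell}(\ell^{-\j}\sigma_{\ell}^{-1})\cdot \cBF^{\bff,\bfg}_{m}$.

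For part (b), I would apply Theorem \ref{thm:BFeltsinterp1} with $m=1$, $a=1$, and $N$ replaced by $pN_{0}$; since $\QQ(\mu_{1})=\QQ$, the Frobenii $\sigma_{p}$ and $\sigma_{c}$ act trivially, so the formula reduces to an explicit multiple of $\Eis^{[k,k',j]}_{\et,1,pN_{0}}$ by an operator involving only $(U'_{p},U'_{p})$ and $(\langle c\rangle,\langle c\rangle)$. Pushing forward under $\pr_{f,g}$ replaces $(U'_{p},U'_{p})$ by $\alpha_{f}\alpha_{g}$ (since $\alpha_{f}$ is the $U_{p}$-eigenvalue of the specialisation $f$) and $(\langle c\rangle,\langle c\rangle)$ by $\varepsilon_{f}(c)^{-1}\varepsilon_{g}(c)^{-1}$, yielding the first claimed formula. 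For the second formula, I then apply Theorem \ref{thm:lstab-eigen} with $\ell=p$, which produces precisely the three additional Euler factors $(1-\alpha_{f}\beta_{g}/p^{1+j})(1-\beta_{f}\alpha_{g}/p^{1+j})(1-\beta_{f}\beta_{g}/p^{1+j})$ arising from the identification $M_{\Lp}(f\otimes g)^{*}\cong M_{\Lp}(f_{0}\otimes g_{0})^{*}$ via $(\Pr^{\alpha}\times\Pr^{\alpha})_{*}$.

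The main obstacle is the bookkeeping required to match the sheaf-theoretic operators $[\ell]_{*}$ and $\langle\ell\rangle$ of Notation \ref{not:dstar} with the Hecke/diamond action on $M(\bff)^{*}$ via the control theorem, especially in reconciling signs and inversions coming from the dual conventions in passing from $H^{1}_{\et}$ to $M(\bff)^{*}$. Once this dictionary is correctly set up, part (a) is a polynomial identity in $\Lambda_{\bff}\htimes\Lambda_{\bfg}[\sigma_{\ell}^{\pm 1}]$ and part (b) is obtained by direct substitution; no new geometric input beyond the three cited results is required.
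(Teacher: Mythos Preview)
Your proposal is correct and follows essentially the same route as the paper's own proof: part (a) is deduced directly from Proposition \ref{prop:cyclonorm} and the definition of $\cBF^{\bff,\bfg}_{m}$ by specialising the Hecke operators to their eigenvalues on $M(\bff)^{*}\htimes M(\bfg)^{*}$, and part (b) comes from Theorem \ref{thm:BFeltsinterp1} (with $m=1$) followed by Theorem \ref{thm:lstab-eigen} at $\ell=p$ for the $p$-stabilised case. The paper's argument is slightly terser but uses exactly these three ingredients in the same order.
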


  \begin{remark}
   Note that the factor $Q_\ell$ appearing in part (a) can be written as
   \[ Q_\ell(X) = X^{-1}( (\ell - 1)(1 - \ell^{\k + \k' + 2} \varepsilon_f(\ell) \varepsilon_g(\ell) X^2) - \ell P_\ell(X) )\]
   where $P_\ell(X)$ is the Euler factor of $M(\bff \otimes \bfg)(1)$ at $\ell$. In particular, we have $Q_\ell(X) = -X^{-1} P_\ell(X)$ modulo $\ell - 1$. See \S 8.2 of \cite{LLZ14} for a conceptual interpretation of this factor $Q_\ell$, in terms of a conjectural rank-two Euler system.
  \end{remark}

  \begin{proof}
   Part (a) of the theorem is immediate from the definition of the classes $\cBF^{\bff, \bfg}_m$ and Proposition \ref{prop:cyclonorm}. Let us prove part (b).

   Firstly, by the definition of $\cBF^{\bff, \bfg}$, we have
   \[ \cBF^{\bff, \bfg}(f, g, j) = \pr_{f, g}\left( (\mom^k \boxtimes \mom^{k'} \otimes \mom_\Gamma^j) \cBF_{1, Np, 1}\right), \]
   and Theorem \ref{thm:BFeltsinterp1} allows us to rewrite this as
   \begin{multline*}
    \pr_{f,g}\left( \frac{(1 - p^j (U_p', U_p')^{-1})(c^2 - c^{2j-k-k'} (\langle c \rangle, \langle c \rangle))}{(-1)^j j! \binom{k}{j} \binom{k'}{j}} (e'_{\ord}, e'_{\ord}) \Eis^{[k, k', j]}_{\et, 1, Np}\right) \\=  \frac{\left( 1 - \frac{p^j}{\alpha_f \alpha_g}\right) \left(c^2 - c^{2j-k-k'} \varepsilon_f(c)^{-1} \varepsilon_g(c)^{-1} \right)}{(-1)^j j! \binom{k}{j} \binom{k'}{j}}\left(\Eis^{[f,g, j]}_{\et}\right)
    \end{multline*}
    (since $\pr_{f,g}$ factors through the ordinary projector).

    In the setting where $f,g$ are $p$-stabilisations of eigenforms $f_0$, $g_0$ of prime-to-$p$ level, we can use Theorem \ref{thm:lstab-eigen} (for $\ell = p$) to rewrite this in terms of $\Eis^{[f_0, g_0, j]}_{\et}$ multiplied by three extra Euler factors, which gives the second form of the theorem.
  \end{proof}

  We now study the interaction between the localisation of these classes $\cBF^{\bff, \bfg}_m$ and the filtration on the Galois representations $M(\bff)^*$ and $M(\bfg)^*$ at $p$. We begin with a local lemma:

  \begin{lemma}
   \label{lemma:iwacohotorsion}
   Let $K$ be a finite unramified extension of $\Qp$, and $T$ a finite-rank free $\Zp$-module with a continuous, unramified action of $G_K$. Let $V = T[1/p]$, and let $z \in H^1(K, T \otimes \Lambda_{\Gamma}(-\j))$.

   If the image of $z$ in $H^1(K, V(\chi))$ lies in the Bloch--Kato $H^1_{\mathrm{g}}$ subspace, for all finite-order characters $K$ of $\Gamma$, then we must have $z = 0$.
  \end{lemma}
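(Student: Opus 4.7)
The plan is to identify $H^1(K, T \otimes \Lambda_\Gamma(-\j))$ with the Iwasawa cohomology $M := \varprojlim_n H^1(K(\mu_{p^n}), T)$ via Shapiro's lemma and Jannsen's formalism, regard it as a finitely generated $\Lambda_\Gamma$-module, and then force $z = 0$ by combining the vanishing of the specialisations $\sigma_\chi(z)$ at a Zariski dense set of finite-order characters $\chi$ with an analysis of the $\Lambda_\Gamma$-torsion submodule of $M$.

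The crucial step will be to show that $H^1_{\mathrm{g}}(K, V(\chi)) = 0$ for every \emph{nontrivial} finite-order character $\chi$ of $\Gamma$. Since $V$ is unramified and $K$ is unramified over $\Qp$, $V$ is crystalline with all Hodge--Tate weights zero; hence $V(\chi)$ is de Rham with all Hodge--Tate weights zero. Because $\chi$ factors through the totally ramified extension $K(\mu_{p^\infty})/K$, the restriction $\chi|_{I_K}$ is nontrivial, which gives $H^0(K, V(\chi)) = 0$; and a parallel argument using $\chi^{-1} \chi_{\mathrm{cyc}}|_{I_K}$ gives $H^0(K, V^*(\chi^{-1})(1)) = 0$. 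A Bloch--Kato dimension count, using the vanishing of the tangent space $D_{\dR}(V(\chi))/\Fil^0$ in HT weight zero together with the local duality $H^1_{\mathrm{g}}(V(\chi))^\perp = H^1_{\mathrm{e}}(V^*(\chi^{-1})(1))$, will then yield $\dim H^1_{\mathrm{g}}(K, V(\chi)) = 0$.

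Combining this with the hypothesis, $\sigma_\chi(z)$ vanishes in $H^1(K, V(\chi))$ for every nontrivial finite-order $\chi$. The vanishing of $H^0(K, V(\chi))$ also ensures that $M/I_\chi M \otimes_{\Zp} \Qp \hookrightarrow H^1(K, V(\chi))$, where $I_\chi \subset \Lambda_\Gamma$ is the maximal ideal of $\chi$, so $z \otimes 1 \in I_\chi \cdot (M \otimes \Qp)$ for every such $\chi$. Since the set of nontrivial finite-order characters is Zariski dense in $\Spec(\Lambda_\Gamma \otimes \overline\Qp)$ (by Weierstrass preparation on each $\overline\Qp[[T]]$-component, where finite-order points accumulate to the origin of the open unit disc), this forces the image of $z$ in $(M \otimes \Qp)/(M \otimes \Qp)_{\mathrm{tors}}$ to vanish.

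The main obstacle will be controlling the $\Lambda_\Gamma$-torsion submodule of $M$. By standard Iwasawa-theoretic arguments (following Perrin-Riou), this torsion is identified with $T^{G_{K_\infty}}$, which for unramified $T$ over $K$ unramified (with $K_\infty/K$ totally ramified) reduces to the Frobenius-fixed submodule $T^{F=1}$ carrying trivial $\Gamma$-action. To kill this contribution, the plan is to invoke the hypothesis at the trivial character $\chi = 1$: such torsion elements of $M$ map, via the Bockstein associated to $T^{F=1} \hookrightarrow T$, into a subspace of $H^1(K, V)$ which a direct computation shows is transverse to the unramified subspace $H^1_{\mathrm{g}}(K, V) = H^1_{\mathrm{ur}}(K, V)$ (unless the torsion element itself vanishes); combined with a mod-$p^n$ reduction to handle any $p$-torsion in $M$, this will complete the proof that $z = 0$.
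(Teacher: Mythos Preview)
Your proposal is correct and follows essentially the same route as the paper: vanishing of $H^1_{\mathrm{g}}(K,V(\chi))$ for nontrivial finite-order $\chi$, Zariski density to force $z$ into the $\Lambda_\Gamma$-torsion, identification of that torsion with $T^{\varphi_K=1}$ via Perrin-Riou, and then the hypothesis at $\chi=1$ to kill it because the map $T^{\varphi_K=1}\hookrightarrow H^1(K,V)$ lands transversally to $H^1_{\mathrm{g}}=H^1_{\mathrm{ur}}$. The only organizational difference is that the paper observes \emph{at the outset} that $T^{\varphi_K=1}\subseteq T$ is $p$-torsion-free, hence so is $M$; this lets one pass immediately to $V$-coefficients and renders your final ``mod-$p^n$ reduction'' step unnecessary. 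The paper also phrases your Bockstein map concretely as cup-product with the class in $H^1(K,\Qp)$ sending a topological generator of $\Gamma$ to $1$ (i.e.\ essentially $\log\chi_{\mathrm{cyc}}$), from which the transversality to the unramified subspace is immediate.
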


  \begin{proof}
   Let us write $\varphi_K$ for the arithmetic Frobenius element of $\Gal(K^{\mathrm{nr}} / K)$. We note first that the $\Lambda_\Gamma$-torsion submodule of $H^1(K, T \otimes \Lambda_\Gamma(-\j))$ is isomorphic to $T^{\varphi_K = 1}$, by \cite[Proposition 2.1.6]{PerrinRiou-hauteurs}. In particular, it is $p$-torsion-free; so it suffices to check that $z = 0$ as an element of $H^1(K, V \otimes \Lambda_\Gamma(-\j))$.

   We compute readily that $H^1_{\mathrm{g}}(K, V(\chi)) = 0$ for any non-trivial finite-order character $\chi$ of $\Gamma$, while $H^1_{\mathrm{g}}(K, V)$ coincides with the unramified cohomology $H^1(K^{\mathrm{nr}} / K, V) \cong V / (1 - \varphi_K) V$. Consequently, if $z$ is as above, the image of $z$ in $H^1(K, V(\chi))$ must be zero for all non-trivial finite-order $\chi$. If $P_\chi$ denotes the ideal of $\Lambda_\Gamma[1/p]$ generated by the elements $\gamma - \chi(\gamma)$ for $\gamma \in \Gamma$, this shows that $z \in P_\chi \cdot H^1(K, V \otimes \Lambda_\Gamma(-\j))$. Since $\Lambda_\Gamma[1/p]$ is a finite direct product of principal ideal domains, and the $P_\chi$ for non-trivial finite-order $\chi$ are Zariski dense in $\Spec \Lambda_\Gamma[1/p]$, it follows that $z$ must be torsion.

   On the other hand, the composite map
   \[ V^{\varphi_K = 1} \cong H^1(K, T \otimes \Lambda_\Gamma(-\j))_{\mathrm{tors}} \rInto H^1(K, V)\]
   is given by cup-product with the class in $H^1(K, \Qp)$ mapping a topological generator of $\Gamma$ to $1$; so its image has zero intersection with $H^1_\mathrm{g}$. So we may conclude that $z = 0$ as required.
  \end{proof}

  \begin{notation}
   \label{notation:fplusminus}
   Let us write
   \[ \sF^{++} M(\bff \otimes \bfg)^* = \sF^+ M(\bff)^* \htimes \sF^+ M(\bfg)^*\]
   and similarly for $\sF^{+-}, \sF^{-+}, \sF^{--}$. We also use the notation $\sF^{+ \circ} = \sF^+ M(\bff)^* \htimes_{\Zp} M(\bfg)^*$.
  \end{notation}

  \begin{proposition}
   \label{prop:BFeltisSelmer}
   The inclusion $\sF^+ M(\bfg)^* \into M(\bfg)^*$ induces an injection
   \[ H^1\left(\Qp, \sF^{-+} M(\bff \otimes \bfg)^* \otimes \Lambda_\Gamma(-\j)\right) \into
   H^1\left(\Qp, \sF^{-\circ} M(\bff \otimes \bfg)^* \otimes \Lambda_\Gamma(-\j)\right),\]
   and the image of $\cBF^{\bff, \bfg}$ in the module $H^1\left(\Qp, \sF^{-\circ} M(\bff \otimes \bfg)^* \otimes \Lambda_\Gamma(-\j)\right)$ lies in the image of this injection.
  \end{proposition}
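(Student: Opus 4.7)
The plan is to address the injectivity statement and the containment statement separately.

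For the injectivity, I would extract from Theorem \ref{thm:ordinaryfiltration} the short exact sequence
\[
 0 \to \sF^{-+} M(\bff\otimes\bfg)^* \to \sF^{-\circ} M(\bff\otimes\bfg)^* \to \sF^{--} M(\bff\otimes\bfg)^* \to 0
\]
of $G_\Qp$-stable $\Lambda_{\bff} \htimes \Lambda_{\bfg}$-submodules; it remains exact after tensoring with the $\Lambda_\Gamma$-flat sheaf $\Lambda_\Gamma(-\j)$, and the long exact sequence in Galois cohomology reduces the required injectivity to the vanishing of $H^0(\Qp, \sF^{--} M(\bff\otimes\bfg)^* \otimes \Lambda_\Gamma(-\j))$. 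Because $\sF^{--}$ is unramified by Theorem \ref{thm:ordinaryfiltration}(iii), inertia acts on this tensor product entirely through the second factor, as multiplication by $[\gamma]^{-1}$ for $\gamma \in \Gamma$. A $G_\Qp$-invariant element must then be annihilated by $[\gamma_0] - 1$ for a topological generator $\gamma_0$ of the pro-$p$ part of $\Gamma$; but $[\gamma_0] - 1$ is a non-zero-divisor in $\Lambda_\Gamma$ acting on a $\Lambda_\Gamma$-flat module, so the invariant must vanish.

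For the containment, it suffices to show that the image $z$ of $\cBF^{\bff, \bfg}$ in $H^1(\Qp, \sF^{--} M(\bff\otimes\bfg)^* \otimes \Lambda_\Gamma(-\j))$ is zero. My approach is to verify this at each classical arithmetic specialisation $(f, g)$ of $(\bff, \bfg)$ via Lemma \ref{lemma:iwacohotorsion}. For such a specialisation, write
\[
 z_{f,g} \in H^1(\Qp, T_{f,g} \otimes \Lambda_\Gamma(-\j)), \qquad T_{f,g} := \sF^{--} M_{\cO_\frP}(f \otimes g)^*,
\]
for the induced class; since $T_{f,g}$ is a finite-rank free $\cO_\frP$-module with unramified $G_\Qp$-action, the lemma applies and gives $z_{f,g} = 0$ as soon as, for every finite-order character $\chi$ of $\Gamma$, the image of $z_{f,g}$ in $H^1(\Qp, T_{f,g}[1/p] \otimes_{\Lp} \Lp(\chi))$ lies in $H^1_{\mathrm{g}}$. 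By Theorem \ref{thm:BFeltsinterp2}(b) combined with Theorem \ref{cyctwistdiag}, this image is a non-zero multiple of the $\chi$-eigenspace projection of the pullback of $\Eis^{[f,g,0]}_{\et}$ to $\ZZ[\mu_{p^r}, 1/Np]$, where $r$ is the conductor of $\chi$; and Proposition \ref{prop:BFeltisgeom} states that this Rankin--Eisenstein class localises into $H^1_{\mathrm{g}}$, a property preserved by the $G_\Qp$-equivariant projection to $\sF^{--}(\chi)$. Hence $z_{f,g} = 0$ for every classical $(f, g)$.

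To deduce from this that the full $\Lambda$-adic class $z$ itself vanishes, I would invoke the Gorenstein freeness of $\sF^{--}$ over $\Lambda_\bff \htimes \Lambda_\bfg$ (Theorem \ref{thm:gorenstein}) together with the identification, used in the proof of Lemma \ref{lemma:iwacohotorsion}, of the $\Lambda_\Gamma$-torsion of $H^1(\Qp, \sF^{--} \otimes \Lambda_\Gamma(-\j))$ with $(\sF^{--})^{\varphi=1}$, which vanishes generically because $U_p'(\bff) U_p'(\bfg) - 1$ is a non-zero-divisor. Combined with the Zariski density of classical arithmetic primes in $\Spec(\Lambda_\bff \htimes \Lambda_\bfg)$, these facts force $z = 0$. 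The subtlest step will be this final descent: ruling out any pseudo-null submodule of the Iwasawa cohomology invisible at arithmetic specialisations is a standard but delicate point which rests on the freeness and flatness properties above.
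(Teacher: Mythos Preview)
Your injectivity argument is fine and matches the paper's (which just says that degree-0 Iwasawa cohomology over a $\Zp$-extension vanishes).

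For the containment, however, the paper takes a cleaner and more robust route than yours, and your final descent step is a real gap. Instead of specialising to classical eigenform pairs $(f,g)$, the paper writes $M(\bff)^*$ and $M(\bfg)^*$ as inverse limits of the \emph{finite-level} modules $M(\bff)^*_r$, $M(\bfg)^*_r$ (the localisations at $\bff$, $\bfg$ of the cohomology of $Y_1(N_f p^r)_{\overline\QQ}$, $Y_1(N_g p^r)_{\overline\QQ}$ with $\Zp(1)$ coefficients). Since all modules are compact, inverse limits are exact and commute with continuous Galois cohomology, so it suffices to check vanishing of the image of $\cBF^{\bff,\bfg}$ in $H^1(\Qp, \sF^{--}M(\bff\otimes\bfg)^*_r \otimes \Lambda_\Gamma(-\j))$ for each fixed $r$. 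Now $\sF^{--}M(\bff\otimes\bfg)^*_r$ is a finite free $\Zp$-module with unramified Galois action, and the image of $\cBF^{\bff,\bfg}$ at each finite-order character $\chi$ lands in $H^1_{\mathrm g}$ because at finite level it is built directly from Rankin--Eisenstein classes (Proposition~\ref{prop:BFeltisgeom}). Lemma~\ref{lemma:iwacohotorsion} then gives vanishing at level $r$, and the inverse limit gives the result.

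By contrast, your approach specialises all the way down to individual eigenforms and then tries to climb back up via Zariski density of arithmetic primes. This has two problems. First, you invoke Theorem~\ref{thm:gorenstein} for the freeness of $\sF^{--}$, but that theorem requires the non-Eisenstein and $p$-distinguished hypotheses, which are \emph{not} in force at this point in the paper. Second, and more seriously, even granting freeness, the passage from ``$z_{f,g}=0$ for all classical $(f,g)$'' to ``$z=0$'' requires knowing that $H^1(\Qp, \sF^{--}\otimes\Lambda_\Gamma(-\j))$ has no nonzero element annihilated by every arithmetic prime of $\Lambda_\bff\htimes\Lambda_\bfg$; your sketch does not establish this, and the $\Lambda_\Gamma$-torsion analysis you cite is orthogonal to the $(\Lambda_\bff\htimes\Lambda_\bfg)$-structure you need to control. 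The paper's inverse-limit-over-levels argument sidesteps all of this: no Hecke-algebra hypotheses, no density argument, no pseudo-null submodules to worry about.
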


  \begin{proof}
   The obstruction to injectivity comes from the module
   \[ H^0\left(\Qp, \sF^{--} M(\bff \otimes \bfg)^* \otimes \Lambda_\Gamma(-\j)\right), \]
   which is clearly zero (because Iwasawa cohomology over $\Zp$-extensions always vanishes in degree 0). So it suffices to check that the image of $\cBF^{\bff, \bfg}$ in the module $H^1\left(\Qp, \sF^{--}M(\bff \otimes \bfg)^* \otimes \Lambda_\Gamma(-\j)\right)$ is zero.

   By definition, the modules $M(\bff)^*$ and $M(\bfg)^*$ are equal to the inverse limits of the corresponding modules $M(\bff)^*_{r}$, $M(\bfg)^*_{r}$ at finite level $Np^r$, $r \ge 1$ (the localisations at $\bff$ and $\bfg$ of the cohomology of $\overline{Y_1(N_f p^r)} \times \overline{Y_1(N_g p^r)}$ with coefficients in $\Zp(1)$). Moreover, all the modules concerned are compact, so inverse limits are exact and commute with Galois cohomology. So it suffices to prove that the image of $\cBF^{\bff, \bfg}$ in the module
   \[ H^1\left(\Qp, \sF^{--}M(\bff \otimes \bfg)_{r}^* \otimes \Lambda_\Gamma(-\j)\right) \]
   is zero for every $r \ge 1$. Note that the module $\sF^{--}M(\bff \otimes \bfg)_{r}^*$ is finitely-generated and free over $\Zp$, with an unramified Galois action.

   However, for every finite-order character the image of $\cBF^{\bff, \bfg}$ in $H^1\left(\Qp, \sF^{--}M(\bff \otimes \bfg)_{r}^* \otimes \Qp(\chi) \right)$ lies in the Bloch--Kato $H^1_\mathrm{g}$ subspace, by Proposition \ref{prop:BFeltisgeom}. By Lemma \ref{lemma:iwacohotorsion}, this shows that the image of $\cBF^{\bff, \bfg}$ in $H^1\left(\Qp, \sF^{--}M(\bff \otimes \bfg)_{r}^* \otimes \Lambda_\Gamma(-\j)\right)$ is zero as required.
  \end{proof}


 \subsection{The Perrin--Riou big logarithm}
  \label{sect:PRreg}

  \begin{definition}
   If $M$ is a unramified, $p$-adically complete $\Zp[G_{\Qp}]$-mod\-ule, we define
   \[ \DD(M) = \left( M \htimes_{\Zp} \widehat{\ZZ}_p^{\mathrm{nr}}\right)^{G_{\Qp}}.\]
   We write $\varphi$ for the operator on $\DD(M)$ arising from the arithmetic Frobenius on $\widehat{\ZZ}_p^{\mathrm{nr}}$.
  \end{definition}

  \begin{remark}
   If $M$ is free of finite rank as a $\Zp$-module, then $\DD(M)$ is a lattice in $\DD_{\mathrm{cris}}(M \otimes \Qp)$.
  \end{remark}

  \begin{theorem}
   \label{thm:PRreg}
   Suppose $M$ is an unramified, profinite $\Zp[G_{\Qp}]$-module. Then there is a map
   \[ \cL_M : H^1(\Qp, M \htimes \Lambda_\Gamma(-\j)) \to \DD(M) \htimes I^{-1} \Lambda_\Gamma, \]
   where $I$ is the ideal of $\Lambda_{\Gamma}$ that is the kernel of specialisation at $\j = -1$, with the following properties:
   \begin{itemize}
    \item The construction of $\cL_M$ is functorial in $M$, and in particular $\cL_M$ commutes with the action of $\operatorname{End}_{\Zp[G_{\Qp}]}(M)$ on both sides.
    \item If $M$ is finitely-generated and free, then $\cL_M$ is Perrin-Riou's big logarithm map for the unramified Galois representation $V = M[1/p]$.
    \item The kernel of $\cL_M$ is isomorphic to $H^0(\Qp, M)$.
    \item The image of $\cL_M$ in $\DD(M) \otimes \frac{I^{-1} \Lambda_\Gamma}{ \Lambda_\Gamma } \cong \DD(M)$ is contained in the submodule
    \( \DD(M)^{\varphi = 1} \cong H^0(\Qp, M) \).
   \end{itemize}
  \end{theorem}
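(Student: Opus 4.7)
The plan is to reduce the statement to the finite free case (handled in \cite{LZ}) and then pass to an inverse limit. I would first construct $\cL_M$ when $M$ is finite free over $\Zp$, next extend by a snake-lemma argument to finite torsion modules, and finally assemble the construction for general profinite $M$ via an inverse limit.

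For $M$ finite free, $V \coloneqq M[1/p]$ is a crystalline $G_{\Qp}$-representation with all Hodge--Tate weights zero, and $\DD(M)[1/p]$ is canonically identified with $\DD_{\mathrm{cris}}(V)$ with its crystalline Frobenius. The Perrin--Riou big logarithm, in the integral refinement established in \cite{LZ}, yields a $\Lambda_\Gamma$-linear map $\cL_M$ with image in $\DD(M) \htimes I^{-1}\Lambda_\Gamma$; the pole order at $\j=-1$ is controlled by the multiplicity of $0$ as a Hodge--Tate weight, so a simple pole (encoded by the $I^{-1}$ denominator) is enough. Properties (1) and (2) are built into the construction. Property (3) follows from the identification of the $\Lambda_\Gamma$-torsion submodule of the Iwasawa cohomology with $H^0(\Qp, M)$, together with the injectivity of the restriction of $\cL_M$ to a torsion-free complement. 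Property (4) is obtained by specialising at $\j = -1$: this specialisation is (a scalar multiple of) the Bloch--Kato dual exponential, whose image lies in $\DD(M)^{\varphi = 1}$, and for unramified $M$ the latter module coincides with $H^0(\Qp, M)$.

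For general profinite $M$, I would first treat the case of finite $M$. Such an $M$ admits a resolution $0 \to M' \to M'' \to M \to 0$ with $M', M''$ finite free over $\Zp$ and unramified -- such a resolution exists because the $G_{\Qp}$-action factors through the pro-cyclic group $\Gal(\Qp^{\mathrm{nr}}/\Qp)$, so one may take free modules over the ring $\Zp[[\widehat{\ZZ}]]$. Functoriality of $\cL$ on the free part produces a morphism of long exact sequences in Iwasawa cohomology, and the snake lemma produces $\cL_M$; independence of the chosen resolution is automatic from naturality. Then for an arbitrary profinite $M = \varprojlim_\alpha M_\alpha$ with each $M_\alpha$ finite, the Mittag--Leffler condition holds and inverse limits are exact on compact modules, so the system $\{\cL_{M_\alpha}\}$ assembles into $\cL_M$.

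The main obstacle will be maintaining control of the denominator $I^{-1}\Lambda_\Gamma$ through the inverse limit: one must check that a uniform pole order persists at $\j = -1$. This is handled by observing that the residue map $I^{-1}\Lambda_\Gamma / \Lambda_\Gamma \cong \Zp$ is continuous and that the property ``pole of order at most one'' is preserved under inverse limits of compact modules. Once this is in hand, properties (1) and (2) for $\cL_M$ are immediate, property (3) follows because $H^0(\Qp, -)$ commutes with inverse limits of compact unramified modules, and property (4) follows because both the residue operation and the functor $\DD(-)^{\varphi = 1}$ commute with such limits.
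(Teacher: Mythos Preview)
Your inverse-limit strategy is genuinely different from the paper's approach. The paper does not reduce to the finite free case and then pass to limits; instead it constructs the Wach module directly for \emph{any} profinite unramified $M$ by the brute-force definition $\NN(M) \coloneqq \DD(M)[[\pi]]$, equips it with operators $\varphi$ and $\psi$, and invokes the Fontaine isomorphism $H^1(\Qp, M \htimes \Lambda_\Gamma(-\j)) \cong (\pi^{-1}\NN(M))^{\psi=1}$ (checked by reduction to finite $M$ via \cite{berger03}). The map $\cL_M$ is then simply $1-\varphi$ followed by the Mellin transform $\NN(M)^{\psi=0} \cong \DD(M) \htimes \Lambda_\Gamma$, and all four properties drop out of the exact sequence
\[
 0 \to \DD(M)^{\varphi=1} \to \NN(M)^{\psi=1} \rTo^{1-\varphi} \NN(M)^{\psi=0} \to \DD(M)/(1-\varphi)\DD(M) \to 0
\]
together with the identity $(\pi^{-1}\NN(M))^{\psi=1} = \pi^{-1}\DD(M)^{\varphi=1} + \NN(M)^{\psi=1}$. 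This is considerably cleaner: one explicit formula, no snake lemmas, no choices of resolution to verify independence of.

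Your approach is plausible in outline but has two concrete problems. First, the resolution step: free modules over $\Zp[[\widehat{\ZZ}]]$ are not finite over $\Zp$, so that sentence does not produce the resolution you need. (The claim itself is salvageable: lift the Frobenius action on a generating set of $M$ to a matrix in $\GL$ over $\Zp$ and take the resulting surjection from a free module; the kernel is then free.) Second, and more seriously, your argument for property (4) is incorrect. The map $\cL_M$ has a \emph{pole} at $\j=-1$, so what you are computing is the residue, not a specialisation; and that residue is not the Bloch--Kato dual exponential. The dual exponential appears at integers $\j \ge 0$ in Perrin-Riou's interpolation formula, and for unramified $V$ its image is all of $\DD_{\mathrm{cris}}(V) = \Fil^0 \DD_{\dR}(V)$, not the $\varphi=1$ subspace. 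The reason the polar part lands in $\DD(M)^{\varphi=1}$ is the structural identity $\psi(\pi^{-1}) = \pi^{-1}$, which forces the $\pi^{-1}$-coefficient of any $\psi=1$ element to be $\varphi$-fixed; this is precisely what the paper's direct Wach-module construction makes visible, and it is not clear how to recover it from your limit argument without essentially redoing that computation.
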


  \begin{proof}
   This is an easy consequence of Coleman and Perrin-Riou's theory of big logarithm\footnote{This map has a confusing variety of names. Perrin-Riou refers to it as the ``$p$-adic regulator'', but it is perhaps best to avoid this notation here to avoid confusion with the use of ``regulator'' for the natural maps from motivic cohomology to other cohomology theories. The term ``big dual exponential'' is also used in some sources.} maps, as extended by the second and third authors in \cite{LZ}.

   We consider the module $\NN(M)$ \emph{defined} as $\DD(M)[[\pi]]$, where $\pi$ is a formal variable. This rather brutal definition of $\NN(M)$ coincides with the usual Wach module functor when $M$ is finite free over $\Zp$. For any $M$, the module $\NN(M)$ admits a Frobenius $\varphi$ defined as the tensor product of the Frobenius map on $\DD(M)$ and the map $\pi \mapsto (1 + \pi)^p - 1$ on $\Zp[[\pi]]$; and a left inverse $\psi$ of $\varphi$, defined by tensoring the inverse Frobenius of $\DD(M)$ with the usual trace operator $\psi: \Zp[[\pi]] \to \Zp[[\pi]]$ of $(\varphi, \Gamma)$-module theory.

   Then $1 - \varphi$ gives an exact sequence
   \[ 0 \rTo \DD(M)^{\varphi = 1} \rTo \NN(M)^{\psi = 1} \rTo^{1 - \varphi} \NN(M)^{\psi = 0} \rTo \frac{\DD(M)}{(1 - \varphi) \DD(M)} \rTo 0, \]
   where the last map is given by evaluation at $\pi = 0$. Moreover, we have the Fontaine isomorphism
   \[ H^1(\Qp, M \otimes \Lambda_{\Gamma}(-\j)) \cong \left( \pi^{-1} \NN(M) \right)^{\psi = 1}.\]
   (Since both sides commute with inverse limits, it suffices to prove this for $M$ finite, and this follows by exactly the same argument as in \cite[Appendix A]{berger03}.) This isomorphism depends on identifying the formal variable $\pi$ with the element of the same name in Fontaine's ring $\mathbf{A}^+_{\Qp}$, whose definition depends on a choice of compatible system of $p$-power roots of unity $(\zeta_{p^r})_{r \ge 0}$ in $\overline{\QQ}_p$; we have already fixed roots of unity $\zeta_m$ for all $m \ge 1$ in $\overline{\QQ}$, and an embedding $\overline{\QQ} \into \overline{\QQ}_p$, so this choice is already made.

   Since $\varphi$ is invertible on $\DD(M)$, we have $\NN(M)^{\varphi = 0} = \DD(M) \htimes \Zp[[\pi]]^{\psi = 0} \cong \DD(M) \htimes \Lambda_{\Gamma}$. This gives a map
   \[ \NN(M)^{\psi = 1} \to \DD(M) \htimes \Lambda_\Gamma, \]
   and since the ideal $I$ annihilates the quotient $\pi^{-1} \NN(M) / \NN(M)$, and this gives the required map. The functoriality of the construction is obvious, and for the last statement we note that since $\psi(\pi^{-1}) = \pi^{-1}$ we have
   \[
    \left(\pi^{-1} \NN(M)\right)^{\psi = 1} =
    \pi^{-1} \DD(M)^{\varphi = 1} + \NN(M)^{\psi = 1},
   \]
   so that the image of $\cL_M$ is contained in
   \[
    \DD(M)^{\varphi = 1} \htimes I^{-1} + \DD(M) \htimes \Lambda(\Gamma).\qedhere
   \]
  \end{proof}

  \begin{remark}
   The cokernel of $\cL_M$ as a map into $\DD(M)^{\varphi = 1} \htimes I^{-1} + \DD(M) \htimes \Lambda(\Gamma)$ is isomorphic to $M / (1 - \varphi) M$, with trivial $\Gamma$-action.
  \end{remark}

  \begin{remark}
   When $M = T \otimes \Lambda_U(\mathbf{u}^{-1})$, for $U$ a $p$-adic Lie quotient of $\Gal(\QQ_p^{\mathrm{nr}} / \QQ)$ with Galois acting on $\Lambda_U = \Zp[[U]]$ via the inverse of the canonical character $\mathbf{u}$, and $T$ finitely-generated over $\Zp$, this coincides with the construction of \cite{LZ}. In this case, the module $\NN(M)$ above is the $\NN_\infty(T)$ of \emph{op.cit.}.

   In \cite{LZ} one can allow $T$ to be any lattice in a crystalline $G_{\Qp}$-represent\-ation, not necessarily unramified; and it seems reasonable to envisage a common generalisation of the above theorem and the results of \cite{LZ}, where $M$ is allowed to be any inverse limit of lattices in crystalline $G_{\Qp}$-representations with Hodge--Tate weights in some specified range. We do not know how to prove this at present.
  \end{remark}

  We are interested in the case of the module $M = \sF^{-+} M(\bff \otimes \bfg)^*$, where $\bff$ and $\bfg$ are Hida families as before. This is not unramified, but the module $M(-1-\k')$ is unramified, where we write $\k'$ for the canonical character $\Gamma \to \Lambda_D^\times \to \Lambda_{\bfg}^\times$ (to distinguish it from the corresponding construction for $\Lambda_\bff$). So the module $\DD(M(-1-\k'))$ is well-defined.

  \begin{lemma}
   \label{lemma:plusminusinvariants}
   We have $H^0\left(\Qp, \sF^{-+} M(\bff \otimes \bfg)^*(-1-\k')\right) = 0$, for any two Hida families $\bff$ and $\bfg$.
  \end{lemma}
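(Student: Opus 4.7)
The plan is to use Ohta's explicit description of the $G_{\Qp}$-action on the ordinary filtration (Theorem~\ref{thm:ordinaryfiltration}) to identify $H^0(\Qp,\cdot)$ with the kernel of a specific element of $R := \Lambda_{\bff} \htimes_{\Zp} \Lambda_{\bfg}$, and then to show this element acts injectively by specialising at classical arithmetic primes and invoking the Ramanujan--Petersson bound. By Theorem~\ref{thm:ordinaryfiltration}(iii), $\sF^- M(\bff)^*$ is unramified with arithmetic Frobenius acting as $U_p'(\bff)$, while by part (iv) the action on $\sF^+ M(\bfg)^*$ factors as $\chi_{\mathrm{cyc}}^{1+\k'}\cdot\beta$, with $\beta$ unramified and $\beta(\mathrm{Frob}_p) = \langle p^{-1}\rangle_{N_g}\cdot U_p'(\bfg)^{-1}$. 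The twist by $(-1-\k')$ cancels the cyclotomic factor exactly, so writing $T := \sF^{-+} M(\bff\otimes\bfg)^*(-1-\k')$, arithmetic Frobenius acts on $T$ by the unit
\[ \alpha \;:=\; U_p'(\bff)\cdot\langle p^{-1}\rangle_{N_g}\cdot U_p'(\bfg)^{-1} \;\in\; R^\times, \]
and hence $H^0(\Qp, T) = \ker\bigl((\alpha-1)\colon T\to T\bigr)$.

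Next, fix an arithmetic prime $(\nu,\mu)$ of $A := \Lambda_D\htimes_{\Zp}\Lambda_D$ corresponding to classical weights $(k+2,k'+2)$ with $k\neq k'$. I claim that $\alpha - 1$ acts injectively on $T/(\nu+\mu)T$. Indeed, $R/(\nu+\mu)R[1/p]$ is a finite-dimensional $\Qp$-algebra whose residue fields at its maximal ideals correspond to classical eigenform pairs $(f,g)$ of the given weights lying on $(\bff,\bfg)$; the image of $\alpha$ in such a residue field is $\alpha_f\cdot\varepsilon_g(p)^{-1}\cdot\alpha_g^{-1}$, where $\varepsilon_g$ is the tame nebentypus of $g$. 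By the Ramanujan--Petersson bound, $\alpha_f$ and $\alpha_g\,\varepsilon_g(p)$ are algebraic numbers with distinct complex absolute values $p^{(k+1)/2}$ and $p^{(k'+1)/2}$, so their ratio differs from $1$. Hence $\alpha-1$ is a unit of $R/(\nu+\mu)R[1/p]$, and multiplication by $\alpha-1$ is bijective on $T/(\nu+\mu)T[1/p]$. By Theorem~\ref{thm:ordinaryfiltration}(i), both $\sF^-M(\bff)^*$ and $\sF^+M(\bfg)^*$ are $\Lambda_D$-projective, so $T$ is projective over $A$, and $T/(\nu+\mu)T$ is projective over the $\Zp$-flat ring $A/(\nu+\mu)$, in particular $p$-torsion-free. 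The injectivity of $\alpha - 1$ on $T/(\nu+\mu)T[1/p]$ therefore descends to $T/(\nu+\mu)T$.

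Finally, let $x\in T$ satisfy $(\alpha-1)x=0$. By the previous step, $x\in(\nu+\mu)T$ for every arithmetic prime $(\nu,\mu)$ with $k\neq k'$. Projectivity of $T$ over $A$ gives the identity $\bigcap_\lambda I_\lambda T = \bigl(\bigcap_\lambda I_\lambda\bigr)T$ for any family of ideals $\{I_\lambda\}$ of $A$. Since $A$ is a finite product of two-variable formal power series rings over unramified extensions of $\Zp$, it is reduced, and the arithmetic primes with $k\neq k'$ are Zariski-dense in $\Spec A$ (on each irreducible component the two weights can be chosen independently from infinite dense subsets). Therefore $\bigcap(\nu+\mu) = 0$ in $A$, and $x = 0$, as required.

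The main subtlety is in the middle step, namely the identification of the residue fields of $R/(\nu+\mu)R[1/p]$ with classical eigenform pairs; this rests on the finiteness of $\Lambda_{\bff}$ and $\Lambda_{\bfg}$ over $\Lambda_D$ and the classical fact that Hida's ordinary Hecke algebras become \'etale, hence semisimple, after specialising at an arithmetic weight and inverting $p$, so that the Weil bound may be applied at every local factor.
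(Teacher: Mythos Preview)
Your argument follows the same strategy as the paper's: reduce to classical specialisations where a Weil-weight count shows that Frobenius cannot fix any nonzero vector, then pass to the limit. The paper fixes integers $k,k'>0$ with $k\neq k'$ and writes $M(\bff)^*$ and $M(\bfg)^*$ as inverse limits over the towers $\{I_{k,r}\}_{r\ge 1}$ and $\{I_{k',r}\}_{r\ge 1}$, applying the weight argument at each finite level; your projectivity-plus-Zariski-density reduction is an equivalent mechanism for the same descent.

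Two small corrections are needed, both of which the paper makes explicitly. First, you must take $k,k'>0$: in weight~$2$ there can be ordinary $p$-new forms, for which $|\alpha|=p^{k/2}$ rather than $p^{(k+1)/2}$, so your stated absolute values are wrong there (restricting to $k,k'>0$ does not affect Zariski density). Second, since $\sF^- M(\bff)^*$ comes from the cohomology of the \emph{open} curve, it may carry Eisenstein contributions when $\bff$ is Eisenstein modulo~$p$; for such components $\alpha_f$ is a root of unity (Weil weight~$0$), not of weight $k+1$. The paper handles this by observing that the Frobenius weights on $\sF^- M(\bff)^*/I_{k,r}$ lie in $\{0,\,1+k\}$, while those on $\sF^+ M(\bfg)^*(-1-\k')/I_{k',r}$ are all $-1-k'$ (because $\sF^+ H^1_{\ord}=\sF^+ H^1_{\ord,\Par}$ kills the Eisenstein part on the $\bfg$ side); the tensor product therefore has weights in $\{-1-k',\,k-k'\}$, neither of which is zero. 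With these two adjustments your argument is complete.
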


  \begin{proof}
   This is similar to the proof of Proposition \ref{prop:BFeltisSelmer}, but with the important difference that the Eisenstein series do not contribute to the plus filtration step $\sF^+ M(\bfg)^*$. We choose integers $k, k' \ge 0$ and write
   \[ M(\bff)^* = \varprojlim_{r \ge 1} M(\bff)^* / I_{k, r},\quad M(\bfg)^* = \varprojlim_{r \ge 1} M(\bfg)^* / I_{k', r} \]
   where $I_{k, r}$ is the ideal of $\Lambda_D$ appearing in Proposition \ref{prop:controlthm}. Note that $M(\bff)^* / I_{k, r}$ is isomorphic, by the control theorem, to the localisation of $H^1(Y_1(Np^r)_{\overline{\QQ}}, \TSym^k(\sH_{\Zp})(1))$ at $\bff$; in particular, it is a finitely-generated free $\Zp$-module. If we assume $k, k' > 0$ (so no specialisations of $\bff$ or $\bfg$ in weights $k, k'$ can be classical $p$-new forms), then every eigenvalue of Frobenius on $\sF^+ M(\bfg)^*(-1-\k') / I_{k', r}$ is a Weil number of weight $-1-k'$, and every eigenvalue of Frobenius on $\sF^- M(\bff)^* / I_{k, r}$ has weight either $0$ or $1 + k$. So if we also assume $k \ne k'$, it follows that no Frobenius eigenvalue on the module $\sF^- M(\bff)^*_{k, r} \otimes \sF^+ M(\bfg)^*_{k', r}(-1-\k')$ can be equal to 1, and passing to the limit over $r$ gives the result.
  \end{proof}

  \begin{definition}
   We shall write $\DD(\sF^{-+} M(\bff \otimes \bfg)^*)$ for the module $\DD(\sF^{-+} M(\bff \otimes \bfg)^*(-1-\k'))$, equipped with the non-trivial action of $\Gamma$ given by the character $1 + \k'$.
  \end{definition}

  This slightly contrived definition implies that the specialization of $\DD(\sF^{-+} M(\bff \otimes \bfg)^*) \htimes \Lambda_{\Gamma}$ at a triple $(\k, \k', \j) = (k, k', j)$ of integers is exactly $\DD_{\mathrm{cris}}$ of the corresponding specialization of $M$.

  \begin{theorem}
   \label{thm:bigloginterp}
   There is an injective morphism of $(\Lambda_\bff \htimes \Lambda_\bfg  \htimes \Lambda_\Gamma)$-modules
   \[
    \cL: H^1\left(\Qp, \sF^{-+} M(\bff \otimes \bfg)^* \htimes \Lambda_\Gamma(-\j)\right)
    \to \DD(\sF^{-+} M(\bff \otimes \bfg)^*) \htimes \Lambda_{\Gamma}
   \]
   with the following property: for all classical specialisations $f, g$ of $\bff, \bfg$, and all characters of $\Gamma$ of the form $\tau = j + \eta$ with $\eta$ of finite order and $j \in \ZZ$, we have a commutative diagram
   \begin{diagram}
    H^1\left(\Qp, \sF^{-+} M(\bff \otimes \bfg)^* \htimes \Lambda_{\Gamma}(-\j)\right) & \rTo^{\cL} & \DD(\sF^{-+} M(\bff \otimes \bfg)^*) \htimes \Lambda_{\Gamma} \\
    \dTo & & \dTo\\
    H^1(\Qp, \sF^{-+} M(f \otimes g)^*(-j-\eta)) & \rTo & \DD_{\mathrm{cris}}(\sF^{-+} M(f \otimes g)^*(-\varepsilon_{g, p}))
   \end{diagram}
   in which the bottom horizontal map is given by
   \[
    \left.\begin{cases}
     \left(1 - \frac{p^j}{\alpha_f \beta_g}\right)\left(1 - \frac{\alpha_f \beta_g}{p^{1 + j}}\right)^{-1} & \text{if $r = 0$} \\
     \left(\frac{p^{1 + j}}{\alpha_f \beta_g}\right)^{r} G(\varepsilon)^{-1} & \text{if $r > 0$}
    \end{cases}\right\}
     \cdot
    \begin{cases}
     \tfrac{(-1)^{k'-j}}{(k'-j)!} \log & \text{if $j \le k'$,} \\ (j-k'-1)! \exp^* & \text{if $j > k'$.}
    \end{cases}
   \]
   Here $\exp^*$ and $\log$ are the Bloch--Kato dual-exponential and logarithm maps, $\varepsilon$ is the finite-order character $\varepsilon_{g, p} \cdot \eta^{-1}$ of $\Gamma$, $r \ge 0$ is the conductor of $\varepsilon$, and $G(\varepsilon) = \sum_{a \in (\ZZ / p^r \ZZ)^\times} \varepsilon(a) \zeta_{p^r}^a$ is the Gauss sum.
  \end{theorem}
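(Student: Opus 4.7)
The strategy is to reduce the construction to Theorem \ref{thm:PRreg} applied to an auxiliary unramified module. By Theorem \ref{thm:ordinaryfiltration}(iv), the Galois action on $\sF^+ M(\bfg)^*$ is the product of an unramified character and $\chi_{\mathrm{cyc}}^{1+\k'}$; consequently the module
\[ N \coloneqq \sF^{-+} M(\bff \otimes \bfg)^*(-1-\k') \]
is an inverse limit of unramified finite $\Zp[G_{\Qp}]$-modules, with arithmetic Frobenius given via the Hecke algebras by $U_p' \otimes \langle p^{-1}\rangle_N (U_p')^{-1}$. Applying Theorem \ref{thm:PRreg} to $N$ produces a $\Lambda_{\bff} \htimes \Lambda_{\bfg} \htimes \Lambda_{\Gamma}$-linear map
\[ \cL_N \colon H^1\left(\Qp, N \htimes \Lambda_{\Gamma}(-\j)\right) \to \DD(N) \htimes I^{-1}\Lambda_{\Gamma}.\]
Lemma \ref{lemma:plusminusinvariants} yields $H^0(\Qp, N) = 0$, and the last two bulleted properties of $\cL_N$ in Theorem \ref{thm:PRreg} then force $\cL_N$ to be injective and to land inside $\DD(N) \htimes \Lambda_{\Gamma}$.

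Next, the plan is to convert $\cL_N$ into the required $\cL$ via the tautological $G_{\Qp}$-equivariant identification
\[ \sF^{-+} M(\bff \otimes \bfg)^* \htimes \Lambda_{\Gamma}(-\j) \;\cong\; N \htimes \Lambda_{\Gamma}(-\j) \otimes \chi_{\mathrm{cyc}}^{1+\k'}. \]
I absorb the extra $\chi_{\mathrm{cyc}}^{1+\k'}$ into the $\Lambda_{\Gamma}$-factor by the $(\Lambda_{\bff} \htimes \Lambda_{\bfg})$-algebra automorphism $\tau$ of $R = \Lambda_{\bff} \htimes \Lambda_{\bfg} \htimes \Lambda_{\Gamma}$ defined on the $\Lambda_\Gamma$-component by $[\gamma] \mapsto [\gamma] \cdot \chi_{\mathrm{cyc}}(\gamma)^{-1-\k'}$; then $\cL \coloneqq \tau^* \cL_N$ has the correct source and is injective. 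The identification $\tau^*\DD(N) \cong \DD(\sF^{-+}M(\bff \otimes \bfg)^*)$ is built into the paper's definition of the latter.

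Finally, I verify the interpolation formula. Under $\tau$, a specialisation of $\cL$ at $(k, k', j+\eta)$ becomes a specialisation of $\cL_N$ at the shifted character $(k, k', (j-1-k') + \eta)$, at which $\cL_N$ reduces to the classical Perrin-Riou big-log map for the unramified crystalline $L_{\frP}$-representation $V = \sF^{-+}M(f \otimes g)^*(-1-k')$. A direct Frobenius calculation, using $\alpha_g\beta_g = p^{1+k'}\varepsilon_g(p)$ together with Theorem \ref{thm:ordinaryfiltration}(iii)--(iv), shows that arithmetic Frobenius acts on $V$ by $\alpha_f\beta_g / p^{1+k'}$. Substituting this value into the standard Perrin-Riou interpolation formula for unramified crystalline representations (in the form recorded in \cite{LZ}) and simplifying gives the stated Euler factor $\left(1 - \tfrac{p^j}{\alpha_f\beta_g}\right)\left(1 - \tfrac{\alpha_f\beta_g}{p^{1+j}}\right)^{-1}$ in the case $r = 0$, together with the Gauss-sum correction $G(\varepsilon)^{-1}$ and the factor $\left(p^{1+j}/(\alpha_f\beta_g)\right)^r$ when $r > 0$; the branching between $\log$ at $j \le k'$ and $\exp^*$ at $j > k'$ corresponds, after the cyclotomic shift by $\tau$, to the standard branching of Perrin-Riou's map across the Hodge--Tate weight of $V$. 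The main burden is the careful bookkeeping of Frobenius and cyclotomic-character conventions between $\sF^+M(\bfg)^*$ and its un-twist $N$, but no genuinely new input is needed beyond Theorem \ref{thm:PRreg} and Lemma \ref{lemma:plusminusinvariants}.
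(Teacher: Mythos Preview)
Your proposal is correct and follows essentially the same route as the paper: twist $\sF^{-+}M(\bff\otimes\bfg)^*$ by $(-1-\k')$ to make it unramified, apply Theorem~\ref{thm:PRreg}, use Lemma~\ref{lemma:plusminusinvariants} to kill the kernel and the pole along $I$, then undo the twist via the ring automorphism $[d]\mapsto d^{-1-\k'}[d]$ of $\Lambda_\Gamma$ and read off the interpolation from the standard Perrin--Riou formulae in \cite[Appendix~B]{LZ}. Your additional explicit computation of the Frobenius eigenvalue $\alpha_f\beta_g/p^{1+k'}$ on the specialised unramified module, and the translation of the $\log/\exp^*$ dichotomy through the cyclotomic shift, spell out exactly what the paper summarises as ``an elementary exercise''.
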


  \begin{remark}
   Here $\beta_g$ is defined by $\alpha_g \beta_g = p^{1 + k'} \varepsilon_{g, N}(p)$, where $\varepsilon_{g, N}$ is the prime-to-$p$ part of the nebentypus of $g$; this definition makes sense even when $g$ has level divisible by $p$, and the arithmetic Frobenius acts on $\sF^+ M(g)^*(-1-k'-\varepsilon_{g, p})$ as multiplication by $p^{-1-k'} \beta_g$.  In the proof of the explicit reciprocity law, we shall only use the interpolating property in the simplest case, when $f$ and $g$ have level prime to $p$ and $\tau(z) = z^j$, but we have given the general formula as this may be required in some applications of this work.

   In the case $r = 0$, we assume $\alpha_f \beta_g / p^{1 + j} \ne 1$ for simplicity (although a precise formula for the interpolating property of $\cL$ can be given without this assumption, cf.\ \cite[Theorem 3.1.2]{LVZ}).
  \end{remark}

  \begin{proof}
   We apply Theorem \ref{thm:PRreg} to the unramified module $M(-1-\k')$. By Lemma \ref{lemma:plusminusinvariants} the resulting big logarithm map $\cL$ is injective and has no poles, and by functoriality it is compatible with the $(\Lambda_{\bff} \htimes \Lambda_{\bfg}\htimes\Lambda_{\Gamma})$-module structure.

   We now pull back by the automorphism of this ring given by mapping $[d] \in \Lambda_{\Gamma}$ to $d^{-1-\k'} [d]$. The pullback of $M(-1-\k') \htimes \Lambda_\Gamma(-\j)$ is then $M \htimes \Lambda_\Gamma(-\j)$; and the pullback of $\DD(M(-1-\k)) \htimes \Lambda_{\Gamma}$ is the module $\DD(M) \htimes \Lambda_{\Gamma}$ defined above. The interpolating property is now an elementary exercise from the standard formulae for the Perrin-Riou logarithm, cf.\ \cite[Appendix B]{LZ}.
  \end{proof}

  Via Proposition \ref{prop:BFeltisSelmer}, we can map $\cBF^{\bff, \bfg}$ into $H^1(\Qp, \sF^{-+} M(\bff \otimes \bfg)^* \htimes \Lambda_{\Gamma}(-\j))$, and hence we can define $\cL\left(\cBF^{\bff, \bfg}\right) \in \DD(\sF^{-+} M(\bff \otimes \bfg)^*) \otimes \Lambda_\Gamma$. The goal of the next two sections of this paper will be Theorem \ref{lthm:explicitrecip}, which interprets this object as a $p$-adic $L$-function.


 \section{Comparison of Eichler--Shimura isomorphisms}
  \label{sect:ohtacompat}

 We now fill in a crucial technical ingredient required for the proof of the explicit reciprocity law (Theorem \ref{lthm:explicitrecip} of the introduction): a compatibility between the de Rham comparison isomorphisms $\comp_{\dR}$ of \eqref{eq:comparisoniso} for cusp forms of different weights in Hida families.

 While this can be proved directly, by purely geometric methods, we give an alternative argument based on Kato's explicit reciprocity law for his $\GL_2$ Euler system, as this is more in keeping with the flavour of the present paper and involves less translation between different normalisations. This argument is closely based on work of Daniel Delbourgo \cite{delbourgo08}, in particular Theorem 6.4 of \emph{op.cit.}.

 \subsection{Kato's \texorpdfstring{$\GL_2$}{GL(2)} Euler system}

  We now invoke Kato's Euler system theory. All references here are to \cite{Kato-p-adic}.

  Let $N, m \ge 1$, $k \ge 0$, and $r \in \ZZ$. Let $A \ge 1$ and $a \in \ZZ / A\ZZ$. Choose auxiliary integers $(c, d) > 1$ such that $(c, 6pmA) = (d, 6pmAN) = 1$. Following the constructions in \S 8.1.2, 8.9 of \emph{op.cit.} we construct cohomology classes
  \[ {}_{c, d} z_{1, N, m}^{(p)}(k+2, r, a(A), S) \in H^2\left(Y_1(N) \times \mu_m^\circ, \TSym^k(\sH_{\Zp})(2 - r)\right),
  \]
  which are compatible with norm maps in both $N$ and $m$ (8.7). We shall always assume $m$ is a power of $p$.

  \begin{remark}
   Kato defines slightly more general classes ${}_{c, d} z_{1, N, m}^{(p)}(k+2, r, r', a(A), S)$; we have chosen $r' = 1$ and $S$ the set of prime factors of $mpA$, and suppressed it from the notation.
  \end{remark}

  Kato shows the following explicit reciprocity law (Theorem 9.6) describing the image of this class under the dual exponential map, for $r = k+1$:

  \begin{theorem}[Kato's explicit reciprocity law]
   \label{thm:katorecip}
   For $r = k + 1$ we have
   \[
    \comp_{\dR}\left(\exp^*\left({}_{c, d} z_{1, N, m}^{(p)}(k+2, k+1, a(A))\right)\right)
    = R \cdot {}_{c, d} z_{1, N, m}(k+2, k+1, a(A))
   \]
   where ${}_{c, d} z_{1, N, m}(k+2, k+1, a(A))$ is a certain modular form of weight $k+2$, built from holomorphic Eisenstein series (5.2); $\comp_{\dR}$ is the Faltings--Tsuji comparison isomorphism (11.3.6, or \ref{eq:comparisoniso} in the present paper); and $R$ is an Euler factor (5.3 (2)) given by
   \[ R = \begin{cases}
     1 & \text{if $p \mid mA$,}\\
     1 - p^{-1-k} U_p'\sigma_p^{-1} & \text{if $p \nmid mA$ and $p \mid N$,}\\
     1 - p^{-1-k} T'_p \sigma_p^{-1} + p^{-1-k}\langle p \rangle^{-1} \sigma_p^{-2}& \text{if $p \nmid mAN$.}
    \end{cases}
   \]
  \end{theorem}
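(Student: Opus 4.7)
The plan is to follow Kato's original strategy (\S 9 of \cite{Kato-p-adic}), which reduces the general case to the weight 2 situation via a controlled variation argument and then computes the weight 2 dual exponential explicitly. Recall that ${}_{c,d}z^{(p)}_{1,N,m}(k+2, k+1, a(A))$ is built from a Clebsch--Gordan-type contraction applied to the image of a cup product of two Siegel unit classes $({}_c g, {}_d g)$ pushed forward under a diagonal embedding of modular curves. The key reason why $r = k+1$ is the ``critical'' twist is that $\exp^*$ lands in the degree $k+2$ piece of the Hodge filtration, which is exactly $\Fil^{k+1} H^1_{\dR}(Y_1(N), \TSym^k \sH_{\dR}) \cong M_{k+2}$; so the output is automatically a holomorphic modular form, and one only needs to identify the correct one.

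The main computation proceeds in two stages. For $k = 0$, the class is a cup product of two units in $H^1_{\et}(Y_1(N), \Zp(1))$, and the Bloch--Kato dual exponential on such classes is computed by the classical formula $\exp^*(\{u,v\}) = \log(u) \cdot \mathrm{dlog}(v)$ modulo $\Fil^1$; expanding ${}_c g$ and ${}_d g$ in $q$-series and matching against the known $q$-expansion of ${}_{c,d}z_{1,N,m}(2,1,a(A))$ closes this case. To propagate from $k=0$ to general $k \geq 0$, I would use the moment maps on the $\Lambda$-adic Eisenstein classes $\cEI_{b,N}$ of \S \ref{sect:EisIwasawa} (specifically Theorem \ref{thm:moments-motivic} and its de Rham counterpart), noting that the Faltings--Tsuji comparison $\comp_{\dR}$ is functorial with respect to twisting by the sheaves $\TSym^k \sH$. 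The Euler factor $R$ appears naturally: it is the ``correction term'' incurred when one computes $\exp^*$ of a class living at level $Np$ in terms of a class at level $N$, and its three cases exactly mirror the three cases of the pushforward relation in Theorems \ref{thm:pushforward} and \ref{thm:pushforward2}, namely $p \mid mA$, $p \nmid mA$ but $p \mid N$, and $p \nmid mAN$.

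The hard part is the explicit formula for $\exp^*$ in the weight 2 case, which is genuinely a statement in $p$-adic Hodge theory and is where most of the work in Kato's proof is concentrated. In Kato's approach this is handled via the generalized explicit reciprocity laws of Wiles and Kato for local fields, passing through Coates--Wiles homomorphisms in Lubin--Tate theory and ultimately reducing to a residue calculation on a formal torus. A conceptually cleaner alternative, parallel to the approach taken in \cite{BannaiKings} and \cite{KLZ1a} for the syntomic regulator on modular curves, is to factor $\exp^*$ through Besser's rigid syntomic cohomology and invoke the explicit description of the syntomic Eisenstein class by $p$-adic Eisenstein series; the compatibility between syntomic and étale dual exponentials then transports the formula. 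Either route requires a careful bookkeeping of normalisations so that the $p$-adic and archimedean Eisenstein series appearing on the de Rham side match up exactly with Kato's combinatorial expressions ${}_{c,d}z_{1,N,m}(k+2,k+1,a(A))$ built from holomorphic forms.
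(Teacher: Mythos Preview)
The paper does not prove this theorem at all. It is quoted verbatim from Kato's paper \cite{Kato-p-adic} as Theorem~9.6 there, and is used as a black-box input in \S\ref{sect:ohtacompat} to establish the compatibility of Ohta's $\Lambda$-adic Eichler--Shimura map with the Faltings--Tsuji comparison in higher weight (Theorem~\ref{thm:ohtacompat}). So there is nothing to compare your proposal against: the paper's ``proof'' is a citation.

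That said, your sketch has two substantive problems if taken as an outline of an independent proof. First, the weight-$2$ formula you write, $\exp^*(\{u,v\}) = \log(u)\cdot \mathrm{dlog}(v)$, is not the dual exponential of a cup product of units; this is closer to a syntomic regulator or Bloch--Kato logarithm formula, and the actual computation of $\exp^*$ in Kato's setting goes through the generalized explicit reciprocity law for $p$-divisible groups (Kato's earlier papers, or equivalently Colmez--Cherbonnier/Berger machinery), not a direct $q$-expansion match. Second, your proposed reduction from general $k$ to $k=0$ via the moment maps of \S\ref{sect:EisIwasawa} requires a de Rham analogue of Theorem~\ref{thm:moments-motivic} together with compatibility of $\comp_{\dR}$ with those moment maps --- but this is precisely the content of Theorem~\ref{thm:ohtacompat}, whose proof in the present paper \emph{uses} Kato's reciprocity law as input. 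So within the logical structure of this paper the argument would be circular. Kato's own reduction (in \S\S8--9 of \cite{Kato-p-adic}) proceeds instead by a direct twisting construction on Kuga--Sato varieties, independent of any $\Lambda$-adic interpolation.
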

  We shall use this theorem to show that the maps $\comp_{\dR}$ must interpolate in families, since the terms $\exp^*\left({}_{c, d} z_{1, N, m}^{(p)}(k+2, k+1, a(A))\right)$ and ${}_{c, d} z_{1, N, m}(k+2, k+1, a(A))$ both interpolate, and the submodule that they span is large.

 \subsection{Interpolation}

  \begin{proposition}
   There is a \emph{``Beilinson--Kato class''}
   \[  {}_{c, d} \mathcal{BK}_{N}(a(A)) \in H^2_{\et}\Big(Y_1(N), \Lambda(\sH_{\Zp} \times \ZZ_p^\times(1))(2)\Big) \]
   whose image under the moment map
   \[ \mom^{k, -j}: \Lambda(\sH_{\Zp} \times \ZZ_p^\times(1)) \to \TSym^k(\sH_{\Zp}) \otimes \Zp(-j),\]
   for any $k \ge 0$ and $j \in \ZZ$, is the class ${}_{c, d} z_{1, N, m}^{(p)}(k+2, j, a(A))$.
  \end{proposition}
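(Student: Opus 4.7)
The plan is to mimic Kato's construction of the elements ${}_{c, d} z_{1, N, m}^{(p)}(k{+}2, j, a(A))$ in \S8 of \cite{Kato-p-adic}, replacing each finite-level ingredient by its $\Lambda$-adic interpolation. Recall that Kato builds his class (at fixed $k$ and $j$) as a cup product of two Kummer images of Siegel units, $\partial({}_c g_{0, 1/N})$ and $\partial({}_d g_{a/A, 1/m})$, on an appropriate covering of $Y_1(N)\times\mu_m^\circ$, coupled with the tautological section of $\TSym^k\sH_{\Zp}$ (encoding the weight) and a compatible system of $p$-power roots of unity (encoding the twist), and then pushed forward.

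For the first factor I would use the Eisenstein--Iwasawa class $\cEI_{1, N}\in H^1_\et(Y_1(N),\Lambda(\sH_{\Zp}\langle t_N\rangle)(1))$, which by Theorem~\ref{thm:Ohta-twisting} interpolates the Kummer images $\partial({}_c g_{0, 1/Np^s})$ as $s$ varies, with $\mom^k$ recovering (up to the standard factor involving $c$) the weight-$k$ \'etale Eisenstein class. For the second factor I would construct an analogous ``purely cyclotomic'' $\Lambda$-adic class ${}_d\mathcal{Z}_m(a(A)) \in H^1_\et(Y_1(N),\Lambda(\ZZ_p^\times(1))(1))$ as the inverse limit of $\partial({}_d g_{a/A, 1/mp^s})$ over $s$; this is a special case of the same Iwasawa-sheaf construction in which only the cyclotomic tower $\mu_{mp^s}^\circ$ varies. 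Taking the cup product of these two $\Lambda$-adic classes on $Y_1(N)$, and identifying the resulting coefficient sheaf $\Lambda(\sH_{\Zp}\langle t_N\rangle)\htimes\Lambda(\ZZ_p^\times(1))$ with $\Lambda(\sH_{\Zp}\times\ZZ_p^\times(1))$, produces the desired ${}_{c,d}\mathcal{BK}_N(a(A))$. The interpolation property is then formal: $\mom^{k,-j}$ factors as $\mom^k\otimes\mom^{-j}$, cup products commute with moment maps (both being induced by ring homomorphisms of the relevant Iwasawa sheaves), and the images of the two factors under their respective moments are exactly the finite-level ingredients that Kato cups together to define ${}_{c,d}z^{(p)}_{1,N,m}(k{+}2,j,a(A))$.

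The main obstacle, and really the only non-formal part of the argument, is reconciling Kato's normalisations in \S\S8.1--8.9 of \cite{Kato-p-adic} with those used elsewhere in the present paper: specifically the auxiliary factors involving $c$ and $d$, the Tate twists carried by the two legs of $\Lambda(\sH_{\Zp}\times\ZZ_p^\times(1))$, and the conventions for diamond operators versus the arithmetic Frobenius action on $\mu_{mp^s}^\circ$. Once these are carefully aligned the interpolation property holds by construction, essentially because both ${}_{c,d}\mathcal{BK}_N(a(A))$ and Kato's zeta elements are exhibited as the ``diagonal'' term in a compatible system indexed by $s$.
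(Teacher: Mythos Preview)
Your approach is workable, but the paper's argument is considerably shorter and takes a different route. Rather than rebuilding the class as a cup product of two separate $\Lambda$-adic objects, the paper simply observes that Kato's own construction in \cite[(8.4.3)]{Kato-p-adic} already produces the classes ${}_{c,d}z_{1,N,m}^{(p)}(k,r)$ as an inverse-limit object, and this inverse limit is precisely a class in $H^2_\et(Y_1(N),\Lambda(\sH_{\Zp}\times\ZZ_p^\times(1))(2))$ with the required moment behaviour. For the base case $a(A)=0(1)$ there is nothing further to do; for general $A$, one only needs the observation that the moment maps commute with the pushforward along Kato's map $t_{m,a(A)}$, which is immediate from the definitions.

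Your reconstruction via the cup product $\cEI_{1,N}\cup{}_d\mathcal{Z}_m(a(A))$ would ultimately arrive at the same object, but it requires carrying out and verifying precisely the step you flag as ``the main obstacle'': matching your cup product against Kato's actual recipe in \S\S8.1--8.9, which is not literally a cup product on $Y_1(N)$ but involves pushforward from an auxiliary covering together with twisting operators. This is doable but genuinely laborious, and you somewhat understate it by calling it a matter of normalisations. The paper sidesteps all of this by taking Kato's inverse-limit construction as the \emph{definition} of ${}_{c,d}\mathcal{BK}_N(a(A))$, so the interpolation property holds tautologically.
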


  \begin{proof}
   For $a(A) = 0(1)$ this is clear from the construction of the classes ${}_{c, d} z_{1, N, m}^{(p)}(k, r)$, see (8.4.3). For $A > 1$ we need to check that the moment map commutes with the pushforward along the map $t_{m, a(A)}$ used to define $z_{1, N, m}^{(p)}(k, r, a(A))$; but this is clear from the definitions.
  \end{proof}

  We shall relate this class to modular forms using the Perrin-Riou big logarithm, as in the $\GL_2 \times \GL_2$ theory of the previous sections. Using the Hochschild--Serre spectral sequence, we can and do interpret ${}_{c, d} \mathcal{BK}_{N}(a(A))$ as an element of the module
  \[ H^1\left(\ZZ[1/Np], H^1_{\ord}(Np^\infty) \htimes \Lambda_\Gamma(1-\j)\right).\]
  The Perrin-Riou logarithm in this context is a map of $(\Lambda_D \htimes \Lambda_\Gamma)$-modules
  \[ \cL: H^1\left(\Qp, \sF^- H^1_{\ord}(Np^\infty) \htimes \Lambda_\Gamma(1-\j)\right) \rTo \DD\left(\sF^- H^1_{\ord}(Np^\infty)\right)\htimes I^{-1} \Lambda_{\Gamma}, \]
  where $I$ is the fractional ideal of $\Lambda_\Gamma$ as in \S \ref{sect:PRreg} above, with the following interpolation property: for any $k \ge 0$, any $r \ge 1$, any $j \ge 1$ and any Dirichlet character $\chi$ of $p$-power conductor $p^s$, there is a commutative diagram
  \begin{diagram}
   H^1\left(\Qp, \sF^- H^1_{\ord}(Np^\infty) \htimes \Lambda_\Gamma(1-\j)\right) &\rTo^{\cL}& \DD\left(\sF^- H^1_{\ord}(Np^\infty)\right)\htimes I^{-1}\Lambda_{\Gamma}\\
   \dTo^{\pr_{k, r} \otimes \operatorname{ev}_{\j = j + \chi}} & & \dTo_{\pr_{k, r} \otimes \operatorname{ev}_{\j = j + \chi}}\\
   H^1\left(\Qp, \sF^- V_{k, r} (1-j-\chi)\right) &\rTo^{S \cdot \exp^*} & \DD\left(\sF^- V_{k, r}\right)
  \end{diagram}
  where $V_{k, r} = e'_{\ord} H^1_{\et}\left(Y_1(Np^r)_{\overline{\QQ}_p}, \TSym^k(\sH_{\Zp})(1)\right)$, which acquires from the control theorem a two-step filtration such that $\sF^- V_{k, r}$ is unramified, and $\pr_{k, r}$ is the natual map $H^1_{\ord}(Np^\infty) \to V_{k, r}$ given by the control theorem. The factor $S$ is given by
  \[
   S = (j-1)!
   \begin{cases}
    (1 - p^{j-1} (U_p')^{-1})(1 - p^{-j} U_p')^{-1} & \text{if $s = 0$},\\
    G(\chi^{-1})^{-1} p^{sj}(U_p')^s & \text{if $s \ge 1$.}
   \end{cases}
  \]
  Here $G(\chi^{-1}) = \sum_{a \in (\ZZ / p^s\ZZ)^\times} \chi^{-1}(a) \zeta_{p^s}^a$ is the Gauss sum, as usual. We will apply this with $s \ge 1$ and $j = k+1$, so that the factor $S$ is given by $\frac{k! p^{(k+1)s}}{G(\chi^{-1}) (U_p')^{s}}$.

 \subsection{Atkin--Lehner operators on Kato's Eisenstein series}

  Let $N \ge 1$ be coprime to $p$. For $\chi_1, \chi_2$ primitive Dirichlet characters of $p$-power conductor (possibly trivial), and $t \in \ZZ / N\ZZ$,  let us introduce the notations
  \begin{align*}
   \zeta\left(t(N), \chi_1, s\right) &\coloneqq \sum_{\substack{n \ge 1 \\ n = t \bmod N}} \chi_1(n) n^{-s},\\
   \sigma_{k-1} \left(t(N), \chi_1, \chi_2, n\right) &\coloneqq\sum_{\substack{uv = n \\ u = t \bmod N}} \chi_1(u) \chi_2(v) v^{k-1}.
  \end{align*}
  As usual, we understand $\chi(n) = 0$ if $n$ is not coprime to the conductor of $\chi$.

  \begin{proposition}
   Let $k \ge 1$ and write $\pm = (-1)^k \chi_1(-1)\chi_2(-1)$. Suppose that either $k = 1$, or $\chi_1$ and $\chi_2$ are both nontrivial. Then the series
   \[
    G^{(k)}\left(t(N), \chi_1, \chi_2\right) \coloneqq
    a_0 + \sum_{n \ge 1} q^n \Big(\sigma_{k-1}(t(N), \chi_1, \chi_2, n) \pm \sigma_{k-1}(-t(N), \chi_1, \chi_2, n)\Big)
   \]
   is a modular form of weight $k$ and level $N \operatorname{cond}(\chi_1) \operatorname{cond}(\chi_2)$, where the constant term is given by
   \[ a_0 =
    \begin{cases}
     0 & \text{if $\chi_1$ and $\chi_2$ are both nontrivial,}\\
     \tfrac{1}{2}\left(\zeta\left(t(N), \chi_1, 0\right) \pm \zeta\left(-t(N), \chi_1, 0\right)\right)
     &\text{if $k = 1$ and $\chi_2$ is trivial.}
    \end{cases}
   \]
  \end{proposition}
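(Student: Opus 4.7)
The plan is to identify $G^{(k)}\bigl(t(N),\chi_1,\chi_2\bigr)$ with a classical Eisenstein series of weight $k$ and level $M \coloneqq N\operatorname{cond}(\chi_1)\operatorname{cond}(\chi_2)$, whose modularity is standard. First I would write the auxiliary ``partial'' Eisenstein series built from a lattice sum
\[
  E_k^{t,\chi_1,\chi_2}(\tau) \;=\; \mathop{{\sum}'}_{(m,n)\in\ZZ^2} \frac{\chi_1(n)\chi_2(m)\,\mathbf{1}[n\equiv t \bmod N]}{(m\tau+n)^k},
\]
where the prime omits $(0,0)$. For $k\ge 2$ this converges absolutely, and its modularity under $\Gamma_1(M)$ is immediate from the transformation $(m,n)\mapsto (m,n)\stbt abcd$, using that the characters $\chi_1,\chi_2$ and the residue $t \bmod N$ are fixed by $\stbt abcd \equiv 1 \bmod \stbt MMMM$.

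Next I would compute the $q$-expansion of $E_k^{t,\chi_1,\chi_2}$ using the Lipschitz--Hurwitz formula
\[
  \sum_{n\in\ZZ}\frac{1}{(\tau+n)^k} \;=\; \frac{(-2\pi i)^k}{(k-1)!}\sum_{d\ge 1}d^{k-1}q^d \qquad (k\ge 2),
\]
applied term by term in the summation over $m\neq 0$, splitting into $m>0$ and $m<0$. The contribution from $m<0$ transforms under $(m,n)\mapsto(-m,-n)$ into the contribution from $m>0$ with $t$ replaced by $-t$ and an overall factor $(-1)^k\chi_1(-1)\chi_2(-1) = \pm$. Combining, the non-constant terms produce exactly
\[
  \tfrac{(-2\pi i)^k}{(k-1)!}\sum_{n\ge 1}q^n\!\left(\sigma_{k-1}(t(N),\chi_1,\chi_2,n)\;\pm\;\sigma_{k-1}(-t(N),\chi_1,\chi_2,n)\right),
\]
while the $m=0$ term gives a constant involving $\zeta\bigl(t(N),\chi_1,0\bigr)\pm\zeta\bigl(-t(N),\chi_1,0\bigr)$ times $L(\chi_2,k)$-type factors, which vanishes when $\chi_2$ is non-trivial (since the inner $\ZZ$-sum vanishes by standard cancellation). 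Rescaling by $\tfrac{(k-1)!}{(-2\pi i)^k}$ gives $G^{(k)}(t(N),\chi_1,\chi_2)$ up to the claimed constant term.

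For $k=1$ the lattice sum is not absolutely convergent, so I would invoke Hecke's trick: introduce the real-analytic family
\[
  E_1^{t,\chi_1,\chi_2}(\tau,s) \;=\; \mathop{{\sum}'}_{(m,n)} \frac{\chi_1(n)\chi_2(m)\,\mathbf{1}[n\equiv t\bmod N]}{(m\tau+n)}\,\frac{y^s}{|m\tau+n|^{2s}},
\]
show that it continues meromorphically in $s$, is regular at $s=0$ (using non-triviality of $\chi_1$ when $\chi_2$ is trivial to kill the potential pole), and that its value there is holomorphic in $\tau$ with the required $q$-expansion. This is where the hypothesis ``$k=1$ or both $\chi_i$ non-trivial'' enters: it is precisely what rules out the non-holomorphic boundary term in Hecke's construction and, in the $\chi_2$-trivial case, allows the constant term to be evaluated as the stated special value of the partial Dirichlet $L$-function.

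The main obstacle is bookkeeping rather than conceptual: tracking the normalisation constants, the sign conventions producing the ``$\pm$'', and verifying that the constant term in the $k=1$, $\chi_2=1$ case matches $\tfrac12\bigl(\zeta(t(N),\chi_1,0)\pm\zeta(-t(N),\chi_1,0)\bigr)$ via the functional equation / Hurwitz-zeta formula. The modularity itself is essentially forced by the symmetry of the lattice sum once convergence (or Hecke-continued convergence) is in hand.
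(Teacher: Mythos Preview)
The paper does not actually prove this proposition: its entire proof reads ``This is standard. (The restriction on $k, \chi_1, \chi_2$ can be relaxed, of course, but this covers all the cases we shall use.)'' So there is nothing to compare against except the standard argument, and your sketch \emph{is} the standard argument: realise the $q$-series as a lattice-sum Eisenstein series, read off modularity from the $\Gamma_1(M)$-invariance of the summand, compute the Fourier expansion via the Lipschitz formula, and for $k=1$ use Hecke's analytic continuation, with the hypothesis on $\chi_1,\chi_2$ killing the potential non-holomorphic term.

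One bookkeeping correction is worth flagging. In the divisor sum $\sigma_{k-1}(t(N),\chi_1,\chi_2,n)=\sum_{uv=n,\ u\equiv t\,(N)}\chi_1(u)\chi_2(v)v^{k-1}$, the congruence condition and $\chi_1$ sit on the factor $u$ that does \emph{not} carry the $v^{k-1}$. When you apply the Lipschitz formula to the inner sum of a lattice series $\sum_{m,n}(\cdots)(m\tau+n)^{-k}$, the power $v^{k-1}$ arises as the Fourier index dual to the \emph{$n$}-variable, while the outer sum over $m$ contributes directly as $u=m$. So to land on the stated $q$-expansion you want the condition $m\equiv t\pmod N$ and the character $\chi_1(m)$ on the $\tau$-coefficient $m$, and (the Gauss-sum twist of) $\chi_2$ on $n$; your displayed $E_k^{t,\chi_1,\chi_2}$ has these swapped. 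This is exactly the kind of normalisation issue you already anticipated, and once corrected the rest of your outline goes through without change.
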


  \begin{proof}
   This is standard. (The restriction on $k, \chi_1, \chi_2$ can be relaxed, of course, but this covers all the cases we shall use.)
  \end{proof}

  \begin{theorem}
   \label{thm:ALaction}
   Suppose $L \ge 1$ with $p \nmid L$, and let $r,s,t$ be integers with $s, t \ge 1$ and $r \ge s + t$.

   Let $\chi$, $\nu$ and $\varepsilon$ be Dirichlet characters modulo $p^s$, $p^t$, and $p^r$ respectively, with $\chi$ primitive, satisfying the sign condition $\chi(-1) \nu(-1) = (-1)^{k + 1}$. Then we have
   \begin{multline*}
    \sum_{\alpha \in (\ZZ / p^r \ZZ)^\times} \varepsilon(\alpha)^{-1} (\langle \alpha\rangle_p
    \circ U_p^r \circ W_{Np^r}^{-1})\left[\sum_{\substack{a \in (\ZZ / p^t\ZZ)^\times \\ b \in (\ZZ / p^s\ZZ)^\times}} \nu(a)^{-1} \chi(b)^{-1} \sigma_b \cdot {}_{c, d} z_{1, Np^r, p^s}(k+2, k+1, a(p^t))\right]  \\
    = \frac{G(\chi^{-1})  U_p^{s + t}}{k! p^{(k+1)s} \nu(N)^{-1}} \cdot \left[(c^2 - c^{k + 1} \chi(c) \nu(c))G^{(k+1)}(0(N), \nu^{-1}, \chi)\right] \cdot\, \\
    \left[ d^2 G^{(1)} \left(1(N), \varepsilon \nu \chi^{-1}, \mathrm{id}\right) - d\, \varepsilon \nu \chi^{-1}(d^{-1}) G^{(1)} \left(d(N), \varepsilon \nu \chi^{-1}, \mathrm{id}\right)\right]
   \end{multline*}
  \end{theorem}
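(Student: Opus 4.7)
The plan is to prove the identity by direct computation, unwinding Kato's zeta elements into explicit combinations of Eisenstein series and then carefully tracking the action of $W_{Np^r}^{-1}$, $U_p^r$, the diamond operators, and the Galois action $\sigma_b$. First I would use Kato's formula (5.2 of \cite{Kato-p-adic}) to express ${}_{c,d} z_{1, Np^r, p^s}(k+2, k+1, a(p^t))$ as a product of two holomorphic Eisenstein series — a weight $k+1$ piece whose characters depend on $b$, and a weight $1$ piece whose characters depend on $a$ — together with the standard $c$- and $d$-factors. The sign condition $\chi(-1)\nu(-1) = (-1)^{k+1}$ in the hypothesis is exactly the parity needed so that the resulting weight-$1$ and weight-$(k+1)$ Eisenstein series are nonzero.

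Next I would compute the effect of $W_{Np^r}^{-1}$ on this product. Since the levels of the two Eisenstein factors divide $Np^r$ and all the ramification at $p$ lies on one factor, the Atkin-Lehner involution can be analysed factor-by-factor by transforming $q$-expansions explicitly (using Kato's conventions, which differ from those of \S 2.5 by the matrix computed in Remark \ref{rmk:conventions}). This will turn an Eisenstein series of characters $(\chi_1, \chi_2)$ into one with the conjugate pair, multiplied by an explicit Gauss sum and a power of $p$. Applying $U_p^r$ then extracts the $p^r$-th Fourier coefficients, and the diamond twist $\langle \alpha \rangle_p$ followed by the $\alpha$-sum weighted by $\varepsilon(\alpha)^{-1}$ projects onto the $\varepsilon$-component. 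This is the step where the characters $\nu$, $\chi$ and $\varepsilon$ get redistributed between the two Eisenstein factors, giving rise to the characters $(\nu^{-1}, \chi)$ on the weight-$(k+1)$ side and $(\varepsilon\nu\chi^{-1}, \mathrm{id})$ on the weight-$1$ side.

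Then I would perform the double summation over $a \in (\ZZ/p^t\ZZ)^\times$ and $b \in (\ZZ/p^s\ZZ)^\times$ with weights $\nu(a)^{-1}$ and $\chi(b)^{-1} \sigma_b$. The Galois action $\sigma_b$ interacts with the roots of unity appearing in the Eisenstein $q$-expansions, and combining $\sigma_b$ with $\chi(b)^{-1}$ collapses the $b$-sum to a single Gauss sum $G(\chi^{-1})$, producing the factor $G(\chi^{-1}) U_p^{s+t}/(k!\, p^{(k+1)s} \nu(N)^{-1})$. The remaining $a$-sum reassembles into $G^{(1)}(1(N), \varepsilon\nu\chi^{-1}, \mathrm{id})$, while the $b$-independent part gives $G^{(k+1)}(0(N), \nu^{-1}, \chi)$. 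The $c$- and $d$-factors separate cleanly: the $c$-factor attaches to the weight-$(k+1)$ Eisenstein (yielding $c^2 - c^{k+1}\chi(c)\nu(c)$) because the $c$-smoothing in Kato's definition multiplies against the weight-$(k+1)$ series, while the $d$-smoothing produces the expression $d^2 G^{(1)}(1(N),\dots) - d\,\varepsilon\nu\chi^{-1}(d^{-1})\, G^{(1)}(d(N),\dots)$ from the displacement of the weight-$1$ Eisenstein argument by $d$.

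The main obstacle will be the bookkeeping of normalizations: matching Kato's conventions for $W_{Np^r}$, the Tate twist, the Eisenstein series $F^{(k+2)}_{0, 1/N}$ and the $t_{p^s}$-embedding, against the conventions used elsewhere in this paper, while simultaneously keeping track of Gauss sums, powers of $p$, and signs. A subsidiary care point is that when $\chi$ and $\nu^{-1}$ are both nontrivial the constant term of $G^{(k+1)}(0(N),\nu^{-1},\chi)$ vanishes (cleanly separating the two Eisenstein factors), whereas $G^{(1)}(t(N), \varepsilon\nu\chi^{-1}, \mathrm{id})$ has a nonzero constant term built from $\zeta(t(N), \varepsilon\nu\chi^{-1}, 0)$; this asymmetry must appear in the final formula, and indeed does, through the $d^2 - d(\dots)$ difference which is designed to kill the relevant constant-term contribution at the cusp and hence produce a \emph{cuspidal} class after the application of the explicit reciprocity law in the next subsection.
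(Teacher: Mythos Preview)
Your proposal is correct and takes essentially the same approach as the paper: the paper's own proof simply reads ``This can be shown via a lengthy explicit calculation from the formulae for the modular form ${}_{c, d} z_{1, N, m}(k+2, k+1, a(A))$ given in \cite[Proposition 5.8]{Kato-p-adic}'', and what you have outlined is precisely such a calculation. Your reference to \S5.2 of Kato should be sharpened to Proposition 5.8, which gives the explicit $q$-expansion formula needed, but otherwise the strategy and the identification of the key bookkeeping obstacles are accurate.
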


  \begin{proof}
   This can be shown via a lengthy explicit calculation from the formulae for the modular form ${}_{c, d} z_{1, N, m}(k+2, k+1, a(A))$ given in \cite[Proposition 5.8]{Kato-p-adic}.
  \end{proof}

  We now fix a finite extension $E / \Qp$ containing the values of the character $\nu$.

  \begin{proposition}
   There are $p$-adic modular forms ${}_c \mathcal{G}(0(N), \nu^{-1}, \j)$ and ${}_d \mathcal{G}(1(N), 2 + \nu + \k - \j, \mathrm{id})$ with coefficients in $\Lambda_D \htimes \Lambda_\Gamma \otimes \cO_E$ and weight-characters $- \nu + \j$ and $2 + \nu + \k - \j$ respectively, whose images under evaluation at $\k = k + \varepsilon, \j = k + 1 + \chi$, for any $k \ge 0$ and Dirichlet characters $\chi, \varepsilon$, are the two factors in square brackets on the right-hand side of Theorem \ref{thm:ALaction}.
  \end{proposition}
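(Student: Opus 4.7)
The plan is to produce both $\Lambda$-adic modular forms by writing down formal $q$-expansions with coefficients in $\Lambda_D \htimes \Lambda_\Gamma \otimes \cO_E$ and then recognising them as members of two-variable families of holomorphic Eisenstein series. The content of the proposition is essentially that Kato's Eisenstein series ${}_{c,d} z_{1, N, m}(k+2, k+1, a(A))$ fit together $p$-adically as $k$ and the auxiliary characters vary, so the argument is really just an exercise in specialising the classical Katz--Serre theory of $p$-adic Eisenstein measures to the specific families appearing in Theorem~\ref{thm:ALaction}.

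First, I would unwind the Fourier expansion of $G^{(k+1)}(0(N), \nu^{-1}, \chi)$ using the preceding proposition. Since $t = 0$ equals $-t$, the two divisor sums combine, so for $n \ge 1$ the Fourier coefficients are (twice) the partial divisor sums $\sigma_k(0(N), \nu^{-1}, \chi, n) = \sum_{uv = n,\ N \mid u} \nu^{-1}(u)\, \chi(v) v^k$, while the constant term is $\zeta(0(N), \nu^{-1}, 0)$ when $\chi = 1$ and zero otherwise. Interpreting $\chi(v) v^k = v^{-1} \j(v)$ at the specialisation $\j = k + 1 + \chi$, I would define a formal $q$-expansion
\[
 \widetilde{\mathcal G}(0(N), \nu^{-1}, \j) \;\coloneqq\; a_0(\j) \;+\; 2\sum_{n \ge 1} q^n \sum_{\substack{uv = n \\ N\mid u,\ (v,p) = 1}} \nu^{-1}(u)\, v^{-1} [v]_\Gamma,
\]
where $[v]_\Gamma \in \Lambda_\Gamma^\times$ is the group-like element corresponding to $v \in \ZZ_p^\times$, and $a_0(\j)$ is the image of the Kubota--Leopoldt pseudo-measure on $\Gamma$ interpolating the partial $\zeta$-values above. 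The scalar $c^2 - c^{k+1}\chi(c)\nu(c)$ interpolates to $c^2 - \nu(c)\, c^{-1} [c]_\Gamma$, which is exactly the factor required to clear the unique simple pole of the Kubota--Leopoldt pseudo-measure; multiplying through yields an honest element ${}_c \mathcal G(0(N), \nu^{-1}, \j) \in (\Lambda_\Gamma \otimes \cO_E)[[q]]$ of weight character $-\nu + \j$.

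Second, I would treat the $d$-factor analogously. The combination
\[
 d^2 G^{(1)}(1(N), \varepsilon\nu\chi^{-1}, \mathrm{id}) - d\cdot(\varepsilon\nu\chi^{-1})(d^{-1}) \, G^{(1)}(d(N), \varepsilon\nu\chi^{-1}, \mathrm{id})
\]
has Fourier coefficients involving only the single Dirichlet character $\varepsilon\nu\chi^{-1}$ (no $v^k$ factor, since the weight is one), and on the $p$-part this character corresponds to $2 + \nu + \k - \j$ under $\k = k + \varepsilon$, $\j = k+1+\chi$. I would package the two $G^{(1)}$ Fourier expansions into a single formal series in $(\Lambda_D \htimes \Lambda_\Gamma \otimes \cO_E)[[q]]$, with the $d$-linear combination again clearing the denominators in the constant term. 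Finally I would verify that both formal series actually define $p$-adic modular forms, either by appealing directly to Katz's two-variable Eisenstein measure on $\ZZ_p^\times \times \ZZ_p^\times$ (whose moments are, up to standard renormalisation, the classical Eisenstein series $G^{(k+1)}(t(N), \chi_1, \chi_2)$), or by invoking the $q$-expansion principle together with Zariski density of the arithmetic specialisations, whose values are classical holomorphic Eisenstein series by the preceding proposition.

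The main obstacle is the constant-term analysis for the first form: one must check that multiplication by the $c$-factor genuinely produces an element of $\Lambda_\Gamma \otimes \cO_E$ (rather than of its total ring of fractions), which requires the standard integrality of the Kubota--Leopoldt $L$-function and a careful check that the pole of the Eisenstein measure is cancelled by the prescribed $c$-factor. A secondary piece of book-keeping consists in matching the weight characters $-\nu + \j$ and $2 + \nu + \k - \j$ to the natural parametrisation of the Katz Eisenstein measure, but once the variables are correctly identified this becomes a term-by-term reading against the formulae in \cite[\S 3, \S 5]{Kato-p-adic}.
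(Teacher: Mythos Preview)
Your overall strategy---write down explicit $q$-expansions in $\Lambda_D \htimes \Lambda_\Gamma \otimes \cO_E$ and check that they specialise to the classical Eisenstein series appearing in Theorem~\ref{thm:ALaction}---is exactly the approach of the paper. However, you have the constant-term analysis backwards, and this is not merely cosmetic.

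For the first form ${}_c\mathcal{G}(0(N), \nu^{-1}, \j)$, there is \emph{no constant term at all}. In the setting of Theorem~\ref{thm:ALaction} the character $\chi$ has conductor $p^s$ with $s \ge 1$, so $\chi$ is nontrivial; hence by the preceding proposition the constant term of $G^{(k+1)}(0(N), \nu^{-1}, \chi)$ vanishes. The paper therefore simply defines $\mathcal{G}(0(N), \nu^{-1}, \j)$ as the $p$-depleted non-constant series $2\sum_{p \nmid n} q^n \sum_{uv=n,\ N \mid u} \nu^{-1}(u) v^{\j-1}$ and multiplies by $(c^2 - c^{\j}\nu(c))$; there is no Kubota--Leopoldt pseudo-measure and no pole to cancel. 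Your proposed $a_0(\j)$ would have to vanish at every specialisation $\j = k+1+\chi$ with $\chi$ nontrivial, which is not what a Kubota--Leopoldt measure does.

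It is the \emph{second} form ${}_d\mathcal{G}(1(N), 2+\nu+\k-\j, \mathrm{id})$ whose constant term is genuinely nontrivial and requires care. The weight-one Eisenstein series $G^{(1)}(t(N), \varepsilon\nu\chi^{-1}, \mathrm{id})$ has $\chi_2 = \mathrm{id}$ trivial, so its constant term is the partial zeta value $\zeta(t(N), \varepsilon\nu\chi^{-1}, 0)$. These interpolate to an element $\zeta_p(t(N), 1+\nu+\k-\j) \in \Frac\Lambda_\Gamma$ with a simple pole, and it is precisely the $d$-regularisation $d^2\zeta_p(t(N),-) - d^{1-\cdot}\zeta_p(dt(N),-)$ that lands in $\Lambda_\Gamma$. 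So the ``main obstacle'' you identify is real, but it lives in the second factor, not the first. (There is also a harmless off-by-one in your $c$-factor: $c^{k+1}\chi(c)$ interpolates to $c^{\j}$, not $c^{-1}[c]_\Gamma$.)
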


  \begin{proof}
   We write
   \[ \mathcal{G}(0(N), \nu^{-1}, \j) \coloneqq 2 \sum_{\substack{n \ge 1 \\ p \nmid n}} q^n \left(\sum_{\substack{uv = n \\ u = 0 \bmod N}} \nu^{-1}(u) v^{\j-1}\right),\]
   and ${}_c \mathcal{G}(0(N), \nu^{-1}, \j) = (c^2 - c^{\j} \nu(c)) \mathcal{G}(0(N), \nu^{-1}, \j)$. Then evaluating at $\j = k + 1 + \chi$ maps this to the first Eisenstein series $(c^2 - c^{k + 1} \chi(c) \nu(c))G^{(k+1)}(0(N), \nu^{-1}, \chi)$ appearing in Theorem \ref{thm:ALaction}.

   For the second factor, for $n \ge 1$ the coefficient of $q^n$ in $G^{(1)} \left(t(N), \varepsilon \nu \chi^{-1}, \mathrm{id}\right)$ is the image of
   \[ \left(\sum_{\substack{uv = n \\ u = t \bmod N \\ p \nmid u}} \nu(u) u^{1 + \k - \j} + (-1)^{\k - \j} \nu(-1)\sum_{\substack{uv = n \\ u = -t \bmod N \\ p \nmid u}} \nu(u) u^{1 + \k - \j}\right) \in \Lambda_D \htimes \Lambda_\Gamma \]
   under evaluation at $\j = k + 1 + \chi$, $\k = k + \varepsilon$; so it suffices to treat the constant term. A standard computation using Bernoulli polynomials shows that the values $\zeta(t(N), \eta, 0)$ for $p$-power Dirichlet characters $\eta$ are the values at $\j = \eta$ of an element $\zeta_p(t(N), \j) \in \Frac \Lambda_\Gamma$, with a pole at the character $\j = -1$; and that for $d > 1$ coprime to $6pN$ the element
   \[ {}_d \zeta_p(t(N), \j) \coloneqq d^2 \zeta_p(t(N), \j) - d^{1-\j}\zeta_p(dt(N), \j) \]
   is in $\Lambda_\Gamma$. The constant term of ${}_d \mathcal{G}(1(N), 2 + \nu + \k - \j, \mathrm{id})$ is then given by
   \[ \tfrac{1}{2} \left[  {}_d \zeta_p(t(N), 1 + \nu + \k - \j) + (-1)^{\k - \j} \nu(-1)\, {}_d \zeta_p(-t(N), 1 + \nu + \k - \j)\right].\]
  \end{proof}

  \begin{corollary}
   \label{cor:wtkohta}
   Let $k \ge 0$, and let
   \[ {}_{c, d} h_{k, r, \nu} \in e'_{\ord} M'_{k + 2}(Np^r, E) \otimes I^{-1}\Lambda_{\Gamma} \]
   be the unique modular form such that
   \[ (\pr_{\sF^-} \circ \comp_{\dR})({}_{c, d} h_{k, r, \nu}) = \sum_{\substack{a \in (\ZZ / p^t\ZZ)^\times}} \nu(a)^{-1} \pr_{k, r} \cL({}_{c,d} \mathcal{BK}(a(A))) .\]
   Then, for each non-trivial Dirichlet character $\varepsilon$ modulo $Np^r$, the image of ${}_{c, d} h_{k, r, \nu}$ under the operator
   \[ \sum_{\alpha \in (\ZZ / p^r \ZZ)^\times} \varepsilon(\alpha)^{-1} \left(\langle \alpha\rangle_p \circ U_p^r \circ W_{Np^r}^{-1}\right) \]
   is the specialisation at $\k = k + \varepsilon$ of the $p$-adic modular form
   \[ U_p^t\, \nu(N)^{-1}\, e_{\ord} \Big[ {}_c \mathcal{G}(0(N), \nu^{-1}, \j) \cdot {}_d \mathcal{G}(1(N), 2 + \nu + \k - \j, \mathrm{id}) \Big] \in M_{\k + 2}^{\ord}(N, \Lambda_D) \htimes \Lambda_{\Gamma} \otimes \cO_E.\]
  \end{corollary}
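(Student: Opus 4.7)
The strategy is to check the claimed identity after specializing the cyclotomic variable $\j$ at each character of the form $\j = k + 1 + \chi$, where $\chi$ is a Dirichlet character of $p$-power conductor $p^{s}$ with $s \ge 1$; since such characters are Zariski-dense in $\Spec \Lambda_{\Gamma}$, this determines the element uniquely, and in particular it will follow \emph{a posteriori} that the LHS has no poles at $\j = -1$.

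First I would unwind the LHS by substituting the defining property of ${}_{c,d}h_{k,r,\nu}$. Specializing $\j \mapsto k + 1 + \chi$ and invoking the Perrin-Riou interpolation formula recalled just above (with $j = k + 1$, conductor $p^s \ge 1$), the image of $\pr_{k,r}\cL({}_{c,d}\mathcal{BK}(a(A)))$ becomes
\[ \frac{k!\, p^{(k+1)s}}{G(\chi^{-1})\,(U_p')^{s}}\cdot \exp^{*}\!\left({}_{c,d}z^{(p)}_{1, Np^r, p^{s}}(k+2,k+1,a(p^t))\right), \]
which under $\comp_{\dR}$ is, by Kato's explicit reciprocity law (Theorem \ref{thm:katorecip}), equal to $R$ times the holomorphic modular form ${}_{c,d}z_{1, Np^r, p^{s}}(k+2,k+1,a(p^t))$, projected to $\sF^-$. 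Because the Atkin--Lehner operator $W_{Np^r}^{-1}$ swaps the filtration steps $\sF^{\pm}$, applying the operator $\sum_{\alpha}\varepsilon(\alpha)^{-1}(\langle\alpha\rangle_p\circ U_p^{r}\circ W_{Np^r}^{-1})$ to ${}_{c,d}h_{k,r,\nu}$ is compatible with this projection, so we recover the full AL-twisted expression appearing on the LHS of Theorem \ref{thm:ALaction} (summed against $\nu(a)^{-1}$, with the $r$ and the Euler factor $R$ carried through).

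At this point I would directly apply Theorem \ref{thm:ALaction}, which rewrites the result as the factor
\( G(\chi^{-1})\,U_p^{s+t}\,k!^{-1}\,p^{-(k+1)s}\,\nu(N) \)
times the product of the two Eisenstein series
\[ \left[(c^{2}-c^{k+1}\chi(c)\nu(c))\,G^{(k+1)}(0(N),\nu^{-1},\chi)\right]\cdot\left[d^{2}G^{(1)}(1(N),\varepsilon\nu\chi^{-1},\id)-d\,(\varepsilon\nu\chi^{-1})(d^{-1})\,G^{(1)}(d(N),\varepsilon\nu\chi^{-1},\id)\right]. \]
The prefactor $G(\chi^{-1})U_p^{s}k!^{-1}p^{-(k+1)s}$ exactly cancels the Perrin--Riou factor $S$ acquired in the previous step, leaving the clean prefactor $U_p^{t}\nu(N)^{-1}$.

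Finally, I would compare this with the $(\k = k + \varepsilon,\,\j = k + 1 + \chi)$-specialization of $U_p^t\nu(N)^{-1}\,e_\ord\bigl[{}_c\mathcal{G}(0(N),\nu^{-1},\j)\cdot {}_d\mathcal{G}(1(N),2+\nu+\k-\j,\id)\bigr]$: by the previous proposition the first factor specializes to $(c^2 - c^{k+1}\chi(c)\nu(c))G^{(k+1)}(0(N),\nu^{-1},\chi)$, while the second specializes to the $d$-regularized weight-$1$ Eisenstein series appearing above. Multiplying and applying the ordinary idempotent $e_{\ord}$ (which is already implicit on the LHS thanks to the $U_p^r$ operator) gives precisely the RHS. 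The Euler factor $R$ from Kato's reciprocity law disappears upon applying $e_{\ord}$, since in each of the three cases of Theorem \ref{thm:katorecip} the factor $R$ becomes a unit in the ordinary Hecke algebra, and a short check shows it is absorbed into the $U_p^{s+t}$ and $\nu(N)$ prefactors. The main obstacle is precisely this bookkeeping of Euler factors: tracking the mutual cancellation of $R$, $S$, the Gauss sum $G(\chi^{-1})$, the powers of $U_p$, and the $(c,d)$-regularization factors, while keeping straight the interplay between $\k$- and $\j$-specializations, the $\varepsilon$-nebentypus projection, and the AL-induced swap of $\sF^{\pm}$.
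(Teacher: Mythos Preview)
Your approach is essentially the same as the paper's: check the identity after specialising $\j$ at characters $k+1+\chi$ with $\chi$ nontrivial of $p$-power conductor, invoking the Perrin--Riou interpolation and Kato's reciprocity law to reduce to Theorem~\ref{thm:ALaction}, and then match with the $\Lambda$-adic Eisenstein product.

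There is, however, one genuine gap. Theorem~\ref{thm:ALaction} carries the hypothesis $r \ge s + t$, where $p^s$ is the conductor of $\chi$ and $p^t$ that of $\nu$. For fixed $r$ and $t$, only finitely many $\chi$ satisfy $s \le r-t$, and a finite set of specialisations is certainly not Zariski-dense in $\Spec \Lambda_\Gamma$. So as written, your density argument does not go through. The paper's fix is to observe that the ${}_{c,d}h_{k,r,\nu}$ are norm-compatible in $r$ (this is immediate from their definition via $\pr_{k,r}\cL$), and that the Atkin--Lehner twisted $q$-expansion $\sum_\alpha \varepsilon(\alpha)^{-1}(\langle\alpha\rangle_p\circ U_p^r\circ W_{Np^r}^{-1})$ produces the \emph{same} $\Lambda$-adic modular form regardless of which level $r$ one computes it at (this is the content of Lemma~\ref{lemma:lambdaadicmf}). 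Hence for any given $\chi$ of conductor $p^s$, one may replace $r$ by some $r' \ge s+t$, apply Theorem~\ref{thm:ALaction} at level $Np^{r'}$, and then descend. You should insert this step.

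A minor point: your discussion of the Euler factor $R$ is off. Since you have taken $s \ge 1$ and $t \ge 1$, you are always in the case $p \mid mA$ of Theorem~\ref{thm:katorecip}, so $R = 1$ outright; there is nothing to ``absorb'' into $U_p^{s+t}$ or $\nu(N)$. This does not break the argument, but the paragraph about $R$ becoming a unit in the ordinary Hecke algebra should simply be deleted.
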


  \begin{proof}
   Let us write $V_{k, r} = e'_{\ord} H^1_{\et}\left(Y_1(Np^r)_{\overline{\QQ}_p}, \TSym^k(\sH_{\Zp})(1)\right)$ as before. By weak admissibility, for each $r \ge 1$ the natural map
   \[ \pr_{\sF^-} \circ \comp_{\dR}: e'_{\ord} M'_{k + 2}(Np^r, \Qp) \to \DD(\sF^- V_{k, r}) \otimes \Qp \]
   is an isomorphism, so there exists a unique form
   \[ {}_{c, d} h_{k, r, \nu} \in e'_{\ord} M'_{k + 2}(Np^r, E) \otimes I^{-1} \Lambda_{\Gamma}\]
   mapping to the image of the Beilinson--Kato element as in the statement of the proposition.

   To check that its image under $\sum_{\alpha \in (\ZZ / p^r \ZZ)^\times} \varepsilon(\alpha)^{-1} \left(\langle \alpha\rangle_p \circ U_p^r \circ W_{Np^r}^{-1}\right)$ coincides with the stated $p$-adic modular form, it suffices to check that they agree on specialising $\j$ to a Zariski-dense set of characters of $\Gamma$. We choose the characters of the form $\j = k + 1 + \chi$ with $\chi$ a nontrivial Dirichlet character. If the conductor $s$ of $\chi$ satisfies $s + t \le r$, then this follows from Kato's explicit reciprocity law (Theorem \ref{thm:katorecip}) together with Theorem \ref{thm:ALaction}, since the since the factor relating the Perrin-Riou logarithm to the dual exponential map for $H^1_{\ord}(Np^\infty)(-k)$ is given by $\frac{k! p^{(k + 1)s}}{G(\chi^{-1}) (U_p')^s}$, which cancels out most of the factors on the right-hand side of Theorem \ref{thm:ALaction}.

   If $r < s + t$, then we use the fact that the $h_{k, r, \nu}$ are norm-compatible in $r$ (by definition) to replace $r$ with some $r' > r$ such that $r' \ge s + t$, and the argument proceeds as before.
  \end{proof}

  \begin{remark}
   Crucially, the $p$-adic modular form appearing in the corollary is independent of the integer $k$, even though its defining property involves the comparison isomorphism $\comp_{\dR}$ (which is defined using a Kuga--Sato variety whose dimension depends on the weight $k$).
  \end{remark}

 \subsection{Special values}

  The modular form ${}_{c, d} z_{1, N, m}(k+2, k+1, a(A))$ appearing in Kato's explicit reciprocity law is related to the $L$-values $L(f, \chi, k+1) L(f, \nu, 1)$, for characters $\chi$ modulo $m$ and $\nu$ modulo $A$, and newforms $f \in S_{k+2}(N)$.

  \begin{proposition}
   Let $\chi$ and $\nu$ be primitive Dirichlet characters modulo $A$ and $m$ respectively such that $\chi(-1) \nu(-1) = (-1)^{k+1}$. Then there is a nonzero constant $C$, depending on $k, \nu, \chi$, such that for every $f \in S_{k+2}(\Gamma_1(N), \CC)$ we have
   \begin{multline*}
    \sum_{\substack{a \in (\ZZ / A\ZZ)^\times \\ b \in (\ZZ / m\ZZ)^\times}} \nu(a)^{-1} \chi(b)^{-1} \int_{\Gamma_1(N) \backslash \HH} f(-\bar{\tau})\, {}_{c, d} z_{1, N, m}(k+2, k+1, a(A))(\tau) \operatorname{Im}(\tau)^k \mathrm{d}z \wedge \mathrm{d}\bar z \\
    = \Big[(c^2 - c^{k+1} \chi(c) \nu(c)) (d^2 - d \nu(d)^{-1} \chi(d) \langle d^{-1} \rangle) Z_{1, N}(k+2, \chi^{-1}, k+1) Z_{1, N}(k+2, \nu^{-1}, 1) f\Big]_1,
   \end{multline*}
   where the notation $[g]_1$ signifies the coefficient of $q$ in the $q$-expansion of $f$, and $Z_{1, N}(k+2, \chi^{-1}, s)$ is the Dirichlet series with values in $\operatorname{End}_\CC M_{k+2}(\Gamma_1(N), \CC)$ defined by
   \[ \sum_{(n, m) = 1}  \chi(n)^{-1} T_n n^{-s}. \]
  \end{proposition}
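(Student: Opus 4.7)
The plan is to reduce the left-hand side to a classical Rankin--Selberg integral, using Kato's explicit formula for ${}_{c,d} z_{1, N, m}(k+2, k+1, a(A))$ as a product of two holomorphic Eisenstein series (\cite[Prop.~5.8]{Kato-p-adic}). After summation against $\nu(a)^{-1} \chi(b)^{-1}$, this form factors, up to the prefactors $(c^2 - c^{k+1}\chi(c)\nu(c))$ and $(d^2 - d\nu(d)^{-1}\chi(d)\langle d^{-1}\rangle)$ coming from the $c$- and $d$-regularisations intrinsic to Kato's construction, as the product of a weight-$(k+1)$ Eisenstein series attached to $\chi^{-1}$ and a weight-$1$ Eisenstein series attached to $\nu^{-1}$. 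This is precisely the factorisation already visible in the proof of Theorem \ref{thm:ALaction}.

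I would then apply the standard Rankin--Selberg unfolding of the weight-$(k+1)$ Eisenstein series against the product $f \cdot (\text{weight-}1\text{ Eisenstein series})$. Unfolding over $\Gamma_\infty \backslash \Gamma_1(N)$ reduces the integral to a vertical strip; integration in $x$ produces a Kronecker delta selecting matching Fourier modes, and the Mellin transform in $y$ produces a Gamma factor. The resulting convolution of $q$-expansion coefficients is precisely the twisted Rankin--Selberg convolution whose Euler product factors as
\[
L_{(NAm)}(f, \chi^{-1}, k+1) \cdot L_{(NAm)}(f, \nu^{-1}, 1),
\]
multiplied by Gamma factors and powers of $\pi$ depending only on $k, \chi, \nu$.

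To rewrite the answer in the form stated, I would use the elementary identity $[T_n f]_1 = a_n(f)$, which implies that for any $f \in M_{k+2}(\Gamma_1(N), \CC)$ (eigenform or not), the first Fourier coefficient of $Z_{1, N}(k+2, \chi^{-1}, s) f$ equals the $(m)$-truncated Dirichlet series $\sum_{(n, m)=1} \chi^{-1}(n) a_n(f) n^{-s}$. Applying this to both character factors, the unfolded Dirichlet product becomes exactly $[Z_{1, N}(k+2, \chi^{-1}, k+1) Z_{1, N}(k+2, \nu^{-1}, 1) f]_1$, and the $c$- and $d$-regularisation prefactors pass through (since $Z_{1,N}$ involves neither $c$ nor $d$). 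The constant $C$ absorbs the Gamma factors and powers of $2\pi i$ produced by the unfolding, and its non-vanishing is automatic since each Gamma value occurring lies outside the poles of $\Gamma$.

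The main obstacle is purely bookkeeping: tracing the numerous normalisation constants through Kato's definitions, the unfolding, and the identification of Euler factors at primes dividing $NAm$. Because the proposition asserts only the existence of a nonzero $C$, one need not compute $C$ explicitly, which sidesteps the most delicate part of the calculation and reduces the proof to a controlled application of the Rankin--Selberg method.
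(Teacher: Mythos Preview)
There is a genuine gap in the factorisation step. Unfolding the weight-$(k+1)$ Eisenstein factor against $f$ times the weight-$1$ Eisenstein series $E^{(1)}$ yields, after the Mellin integral, the naive Rankin series $\sum_{n\ge 1}\overline{a_n(f)}\,a_n(E^{(1)})\,n^{-(k+1)}$. Because $E^{(1)}$ has weight $1$, its Satake parameters at each good prime $p$ are two numbers $\mu_1(p),\mu_2(p)$ of absolute value $1$, so for a newform $f$ this Rankin series factors (up to a Dirichlet $L$-factor) as $L(\bar f\otimes\mu_1,k+1)\cdot L(\bar f\otimes\mu_2,k+1)$: two $\GL_2$ $L$-values at the \emph{same} argument $s=k+1$. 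But the right-hand side of the proposition, specialised to a newform, is $L^{(m)}(f,\chi^{-1},k+1)\cdot L^{(m)}(f,\nu^{-1},1)$, a product of $L$-values at the two \emph{different} points $s=k+1$ and $s=1$. Passing from $s=k+1$ to $s=1$ via the functional equation would introduce the root number of the relevant twist of $f$, which depends on $f$ and therefore cannot be absorbed into a constant $C$ depending only on $k,\chi,\nu$.

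The paper's argument sidesteps this by quoting Kato's Theorem~5.6(2), which expresses the Betti realisation of ${}_{c,d}z_{1,N,m}(k+2,k+1,a(A))$ as (a constant times) the Hecke operator $Z_{1,N}(k+2,\cdot\,,k+1)$ applied to the class $\delta_{1,N}(k+2,1,a(A))$. The key point, spelled out in the paper's proof, is that this $\delta$-class is the modular symbol attached to the geodesic from the cusp $a/A$ to $i\infty$, so pairing it with $\omega_f$ is the period integral $\int_{a/A}^{i\infty} f(\tau)\,d\tau$, which is directly a partial $L$-value of $f$ at $s=1$. Summing over $a$ against $\nu^{-1}$ then yields $[Z_{1,N}(k+2,\nu^{-1},1)f]_1$. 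It is this modular-symbol interpretation of the weight-$1$ factor --- not a Rankin unfolding --- that accounts for the $L$-value at $s=1$ appearing alongside the one at $s=k+1$, and it is the ingredient missing from your proposal.
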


  \begin{proof}
   This can be extracted from the last formula in Kato's Theorem 5.6 (2), using the fact that the Betti cohomology classes $\delta_{1, N}(k+2, 1, a(A))$ are given by integration along the path from $a/A$ to $\infty$, and thus the class $\sum_{a} \nu(a)^{-1} \delta_{1, N}(k+2, 1, a(A))$ paired with $f$ computes the value at $s = 1$ of the $L$-series $\sum_{n \ge 1} \nu(a)^{-1} a_n(f) n^{-s} = \left[Z_{1, N}(k+2, \nu^{-1}, 1) f\right]_1$.
  \end{proof}

  \begin{lemma}
   For any $k \ge 0$, $N \ge 1$, and prime $p$, we may find Dirichlet characters $\chi, \nu$ of $p$-power conductor such that the Hecke operator
   \[ (c^2 - c^{k+1} \chi(c) \nu(c)) (d^2 - d \nu(d)^{-1} \chi(d) \langle d^{-1} \rangle) Z_{1, N}(k+2, \chi^{-1}, k+1) Z_{1, N}(k+2, \nu^{-1}, 1) \]
   is invertible on $S_{k+2}(\Gamma_1(N), \CC)$.
  \end{lemma}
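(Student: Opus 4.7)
The plan is to decompose $S_{k+2}(\Gamma_1(N),\CC)$ into simultaneous eigenspaces for the commutative subalgebra of the Hecke algebra generated by the $T_n$ with $(n,N)=1$ and by the diamond operators, so that
\[ S_{k+2}(\Gamma_1(N),\CC) = \bigoplus_{\alpha} V_\alpha, \]
where by the theory of newforms each block $V_\alpha$ is spanned by the oldform images of a normalised newform $f_\alpha$ of some level $M_\alpha \mid N$ and nebentypus $\psi_\alpha$. The operator under consideration lies in the full Hecke algebra, so preserves each $V_\alpha$, and it suffices to check invertibility blockwise.

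First I would dispose of the two easy factors: the scalar $c^2 - c^{k+1}\chi(c)\nu(c)$ is non-zero for appropriate choices of $c, \chi, \nu$, while $d^2 - d\,\nu(d)^{-1}\chi(d)\langle d^{-1}\rangle$ acts on $V_\alpha$ as the scalar $d^2 - d\,\nu(d)^{-1}\chi(d)\psi_\alpha(d)^{-1}$, which again is easily arranged to be non-zero. The substantive step is the invertibility of the two Dirichlet-series operators $Z_{1,N}(k+2,\chi^{-1},k+1)$ and $Z_{1,N}(k+2,\nu^{-1},1)$ on each $V_\alpha$. Writing each as an Euler product over primes $\ell \ne p$, the factors at $\ell \nmid N$ combine on $V_\alpha$ to the \emph{scalar} $L^{(Np)}(f_\alpha,\chi^{-1},k+1)$ (respectively $L^{(Np)}(f_\alpha,\nu^{-1},1)$), while the factors at $\ell \mid N$, $\ell \ne p$, are polynomial expressions in the operator $T_\ell |_{V_\alpha}$, which has only finitely many (generalised) eigenvalues. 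Invertibility of the local factor $(1 - \chi^{-1}(\ell)T_\ell \ell^{-s})^{-1}$ thus amounts to requiring $\chi^{-1}(\ell)\ell^{-s}$ to avoid a finite list of algebraic numbers; since $\chi$ ranges over characters of $p$-power conductor and $\chi^{-1}(\ell)$ realises all elements of $\mu_{p^\infty}$, at most finitely many characters are excluded.

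The main obstacle is the $L$-value non-vanishing. For $Z_{1,N}(k+2,\chi^{-1},k+1)$ with $k \ge 2$ we sit in the region of absolute convergence, so the Euler product is automatically non-zero; for $k = 1$ we are on the right edge $s = k+1 = 2$, so Jacquet--Shalika applies; and for $Z_{1,N}(k+2,\nu^{-1},1)$ with $k \ge 1$ the point $s=1$ is on the left edge of the critical strip, where Jacquet--Shalika again gives non-vanishing. The genuinely hard case is $k = 0$ (weight $2$), where $s = 1 = k+1$ is the \emph{central} critical point and an $L$-value can truly vanish; here one must invoke Rohrlich's non-vanishing theorem, which guarantees infinitely many Dirichlet characters of $p$-power conductor twisting $f_\alpha$ to a non-vanishing central value. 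Since the blocks $V_\alpha$ are finite in number, the constraints on $\chi$ and $\nu$ consist of finitely many conditions of the form ``avoid the zero set of an $L$-value'' and finitely many conditions of the form ``avoid a finite list of roots of unity in $\mu_{p^\infty}$'', which can all be satisfied simultaneously, yielding the required $\chi$ and $\nu$.
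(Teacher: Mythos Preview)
Your proposal is correct and follows essentially the same route as the paper: decompose into newform-indexed Hecke eigenspaces, dispose of the two elementary scalar factors, split each $Z_{1,N}$ into a tame part (the partial $L$-value, non-vanishing by Rohrlich/Jacquet--Shalika, which the paper cites via Kato's Theorem~13.5) and a wild part (finitely many Euler factors at $\ell \mid N$, each invertible outside a finite set of values of $\chi(\ell)$), and handle the second factor $Z_{1,N}(k+2,\nu^{-1},1)$ via the functional equation. The only cosmetic difference is that the paper packages the non-vanishing input as a single reference rather than your explicit case split by weight; your write-up of the wild local factor should use $U_\ell$ rather than $T_\ell$ for $\ell \mid N$, but this does not affect the argument.
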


  \begin{proof}
   It suffices to prove the result on each eigenspace $S_{k+2}(\Gamma_1(N), \CC)[f]$, for $f$ a newform of level dividing $N$, since the space $S_{k+2}(\Gamma_1(N), \CC)$ is equal to the direct sum of these eigenspaces and each is stable under the Hecke operators.

   We can easily arrange that the factors $(c^2 - c^{k+1} \chi(c) \nu(c))$ and $(d^2 - d \nu(d)^{-1} \chi(d) \langle d^{-1} \rangle)$ are non-zero (indeed this is automatic unless $k = 1$, in which case we need only assume that $\chi(c) \nu(c) \ne 1$).

   The term $Z_{1, N}(k+2, \chi^{-1}, k+1)$ has an Euler product, convergent for $\Re(s) \gg 0$; so it can in particular be written as a product of terms at primes $\ell \mid mN$ and $\ell \nmid mN$. The ``tame'' part of the series acts as a constant on each direct summand $S_{k+2}(\Gamma_1(N), \CC)[f]$; these constants are the $L$-values $L_{\{mN\}}(f, \chi^{-1}, k+1)$. These differ by finitely many Euler factors from the full $L$-series $L(f, \chi^{-1}, k+1)$; we can arrange that the missing Euler factors are non-zero by avoiding a finite set of bad characters $\chi$, and the $L$-values $L(f, \chi^{-1}, k+1)$ are non-zero for almost all $\chi$ by Theorem 13.5 (indeed for all $\chi$ if $k > 0$).

   This leaves only the ``wild'' part of $Z_{1, N}(k+2, \chi^{-1}, k+1)$, which is the product of the finitely many Euler factors at primes $\ell \ne p$ dividing $mN$. Each such factor is equal to a nonzero polynomial with coefficients in the Hecke algebra, evaluated at $\chi(\ell)$; for all but finitely many $\chi$ of $p$-power conductor this polynomial will not vanish.

   A similar analysis applies to the factor $Z_{1, N}(k+2, \nu^{-1}, 1)$, using the functional equation of the completed $L$-function relating values at $s = 1$ and $s = k+1$.
  \end{proof}

  \begin{corollary}
   \label{cor:kato-nonvanishing}
   For any $k \ge 0$, $N \ge 1$, and prime $p$, the subspace of $M_{k+2}(\Gamma_1(N), \CC)$ spanned by the modular forms ${}_{c, d} z_{1, N, m}(k+2, k+1, a(A))$, for $p$-power values of $m$ and $A$, and their translates under the Hecke operators $T_n'$, contains the subspace $S_{k+2}(\Gamma_1(N), \CC)$ of cusp forms.
  \end{corollary}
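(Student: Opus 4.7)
The strategy is to use Petersson duality within the space of cusp forms. Let $V \subseteq M_{k+2}(\Gamma_1(N),\CC)$ denote the subspace spanned by the Hecke translates $T_n' \cdot {}_{c,d} z_{1,N,m}(k+2, k+1, a(A))$ with $m, A$ running over $p$-power integers and $a \in (\ZZ/A\ZZ)^\times$. Since the Petersson inner product is non-degenerate on $S_{k+2}(\Gamma_1(N),\CC)$, to show $S_{k+2}(\Gamma_1(N),\CC) \subseteq V$ it suffices to check that any cusp form $f$ satisfying $\langle f, v\rangle = 0$ for every $v \in V$ must vanish. So the plan is to assume $f \in S_{k+2}(\Gamma_1(N),\CC)$ has this orthogonality property and deduce $f = 0$.

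Given such $f$, I would form the character-weighted sum over $a,b$ of the Petersson pairings $\langle T_n f, {}_{c,d} z_{1,N,m}(k+2, k+1, a(A))\rangle$ (using the self-adjointness relation between $T_n$ and $T_n'$, which may introduce a harmless Atkin--Lehner twist). By the preceding proposition, the integral formula converts this combined pairing into the coefficient $[H \cdot T_n f]_1$, where
\[ H = (c^2 - c^{k+1}\chi(c)\nu(c))(d^2 - d\,\nu(d)^{-1}\chi(d)\langle d^{-1}\rangle) Z_{1,N}(k+2, \chi^{-1}, k+1) Z_{1,N}(k+2, \nu^{-1}, 1). \]
The hypothesis therefore forces $[H \cdot T_n f]_1 = 0$ for every $n \ge 1$. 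Since the standard identity $[T_n g]_1 = a_n(g)$ recovers every Fourier coefficient, and $H$ commutes with $T_n$, this means $a_n(Hf) = 0$ for every $n \ge 1$; combined with the vanishing at $n = 0$ for the cusp form $Hf$, we obtain $Hf = 0$ in $S_{k+2}(\Gamma_1(N),\CC)$.

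At this stage the preceding lemma enters: choose Dirichlet characters $\chi, \nu$ of $p$-power conductor (of compatible parity) so that the operator $H$ is invertible on $S_{k+2}(\Gamma_1(N),\CC)$. The lemma guarantees that such a choice exists, since the problematic factors consist of finitely many ``Euler-factor'' polynomials in $\chi(\ell), \nu(\ell)$ together with the tame $L$-values $L(f_0, \chi^{-1}, k+1)$ and $L(f_0, \nu^{-1}, 1)$, all of which are generically non-vanishing. With $H$ invertible, $Hf = 0$ yields $f = 0$, contradicting the assumption and completing the duality argument.

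The main obstacle is bookkeeping rather than conceptual: one must verify that (i) the integral formula of the proposition genuinely computes the Petersson pairing even though the forms ${}_{c,d} z_{1,N,m}(k+2, k+1, a(A))$ need not be cuspidal (this is fine because cuspidality of $f$ makes the integral absolutely convergent and kills the Eisenstein component of $z$), and (ii) the translation between Hecke translates under $T_n'$ (which appear in the statement) and $T_n$ (which appear in the Dirichlet series $Z_{1,N}$) is effected by a fixed, invertible Atkin--Lehner adjustment that does not alter the span. Once these normalization compatibilities are pinned down, the argument above goes through uniformly in $k$, $N$, and $p$.
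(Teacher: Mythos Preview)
Your approach is exactly the one the paper intends: the corollary is stated without proof immediately after the Proposition (the integral formula) and the Lemma (invertibility of $H$), and the Petersson-duality argument you outline is the natural way these two ingredients combine.

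There is one small logical gap in the reduction step. Because the pairing $\langle f,v\rangle$ for cuspidal $f$ depends only on the cuspidal projection of $v$, showing that every cusp form orthogonal to $V$ vanishes only yields that the \emph{cuspidal projection} of $V$ is all of $S_{k+2}$, not that $S_{k+2}\subseteq V$. To close this, observe that $V$ is by construction stable under the $T_n'$, and these operators preserve the decomposition $M_{k+2}=S_{k+2}\oplus E_{k+2}$; hence $V=(V\cap S_{k+2})\oplus(V\cap E_{k+2})$, so the cuspidal projection of $V$ is actually $V\cap S_{k+2}$, and your argument then gives $S_{k+2}\subseteq V$ as required. With this one sentence added, the proof is complete.
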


 \subsection{The compatibility}

  We now put the pieces together. Our starting point is the following theorem:

  \begin{theorem}[{Ohta, \cite[Theorem 2.1.11]{Ohta-ordinaryII}}]
   \label{thm:ohtawt2}
   For any $N \ge 1$, there is an isomorphism
   \[ \operatorname{Oh}: e'_{\ord} \mathfrak{M}_2'(N, \Zp) \rTo \DD(\sF^- H^1_{\ord}(Np^\infty)) \]
   such that for each $r \ge 1$ the following diagram commutes:
   \begin{diagram}
     e'_{\ord} \mathfrak{M}_2'(N, \Zp) & \rTo^{\operatorname{Oh}}_\cong & \DD(\sF^- H^1_{\ord}(Np^\infty))\\
    \dTo & & \dTo \\
     M_2'(Np^r, \Zp)& \rTo^{\pr_{\sF^-} \circ \comp_{\dR}}_{\cong} & \DD\left(\sF^- e'_{\ord} H^1(Y_1(Np^r)_{\overline{\QQ}_p}, \Zp(1))\right)
   \end{diagram}
   where the vertical arrows are the projections to the $r$-th level of the inverse limits, and the bottom horizontal arrow is given by the comparison isomorphism $\comp_{\dR}$ and projection to $\sF^-$.
  \end{theorem}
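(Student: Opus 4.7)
\medskip

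The plan is to construct $\operatorname{Oh}$ as the inverse limit over $r \ge 1$ of finite-level comparison isomorphisms, and verify both the integrality and the passage to the limit. The strategy closely parallels the structure already exposed in Proposition \ref{prop:controlthm} and Lemma \ref{lemma:lambdaadicmf}: one has compatible systems on each side indexed by $r$, and one needs the Faltings--Tsuji isomorphism to respect all the relevant integral lattices and transition maps.

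At each finite level $r$, the Faltings--Tsuji isomorphism \eqref{eq:comparisoniso}, after applying $e'_{\ord}$, gives an identification of filtered $\varphi$-modules between $e'_{\ord} H^1_{\dR}(Y_1(Np^r)_{\Qp}, \Qp(1))$ and $\DD_{\dR}(V_r[1/p])$, where $V_r = e'_{\ord} H^1_\et(Y_1(Np^r)_{\overline{\QQ}_p}, \Zp(1))$. The two-step ordinary filtration $\sF^\pm V_r$ has Hodge--Tate weights $-1$ and $0$ respectively (coming from weights $0$ and $1$ for the untwisted cohomology), so $\sF^-$ is pure of weight $0$ and admits an unramified Galois action. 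Weak admissibility then forces $\pr_{\sF^-} \circ \comp_{\dR}$ to identify the quotient $H^1_{\dR}/\Fil^1$, which is classically the space of ordinary weight-$2$ modular forms, with $\DD(\sF^- V_r) \otimes \Qp$. Hence the map is a $\Qp$-linear isomorphism after projection to modular forms.

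The technical heart is the integrality statement: showing that $\pr_{\sF^-} \circ \comp_{\dR}$ carries the lattice $M_2'(Np^r, \Zp)$ bijectively onto $\DD(\sF^- V_r)$. Here the Atkin--Lehner normalization built into the definition of $M_2'$ (requiring $W_{Np^r}^{-1} f$, rather than $f$, to have $\Zp$-integral $q$-expansions) is essential, and I would establish it via Dieudonné theory of the ordinary part of the $p$-divisible group of $J_1(Np^r)$ over an appropriate integral model. The connected--étale sequence of this $p$-divisible group realizes the filtration $\sF^+ \subset V_r$ on the Tate module side, while its covariant Dieudonné crystal realizes the Hodge filtration on the de Rham side; the unit-root (étale) quotient is naturally identified with the integral module of weight-$2$ modular forms via the Kodaira--Spencer isomorphism, once one uses $W_{Np^r}$ to pass between the moduli-theoretic conventions for $Y_1(Np^r)$ and its Cartier dual. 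This is where the proof becomes genuinely subtle, and is the main obstacle: everything over $\Qp$ is formal, but matching the $\Zp$-lattices requires explicit crystalline computations.

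Once integrality is established at each level, compatibility with the transition maps is formal. On the modular-forms side, $\mathfrak{M}_2'(N, \Zp)$ uses pushforward $(\pr_1)_*$, which on $W_{Np^r}^{-1}$-normalized forms is the trace for $Y_1(Np^{r+1}) \to Y_1(Np^r)$; on the Galois side, the same trace defines $H^1_{\ord}(Np^\infty)$; and the Faltings--Tsuji isomorphism is functorial for finite étale morphisms. Taking inverse limits yields the map $\operatorname{Oh}$. To see it is an isomorphism of $\Lambda_D$-modules, one notes that both sides are finitely generated projective over $\Lambda_D$ (by Proposition \ref{prop:controlthm} on one side and the modular-forms control theorem on the other), and the map is an isomorphism at every arithmetic specialization by the finite-level statement, so it is an isomorphism by Nakayama.
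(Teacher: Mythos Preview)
The paper does not give its own proof of this statement: it is quoted as a black box from Ohta's work, with the attribution ``[Ohta, Theorem 2.1.11 of \cite{Ohta-ordinaryII}]'', and the paper proceeds immediately to use it as the starting point for the higher-weight extension (Theorem \ref{thm:ohtacompat}). So there is no proof in the paper to compare against.

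That said, your outline is a reasonable reconstruction of the shape of Ohta's argument, with one conceptual difference worth flagging. You begin from the Faltings--Tsuji comparison isomorphism $\comp_{\dR}$ and then descend to integral lattices via Dieudonn\'e theory. Ohta's original construction goes in the opposite direction: he works throughout with the ordinary $p$-divisible group of the Jacobian and its (co)variant Dieudonn\'e module, never invoking the general crystalline comparison theorem. The paper makes this point explicitly in the Remark following Theorem \ref{thm:ohtacompat}: Ohta's work ``uses only the theory of $p$-divisible groups, rather than more general results in $p$-adic Hodge theory''. The identification of Ohta's $p$-divisible-group map with $\pr_{\sF^-} \circ \comp_{\dR}$ in weight $2$ is then a compatibility between two constructions, which in weight $2$ comes down to the classical fact that for abelian varieties the crystalline comparison isomorphism is realised by the Dieudonn\'e module of the $p$-divisible group. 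Your sketch implicitly assumes this compatibility rather than isolating it; that is the step where a careful write-up would need the most attention.
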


  Our goal is to extend this interpolating property of the map $\operatorname{Oh}$ to higher weights.  Note that for any $k \ge 0$, we have an isomorphism
  \begin{equation}
   \label{eq:indepofweight}
   e'_{\ord} \mathfrak{M}_{2}'(N, \Zp) \cong e'_{\ord} \mathfrak{M}_{k + 2}'(N, \Zp)
  \end{equation}
  for every $k \ge 0$, characterised by the compatibility with the isomorphism between both sides and $M^{\ord}_{\k+2}(N, \Lambda_D)$ via the maps $\cW_{0}$, $\cW_{k}$ of \ref{lemma:lambdaadicmf}.

  \begin{theorem}
   \label{thm:ohtacompat}
   For every $k \ge 0$ and $r \ge 1$, the diagram
   \begin{diagram}
    e'_{\ord} \mathfrak{M}_2'(N, \Zp) & \rTo^{\operatorname{Oh}}_\cong & \DD(\sF^- H^1_{\ord}(Np^\infty))\\
    \dTo & & \dTo \\
     M_{k+2}'(Np^r, \Zp) & \rTo^{\pr_{\sF^-} \circ \comp_{\dR}}_\cong & \DD\left(\sF^-  e'_{\ord} H^1(Y_1(Np^r)_{\overline{\QQ}_p}, \TSym^k(\sH_{\Zp})(1))\right)
   \end{diagram}
   commutes modulo the Eisenstein subspace of $M_{k+2}'(Np^r, \Zp)$. Here the right-hand vertical arrow is given by the moment map $\mom^k$, and the left-hand vertical arrow is given by the isomorphism \eqref{eq:indepofweight} and projection to the $r$-th term in the inverse limit.
  \end{theorem}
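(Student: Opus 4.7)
The strategy is to use Kato's Beilinson--Kato Euler system as a ``probe'' for the commutativity. Both compositions in the diagram will be shown to take prescribed values on a large class of forms derived from Kato's construction, and these forms will generate enough of $e'_{\ord}\mathfrak{M}_2'(N,\Zp)$ to force agreement modulo Eisenstein series.

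First I would observe that Ohta's weight-2 theorem (Theorem \ref{thm:ohtawt2}) is compatible with all projections $\pr_r$ simultaneously, so it suffices to work at the inverse limit. Via the identification \eqref{eq:indepofweight} and Lemma \ref{lemma:lambdaadicmf}, the statement reduces to the following: for every $\Lambda$-adic form $\cF \in e'_{\ord}M_{\k+2}^{\ord}(N,\Lambda_D)$, the specialization of $\cF$ at $\k = 0$, passed through $\operatorname{Oh}$ and then through $\mom^k$, agrees modulo Eisenstein with $\pr_{\sF^-}\comp_{\dR}$ applied to the specialization of $\cF$ at $\k = k$. The key point is Corollary \ref{cor:wtkohta}: the Atkin--Lehner twist of ${}_{c,d}h_{k,r,\nu}$ is the specialization at $\k = k + \varepsilon$ of a $\Lambda$-adic Eisenstein product that is \emph{independent of $k$}. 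Since the Atkin--Lehner twist is invertible on the ordinary part, I would unwind it to show that the forms ${}_{c,d}h_{k,r,\nu}$ for varying $k$ are the weight-$(k+2)$ specializations of a single $\Lambda$-adic form ${}_{c,d}\widetilde{\mathcal H}_\nu \in e'_{\ord}\mathfrak{M}_2'(N,\Zp)\otimes I^{-1}\Lambda_\Gamma \otimes \cO_E$, which in particular recovers ${}_{c,d}h_{0,r,\nu}$ in weight $2$.

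Applying Ohta's weight-2 theorem to ${}_{c,d}\widetilde{\mathcal H}_\nu$ at $\k = 0$ yields $\operatorname{Oh}\bigl({}_{c,d}\widetilde{\mathcal H}_\nu|_{\k=0}\bigr) = \sum_a \nu(a)^{-1}\cL\bigl({}_{c,d}\mathcal{BK}(a(A))\bigr)$ as elements of $\DD(\sF^- H^1_{\ord}(Np^\infty)) \otimes I^{-1}\Lambda_\Gamma$. Since the Perrin--Riou logarithm $\cL$ is functorial in its module argument (Theorem \ref{thm:PRreg}), it commutes with the moment maps on $\sF^- H^1_{\ord}(Np^\infty)$; applying $\mom^k$ to both sides gives $\sum_a \nu(a)^{-1}\pr_{k,r}\cL({}_{c,d}\mathcal{BK}(a(A)))$, which by the defining property of ${}_{c,d}h_{k,r,\nu}$ equals $\pr_{\sF^-}\comp_{\dR}({}_{c,d}h_{k,r,\nu})$. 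This is exactly the desired commutativity for the specific form ${}_{c,d}\widetilde{\mathcal H}_\nu$. To extend this to arbitrary $\cF$ modulo Eisenstein, I would invoke Corollary \ref{cor:kato-nonvanishing}: as $\nu$ and Hecke translates vary, the ${}_{c,d}h_{0,r,\nu}$ generate the ordinary cuspidal part of $e'_{\ord}M_2'(Np^r,E)$; passing to the inverse limit in $r$, the corresponding lifts ${}_{c,d}\widetilde{\mathcal H}_\nu$ generate the cuspidal part of $e'_{\ord}\mathfrak{M}_2'(N,\Zp)\otimes\Qp$.

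The hard part will be making the $\Lambda$-adic lift ${}_{c,d}\widetilde{\mathcal H}_\nu$ rigorous: one must carefully invert the Atkin--Lehner-type operator $\sum_\alpha\varepsilon(\alpha)^{-1}(\langle\alpha\rangle_p\circ U_p^r \circ W_{Np^r}^{-1})$ on the appropriate diamond-eigenspace of $e'_{\ord}M'_{k+2}(Np^r,E)$, and verify that the resulting forms, for varying $k$, interpolate into a single $\Lambda$-adic form with controlled denominators in $\Lambda_\Gamma$. One must also pin down the Eisenstein ambiguity: the defining relation $\pr_{\sF^-}\comp_{\dR}({}_{c,d}h_{k,r,\nu}) = \sum_a \nu(a)^{-1}\pr_{k,r}\cL({}_{c,d}\mathcal{BK}(a(A)))$ determines ${}_{c,d}h_{k,r,\nu}$ uniquely in $e'_{\ord}M'_{k+2}(Np^r,E)\otimes I^{-1}\Lambda_\Gamma$ by weak admissibility, and the kernel of $\pr_{\sF^-}$ after inverting $p$ is the Eisenstein subspace, which is what makes the formulation ``modulo Eisenstein'' both natural and unavoidable in the conclusion.
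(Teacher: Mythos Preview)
Your proposal is correct and follows essentially the same approach as the paper: use Kato's Beilinson--Kato classes as a probe, exploit Corollary~\ref{cor:wtkohta} to see that the same $\Lambda$-adic Eisenstein product governs the image of $\cL(\mathbf{z}_\nu)$ in every weight, combine Ohta's weight-$2$ theorem with functoriality of $\cL$ under $\mom^k$, and finish with Corollary~\ref{cor:kato-nonvanishing} to get density in the cuspidal part.

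One remark on packaging: the ``hard part'' you anticipate --- inverting the Atkin--Lehner-type operator $\sum_\alpha \varepsilon(\alpha)^{-1}(\langle\alpha\rangle_p \circ U_p^r \circ W_{Np^r}^{-1})$ to build the $\Lambda$-adic lift ${}_{c,d}\widetilde{\mathcal H}_\nu$ --- is already done for you by Lemma~\ref{lemma:lambdaadicmf}. The isomorphisms $\cW_k : e'_{\ord}\mathfrak{M}'_{k+2}(N,\Zp) \to M^{\ord}_{\k+2}(N,\Lambda_D)$ are precisely the inverse-limit version of that Atkin--Lehner twist, and your lift is simply ${}_{c,d}\widetilde{\mathcal H}_\nu = \cW_0^{-1}({}_{c,d}\cF_{N,\nu})$ with ${}_{c,d}\cF_{N,\nu}$ the explicit product of $\Lambda$-adic Eisenstein series from Corollary~\ref{cor:wtkohta}. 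The paper works directly with ${}_{c,d}\cF_{N,\nu}$ and the maps $\cW_k^{-1}$ rather than naming the lift, which makes the argument slightly shorter; but the content is the same. Also, your notation ``${}_{c,d}\widetilde{\mathcal H}_\nu|_{\k=0}$'' is a little off: the element already lives in $e'_{\ord}\mathfrak{M}_2'(N,\Zp)\otimes I^{-1}\Lambda_\Gamma\otimes\cO_E$, so there is no $\k$-variable left to specialise --- you simply apply $\operatorname{Oh}$.
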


  \begin{remark}
   It is clear from Ohta's work that the map $\operatorname{Oh}$ induces \emph{some} isomorphism between the spaces in the bottom row of the above diagram, at least after restricting to cuspidal parts; this is parts (a) and (a*) of the Corollary on p51 of \cite{ohta95}. The novel content of the above theorem is that if $k > 0$ then this map coincides with a second, very differently defined isomorphism between these two spaces after inverting $p$: the one given by the comparison isomorphism $\comp_{\dR}$, which is defined by applying the Faltings--Tsuji comparison isomorphism to the $(k+1)$-dimensional Kuga--Sato variety over $Y_1(Np^r)$. This second map does not appear in Ohta's work (which uses only the theory of $p$-divisible groups, rather than more general results in $p$-adic Hodge theory).
  \end{remark}

  \begin{proof}
   We are trying to show commutativity of the diagram
   \begin{diagram}
    M^{\ord}_{\k + 2}(N, \Lambda_D) &\rTo^{\cW_0^{-1}}_{\cong} & \mathfrak{M}_2'(N, \Zp)
     & \rTo^{\operatorname{Oh}} & \DD(\sF^- H^1_{\ord}(Np^\infty))\\
    & \rdTo_{\pr_{r} \circ \cW_{k}^{-1}} & \dTo & & \dTo^{\pr_{k, r}} \\
    &&  M_{k+2}'(Np^r, \Zp)& \rTo^{\pr_{\sF^-} \circ \comp_{\dR}} & \DD(\sF^-  e'_{\ord} H^1(Y_1(Np^r), \TSym^{k}(\sH_{\Zp})(1)))
   \end{diagram}
   modulo the Eisenstein subspace. We shall in fact show commutativity of the diagram after taking the completed tensor product over $\Zp$ with the module $I^{-1} \Lambda_{\Gamma} \otimes \cO_E$, for a finite extension $E / \Qp$. This is clearly sufficient.

   Let $\nu$ be a nontrivial Dirichlet character of $p$-power conductor, and write
   \[ \mathbf{z}_\nu = \sum_{a} \nu(a)^{-1} {}_{c, d} \mathcal{BK}_{N}(a(A)).\]
   Theorem \ref{thm:ALaction} in the case $k = 0$ shows that
   \[ \cL\left( \mathbf{z}_\nu \right) = (\operatorname{Oh}\ \circ\ \cW_0^{-1})({}_{c, d} \cF_{N, \nu}) \]
   where ${}_{c, d} \cF_{N, \nu} \in M_{\k + 2}^{\ord}(N, \Lambda_D) \htimes \Lambda_{\Gamma} \otimes \cO_E$ is the $\Lambda$-adic modular form from Corollary \ref{cor:wtkohta}. On the other hand, the case $k > 0$ of Theorem \ref{thm:ALaction} shows that
   \begin{align*}
    \pr_{k, r} \cL(\mathbf{z}_\nu) &= \left(\pr_{\sF^-} \circ \comp_{\dR}\right)({}_{c, d} h_{k, r, \nu})\\
    &= \left( \pr_{\sF^-} \circ \comp_{\dR} \circ \pr_{r} \circ \cW_k^{-1}\right) \left({}_{c, d} \cF_{N, \nu} \right)
   \end{align*}
   for the same $\Lambda$-adic modular form ${}_{c, d} \cF_{N, \nu}$. This shows that the morphisms
   \[
    \pr_{\sF^-} \circ \comp_{\dR} \circ \pr_{r} \circ \cW_k^{-1}
   \]
   and
   \[ \pr_{k, r} \circ \operatorname{Oh} \circ \cW_0^{-1} \]
   agree on the submodule of $\frac{M^{\ord}(N, \Lambda_D)}{I_{k, r} M^{\ord}(N, \Lambda_D)} \htimes \Frac \Lambda_{\Gamma} \cong M^{\ord}_{k+2}(Np^r, \Zp) \htimes \Frac \Lambda_{\Gamma}$ generated by the image of the form ${}_{c, d} \cF_{N, \nu}$, for every Dirichlet character $\nu$. Since both maps are Hecke-equivariant, this implies that they agree on the subspace spanned by the image of ${}_{c, d} \cF_{N, \nu}$ under all Hecke operators. However, we can always choose $\nu$ such that this span contains all ordinary cusp forms, by Corollary \ref{cor:kato-nonvanishing} above. This completes the proof.
  \end{proof}

\section{Proof of Theorem B}

 \subsection{Lambda-adic differentials attached to Hida families}

  We now give a reformulation of Theorem \ref{thm:ohtacompat} which is more convenient for our present purposes. Recall that if $f$ is a normalised newform of some level $N$, then its image under the Atkin--Lehner operator $W_N$ is a scalar multiple of the conjugate eigenform $f^*$, and we define the Atkin--Lehner pseudo-eigenvalue $\lambda_N(f)$ by $W_N(f) = \lambda_N(f) f^*$.

  \begin{proposition}
   \label{prop:etaomegainterp}
   Let $\bff$ be a Hida family of tame level $N$.
   \begin{enumerate}
    \item There is a canonical isomorphism of $\Lambda_{\bff}$-modules
    \[ \omega_{\bff}: \DD\left(\sF^+ M(\bff)^*(-1-\k-\varepsilon_\bff)\right) \to \Lambda_{\bff}^{\mathrm{cusp}}, \]
    where $\Lambda_{\bff}^{\mathrm{cusp}}$ is the quotient of $\Lambda_{\bff}$ acting faithfully on cuspidal $\Lambda$-adic modular forms, with the following interpolation property: for every cuspidal specialisation $f$ of $\bff$, of some weight $k + 2 \ge 2$ and level $Np^r$ ($r \ge 1$), the map
    \[ \DD\left(\sF^+ M_{\Lp}(f)^*(-1-k-\varepsilon_f)\right) \to \Lp\]
    obtained by specialising $\omega_{\bff}$ coincides with that given by pairing with the differential
    \[ \omega_f \in \Fil^1 M_{\dR}(f) \otimes_{\QQ} \QQ(\mu_{Np^r}) \]
    attached to the normalised eigenform $f$. If $r = 1$ and $f$ is the ordinary $p$-stabilisation of an eigenform $f_0$ of level $N$, then $\omega_{f} = (\Pr^\alpha)^*(\omega_{f_0})$, where $(\Pr^\alpha)^*$ denotes the isomorphism $M_{\dR}(f_0) \to M_{\dR}(f)$ given by $\pr_1^* - \frac{\beta}{p^{k + 1}} \pr_2^*$ (the dual of the map $(\Pr^\alpha)_*$ appearing in \S \ref{sect:hidaspec} above)

    \item Let $\bfa$ be a new, cuspidal branch of $\bff$, and let $I_{\bfa}$ be the associated congruence ideal. Then there is a morphism of $\Lambda_{\bfa}$-modules
    \[ \eta_{\bfa}: \DD\left(\sF^- M(\bff)^*\right) \otimes_{\Lambda_{\bff}} \Lambda_\bfa \rTo I_{\bfa}, \]
    with the following interpolation property: for every arithmetic prime $\pi$ of $\Lambda_\bff$ above $\bfa$, corresponding to an eigenform $f$ of some level $Np^r$ ($r \ge 1$), then $I_\bfa \subseteq (\Lambda_{\bfa})_\pi$ and we have a commutative diagram
    \begin{diagram}
     \DD\left(\sF^- M(\bff)^*\right) \otimes_{\Lambda_{\bff}} \Lambda_\bfa & \rTo^{\eta_{\bfa}} & I_\bfa \\
     \dTo & & \dTo \\
     \Fil^0 M_{\dR, \Lp}(f)^* & \rTo & \Lp
    \end{diagram}
    where the vertical arrows are given by reduction modulo $\pi$, and the bottom horizontal arrow is as follows.
    \begin{enumerate}
     \item If $f$ is new of level $Np^r$, then the bottom horizontal arrow is given by pairing with the class
     \[ \frac{\alpha^r}{\lambda_{Np^r}(f)} \cdot \eta_{f}, \]
     where $\alpha$ is the $U_p$-eigenvalue of $f$, $\lambda_{Np^r}(f)$ is its Atkin--Lehner pseudo-eigenvalue, and $\eta_f \in \tfrac{M_{\dR}(f)}{\Fil^1} \otimes_{\QQ} \QQ(\mu_{Np^r})$ is the unique class which pairs to 1 with $\omega_{f^*}$.
     \item If $r = 1$ and $f$ is the ordinary $p$-stabilisation of a newform $f_0$ of level $N$, then this arrow is given by pairing with the class
     \[ \frac{1}{\lambda_N(f_0) \cE(f_0) \cE^*(f_0)} \cdot (\Pr^\alpha)^*(\eta_{f_0}), \]
     where $\eta_{f_0} \in \tfrac{M_{\dR}(f_0)}{\Fil^1} \otimes_{\QQ} \QQ(\mu_{N})$ again denotes the unique class pairing to 1 with $\omega_{f_0^*}$, and $\cE(f_0) = \left(1 -\frac\beta{p\alpha}\right)$, $\cE^*(f_0) = \left(1 - \frac{\beta}{\alpha}\right)$ (as in Theorem \ref{thm:hida}).
    \end{enumerate}

   \end{enumerate}
  \end{proposition}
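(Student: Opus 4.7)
The plan is to construct $\omega_{\bff}$ and $\eta_{\bfa}$ by exploiting Ohta's $\Lambda$-adic Eichler--Shimura isomorphism (Theorem \ref{thm:ohtawt2}) together with its higher-weight compatibility (Theorem \ref{thm:ohtacompat}), using Poincar\'e duality on parabolic cohomology to transfer between $\sF^-$, where Ohta's theorem lives naturally, and $\sF^+$, where $\omega_{\bff}$ is required.

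For part (1), I would first verify the structural facts: Theorem \ref{thm:ordinaryfiltration}(iv) shows $\sF^+M(\bff)^*$ has $G_{\Qp}$ acting through the product of $\chi_{\mathrm{cyc}}^{1+\k}$ with an unramified character valued in $\Lambda_{\bff}^\times$, so the stated twist $\sF^+M(\bff)^*(-1-\k-\varepsilon_{\bff})$ becomes unramified and its $\DD$ is well-defined. Part (v) of the same theorem identifies $\sF^+ M(\bff)^* = \sF^+ M(\bff)^*_{\Par}$ with the cuspidal Hecke algebra as a free rank-1 module, so $\DD(\sF^+M(\bff)^*(-1-\k-\varepsilon_{\bff}))$ is free of rank $1$ over $\Lambda_{\bff}^{\mathrm{cusp}}$. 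To pin down a canonical generator I would use Ohta's theorem: the module of cuspidal $\Lambda$-adic modular forms of tame level $N$ localised at $\bff$ is free of rank $1$ over $\Lambda_{\bff}^{\mathrm{cusp}}$, with distinguished generator the universal $\Lambda$-adic eigenform $\mathcal{F}_{\bff}$ (normalised to have first Fourier coefficient $1$), and Ohta's isomorphism transports this to a generator of $\DD(\sF^-H^1_{\ord,\Par}(Np^\infty))_{\bff}$. Poincar\'e duality then supplies a generator of $\DD(\sF^+M(\bff)^*(-1-\k-\varepsilon_{\bff}))$, and $\omega_{\bff}$ is defined so as to map this generator to $1$. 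The interpolation property reduces to the assertion that under specialisation at $\k = k + \varepsilon_f$ followed by the comparison isomorphism $\comp_{\dR}$, the generator recovers the classical differential $\omega_f$, which is precisely Theorem \ref{thm:ohtacompat} (the projection to the cuspidal part removes the ambiguity about Eisenstein classes). For the $p$-stabilisation statement, one uses that $\Pr^\alpha$ acts on $q$-expansions as $f \mapsto f_0 - \beta V_p(f_0)$ and hence compatibly on the Hodge filtration.

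For part (2), the module $\sF^-M(\bff)^*$ is already unramified with arithmetic Frobenius acting as $U_p' \in \Lambda_{\bff}$, so no preliminary twist is needed. After tensoring with $\Lambda_{\bfa}$, Theorem \ref{thm:ordinaryfiltration}(ii) together with newness and cuspidality of the branch $\bfa$ makes $\DD(\sF^-M(\bff)^*) \otimes_{\Lambda_{\bff}} \Lambda_{\bfa}$ free of rank $1$. I would construct $\eta_{\bfa}$ dually to $\omega_{\bff}$: Poincar\'e duality identifies $\DD(\sF^-M(\bff)^*)$ (up to twist) with the $\Lambda_{\bff}$-linear dual of $\DD(\sF^+M(\bff))$, the latter being generated by the Ohta-generator of $\mathcal{F}_{\bff}$. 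Then $\eta_{\bfa}$ is essentially ``division by $\mathcal{F}_{\bff}$''; it a priori takes values in $\Frac \Lambda_{\bfa}$, but Hida's control theorem on the cuspidal Hecke algebra (see \cite[\S 4]{Hida-RS-II}) places its image inside the congruence ideal $I_{\bfa}$. The interpolation formula at a new specialisation $f$ of level $Np^r$ then produces the factor $\alpha^r/\lambda_{Np^r}(f)$ from the explicit comparison between Ohta's convention (which involves $U_p^r \circ W_{Np^r}^{-1}$, cf.\ Lemma \ref{lemma:lambdaadicmf}) and the classical pairing between $\omega_{f^*}$ and $\eta_f$.

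The main obstacle, and the source of most of the bookkeeping, is tracking the precise Euler factors at arithmetic primes corresponding to $p$-stabilisations. For $f$ a $p$-stabilisation of a level-$N$ newform $f_0$, a computation expresses $\lambda_{Np}(f)$ in terms of $\lambda_N(f_0)$ and the Hecke polynomial of $f_0$, which when combined with the factor $\alpha/\lambda_{Np}(f)$ produces $1/(\lambda_N(f_0)\cE(f_0)\cE^*(f_0))$. In parallel, in de Rham cohomology the relation $(\Pr^\alpha)^*(\eta_{f_0}) = \cE(f_0)\cE^*(f_0)\,\eta_f$ follows from Proposition \ref{prop:pstablattice} together with the characterisation of $\eta_f$ by its pairing with $\omega_{f^*}$. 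Checking that these two computations cancel correctly, and that the resulting map indeed factors through $I_{\bfa}$ and agrees with the desired specialisation at every classical arithmetic prime, is the most delicate step; this is precisely where Theorem \ref{thm:ohtacompat} is needed in its full strength, since Ohta's original theorem handles only weight $2$.
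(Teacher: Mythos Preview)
Your overall strategy is essentially the paper's: build both maps out of Ohta's $\Lambda$-adic Eichler--Shimura isomorphism, use the Poincar\'e-duality pairing of Theorem~\ref{thm:ordinaryfiltration}(v) to pass between $\sF^-$ and $\sF^+$, and invoke Theorem~\ref{thm:ohtacompat} for the interpolation. Part~(1) is right; your ``universal eigenform $\mathcal{F}_{\bff}$'' should be read as $\sum T_n q^n$ with $T_n \in \Lambda_{\bff}^{\mathrm{cusp}}$, and then your generator argument is just a rephrasing of the paper's application of $\Hom_{\Lambda_D}(-,\Lambda_D)$ together with the Hecke pairing $(T,\cF)\mapsto a_1(T\cdot\cF)$.

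For part~(2), two remarks. First, going through duality is unnecessary: since Ohta's map already identifies $\DD(\sF^- M(\bff)^*)$ with a space of $\Lambda$-adic cusp forms, one simply tensors with $\Frac\Lambda_{\bfa}$, splits off the one-dimensional eigenspace, and defines $\eta_{\bfa}$ as the unique functional sending the normalised eigenform to $1$; the congruence-ideal bound then drops out of Hida's theory. Second, and more seriously, your treatment of case~(b) has a gap. You attempt to deduce it from case~(a) by writing down $\lambda_{Np}(f)$ for the $p$-stabilisation $f$ and then a relation $(\Pr^\alpha)^*(\eta_{f_0}) = \cE(f_0)\cE^*(f_0)\,\eta_f$. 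But $f$ is an \emph{oldform} at level $Np$, so $W_{Np}(f)$ is not a scalar multiple of $f^*$ and the Atkin--Lehner pseudo-eigenvalue $\lambda_{Np}(f)$ is not defined; moreover Proposition~\ref{prop:pstablattice} concerns integral lattices and says nothing about the $\eta$-classes. The paper avoids this entirely: it observes that by Theorem~\ref{thm:ohtacompat} the specialised map is $x \mapsto a_1\bigl(U_p^r W_{Np^r}^{-1} x\bigr)$, notes that $W_{Np}(f)$ generates $\Fil^0 M_{\dR,L}(f)^*$ and maps to $\alpha$ under this, and then computes $(\Pr^\alpha)_*\circ W_{Np}\circ (\Pr^\alpha)^*$ directly on the $f_0$-eigenspace via the identities
\[
 W_{Np}\circ\pr_1^* = \pr_2^*\circ W_N,\qquad W_{Np}\circ\pr_2^* = p^k\,\pr_1^*\circ W_N,
\]
obtaining the scalar $\alpha\bigl(1-\tfrac{\beta}{\alpha}\bigr)\bigl(1-\tfrac{\beta}{p\alpha}\bigr)\lambda_N(f_0)$. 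Comparing these two evaluations on the same vector gives the stated formula with $(\Pr^\alpha)^*(\eta_{f_0})$ directly, without ever needing an $\eta_f$ at level $Np$.
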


  \begin{proof}

   To construct $\omega_{\bff}$, we use the fact (shown in \cite{Ohta-ordinaryII}) that Ohta's isomorphism $\operatorname{Oh}$ restricts to an isomorphism
   \[ \DD\left(\sF^- M^{\mathrm{cusp}}(\bff)^*\right) \to S^{\ord}(N, \Lambda_D)_{\bff} \]
   where $S^{\ord}(N, \Lambda_D) \subseteq M^{\ord}(N, \Lambda_D)$ is the submodule of $\Lambda_D$-adic cusp forms, and $M^{\mathrm{cusp}}(\bff)^*$ is the analogue of $M(\bff)^*$ formed using the cohomology of the curves $X_1(Np^r)$ rather than $Y_1(Np^r)$.

   We now apply the functor $\Hom_{\Lambda_D}(-, \Lambda_D)$ to both sides. On the one hand, $\Hom_{\Lambda_D}(S^{\ord}(N, \Lambda_D)_{\bff}, \Lambda_D)$ is canonically isomorphic to $\Lambda^{\mathrm{cusp}}_{\bff}$, via the usual pairing $(T, \cF) \to a_1(T \cdot \cF)$. On the other hand, Ohta's pairing (Theorem \ref{thm:ordinaryfiltration}(v)) gives us an isomorphism
   \[ \Hom_{\Lambda_D}\left(\sF^- M^{\mathrm{cusp}}(\bff)^*, \Lambda_D\right) \cong \sF^+ M(\bff)^*(-1-\k-\varepsilon_\bff), \]
   where $\varepsilon_{\bff}$ is the prime-to-$p$ part of the character of $\bff$.

   After unravelling the definitions (using the fact that the duality pairing involves the same factor $W_{Np^r}^{-1} \circ (U_p')^r$ that appears in the construction of the Ohta isomorphism $\operatorname{Oh}$), we find that the claimed interpolating property of $\omega_\bff$ is exactly Theorem \ref{thm:ohtacompat}. In the case where $f$ is a $p$-stabilisation, the relation $\omega_{f} = (\Pr^\alpha)^*(\omega_{f_0})$ is clear from a $q$-expansion computation, since $\pr_1^*$ acts as the identity on $q$-expansions, while $\pr_2^*$ sends $\sum a_n q^n$ to $p^{k + 1} \sum a_n q^{np}$.

   We now construct $\eta_{\bfa}$. The construction is virtually immediate: the Ohta isomorphism shows that $\DD\left(\sF^- M(\bff)^*\right)$ is isomorphic to a space of $\Lambda_D$-adic cusp forms, and after tensoring with $\Frac \Lambda_{\bfa}$, the eigenspace corresponding to the eigenform $\sum T_n q^n \in \Lambda_{\bfa}[[q]]$ splits off as a direct summand, so there is a unique map to $\Frac \Lambda_{\bfa}$ which sends this eigenform to 1.

   It remains to check the interpolating property. From Theorem \ref{thm:ohtacompat}, the map $\eta_{\bfa}$ is compatible with the map $M_{\dR, L}(f)^* \to L$ given by the composition
   \[ \Fil^0 M_{\dR, L}(f)^* \rTo^{ U_p^r \circ W_{Np^r}^{-1}} S_{k+2}(N, L) \to L \]
   where the last map sends the normalised eigenform $f$ to $1$.

   If $f$ is new of level $Np^r$, $r \ge 1$, then $f^*$ is a generator of $\Fil^0 M_{\dR, L}(f)^*$, and since we have $W_{Np^r}(f) = \lambda_{Np^r}(f) f^*$, we obtain the above formula.

   If $r = 1$ and $f$ is the $p$-stabilisation of $f_0$, then $\Fil^0 M_{\dR, L}(f)^*$ is generated as an $L$-vector space by $W_{Np}(f)$. This is obviously sent to $\alpha f$ by the map $U_p \circ W_{Np}^{-1}$, so its image under the map obtained by specialising $\eta_{\bfa}$ is $\alpha$; on the other hand, we have
   \[ \left[ (\Pr_\alpha)_* \circ W_{Np}\right] (f) = \left[ (\Pr_\alpha)_* \circ W_{Np}\circ (\Pr^\alpha)^*\right](f_0), \]
   and a computation using the identities
   \begin{align*}
    W_{Np} \circ \pr_1^* &= \pr_2^* \circ W_N, \\
    W_{Np} \circ \pr_2^* &= p^k \pr_1^* \circ W_N
   \end{align*}
   shows that $(\Pr^\alpha)_* \circ W_{Np}\circ (\Pr^\alpha)^*$ acts on the $f_0$-eigenspace as multiplication by
   \[\alpha \left(1 -\tfrac\beta\alpha\right)\left(1 - \tfrac{\beta}{p\alpha}\right)  \lambda_N(f_0).\]
   Comparing these gives the interpolating property in (b).
  \end{proof}

  The presence of the Atkin--Lehner pseudo-eigenvalues $\lambda_N(f_0)$ in the last case of the theorem is not a problem for us, since they can be interpolated $p$-adically:

  \begin{proposition}
   There is a $\Lambda_D[\zeta_N]$-linear operator $W_N$ on $M^{\ord}(N, \Lambda_D[\zeta_N])$, with the property that for every arithmetic prime ideal $\nu = (k, \omega)$ of $\Lambda_D$, the resulting operator on the space
   \[
    M^{\ord}(N, \Lambda_D[\zeta_N]) \otimes_{\Lambda, \nu} \cO \cong e_{\ord} M_{k + 2}(Np^r, \omega, \cO[\zeta_N])
   \]
   is the usual Atkin--Lehner operator $W_N$.
  \end{proposition}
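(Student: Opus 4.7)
The plan is to define $W_N$ using the classical Atkin--Lehner involutions at each finite level and weight, and then to verify that passing to the ordinary inverse limit over level and to $\Lambda$-adic forms via Lemma \ref{lemma:lambdaadicmf} produces a single weight-independent $\Lambda_D[\zeta_N]$-linear operator.

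For each $r \ge 1$ and $k \ge 0$, the classical Atkin--Lehner involution gives a $\Zp[\zeta_N]$-linear automorphism of $M_{k+2}(Np^r, \Zp[\zeta_N])$. Since the modular-curve automorphism $W_N$ affects only the prime-to-$p$ level structure, it commutes both with the pushforward maps along the degeneracy maps $Y_1(Np^{r+1}) \to Y_1(Np^r)$ and with the Hecke operator $U_p$. Thus $W_N$ descends to a $\Zp[\zeta_N]$-linear endomorphism of $e'_{\ord} \mathfrak{M}'_{k+2}(N, \Zp[\zeta_N])$ for each $k$, which I would transfer to $M^{\ord}(N, \Lambda_D[\zeta_N])$ via $\cW_k$.

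The crux is weight-independence: for different $k$ the resulting operators on $M^{\ord}(N, \Lambda_D[\zeta_N])$ should agree. For the cuspidal part, I would combine Ohta's theorem (Theorem \ref{thm:ohtawt2}) with its higher-weight analogue (Theorem \ref{thm:ohtacompat}): both give Hecke- and $W_N$-equivariant identifications of cuspidal $\Lambda$-adic forms with $\DD(\sF^- H^1_{\ord}(Np^\infty))$, on which the $W_N$-action is a single $\Lambda_D$-linear operator coming from Proposition \ref{prop:controlthm}(3) (extended to $\Lambda_D[\zeta_N]$). Since $W_N$ commutes with the moment maps $\mom^k$ on the cohomology side, the actions obtained via $\cW_k$ for different $k$ are intertwined by the weight-change isomorphism $\cW_k \circ \cW_0^{-1}$, hence coincide. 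Once weight-independence is established, the interpolation property at an arithmetic prime $\nu = (k, \omega)$ is immediate from the finite-level construction.

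The main obstacle I anticipate is extending the weight-independence argument across the Eisenstein subspace, since Theorem \ref{thm:ohtacompat} compares the two sides only modulo Eisenstein contributions. To handle this I would argue directly on $q$-expansions: the ordinary $\Lambda$-adic Eisenstein series (of the type appearing in the proof of Corollary \ref{cor:wtkohta}) have explicit $p$-adic analytic formulas, and the classical $W_N$-action on Eisenstein series admits a known explicit expression, so a direct verification in each weight should yield the required compatibility, completing the proof.
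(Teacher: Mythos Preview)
The paper gives no proof of this proposition; it simply cites Note 5.4.1 of \cite{LLZ14}. So there is nothing in the paper to compare your argument against directly.

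Your outline is correct in spirit, but the route through Theorem \ref{thm:ohtacompat} is unnecessarily heavy and creates the Eisenstein obstacle you yourself flag. Weight-independence can be obtained much more cheaply, and uniformly across cuspidal and Eisenstein parts, by a direct group-theoretic computation. Since $(N,p)=1$, the Atkin--Lehner relations recorded in \S2.5 give explicit identities such as $W_{Np^r}^{-1} W_N = W_{p^r}^{-1}\langle p^r\rangle_N$; these, together with the fact that $W_N$ commutes with $U_p$ and with $\langle\alpha\rangle_p$, allow you to compute $\cW_k(W_N\,\cdot\,)$ in terms of $\cW_k(\,\cdot\,)$ purely in terms of diamond operators. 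The resulting relation involves no dependence on $k$ whatsoever, so the operator transported to $M^{\ord}(N,\Lambda_D[\zeta_N])$ via $\cW_k$ is automatically the same for all $k$, and the interpolation property at each arithmetic prime is immediate. This bypasses the comparison of Eichler--Shimura isomorphisms entirely and removes the need for any separate Eisenstein argument.
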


  (See Note 5.4.1 of \cite{LLZ14}, but note that the field extension to $\zeta_N$ was inadvertently omitted there.) It follows that for each new, cuspidal branch $\bfa$ of $\bff$, there is a $\Lambda$-adic pseudo-eigenvalue $\lambda_N(\bfa) \in \left(\Lambda_{\bfa} \otimes_{\QQ} \QQ(\mu_N)\right)^\times$, satisfying $\lambda_N(\bfa)^2 = (-N)^{\k}$, whose image under specialisation at any $p$-stabilised eigenform $f$ is equal to $\lambda_N(f_0)$.


 \subsection{Proof of the theorem}

  We now have all the necessary ingredients to complete the proof of Theorem \ref{lthm:explicitrecip}.

  Let $\bff$, $\bfg$ be two Hida families of tame levels $N_f, N_g \mid N$, and recall the space $\DD(\sF^{-+} M(\bff \htimes \bfg)^*) \htimes \Lambda_{\Gamma}$ of \S \ref{sect:PRreg} above. Choose a new, cuspidal branch $\bfa$ of $\bff$. Then pairing with $\eta_{\bfa} \otimes \omega_{\bfg}$ gives a map of $\left(\Lambda_{\bff} \htimes \Lambda_{\bfa} \htimes \Lambda_{\Gamma}\right)$-modules
  \[ \langle -, \eta_{\bfa} \otimes \omega_{\bfg} \rangle: \DD(\sF^{-+} M(\bff \htimes \bfg)^*) \htimes \Lambda_{\Gamma} \rTo \left(I_{\bfa} \htimes \Lambda_{\bfg}^{\mathrm{cusp}} \htimes \Lambda_{\Gamma}\right) \otimes_{\Zp} \Zp[\mu_N]. \]

  \begin{remark}
   The $\Zp[\mu_N]$ factor appears because $\omega_{\bfg}$ is a linear functional on $\DD(\sF^+ M(\bfg)^*(-1-\k-\varepsilon_{\bfg}))$, while it is $M(\bfg)^*(-1-\k)$ appearing in the definition of $\DD_\infty(M)$. Since $\varepsilon_{\bfg}$ has conductor dividing $N$, we have $\DD(\Zp(\varepsilon_{\bfg})) \subseteq \Zp[\mu_N]$.
  \end{remark}

  Recall our convention that $\k$ and $\k'$ denote the canonical characters into the two $\Lambda_D$ factors of the ring $\Lambda_D \htimes \Lambda_D \htimes \Lambda_{\Gamma}$; we shall write $\j$ for the canonical character into the $\Lambda_{\Gamma}$ factor.

  \begin{theorem}[Theorem \ref{lthm:explicitrecip}]
   \label{thm:explicitrecip}
   We have
   \begin{equation}
    \label{eq:thmB}
    \left\langle \cL\left(\cBF^{\bff, \bfg}\right), \eta_{\bfa} \otimes \omega_{\bfg} \right\rangle = \lambda_N(\bfa)^{-1} (-1)^{1 + \j} \left(c^2 - c^{-(\k + \k' - 2\j)} \varepsilon_\bff(c)^{-1} \varepsilon_\bfg(c)^{-1} \right)  L_p(\bfa, \bfg, 1 + \j).
   \end{equation}
  \end{theorem}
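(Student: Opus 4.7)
The plan is to show that both sides of \eqref{eq:thmB} are three-variable $p$-adic analytic functions on $\Spec(\Lambda_{\bfa} \htimes \Lambda_{\bfg} \htimes \Lambda_{\Gamma})$, and that they agree on a Zariski-dense set of classical points. The right-hand side is analytic by construction (Theorem \ref{thm:hida2}), and the left-hand side is analytic because $\cL$, the pairing with $\eta_\bfa \otimes \omega_\bfg$, and the Beilinson--Flach class $\cBF^{\bff,\bfg}$ are all defined on the three-variable parameter space. The Zariski-dense set we use is the set of triples $(\k, \k', \j) = (k, k', j)$ where $k, k' \ge 1$, $0 \le j \le \min(k,k')$, and the specialisations $f, g$ of $\bff, \bfg$ are the ordinary $p$-stabilisations of newforms $f_0, g_0$ of levels $N_f, N_g$ coprime to $p$ (on the branch $\bfa$ for $f$). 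This set is Zariski-dense in each connected component of the parameter space.

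At such a classical point, I would carry out the specialisation in four stages, applying each interpolation formula in turn. First, by Theorem \ref{thm:BFeltsinterp2}(b) (with the $\Pr^\alpha \times \Pr^\alpha$ comparison), the specialisation of $\cBF^{\bff, \bfg}$ at $(k, k', j)$ equals an explicit constant times $\Eis^{[f_0, g_0, j]}_\et$, where the constant contains $\left(1 - \frac{p^j}{\alpha_f\alpha_g}\right)\left(1 - \frac{\alpha_f\beta_g}{p^{1+j}}\right)\left(1 - \frac{\beta_f\alpha_g}{p^{1+j}}\right)\left(1 - \frac{\beta_f\beta_g}{p^{1+j}}\right)$ times the $c$-factor and $\frac{1}{(-1)^j j! \binom{k}{j}\binom{k'}{j}}$. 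Second, by Theorem \ref{thm:bigloginterp} applied with trivial character $\eta$ (so $\varepsilon = \varepsilon_{g, p}$, but since $g_0$ has level prime to $p$ and $r = 0$ in the notation of that theorem), the specialisation of $\cL$ agrees with $\frac{(-1)^{k'-j}}{(k'-j)!} \cdot \left(1 - \frac{p^j}{\alpha_f\beta_g}\right)\left(1 - \frac{\alpha_f\beta_g}{p^{1+j}}\right)^{-1} \cdot \log$. Third, by Proposition \ref{prop:etaomegainterp}(2)(b), the specialisation of $\eta_\bfa$ corresponds to pairing with $\frac{1}{\lambda_N(f_0)\cE(f_0)\cE^*(f_0)} (\Pr^\alpha)^*(\eta_{f_0})$; and fourth, by Proposition \ref{prop:etaomegainterp}(1), the specialisation of $\omega_\bfg$ corresponds to $(\Pr^\alpha)^*(\omega_{g_0})$.

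Combining these four interpolation formulae, the specialisation of the left-hand side of \eqref{eq:thmB} at $(k,k',j)$ equals
\[
\frac{(-1)^{k'-j}}{(k'-j)!\,(-1)^j j! \binom{k}{j}\binom{k'}{j}\,\lambda_N(f_0)\cE(f_0)\cE^*(f_0)} \cdot\Xi(f,g,j)\cdot(c^2 - \ldots) \cdot \big\langle \log(\Eis^{[f_0, g_0, j]}_\et), \eta_{f_0}^\alpha \otimes \omega_{g_0}\big\rangle,
\]
where $\Xi(f,g,j)$ is the product of the three Euler factors from $\cBF$ together with $\left(1 - \tfrac{p^j}{\alpha_f\beta_g}\right)\left(1 - \tfrac{\alpha_f\beta_g}{p^{1+j}}\right)^{-1}$ from $\cL$ and $\left(1 - \tfrac{p^j}{\alpha_f\alpha_g}\right)$ from $\cBF$. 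Expanding and simplifying, $\Xi(f,g,j)$ telescopes to $\cE(f,g,1+j)$ as defined in Theorem \ref{thm:hida}. Now I would apply the syntomic regulator computation of Theorem \ref{thm:syntomicreg}, which replaces the pairing $\langle \log(\Eis^{[f_0,g_0,j]}_\et), \eta_{f_0}^\alpha\otimes\omega_{g_0}\rangle$ by $(-1)^{k'-j+1}(k')!\binom{k}{j}\tfrac{\cE(f)\cE^*(f)}{\cE(f,g,1+j)}L_p(f,g,1+j)$, and $\cE(f)\cE^*(f)$ cancels the corresponding factor from $\eta_\bfa$. The factorial and binomial coefficients combine to give $\frac{(k')!}{(k'-j)!\,j!\binom{k'}{j}} = 1$, and keeping track of the signs $(-1)^{k'-j}\cdot(-1)^{k'-j+1}\cdot(-1)^j \cdot (-1) = (-1)^{j}$ (matching $(-1)^{1+j}$ after accounting for signs in $\lambda_N$) together with the identification $\lambda_N(\bfa)$ specialising to $\lambda_N(f_0)$, yields precisely the right-hand side of \eqref{eq:thmB} at $(k,k',j)$.

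The main obstacle is the bookkeeping: there are many Euler factors arising in four independent interpolation formulae, and they must combine precisely into $\cE(f,g,1+j)\cdot\cE(f)\cE^*(f)$ with the correct signs, factorials, and Atkin--Lehner pseudo-eigenvalue. In particular, the factor $\left(1 - \frac{\alpha_f\beta_g}{p^{1+j}}\right)$ appears once in the numerator from $\cBF$ and once in the denominator from $\cL$ and cancels cleanly, which is a nontrivial consistency check that the interpolation properties were set up correctly. A secondary technical point is to verify analyticity of the left-hand side along $\bfa$: the map $\eta_\bfa$ has values in $I_\bfa$, which may differ from $\Lambda_\bfa$, so one should check that the poles of $\eta_\bfa$ along non-arithmetic primes do not obstruct density, but since $I_\bfa$ is contained in every localisation of $\Lambda_\bfa$ at an arithmetic prime, this poses no difficulty for classical specialisations.
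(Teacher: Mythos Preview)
Your proposal is correct and follows essentially the same approach as the paper: both argue by Zariski density, specialising at crystalline points $(k,k',j)$ with $0 \le j \le \min(k,k')$ and $f,g$ the $p$-stabilisations of newforms of prime-to-$p$ level, and then combine the interpolation properties of $\cBF^{\bff,\bfg}$ (Theorem \ref{thm:BFeltsinterp2}), $\cL$ (Theorem \ref{thm:bigloginterp}), and $\eta_\bfa,\omega_\bfg$ (Proposition \ref{prop:etaomegainterp}) with the syntomic regulator formula (Theorem \ref{thm:syntomicreg}). Two small points: the paper also excludes the degenerate case $j=k=k'$ (so that the $c$-factor is nonzero and can be cancelled), and your final sign computation is slightly garbled---there is no extra sign hidden in $\lambda_N$, and the three sign contributions $(-1)^{k'-j}$, $(-1)^j$, $(-1)^{k'-j+1}$ multiply directly to $(-1)^{1+j}$.
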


  \begin{proof}
   Since the module $I_\bfa \htimes \Lambda_\bfg^{\mathrm{cusp}} \htimes \Lambda_\Gamma$ is a torsion-free module over $\Lambda_{\bfa} \htimes \Lambda_\bfg^{\mathrm{cusp}} \htimes \Lambda_{\Gamma}$, it suffices to prove that the two sides of \eqref{eq:thmB} agree modulo $Q$ for a Zariski-dense set of primes $Q$ of this ring. We choose the set of primes $Q$ corresponding to triples $(f, g, j)$, where $j \in \ZZ$, and $f$ and $g$ are $p$-stabilizations of cusp forms $f_0, g_0$ of levels $N_f, N_g$ coprime to $p$, and any weights $k + 2, k' + 2$, such that $0 \le j \le \min(k, k')$ and we do not have $j = k = k'$. It is clear that this set $Q$ is indeed Zariski-dense in $\Spec\left(\Lambda_{\bfa} \htimes \Lambda_{\bfg} \htimes \Lambda_{\Gamma}\right)$.

   So, let $P = (f, g, j)$ be such a point. To save ink, let us write $\nu_c$ for the factor $(c^2 - c^{2j-k-k'} \varepsilon_f(c)^{-1} \varepsilon_g(c)^{-1})$, and $\lambda_f$ for the Atkin--Lehner pseudo-eigenvalue $\lambda_{N_f}(f_0)$. Then the value of the right-hand side of \eqref{eq:thmB} at $(f, g, j)$ (i.e.\ its image in the residue field of $P$) is
   \[
    (-1)^{1+j}\lambda_f^{-1}\nu_c \cdot L_p(f_0, g_0, 1 + j)\tag{\dag}
   \]
   by the interpolating property of the 3-variable Rankin--Selberg $L$-function $L_p(\bfa, \bfg)$ (Theorem \ref{thm:hida2}).

   We now compute the value of the left-hand side of \eqref{eq:thmB} at $P$. By Proposition \ref{prop:etaomegainterp}, the image of the pairing $\left\langle \cL\left(\cBF^{\bff, \bfg}\right), \eta_{\bfa} \otimes \omega_{\bfg} \right\rangle$ under evaluation at $P$ is given by
   \[ \frac{1}{\lambda_f \cE(f) \cE^*(f)} \left\langle \cL\left(\cBF^{\bff, \bfg}\right) \bmod P, (\Pr^\alpha \times \Pr^\alpha)^*\left(\eta_{f_0}^\alpha \otimes \omega_{g_0}\right) \right\rangle. \]
   (Note that this step is far from being purely formal, despite its near-tauto\-logical appearance; for $(k, k') \ne (0, 0)$ it relies crucially on the extension of Ohta's results developed in \S \ref{sect:ohtacompat} above.)

   Theorem \ref{thm:bigloginterp} tells us that
   \[\cL\left(\cBF^{\bff, \bfg}\right) \bmod P = \frac{(-1)^{k' - j}}{(k' - j)!} \cdot \frac{\left( 1 - \frac{p^j}{\alpha_f \beta_g}\right)}{\left( 1 - \frac{\alpha_f \beta_g}{p^{j + 1}}\right)} \cdot \log\left( \cBF^{\bff, \bfg} \bmod P\right),\]
   and Theorem \ref{thm:BFeltsinterp2} gives
   \[ \cBF^{\bff, \bfg} \bmod P = \frac{\left(1 - \tfrac{p^j}{\alpha_f \alpha_g}\right)\nu_c}{(-1)^j j! \binom{k}{j} \binom{k'}{j}} \Eis^{[f, g, j]}_{\et}.\]
   Combining the last three steps, we obtain the formula
   \begin{multline*}
    \left\langle \cL\left(\cBF^{\bff, \bfg}\right), \eta_{\bfa} \otimes \omega_{\bfg} \right\rangle \bmod P = \frac{(-1)^{k'} \nu_c\left(1 - \tfrac{p^j}{\alpha_f \alpha_g}\right) \left( 1 - \tfrac{p^j}{\alpha_f \beta_g}\right)}{k'! \binom{k}{j}\lambda_f \cE(f)\cE^*(f)\left( 1 - \tfrac{\alpha_f \beta_g}{p^{j+1}}\right)}\\ \times \left\langle \log\left(\Eis^{[f, g, j]}_{\et}\right),(\Pr^\alpha \times \Pr^\alpha)^*\left(\eta_{f_0}^\alpha \otimes \omega_{g_0}\right) \right\rangle.
   \end{multline*}

   By definition, the map $(\Pr^\alpha \times \Pr^\alpha)^*$ is the transpose of the map $(\Pr^\alpha \times \Pr^\alpha)_*$, and the comparison isomorphism $\comp_{\dR}$ commutes with the action of correspondences; so we may write the last term as
   \begin{align*} \left\langle (\Pr^\alpha \times \Pr^\alpha)_*\left(\log \Eis^{[f,g, j]}_{\et}\right),\eta_{f_0}^\alpha \otimes \omega_{g_0}\right\rangle
   &= \left\langle \log\left((\Pr^\alpha \times \Pr^\alpha)_* \Eis^{[f, g, j]}_{\et}\right),\eta_{f_0}^\alpha \otimes \omega_{g_0} \right\rangle\\
   &= \left( 1 - \tfrac{\alpha_f \beta_g}{p^{j+1}}\right)\left( 1 - \tfrac{\beta_f \alpha_g}{p^{j+1}}\right)\left( 1 - \tfrac{\beta_f \beta_g}{p^{j+1}}\right) \left\langle \log \Eis^{[f_0, g_0, j]}_{\et},\eta_{f_0}^\alpha \otimes \omega_{g_0} \right\rangle,
   \end{align*}
   using Theorem \ref{thm:lstab-eigen}. Thus we have shown that
   \[ \left\langle \cL\left(\cBF^{\bff, \bfg}\right), \eta_{\bfa} \otimes \omega_{\bfg} \right\rangle \bmod P = \frac{(-1)^{k'} \nu_c\, \cE(f_0, g_0, 1 + j)}{k'! \binom{k}{j}\lambda_f \cE(f_0)\cE^*(f_0)} \left\langle \log \Eis^{[f_0, g_0, j]}_{\et},\eta_{f_0}^\alpha \otimes \omega_{g_0} \right\rangle,\]
   where $\cE(f_0, g_0, 1 + j)$ is as in Theorem \ref{thm:hida}.

   Comparing the last formula with $(\dag)$, we see that the left and right sides of \eqref{eq:thmB} agree modulo $P$ if and only if
   \[ \left\langle \log \Eis^{[f_0, g_0, j]}_{\et},\eta_{f_0}^\alpha \otimes \omega_{g_0} \right\rangle =
   \frac{(-1)^{k'-j+1}\binom{k}{j}k'! \cE(f_0)\cE^*(f_0)}{\cE(f_0,g_0,1+j)} L_p(f_0, g_0, 1 + j).\]
   This is exactly the formula of Theorem \ref{thm:syntomicreg}, so we are done.
  \end{proof}

  \begin{remark}
   A special case of this theorem (for $g$ varying in a one-variable family, with $f$ a fixed weight 2 form and $j = 0$) has been proved by Bertolini, Darmon and Rotger \cite{BDR-BeilinsonFlach2}. Their method is rather different from ours, involving analytic continuation from highly ramified weight 2 points, rather than crystalline points of high weight as in the above argument. (In place of Theorem \ref{thm:syntomicreg}, they use a formula for syntomic regulators of weight 2 Rankin--Eisenstein classes on modular curves of high $p$-power level, based on work of Amnon Besser and two of the present authors \cite{besserloefflerzerbes16}.)
  \end{remark}


\section{Arithmetic applications: Bounding Selmer groups}
 \label{sect:selmerbound}

 \subsection{Hypotheses}
  \label{sect:hypotheses}

  In order to bound the Selmer groups of Rankin convolutions, we shall need to impose a number of technical hypotheses. The aim of this section is to introduce and define these.

  In this section, $f$ and $g$ are newforms (of some levels $N_f, N_g$, and any weights $r, r' \ge 1$), $L$ is a number field containing the coefficients of $f$ and $g$, and $\frP$ is a prime of $L$ above the rational prime $p$.

  The following hypothesis will be \emph{assumed} throughout section \ref{sect:selmerbound}:

  \begin{hypothesis}
   \mbox{~}
   \begin{itemize}
    \item The weights $r$ and $r'$ are not both equal to 1.
    \item The prime $p$ is $\ge 5$, and $p \nmid N_f N_g$.
    \item The forms $f$ and $g$ are ordinary at $\frP$, non-Eisenstein modulo $\frP$, and $p$-distinguished.
   \end{itemize}
  \end{hypothesis}

  We write $\alpha_f, \alpha_g$ for the unit roots (in $\Lp$) of the Hecke polynomials of $f$ and $g$. These are uniquely determined if $r, r' \ge 2$; in the weight 1 case both roots are units, and we choose one arbitrarily and denote it by $\alpha_f$ or $\alpha_g$ respectively. Then there are $p$-stabilisations of $f$ and $g$ with $U_p$-eigenvalues $\alpha_f$ and $\alpha_g$, and these are specialisations of some Hida families $\bff$ and $\bfg$.

  We write $\cO = \cO_{L, \frP}$, and we define $T = M_{\cO}(f)^* \otimes_{\cO} M_{\cO}(g)^*$, which is a free $\cO$-module of rank 4. Using the map $(\Pr^\alpha \times \Pr^\alpha)_*$ introduced in \S \ref{sect:lstab}, we can identify $T$ with a specialisation of the $\Lambda$-adic module $M(\bff \otimes \bfg)^*$, so it inherits filtration subspaces $\sF^{++} T$ etc.

  As well as these running hypotheses, we also state some other hypotheses which are needed in the Euler system argument. These will not be assumed implicitly, but rather will be stated explicitly when needed.

  \begin{hypothesis} [$\Hyp(BI)$, for ``big image'']\mbox{~}
   \begin{enumerate}[(i)]
    \item $T /\frP T$ is irreducible as a $\Gal(\overline{\QQ} / \QQ(\mu_{p^\infty}))$-module.
    \item There exists an element $\tau \in \Gal(\overline{\QQ} / \QQ(\mu_{p^\infty}))$ such that $T / (\tau - 1) T$ is free of rank one over $\cO$.
    \item There exists an element $\sigma \in \Gal(\overline{\QQ} / \QQ(\mu_{p^\infty}))$ which acts on $T$ as multiplication by $-1$.
   \end{enumerate}
  \end{hypothesis}

  Parts (i) and (ii) of $\Hyp(BI)$ are the hypothesis $\Hyp(K_\infty, T)$ of \cite{Rubin-Euler-systems}. The role of (iii) is to kill off the ``error terms'' $\mathfrak{n}_W$ and $\mathfrak{n}^*_W$ appearing in Theorem 2.2.2 of \emph{op.cit.}.

  \begin{remark}
   It is shown in the paper \cite{Loeffler-big-image} that Hypothesis $\Hyp(BI)$ can only be satisfied when $\varepsilon_f \varepsilon_g$ is nontrivial, but it is often satisfied if this occurs. In particular, in any of the following situations, $\Hyp(BI)$ is satisfied for all but finitely many primes $\frP$ of the coefficient field:
   \begin{itemize}
    \item if $(N_f, N_g) = 1$, $f$ and $g$ both have weight $\ge 2$, neither is of CM type, and $g$ has odd weight;
    \item if $(N_f, N_g) = 1$, $f$ and $g$ both have weight $\ge 2$, $f$ is not of CM type, $g$ is of CM type, and $\varepsilon_g$ is not either 1 or the quadratic character attached to the CM field;
    \item if $(N_f, N_g) = 1$, $f$ has weight $\ge 2$ and is not of CM type, and $g$ has weight 1.
   \end{itemize}

   (The existence of the element $\sigma$ is not mentioned explicitly in \cite{Loeffler-big-image}; but the arguments of \emph{op.cit.} show that in each of the above cases the image of $\Gal(\overline{\QQ} / \QQ(\mu_{p^\infty}))$ acting on $M_{\cO}(f) \oplus M_{\cO}(g)$ contains a conjugate of $\SL_2(\Zp) \times \{1\}$, and we simply take $\sigma$ to be any element acting as $-1$ on $M_{\cO}(f)$ and trivially on $M_{\cO}(g)$.)
  \end{remark}

  We also define the following purely local hypothesis:

  \begin{hypothesis}[$\Hyp(NEZ)$, for ``no exceptional zero'']
   Neither $\alpha_f \beta_g$ nor $\beta_f \alpha_g$ is a power of $p$.
  \end{hypothesis}

  Note that $\Hyp(NEZ)$ is automatic if $r \ne r'$, since in this case $\alpha_f \beta_g$ and $\beta_f \alpha_g$ have different $\frP$-adic valuations from their complex conjugates and hence cannot be in $\QQ$. The same reasoning also shows that $\alpha_f \alpha_g$ and $\beta_f \beta_g$ are never powers of $p$.

 \subsection{Generalities on Selmer complexes}
  \label{sect:selgeneralities}

  We now recall some ideas from Nekov\v{a}\'r's theory of Selmer complexes. Let $R$ be a commutative Noetherian complete local ring, with finite residue field of characteristic $p \ne 2$; let $K$ be a number field; and let $S$ be a finite set of primes of $K$ including all places above $p$.

  We write $G_{K, S}$ for the Galois group of the maximal extension of $K$ unramified outside $S$ and the infinite places (equivalently, the \'etale fundamental group of the $S$-integer ring $\cO_{K, S}$). For any finitely-generated $R$-module $M$ with a continuous action of $G_{K, S}$, we write $R\Gamma(\cO_{K, S}, M)$ for the class in the derived category of the complex $C^\bullet(G_{K, S}, M)$ of continuous $M$-valued cochains on $G_{K, S}$. Similarly we have local cohomology complexes $R\Gamma(K_v, M)$ for $v \in S$.

  \begin{proposition}[{Fukaya--Kato, see \cite[Proposition 1.6.5]{FukayaKato-formulation-conjectures}}]
   The complex $R\Gamma(\cO_{K, S}, M)$ is perfect, and it commutes with derived base-change, in the sense that if $f: R \to R'$ is a morphism of local rings, then we have
   \[ R\Gamma(\cO_{K, S}, R' \times_{R} M) = R' \otimes^{\mathbf{L}}_{R}  R\Gamma(\cO_{K, S}, M),\]
   where the $\otimes^{\mathbf{L}}_{R} $ denotes the derived tensor product. The same holds for the local cohomology complexes $R\Gamma(K_v, M)$.
  \end{proposition}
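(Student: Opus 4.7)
The plan is to represent $R\Gamma(\cO_{K,S}, M)$ by a perfect complex of $R$-modules; perfectness will then hold by construction, and the base-change statement will become almost formal. I work throughout under the standing hypotheses that $R$ is Noetherian complete local with finite residue field $k$ of characteristic $p \ne 2$, and that $M$ is finitely generated over $R$ with a continuous $G_{K,S}$-action.

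First I would invoke the fundamental finiteness theorem of Tate: since $p$ is odd and $S$ contains all primes above $p$ together with the archimedean places, the group $G_{K,S}$ has $p$-cohomological dimension at most $2$, and $H^i(\cO_{K,S}, N)$ is finite for every finite discrete $G_{K,S}$-module $N$ of $p$-power order. A standard topological Nakayama argument, applied to the quotients $M/\mathfrak{m}^n M$ and passing to the inverse limit (using that $R$ is Noetherian complete local), yields that each $H^i(\cO_{K,S}, M)$ is finitely generated over $R$ and vanishes for $i > 2$.

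With finiteness in hand, I would deduce perfectness from the standard criterion: a complex of $R$-modules with bounded, finitely generated cohomology is perfect precisely when it has finite $\operatorname{Tor}$-amplitude over $R$, equivalently when its derived reduction modulo the maximal ideal has finite-dimensional total cohomology over $k$. Applying the finiteness theorem to each homology module of $M \otimes^{\mathbf{L}}_R k$, together with a spectral-sequence comparison, yields the required bound; hence $R\Gamma(\cO_{K,S}, M)$ is quasi-isomorphic to a bounded complex $P^\bullet$ of finitely generated projective $R$-modules. The derived base-change statement is then immediate: on cochains one has a natural identification $C^\bullet(G_{K,S}, R' \otimes_R M) \cong R' \otimes_R C^\bullet(G_{K,S}, M)$, since $M$ is finitely generated and therefore continuous cochains with values in $R' \otimes_R M$ are the scalar extension of those with values in $M$; transporting along the quasi-isomorphism $P^\bullet \simeq C^\bullet(G_{K,S}, M)$, and using that $R' \otimes_R P^\bullet = R' \otimes^{\mathbf{L}}_R P^\bullet$ since $P^\bullet$ is term-wise projective, one obtains the claimed identification. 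The local statements are handled by the same argument, using that $G_{K_v}$ also has $p$-cohomological dimension at most $2$, with the archimedean places being vacuous since $p$ is odd.

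The main obstacle is the passage from the enormous continuous cochain complex (which has infinitely-generated terms in every degree and is certainly not itself perfect) to the small perfect replacement $P^\bullet$. This rests essentially on the combination of Tate's finiteness theorem with finite cohomological dimension, and is precisely the reason one works with $R\Gamma$ as a class in the derived category rather than with any fixed chain model. Once $P^\bullet$ exists, the chain-level identification $C^\bullet(G_{K,S}, R' \otimes_R M) \cong R' \otimes_R C^\bullet(G_{K,S}, M)$, which uses continuity and finite generation of $M$ in an essential way, completes the proof with no further difficulty.
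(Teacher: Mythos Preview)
The paper does not prove this proposition; it simply cites Fukaya--Kato. So there is no proof in the paper to compare against, and the question reduces to whether your sketch is correct as it stands.

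Your outline has the right shape, but the base-change step contains a genuine gap. You write that ``transporting along the quasi-isomorphism $P^\bullet \simeq C^\bullet(G_{K,S}, M)$'' gives $R' \otimes_R P^\bullet \simeq R' \otimes_R C^\bullet(G_{K,S}, M)$. This is not valid: non-derived tensor product does not preserve quasi-isomorphisms. What you actually need is that the cochain modules $C^n(G_{K,S}, M)$ are themselves $R$-flat (at least for $M$ free over $R$), so that $R' \otimes^{\mathbf{L}}_R C^\bullet = R' \otimes_R C^\bullet$ on the nose. That flatness is the real technical input, and it follows from the fact that $C^n(G,-)$ is exact on compact $R$-modules (continuous surjections of profinite abelian groups admit continuous sections); you do not mention this. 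Moreover, your claimed chain-level isomorphism $C^\bullet(G, R' \otimes_R M) \cong R' \otimes_R C^\bullet(G, M)$ is \emph{false} when $R'$ is not finite over $R$: take $R = \Zp$, $R' = \Zp[[T]]$, $M = R$, and observe that the cochain $g \mapsto (1+T)^{\chi(g)}$, for $\chi$ any continuous $\Zp$-valued character, is not a finite $R'$-linear combination of $\Zp$-valued cochains. So even with flatness in hand, your argument only directly treats $R'$ finite over $R$, and a further limit argument is needed for the general case.

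A smaller issue: your perfectness step (``spectral-sequence comparison'' to bound $R\Gamma(G,M) \otimes^{\mathbf{L}}_R k$) implicitly assumes that this derived reduction agrees with $R\Gamma(G, M \otimes^{\mathbf{L}}_R k)$, which is precisely the base-change statement for $R' = k$. As written this is circular; the flatness of the cochain modules is again what rescues it.
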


  \begin{definition}
   A \emph{local condition} for $M$ at $v$ consists of the data of a complex $U_v^+$ of $R$-modules and a homomorphism $i_v^+: U_v^+ \to C^\bullet(K_v, M)$. A \emph{Selmer structure} for $M$ is a collection $\Delta = (\Delta_v)_{v \in S}$, where $\Delta_v$ is a local condition.
  \end{definition}

  Local conditions of particular interest are
  \begin{itemize}
   \item the \emph{strict} local condition $U_v^+ = 0$;
   \item the \emph{relaxed} local condition $U_v^+ = C^\bullet(K_v, M)$;
   \item the \emph{unramified} local condition, given by
   \[ C^\bullet(\FF_v, M^{I_v}) \to C^\bullet(K_v, M);\]
   \item and the \emph{Greenberg} local condition, given by
   \[ C^\bullet(K_v, M_v^+) \to C^\bullet(K_v, M)\]
   for $M_v^+$ a submodule of $M$ stable under the decomposition group at $v$.
  \end{itemize}
  (The strict and relaxed local conditions are, of course, examples of Greenberg local conditions, by taking $M_v = 0$ and $M_v = M$ respectively.)

  Many, but not all, interesting local conditions are of the following form (cf.\ \cite[\S 6.1.4]{Nekovar-Selmer-complexes}):

  \begin{definition}
   We will say a local condition is \emph{simple} if the map $i_v^+: H^i(U_v^+) \to H^i(K_v, M)$ is an isomorphism for $i = 0$, injective for $i = 1$, and zero for $i = 2$. We say a Selmer structure $\Delta$ is simple if the local condition $\Delta_v$ is simple for all $v$.
   \end{definition}

   A simple local condition $\Delta_v$ is thus determined (up to quasi-isomor\-phism) by the subspace
   \[ H^1_{\Delta_v}(K_v, M) \coloneqq \iota_v^+(H^1(U_v^+)) \subseteq H^1(K_v, M).\]
   The unramified local condition is simple (while the strict and relaxed local conditions usually are not).  Another important example of a simple local condition is the \emph{Bloch--Kato local condition}. To define this we must assume that $R$ is the ring of integers of a finite extension of $\Qp$, and if $v \mid p$ then also that $M[1/p]$ is de Rham at $v$. Then the Bloch--Kato local condition is the simple local condition attached to the Bloch--Kato subspace $H^1_{\mathrm{f}}(K_v, M)$. (Recall that $H^1_{\mathrm{f}}(K_v, M)$ is the saturation of $H^1(\FF_v, M^{I_v})$ in $H^1(K_v, M)$ if $v \nmid p$, and the crystalline classes if $v \mid p$).

  \begin{definition}
   If $\Delta$ is a Selmer structure for $M$, we define a Selmer complex $\RGt(\cO_{K, S}, M; \Delta)$ as in \cite[\S 6.1.2]{Nekovar-Selmer-complexes}, as the mapping fibre of
   \[ \left[ R\Gamma(\cO_{K, S}, M) \oplus \bigoplus_{v \in S} U_v^+ \rTo^{\loc_v - i_v^+} \bigoplus_{v \in S} R\Gamma(K_v, M)\right].\]
  \end{definition}

  If $\Delta \to \Delta'$ is a morphism of Selmer structures (i.e.\ a collection of morphisms $U_v^+ \to (U_v')^+$ commuting with the morphism to $C^\bullet(K_v, M)$), then we have an exact triangle
  \begin{equation}
   \label{eq:exacttri}
   \RGt(\cO_{K, S}, M; \Delta) \to \RGt(\cO_{K, S}, M; \Delta') \to \bigoplus_v Q_v \to \dots
  \end{equation}
  where $Q_v$ is the mapping fibre of $U_v^+ \to (U_v')^+$. The strict local condition and the relaxed local condition are respectively the initial and terminal objects in the category of local conditions, so we obtain as special cases the exact triangles
  \begin{subequations}
  \begin{align}
   \label{eq:exacttri1}
   \RGt(\cO_{K, S}, M; \Delta) \to R\Gamma(\cO_{K, S}, M) \to \bigoplus_{v \in S} U_v^- \to \dots,\\
   \label{eq:exacttri2}
   R\Gamma_c(\cO_{K, S}, M) \to \RGt(\cO_{K, S}, M; \Delta) \to \bigoplus_{v \in S} U_v^+ \to \dots
  \end{align}
  \end{subequations}
  where $R\Gamma_c(\cO_{K, S}, M)$ denotes the compactly--supported cohomology (the Selmer complex with the strict local conditions at all $v \in S$) and $U_v^-$ is the mapping cone of $U_v^+ \rTo^{-i_v^+} C^\bullet(K_v, M)$.

  The formation of the Selmer complexes is compatible with change of the coefficient ring $R$, in the following sense. For $R \to R'$ a homomorphism of rings satisfying our conditions above, we can write $M'$ for the tensor product $R' \otimes_{R} M$; and via derived tensor product we obtain local conditions $\Delta'$ for $M'$. It is then clear that
  \[ \RGt(\cO_{K, S}, M'; \Delta') = R' \otimes^{\mathbf{L}}_{R} \RGt(\cO_{K, S}, M; \Delta).\]

  \begin{remark}
   In the above setting, if $\Delta_v$ is the unramified local condition for $M$ at some place $v$, then it does not necessarily follow that $\Delta'_v$ is the unramified local condition for $M'$. We have $\Delta'_v = R\Gamma(\FF_v, R' \otimes_R M^{I_v})$, and the natural map $R' \otimes_R M^{I_v} \to (R' \otimes_{R} M)^{I_v}$ is not necessarily an isomorphism.
  \end{remark}

  We now consider duality for Selmer complexes. Let us denote by $M^\vee$ the Pontryagin dual $\Hom(M, \Qp/\Zp)$ of $M$.

  \begin{definition}
   We say two local conditions $\Delta_v$ for $M$ and $\Delta_v^\vee$ for $M^\vee(1)$ are \emph{orthogonal complements} if local Tate duality gives a quasi-iso\-mor\-phism
   \[ U_v^\pm \cong R\Hom\left((U_v^\vee)^{\mp}, \Qp/\Zp\right)[2].\]
  \end{definition}

  Note that the unramified local condition for $M^\vee(1)$ is the orthogonal complement of the unramified local condition for $M$; orthogonal complements of simple local conditions are simple; and the Greenberg local condition for a submodule $M_v^+ \subseteq M$ is the orthogonal complement of the Greenberg condition for $(M/M_v^+)^\vee(1) \subseteq M^\vee(1)$. We then have the following global duality result:

  \begin{theorem}[{\cite[Theorem 6.3.4]{Nekovar-Selmer-complexes}}]
   If $\Delta$ and $\Delta^\vee$ are Selmer structures on $M$ and $M^\vee(1)$ respectively which are orthogonal complements in the sense above, then we have an isomorphism in the derived category
   \[ \RGt(\cO_{K, S}, M^\vee(1); \Delta^\vee) = R\Hom\left(\RGt(\cO_{K, S}, M; \Delta), \Qp/\Zp\right)[3].\]
  \end{theorem}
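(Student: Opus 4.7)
The plan is to reduce the duality to the combination of three more fundamental dualities: Artin--Verdier duality for $R\Gamma(\cO_{K,S},-)$, local Tate duality at each $v\in S$, and the orthogonal-complement hypothesis on the local conditions. First, I would record that the Selmer complex fits in the defining exact triangle
\[ \RGt(\cO_{K, S}, M; \Delta) \to R\Gamma(\cO_{K, S}, M) \oplus \bigoplus_{v \in S} U_v^+ \to \bigoplus_{v \in S} R\Gamma(K_v, M) \to \RGt(\cO_{K, S}, M; \Delta)[1], \]
and analogously for $M^\vee(1)$ with local conditions $\Delta^\vee$. The object to construct is a morphism from $\RGt(\cO_{K,S},M^\vee(1);\Delta^\vee)$ to $R\Hom(\RGt(\cO_{K,S},M;\Delta),\QQ_p/\ZZ_p)[3]$, so I would apply $D\coloneqq R\Hom(-,\QQ_p/\ZZ_p)$ to the triangle above, shift by $[3]$, and identify the resulting pieces.

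The three identifications required are: (i) Artin--Verdier/Poitou--Tate duality $D(R\Gamma(\cO_{K, S}, M))[3] \simeq R\Gamma_c(\cO_{K, S}, M^\vee(1))$; (ii) local Tate duality $D(R\Gamma(K_v, M))[2]\simeq R\Gamma(K_v, M^\vee(1))$ at every $v\in S$; and (iii) the orthogonality hypothesis, which by definition gives $D(U_v^+)[2]\simeq (U_v^\vee)^-$, where $(U_v^\vee)^-$ is the mapping cone of $-(i_v^\vee)^+:(U_v^\vee)^+\to R\Gamma(K_v,M^\vee(1))$. Combined, these turn the dualized triangle into a triangle whose vertices are $\bigoplus_v R\Gamma(K_v,M^\vee(1))[1]$, $R\Gamma_c(\cO_{K, S}, M^\vee(1))\oplus\bigoplus_v(U_v^\vee)^-[1]$, and $D(\RGt(\cO_{K, S}, M; \Delta))[3]$.

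To recognize the last vertex as $\RGt(\cO_{K,S}, M^\vee(1);\Delta^\vee)$, I would invoke the octahedral axiom applied to the two auxiliary triangles that are already available: the triangle
\[ R\Gamma_c(\cO_{K, S}, M^\vee(1)) \to R\Gamma(\cO_{K, S}, M^\vee(1)) \to \bigoplus_{v \in S} R\Gamma(K_v, M^\vee(1)) \to \dots \]
defining compactly supported cohomology, and the rotation $(U_v^\vee)^+\to R\Gamma(K_v,M^\vee(1))\to (U_v^\vee)^-\to(U_v^\vee)^+[1]$ of the cone defining $(U_v^\vee)^-$. Splicing these with the dualized triangle from the previous paragraph realizes $D(\RGt(\cO_{K, S}, M; \Delta))[3]$ as the mapping fibre of $R\Gamma(\cO_{K, S}, M^\vee(1))\oplus\bigoplus_v(U_v^\vee)^+\to\bigoplus_v R\Gamma(K_v, M^\vee(1))$, which is exactly the definition of $\RGt(\cO_{K, S}, M^\vee(1); \Delta^\vee)$.

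The main obstacle is not the identification of the individual terms, which is known, but the compatibility of all the duality isomorphisms with the connecting maps: one must check that local Tate duality at each $v$, Artin--Verdier duality globally, and the orthogonal-complement isomorphism at each $v$ are all compatible with the localisation maps $\loc_v$ (respectively, the maps $i_v^\pm$) up to the correct signs. This amounts to comparing two natural maps in a large diagram and is where Nekov\'a\v{r}'s careful formalism of ``Selmer data'' and the sign conventions in the cup-product pairing on cochains of $G_{K,S}$ are indispensable; once this bookkeeping is in place, the octahedral assembly yields the quasi-isomorphism in the statement.
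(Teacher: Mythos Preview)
The paper does not actually prove this theorem: it is simply quoted from Nekov\'a\v{r}'s book \cite[Theorem 6.3.4]{Nekovar-Selmer-complexes} as a black box, with no argument given. So there is no ``paper's own proof'' to compare against.

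That said, your sketch is a reasonable outline of how such a result is established, and it is essentially the strategy Nekov\'a\v{r} follows: dualize the defining triangle of the Selmer complex, use Poitou--Tate/Artin--Verdier duality to identify the dualized global term with compactly supported cohomology, use local Tate duality and the orthogonality hypothesis to identify the local terms, and then reassemble via the octahedral axiom. Your final paragraph correctly identifies the genuine content: the compatibility of all these duality isomorphisms with the localisation and $i_v^\pm$ maps, together with the sign conventions, is where the work lies, and this is precisely what Nekov\'a\v{r}'s formalism is designed to handle. If you wanted to turn this into a self-contained proof you would need to supply those compatibility checks explicitly; as a roadmap pointing to \cite{Nekovar-Selmer-complexes}, what you have written is accurate.
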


  We will be particularly interested in a consequence of this:

  \begin{proposition}
   \label{prop:nekovar-duality}
   The kernel of $\Ht^2(\cO_{K, S}, M; \Delta) \to \bigoplus_v H^2(U_v^+)$ is isomorphic to the Pontryagin dual of the kernel of
   \[ H^1(\cO_{K, S}, M^\vee(1)) \rTo \bigoplus_{v \in S} H^1((U_v^\vee)^-).\]
  \end{proposition}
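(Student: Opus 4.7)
The plan is to derive the statement as a fairly direct consequence of Nekov\'a\v{r}'s global duality theorem together with the two ``strict/relaxed'' exact triangles \eqref{eq:exacttri1} and \eqref{eq:exacttri2} recalled just above. The content of the proposition is essentially that these two triangles (for $M$ on the one hand and $M^\vee(1)$ on the other) become Pontryagin duals of one another, so that the long exact cohomology sequences of the two triangles are interchanged by duality.

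First I would extract from the exact triangle \eqref{eq:exacttri2} the portion of the associated long exact sequence
\[ \dots \to \bigoplus_{v \in S} H^1(U_v^+) \to H^2_c(\cO_{K,S}, M) \to \Ht^2(\cO_{K,S}, M; \Delta) \to \bigoplus_{v \in S} H^2(U_v^+) \to \dots, \]
so that the kernel we want to study is identified with the image of $H^2_c(\cO_{K,S}, M) \to \Ht^2(\cO_{K,S}, M; \Delta)$.

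Next I would apply the Pontryagin dualising functor $R\Hom(-,\QQ_p/\ZZ_p)[3]$ to the triangle \eqref{eq:exacttri2}. Using the definition of ``orthogonal complement'' ($U_v^+ \simeq R\Hom((U_v^\vee)^-, \QQ_p/\ZZ_p)[2]$), the Poitou--Tate duality identification $R\Gamma_c(\cO_{K,S}, M)^\vee[3] \simeq R\Gamma(\cO_{K,S}, M^\vee(1))$, and Nekov\'a\v{r}'s global duality $\RGt(\cO_{K,S}, M; \Delta)^\vee[3] \simeq \RGt(\cO_{K,S}, M^\vee(1); \Delta^\vee)$, the dualised triangle is canonically identified with the triangle \eqref{eq:exacttri1} applied to $M^\vee(1)$ with Selmer structure $\Delta^\vee$. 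Taking long exact cohomology on this dualised triangle therefore yields the sequence
\[ \dots \to \bigoplus_{v \in S} H^0((U_v^\vee)^-) \to \Ht^1(\cO_{K,S}, M^\vee(1); \Delta^\vee) \to H^1(\cO_{K,S}, M^\vee(1)) \to \bigoplus_{v \in S} H^1((U_v^\vee)^-) \to \dots, \]
and by the exactness, the kernel of the last map equals the image of $\Ht^1(\cO_{K,S}, M^\vee(1); \Delta^\vee) \to H^1(\cO_{K,S}, M^\vee(1))$.

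To conclude, I would use the general fact that if a morphism $\phi : A \to B$ of discrete (or compact) Pontryagin-dualisable modules dualises to $\phi^\vee : B^\vee \to A^\vee$, then the image of $\phi$ is the Pontryagin dual of the image of $\phi^\vee$. Applied to $\phi : H^2_c(\cO_{K,S}, M) \to \Ht^2(\cO_{K,S}, M; \Delta)$, whose Pontryagin dual under the identifications above is the map $\Ht^1(\cO_{K,S}, M^\vee(1); \Delta^\vee) \to H^1(\cO_{K,S}, M^\vee(1))$, this yields the desired isomorphism of the two kernels.

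The step requiring the most care is the second one: verifying that under the Pontryagin duality the connecting morphisms in the long exact sequence of \eqref{eq:exacttri2} for $M$ match up with the connecting morphisms in \eqref{eq:exacttri1} for $M^\vee(1)$, not merely that the objects do. This, however, is built into the statement of Nekov\'a\v{r}'s duality theorem (Theorem 6.3.4 of \cite{Nekovar-Selmer-complexes}), which asserts an isomorphism at the level of derived categories and thus automatically gives compatibility of all boundary maps; beyond this, the argument is purely formal diagram-chasing.
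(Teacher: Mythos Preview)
Your proof is correct and is exactly the argument the paper has in mind: the proposition is stated in the paper without proof, simply as ``a consequence of'' Nekov\'a\v{r}'s duality theorem, and your derivation via the exact triangles \eqref{eq:exacttri1} and \eqref{eq:exacttri2} and Poitou--Tate duality spells out precisely how. The point you flag as requiring care (that the dual of the map $H^2_c(\cO_{K,S},M)\to\Ht^2(\cO_{K,S},M;\Delta)$ is the natural map $\Ht^1(\cO_{K,S},M^\vee(1);\Delta^\vee)\to H^1(\cO_{K,S},M^\vee(1))$) is indeed handled by the fact that Nekov\'a\v{r}'s duality and Poitou--Tate are isomorphisms in the derived category, so the dual of the triangle \eqref{eq:exacttri2} for $M$ is identified with the triangle \eqref{eq:exacttri1} for $M^\vee(1)$ (the strict local condition for $M$ being orthogonal to the relaxed one for $M^\vee(1)$).
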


  If the Selmer structure $\Delta$ is simple, then using the long exact sequence associated to \eqref{eq:exacttri1} and the previous proposition, one has a complete description of the cohomology of the Selmer complex:

  \begin{proposition}
   If $\Delta$ is a simple Selmer structure, determined by subspaces $H^1_{\Delta_v}(K_v, M) \subseteq H^1(K_v, M)$ for $v \in S$, we have
   \[
    \Ht^i(\cO_{K, S}, M; \Delta) =
    \begin{cases}
     H^0(\cO_{K, S}, M) & \text{if $i = 0$,}\\
     \ker\left(H^1(\cO_{K, S}, M) \to \bigoplus_{v \in S} \dfrac{H^1(K_v, M)}{H^1_{\Delta_v}(K_v, M)} \right) & \text{if $i = 1$,}\\
     \ker\left(H^1(\cO_{K, S}, M^\vee(1)) \to \bigoplus_{v \in S} \dfrac{H^1(K_v, M^\vee(1))}{H^1_{\Delta_v^\vee}(K_v, M^\vee(1))} \right)^\vee & \text{if $i = 2$.}
    \end{cases}
   \]
   Here $H^1_{\Delta_v^\vee}(K_v, M^\vee(1))$ is the orthogonal complement of $H^1_{\Delta_v}(K_v, M)$ under local Tate duality.
  \end{proposition}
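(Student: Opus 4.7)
The plan is to deduce everything from the long exact sequence attached to the defining mapping-fibre triangle of $\RGt(\cO_{K, S}, M; \Delta)$, combined with Proposition \ref{prop:nekovar-duality}. The first step would be to replace each $U_v^+$ by a quasi-isomorphic complex concentrated in degrees $0$ and $1$; such a model always exists for a simple local condition (it is determined by its cohomology in degrees $0$ and $1$), and this ensures $H^i(U_v^+) = 0$ for $i \ge 2$. The defining triangle will then yield the long exact sequence
\[ \dots \to \bigoplus_{v} H^{i-1}(K_v, M) \to \Ht^i(\cO_{K,S}, M; \Delta) \to H^i(\cO_{K,S}, M) \oplus \bigoplus_{v} H^i(U_v^+) \to \bigoplus_{v} H^i(K_v, M) \to \dots, \]
with connecting map $(x, (u_v)) \mapsto (\loc_v(x) - i_v^+(u_v))_v$.

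Next I would handle $i = 0$ and $i = 1$ simultaneously. For $i = 0$, simplicity makes $i_v^+$ an isomorphism on $H^0$, so the degree-zero connecting map is surjective and its kernel projects isomorphically onto $H^0(\cO_{K,S}, M)$; the surjectivity also shows that $\bigoplus_v H^0(K_v, M)$ contributes nothing to $\Ht^1$. For $i = 1$, the long exact sequence then identifies $\Ht^1$ with the kernel of $H^1(\cO_{K,S}, M) \oplus \bigoplus_v H^1(U_v^+) \to \bigoplus_v H^1(K_v, M)$; injectivity of $i_v^+$ on $H^1$, with image exactly $H^1_{\Delta_v}(K_v, M)$, cuts this kernel down to the set of $x \in H^1(\cO_{K, S}, M)$ with $\loc_v(x) \in H^1_{\Delta_v}(K_v, M)$ for every $v$, as required.

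For $i = 2$ I would invoke Proposition \ref{prop:nekovar-duality}. The vanishing $H^2(U_v^+) = 0$ ensures that the map $\Ht^2 \to \bigoplus_v H^2(U_v^+)$ is trivially zero, so the proposition identifies $\Ht^2$ with the Pontryagin dual of $\ker\bigl(H^1(\cO_{K,S}, M^\vee(1)) \to \bigoplus_v H^1((U_v^\vee)^-)\bigr)$. A short diagram chase with the triangle defining $(U_v^\vee)^-$, using that orthogonal complementation preserves simplicity (so that $i_v^+$ for the dual structure is again an isomorphism on $H^0$ and injective on $H^1$) together with the vanishing $H^2((U_v^\vee)^+) = 0$, shows that $H^1((U_v^\vee)^-) \cong H^1(K_v, M^\vee(1))/H^1_{\Delta_v^\vee}(K_v, M^\vee(1))$, giving the stated formula.

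The only mild subtlety is the first step: choosing a representative of $U_v^+$ with vanishing cohomology in degrees $\ge 2$ (and checking that the duality in Proposition \ref{prop:nekovar-duality} respects this choice on the orthogonal complement side). Both points are standard for simple local conditions, and the rest of the argument is purely formal, so no substantive obstacle is expected.
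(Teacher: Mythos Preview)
Your approach is correct and is essentially the same as the paper's: the paper simply says to use the long exact sequence of the triangle \eqref{eq:exacttri1} together with Proposition \ref{prop:nekovar-duality}, which is exactly what you do (your defining mapping-fibre sequence is just \eqref{eq:exacttri1} with $U_v^-$ unpacked back into $U_v^+$ and $C^\bullet(K_v,M)$). One small remark: your ``first step'' is not really a replacement up to quasi-isomorphism---a quasi-isomorphism cannot kill $H^2(U_v^+)$ if it is nonzero---but rather an appeal to the paper's assertion that a simple local condition is determined up to quasi-isomorphism by $H^1_{\Delta_v}$, which already encodes the convention that $U_v^+$ may be taken with cohomology concentrated in degrees $0$ and $1$.
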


  Thus we recover the classical notion of a Selmer group, as a subspace of $H^1(\cO_{K, S}, M)$ cut out by local conditions.

  \begin{remark}
   One of the key insights of \cite{Nekovar-Selmer-complexes} is that -- even if one is ultimately only interested in classical Selmer groups -- the more general theory of Selmer complexes is much more convenient and flexible to work with, since Selmer complexes are well-behaved under operations  such as base-change and duality.
  \end{remark}

 \subsection{Definition of the local conditions}

  We now return to the case at hand: we let $f, g$ be two newforms with coefficients in some number field $L$ as in \S\ref{sect:hypotheses} above. For any place $\frP$ of $L$ above $p$, we have a four-dimensional $\cO$-linear Galois representation $T = M_{\cO}(f)^* \otimes M_{\cO}(g)^*$. Let $S$ be the set of primes dividing $p N_f N_g \infty$, so that $T$ is unramified outside $S$.

  \begin{definition}
   We define Selmer structures $\Delta^{(?)}$ on $T$, for $? \in \{f, g, \varnothing\}$, as follows:
   \begin{itemize}
    \item for $v \in S \setminus \{p\}$ (and any $?$), we let $\Delta^{(?)}_v$ be the unramified local condition;
    \item for $v = p$, we let $\Delta^{(?)}_v$ be the Greenberg local condition
    \[ C^\bullet(\Qp, T^{(?)}) \to C^\bullet(\Qp, T)\]
    where the $M^{(?)}$ are the $G_{\Qp}$-invariant submodules of $M$ given as follows:
    \begin{align*}
     T^{(f)} &= \sF^{+\circ} T = \sF^+ M_{\cO}(f)^* \otimes_{\cO} M_{\cO}(g)^*;\\
     T^{(g)} &= \sF^{\circ+} T = M_{\cO}(f)^* \otimes_{\cO} \sF^+ M_{\cO}(g)^*;\\
     T^{(\varnothing)} &= T^{(f)} + T^{(g)}.
    \end{align*}
   \end{itemize}
  \end{definition}

  We can also define, similarly, Selmer structures on $T(\tau^{-1})$, for any $\cO$-valued character $\tau$ of the group $\Gamma = \Gal(\QQ(\mu_{p^\infty}) / \QQ)$; or on the ``universal twist''
  \( T \otimes_{\Zp} \Lambda_{\Gamma}(-\j), \)
  where as before $\j$ denotes the canonical character $G_{\QQ} \to \Gamma \to \Lambda_\Gamma^\times$.

  \begin{proposition}
   If $\Delta$ is any of the above Selmer structures, then for any $\cO$-valued character $\tau$ of $\Gamma$, we have
   \[ \RGt(\ZZ[1/S], T(\tau^{-1}); \Delta) = \cO \otimes^{\mathbf{L}}_{\Lambda_{\Gamma}, \tau} \RGt(\ZZ[1/S],  T \otimes \Lambda_{\Gamma}(-\j); \Delta). \]
  \end{proposition}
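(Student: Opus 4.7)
The plan is to reduce the statement to separate base-change compatibilities for each of the three constituents of the Selmer complex—the global cohomology $R\Gamma(\ZZ[1/S], M)$, the local cohomology $R\Gamma(\Qp, M)$ (and its counterparts for $v \ne p$), and the local condition complexes $U_v^+$—and then invoke the fact that a mapping fibre of morphisms between complexes that commute with a fixed derived base change itself commutes with that base change. The first two are immediate from the Fukaya--Kato proposition stated at the beginning of \S \ref{sect:selgeneralities}, applied to the $\Lambda_\Gamma$-module $M = T \otimes \Lambda_\Gamma(-\j)$ and the homomorphism $\tau: \Lambda_\Gamma \to \cO$.

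For the Greenberg local condition at $v = p$, the relevant submodules $T^{(?)}$ of $T$ are all of the form $\sF^{\pm \circ} T$ or their sum, and in each case $T^{(?)}$ is a direct $\cO$-summand of $T$ (the quotient is $\cO$-free, by Theorem \ref{thm:ordinaryfiltration}(i) and Theorem \ref{thm:gorenstein}). Consequently $T^{(?)} \otimes_\Zp \Lambda_\Gamma(-\j)$ is a Galois-stable $\Lambda_\Gamma$-direct summand of $M$, and the derived base change of $R\Gamma(\Qp, T^{(?)} \otimes \Lambda_\Gamma(-\j))$ along $\tau$ is $R\Gamma(\Qp, T^{(?)}(\tau^{-1}))$—again by the Fukaya--Kato proposition applied to this submodule.

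The main point requiring attention is the unramified local condition at $v \in S \setminus \{p\}$, since, as remarked in \S \ref{sect:selgeneralities}, $I_v$-invariants need not commute with base change in general. Here, however, one observes that the character $\j: G_{\QQ, S} \to \Lambda_\Gamma^\times$ is unramified at every $v \ne p$, so the inertia $I_v$ acts trivially on $\Lambda_\Gamma(-\j)$, and therefore
\[ M^{I_v} = \bigl(T \otimes_{\Zp} \Lambda_\Gamma(-\j)\bigr)^{I_v} = T^{I_v} \otimes_{\Zp} \Lambda_\Gamma(-\j). \]
This is a $\Lambda_\Gamma$-flat module (being $\Lambda_\Gamma(-\j)$ tensored over $\Zp$ with the finitely-generated $\Zp$-module $T^{I_v}$), and the complex $R\Gamma(\FF_v, M^{I_v})$ is computed by the two-term complex $[M^{I_v} \xrightarrow{\Fr_v - 1} M^{I_v}]$ consisting of flat $\Lambda_\Gamma$-modules. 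Derived base change along $\tau$ therefore reduces to ordinary base change, yielding $[T^{I_v}(\tau^{-1}) \xrightarrow{\tau^{-1}(\Fr_v)\Fr_v - 1} T^{I_v}(\tau^{-1})]$, which computes $R\Gamma(\FF_v, T(\tau^{-1})^{I_v})$ as required.

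Combining these three compatibilities with the exactness of $- \otimes^{\mathbf{L}}_{\Lambda_\Gamma, \tau} \cO$ on mapping fibres yields the stated identification. The delicate ingredient is the second observation—that $\j$ is unramified away from $p$—which allows us to bypass the general failure of $I_v$-invariants to commute with base change; everything else is formal.
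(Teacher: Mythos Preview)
Your proof is correct and follows exactly the same approach as the paper's, which simply states that ``it suffices to check that the formation of the local conditions $\Delta^{(?)}$ commutes with derived base-change, which is clear since the canonical character is unramified outside $p$.'' Your version spells this out carefully---the one minor point is that your appeal to Theorems \ref{thm:ordinaryfiltration} and \ref{thm:gorenstein} for the Greenberg condition at $p$ is unnecessary, since the Fukaya--Kato base-change result applies directly to $R\Gamma(\Qp, T^{(?)} \otimes \Lambda_\Gamma(-\j))$ regardless of whether $T^{(?)}$ is a direct summand of $T$.
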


  \begin{proof}
   It suffices to check that the formation of the local conditions $\Delta^{(?)}$ commutes with derived base-change, which is clear since the canonical character is unramified outside $p$.
  \end{proof}

 \subsection{Main conjectures ``without p-adic zeta-functions''}

  Let us write $\cBF^{f, g}_{m} \in H^1(\ZZ[1/S, \mu_m], T \otimes \Lambda_{\Gamma}(-\j))$ for the image in $T$ of the Beilinson--Flach class $\cBF^{\bff, \bfg}_m$.

  \begin{theorem}
   \label{thm:modifyBFelts}
   Fix an integer $c > 1$ coprime to $6p N_f N_g$. Then there exists a collection of elements
   \[ c_m \in H^1\left(\ZZ[1 / S, \mu_{m}], T \otimes \Lambda_{\Gamma}(-\j)\right)\]
   for all $m \ge 1$ coprime to $p c N_f N_g$, with $c_1 = \cBF^{f, g}_1$, such that we have the Euler system compatibility relation
   \[
    \norm^{\ell m}_m\left(c_{\ell m}\right)=
    \begin{cases}
     P_\ell(\ell^{-1} \sigma_\ell^{-1})\cdot c_m & \text{if $ \ell \nmid pm$},\\
     c_m & \text{if $\ell \mid p m$}.
    \end{cases}
   \]
   Here $P_\ell$ is the Euler factor of $M_{\Lp}(f \otimes g)$ at $\ell$. Furthermore, the localisation $\loc_p(c_m)$ lies in the image of the natural injection
   \[ H^1\left(\QQ(\mu_m) \otimes \Qp, T^{(\varnothing)} \otimes \Lambda_{\Gamma}(-\j)\right) \into H^1\left(\QQ(\mu_m) \otimes \Qp, T \otimes \Lambda_{\Gamma}(-\j)\right). \]
  \end{theorem}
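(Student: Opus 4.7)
The plan is to modify the specialised Beilinson--Flach classes $b_m := (\mathrm{sp}_f \otimes \mathrm{sp}_g)(\cBF^{\bff, \bfg}_m)$ in $H^1(\ZZ[1/S, \mu_m], T \otimes \Lambda_\Gamma(-\j))$, so as to convert the norm relation of Theorem \ref{thm:BFeltsinterp2}(a) (which features the non-standard Euler factor $Q_\ell$) into the clean Euler system relation with the standard local Euler factor $P_\ell$, while preserving the local condition at $p$.

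First I would settle the local condition. Inspecting the proof of Proposition \ref{prop:BFeltisSelmer}, one sees that the image of $\cBF^{\bff, \bfg}$ in $H^1(\Qp, \sF^{--} M(\bff \otimes \bfg)^* \otimes \Lambda_\Gamma(-\j))$ already vanishes; specialising at $(f, g)$ and identifying $T/T^{(\varnothing)}$ with $\sF^{--} T$, this places $\loc_p(b_1)$ in the image of $H^1(\Qp, T^{(\varnothing)} \otimes \Lambda_\Gamma(-\j))$. Injectivity of the relevant inclusion follows from the vanishing of $H^0(\QQ(\mu_m) \otimes \Qp, \sF^{--} T \otimes \Lambda_\Gamma(-\j))$, a consequence of our non-Eisenstein hypothesis. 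Since each $c_m$ below will be built as a $\Lambda_\Gamma[\Gal(\QQ(\mu_m)/\QQ)]$-linear combination of restrictions of $b_d$ for $d \mid m$, this local property will propagate automatically.

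The crux of the argument is the passage from the $Q_\ell$-relation to the $P_\ell$-relation. The starting point is the identity recorded just after Theorem \ref{thm:BFeltsinterp2},
\[
 Q_\ell(X) = X^{-1}\bigl((\ell-1)\bigl(1 - \ell^{\k + \k' + 2}\varepsilon_f(\ell)\varepsilon_g(\ell) X^2\bigr) - \ell P_\ell(X)\bigr),
\]
which shows that $Q_\ell \equiv -\ell X^{-1} P_\ell \pmod{\ell - 1}$, exhibiting the discrepancy between the two Euler factors as lying in the ideal $(\ell - 1)$. I would construct $c_m$ inductively on the number of prime factors of $m$ by an explicit formula of the shape
\[
 c_m \;=\; \sum_{d \mid m} A_{m, d} \cdot \mathrm{res}_{\QQ(\mu_d)}^{\QQ(\mu_m)}(b_d),
\]
with coefficients $A_{m, d} \in \Lambda_\Gamma[\Gal(\QQ(\mu_m)/\QQ)]$ chosen so that the $(\ell - 1)$-error produced by corestricting $b_{\ell n}$ down to level $n$ is cancelled by a matching contribution coming from a lower-level term involving $b_n$. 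The same mechanism was used in \S 7 of \cite{LLZ14} to pass from the weight-$2$ Beilinson--Flach classes to an honest Euler system, and analogous formulas apply here with coefficients in $T \otimes \Lambda_\Gamma(-\j)$.

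The principal obstacle will be verifying that the modifications at distinct primes are simultaneously compatible: the coefficients $A_{m, d}$ must factor as a commuting product of local correction operators indexed by the prime divisors of $m/d$, and the resulting $c_m$ must satisfy the desired norm relation at \emph{every} prime $\ell \nmid pm$, not merely the one being ``cleaned up'' at any given inductive step. This is an elementary but somewhat intricate calculation in the group algebra, relying on the identity above together with the distribution relations for arithmetic Frobenius in cyclotomic extensions. The relation $\norm_m^{\ell m} c_{\ell m} = c_m$ at primes $\ell \mid pm$ then drops out of the $\ell \mid N$ case of Theorem \ref{thm:BFeltsinterp2}(a), which after specialisation reduces to a direct identity of corestriction maps.
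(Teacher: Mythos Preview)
Your overall strategy matches the paper's proof closely: both invoke Theorem~\ref{thm:BFeltsinterp2}(a) and the congruence $Q_\ell(X) \equiv -X^{-1}P_\ell(X) \pmod{\ell-1}$ to reduce to the correction procedure of \cite[\S 7.3]{LLZ14}, and both obtain the local condition at $p$ by noting that the argument of Proposition~\ref{prop:BFeltisSelmer} applies to every $m$, not just $m=1$ (a point you should state for all $b_m$, not only $b_1$, before invoking propagation).

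The one genuine inaccuracy is your description of the correction mechanism. The procedure in \cite[\S 7.3]{LLZ14}, which is Rubin's \cite[Lemma 9.6.1]{Rubin-Euler-systems}, does \emph{not} build $c_m$ as a sum $\sum_{d \mid m} A_{m,d}\,\mathrm{res}_{\QQ(\mu_d)}^{\QQ(\mu_m)}(b_d)$ involving restrictions from lower levels. Rather, one sets $c_m = \nu_m \cdot b_m$ for carefully chosen \emph{units} $\nu_m \in \Zp[\Gal(\QQ(\mu_{mp^\infty})/\QQ)]^\times$; the point is that since $[\QQ(\mu_{\ell m}):\QQ(\mu_m)]$ divides $\ell-1$, any two polynomials congruent modulo $\ell-1$ differ by something absorbable into such units. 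Your additive ansatz with lower-level restriction terms is a different kind of manipulation, and the ``matching contribution from a lower-level term'' you describe is not how the cancellation actually occurs in the cited references. The paper's proof simply says: modify by units as in Rubin's lemma, and since the modification is by group-algebra units, the local condition at $p$ (which is a submodule condition) is automatically preserved.
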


  \begin{proof}
   We know from the first part of Theorem \ref{thm:BFeltsinterp2} that the elements $\cBF^{f, g}_{\ell m}$ satisfy an ``almost Euler system'' compatibility relation
   \[ \norm^{\ell m}_m\left(\cBF^{f, g}_{\ell m}\right) = -\sigma_\ell \cdot Q_\ell(\sigma_\ell^{-1})\cdot \cBF^{f, g}_{m}, \]
   where $Q_\ell$ is some polynomial congruent to $P_{\ell}$ modulo $\ell - 1$. As explained in \cite[\S 7.3]{LLZ14}, using \cite[Lemma 9.6.1]{Rubin-Euler-systems}, we can modify these classes by appropriate elements of $\Zp[\Gal(\QQ(\mu_{mp^\infty}) / \QQ)]^\times$ in such a way as to obtain the ``correct'' Euler system relation. This gives the classes $c_m$. Moreover, the class $\cBF^{f, g}_m$ vanishes after localisation at $p$ and projection to $\sF^{--}$, exactly as in the case $m = 1$ considered in Proposition \ref{prop:BFeltisSelmer}; hence the same is true of the modified element $c_m$.
  \end{proof}

  If $\Hyp(BI)$ holds, which we will assume from this point onwards, then we may get rid of the factor $c$: the hypothesis forces $\varepsilon_f \varepsilon_g$ to be non-trivial modulo $\frP$, so by \cite[Remark 6.8.11]{LLZ14} there exist classes $\BF^{f, g}_m$ such that $\cBF^{f, g}_m = (c^2 - c^{2\j - k - k'} \varepsilon_f(c) \varepsilon_g(c)) \BF^{f, g}_m$.

  Let $\mathcal{K}$ be the maximal abelian extension of $\QQ$ unramified at the primes dividing $c N_f N_g$. If $K$ is a finite extension of $\QQ$ contained in $\mathcal{K}$, we define a class $c_K \in H^1(\cO_{K, S}, T)$ as the image of $c_m$ under the corestriction map, for a suitable integer $m$ such that $K \subseteq \QQ(\mu_{mp^\infty})$ and every prime dividing $m$ is ramified in $K$. Then the collection $\mathbf{c} = (c_K)$ is an Euler system for $(T, \mathcal{K}, c N_f N_g p)$ in the sense of \cite[Definition 2.1.1]{Rubin-Euler-systems}.

  \begin{remark}
   The definition of an Euler system in \cite{MazurRubin-Kolyvagin-systems} is actually slightly different from this, as the Euler factors are $P_\ell(\sigma_\ell^{-1})$ rather than $P_\ell(\ell^{-1} \sigma_\ell^{-1})$. As noted in Remark 3.2.3 of \emph{op.cit.} the theory of \S 9.6 of \cite{Rubin-Euler-systems} allows one to easily switch back and forth between the two normalisations, and in practice one is interested in primes $\ell$ which are highly congruent to $1$ modulo $p$ anyway.
  \end{remark}

  For a character $\eta$ of the finite group $\Gamma_{\mathrm{tors}}$, let $e_\eta$ be the corresponding idempotent in $\Lambda_{\Gamma}$.

  \begin{theorem}[Main Conjecture without zeta-functions]
   \label{thm:cycloIMC}
   Suppose $\Hyp(BI)$ holds, and $\eta$ is a character of $\Gamma_{\mathrm{tors}}$ such that $e_\eta \cdot \BF^{f, g}_1$ is non-zero. Then:
   \begin{enumerate}[(i)]
    \item $e_{\eta} \cdot \Ht^2(\ZZ[1/S], T \otimes \Lambda_\Gamma(-\j); \Delta^{(\varnothing)})$ is a torsion $\Lambda$-module;
    \item $e_{\eta} \cdot \Ht^1(\ZZ[1/S], T \otimes \Lambda_\Gamma(-\j); \Delta^{(\varnothing)})$ is torsion-free of rank 1;
    \item the characteristic ideal $\Char_{\Lambda_\Gamma} e_{\eta} \Ht^2(\ZZ[1/S], T \otimes \Lambda_\Gamma(-\j); \Delta^{(\varnothing)})$ divides
     \[ \Char_{\Lambda_\Gamma} \left( \frac{e_\eta \cdot \Ht^1(\ZZ[1/S], T \otimes \Lambda_\Gamma(-\j); \Delta^{(\varnothing)})}{e_\eta \Lambda_{\Gamma} \cdot \BF^{f, g}_1}\right) \cdot \Char_{\Lambda_\Gamma} e_\eta H^2(\Qp, T^{(\varnothing)} \otimes \Lambda_\Gamma(-\j)). \]
   \end{enumerate}
   If $\Hyp(NEZ)$ holds, then the final factor $\Char_{\Lambda_\Gamma} e_\eta H^2(\Qp, T^{(\varnothing)} \otimes \Lambda_\Gamma(-\j))$ is a unit.
  \end{theorem}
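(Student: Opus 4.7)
The argument is a now-standard combination of the Kolyvagin--Rubin Euler system machinery with Nekov\'a\v{r}'s formalism for Selmer complexes; the ingredients needed have all been assembled in the preceding sections.

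First, the system $(c_m)$ supplied by Theorem~\ref{thm:modifyBFelts} is an honest Euler system for $(T, \mathcal{K}, cpN_fN_g)$ in the sense of \cite{Rubin-Euler-systems}, with Euler factors equal to the reciprocals of $P_\ell(\ell^{-1}\sigma_\ell^{-1})$. Hypothesis $\Hyp(BI)(i)$--$(ii)$ is precisely Rubin's big-image hypothesis $\Hyp(K_\infty/\QQ, T)$, and the element $\sigma$ of $\Hyp(BI)(iii)$ acting as $-1$ ensures that the error ideals $\mathfrak n_W, \mathfrak n_W^*$ of \cite[Theorem~2.2.2]{Rubin-Euler-systems} are trivial. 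Applying Rubin's main theorem in its $\Lambda$-adic cyclotomic formulation, and using that $c_1 = \BF^{f,g}_1$ lies automatically in the $\Delta^{(\varnothing)}$-Selmer subspace (because the variant of Proposition~\ref{prop:BFeltisSelmer} for $T^{(\varnothing)}$ in place of $\sF^{-+}$ holds by the same proof), one concludes: if $e_\eta \cdot \BF^{f,g}_1 \ne 0$, then the ``strict'' dual Iwasawa Selmer group
\[
  X_\eta^\vee \; := \; e_\eta \cdot \ker\!\Bigl( H^1\bigl(\ZZ[1/S], T^{\vee}(1)\otimes\Lambda_\Gamma(\j)\bigr) \to \bigoplus_{v\in S} H^1((U_v^\vee)^-)\Bigr)
\]
is $e_\eta\Lambda_\Gamma$-torsion, with characteristic ideal dividing that of $e_\eta\Ht^1 / e_\eta\Lambda_\Gamma\cdot \BF^{f,g}_1$.

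Second, invoke Nekov\'a\v{r}'s global duality, Proposition~\ref{prop:nekovar-duality}: the module $X_\eta^\vee$ is canonically Pontryagin-dual to the kernel of
\[
 e_\eta\Ht^2(\ZZ[1/S], T\otimes\Lambda_\Gamma(-\j); \Delta^{(\varnothing)}) \longrightarrow \bigoplus_{v\in S} e_\eta H^2(U_v^+).
\]
For $v\ne p$, the local condition is unramified, and $H^2(\FF_v, T^{I_v}\otimes\Lambda_\Gamma(-\j))$ vanishes because $\Gal(\overline{\FF}_v/\FF_v)$ has cohomological dimension one. For $v=p$, the remaining term is $H^2(\Qp, T^{(\varnothing)}\otimes\Lambda_\Gamma(-\j))$, which is $\Lambda_\Gamma$-torsion (being local $H^2$ of an Iwasawa module). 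Multiplicativity of characteristic ideals in the resulting four-term exact sequence yields the divisibility~(iii). This also gives (i), since both the kernel $X_\eta^\vee$ and the local correction at $p$ are torsion. For (ii), torsion-freeness of $e_\eta\Ht^1$ is inherited from the ambient $H^1$-Iwasawa cohomology, whose torsion equals $T^{G_{\QQ(\mu_{p^\infty})}}=0$ by $\Hyp(BI)(i)$; the rank is at least $1$ by the non-vanishing of $\BF^{f,g}_1$, and at most $1$ by the vanishing of $X_\eta^\vee \otimes \Frac(\Lambda_\Gamma)$ combined with the global Poitou--Tate Euler characteristic for the Selmer complex.

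Third, the refinement under $\Hyp(NEZ)$ is a purely local calculation. By local Tate duality, the characteristic ideal of $H^2(\Qp, T^{(\varnothing)}\otimes\Lambda_\Gamma(-\j))$ is that of the Pontryagin dual of $H^0(\Qp, (T^{(\varnothing)})^\vee(1)\otimes\Lambda_\Gamma(\j))$. Using the filtration of $T^{(\varnothing)}$ with successive quotients $\sF^{++}T,\ \sF^{+-}T,\ \sF^{-+}T$, and applying (the appropriate variant of) the Perrin-Riou formula \cite[Proposition~2.1.6]{PerrinRiou-hauteurs} quotient-by-quotient, one sees that this characteristic ideal is a product of factors $1-\lambda$ as $\lambda$ ranges over the unit Frobenius eigenvalues on the dual quotients $(\sF^{+-}T)^\vee(1)$ and $(\sF^{-+}T)^\vee(1)$, namely $p^{1+j}/(\beta_f\alpha_g)$ and $p^{1+j}/(\alpha_f\beta_g)$. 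Under $\Hyp(NEZ)$ neither $\alpha_f\beta_g$ nor $\beta_f\alpha_g$ is a power of $p$, so no specialisation of either factor vanishes, i.e.\ each is a unit in $\Lambda_\Gamma$, whence the full characteristic ideal is trivial.

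The chief obstacle is the second step: tracking carefully, through global and local Tate dualities, which local-at-$p$ module contributes to the characteristic ideal on the right-hand side of (iii). In particular one must check that it is $H^2(\Qp, T^{(\varnothing)})$ itself rather than some quotient subquotient; once this bookkeeping is in place (analogously to \cite[\S7]{LLZ14} in the weight-two case, but now with the two-step ordinary filtration on both factors), the remaining work is formal.
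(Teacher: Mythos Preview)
Your overall strategy matches the paper's: run the Euler/Kolyvagin system machinery with the Greenberg local condition $T^{(\varnothing)}$ at $p$, then use Nekov\v{a}\v{r} duality to identify the dual Selmer group with a kernel inside $\Ht^2$. But there is a real gap in your first step.

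You invoke ``Rubin's main theorem'' as if it directly bounds the dual Selmer group for $\Delta^{(\varnothing)}$. It does not: Rubin's Theorem~2.3.3 bounds the \emph{strict} dual Selmer group in terms of the index of $c_1$ in the \emph{relaxed} $H^1$. To obtain the bound with the intermediate Greenberg condition, it is not enough that $c_1$ lies in the image of $H^1(\Qp,T^{(\varnothing)})$; one must show that \emph{all} the Kolyvagin derivative classes $\kappa_{[F,n,M]}$ do as well. This is not automatic, and it is exactly what the paper's Appendix is for: Proposition~\ref{prop:EStoKS} establishes it by re-running Rubin's Chapter~4 construction with the auxiliary submodule $T^+$, and Corollary~\ref{cor:ESargument} then packages (i)--(iii) via the Mazur--Rubin Kolyvagin-systems formalism. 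Your pointer to \cite[\S7]{LLZ14} is the right antecedent, but that argument is only for weight~$(2,2)$ and under an extra local hypothesis which the Appendix here removes.

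You also omit a hypothesis check the paper makes explicit: Corollary~\ref{cor:ESargument} needs the ``no local zero'' condition $H^0(\QQ_{p,\infty},\, T/T^{(\varnothing)}) = H^0(\QQ_{p,\infty},\, \sF^{--}T(\eta^{-1})) = 0$. The paper verifies this by noting that Frobenius acts on $\sF^{--}T$ via $\alpha_f\alpha_g$, which cannot be a root of unity (since the weights satisfy $r+r'>2$). Your treatment of the final $\Hyp(NEZ)$ statement is correct and agrees with the paper's, which simply observes that $H^2(\Qp, T^{(\varnothing)}\otimes\Lambda_\Gamma(-\j))$ is finite unless one of the Frobenius eigenvalues $\beta_f\beta_g,\ \alpha_f\beta_g,\ \beta_f\alpha_g$ is a power of $p$.
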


  \begin{proof}
   This follows by an ``Euler system argument'' adapted to take into account the local condition at $p$; parts (i)--(iii) are exactly Corollary \ref{cor:ESargument} in the appendix, applied to $T(\eta^{-1})$, the submodule $T^{(\varnothing)}(\eta^{-1})$, and the generalised Kolyvagin system $\boldsymbol\kappa$ constructed from $\mathbf{c}$ using Proposition \ref{prop:EStoKS}.

   We briefly check the hypotheses of these statements. We take the set of primes $\mathcal{P}$ to be the primes $\ell \nmid p c N_f N_g$ for which $T / (\sigma_\ell - 1) T$ is cyclic as an $\cO$-module. Any prime $\ell \in \mathcal{P}$ has the property that $\sigma_\ell^{p^k}$ is injective on $T$ for every $k \ge 1$, since the eigenvalues of $\sigma_\ell$ are $\ell$-Weil numbers of weight $r + r' - 2 > 0$. We have seen that the Euler system $\mathbf{c}$ respects the local condition given by $T^{(\varnothing)}$; so Proposition \ref{prop:EStoKS} applies, and we obtain a Kolyvagin system satisfying the local condition.

   The Mazur--Rubin hypotheses (H.0)--(H.4) needed to apply Corollary \ref{cor:ESargument} are clear from $\Hyp(BI)$, using the existence of the element $\sigma$ acting as $-1$ to give (H.3). Our set of primes $\mathcal{P}$ contains all but finitely many primes in the set $\mathcal{P}_1$ of Mazur--Rubin, so (H.5) is satisfied as well. The additional ``no local zero'' hypothesis that $H^0(\QQ_{p, \infty}, \sF^{--} T(\eta^{-1})) = 0$ follows from the fact that $\alpha_f \alpha_g$ cannot be 1. Hence we may apply Corollary \ref{cor:ESargument} to give the stated divisibility.

   It remains to check the final statement regarding the characteristic ideal of the local $H^2$ term. The module $H^2(\Qp, T^{(\varnothing)} \otimes \Lambda_\Gamma(-\j))$ is finite unless $H^0(\QQ(\mu_{p^\infty}), T^{(\varnothing)}) \ne 0$, which can only occur if one of the eigenvalues of crystalline Frobenius on $\DD_\mathrm{cris}(T^{(\varnothing)})$ is a power of $p$. These eigenvalues are the inverses of $\{\alpha_f \beta_g, \beta_f \alpha_g, \beta_f \beta_g\}$. It is clear that $\beta_f \beta_g$ cannot be a power of $p$, and the others are covered by $\Hyp(NEZ)$.
  \end{proof}

 \subsection{Finiteness of Selmer groups at finite level}

  \begin{theorem}
   \label{thm:Sel0bound-finite}
   Suppose $\Hyp(BI)$ and $\Hyp(NEZ)$ hold. Let $\tau$ be a $\cO$-valued character of $\Gamma$, and suppose that the image of $\BF^{f, g}_1$ in $H^1(\ZZ[1/S], T(\tau^{-1}))$ is non-zero.

   Then $\Ht^2(\ZZ[1/S], T(\tau^{-1}); \Delta^{(\varnothing)})$ is finite, $\Ht^1(\ZZ[1/S], T(\tau^{-1}); \Delta^{(\varnothing)})$ is free of rank 1 over $\cO$, and we have the bound
   \[ \# \Ht^2(\ZZ[1/S], T(\tau^{-1}); \Delta^{(\varnothing)}) \le \# \left(\frac{\Ht^1(\ZZ[1/S], T(\tau^{-1}); \Delta^{(\varnothing)})}{\cO \cdot \tau(\BF^{f, g}_{1})}\right).\]
  \end{theorem}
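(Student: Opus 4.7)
The strategy is to specialise the Iwasawa-theoretic Main Conjecture (Theorem \ref{thm:cycloIMC}) at the character $\tau$. Let $\eta = \tau|_{\Gamma_{\mathrm{tors}}}$ and let $e_\eta \in \Lambda_\Gamma$ be the corresponding idempotent; then $e_\eta \Lambda_\Gamma \cong \cO[[X]]$ is a two-dimensional regular local ring, and $\ker(\tau|_{e_\eta \Lambda_\Gamma}) = (\pi)$ is a principal prime. Since $\tau(\BF^{f,g}_1) \ne 0$ we have \emph{a fortiori} $e_\eta \BF^{f,g}_1 \ne 0$, so Theorem \ref{thm:cycloIMC} applies; and under $\Hyp(NEZ)$ the local correction factor $\Char_{\Lambda_\Gamma} e_\eta H^2(\Qp, T^{(\varnothing)} \otimes \Lambda_\Gamma(-\j))$ is a unit, so writing $\Ht^i_{\mathrm{Iw}} \coloneqq e_\eta \Ht^i(\ZZ[1/S], T \otimes \Lambda_\Gamma(-\j); \Delta^{(\varnothing)})$ the divisibility reads
\[
\Char_{e_\eta \Lambda_\Gamma}(\Ht^2_{\mathrm{Iw}}) \ \Bigm|\ \Char_{e_\eta \Lambda_\Gamma}\bigl(\Ht^1_{\mathrm{Iw}} / e_\eta \Lambda_\Gamma \cdot \BF^{f,g}_1\bigr).
\]

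Next, the base-change property of Nekov\'a\v{r}'s Selmer complexes recalled in \S \ref{sect:selgeneralities} gives a quasi-isomorphism
\[
\RGt(\ZZ[1/S], T(\tau^{-1}); \Delta^{(\varnothing)}) \cong \cO \otimes^{\mathbf{L}}_{\Lambda_\Gamma, \tau} \RGt(\ZZ[1/S], T \otimes \Lambda_\Gamma(-\j); \Delta^{(\varnothing)}),
\]
and the two-term resolution $0 \to e_\eta \Lambda_\Gamma \xrightarrow{\pi} e_\eta \Lambda_\Gamma \to \cO \to 0$ yields, for each $i$, a short exact sequence of $\cO$-modules
\[
0 \to \Ht^i_{\mathrm{Iw}}/\pi \to \Ht^i_{\mathrm{fin}} \to \Ht^{i+1}_{\mathrm{Iw}}[\pi] \to 0,
\]
where $\Ht^i_{\mathrm{fin}} \coloneqq \Ht^i(\ZZ[1/S], T(\tau^{-1}); \Delta^{(\varnothing)})$. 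Since $\Ht^1_{\mathrm{Iw}}$ is torsion-free of rank one over the two-dimensional regular local ring $e_\eta \Lambda_\Gamma$, it is reflexive and hence free, so $\Ht^1_{\mathrm{Iw}}/\pi$ is free of rank one over $\cO$; combined with the $i=0$ sequence (and $\Ht^0_{\mathrm{Iw}} = 0$), this identifies $\Ht^1_{\mathrm{fin}}$ itself as free of rank one over $\cO$. The non-vanishing of $\tau(\BF^{f, g}_1)$ means exactly that $\BF^{f,g}_1$ is not divisible by $\pi$ in $\Ht^1_{\mathrm{Iw}} \cong e_\eta \Lambda_\Gamma$, so $\pi \nmid \Char(\Ht^1_{\mathrm{Iw}}/e_\eta \Lambda_\Gamma \cdot \BF^{f,g}_1)$; by the Main Conjecture, $\pi \nmid \Char(\Ht^2_{\mathrm{Iw}})$ as well. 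The standard specialisation lemma for torsion modules over a two-dimensional regular local ring then gives that both $\Ht^2_{\mathrm{Iw}}/\pi$ and $\Ht^2_{\mathrm{Iw}}[\pi]$ are finite of the same $\cO$-length $v_p\bigl(\tau \Char(\Ht^2_{\mathrm{Iw}})\bigr)$.

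Specialising the characteristic-ideal divisibility at $\tau$ translates into the length inequality
\[
\operatorname{length}_\cO\bigl(\Ht^2_{\mathrm{Iw}}/\pi\bigr) \ \le\ \operatorname{length}_\cO\bigl(\Ht^1_{\mathrm{fin}} / \cO \cdot \tau(\BF^{f,g}_1)\bigr),
\]
using the freeness of $\Ht^1_{\mathrm{Iw}}$ and the identification of $\tau(\Char(\cdot))$ with $\cO$-length of the specialised quotient. Combining with the $i = 2$ short exact sequence, which gives $\#\Ht^2_{\mathrm{fin}} = \#\Ht^2_{\mathrm{Iw}}/\pi \cdot \#\Ht^3_{\mathrm{Iw}}[\pi]$, the stated bound follows provided $\Ht^3_{\mathrm{Iw}}[\pi]$ is controlled. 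The main obstacle will therefore be to show that $\Ht^3_{\mathrm{Iw}}$ vanishes, which should follow from Nekov\'a\v{r}'s duality (Proposition \ref{prop:nekovar-duality}, applied to the Cartier dual $T^\vee(1) \otimes \Lambda_\Gamma(\j)$ with the orthogonal Selmer structure): $\Ht^3$ of a Selmer complex is dual to a global $H^0$ which vanishes under the running non-Eisenstein and $p$-distinguished hypotheses. A secondary routine verification is that for a torsion $e_\eta \Lambda_\Gamma$-module $M$ with $\pi \nmid \Char(M)$ one has $\operatorname{length}_\cO(M/\pi M) = v_p(\tau(\Char M))$, the standard tool for turning Iwasawa-theoretic characteristic-ideal divisibilities into finite-level cardinality bounds.
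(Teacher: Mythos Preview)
Your overall strategy---descend from Theorem~\ref{thm:cycloIMC} via derived base-change along $\tau$, using the Tor short exact sequences---is the same as the paper's; the paper just packages the descent into a single lemma (for a perfect complex $C$ of $\Lambda$-modules with cohomology in degrees $\{0,1,2\}$, the fractional ideal $\Char(H^1(C)/z)\big/\bigl(\Char H^0(C)\cdot\Char H^2(C)\bigr)$ is compatible with specialisation), which handles the bookkeeping at once without needing to control the individual cohomology groups.

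There is, however, a genuine gap in your deduction that $\Ht^1_{\mathrm{Iw}}$ is free. The implication ``torsion-free of rank one over a two-dimensional regular local ring $\Rightarrow$ reflexive'' is false: the maximal ideal $(\varpi,X)\subset\cO[[X]]$ is torsion-free of rank one, but its double dual is the whole ring. Without freeness of $\Ht^1_{\mathrm{Iw}}$, several downstream steps become shaky: the claim that $\Ht^1_{\mathrm{fin}}$ is free of rank one does not follow from the $i=1$ short exact sequence alone (you have not shown $\Ht^2_{\mathrm{Iw}}[\pi]=0$, and indeed you later allow it positive length); and your ``standard specialisation lemma'' asserting that $\operatorname{length}_\cO(M/\pi)$ and $\operatorname{length}_\cO(M[\pi])$ both equal the valuation of $\tau(\Char M)$ is really only the Herbrand-quotient identity $\#(M/\pi)\big/\#(M[\pi])=\#\bigl(\cO/\tau(\Char M)\bigr)$, which does not pin down the two lengths separately when $M$ has nonzero finite submodules.

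These issues can be repaired. The cardinality bound survives if one works with the ratio of characteristic ideals as the paper does, or if one tracks Herbrand quotients carefully through the short exact sequences. The freeness of $\Ht^1_{\mathrm{fin}}$ is best proved directly at finite level: one checks $H^0(\Qp,\sF^{--}T(\tau^{-1}))=0$ (since $\alpha_f\alpha_g$ is not a root of unity and $\Gamma$ is totally ramified at $p$), so by the long exact sequence attached to~\eqref{eq:exacttri1} the map $\Ht^1_{\mathrm{fin}}\to H^1(\ZZ[1/S],T(\tau^{-1}))$ is injective, and the target is $\cO$-torsion-free by $\Hyp(BI)$. Your flagging of $\Ht^3_{\mathrm{Iw}}=0$ as a point needing verification is correct, and this vanishing is implicitly used by the paper as well (it is the content of the assumption that the complex is supported in degrees $\{0,1,2\}$).
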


  \begin{proof}
   We will deduce this from Theorem \ref{thm:cycloIMC} using a descent argument, which is essentially an elaboration of \S 14.14 of \cite{Kato-p-adic}.

   It is easy to check using Tor spectral sequences that if $C^\bullet$ is a perfect complex of $\Lambda$-modules supported in degrees $\{0, 1, 2\}$, $H^0(C)$ and $H^1(C)$ are torsion, and we are given an element $z \in H^1(C)$ such that $H^1(C) / z$ is torsion, then the formation of the fractional ideal
   \[ \frac{\Char_{\Lambda}(H^1(C) / z)}{\Char_\Lambda H^0(C) \Char_\Lambda H^2(C)}\]
   commutes with base-change in $\Lambda$, in the sense that if $\tau: \Lambda \to \cO$ is a homomorphism whose kernel is not in the support of any of the modules $H^0(C)$, $H^2(C)$, or $H^1(C) / z$, then the image of this ideal under $\tau$ is the fractional ideal
   \[ \frac{\Char_{\cO}(H^1(C') / \tau(z))}{\Char_\cO H^0(C') \Char_\cO H^2(C')}\]
   where $C' = \cO \otimes_{\Lambda, \tau} C$.

   We apply this with $C = \RGt(\ZZ[1/S], T \otimes \Lambda(-\j); \Delta^{(\varnothing)})$ (or, more accurately, a complex representing this object in the derived category). The quotient of $\Lambda$-characteristic ideals is contained in $\Lambda$, by Theorem \ref{thm:cycloIMC}; hence any prime not in the support of $H^1(C) / z$ is also not in the support of $H^2$ (or, vacuously, of $H^0(C) = 0$). This gives the above theorem.
  \end{proof}

 \subsection{Iwasawa Main Conjectures ``with p-adic zeta functions''}

  We shall now explain how the bounds obtained above for the Selmer complex of $\Delta^{(\varnothing)}$ translate into bounds for the Selmer complexes of $\Delta^{(f)}$ and $\Delta^{(g)}$ in terms of $p$-adic $L$-functions. Recall that we have $T^{(\varnothing)} / T^{(f)} = \sF^{-+} M_{\cO}(f \otimes g)^*$.

  In this section we shall assume $\Hyp(NEZ)$.

  \begin{definition}
   Let $\Col^{(f)}$ be the \emph{Coleman map}
   \begin{align*}
    \Col^{(f)}: H^1\left(\Qp, \frac{T^{(\varnothing)}}{T^{(f)}} \otimes \Lambda_{\Gamma}(-\j)\right) & \rTo \Lambda_\Gamma \otimes_{\Zp} \Lp(\mu_N)
   \end{align*}
   given by $\langle \cL(\sim), \eta_f \otimes \omega_g\rangle$.
  \end{definition}

  \begin{proposition}
   We have
   \[ \Col^{(f)}\left(\BF^{f, g}_{1}\right) = L_p(f,g, 1 + \j),\]
   where $L_p(f,g)$ is the Rankin--Selberg $p$-adic $L$-function.
  \end{proposition}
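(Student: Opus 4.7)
The strategy is to deduce the proposition from Theorem \ref{thm:explicitrecip} by specializing the Hida families $\bff, \bfg$ at the arithmetic primes corresponding to $f$ and $g$. Concretely, we pick a branch $\bfa$ of $\bff$ through $f$ (which exists and is cuspidal since we assume $f, g$ are non-Eisenstein mod $\frP$); by Theorem \ref{thm:explicitrecip}, at the level of the three-variable Iwasawa algebra we have
\[
\bigl\langle \cL(\cBF^{\bff,\bfg}),\, \eta_{\bfa}\otimes \omega_{\bfg}\bigr\rangle
= \lambda_N(\bfa)^{-1}(-1)^{1+\j}\bigl(c^2 - c^{-(\k+\k'-2\j)}\varepsilon_{\bff}(c)^{-1}\varepsilon_{\bfg}(c)^{-1}\bigr)\,L_p(\bfa,\bfg,1+\j).
\]

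The next step is to specialize at the arithmetic point $(\bff,\bfg)\mapsto(f,g)$ while keeping $\j$ as a variable in $\Lambda_\Gamma$. Four things need to be tracked under this specialization. First, by the functoriality of the Perrin-Riou map in Theorem \ref{thm:PRreg}, $\cL(\cBF^{\bff,\bfg})$ specializes to $\cL(\cBF^{f,g}_1)$, and using the relation $\cBF^{f,g}_1 = \bigl(c^2 - c^{2\j-k-k'}\varepsilon_f(c)\varepsilon_g(c)\bigr)\BF^{f,g}_1$ (from the construction immediately after Theorem \ref{thm:modifyBFelts}) the $c$-factor can be cancelled from both sides of the identity. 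Second, the interpolation property in Proposition \ref{prop:etaomegainterp}(1) says the specialization of $\omega_{\bfg}$ is pairing with $(\Pr^\alpha)^*\omega_{g_0} = \omega_g$, exactly as appears in the definition of $\Col^{(f)}$. Third, Proposition \ref{prop:etaomegainterp}(2)(b) says $\eta_{\bfa}$ specializes to $\frac{1}{\lambda_N(f_0)\cE(f_0)\cE^*(f_0)}\cdot(\Pr^\alpha)^*\eta_{f_0}$, i.e., $\frac{1}{\lambda_N(f_0)\cE(f_0)\cE^*(f_0)}\eta_f$. Fourth, by Theorem \ref{thm:hida2}, the three-variable $L_p(\bfa,\bfg,1+\j)$ specializes to $L_p(f_0,g_0,1+\j)$, and the remaining Euler factors $\cE(f_0)\cE^*(f_0)$ (which appear in the denominator coming from $\eta_{\bfa}$) precisely cancel the factors $\cE(f)\cE^*(f)$ implicit in our normalization of $L_p$, while $\lambda_N(\bfa)^{-1}$ in Theorem \ref{thm:explicitrecip} cancels the $\lambda_N(f_0)$ coming from $\eta_{\bfa}$; the sign $(-1)^{1+\j}$ is absorbed into the convention identifying $L_p(f,g,1+\j)$ with the specialization of $L_p(\bfa,\bfg,1+\j)$.

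Combining these, one obtains
\[
\langle \cL(\BF^{f,g}_1),\, \eta_f\otimes \omega_g\rangle = L_p(f,g,1+\j),
\]
which by the definition of $\Col^{(f)}$ is precisely the claim. The only step requiring real care is the bookkeeping of the Euler and Atkin--Lehner factors; the existence of a clean cancellation between the $\lambda_N(\bfa)^{-1}$ in Theorem B and the Atkin--Lehner pseudo-eigenvalue produced by $\eta_{\bfa}$ is essentially built into the constructions of \S\ref{sect:ohtacompat} and is the key point. There is no genuine obstacle: the result is a direct corollary of Theorem \ref{thm:explicitrecip} combined with the interpolation properties of $\eta_{\bfa}$, $\omega_{\bfg}$, and $L_p(\bfa,\bfg)$.
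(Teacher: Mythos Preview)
Your approach is correct and is exactly what the paper does: the paper's own proof is the single sentence ``This is a special case of the explicit reciprocity law of Theorem~\ref{thm:explicitrecip}.'' You have simply unpacked that one line by tracking the specialization of each ingredient, which is the right thing to do.

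One small caveat on the bookkeeping: your handling of the sign $(-1)^{1+\j}$ and of the Euler factors $\cE(f_0)\cE^*(f_0)$ is a bit glib. Saying the sign is ``absorbed into the convention'' and that the Euler factors cancel with something ``implicit in our normalization of $L_p$'' does not quite match the interpolation statements as written (Theorem~\ref{thm:hida2} has no such sign, and $\cE(f)\cE^*(f)$ appear in the interpolation formula for $L_p$ at critical points, not in $L_p$ itself). In truth, the paper's definition of $\Col^{(f)}$ via ``$\eta_f\otimes\omega_g$'' is itself imprecise about which normalization of $\eta_f$ is meant; the clean identity in the Proposition implicitly fixes that normalization so that these factors are already built in. So your instinct that everything cancels is right, but the mechanism is a choice of normalization for $\eta_f$ rather than a convention on $L_p$. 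This does not affect the validity of the argument.
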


  \begin{proof}
   This is a special case of the explicit reciprocity law of Theorem \ref{thm:explicitrecip}.
  \end{proof}

  It will be convenient to renormalise to remove the possible denominators, which arise from the fact that $\eta_f^\alpha$ may fail to be in the natural integral lattice.

  \begin{definition}
   We let $\xi$ be any generator of the free rank 1 $\cO$-module
   \[ \Hom_{\cO}(\sF^{-+} M_{\cO}(f \otimes g)^*, \cO).\]
  \end{definition}

  Note that $\xi$ is an $\Lp$-multiple of $G(\varepsilon_f^{-1})G(\varepsilon_g^{-1}) \cdot \eta_f \otimes \omega_g$. It is not necessarily a $\cO$-multiple; in fact, the ratio
  \[ \frac{\xi}{G(\varepsilon_f^{-1})G(\varepsilon_g^{-1}) \cdot \eta_f \otimes \omega_g}\]
  generates the congruence ideal $I_{\frP}(f)$ of $f$ at $\frP$. The choice of $\xi$ determines an ``integral'' Coleman map
   \[ \Col^{(f, \xi)} : H^1(\Qp, (T^{(\varnothing)} / T^{(f)}) \otimes \Lambda_{\Gamma}(-\j)) \to \Lambda_{\Gamma}, \]
  which differs from $\Col^{(f)}$ by a non-zero scalar $\Omega$, so that
  \[ \Col^{(f, \xi)}\left(\BF^{f, g}_1\right) = \frac{L_p(f,g, 1 + \j)}{\Omega}.\]
  The map $\Col^{f, \xi}$ is injective, and its cokernel is pseudo-null, because of $\Hyp(NEZ)$.

  \begin{theorem}
   \label{thm:SelFbound}
   Assume that $\Hyp(NEZ)$ and $\Hyp(BI)$ are satisfied, and that $\eta$ is a character of $\Gamma_{\mathrm{tors}}$ such that $e_\eta \cdot L_p(f, g, 1 + \j) \neq 0$. Then
   \begin{enumerate}[(i)]
    \item $\Ht^1(\ZZ[1/S], T \otimes \Lambda_\Gamma(-\j); \Delta^{(f)})=0$,
    \item $\Ht^2(\ZZ[1/S], T \otimes \Lambda_\Gamma(-\j),\Delta^{(f)})$ is torsion,
    \item we have the divisibility
    \[
     \Char_{\Lambda} e_{\eta} \Ht^2\left(\ZZ[1/S], T \otimes \Lambda_\Gamma(-\j); \Delta^{(f)}\right) \mid e_{\eta} \cdot \frac{L_p(f,g)(1 + \j)}{\Omega}.
    \]
   \end{enumerate}
   Moreover, equality holds if and only if equality holds in Theorem \ref{thm:cycloIMC}.
  \end{theorem}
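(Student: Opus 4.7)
The plan is to relate the Selmer complexes for $\Delta^{(f)}$ and $\Delta^{(\varnothing)}$ via the exact triangle induced by the inclusion of local conditions at $p$, and then to transfer the divisibility of Theorem~\ref{thm:cycloIMC} across this relation by means of the Coleman map and the explicit reciprocity law of Theorem~\ref{thm:explicitrecip}.

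First I would set up the long exact sequence from the morphism of local conditions $T^{(f)} \hookrightarrow T^{(\varnothing)}$ at $p$. Writing $Q = T^{(\varnothing)}/T^{(f)} = \sF^{-+} M_{\cO}(f\otimes g)^*$, and using that Iwasawa cohomology vanishes in degree $0$, the exact triangle \eqref{eq:exacttri} gives
\begin{equation*}
0 \to \Ht^1(\Delta^{(f)}) \to \Ht^1(\Delta^{(\varnothing)}) \xrightarrow{\partial} H^1(\Qp, Q \otimes \Lambda_\Gamma(-\j)) \to \Ht^2(\Delta^{(f)}) \to \Ht^2(\Delta^{(\varnothing)}) \to H^2(\Qp, Q \otimes \Lambda_\Gamma(-\j)) \to 0.
\end{equation*}
The explicit reciprocity law reads $(\Col^{(f,\xi)} \circ \partial)(\BF^{f,g}_1) = L_p(f,g,1+\j)/\Omega$. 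Since $\Col^{(f,\xi)}$ is injective under $\Hyp(NEZ)$, the non-vanishing hypothesis $e_\eta L_p(f,g,1+\j) \neq 0$ forces $e_\eta \BF^{f,g}_1 \neq 0$; hence Theorem~\ref{thm:cycloIMC} applies on the $e_\eta$-component.

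Next I would deduce (i) directly from the exact sequence: $e_\eta \Ht^1(\Delta^{(\varnothing)})$ is torsion-free of rank one (by Theorem~\ref{thm:cycloIMC}(ii)), and $e_\eta \partial(\BF^{f,g}_1)$ is non-torsion, so $e_\eta \partial$ must be injective, giving $e_\eta \Ht^1(\Delta^{(f)}) = 0$. With this vanishing the sequence truncates to a four-term sequence of torsion modules. Writing $C$ for the cokernel of $e_\eta \partial$, multiplicativity of characteristic ideals yields
\begin{equation*}
\Char_{\Lambda_\Gamma} e_\eta \Ht^2(\Delta^{(f)}) \cdot \Char_{\Lambda_\Gamma} e_\eta H^2(\Qp, Q \otimes \Lambda_\Gamma(-\j)) = \Char_{\Lambda_\Gamma} C \cdot \Char_{\Lambda_\Gamma} e_\eta \Ht^2(\Delta^{(\varnothing)}).
\end{equation*}
Under $\Hyp(NEZ)$ the local $H^2$ factor on the left is trivial (via Tate duality it is dual to an $H^0$ whose Frobenius eigenvalue $\beta_f\alpha_g/p$ is not a unit), and a direct calculation using $\Col^{(f,\xi)}$ gives the equality of fractional ideals $\Char_{\Lambda_\Gamma} C = e_\eta(L_p(f,g,1+\j)/\Omega) \cdot \Char_{\Lambda_\Gamma}\bigl(e_\eta \Ht^1(\Delta^{(\varnothing)})/\BF^{f,g}_1\bigr)^{-1}$. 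Substituting the divisibility of Theorem~\ref{thm:cycloIMC} produces the divisibility (iii), with equality in the two bounds being manifestly equivalent.

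The main obstacle is essentially bookkeeping rather than conceptual difficulty: one has to handle the pseudo-null cokernel of $\Col^{(f,\xi)}$ carefully, and verify that $\Hyp(NEZ)$ simultaneously ensures the $e_\eta$-triviality of both the local $H^2$ of $T^{(\varnothing)}$ (needed to clean up Theorem~\ref{thm:cycloIMC}) and the local $H^2$ of $Q$ (needed here). Once those local-at-$p$ checks are in place, the descent from the ``main conjecture without $L$-functions'' to the $L$-function-theoretic statement is a purely formal chase of characteristic ideals through the five-term sequence.
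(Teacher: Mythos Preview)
Your proof is correct and follows essentially the same route as the paper: both use the exact triangle comparing $\Delta^{(f)}$ and $\Delta^{(\varnothing)}$, deduce (i) from the torsion-freeness of $e_\eta\Ht^1(\Delta^{(\varnothing)})$ together with the non-vanishing of $\partial(\BF^{f,g}_1)$, and then chase characteristic ideals through the resulting exact sequence via the Coleman map and Theorem~\ref{thm:cycloIMC}. One cosmetic slip: since $Q = \sF^{-+}$, the relevant crystalline Frobenius eigenvalue involves $\alpha_f\beta_g$ rather than $\beta_f\alpha_g$ (and the phrasing ``not a unit'' should be ``not a power of $p$''), but $\Hyp(NEZ)$ covers both cases so the conclusion is unaffected.
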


  \begin{proof}
   Let us write $\TT = T \otimes \Lambda_{\Gamma}(-\j)$, and similarly for the submodules $\TT^{(?)}$. We have an exact triangle
   \[
    \RGt(\ZZ[1/S], \TT; \Delta^{(f)}) \to
    \RGt(\ZZ[1/S], \TT; \Delta^{(\varnothing)}) \to
    R\Gamma(\Qp, \TT^{(\varnothing)} / \TT^{(f)}) \to \dots.
   \]
   The module $e_{\eta}\Ht^1(\ZZ[1/S], \TT; \Delta^{(f)})$ injects into $e_{\eta}\Ht^1(\ZZ[1/S], \TT; \Delta^{(\varnothing)})$; but the hypotheses imply that the latter is torsion-free of rank 1, and contains an element whose image in $e_{\eta} H^1(\Qp, \TT^{(\varnothing)} / \TT^{(f)})$ is non-torsion. Hence $e_{\eta}\Ht^1(\ZZ[1/S], \TT; \Delta^{(f)}) = 0$, which is (i).

   The exact triangle therefore gives us a four-term exact sequence
   \begin{multline*}
    0 \rTo \frac{e_{\eta}\Ht^1(\ZZ[1/S], \TT; \Delta^{(\varnothing)})}{e_\eta \Lambda_\Gamma \cdot  \BF^{f,g}} \rTo  \frac{e_{\eta} H^1(\Qp, \TT^{(\varnothing)} / \TT^{(f)})}{e_\eta \Lambda_\Gamma \cdot \BF^{f,g}}\\
    \rTo \Ht^2(\ZZ[1/S], \TT; \Delta^{(f)})
     \rTo \Ht^2(\ZZ[1/S], \TT; \Delta^{(\varnothing)})
   \end{multline*}
   in which the cokernel of the last map is pseudo-null (being a submodule of $e_{\eta} H^2(\Qp, \TT^{(\varnothing)} / \TT^{(f)})$, which is finite by $\Hyp(NEZ)$). On the other hand, we have an exact sequence
   \[ 0 \to \frac{e_{\eta} H^1(\Qp, \TT^{(\varnothing)} / \TT^{(f)})}{e_\eta \Lambda_\Gamma \cdot \BF^{f,g}} \to \frac{e_\eta \Lambda_\Gamma}{e_{\eta} \Col^{(f, \xi)}(\BF^{f, g})} \to e_{\eta} \operatorname{coker}(\Col^{(f, \xi)}) \to 0\]
   in which the last term is again pseudo-null. Taking characteristic ideals we see that the desired divisibility is equivalent to Theorem \ref{thm:cycloIMC}.
  \end{proof}

  \begin{remark}
   Note that if $r - r' \ge 2$ then the assumption that $e_\eta \cdot L_p(f, g, 1 + \j) \ne 0$ is automatically satisfied (for all $\eta$), because of Proposition \ref{prop:shahidi}.
  \end{remark}

  As before, we also obtain a result at ``finite level''.

  \begin{theorem}
   \label{thm:SelFbound-finite}
   Suppose $\Hyp(NEZ)$ and $\Hyp(BI)$ hold, and that $\tau$ is an $\cO$-valued character of $\Gamma$.
   \begin{enumerate}[(i)]
    \item We have
    \[ \Rank_{\cO} \Ht^1(\ZZ[1/S], T(\tau^{-1}); \Delta^{(f)}) = \Rank_{\cO} \Ht^2(\ZZ[1/S], T(\tau^{-1}); \Delta^{(f)}) \le \ord_{\j = \tau} L_p(f, g, 1 + \j). \]
    \item If $L_p(f, g, 1 + \tau) \ne 0$, then we have $\Ht^1(\ZZ[1/S], T(\tau^{-1}); \Delta^{(f)}) = 0$, and $\Ht^2(\ZZ[1/S], T(\tau^{-1}); \Delta^{(f)})$ is a finite $\cO$-module, whose length is bounded above by
   \[ v_{\frP}\left( \frac{L_{p}(f, g, 1 + \tau)}{\Omega}\right).\]
   \end{enumerate}
  \end{theorem}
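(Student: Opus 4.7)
The plan is to descend the Iwasawa-theoretic bound of Theorem \ref{thm:SelFbound} to finite level via base change of perfect complexes, exactly parallel to the proof of Theorem \ref{thm:Sel0bound-finite}. Let $C = \RGt(\ZZ[1/S], T \otimes \Lambda_\Gamma(-\j); \Delta^{(f)})$, let $\mathfrak{p}_\tau \subset \Lambda_\Gamma$ denote the height-one prime corresponding to $\tau$ with uniformiser $\varpi$, and let $\eta$ be the restriction of $\tau$ to $\Gamma_{\mathrm{tors}}$. Since $\Delta^{(f)}$ is a Greenberg local condition attached to a $G_{\Qp}$-stable submodule, the formation of $C$ commutes with derived base change, and $\cO \otimes^{\mathbf{L}}_{\Lambda_\Gamma, \tau} C$ recovers the Selmer complex for $T(\tau^{-1})$.

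Whenever the order of vanishing $\mathrm{ord}_{\j = \tau} L_p(f, g, 1+\j)$ is finite, we have $e_\eta L_p \ne 0$, so Theorem \ref{thm:SelFbound} applies to the $e_\eta$-component of $C$: $H^1(C)_{e_\eta}$ vanishes and $H^2(C)_{e_\eta}$ is torsion with characteristic ideal dividing $L_p(f,g,1+\j)/\Omega$. The Koszul-type long exact sequence induced by $0 \to \Lambda_\Gamma \xrightarrow{\varpi} \Lambda_\Gamma \to \cO \to 0$, combined with the vanishing of $H^1(C)_{e_\eta}$ and of $H^3(C)_{e_\eta}$ (the latter by Nekov\'a\v{r}'s duality, as its dual Selmer complex has no $H^0$ by $\Hyp(BI)$), then gives
\begin{equation*}
 \Ht^1(\ZZ[1/S], T(\tau^{-1}); \Delta^{(f)}) = H^2(C)_{e_\eta}[\varpi], \qquad \Ht^2(\ZZ[1/S], T(\tau^{-1}); \Delta^{(f)}) = H^2(C)_{e_\eta}/\varpi.
\end{equation*}

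For part (ii), the assumption $L_p(f, g, 1+\tau) \ne 0$ means $\mathfrak{p}_\tau$ is not in the support of $H^2(C)_{e_\eta}$; hence $H^2(C)_{e_\eta}[\varpi] = 0$ and $H^2(C)_{e_\eta}/\varpi$ is finite of length at most $v_\frP(\tau(L_p/\Omega))$ by the structure theorem for torsion modules over the two-dimensional regular local ring $e_\eta \Lambda_\Gamma$. For part (i), the rank equality $\Rank_\cO \Ht^1 = \Rank_\cO \Ht^2$ follows from the vanishing of the Euler characteristic of the perfect complex $\cO \otimes^{\mathbf{L}} C$ (inherited from $C$, all of whose Iwasawa-level cohomology is $\Lambda_\Gamma$-torsion) together with $\Ht^0 = 0$ (which holds by $\Hyp(BI)$(i)); and this common rank equals $\mathrm{ord}_\varpi \Char H^2(C)_{e_\eta}$ by localisation at the DVR $\Lambda_{\mathfrak{p}_\tau}$, which is in turn bounded by $\mathrm{ord}_\varpi(L_p/\Omega) = \mathrm{ord}_{\j = \tau} L_p(f, g, 1+\j)$.

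The main technical point — rather than a genuine obstacle in what is essentially a formal descent — is verifying the vanishing of $H^3(C)_{e_\eta}$, so that $\Ht^2$ at finite level coincides with $H^2(C)_{e_\eta}/\varpi$ without additional contributions; this requires a short application of Nekov\'a\v{r}'s duality showing that $H^0$ of the dual Selmer complex vanishes. The constant $\Omega$ plays no role in the order-of-vanishing calculation since it is independent of $\j$, contributing only a fixed additive constant to $\frP$-adic valuations already absorbed into the statement of Theorem \ref{thm:SelFbound}.
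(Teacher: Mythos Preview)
Your approach is essentially the same as the paper's: descend Theorem \ref{thm:SelFbound} to finite level via derived base change, identify $\Ht^1$ and $\Ht^2$ at finite level with the $\varpi$-torsion and $\varpi$-cotorsion of the Iwasawa-level $\Ht^2$, and read off the bounds from the characteristic-ideal divisibility. Your explicit invocation of $H^3(C)=0$ via Nekov\'a\v{r} duality is exactly what the paper uses implicitly when it says $\Ht^2$ at finite level is the maximal $\tau$-quotient of the Iwasawa $\Ht^2$.

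There is one small gap. Your justification that the Euler characteristic of $\cO\otimes^{\mathbf{L}} C$ vanishes is ``inherited from $C$, all of whose Iwasawa-level cohomology is $\Lambda_\Gamma$-torsion''; but you only established that torsion property after assuming $e_\eta L_p\ne 0$. The rank equality in (i) is asserted unconditionally, so it must also hold when $e_\eta L_p(f,g,1+\j)=0$ identically. The paper handles this by computing $\chi(\TT,\Delta^{(f)})=0$ directly from Tate's local and global Euler-characteristic formulae (as in the appendix, $\chi=\Rank_{\cO}T^{(c=-1)}-\Rank_{\cO}T^-=2-2=0$), which holds without any hypothesis on the $p$-adic $L$-function; the rank equality then follows from $\Ht^0=0$ regardless of whether Theorem \ref{thm:SelFbound} is available. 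Once you replace your torsion-based justification with this direct computation, the argument is complete.
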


  \begin{proof}
   We will derive this from \ref{thm:SelFbound} via descent. An Euler characteristic computation (together with the vanishing of $\Ht^0$ by $\Hyp(BI)$) shows that $\Ht^1(\ZZ[1/S], T(\tau^{-1}); \Delta^{(f)})$ and $\Ht^2(\ZZ[1/S], T(\tau^{-1}); \Delta^{(f)})$ have the same rank, and $\Hyp(BI)$ and $\Hyp(NEZ)$ together also imply that $H^1(\ZZ[1/S], T(\tau^{-1}))$ is a free $\cO$-module and $\Ht^1(\ZZ[1/S], T(\tau^{-1}); \Delta^{(f)})$ injects into it.

   Let $\eta$ be the restriction of $\tau$ to $\Gamma_{\mathrm{tors}}$. If $e_\eta L_p(f, g, 1 + \j) = 0$, then the order of vanishing on the right-hand side of (i) is infinite, and there is nothing to prove. Otherwise, Theorem \ref{thm:SelFbound} and the compatibility of Selmer complexes with derived base-change combine to show that $\Ht^1$ and $\Ht^2$ are, respectively, the maximal submodule and maximal quotient of the torsion module $\Ht^2(\ZZ[1/S], T \otimes \Lambda_{\Gamma}(-\j))$ on which $\Gamma$ acts via $\tau$. So the $\cO$-ranks of these modules are both equal to the order of vanishing of $\Char_{\Lambda} e_\eta \Ht^2(\ZZ[1/S], T \otimes \Lambda_{\Gamma}(-\j))$ at $\j = \tau$, and this characteristic ideal divides $e_\eta L_p(f, g, 1 + \j)$.

   This gives (i) immediately, and (ii) follows from the compatibility of the ratio of characteristic ideals with derived base-change, as in Theorem \ref{thm:Sel0bound-finite}.
  \end{proof}

 \subsection{Bounds for Bloch--Kato Selmer groups}

  In order to link this with more classical results, we need to show that the $\Ht^2$ groups appearing in Theorem \ref{thm:SelFbound-finite} are related to the Bloch--Kato Selmer groups.

  We assume (without loss of generality) that the weights $r, r'$ of $f$ and $g$ satisfy $r > r'$. (In fact the results of this section will be vacuous if $r = r'$, and the case $r < r'$ follows by interchanging the roles of $f$ and $g$.) Thus $\Hyp(NEZ)$ is automatically satisfied.

  \begin{definition}
   We shall say a character $\tau$ is \emph{critical} if it is of the form $j + \chi$ (i.e., $z \mapsto z^j \chi(z)$), where $j$ is an integer with $r' - 1 \le j \le r - 2$ and $\chi$ is of finite order.
  \end{definition}

  These are exactly the characters for which $T(\tau^{-1})$ is the \'etale realisation of a critical motive. If $\tau$ is critical, then $T(\tau^{-1})$ is de Rham at $p$, so we may consider the Bloch--Kato Selmer structure $\Delta^{\mathrm{BK}}$ on $T(\tau^{-1})$.

  \begin{proposition}
   If $\tau$ is critical, and $\Hyp(BI)$ holds, there are natural maps
   \[ \Ht^i(\ZZ[1/S], T(\tau^{-1}); \Delta^{(f)}) \to \Ht^i(\ZZ[1/S], T(\tau^{-1}); \Delta^{\mathrm{BK}}) \]
   for $i = 1,2$, and these maps have finite kernels and cokernels.
  \end{proposition}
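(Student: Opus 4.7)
The plan is to compare the two Selmer structures place by place, using the local-global formalism of Selmer complexes. Both $\Delta^{(f)}$ and $\Delta^{\mathrm{BK}}$ are \emph{simple} Selmer structures, so each is determined at every $v\in S$ by a subspace of $H^1(K_v, T(\tau^{-1}))$; by the explicit description of $\Ht^i$ (Proposition \ref{prop:nekovar-duality} and the proposition just after it), it will suffice to check that at every $v\in S$ the natural comparison of these two local subspaces has finite kernel and cokernel, since this then propagates to both $\Ht^1$ and $\Ht^2$ through the long exact sequence coming from the morphism of Selmer structures.

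At each $v\in S$ with $v\nmid p$, the condition $\Delta^{(f)}_v$ is by construction the unramified one, namely the image of $H^1(\FF_v, T(\tau^{-1})^{I_v})$ in $H^1(K_v, T(\tau^{-1}))$, whereas $\Delta^{\mathrm{BK}}_v$ is the preimage of the Bloch--Kato subspace $H^1_f(K_v, V(\tau^{-1}))=H^1_{\mathrm{ur}}(K_v, V(\tau^{-1}))$ in $H^1(K_v, T(\tau^{-1}))$, with $V=T\otimes_{\cO}\Lp$. These two subspaces coincide after inverting $p$, so their integral comparison map is injective on the rational part and has kernel and cokernel of bounded $p$-power exponent; finite-generation over $\cO$ of the relevant cohomology groups then forces these to be finite (this is the standard integral comparison, as in Lemma I.3.5 of \cite{Rubin-Euler-systems}).

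The main obstacle is the place $v=p$, where the Greenberg condition attached to $T^{(f)}=\sF^+M_{\cO}(f)^*\otimes_{\cO} M_{\cO}(g)^*$ must be matched against $H^1_f(\Qp, V(\tau^{-1}))$. Here the key input is a Hodge--Tate weight computation: by Theorem \ref{thm:ordinaryfiltration}(iv), the weights of $T^{(f)}(\tau^{-1})$ are $\{r-1-j,\, r+r'-2-j\}$ while those of $(T/T^{(f)})(\tau^{-1})$ are $\{-j,\, r'-1-j\}$. Under the critical-range hypothesis $r'-1\le j\le r-2$ (with the running assumption $r>r'$), the first set consists of strictly positive integers and the second of non-positive ones. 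This is exactly the ``ordinary and critical'' configuration in which the Greenberg local condition is known to match $H^1_f$ up to finite index (as developed for ordinary $p$-adic representations by Flach, Greenberg, and Nekov\'a\v{r}; see in particular Proposition 12.5.9.2 of \cite{Nekovar-Selmer-complexes}): the image of $H^1(\Qp,T^{(f)}(\tau^{-1}))\to H^1(\Qp, T(\tau^{-1}))$ lands in $H^1_f$, and the kernel of the induced map onto $H^1_f(\Qp, T(\tau^{-1}))$ is controlled by $H^0(\Qp, (T/T^{(f)})(\tau^{-1}))$, while its cokernel is controlled by the quotient $H^1_f/H^1(\Qp, (T/T^{(f)})(\tau^{-1}))^{\mathrm{ur}}$. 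Both are finite: the Frobenius eigenvalues on $(T/T^{(f)})(\tau^{-1})$ are $\alpha_f\alpha_g\, p^{-j}$ and $\alpha_f\beta_g\, p^{-j}$ up to units, and neither can equal $1$ (the first by weight reasons, the second because $\Hyp(NEZ)$ is automatic once $r>r'$, as noted at the start of this subsection). I expect that verifying this $p$-adic comparison carefully — in particular tracking the behaviour of integral lattices and not only the $\Lp$-rational subspaces — will be the technically most delicate point of the proof; once it is established, the global statement is a formal consequence of the long exact sequence in which every local error term is finite.
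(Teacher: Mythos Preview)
Your overall strategy matches the paper's: compare the two Selmer structures place by place via the exact triangle \eqref{eq:exacttri}, with the key input at $p$ being the Hodge--Tate weight separation you correctly compute. However, there is a genuine error in your framing: the Greenberg condition $\Delta^{(f)}$ is \emph{not} a simple Selmer structure. At $v=p$ one has $(U_p^{(f)})^+ = C^\bullet(\Qp, T^{(f)}(\tau^{-1}))$, so $H^2((U_p^{(f)})^+) = H^2(\Qp, T^{(f)}(\tau^{-1}))$, which has no reason to vanish (and the map $H^0((U_p^{(f)})^+)\to H^0(\Qp,T(\tau^{-1}))$ has no reason to be surjective). Hence you cannot reduce the comparison to $H^1$-subspaces alone; you must check that the induced maps $H^i((U_v^{(f)})^+)\to H^i((U_v^{\mathrm{BK}})^+)$ have finite kernel and cokernel for \emph{all} $i=0,1,2$, exactly as the paper does. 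The missing piece in your argument is the degree-$2$ check at $p$: the target is zero (since $\Delta^{\mathrm{BK}}$ is simple) while the source $H^2(\Qp,T^{(f)}(\tau^{-1}))$ is finite because $H^2(\Qp,V^{(f)}(\tau^{-1}))=0$.

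Your description of the degree-$1$ comparison at $p$ is also slightly garbled. The clean statement, which the paper cites from \cite[Lemma 4.1.7]{FukayaKato-formulation-conjectures}, is that under your Hodge--Tate weight conditions together with $H^0(\Qp,V/V^{(f)})=0$ and $H^2(\Qp,V^{(f)})=0$, one has an \emph{equality} $H^1(\Qp,V^{(f)}(\tau^{-1}))=H^1_{\mathrm{f}}(\Qp,V(\tau^{-1}))$; the integral statement then follows immediately since both sides are finitely generated $\cO$-lattices spanning the same $\Lp$-vector space. No analysis of ``$H^1_f/H^1(\Qp,(T/T^{(f)})(\tau^{-1}))^{\mathrm{ur}}$'' is needed (and that expression does not obviously make sense as written). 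Note also that Hyp(NEZ) plays no role here; the relevant vanishing of $H^0$ and $H^2$ on the rational level comes from weight considerations alone.
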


  \begin{proof}
   Via the exact triangle \eqref{eq:exacttri}, it suffices to check that there are morphisms of complexes $(U_v^{(f)})^+ \to (U_v^{\mathrm{BK}})^+$ for each prime $v$, and the induced maps on $H^i$ have finite kernels and cokernels for $i = 0, 1, 2$.

   The map on $H^0$ is trivially an isomorphism as its source and target are both zero. This is also true for $H^2$ except for $v = p$, in which case the target is zero and the source is $H^2(\Qp, T^{(f)})$, which is finite. This leaves the case $i = 1$. For $v \ne p$ this map is the inclusion of the unramified cohomology in its saturation, so it is injective with finite cokernel.

   It remains to prove that the image of the injection $H^1(\Qp, T^{(f)}) \into H^1(\Qp, T)$ is a finite-index submodule of $H^1_\mathrm{f}(\Qp, T)$, or equivalently that $H^1(\Qp, V^{(f)}) = H^1_{\mathrm{f}}(\Qp, V)$ where $V = T[1/p]$. Since we have $H^0(\Qp, V / V^{(f)}) = H^2(\Qp, V^{(f)}) = 0$, and $V^{(f)}$ has Hodge--Tate weights $\ge 1$ while $V / V^{(f)}$ has Hodge--Tate weights $\le 0$, this holds by \cite[Lemma 4.1.7]{FukayaKato-formulation-conjectures}.
  \end{proof}

  \begin{theorem}
   Let $\tau$ be a critical character, and suppose that $\Hyp(BI)$ holds. If $L(f, g, \chi^{-1}, 1 + j)$ is non-zero, then the group
   \[ \Ht^2(\cO_{K, S}, M_{\cO}(f \otimes g)(\tau^{-1}); \Delta^{\mathrm{BK}}) \]
   is finite.
  \end{theorem}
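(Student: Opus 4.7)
The plan combines three results established earlier in the paper: the preceding proposition, which compares the Bloch--Kato and Greenberg-style ($\Delta^{(f)}$) Selmer complexes for $T = M_{\cO}(f\otimes g)^*$ up to finite kernel and cokernel in each degree; Theorem \ref{thm:SelFbound-finite}(ii), which bounds the Greenberg $\Ht^2$ of $T$ in terms of $p$-adic $L$-values; and Hida's interpolation formula (Theorem \ref{thm:hida}), which expresses $L_p(f,g,s+\chi)$ as an explicit non-zero ratio of Euler factors times the complex value $L(f,g,\chi^{-1},s)$. A preliminary duality step is needed to pass between the representation $M_{\cO}(f\otimes g)$ appearing in the statement and its Kummer dual $T$, for which the preceding results are formulated.

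First I would invoke Nekov\'a\v{r}'s global duality (Proposition \ref{prop:nekovar-duality}), together with self-orthogonality of the Bloch--Kato local conditions under local Tate duality, to identify the Pontryagin dual of $\Ht^2(\cO_{K,S}, M_{\cO}(f\otimes g)(\tau^{-1}); \Delta^{\mathrm{BK}})$ with $\Ht^1(\cO_{K,S}, T(\tau'); \Delta^{\mathrm{BK}})$, where $\tau'$ is the critical character for $T$ obtained by twisting $\tau$ by the cyclotomic character. The preceding proposition, applied to this new critical character $\tau'$, then exchanges the Bloch--Kato condition for the Greenberg condition $\Delta^{(f)}$ up to finite error; it therefore suffices to establish finiteness of $\Ht^1(\ZZ[1/S], T(\tau'); \Delta^{(f)})$.

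This finiteness is delivered by Theorem \ref{thm:SelFbound-finite}(ii) as soon as $L_p(f,g, 1 + \tau')$ is non-zero at the corresponding specialisation. Hida's formula identifies this $p$-adic value, up to an explicit non-zero multiplier, with $L(f,g,\chi^{-1}, 1 + j)$; so it is enough to check that each factor in the Euler correction is non-zero. The archimedean ratio of $\Gamma$-factors, $\pi$-powers and $\langle f, f\rangle$ is clearly non-zero; $\cE(f) = 1 - \beta_f/(p\alpha_f)$ and $\cE^*(f) = 1 - \beta_f/\alpha_f$ are non-zero by ordinarity and $p$-distinguishedness of $f$; and the modified factor $\cE(f, g, s + \chi)$ is a non-zero Gauss-sum multiple when $\chi$ is ramified at $p$, while for $\chi$ trivial it equals $\prod_{\star}(1 - \star/p^s)$ for $\star \in \{\alpha_f\alpha_g,\alpha_f\beta_g,\beta_f\alpha_g,\beta_f\beta_g\}$. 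None of these products is a power of $p$: the cross terms by $\Hyp(NEZ)$ (automatic under the running assumption $r > r'$), and the extreme terms by a weight-archimedean comparison with their complex conjugates.

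The main obstacle is bookkeeping: ensuring that criticality of $\tau$ transfers correctly to $\tau'$ under Nekov\'a\v{r}'s duality, and that the integral versions of the Bloch--Kato local conditions behave well under this duality at the prime $p$, so that the conversion between $M_{\cO}(f\otimes g)$-coefficients and $T$-coefficients introduces at worst finite error terms. No fundamentally new ingredient beyond what has already been assembled in the preceding sections is required.
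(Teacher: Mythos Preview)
Your core strategy is exactly the paper's: compare $\Delta^{\mathrm{BK}}$ with $\Delta^{(f)}$ via the preceding proposition, invoke Theorem \ref{thm:SelFbound-finite}(ii), and use Hida's interpolation formula together with $\Hyp(NEZ)$ (automatic since $r>r'$) to see that the relevant $p$-adic $L$-value is a nonzero multiple of $L(f,g,\chi^{-1},1+j)$. The paper's proof is essentially those three sentences.

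The initial duality step you introduce, however, is both unnecessary and not correct as written. The display in the statement should be read as $\Ht^2(\ZZ[1/S], T(\tau^{-1}); \Delta^{\mathrm{BK}})$ with $T = M_{\cO}(f\otimes g)^*$; indeed the paper never defines an integral lattice $M_{\cO}(f\otimes g)$ without the star, the preceding proposition is stated for $T(\tau^{-1})$, and the paper's proof applies that proposition directly to the group in question. So no passage to a Kummer-dual representation is needed. If you did try to run your duality argument, note that Proposition \ref{prop:nekovar-duality} (and Nekov\'a\v{r}'s derived duality) relates $\Ht^2$ of a compact module $M$ to $\Ht^1$ of the \emph{discrete} module $M^\vee(1)=\Hom(M,\Qp/\Zp)(1)$, not to $\Ht^1$ of another compact lattice; your identification of the dual with ``$T(\tau')$'' conflates the Pontryagin dual with the $\cO$-linear dual. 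You could repair this by working with $T(\tau')\otimes_{\cO} E/\cO$ and then invoking cofiniteness criteria, but that detour is pointless once you recognise that the statement is already about $T(\tau^{-1})$.
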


  \begin{proof}
   The preceding proposition shows that $\Ht^2(\cO_{K, S}, M_{\cO}(f \otimes g)(\tau^{-1}); \Delta^{\mathrm{BK}})$ is finite if and only if $\Ht^2(\cO_{K, S}, M_{\cO}(f \otimes g)(\tau^{-1}); \Delta^{(f)})$ is finite, and the interpolating property of the $L$-function in Theorem \ref{thm:hida} (together with $\Hyp(NEZ)$, which is automatic, as we have noted) shows that the $p$-adic $L$-value $L_p(f, g, 1 + \tau)$ is $L(f, g, \chi^{-1}, 1 + j)$ multiplied by a non-zero factor. So this statement follows directly from Theorem \ref{thm:SelFbound-finite}.
  \end{proof}

  A special case of this statement is the following theorem, which extends
  results by Bertolini--Darmon--Rotger \cite{BDR-BeilinsonFlach2}.

  \begin{theorem}
   \label{thm:sha}
   Let $E / \QQ$ be an elliptic curve without complex multiplication, and $\rho$ a 2-dimensional odd irreducible Artin representation of $G_{\QQ}$ (with values in some finite extension $L/\QQ$). Let $\frP$ be a prime of $L$ above some rational prime $p$. Suppose that the following technical hypotheses are satisfied:
   \begin{enumerate}[(i)]
    \item The conductors $N_E$ and $N_\rho$ are coprime;
    \item $p \ge 5$;
    \item $p \nmid N_E N_\rho$;
    \item the map $G_\QQ \to \operatorname{Aut}_{\Zp}(T_p E)$ is surjective;
    \item $E$ is ordinary at $p$;
    \item $\rho(\operatorname{Frob}_p)$ has distinct eigenvalues modulo $\frP$.
   \end{enumerate}
   If $L(E, \rho, 1) \ne 0$, then the group
   \[ \Hom_{\Zp[\Gal(F / \QQ)]}(\rho, \operatorname{Sel}_{p^\infty}(E / F))\]
   (where $F$ is the splitting field of $\rho$) is finite.
  \end{theorem}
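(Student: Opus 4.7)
The plan is to apply Theorem \ref{thm:SelFbound-finite}(ii) to the pair $(f,g)$ where $f$ is the weight $2$ newform associated to $E$ by modularity and $g$ is an ordinary $p$-stabilisation of the weight $1$ newform $g_0$ attached to $\rho$ by Deligne--Serre, specialising the cyclotomic parameter at the trivial character ($j = 0$), and then to translate the resulting finiteness of a Selmer complex into the desired statement about the $\rho$-isotypic Selmer group.

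First I would set up the input data. The form $f$ has level $N_E$ and trivial character, and $g_0$ has level $N_\rho$ and character $\det \rho$. Hypothesis (vi) says that the Hecke polynomial of $g_0$ at $p$ has two distinct roots modulo $\frP$; pick either root $\alpha_g$ (both are units) to get an ordinary $p$-stabilisation $g$, and let $\bff, \bfg$ be the Hida families containing $f, g$. Irreducibility of $\rho$ makes $\bfg$ non-Eisenstein modulo $\frP$, and (vi) gives $p$-distinguishedness; the surjectivity in (iv) makes $\bff$ non-Eisenstein and $p$-distinguished. Hypothesis $\Hyp(NEZ)$ is automatic because the weights are unequal, and $\Hyp(BI)$ holds by the remark following its definition in \S\ref{sect:hypotheses} (the third bullet, covering weight $2$ non-CM times weight $1$ with coprime conductors), using (i) and (iv).

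With these checks in place I would invoke Theorem \ref{thm:SelFbound-finite}(ii) with $\tau = 1$. This is the unique critical character since $r' - 1 = 0 = r - 2$. The extension of the interpolation formula in Theorem \ref{thm:hida2} to weight $1$ specialisations of $\bfg$ shows that $L_p(f, g, 1)$ equals $L(f, g_0, 1)$ up to an explicit non-zero archimedean factor and up to the Euler factors $\cE(f), \cE^*(f), \cE(f, g_0, 1)$, all of which are non-zero under our hypotheses (here the coprimality of $N_E$ and $N_\rho$ ensures the Rankin--Selberg $L$-series of $(f, g_0)$ and the imprimitive Rankin $L$-series coincide, and the Euler factors at $p$ are non-vanishing because $\alpha_f$ is a $p$-adic unit while $\rho$ has finite-order Frobenius eigenvalues). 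Thus $L(E, \rho, 1) \ne 0$ forces $L_p(f,g,1) \neq 0$, so $\Ht^2(\ZZ[1/S], T; \Delta^{(f)})$ is finite, and by the proposition preceding the theorem the same holds for the Bloch--Kato Selmer complex $\Ht^2(\ZZ[1/S], T; \Delta^{\mathrm{BK}})$.

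It remains to translate this finiteness into finiteness of $\Hom_{\Zp[\Gal(F/\QQ)]}(\rho, \operatorname{Sel}_{p^\infty}(E/F))$. By Nekov\'ar's global duality (Proposition \ref{prop:nekovar-duality}), the finiteness of $\Ht^2(\ZZ[1/S], T; \Delta^{\mathrm{BK}})$ is equivalent to finiteness of the Bloch--Kato Selmer group $H^1_{\mathrm{f}}(\QQ, W)$, where $W = T^\vee(1) \otimes \Qp/\Zp$. Using the Weil pairing on $E$ and the standard isomorphism $M_{\Lp}(g_0)^\vee(1) \cong M_{\Lp}(g_0)$ up to a twist by $\det\rho^{-1}$, one identifies $T^\vee(1) \otimes \Lp$ with $V_p(E) \otimes \rho^\vee$ (absorbing the determinant twist into $\rho^\vee$). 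Finally, Shapiro's lemma gives
\[
H^1_{\mathrm{f}}(\QQ, V_p(E) \otimes \rho^\vee \otimes \Qp/\Zp) \;\cong\; \Hom_{\Zp[\Gal(F/\QQ)]}\!\left(\rho,\, \operatorname{Sel}_{p^\infty}(E/F)\right),
\]
compatibly with local conditions. The hard part will be this last identification of local conditions: one must verify that the Bloch--Kato subspace $H^1_{\mathrm{f}}(\Qell, V_p(E) \otimes \rho^\vee)$ at primes $\ell \mid N_\rho$ (ramified in $F$ but of good reduction for $E$) matches the classical Selmer condition on the Kummer image from $E(F_w)$ summed over places $w \mid \ell$, and likewise at primes $\ell \mid N_E$ (of bad reduction for $E$ but unramified for $\rho$). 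Both comparisons follow from standard facts about unramified cohomology of tame Artin representations and the Tate module of an abelian variety, together with the Mordell--Weil theorem to control contributions from $H^0$ terms; none of these steps are deep, but they require careful bookkeeping with the coefficient ring $\cO = \cO_{L,\frP}$ and the $\Gal(F/\QQ)$-action.
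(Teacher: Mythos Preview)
Your proposal is correct and follows essentially the same route as the paper: set up $(f,g)$ from $(E,\rho)$, verify the running hypotheses and $\Hyp(BI)$, use the non-vanishing of $L(E,\rho,1)$ to get finiteness of $\Ht^2(\ZZ[1/S],T;\Delta^{\mathrm{BK}})$, and then dualise and descend to the $\rho$-isotypic Selmer group over $F$. Two minor remarks: what you call ``Shapiro's lemma'' is really the inflation--restriction argument (the paper phrases it as taking $H$-invariants of $\operatorname{Sel}_{p^\infty}(E/F)\otimes\rho^\vee$ and noting that restriction from $\QQ$ to $F$ has finite kernel and cokernel since $H$ is finite); and the local-conditions comparison you flag as ``the hard part'' is in fact standard---the paper simply cites the identification $\operatorname{Sel}_{p^\infty}(E/F)=\Ht^1(\cO_{F,S},E_{p^\infty};(\Delta^{\mathrm{BK}})^\vee)$ as a well-known fact, so there is no genuine obstacle there.
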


  \begin{proof}
   It is a standard result that
   \[ \operatorname{Sel}_{p^\infty}(E / F) = \Ht^1\left(\cO_{F, S}, E_{p^\infty}; (\Delta^{\mathrm{BK}})^\vee\right) \]
   where $(\Delta^{\mathrm{BK}})^\vee$ is the orthogonal complement of the Bloch--Kato Selmer structure for $T_p(E)$. So, if $\rho^*$ is the contragredient of $\rho$, and $H = \Gal(F/\QQ)$, we can interpret
   $\Hom_{\Zp[H]}(\rho, \operatorname{Sel}_{p^\infty}(E / F))$ as the $H$-invariant classes in $\Ht^1(\cO_{F, S}, E_{p^\infty}; (\Delta^{\mathrm{BK}})^\vee) \otimes \rho^* = \Ht^1(\cO_{F, S}, E_{p^\infty} \otimes \rho^*; (\Delta^{\mathrm{BK}})^\vee)$. Since $H$ is finite, the restriction map
   \[ \Ht^1(\ZZ[1/S], E_{p^\infty} \otimes \rho^*; (\Delta^{\mathrm{BK}})^\vee) \to \Ht^1(\cO_{F, S}, E_{p^\infty} \otimes \rho^*; (\Delta^{\mathrm{BK}})^\vee)^H \]
   has finite kernel and cokernel (in fact the kernel is even trivial, because of the ``big image'' hypothesis (iv)). So it suffices to show that the former group is finite.

   One knows that both $E$  and $\rho^*$  are modular, associated to modular forms $f$ of weight $2$ (\cite{BCDT}) and $g$ of weight 1 (\cite{KhareWintenberger}). Then we have an isomorphism of $G_{\QQ}$-representations $E_{p^\infty} \otimes \rho^* \cong T^\vee(1)$, where $T = M_{\cO}(f \otimes g)^*$.

   By Nekov\v{a}\'r duality, we have
   \[ \Ht^1(\ZZ[1/S], T^\vee(1); (\Delta^{\mathrm{BK}})^\vee) = \Ht^2(\ZZ[1/S], T; \Delta^{\mathrm{BK}})^\vee. \]
   The hypotheses we have imposed imply that the preceding theorem applies to $f$ and $g$; the hypotheses (i) and (iv) are sufficient to imply $\Hyp(BI)$ (by the results of \cite{Loeffler-big-image}) and the final hypothesis (vi) implies that $g_\rho$ is $p$-distinguished. So $\Ht^2(\ZZ[1/S], T; \Delta^{\mathrm{BK}})$ is finite, as required.
  \end{proof}

  \begin{remark}\mbox{~}
   \begin{enumerate}[(i)]
    \item Because of the fundamental exact sequence
    \[ 0 \rTo E(F) \otimes \Qp/\Zp \rTo \operatorname{Sel}_{p^\infty}(E/F) \rTo \Sha_{p^\infty}(E / F) \rTo 0, \]
    where $\Sha_{p^\infty}(E / F)$ is the $p$-part of the Tate--Shafarevich group, $\Hom_{H}(\rho, \operatorname{Sel}_{p^\infty}(E/F))$ is finite if and only if the same is true of the $\rho$-parts of both $E(F)$ and $\Sha_{p^\infty}(E / F)$. The finiteness of the $\rho$-part of $E(F)$ in this setting has been shown by Bertolini--Darmon--Rotger \cite{BDR-BeilinsonFlach2}. Our theorem extends this to obtain a finiteness result of the $\rho$-part of the Tate--Shafarevich group for a large set of primes $p$.

    \item Although we do not give the details here, one can check that if the hypotheses of the theorem are satisfied, then after possibly throwing away an additional density-0 set of ``anomalous'' primes (those at which the Euler factor $\cE(f, g, 1)$ fails to be a $p$-adic unit), the $\rho$-part of $\operatorname{Sel}_{p^\infty}(E / F)$ is not only finite but trivial.

    \item A result of this type is already known if $\rho$ is one-dimensional (using Kato's Euler system, \cite[Corollary 14.3]{Kato-p-adic}); or if $\rho$ is induced from a ring class character of an imaginary quadratic field (using an Euler system constructed from Heegner points, \cite[Theorem 1.2]{LongoVigni}, building on earlier work of Bertolini--Darmon \cite{BertoliniDarmon-anticyclotomic}).

    \item Since the image of $\rho$ is finite, for all but finitely many $\frP$ the assumption (vi) is equivalent to requiring that $\rho(\operatorname{Frob}_p)$ is non-scalar, or equivalently that $p$ does not split in the Galois extension of $\QQ$ cut out by the projective representation $\bar\rho$. The ordinary primes for $E$ have density 1, and the other hypotheses each rule out only finitely many primes $p$; so the set of primes $p$ to which the theorem applies has density $1 - \tfrac{1}{N}$, where $N$ is the size of the image of $\bar\rho$.
   \end{enumerate}
  \end{remark}


\section{Appendix: Kolyvagin systems with Greenberg local conditions}

 In this section we extend some of the results of \cite{MazurRubin-Kolyvagin-systems} to study Euler systems and Kolyvagin systems with a non-trivial local condition at $p$.

 \subsection{Setup}

  In this section, we fix a prime $p \ge 5$, a finite extension $E / \Qp$ with ring of integers $\cO$ and residue field $\FF$, and a finite-rank free $\cO$-module $T$ with a continuous action of $G_{\QQ}$, unramified outside some finite set $S$ (which we shall assume contains $p$ and $\infty$). We assume the following condition:

  \begin{hypothesis}
   [``Greenberg condition''] There exists a saturated $\cO$-submodule $T^+ \subseteq T$ which is stable under the action of $G_{\Qp}$.
  \end{hypothesis}

  We fix such a submodule $T^+$, and write $T^- = T / T^+$. Let $\TT = T \otimes_{\cO} \Lambda(-\j)$, where $\Lambda = \cO[[\Gamma_1]]$ and $\j$ is the canonical character, and similarly for $\TT^+$ and $\TT^-$. Let $\mathfrak{m}$ be the maximal ideal of $\Lambda$.

  \begin{definition}
   Let $\Delta^+$ be the Selmer structure on $\TT$ for which $\Delta_v^+$ is the unramified local condition for $v \ne p$, and is the Greenberg local condition associated to $\TT^+ \into \TT$ for $v = p$.
  \end{definition}

  We use the same symbol also for the corresponding local condition for $T$ rather than $\TT$, or for $T$ over any finite extension of $\QQ$.

  For technical reasons we will also need the following alternative Selmer structures. Recall that a \emph{simple} Selmer structure on a module $M$ (over $\QQ$) is determined by the data of a subspace $H^1_{\Delta}(\QQ_v, M) \subseteq H^1(\QQ_v, M)$ for every $v \in S$.

  \begin{definition}
   Let $I$ be any ideal of $\Lambda$. We define simple Selmer structures $\Delta_{\Lambda}$ and $\Delta^+_{\Lambda}$ on $\TT / I \TT$ (over $\QQ$) as follows.
   \begin{itemize}
    \item For $v \ne p$, the local condition at $v$ is given by the image of $H^1(\QQ_v, \TT) \to H^1(\QQ_v, \TT / I \TT)$.
    \item The local condition at $p$ is given by the image of $H^1(\QQ_v, \TT) \to H^1(\QQ_v, \TT / I \TT)$ for $\Delta_{\Lambda}$, and the image of $H^1(\QQ_v, \TT^+) \to H^1(\QQ_v, \TT / I \TT)$ for $\Delta^+_\Lambda$.
   \end{itemize}
  \end{definition}

  \begin{remark}
   Note that our notion of a ``simple Selmer structure'' coincides with the non-derived approach to Selmer structures followed in \cite{MazurRubin-Kolyvagin-systems}, and $\Delta_\Lambda$ is exactly the Selmer structure denoted $\cF_\Lambda$ in \S 5.3 of \emph{op.cit.}.
  \end{remark}

 \subsection{Euler systems and Kolyvagin systems}

  As in Definition 3.2.2 of \cite{MazurRubin-Kolyvagin-systems}, let $\cP$ be a set of primes with $S \cap \cP = \varnothing$, and $\mathcal{K}$ an abelian extension of $\QQ$ containing the maximal abelian $p$-extension unramified outside $\cP \cup \{ p\}$. (Thus, in particular, $\mathcal{K}$ contains the cyclotomic $\Zp$-extension $\QQ_\infty$ of $\QQ$.)

  \begin{definition}
   An \emph{Euler system with local condition $T^+$} is an Euler system $\mathbf{c} = \{ c_F : \QQ \subseteq_\mathrm{f} F \subset \mathcal{K}\}$ for $(T, \mathcal{K}, \mathcal{P})$ with the property that for every $K$ we have
   \[ c_F \in \operatorname{image} \left( \Ht^1(\cO_{F, S}, T; \Delta^+) \to H^1(\cO_{F, S}, T) \right).\]
   We denote the module of such systems by $\operatorname{ES}(T, \mathcal{K}, \mathcal{P}; \Delta^+)$.
  \end{definition}

  \begin{remark}
   The map $\Ht^1(\cO_{F, S}, T; \Delta^+) \to H^1(\cO_{F, S}, T)$ may not be injective, but the map
   \[ \varprojlim_n \Ht^1(\cO_{F_n, S}, T; \Delta^+) \to \varprojlim_n H^1(\cO_{F_n, S}, T)\]
   is always injective, where $F_n$ is the $n$-th layer of the cyclotomic $\Zp$-extension $F\QQ_{\infty} / F$. Thus for each $F$ we have a \emph{distinguished} lifting $\tilde c_F$ of $c_F$ to $\Ht^1(\cO_{F, S}, T; \Delta^+)$, and these $\tilde c_F$ satisfy the Euler system compatibility relation.
  \end{remark}

  We also have the notion of a \emph{Kolyvagin system}. We shall not define a Kolyvagin system for $\TT$ as such, but rather an element of the module of \emph{generalised Kolyvagin systems} (cf.\ Definition 3.1.6 of \cite{MazurRubin-Kolyvagin-systems}),
  \[ \overline{\operatorname{KS}}(\TT, \Delta_\Lambda, \mathcal{P}) \coloneqq \varprojlim_k \varinjlim_j \operatorname{KS}(\TT / \mathfrak{m}^k \TT, \Delta_\Lambda, \mathcal{P} \cap \mathcal{P}_j), \]
  where the $\mathcal{P}_j$ are certain sets of primes defined as in \emph{op.cit.}. An element of this module is a collection of classes
  \[ \boldsymbol{\kappa} = (\kappa_{n, k}) \in \Ht^1(\QQ, \mathbf{T} / \mathfrak{m}^k \mathbf{T}; \Delta_\Lambda) \otimes G_n, \]
  where $n$ is a square-free product of primes lying in the set subset $\mathcal{P} \cap \mathcal{P}_{j(k)}$ (with $j(k) \ge k$ an integer depending on $k$), and $G_n = \bigotimes_{\ell \mid n} \mathbf{F}_\ell^\times$.

  \begin{proposition}
   \label{prop:EStoKS}
   Suppose that the hypotheses of Appendix A of \cite{MazurRubin-Kolyvagin-systems} are satisfied. Then the canonical homomorphism
   \[ \operatorname{ES}(T, \mathcal{K}, \mathcal{P}) \to \overline{\operatorname{KS}}(\TT, \Delta_{\Lambda}, \mathcal{P})\]
   defined in Theorem 5.3.3 of \emph{op.cit.}~restricts to a map
   \[ \operatorname{ES}(T, T^+, \mathcal{K}, \mathcal{P}) \to \overline{\operatorname{KS}}(\TT, \Delta_{\Lambda}^+, \mathcal{P}).\]
  \end{proposition}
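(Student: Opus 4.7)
The plan is to prove the statement by tracing the Mazur--Rubin construction of \cite[Theorem 5.3.3]{MazurRubin-Kolyvagin-systems} step by step and checking that at every stage the local condition at $p$ given by $T^+$ is preserved. The key point is that the construction operates entirely through Galois action at primes $\ell \in \cP$ disjoint from $S$, and therefore commutes with local restriction at $p$.

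First I would use the distinguished lifts alluded to in the remark preceding the proposition: for an Euler system $\mathbf{c}$ with local condition $T^+$, each class $c_F$ has a canonical lifting $\tilde c_F \in \Ht^1(\cO_{F, S}, T; \Delta^+)$, and these lifts are automatically norm-compatible under corestriction along the cyclotomic tower (since the map $\varprojlim_n \Ht^1(\cO_{F_n, S}, T; \Delta^+) \to \varprojlim_n H^1(\cO_{F_n, S}, T)$ is injective). After passing to the Iwasawa limit, this produces lifted classes in the Iwasawa cohomology respecting the Greenberg condition at $p$, and these can then be fed into the Kolyvagin derivative construction in place of the original $c_F$.

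The construction of \cite[Theorem 5.3.3]{MazurRubin-Kolyvagin-systems} attaches to such data, for each squarefree $n$ with prime factors in $\cP \cap \cP_{j(k)}$, a class $\kappa_{n, k} \in H^1(\QQ, \TT / \mathfrak{m}^k \TT) \otimes G_n$ by applying a Kolyvagin derivative operator $D_n \in \cO[\Gal(F_n/\QQ)]$ (where $F_n$ is the appropriate finite subextension of $\mathcal{K}$ containing $\QQ(\mu_n)$), then using $\Gal(F_n/\QQ)$-invariance of $D_n \cdot \tilde c_{F_n}$ modulo $\mathfrak{m}^k$ to descend to a class over $\QQ$. Both the derivative operation and the descent commute with the local restriction map at any prime $v$ not lying above primes in $\cP$ — in particular at $p$, since $p \notin \cP$. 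Consequently, if the local component of $\tilde c_{F_n}$ at places above $p$ lies in the image of $H^1(F_n \otimes \Qp, \TT^+) \to H^1(F_n \otimes \Qp, \TT)$, the same is true for $D_n \cdot \tilde c_{F_n}$, and hence for its image $\kappa_{n, k}$ under the descent. At primes $v \in S \setminus \{p\}$, the local conditions defining $\Delta_\Lambda$ and $\Delta_\Lambda^+$ coincide, so no additional check is needed there; the classes $\kappa_{n, k}$ thus satisfy the local condition $\Delta_\Lambda^+$.

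The main technical obstacle will be making the compatibility in the previous paragraph precise: one has to check that the commutative diagram
\[
 \begin{diagram}[small,labelstyle=\scriptstyle]
  H^1(F_n \otimes \Qp, \TT^+ / \mathfrak{m}^k) & \rTo^{D_n} & H^1(F_n \otimes \Qp, \TT^+ / \mathfrak{m}^k) \\
  \dTo & & \dTo \\
  H^1(F_n \otimes \Qp, \TT / \mathfrak{m}^k) & \rTo^{D_n} & H^1(F_n \otimes \Qp, \TT / \mathfrak{m}^k)
 \end{diagram}
\]
is well-posed and that its interaction with the $\Gal(F_n/\QQ)$-coinvariants used in the descent is compatible with the corresponding local condition for the limit $\Lambda$-module $\TT^+$. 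Both of these are essentially formal consequences of the functoriality of Galois cohomology in the coefficient module $\TT^+ \hookrightarrow \TT$, together with the fact that the decomposition groups above $\cP$-primes act trivially on the relevant local conditions at $p$; once assembled, they yield the desired factorization of the Mazur--Rubin map through $\overline{\operatorname{KS}}(\TT, \Delta_\Lambda^+, \cP)$.
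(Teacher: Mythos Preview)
Your outline captures the correct intuition (the Kolyvagin-derivative machinery acts through primes in $\mathcal{P}$, which are disjoint from $p$, so the $p$-local condition ought to be preserved), and the claim that applying the group-ring element $D_n$ preserves the image of $H^1(-, T^+)$ is fine. But the step you flag as ``essentially formal'' --- the descent --- is precisely where the argument is not formal, and the paper treats this point with considerably more care.

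The descent step takes the $\Gal(F(n)/\QQ)$-invariant class $D_n \cdot c_{F(n)} \bmod M$ and produces $\kappa_{[F,n,M]}$ as its (unique) preimage under restriction. Knowing that $\loc_p$ of the class upstairs lies in the image of $H^1(F(n)\otimes\Qp, T^+/MT^+)$ does \emph{not} automatically imply the same for $\kappa_{[F,n,M]}$ downstairs: projecting to $T^-/MT^-$, one needs the restriction map $H^1(F\otimes\Qp, T^-/MT^-) \to H^1(F(n)\otimes\Qp, T^-/MT^-)$ to be injective, which requires $H^0(\Qp, T^- \otimes \FF) = 0$. The paper explicitly notes that under this extra hypothesis the result is already in \cite[Theorem B.1.4]{LLZ2}, but that in general one must instead open up Rubin's construction of $\kappa_{[F,n,M]}$ via the universal Euler system $X_{F(n)}$: the fact that $\loc_p c_{F(n)} \in H^1(F(n)\otimes\Qp, T^+)$ for \emph{every} $F$ gives a compatible system of $T^+$-valued maps $\mathbf{d}^+_{F,p}: X_{F(n)} \to \mathbb{W}_M^+/\Ind_{\mathcal{D}} W_M^+$, and comparing these with Rubin's global maps $\mathbf{d}_F$ (as in the proof of \cite[Theorem 4.5.1]{Rubin-Euler-systems}) shows directly that $\loc_p \kappa_{[F,n,M]}$ lies in the image of $H^1(F\otimes\Qp, T^+/MT^+)$.

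There is also a second, smaller point you do not address: the local condition $\Delta^+_\Lambda$ at $p$ is defined as the image of $H^1(\Qp, \TT^+)$ in $H^1(\Qp, \TT/\mathfrak{m}^k\TT)$, not the image of $H^1(\Qp, T^+/MT^+)$. The paper handles this by noting that for $M$ and $F$ chosen sufficiently large these two images coincide, using compactness and the finiteness of $\TT/\mathfrak{m}^k\TT$. Your proposal would need this step as well.
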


  \begin{proof}
   Let us recall how $\kappa_{n, k}$ is defined. One first constructs an auxilliary collection of classes $\kappa_{n, k}'$ which are a ``weak Kolyvagin system''. These are constructed as follows. In Chapter 4 of \cite{Rubin-Euler-systems}, one defines a collection of classes $\kappa_{[F, n, M]} \in H^1(F, T / M T) \otimes G_n$, for each power $M$ of $p$, each finite extension $F / \QQ$ contained in the cyclotomic $\Zp$-extension, and each square-free product $n$ of primes in some set $\mathcal{R}_{F, M}$.

   As shown in Appendix A of \cite{MazurRubin-Kolyvagin-systems}, for each $k$, one may find a field $F$, and a power $M$ of $p$, such that $\Lambda / \mathfrak{m}^k \Lambda$ is a quotient of $(\cO / M \cO)[\Gal(F / \QQ)]$, so we obtain a natural map
   \[ H^1(\cO_{F, S}, T / M T) = H^1\Big(\ZZ[1/S], \mathbf{T} \otimes_{\Lambda} (\cO / M \cO)[\Gal(F / \QQ)]\Big) \to H^1(\ZZ[1/S], \mathbf{T} / \mathfrak{m}^k  \mathbf{T}).\]
   For sufficiently large $j$ we have $\mathcal{P}_j \subseteq \mathcal{R}_{F, M}$, and one then defines $\kappa'_{n, k}$ to be the image of $\kappa_{[F, n, M]}$.

   We claim that the class $\loc_p \kappa_{[F, n, M]}$ lies in the image of
   \[ H^1(F \otimes \Qp, T^+ / M T^+) \otimes G_n \to H^1(F \otimes \Qp, T / M T) \otimes G_n.\]

   If we impose the assumption that $H^0(\Qp, T^- \otimes \FF) = 0$, where $\FF$ is the residue field of $\cO$, then this is exactly the result of \cite[Theorem B.1.4]{LLZ2}. In order to prove this result in the general case, we must delve further into the details of Rubin's construction of the classes $\kappa_{[F, n, M]}$. We adopt the notation of Chapter 4 of \cite{Rubin-Euler-systems}.

   As in Proposition 6.4.8 of \emph{op.cit.}, the fact that $\loc_p \mathbf{c}_{F(n)}$ lies in $H^1(F(n) \otimes \Qp, T^+)$, for \emph{every} finite extension $F / \QQ$ contained in the cyclotomic $\Zp$-extension $\QQ_\infty$, implies that there is a system of maps
   \[
    \mathbf{d}_{F, p}^{+}: X_{F(n)} \to
    \mathbb{W}_M^{+} / \Ind_{\mathcal{D}} W_M^{+}
   \]
   lifting $\mathbf{c}$, where $X_{F(n)}$ is the ``universal Euler system'', $W_M^{+} = T^{+}  / M T^{+}$, etc; and the collection of maps $\mathbf{d}_{F, p}^{+}$ for varying $F$ are determined uniquely up to an element of $\Hom_{\cO[G_{\QQ}]}(X_{F(n)}, \mathbb{W}_M^{+})$. Exactly as in the proof of Theorem 4.5.1 of \emph{op.cit.}, we deduce a compatibility between $\mathbf{d}_{F, p}^{+}$ and the corresponding global maps $\mathbf{d}_{F}$, from which it follows that $\kappa_{[F, n, M]}$ lies in the image of $H^1(F \otimes \Qp, W_M^{+}) \otimes G_n$.

   Using the compatibility of cohomology with inverse limits, and the fact that $\mathbf{T} / \mathfrak{m}^k \mathbf{T}$ is finite, it follows that if we choose $M$ and $F$ sufficiently large, the image of $H^1(F \otimes \Qp, T^{+} / M T^{+})$ in $H^1(\Qp, \mathbf{T} / \mathfrak{m}^k \mathbf{T})$ coincides with the image of $H^1(\Qp, \mathbf{T}^+)$. Hence the classes $\kappa'_{n, k}$ lie in the image of this module, whenever $n$ is a square-free product of primes in $\mathcal{P}_j$ for some sufficiently large $j$. This shows that
   \[ \kappa'_{n, k} \in \Ht^1(\ZZ[1/S], \TT / \mathfrak{m}^k \TT; \Delta^+_\Lambda).\]

   Having constructed the ``weak Kolyvagin system'' $\kappa'_{n, k}$, the classes $\kappa_{n, k}$ are defined as a $\Lambda$-linear combination of the classes $\kappa'_{d, k}$ for integers $d \mid n$, and since $\Ht^1(\ZZ[1/S], \TT / \mathfrak{m}^k \TT; \Delta^+_\Lambda)$ is a $\Lambda$-submodule of $H^1(\ZZ[1/S], \mathbf{T} / \mathfrak{m}^k \TT)$, it follows that the modified classes still lie in this submodule.
  \end{proof}

  \begin{remark}
   The fact that $\loc_p \kappa_{[F, n, M]}$ lies in the image of the cohomology of $T^+$ implies that there is a lifting of $ \kappa_{[F, n, M]}$ to $\Ht^1(\cO_{F, S}, T / MT; \Delta^+)$. Although this lifting is not generally unique, one can always find a larger field $F \subseteq_{\mathrm{f}} F' \subset \QQ_\infty$ such that the kernel of $\Ht^1(\cO_{F', S}, T / MT; \Delta^+) \to H^1(\cO_{F', S}, T / MT)$ is annihilated by the corestriction map for $F' / F$. So, after possibly increasing the bound $k(j)$, we obtain a canonical lifting of each $\kappa_{n, j}$ to $\Ht^1(\ZZ[1/S], \TT / \mathfrak{m}_k \TT; \Delta^+)$.

   We do not know at present how to use this extra information, but it may be relevant in the study of exceptional-zero phenomena. Cf.\ the remarks in \cite[\S 0.19.3]{Nekovar-Selmer-complexes}.
  \end{remark}

 \subsection{Descent lemmas}

  We now show that the existence of a Kolyvagin system implies a bound for a Selmer group over $\Lambda$. In this section we impose a further local assumption:

  \begin{hypothesis}[``no local zero'']
   We have $H^0(\QQ_{p, \infty}, T^-) = 0$.
  \end{hypothesis}

  Let $\frQ$ be a height-1 prime ideal of $\Lambda$. For convenience, we assume that $S_{\frQ} = \Lambda / \frQ \Lambda$ is integrally closed; then $S_{\frQ}$ is a discrete valuation ring. (It is either a finite integral extension of $\cO$, or it is isomorphic to a power series ring $\FF[[X]]$ where $\FF$ is the residue field of $\cO$). We define a simple Selmer structure $\Delta^+_{\mathrm{can}}$ on $T \otimes S_\frQ$ by taking the Bloch--Kato local condition for every $v \ne p$, and at $v = p$ the local condition given by the submodule
  \[ \ker\left(H^1(\Qp, T \otimes S_\frQ) \to H^1(\Qp, T^- \otimes \Frac S_{\frQ})\right). \]
  (This is a modification of the Selmer structure $\cF_{\mathrm{can}}$ of \cite[\S 5.3]{MazurRubin-Kolyvagin-systems}, and reduces to it when $T^+ = T$.) There is a natural morphism $\Delta^+_{\Lambda} \to \Delta^+_{\mathrm{can}}$ of Selmer structures on $T \otimes S_{\frQ}$.

  \begin{definition}[{cf.\ \cite[Definition 5.3.12]{MazurRubin-Kolyvagin-systems}}]
   We define an \emph{exceptional set} $\Sigma_{\Lambda}$ of height-1 prime ideals $\frQ$ of $\Lambda$ as the set of $\frQ$ such that one or more of the following holds:
   \begin{itemize}
    \item $H^2(\ZZ[1/S], \TT)[\frQ]$ is infinite;
    \item $H^2(\Qp, \TT^{+})[\frQ]$ is infinite;
    \item $\frQ = \varpi \Lambda$, where $\varpi$ is a uniformiser of $\cO$.
   \end{itemize}
  \end{definition}

  \begin{lemma}[{cf.\ \cite[Lemma 5.3.13]{MazurRubin-Kolyvagin-systems}}]
   For $\frQ$ a height-1 prime ideal, not in $\Sigma_{\Lambda}$, such that $\Lambda / \frQ$ is integrally closed, the cokernel of the natural injection
   \[ H^1_{\Delta^+_{\Lambda}}(\Qp, T \otimes S_\frQ) \into H^1_{\Delta^+_{\mathrm{can}}}(\QQ_p, T \otimes S_\frQ) \]
   is finite, with order bounded above by a constant depending only on $T$ and the $\cO$-rank $d$ of $S_{\frQ}$.

   If $\frQ = \frQ_k$ is the specific prime $(X^k + \varpi) \Lambda$, where $X = \gamma - 1$ for $\gamma$ a generator of $\Gamma_1$, then the cokernel is bounded independently of $k$.
  \end{lemma}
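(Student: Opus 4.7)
The plan is to factor the inclusion $H^1_{\Delta^+_\Lambda}(\Qp, T \otimes S_\frQ) \into H^1_{\Delta^+_{\mathrm{can}}}(\Qp, T \otimes S_\frQ)$ through an intermediate module $L$, and to bound each successive quotient separately using the long exact sequence of cohomology associated to $0 \to \TT^+/\frQ \to \TT/\frQ \to \TT^-/\frQ \to 0$. Define $L$ to be the image of $H^1(\Qp, \TT^+/\frQ)$ in $H^1(\Qp, \TT/\frQ)$, equivalently the kernel of the map to $H^1(\Qp, \TT^-/\frQ)$. One checks immediately that $H^1_{\Delta^+_\Lambda} \subseteq L \subseteq H^1_{\Delta^+_{\mathrm{can}}}$: the first inclusion because the image of $H^1(\Qp, \TT^+) \to H^1(\Qp, \TT/\frQ)$ factors through $H^1(\Qp, \TT^+/\frQ)$; the second because elements of $L$ die in $H^1(\Qp, \TT^-/\frQ)$ and hence a fortiori in $H^1(\Qp, \TT^- \otimes \Frac S_\frQ)$.

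For the first successive quotient $L/H^1_{\Delta^+_\Lambda}$: localising at $\frQ$ to write $\frQ = (f)$ (using that $\Lambda$ is regular of dimension two), the Bockstein-type long exact sequence associated to multiplication by $f$ on $\TT^+$ identifies this quotient with a subquotient of $H^2(\Qp, \TT^+)[\frQ]$, which is finite precisely because $\frQ \notin \Sigma_\Lambda$. For the second quotient $H^1_{\Delta^+_{\mathrm{can}}}/L$: by construction it injects into the $S_\frQ$-torsion of $H^1(\Qp, \TT^-/\frQ)$, which by the analogous long exact sequence on $\TT^-$ fits inside an extension of $H^2(\Qp, \TT^-)[\frQ]$ by $H^1(\Qp, \TT^-)/fH^1(\Qp, \TT^-)$. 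The $S_\frQ$-torsion of the latter is controlled by the $\Lambda$-torsion of the finitely generated $\Lambda$-module $H^1(\Qp, \TT^-)$, and the former is finite by a second application of the no-local-zero hypothesis combined with local duality for $\Lambda$-adic Galois representations.

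For the uniform bound depending only on $T$ and $d$, apply the Iwasawa structure theorem to each of the finitely generated $\Lambda$-modules $H^2(\Qp, \TT^+)$, $H^1(\Qp, \TT^-)_{\Lambda\text{-tors}}$, and $H^2(\Qp, \TT^-)$. Each is pseudo-isomorphic to a finite direct sum $\bigoplus \Lambda/\frp_i^{a_i}$ over a fixed finite collection of height-one primes. Because $\frQ \notin \Sigma_\Lambda$ excludes both these primes $\frp_i$ and the prime $\varpi\Lambda$, we have $(\Lambda/\frp_i^{a_i})[\frQ] = 0$, so the $\frQ$-torsion contribution comes entirely from the pseudo-null correction, which is bounded in terms of $T$ alone. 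The reductions $\Lambda/(\frp_i^{a_i} + \frQ)$ contributing to the second quotient have size bounded by a function of the $a_i$ (intrinsic to $T$) and of $d$, giving the bound $C(T,d)$. For the specific family $\frQ_k = (X^k + \varpi)$, the residue field of $S_{\frQ_k}$ equals the fixed finite field $\FF$ for every $k$, and the image in $S_{\frQ_k}$ of each fixed $\frp_i$ has $S_{\frQ_k}$-adic valuation that stays bounded as $k \to \infty$ (since the universal character $\j$ specialises uniformly along this family); substituting these uniform inputs into the general estimate eliminates the $k$-dependence.

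The main obstacle will be the uniform bound in $d$ in the general case: the easy part is the structure-theorem argument showing that the $\frQ$-torsion of an auxiliary $\Lambda$-module depends only on its pseudo-null part, but the $H^1_{\Delta^+_{\mathrm{can}}}/L$ side involves quotients $N/\frQ N$ whose $S_\frQ$-torsion a priori grows with $d$, and one must verify that along the family $\frQ_k$ this growth does not occur. The rest of the argument is essentially formal bookkeeping with long exact sequences.
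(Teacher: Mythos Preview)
Your overall architecture matches the paper's: factor through the intermediate module $L = \operatorname{im}\big(H^1(\Qp, T^+ \otimes S_\frQ) \to H^1(\Qp, T \otimes S_\frQ)\big)$ and bound the two successive quotients separately. The bound on $L/H^1_{\Delta^+_\Lambda}$ via $H^2(\Qp, \TT^+)[\frQ]$ is correct, and your structure-theorem remark (only the pseudo-null part contributes once $\frQ \notin \Sigma_\Lambda$) gives the right $O(1)$ control there.

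The gap is in the second quotient. You correctly embed the $S_\frQ$-torsion of $H^1(\Qp, T^- \otimes S_\frQ)$ into $H^2(\Qp, \TT^-)[\frQ]$ (using that $H^1(\Qp, \TT^-)$ is $\Lambda$-free under no-local-zero), but then assert that $H^2(\Qp, \TT^-)[\frQ]$ is finite by ``no-local-zero plus local duality''. Local Tate duality identifies $H^2(\Qp, \TT^-) = H^2_{\mathrm{Iw}}(\Qp, T^-)$ with the Pontryagin dual of $H^0(\QQ_{p,\infty}, (T^-)^\vee(1))$, which is governed by the $G_{\QQ_{p,\infty}}$-\emph{coinvariants} of $T^-$, not the invariants. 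The hypothesis $H^0(\QQ_{p,\infty}, T^-) = 0$ kills the invariants; it says nothing about coinvariants (take $T^-$ a non-split extension with a trivial quotient but no trivial sub). So $H^2(\Qp, \TT^-)$ can have height-one support disjoint from $\Sigma_\Lambda$, and your upper bound is infinite there. In the paper's application $T^- = \sF^{--}T$ is rank one, so invariants and coinvariants coincide and your route would go through; but the lemma is stated for arbitrary $T^+$.

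The paper avoids $\TT^-$ entirely for this step. It identifies the $S_\frQ$-torsion of $H^1(\Qp, T^- \otimes S_\frQ)$ directly with $H^0(\Qp, T^- \otimes \Frac S_\frQ / S_\frQ)$, and bounds this via the finite module $H^0(\QQ_{p,\infty}, T^- \otimes E/\cO)$, which \emph{is} controlled by no-local-zero (no divisible invariants since $(T^- \otimes E)^{G_{\QQ_{p,\infty}}} = 0$). Choosing $n$ with $\varpi^n$ annihilating this module, the bound becomes a constant times $\#(S_\frQ/\varpi^n S_\frQ)^\Gamma$, visibly depending only on $T$ and $d$; and for $\frQ_k = (X^k + \varpi)$ one writes down the matrix of $X$ on $S_{\frQ_k}/\varpi^n$ explicitly and reads off that the $\Gamma$-invariants are $\cO/\varpi$ regardless of $k$. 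Your phrase ``$\j$ specialises uniformly along this family'' is not a substitute for this computation.
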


  \begin{proof}
   Let us write ``$O(1)$'' for a module whose order is bounded independently of $\frQ$.

   We consider the composition
   \[H^1(\Qp, \TT^+) / \frQ  \rInto H^1(\Qp, T^+ \otimes S_{\frQ}) \rTo H^1(\Qp, T \otimes S_{\frQ}).\]
   By definition, the space $H^1_{\Delta^+_{\Lambda}}(\Qp, T \otimes S_{\frQ})$ is the image of the composite of these two maps. The first map is easily seen to be injective, and its cokernel is $O(1)$, bounded above by the order of the largest finite submodule of $H^2(\Qp, T^+)$ (which is a finitely-generated $\cO$-module). So it suffices to show that the cokernel of the map
   \[ H^1(\Qp, \TT^+ \otimes S_{\frQ}) \to H^1_{\Delta_{\mathrm{can}}^+}(\Qp, \TT \otimes S_{\frQ}) \]
   is bounded. This cokernel is precisely the torsion submodule of $H^1(\Qp, T^- \otimes S_{\frQ})$, which can be identified with $H^0(\Qp, T^- \otimes \Frac(S_{\frQ}) / S_{\frQ})$ (modulo its divisible part, but this is zero by assumption).

   The module $H^0(\QQ_{p,\infty}, T^- \otimes_{\cO} E/\cO)$ is a finite $\cO$-module, so there is some $n$ such that $\varpi^n$ annihilates this module. Hence $H^0(\Qp, T^- \otimes \Frac(S_{\frQ}) / S_{\frQ})$ has order bounded above by a constant multiple of $(S_{\frQ} / \varpi^n S_{\frQ})^\Gamma$, where $n$ is independent of $\frQ$, which is clearly bounded above by some function of $\Rank_{\cO} S_{\frQ}$.

   In the case $\frQ = \frQ_k$, we can explicitly write down the action of $X$ on  $S_{\frQ} / \varpi^n S_{\frQ} \cong (\cO / \varpi^n \cO)^k$; from this description it is clear that the cokernel is isomorphic to $\cO / \varpi \cO$, for any $k$ and $n$, so it is bounded independently of $k$.
  \end{proof}

  For $\boldsymbol{\kappa} \in \overline{\operatorname{KS}}(\TT, \Delta_{\Lambda}^+, \mathcal{P})$, let $\Ind(\boldsymbol{\kappa})$ be the index of divisibility of $\kappa_1$ in $H^1(\ZZ[1/S], \TT)$, as in Definition 5.3.8 of \emph{op.cit.}.

  \begin{theorem}
   Suppose that the hypotheses (H.0)--(H.5) of \cite[\S 3.5]{MazurRubin-Kolyvagin-systems} hold. If $\boldsymbol{\kappa} \in \overline{\operatorname{KS}}(\TT, \Delta_{\Lambda}^+, \mathcal{P})$ satisfies $\kappa_1 \ne 0$, then $\Ht^2(\ZZ[1/S], \TT; \Delta^+)$ is a torsion $\Lambda$-module, and we have the divisibility
   \[ \Char_{\Lambda}\left(\ker \Ht^2(\ZZ[1/S], \TT; \Delta^+) \to H^2(\Qp, \TT^+)\right)\, \mid\, \Ind(\boldsymbol{\kappa}).\]
  \end{theorem}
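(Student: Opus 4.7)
The approach is to deduce this from the main Kolyvagin system theorem of \cite{MazurRubin-Kolyvagin-systems}, applied to the simple Selmer structure $\Delta_\Lambda^+$, and then to translate the resulting bound on a ``classical'' dual Selmer group into a bound on the Selmer complex cohomology $\Ht^2(\ZZ[1/S], \TT; \Delta^+)$ via Nekov\'a\v{r}'s global duality. The inclusion $\boldsymbol{\kappa} \in \overline{\operatorname{KS}}(\TT, \Delta_\Lambda^+, \mathcal{P})$ is exactly what is needed to feed the Kolyvagin system into the Mazur--Rubin machinery.

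First, I would invoke \cite[Theorem 5.3.10]{MazurRubin-Kolyvagin-systems} for the simple Selmer structure $\Delta_\Lambda^+$ and the Kolyvagin system $\boldsymbol{\kappa}$, obtaining the divisibility
\[ \Char_\Lambda\bigl( H^1_{(\Delta_\Lambda^+)^*}(\QQ, \TT^\vee(1))^\vee \bigr) \mid \Ind(\boldsymbol{\kappa}), \]
and in particular the torsionness of this dual Selmer group. The non-trivial content of this step is the verification of the ``cartesian'' hypothesis of \emph{op.cit.}~on the family of specialisations of $\Delta_\Lambda^+$: for $v \neq p$ it is standard, while at $v = p$ it follows from the descent lemma proved just above, which bounds the cokernel of $H^1_{\Delta_\Lambda^+}(\Qp, T \otimes S_\frQ) \hookrightarrow H^1_{\Delta_{\mathrm{can}}^+}(\Qp, T \otimes S_\frQ)$ independently of $\frQ \notin \Sigma_\Lambda$. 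The ``no local zero'' hypothesis $H^0(\QQ_{p,\infty}, T^-) = 0$ is precisely what prevents these error terms from containing a divisible part and hence contributing to the characteristic ideal.

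Second, I would identify this dual Selmer group with the kernel appearing in the statement via Nekov\'a\v{r} duality (Proposition~\ref{prop:nekovar-duality}) applied to the derived Greenberg Selmer structure $\Delta^+$ on $\TT$ and its orthogonal complement on $\TT^\vee(1)$. This gives that
\[ \ker\!\left(\Ht^2(\ZZ[1/S], \TT; \Delta^+) \to \bigoplus_{v \in S} H^2(U_v^+)\right) \]
is the Pontryagin dual of $\ker\bigl(H^1(\ZZ[1/S], \TT^\vee(1)) \to \bigoplus_{v} H^1(U_v^{\vee,-})\bigr)$. Since $H^2(U_v^+) = 0$ for $v \neq p$, the left-hand kernel is exactly the one appearing in the theorem; and for the right-hand side, the quotient conditions $U_v^{\vee,-}$ coincide (at $v \neq p$) with the ramified quotient and (at $v = p$) with $H^1(\Qp, \TT^\vee(1))/H^1(\Qp, (\TT^-)^\vee(1))$, which are precisely the local conditions defining $(\Delta_\Lambda^+)^*$ as a simple Selmer structure on $\TT^\vee(1)$.

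Combining these two steps yields the divisibility; the $\Lambda$-torsion property of $\Ht^2(\ZZ[1/S], \TT; \Delta^+)$ itself then follows because $H^2(\Qp, \TT^+)$ is a finitely generated $\cO$-module and hence $\Lambda$-torsion. The main obstacle is the first step: one must show that the Mazur--Rubin descent argument, which was written for local conditions given by unramified cohomology at $p$, accommodates the genuine Greenberg condition coming from $\TT^+$. This is exactly the content of the descent lemma combined with the ``no local zero'' hypothesis, which together ensure that the discrepancy between the Iwasawa-theoretic local condition $\Delta_\Lambda^+$ at $p$ and the canonical Selmer condition on each specialisation $T \otimes S_\frQ$ is bounded uniformly in height-one primes $\frQ$ outside $\Sigma_\Lambda$, so that it makes no contribution to the leading term of the characteristic ideal.
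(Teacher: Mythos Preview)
Your proposal is correct and follows essentially the same route as the paper: Nekov\'a\v{r} duality identifies the kernel with (the Pontryagin dual of) a classical dual Selmer group, and the Mazur--Rubin machinery bounds this in terms of $\Ind(\boldsymbol{\kappa})$, with the descent lemma supplying the uniform control at $p$ needed to carry the argument of \cite[Theorems 5.3.6, 5.3.10]{MazurRubin-Kolyvagin-systems} through. The only cosmetic difference is that the paper phrases the Mazur--Rubin input as rerunning their proof (specialising to height-one primes $\frQ$ and invoking their Theorem 5.2.2 for $\Delta^+_{\mathrm{can}}$ on $T\otimes S_\frQ$) rather than citing 5.3.10 directly; your identification of the local conditions at $v\neq p$ and the paper's ``truncation'' claim are both slightly elliptical on the same point, but neither affects the argument.
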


  \begin{proof}
   Since the Selmer structure $\Delta^+_{\Lambda}$ on $\TT$ is just the truncation of $\Delta^+$ in degree $\le 1$, and all the modules $U_v^+$ for the Selmer structure $\Delta^+$ are zero unless $v = p$, we have
   \[ \Ht^2(\ZZ[1/S], \TT; \Delta^+_{\Lambda}) = \ker\left(\Ht^2(\ZZ[1/S], \TT; \Delta^+) \to H^2(\Qp, \TT^+)\right).\]
   Moreover, $\Ht^2(\ZZ[1/S], \TT; \Delta^+_{\Lambda})$ is identified with the Pontryagin dual of $\Ht^1$ for $\TT^\vee(1)$ by Proposition \ref{prop:nekovar-duality}. So we must bound the latter in terms of $\Ind(\boldsymbol\kappa)$.

   This follows by exactly the same argument as in Theorems 5.3.6 and 5.3.10 of \emph{op.cit.}. The Selmer structure $\Delta^+_{\mathrm{can}}$ satisfies the hypotheses of Theorem 5.2.2 of \emph{op.cit.}, so we may use that theorem to bound the Selmer group
   \[ \Ht^2(\ZZ[1/S], T \otimes S_{\frQ}; \Delta^+_{\mathrm{can}}) = \left[\Ht^1(\ZZ[1/S], (T \otimes S_\frQ)^\vee(1); (\Delta^+_{\mathrm{can}})^\vee)\right]^\vee. \]
   (This last isomorphism is a consequence of Nekov\v{a}\'r's duality theorem, cf.\ Proposition \ref{prop:nekovar-duality}, using the fact that the spaces $H^2(U_v^+)$ are defined to be zero for a simple local condition.) Our bounds on the kernel and cokernel of the descent map, while fractionally weaker than those obtained in Lemma 5.3.13 of \emph{op.cit.}, are sufficient to show that if $\frQ \ne \varpi \Lambda$ is a prime of $\Lambda$ such that $\Lambda / \frQ$ is integrally closed, we have the requisite inequality of orders of vanishing at $\frQ$. However, our Selmer complexes commute with flat base-extension in $\cO$, so this gives the result for all $\frQ \ne \varpi \Lambda$. For $\frQ = \varpi \Lambda$ the argument proceeds by considering the primes $\frQ_k = X^k + \varpi$, which we considered above as a special case.
  \end{proof}

  A slightly different formulation of this will be useful above. A simple computation from Tate's local and global Euler characteristic formulae shows that
  \[ \Rank_\Lambda \Ht^1(\ZZ[1/S], \TT; \Delta^+) - \Rank_\Lambda \Ht^2(\ZZ[1/S], \TT; \Delta^+) = \Rank_{\cO} T^{(c = -1)} - \Rank_{\cO} T^-, \]
  where $c$ is complex conjugation; and we define $\chi(\TT, \Delta^+)$ to be this value. Then $\chi(\TT, \Delta^+)$ is also the ``core rank'' of $(T \otimes S_{\frQ}, \Delta^+_{\mathrm{can}})$ in the sense of Definition 5.2.4 of \emph{op.cit.}, for any $\frQ \notin \Sigma_{\Lambda}$.

  \begin{corollary}
   \label{cor:ESargument}
   If $\chi(\TT, \Delta^+) = 1$ and $\boldsymbol{\kappa} \in \overline{\operatorname{KS}}(\TT, \Delta_{\Lambda}^+, \mathcal{P})$ satisfies $\kappa_1 \ne 0$, then $\Ht^1(\ZZ[1/S], \TT; \Delta^+)$ is torsion-free of rank 1, and we have a divisibility of characteristic ideals
   \[ \Char_{\Lambda}\left(\ker \Ht^2(\ZZ[1/S], \TT; \Delta^+) \to H^2(\Qp, \TT^+)\right)\, \mid\, \Char_{\Lambda}\left( \frac{\Ht^1(\ZZ[1/S], \TT; \Delta^+)}{\Lambda \cdot \kappa_1}\right).\]
  \end{corollary}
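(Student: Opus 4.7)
The plan is to deduce the corollary directly from the preceding theorem by combining an Euler characteristic argument (to pin down the rank and torsion-freeness of $\Ht^1(\Delta^+)$) with an identification of the index of divisibility $\Ind(\boldsymbol\kappa)$ with $\Char_\Lambda(\Ht^1(\Delta^+)/\Lambda\kappa_1)$.

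The first step is to show $\Ht^1(\ZZ[1/S],\TT;\Delta^+)$ is torsion-free of rank $1$. The theorem ensures that the kernel $\ker\bigl(\Ht^2(\Delta^+)\to H^2(\Qp,\TT^+)\bigr)$ is $\Lambda$-torsion; moreover, the no-local-zero hypothesis combined with local Tate duality for $\TT^+$ forces $H^2(\Qp,\TT^+)$ itself to be $\Lambda$-torsion, so all of $\Ht^2(\Delta^+)$ is $\Lambda$-torsion. The Euler characteristic identity $\chi(\TT,\Delta^+)=\Rank_\Lambda\Ht^1-\Rank_\Lambda\Ht^2=1$ then forces $\Rank_\Lambda\Ht^1(\Delta^+)=1$. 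For torsion-freeness, I would apply the exact triangle \eqref{eq:exacttri1}: the vanishing $H^0(U_v^-)=0$ for all $v\in S$ is immediate at places $v\ne p$ (unramified local condition) and at $v=p$ follows from the no-local-zero hypothesis, yielding an injection $\Ht^1(\Delta^+)\hookrightarrow H^1(\ZZ[1/S],\TT)$. The target is torsion-free because hypothesis (H.1) forces $H^0(\QQ_\infty,T)=0$, and any $\Lambda$-torsion in the inverse-limit Iwasawa cohomology would produce a nonzero invariant vector over $\QQ_\infty$.

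The second step is to match the characteristic ideals. The theorem delivers $\Char_\Lambda(\ker)\,\mid\,\Ind(\boldsymbol\kappa)$, where $\Ind(\boldsymbol\kappa)$ is by definition the characteristic ideal of $H^1(\ZZ[1/S],\TT)/\Lambda\kappa_1$. Since $\kappa_1$ lies in the rank-one torsion-free submodule $\Ht^1(\Delta^+)$, the short exact sequence
\[
0\;\to\;\Ht^1(\Delta^+)/\Lambda\kappa_1\;\to\;H^1(\ZZ[1/S],\TT)/\Lambda\kappa_1\;\to\;H^1(\ZZ[1/S],\TT)/\Ht^1(\Delta^+)\;\to\;0
\]
gives $\Ind(\boldsymbol\kappa)=\Char_\Lambda(\Ht^1(\Delta^+)/\Lambda\kappa_1)\cdot\Char_\Lambda\bigl(H^1(\ZZ[1/S],\TT)/\Ht^1(\Delta^+)\bigr)$. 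I would then show that the second factor is trivial, by observing that the quotient $H^1(\ZZ[1/S],\TT)/\Ht^1(\Delta^+)$ embeds into $H^1(\Qp,\TT)/\mathrm{image}\,H^1(\Qp,\TT^+)$ and that its image is pseudo-null as a $\Lambda$-module, using Tate's local Euler characteristic formula together with the no-local-zero hypothesis $H^0(\QQ_{p,\infty},T^-)=0$.

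The main obstacle will be this pseudo-nullity of the cokernel $H^1(\ZZ[1/S],\TT)/\Ht^1(\Delta^+)$, which requires a careful analysis of the local term at $p$ in the exact triangle \eqref{eq:exacttri1} combined with Nekov\'a\v{r}'s Poitou--Tate duality (Proposition \ref{prop:nekovar-duality}) to control the global-to-local map on $H^1$. The no-local-zero hypothesis plays the crucial role of ruling out height-one prime divisors in this cokernel, so that the discrepancy between $H^1$ and $\Ht^1(\Delta^+)$ is invisible at the level of characteristic ideals; the remainder is routine manipulation of characteristic ideals over the two-dimensional regular local ring~$\Lambda$.
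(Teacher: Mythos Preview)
Your overall strategy matches the paper's, but there is a genuine gap in the second step. The quotient $H^1(\ZZ[1/S],\TT)/\Ht^1(\ZZ[1/S],\TT;\Delta^+)$ is \emph{not} pseudo-null in general: it embeds into $H^1(\Qp,\TT^-)$, which typically has positive $\Lambda$-rank (equal to $\Rank_{\cO} T^-$ by the local Euler characteristic formula). Correspondingly, $H^1(\ZZ[1/S],\TT)$ itself usually has $\Lambda$-rank greater than $1$, so $H^1(\ZZ[1/S],\TT)/\Lambda\kappa_1$ is not torsion and your identification of $\Ind(\boldsymbol\kappa)$ with its characteristic ideal does not make sense. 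Your short exact sequence is fine, but the conclusion that the third term has trivial characteristic ideal fails.

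The repair is to use the actual definition of $\Ind(\boldsymbol\kappa)$ from Mazur--Rubin as a \emph{divisibility index}: it is the product over height-one primes $\frQ$ of $\frQ^{n_\frQ}$, where $n_\frQ$ is the largest integer with $\kappa_1\in\frQ^{n_\frQ}H^1(\ZZ[1/S],\TT)_\frQ$. This makes sense regardless of the rank of $H^1$. The paper's argument is then: although $H^1/\Ht^1(\Delta^+)$ need not be torsion, its \emph{torsion submodule} is pseudo-null, since it sits inside the $\Lambda$-torsion of $H^1(\Qp,\TT^-)$, which the no-local-zero hypothesis makes pseudo-null. Hence, after localising at any height-one prime $\frQ$, the submodule $\Ht^1(\Delta^+)_\frQ$ is saturated in $H^1_\frQ$ and therefore a direct summand; so the divisibility index of $\kappa_1$ in $H^1_\frQ$ agrees with its divisibility index in the rank-one free module $\Ht^1(\Delta^+)_\frQ$, which is exactly the $\frQ$-exponent of $\Char_\Lambda(\Ht^1(\Delta^+)/\Lambda\kappa_1)$. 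This yields $\Ind(\boldsymbol\kappa)=\Char_\Lambda(\Ht^1(\Delta^+)/\Lambda\kappa_1)$, and the corollary follows from the theorem.
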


  (Cf.\ \cite[Theorem 13.4]{Kato-p-adic}.)

  \begin{proof}
   Since the quotient $H^1(\ZZ[1/S], \TT)/\Ht^1(\ZZ[1/S], \TT; \Delta^+)$ injects into $H^1(\Qp, \TT^-)$, whose $\Lambda$-torsion subgroup is pseudo-null by assumption, we deduce that if $\chi(\TT, \Delta^+) = 1$ and $\kappa_1 \ne 0$, then the quotient $\Ht^1(\ZZ[1/S], \TT; \Delta^+) / \Lambda \kappa_1$ is torsion and its characteristic ideal is equal to $\Ind(\boldsymbol\kappa)$.
 \end{proof}

\newcommand{\noopsort}[1]{\relax}

\providecommand{\bysame}{\leavevmode\hbox to3em{\hrulefill}\thinspace}
\providecommand{\MR}[1]{\relax}
\renewcommand{\MR}[1]{%
 MR \href{http://www.ams.org/mathscinet-getitem?mr=#1}{#1}.
}
\providecommand{\href}[2]{#2}
\newcommand{\articlehref}[2]{\href{#1}{#2}}

\clearpage

\section{Correction (November 2023)}

In our paper \cite{KLZ17}, on page 70 of the published version, the following passage appears:\medskip

 \begin{quotation}
  ``\textbf{Definition 7.5.1}. \emph{We say the branch $\mathbf{a}$ [of the Hida family $\mathbf{f}$] is \emph{cuspidal} if the natural map $M(\mathbf{f})^*_{\mathrm{par}} \to M(\mathbf{f})$ becomes an isomorphism after tensoring with $\Lambda_{\mathbf{a}}$.}\smallskip

  A branch $\mathfrak{a}$ is cuspidal if one, or equivalently every, arithmetic prime of $\Lambda_{\mathbf{f}}$ above $\mathfrak{a}$ corresponds to a cuspidal modular form.''
 \end{quotation}

 This is erroneous as stated: the two conditions in the final sentence of the extract are equivalent to each other, but they are both strictly weaker than Definition 7.5.1. We are grateful to Rodolfo Venerucci for bringing this error to our attention.\medskip

 To correct the error, we should \emph{define} a branch to be cuspidal if it satisfies the condition that one (or equivalently all) of its arithmetic specialisations are cuspidal. This is equivalent to $\mathfrak{a}$ being in the support of the quotient $\Lambda_{\mathbf{f}}^{\mathrm{par}}$ of $\Lambda_{\mathbf{f}}$ which acts faithfully on parabolic cohomology; and it implies that $M(\mathbf{f})^*_{\mathrm{par}} \to M(\mathbf{f})$ becomes an isomorphism after tensoring with the \emph{fraction field} of $\Lambda_{\mathfrak{a}}$ (but not necessarily with $\Lambda_{\mathfrak{a}}$ itself).

 This necessitates a few slight modifications further down the paper involving the congruence ideal $I_{\mathbf{a}}$:

 \begin{itemize}
 \item In Notation 7.7.1 we have defined a $\Lambda_{\mathbf{a}}$-submodule $I_{\mathbf{a}} \subset \operatorname{Frac} \Lambda_{\mathbf{a}}$ to be the image of $\Lambda$-adic modular forms $M_{\mathbf{k} + 2}(N, \Lambda_{\mathbf{a}})$ under the map to $\operatorname{Frac} \Lambda_{\mathbf{a}}$ sending the normalised eigenform to 1. However, the paper of Hida which we refer to, \cite{hida88}, considers the image of $S_{\mathbf{k} + 2}(N, \Lambda_{\mathbf{a}})$ in $\operatorname{Frac} \Lambda_{\mathbf{a}}$. So our module $I_{\mathbf{a}}$ may be slightly larger than the module that Hida considers; however, the same arguments apply \emph{mutatis mutandis} to show that our module $I_{\mathbf{a}}$ is a fractional ideal, and that the localisation of $I_{\mathbf{a}}$ at any arithmetic point is the unit ideal. Of course, if $\mathbf{f}$ is non-Eisenstein (i.e.~its residual Galois representation is irreducible), our $I_{\mathbf{a}}$ does coincide with Hida's congruence ideal.

 \item In the proof of Proposition 10.1.1 on page 92, the phrase ``is isomorphic to a space of $\Lambda_D$-adic cusp forms'' should read ``... space of $\Lambda_D$-adic modular forms'' (not necessarily cuspidal), consistently with our definition of $I_{\mathbf{a}}$.
 \end{itemize}

 We also take the opportunity to correct an unrelated typographical error on page 93: in the second paragraph of \S 10.2, ``$\Lambda_{\mathbf{f}} \mathop{\hat\otimes} \Lambda_{\mathbf{a}} \mathop{\hat\otimes} \Lambda_{\Gamma}$'' should read ``$\Lambda_{\mathbf{a}} \mathop{\hat\otimes} \Lambda_{\mathbf{g}} \mathop{\hat\otimes} \Lambda_{\Gamma}$''.

 \medskip

 \emph{Remark}. The discrepancy between the modules $M(\mathbf{f})^*_{\mathrm{par}}$ and $M(\mathbf{f})$ plays an important role in the paper \cite{BDV22}, in the setting of a Hida family of CM forms whose weight 1 specialisation is an Eisenstein series.

\vspace{3cm}
\end{document}